\DeclareSymbolFontAlphabet{\mathbbm}{bbold}
\DeclareSymbolFontAlphabet{\mathbb}{AMSb}%
\newcommand{\roverline}[1]{\mathpalette\doroverline{#1}}
\newcommand{\doroverline}[2]{\overline{#1#2}}
\newcommand{\X}{\mathsf X}
\newcommand{\Y}{\mathsf Y}
\newcommand{\NN}{\mathbb N}
\newcommand{\ZZ}{\mathbb Z}
\newcommand{\RR}{\mathbb R}
\newcommand{\Rp}{{\mathbb R}_{>0}}
\newcommand{\Rx}{{\mathbb R}^*}
\newcommand{\CC}{\mathbb C}
\newcommand{\I}{\mathrm i}
\newcommand{\E}{\mathrm e}
\newcommand{\D}{\mathrm d}
\newcommand{\id}{\mathrm{id}}
\newcommand{\abs}[2][]{#1\lvert#2#1\rvert}
\newcommand{\norm}[2][]{#1\lVert#2#1\rVert}
\DeclareFontFamily{U}{mathx}{}
\DeclareFontShape{U}{mathx}{m}{n}{<-> mathx10}{}
\DeclareSymbolFont{mathx}{U}{mathx}{m}{n}
\DeclareMathAccent{\widehat}{0}{mathx}{"70}
\DeclareMathAccent{\widecheck}{0}{mathx}{"71}
\newcommand*{\nb}{\nobreakdash}
\newcommand*{\conj}[1]{\overline{#1}}
\newcommand{\Algebra}[1]{\mathcal{#1}}
\newcommand{\Space}[1]{\underline{#1}}
\newcommand{\tensor}{\otimes}
\newcommand{\completedtensor}{\mathbin{\widehat{\tensor}}}
\newcommand{\argument}{\,\cdot\,}
\newcommand*{\defeq}{\mathrel{\vcentcolon=}}
\DeclareMathOperator{\Ad}{Ad} 
\DeclareMathOperator{\ad}{ad}
\newcommand{\lie}[1]{\mathfrak{#1}} 
\newcommand{\Schwartz}{\mathcal{S}}
\newcommand{\Smooth}{\mathcal{C}^\infty}
\newcommand{\SmoothCompactSupp}{\Smooth_{\mathrm c}}
\newcommand{\Cont}{\mathcal{C}}
\newcommand*{\Cst}{C^*}
\newcommand{\TG}{{\mathcal G}} 
\newcommand{\TGdil}{\mathcal G_D} 
\newcommand{\TGrep}{\mathcal G_R} 
\newcommand{\PPseu}{\mathbbm{\Psi}_\Gamma}
\newcommand{\Pseu}{\Psi_\Gamma}
\newcommand{\Symb}{\Sigma_\Gamma}
\renewcommand{\P}{\mathbb P}
\newcommand{\Q}{\mathbb Q}
\DeclareMathOperator{\Op}{Op}
\DeclareMathOperator{\Bound}{\mathbb{B}}
\DeclareMathOperator{\Comp}{\mathbb{K}}
\DeclareMathOperator{\ind}{ind}
\DeclareMathOperator{\ev}{ev}
\newcommand{\pr}{\mathrm{pr}}
\DeclareMathOperator{\singsupp}{singsupp}
\newcommand{\Sob}{H_\Gamma}
\newcommand{\ess}{\mathrm{Ess}}
\newcommand{\Ess}{\mathrm{Ess}_\Gamma}
\newcommand{\AlgebraicIso}{\Psi}
\newcommand{\IntegrationMap}{{\smallint}}
\newcommand{\OrbitMap}[2]{o^{#1}_{#2}}
\newcommand{\ChainNoArg}[3]{\mathrm{Ch}(#1)^{#2}_{#3}}
\newcommand{\Chain}[4]{\ChainNoArg{#1}{#2}{#3}(#4)}
\newcommand{\set}[3][]{#1\{#2\,#1|\,#3#1\}}
\newcommand{\simpleset}[2][]{#1\{#2#1\}}
\def\@endtheorem{\endtrivlist}
\renewenvironment{proof}[1][\proofname]{\par
	\pushQED{\qed}%
	\normalfont \topsep6\p@\@plus6\p@\relax
	\trivlist
	\item[\hskip\labelsep
	\textsc{#1\@addpunct{.}}]\ignorespaces
}{%
	\popQED\endtrivlist
}
\newtheorem{theorem}{Theorem}[section]
\newtheorem{lemma}[theorem]{Lemma}
\newtheorem{proposition}[theorem]{Proposition}
\newtheorem{corollary}[theorem]{Corollary}
\theoremstyle{definition}
\newtheorem{definition}[theorem]{Definition}
\newtheorem{example}[theorem]{Example}
\newtheorem{remark}[theorem]{Remark}
\newcommand{\myitem}[1]{%
	\item[#1]\protected@edef\@currentlabel{#1}%
}
\newcommand{\refitem}[1]{\textit{\ref{#1}.)}}
\numberwithin{equation}{section}
\newcommand{\MyBoxWidth}{2.25}
\newcommand{\StartX}[1]{#1*\MyBoxWidth-2*\MyBoxWidth/3+\value{mybigboxcounter}*\mybigboxincrease}
\newcommand{\CenterX}[1]{#1*\MyBoxWidth-\MyBoxWidth/3+\value{mybigboxcounter}*\mybigboxincrease}
\newcommand{\EndX}[1]{#1*\MyBoxWidth+\value{mybigboxcounter}*\mybigboxincrease}
\newcommand{\convertType}[1]{\csname IsOfType#1\endcsname}
\newcounter{myboxcounter}
\newcounter{mylinecounter0}
\newcounter{mylinecounter1}
\newcounter{mylinecounter2}
\newcounter{mylinecounter3}
\newcounter{mylinecounter4}
\newcounter{mylinecounter5}
\newcounter{mylinecounter6}
\newcounter{mylinecounter7}
\newcounter{mylinecounter8}
\newcounter{mylineboxcounter0}
\newcounter{mylineboxcounter1}
\newcounter{mylineboxcounter2}
\newcounter{mylineboxcounter3}
\newcounter{mylineboxcounter4}
\newcounter{mylineboxcounter5}
\newcounter{mylineboxcounter6}
\newcounter{mylineboxcounter7}
\newcounter{mylineboxcounter8}
\newcounter{mybigboxcounter}
\newcommand{\resetboxes}{
	\setcounter{myboxcounter}{0}
	\setcounter{mylinecounter0}{0}
	\setcounter{mylinecounter1}{0}
	\setcounter{mylinecounter2}{0}
	\setcounter{mylinecounter3}{0}
	\setcounter{mylinecounter4}{0}
	\setcounter{mylinecounter5}{0}
	\setcounter{mylinecounter6}{0}
	\setcounter{mylinecounter7}{0}
	\setcounter{mylinecounter8}{0}
	\setcounter{mylineboxcounter0}{0}
	\setcounter{mylineboxcounter1}{0}
	\setcounter{mylineboxcounter2}{0}
	\setcounter{mylineboxcounter3}{0}
	\setcounter{mylineboxcounter4}{0}
	\setcounter{mylineboxcounter5}{0}
	\setcounter{mylineboxcounter6}{0}
	\setcounter{mylineboxcounter7}{0}
	\setcounter{mylineboxcounter8}{0}
	\setcounter{mybigboxcounter}{0}
}
\newcommand{\addtolineboxcounters}[1]{
	\addtocounter{mylineboxcounter0}{#1}
	\addtocounter{mylineboxcounter1}{#1}
	\addtocounter{mylineboxcounter2}{#1}
	\addtocounter{mylineboxcounter3}{#1}
	\addtocounter{mylineboxcounter4}{#1}
	\addtocounter{mylineboxcounter5}{#1}
	\addtocounter{mylineboxcounter6}{#1}
	\addtocounter{mylineboxcounter7}{#1}
	\addtocounter{mylineboxcounter8}{#1}
}
\newcommand{\convertLine}[1]{\csname linetemplatecommand#1\endcsname}
\newcommand{\mylineheight}[1]{0.18*#1}
\newcommand{\mybigboxincrease}{0.275}
\newcommand{\draftBoxNumber}[1]{
	\draw[anchor = center,blue] (\CenterX{\value{myboxcounter}},#1-1) node {\arabic{myboxcounter}};
}
\renewcommand{\draftBoxNumber}{}
\newcommand{\DrawBox}[6][0]{
	\stepcounter{myboxcounter}
	\addtolineboxcounters{#1}
	\draw (\StartX{\value{myboxcounter}},\mylineheight{\convertLine{#3}}-0.5) rectangle (\EndX{\value{myboxcounter}}+#1*\mybigboxincrease,\mylineheight{\convertLine{#4}}+0.5);
	\draw[anchor = center] (\CenterX{\value{myboxcounter}}+#1*\mybigboxincrease/2,\mylineheight{\convertLine{#3}}/2+\mylineheight{\convertLine{#4}}/2) node {$#2$};
	\foreach \myline / \mystyle in #5 {
		\draw[\convertType{\mystyle}] (\EndX{\value{mylinecounter\convertLine{\myline}}}-\value{mylineboxcounter\convertLine{\myline}}*\mybigboxincrease+#1*\mybigboxincrease,\mylineheight{\convertLine{\myline}}) -- (\StartX{\value{myboxcounter}},\mylineheight{\convertLine{\myline}});
	}
	\foreach \myline in #6 {
		\setcounter{mylinecounter\convertLine{\myline}}{\value{myboxcounter}}
		\setcounter{mylineboxcounter\convertLine{\myline}}{0}
	}
	\draftBoxNumber{\mylineheight{#3}}
	\addtocounter{mybigboxcounter}{#1}
}
\newcommand{\DrawEmptyBox}[1][0]{\stepcounter{myboxcounter}\addtocounter{mybigboxcounter}{#1}}
\newcommand{\DrawFinalLines}[1]{
	\stepcounter{myboxcounter}
	\foreach \myline / \mystyle in #1 {
		\draw[\convertType{\mystyle}] (\EndX{\value{mylinecounter\convertLine{\myline}}},\mylineheight{\convertLine{\myline}}) -- (\StartX{\value{myboxcounter}},\mylineheight{\convertLine{\myline}});
	} 
}
\newcommand{\DrawOneLineBox}[4][0]{
	\DrawBox[#1]{#2}{#3}{#3}{{#3/#4}}{{#3}}
}
\newcommand{\DrawTwoLineBox}[6][0]{
	\DrawBox[#1]{#2}{#3}{#5}{{#3/#4,#5/#6}}{{#3,#5}}
}
\newcommand{\DrawOneToTwoBox}[5]{
	\DrawBox{#1}{#4}{#5}{{#2/#3}}{{#4,#5}}
}
\newcommand{\DrawTwoToOneBox}[6]{
	\DrawBox{#1}{#2}{#4}{{#2/#3,#4/#5}}{{#6}}
}
\newcommand{\DrawLeftLine}[2]{
	\foreach \start / \end / \linetype in #2 {
		\draw[\convertType{\linetype}] (\EndX{\start},\mylineheight{\convertLine{#1}}) -- (\StartX{\end}-0.2,\mylineheight{\convertLine{#1}});
		\fill (\StartX{\end}-0.2,\mylineheight{\convertLine{#1}}) circle (3pt);
	} 
}
\newcommand{\DrawRightLine}[2]{
	\foreach \start / \end / \linetype in #2 {
		\draw[\convertType{\linetype}] (\EndX{\start}+0.2,\mylineheight{\convertLine{#1}}) -- (\StartX{\end},\mylineheight{\convertLine{#1}});
		\fill (\EndX{\start}+0.2,\mylineheight{\convertLine{#1}}) circle (3pt);
	} 
}
\newcommand{\AlignTo}[1]{(0,\mylineheight{\convertLine{#1}}*\myscale)}
\newcommand{\underbracedistance}{0.1}
\newcommand{\AddUnderbrace}[5][0]{
	\draw [decoration={brace, mirror},	decorate](\StartX{#4}-3*\underbracedistance-#1*\mybigboxincrease,\mylineheight{\convertLine{#3}}-0.5-\underbracedistance) -- (\EndX{#5}+3*\underbracedistance,\mylineheight{\convertLine{#3}}-0.5-\underbracedistance) 
	node [pos=0.5,anchor=north,yshift=-0.05cm] {$#2$};
}
\newcommand{\AddOverbrace}[5][0]{
	\draw [decoration={brace},	decorate](\StartX{#4}-3*\underbracedistance-#1*\mybigboxincrease,\mylineheight{\convertLine{#3}}+0.5+\underbracedistance) -- (\EndX{#5}+3*\underbracedistance,\mylineheight{\convertLine{#3}}+0.5+\underbracedistance) 
	node [pos=0.5,anchor=south,yshift=0.05cm] {$#2$};
}
\newenvironment{boxdiagram}[1]{\begin{tikzpicture}[scale=\myscale, baseline={([yshift=-.6ex]\AlignTo{#1})}]
		\resetboxes
}{\end{tikzpicture}}
\newcommand{\myscale}{.4}
\title{\textbf{Shubin calculi for actions of graded Lie groups}}
\author[1]{Eske Ewert}
\author[1]{Philipp Schmitt}
\affil[1]{\small Institute of Analysis, Leibniz University Hannover, Welfengarten 1, 30167 Hannover, Germany}
\date{}
\begin{document}

\maketitle
\vspace*{-.675cm}

\begin{abstract}
{\footnotesize
	In this article, we develop a calculus of Shubin type pseudodifferential operators on certain non-compact spaces, using a groupoid approach similar to the one of van Erp and Yuncken.
	More concretely, we consider actions of graded Lie groups on graded vector spaces
	and study pseudodifferential operators that generalize fundamental vector fields and multiplication by polynomials. 
Our two main examples of elliptic operators in this calculus are Rockland operators with a potential
and the generalizations of the harmonic oscillator to the Heisenberg group due to Rottensteiner--Ruzhansky.

Deforming the action of the graded group, we define a tangent groupoid which connects pseudodifferential operators to their principal (co)symbols.
We show that our operators form a calculus that is asymptotically complete.
Elliptic elements in the calculus have parametrices, are hypoelliptic, and can be characterized in terms of a Rockland condition.
Moreover, we study the mapping properties as well as the spectra of our operators on Sobolev spaces
and compare our calculus to the Shubin calculus on $\RR^n$ and its anisotropic generalizations.\par
} \end{abstract}

\setcounter{tocdepth}{3}
{\small \tableofcontents}

\section{Introduction} \label{sec:introduction}

An important approach to study elliptic partial differential equations is via pseudodifferential calculi.
They give a conceptual way to invert an elliptic operator up to a smoothing error,
i.e.\ to construct a parametrix.
Typically, in these calculi one starts with a class of symbols,
like the ones introduced by Kohn--Nirenberg and Hörmander \cites{KN65, Hoe85c}, which one then quantizes to operators.
For Hörmander symbols on a compact manifold, the smoothing error in the parametrix construction is a compact operator. 
Therefore elliptic operators are Fredholm,
and their index can be computed in terms of topological properties \cite{AS68}.
However, on a non-compact manifold the calculus is not suitable for index theory since smoothing operators 
are no longer compact.

\paragraph{Shubin calculus}
One way to remedy this on \(\RR^n\) is to also control the behaviour of symbols in space direction.
This leads to so-called \emph{Shubin symbol classes} \cite{Shu87}, 
which were introduced at the same time also by Helffer and Robert \cite{Hel84}.
The space of Shubin symbols $\Gamma^m(\RR^n)$ of order $m \in \RR$ consists of
smooth functions \(p\in \Smooth(\RR^{2n})\) with the property that for all \(a,b\in\NN^n_0\) there exists \(C_{ab}>0\) such that for all \((x,\xi)\in\RR^{2n}\)
\begin{equation}
	\abs{\partial^a_x\partial^b_\xi p(x,\xi)}\leq C_{ab} (1+\norm{x}+\norm{\xi})^{m-\abs{a}-\abs{b}}.
\end{equation}
One easily verifies $\bigcap_{m \in \ZZ} \Gamma^m(\RR^n) = \Schwartz(\RR^n\times\RR^n)$
and that the corresponding operators have Schwartz functions as kernels, so that they are indeed compact.
For this reason, many results and techniques carry over from compact manifolds: 
e.g.\ elliptic operators are Fredholm and one obtains index theorems \cites{Fed70,ENN96} and a Weyl law \cite{HR82}.
Prominent examples of elliptic Shubin operators are the creation and annihilation operator and the harmonic oscillator.

Note however, that Shubin symbols are not invariant under general coordinate changes,
giving limitations to generalize them to other non-compact manifolds than \(\RR^n\).
There are also other ways to control symbols in space directions,
e.g.\ the SG-calculus \cites{Cor95, Par72} which can even be generalized to certain non-compact manifolds \cites{Mel94,Sch87}, 
the global calculus of \cite{NR10},
or, more generally, the Hörmander--Weyl calculus of \cite{Hoe85c}*{Section~18.5}.

\paragraph{Adapted calculi} To gain a better understanding of hypoelliptic operators like Hörmander's sum of squares, which are not elliptic in the standard sense, \cites{FS74,RS76,FS82} studied operators on certain nilpotent Lie groups called graded Lie groups. For these, hypoellipticity can be characterized by the Rockland condition \cites{Roc78,Bea77,HN78} which is based on the representation theory of the group. A graded Lie group $G$ is a  simply connected Lie group whose Lie algebra admits a grading 
\begin{equation} \label{eq:grading}
	\lie{g}=\bigoplus_{j=1}^r\lie{g}_j\qquad\text{ with }[\lie{g}_j,\lie{g}_k]\subseteq \lie{g}_{j+k}.\end{equation}
One important example is the \((2n+1)\)-dimensional Heisenberg group \(H_n\). Its Lie algebra is generated by \(X_1,\ldots, X_{2n+1}\) with \([X_i,X_{n+i}]=X_{2n+1}\) for \(i=1,\ldots,n\). A grading is given by \(\lie{g}_1=\mathrm{span}\{X_1,\ldots,X_{2n}\}\) and \(\lie{g}_2=\RR X_{2n+1}\). 
The grading yields a natural way to redefine the order of differential operators. Namely, identifying \(X\in\lie{g}\) with the corresponding left-invariant differential operator on $G$, the order of \(X\in\lie{g}_j\) is defined to be~\(j\). This notion of order is the cornerstone of various pseudodifferential calculi defined for these groups \cites{Tay84,CGGP92,FR16,FKF20} or also for certain classes of manifolds, like contact manifolds \cites{BG88, Pon08} or filtered manifolds \cites{Mel82,vEY19}.

In \cite{vEY19} an elegant, coordinate-free approach to a calculus on filtered manifolds is introduced. It is built on Connes' tangent groupoid \cite{Con94} and its generalizations for filtered manifolds \cites{vEY17,SH18,CP19c,Moh21}. Moreover, it uses fibred distributions \cites{AS68,LMV17} as well as a zoom action of \(\Rp\) on the tangent groupoid, whose close relation with classical pseudodifferential operators was shown in \cite{DS14}. Ellipticity in this calculus can be characterized using the Rockland condition on certain graded groups associated with the filtration \cites{DH22,CGGP92}.
This new approach lead to recent advances such as the definition of a scale of associated  Sobolev scales \cite{DH22}, a Weyl law and heat asymptotics \cite{DH20}, index theorems \cites{Moh22, Moh22u, GK22, Gof24} and the proof of the Helffer--Nourrigat conjecture \cites{AMY22}.
As van Erp and Yuncken's calculus generalizes operators with classical Hörmander symbols, many of these results are only valid for \emph{compact} filtered manifolds.

\paragraph{Objectives} In this article, we shall therefore investigate the question whether one can also define an adapted Shubin type calculus on graded Lie groups, similarly to passing from classical pseudodifferential operators on compact manifolds to Shubin operators on $\RR^n$.
There are at least two ways to achieve this, and they can be used to study different kinds of operators.
To see this, let us consider the harmonic oscillator \(-\Delta+\norm{x}^2\) on $\RR^n$ as a prototype of an elliptic Shubin operator.

\begin{enumerate}[wide=0pt]
	\myitem{(O1)} \label{item:example:rocklandWithPotential} The first approach is based on the fact that \(-\Delta\) is a constant coefficient, elliptic operator or, equivalently, a left-invariant Rockland operator on the Abelian group \(G=\RR^n\). Therefore, one might be interested in operators obtained by adding potentials to a Rockland operator on a graded Lie group. For example, Sublaplacians with potentials are studied in \cite{BC22} on noncompact Lie groups.

	\myitem{(O2)} \label{item:example:representationGroupoid} The second approach uses the fact that the 
	harmonic oscillator on \(\RR^n\) appears as the Schrödinger representation of the Sublaplacian of the Heisenberg group \(H_n\).
	Based on this observation, we consider images of a Rockland operator on \(\overline{G}\) in a certain representation,
	where \(\overline{G}\) is a bigger group relating to $G$ in the same way as $H_n$ to $\RR^n$ \cites{Dyn75,Fol94, Moh22}.
	This idea was used by Rottensteiner and Ruzhansky to define 
	(an)harmonic oscillators on the Heisenberg group in \cites{RR20,RR22}.
\end{enumerate}

\paragraph{Shubin groupoid}
In this article, we develop a general framework in which both approaches can be realized.
Let $G$ be a graded Lie group with a polynomial (right) action \(\theta^1\) on a graded vector space \(\X\).
Here, polynomial means that the map $\theta^1 \colon \X \times G \to \X$ is polynomial,
where $G$ is identified with the vector space $\lie g$ via the exponential map.
The goal is to construct a calculus of pseudodifferential operators acting on $\X$ which contains 
fundamental vector fields with respect to $\theta^1$ (with order determined by the grading of $G$) and multiplications by polynomials on $\X$ (with order determined by the grading of $\X$).
In our two examples above, \(\X\) is the group itself (viewed as a vector space with a potentially different grading),
but we also give other examples in which this is not the case.

For a graded group \(G\), the grading can also be characterized by the family
of Lie algebra homomorphisms $A_\lambda \colon \lie g \to \lie g$ with $\lambda \in \RR$ defined by \(A_\lambda(X)=\lambda^jX\) for \(X\in\lie{g}_j\). The exponentiated action 
$\alpha_\lambda \colon G \to G$ of the multiplicative group $\Rx \coloneqq \RR \setminus \simpleset{ 0 }$ on $G$ is called the \emph{dilation action}. 
Similarly, the grading on $\X$ is defined by dilations $\beta_\lambda$.

In order to construct a tangent groupoid, we shall deform $\theta^1$ with respect to a parameter \(t\neq 0\) by
\begin{equation*}
	\theta_v^t(x)=\beta_t\circ \theta^1_{\alpha_t(v)}\circ \beta_{t^{-1}}(x) \qquad\text{for \(v\in G\) and \(x\in \X\).}
\end{equation*}
If the family \((\theta^t)_{t \neq 0}\) extends to \(t=0\), we say that $\theta^1$ defines a \emph{Shubin action}.
In this case, we assemble the family of actions $\theta^t$ into an action of $G$ on $\X \times \RR$
and define the \emph{Shubin tangent groupoid} as the action groupoid $\TG = (\X \times \RR) \rtimes G$.
More concretely, its unit space is $\X \times \RR$, the range and source maps are $r(x,t,v) = (x,t)$ and $s(x,t,v) = (\theta^t_v(x),t)$ and the multiplication is determined by $(x,t,v) \cdot (\theta^t_v (x), t, w) = (x,t,vw)$.
It can be equipped with a Shubin zoom action of \(\Rp\) by \(\tau_\lambda(x,t,v)=(\beta_{\lambda^{-1}}(x),\lambda^{-1}t,\alpha_\lambda(v))\) for \(\lambda>0\), \(x\in \X\), \(t\in\RR\) and \(v\in G\). 
In contrast to the tangent groupoid of a general (filtered) manifold, this groupoid is particularly easy as it can be globally described as an action groupoid. 

\paragraph{Fibred distributions}
When trying to apply the groupoid approach of \cite{vEY19} to the Shubin tangent groupoid, 
one difficulty arises due to the non-proper support of Shubin operators. For this reason, one has to use other distribution spaces as \cites{AS68,LMV17,vEY19,AMY21}. Namely, we use the space of slowly increasing functions \(\Algebra{O}_M(\RR^k)\)
(smooth functions on $\RR^k$ with all derivatives bounded by a polynomial) and its dual \(\Algebra{O}_M'(\RR^k)\) to define an algebra of fibred distributions \(\Algebra O'_r(\TG) \cong \Algebra O_M(\X \times \RR) \completedtensor \Algebra O_M'(G)\) for polynomial action groupoids. 
Note that a distribution $u\in\Algebra O'_r(\TG)$ may be viewed as a family of distributions $u_{x,t} \in \Algebra O_M'(G)$ on the range fibres, which varies smoothly along $\X \times \RR$ with controlled growth.
The convolution product of Schwartz functions $\Schwartz(\TG)$ on the groupoid can be extended to \(\Algebra O'_r(\TG)\).

For every $t \in \RR$ one can consider the action groupoid $X \rtimes^t G$ with respect to $\theta^t$.
Making use of $\Algebra O_M(\X \times \RR) \cong \Algebra O_M(\X) \completedtensor \Algebra O_M(\RR)$,
a range fibred distribution \(u \in \mathcal O_r'(\TG)\) can also be viewed as a family of range fibred distributions $u_t \in \Algebra O_r'(\X \rtimes^t G)$. In this case the convolution product becomes the pointwise convolution product, $(u*v)_t = u_t *_t v_t$.

For \(u_t\in\Algebra{O}'_r(\X\rtimes^t G)\), there is a corresponding operator \(\Op_t(u_t)\colon\Algebra O_M(\X)\to\Algebra O_M(\X)\).
The map \(\Op_t \colon \Algebra{O}'_r(\X\rtimes^t G) \to \mathcal L(\Algebra O_M(\X),\Algebra O_M(\X))\) is a representation, i.e. $\Op_t(u_t *_t v_t) = \Op_t(u_t) \circ \Op_t(v_t)$.
However, when the action \(\theta^t\) is not free, $\Op_t$ is in general not injective,
so that the same operator could be described by different range fibred distributions.
In our two main examples $\Op_t$ is injective for $t \neq 0$ and one can identify fibred distributions with the corresponding operators.

The definition of fibred distributions and the convolution product is given for general polynomial actions on vector spaces and thus not limited to the setting of Shubin tangent groupoids. 
Therefore, they could also be employed to define other calculi, like an SG-type calculus. 

\paragraph{Pseudodifferential operators and distributions}

Similar to the approach of van Erp--Yuncken, we say that
an element \(\P_1\in\Algebra O'_r(\X\rtimes^1 G)\) is called a \emph{pseudodifferential distribution} of order \(m\) if it can be extended to a \(\P\in\Algebra{O}_r'(\TG)\) which is essentially homogeneous of order $m$ with respect to the Shubin zoom action, that is
\begin{equation*}
	{\tau_\lambda}_*\P-\lambda^m\P\in\Schwartz(\TG) \quad\text{for all }\lambda>0.
\end{equation*} 
We denote the space of pseudodifferential distributions of order \(m\) by $\Pseu^m$.
The corresponding operator $\Op_1(\P_1) \colon \mathcal O_M(\X) \to \mathcal O_M(\X)$ is called a \emph{pseudodifferential operator} of order~$m$.
 
Following the usual tangent groupoid philosophy, one can interpret this construction as deforming a pseudodifferential distribution $\P_1$ to its principal cosymbol $[\P_0] \in \Algebra O_r'(\X \rtimes^0 G) / \Schwartz(\X \rtimes^0 G)$.
This is achieved by scaling away lower order terms using the zoom action.

Let us describe the tangent groupoid and pseudodifferential operators more concretely for the two main examples from above.
\begin{enumerate}[wide=0pt]
	\myitem{(O1)}\label{item:dd-calculus} Let \(\X=G\) be a graded Lie group with dilations $\alpha$, \(\theta^1(x,v)=x\cdot v\) and \(\beta\) be group dilations, possibly different from $\alpha$.
	For this reason, we call this the \emph{double dilation groupoid}.
	Since \(\theta^t_v(x)=x \cdot \beta_t(\alpha_{t}(v))\),
	the smooth structure of the double dilation groupoid differs from the tangent groupoid of \(G\) used to obtain a Hörmander type calculus. 
	
	For \(G=\RR^n\) with dilations \(\alpha_\lambda(v)=\beta_\lambda(v) = \lambda v\), one sees that under Fourier transform of the distributions in the \(v\)-direction at \(t=0\), the zoom action transforms into \(\widehat\tau_\lambda f(x,\xi)=f(\lambda x,\lambda\xi)\) for \(f\in C^\infty(\RR^{2n})\).
	This is precisely the action used to define homogeneity of symbols in the Shubin calculus,
	and indeed we show that one recovers the classical Shubin calculus in this case. 
	For different dilations \(\alpha,\beta\) on $\RR^n$ one obtains the non-isotropic calculi considered in \cites{BN03,NR10}, allowing one to study, for example, the anharmonic oscillators in \cite{CDR21}.
	
	When \(G\) is the Heisenberg group \(H_1\), denote by \(x_1,x_2,x_3\) exponential coordinates with respect to the basis \(X_1,X_2, X_3\) of \(\lie{g}\) as above.
	Consider $\alpha_\lambda(x_1,x_2,x_{3})=\beta_\lambda(x_1,x_2,x_{3})= (\lambda x_1, \lambda x_{2}, \lambda^2 x_{3})$ for $\lambda >0$.
	Then the polynomials \(x_1\) and \(x_{2}\) have order \(1\) as pseudodifferential operators whereas \(x_{3}\) has order \(2\) and similarly the left-invariant differential operators $X_1$ and $X_2$ have order \(1\) whereas $X_3$ has order \(2\).
		
	\myitem{(O2)} 
	Let $G$ be a graded Lie group with dilations $\alpha$
	and $\X = \lie g$ be the vector space with the opposite grading $\X_j = \lie g_{r-j}$ and dilations $\beta$
	(here $r \in \NN$ is the maximal integer with $\lie g_r \neq \simpleset{0}$ in \eqref{eq:grading}). 
	The definition of $\theta^1$ is more involved, but for $t = 0$ it deforms to the adjoint action of $G$ on $\lie g$.
	
	For $G = \RR^n$, the groupoid $C^*$-algebra of this Shubin tangent groupoid is isomorphic to the group $C^*$-algebra of the Heisenberg group $H_n = \smash{\roverline{\RR^n}}$.
	For general $G$, these two $C^*$-algebras are still related, even though they might not coincide on the nose.
	We call this Shubin tangent groupoid the \emph{representation groupoid} of $G$.
	
	For the Heisenberg group $G = H_1$ the order of polynomials in the corresponding calculus differs from the order of the double dilation groupoid.
	The polynomials \(x_1\) and \(x_2\) have now order \(2\) and \(x_{3}\) has order~\(1\).
	The orders of the fundamental vector fields of $X_1$, $X_2$ and $X_3$ are still $1$, $1$ and $2$.
\end{enumerate}

\paragraph{Properties of the calculus} Let us summarize the most important properties of our calculus:
\begin{enumerate}
	\item For every $m \in \RR$, there is a short exact sequence 
	$0\to\Pseu^{m-1} \to \Pseu^m \to \Symb^m\to 0$
	where the right map is the principal symbol map and $\Symb^m$ is a certain subspace of $\Algebra O_r'(\X \rtimes^0 G) / \Schwartz(\X \rtimes^0 G)$ consisting of essentially $m$-homogeneous elements.
	\item The convolution of $\P_1\in\Pseu^\ell$ and $\Q_1 \in \Pseu^m$ belongs to $\Pseu^{\ell+m}$ and the convolution is compatible with the principal cosymbol map.
	\item Every element of $\Pseu^m$ is a two-sided multiplier of $\Schwartz(\X \rtimes^1 G)$, i.e.\ $\P_1 *_1 f, f *_1 \P_1 \subseteq \Schwartz(\X \rtimes^1 G)$ for all $f \in \Schwartz(\X \rtimes^1 G)$ and $\P_1 \in \Pseu^m$. 
	\item There is an involution $^*$ on $\Pseu^m$ satisfying $(\P_1 *_1 \Q_1)^* = \Q_1^* *_1 \P_1^*$ for all $\P_1\in\Pseu^\ell$ and $ \Q_1 \in \Pseu^m$.
	\item Schwartz functions are the residual class of the calculus,
	meaning that $\bigcap_{m \in \ZZ} \Pseu^m = \Schwartz(\X \rtimes^1 G)$.
	\item The calculus is asymptotically complete.
	\item\label{item:parametrix} An elliptic $\P_1 \in \Pseu^m$ has a parametrix in $\Pseu^{-m}$, i.e.\ an inverse up to $\Schwartz(\X \rtimes^1 G)$.
\end{enumerate}
In \refitem{item:parametrix}, an element $\P_1 \in \Pseu^m$ is called elliptic if its principal cosymbol $[\P_0]$ is invertible (up to Schwartz functions in the convolution algebra $\Algebra O_r'(\X \rtimes^0 G)$).

\paragraph{Property (R)}
The ellipticity of $\P_1 \in \Pseu^m$ (or equivalently invertibility of $[\P_0]$)
can be characterized by a Rockland condition, if the calculus satisfies a certain assumption.
Namely, we say that property (R) is satisfied if the action $\theta^0$ of $G$ on $\X$ is by linear maps.
In this case, $G$ acts also on the dual $\X^*$ of $\X$ and we may consider the group $\X^* \rtimes^0 G$.
Then $\P_0$ may be viewed as a distribution on $\X^* \rtimes^0 G$ via inverse Fourier transform
and this identification is compatible with the respective convolutions.
By results of \cite{CGGP92}, a distribution on $\X^* \rtimes^0 G$ is invertible up to Schwartz functions if and only if the Rockland condition on \(\X^* \rtimes^0 G\) is satisfied,
i.e.\ if the principal symbol becomes invertible in the non-trivial representations of this group.
 
For the double dilation groupoid the action $\theta^0$ is trivial, hence property (R) is fulfilled and $\X^* \rtimes^0 G = \X^* \times G$. Hence, representations of this group are obtained from a point evaluation and a representation of $G$.
For the representation groupoid, $\theta^0$ is the adjoint action, so that property (R) is fulfilled. But now the principal symbol group $\X^* \rtimes^0 G = \lie g^* \rtimes G$ is the semidirect product with respect to the coadjoint action.
 
We show that the respective generalizations of the harmonic oscillators, i.e.
Rockland operators with suitable potentials for the double dilation groupoid or the harmonic oscillators of Rottensteiner--Ruzhansky for the representation groupoid, are indeed elliptic in 
the corresponding calculus.
 
\paragraph{Mapping properties}
We refer to the technical assumption that the shear map $\Theta^1 \colon \X \times G \to \X \times \X$, $\Theta^1(x,v) = (x, \theta^1(x,v))$
has a polynomial inverse as property (P). In particular, this implies that the action $\theta^1$ is free and transitive and that $\Op_1$ is injective.
Under properties (P) and (R) the following holds true:
\begin{enumerate}
	\item For $\P_1 \in \Pseu^m$ with $m < 0$, $\Op_1(\P_1)$ is compact on $L^2(\X)$.
	\item For $\P_1 \in \Pseu^m$ with $m \leq 0$, $\Op_1(\P_1)$ is bounded on $L^2(\X)$.
	\item For every $s \in \RR$, there exists an elliptic operator $P_s \in \Op(\Pseu^s)$,
	which can be used to define \emph{Sobolev spaces} \(\Sob^s(\X)=\set{u\in\Schwartz'(\X) }{ P_su\in L^2(\X)}\).
	\item If $\P_1 \in \Pseu^m$ for $m \in \RR$, then $\Op_1(\P_1) \colon \Sob^s(\X) \to \Sob^{s-m}(\X)$ is bounded for every $s \in \RR$.
	\item If $\P_1 \in \Pseu^m$ is elliptic for $m \in \RR$, then $\Op_1(\P_1) \colon \Sob^s(\X) \to \Sob^{s-m}(\X)$ is Fredholm for every $s \in \RR$.
	\item If $\P_1 \in \Pseu^m$ is elliptic for $m > 0$ and $\Op_1(\P_1)$ is formally self-adjoint on $\Schwartz(\X)$, then $\Op_1(\P_1)$ has discrete spectrum. 
\end{enumerate}
In a sequel to this article, we intend to compute the index of the Fredholm operator $\Op_1(\P_1)$ for elliptic $\P_1$ 
in terms of its principal (co)symbol.

\paragraph{Outline}
This article is organized as follows. In \cref{sec:graded-lie-groups} we summarize known results from the literature concerning graded Lie groups, the Rockland condition and essentially homogeneous distributions.
All other sections of this article contain new results. In \cref{sec:action-groupoids-convolution-algebra} we define the convolution algebra \(\Algebra O'_r(\X\rtimes G)\) of fibred distributions for a polynomial action of a graded group \(G\) on \(\X\). \cref{sec:shubin-tangent-groupoid} introduces abstract Shubin tangent groupoids and our two main examples. In \cref{sec:calculus} a pseudodifferential calculus using the approach of van Erp--Yuncken is defined for Shubin tangent groupoids. In particular, the principal symbol, the \(^*\)-algebra structure, asymptotic completeness and parametrices for elliptic pseudodifferential distributions are discussed.
In \cref{sec:PR} we characterize ellipticity in terms of Rockland conditions, construct a scale of Sobolev spaces and investigate the Fredholm and spectral properties of these Shubin type pseudodifferential operators.
In \cref{sec:comparison} we compare our calculus for \(G=\RR^n\) to the usual Shubin calculus and the anisotropic calculus.
Finally, \cref{sec:homogeneous_distributions} compiles known facts about homogeneous and essentially homogeneous distributions on graded groups. 

\paragraph{Notation} Throughout this article, we write
$\NN = \simpleset{1,2,3,\dots}$ and $\NN_0 = \simpleset{0,1,2,3,\dots}$ for the natural numbers without and with $0$, 
$\RR$ for the real numbers, $\Rp = \set{x \in \RR}{x > 0}$, and $\Rx = \RR \setminus \simpleset 0$.
Since (graded) groups occur mainly in action groupoids, we think of their elements as arrows or vectors and therefore denote them by $v$ and $w$ throughout this article.

\paragraph{Acknowledgements}
The authors would like to thank Ryszard Nest for many inspiring discussions and for suggesting a topic to us that lead to the present article 
and Elmar Schrohe and Robert Yuncken for many insightful conversations.
They are also grateful to Magnus Goffeng, Omar Mohsen, David Rottensteiner, Michael Ruzhansky and Bernhard Helffer for their kind remarks which lead us to generalize our first results to also cover the representation groupoid. They would also like to thank the referee for their helpful suggestions. 

\section{Graded Lie groups}\label{sec:graded-lie-groups}

In this section the definition and some basic properties of graded Lie groups are recalled. 
For a more detailed introduction we refer the reader to \cites{FS82,FR16}. 
All our Lie groups and Lie algebras shall be defined over the field $\mathbb R$. 

\begin{definition}
	A \emph{graded Lie group} is a simply connected Lie group \(G\) whose Lie algebra~\(\lie{g}\) is graded, that is \(\lie{g} = \bigoplus_{j=1}^r {\lie{g}_j},\) with \([X,Y]\in \lie{g}_{j+k}\) for all \(X\in\lie{g}_j\) and \(Y\in\lie{g}_k\). Here, one sets \(\lie{g}_j=\{0\}\) for \(j>r\). 
	The graded Lie group is called \emph{stratified} if \(\lie{g}_1\) generates \(\lie{g}\) as a Lie algebra. 
\end{definition}
Here, and in the following simply connected Lie groups are in particular assumed to be connected. 

\begin{example}\label{ex:heisenberg-algebra}
	The \((2n+1)\)-dimensional \emph{Heisenberg Lie algebra} is generated by \(X_1,\ldots,X_{2n+1}\) such that \([X_j,X_{n+j}]=-[X_{n+j},X_j] = X_{2n+1}\) for all \(j=1,\ldots,n\), while all other commutators of generators vanish. Consider the grading for which $X_1,\ldots,X_{2n}$ have degree $1$ and $X_{2n+1}$ has degree $2$.
	The corresponding simply connected Lie group is called the \emph{Heisenberg Lie group} $H_n$ and is as a space \(\RR^{2n+1}\)
	with the group product 
	\begin{equation*}
		(v_1,\ldots,v_{2n+1})\cdot(w_1,\ldots,w_{2n+1})
		= \biggl(
		v_1+w_1, \ldots, v_{2n}+w_{2n}, v_{2n+1}+w_{2n+1}+\tfrac{1}{2}\sum_{j=1}^n (v_jw_{n+j} -  w_j v_{n+j})
		\biggr).
	\end{equation*}
\end{example}
The condition on the Lie bracket implies that graded Lie groups are nilpotent.  In particular, the exponential map \(\exp\colon\lie{g}\to G\) is a diffeomorphism and the group law on \(G\) is determined by the Lie bracket on~\(\lie{g}\) using the Baker--Campbell--Hausdorff formula \cite{CG90}*{Thm.~1.2.1}.  
Furthermore, graded Lie groups can be always realized as subgroups of upper triangular matrices \cite{CG90}*{Cor.~1.2.3}.

\subsection{Dilations and homogeneity}\label{sec:dilations}

A grading on a (finite dimensional) Lie algebra can equivalently be described by an integer family of dilations:

\begin{definition}\label{def:dilation}
	Let \(\lie{g}\) be a Lie algebra of dimension $n \in \mathbb N$.
	A family \(\{A_\lambda\}_{\lambda >0}\) of Lie algebra homomorphisms \(A_\lambda\colon \lie{g}\to\lie{g}\) is called an \emph{integer family of dilations} if there is a diagonalizable linear map \(D\colon\lie{g}\to\lie{g}\) with eigenvalues \(1\leq q_1\leq q_2\leq\ldots\leq q_n\in\NN\) such that \(A_\lambda=\mathrm{Exp}(\ln(\lambda)D)\) for all \(\lambda>0\). The eigenvalues~\(q_j\in\NN\) are called the \emph{weights} of \(A\).
\end{definition}

\begin{example}
	For a graded Lie group $G$ with Lie algebra $\lie g$, \(A_\lambda(X)=\lambda^jX\) for \(X\in\lie{g}_j\) and \(\lambda>0\) defines an integer family of dilations, called  \emph{standard dilations}.
	Conversely, given an integer family of dilations on a Lie algebra $\lie g$,
	we can fix a basis of eigenvectors \(\{X_1,\ldots,X_n\}\) of \(D\) such that \(A_\lambda(X_j)=\lambda^{q_j}X_j\), which we refer to as a \emph{standard basis}.
	Defining the degree of $X_j$ to be $q_j$ we obtain a grading on $\lie g$,
	which turns the simply connected Lie group integrating $\lie g$ into a graded Lie group.
\end{example}
More generally, one can also consider dilations with positive real weights.
Groups equipped with such dilations are called \emph{homogeneous groups}
in the literature and slightly more general than graded groups \cite{FR16}*{Ex.\ 3.1.11}.
However, we shall only be concerned with integer dilations in this article 
and refer to those simply as dilations.

Note that an integer family of dilations \(\{A_\lambda\}_{\lambda>0}\) defines an \(\Rp\)-action \(A\) on \(\lie{g}\) by Lie algebra automorphisms. The integrated action of \(\Rp\) on the simply connected Lie group $G$ is by Lie group automorphisms. Denote it by \(\alpha\). 

Define the \emph{homogeneous dimension} of \(G\) with respect to \(\alpha\) by \(Q(\alpha)=q_1+\ldots+q_n\). 
Given an integer family of dilations,
fix a standard basis \(\{X_1,\ldots,X_n\}\).
We often identify \(\RR^n\) with \(G\) under the map \((v_1,\ldots,v_n)\mapsto \exp(v_1X_1+\ldots+v_nX_n)\),
referred to as \emph{standard coordinates}.
In particular, \(0\in G\) denotes the unit element and we occasionally write $-v$ for the inverse of $v$.
Note that the coordinates for different choices of bases are related by a linear map,
allowing us to use this identification to define spaces of functions like polynomials or Schwartz functions $\Schwartz(G)$ on \(G\).

\begin{remark} \label{rem:dilationsExtend}
	Integer dilations on a Lie group are given by 
	$\alpha_\lambda(v_1, \dots, v_n) = (\lambda^{q_1} v_1, \dots, \lambda^{q_n} v_n)$ in standard coordinates on $G$.
	This formula still makes sense for $\lambda \in \RR$, and then defines a smooth map
	$\RR \times G \to G$ which restricts to an action of the (multiplicative, non-connected) group $\RR^* \coloneqq \RR \setminus \{0\}$. By abuse of notation, we still denote the extended map by $\alpha_\lambda$. 
\end{remark}

\begin{definition} \label{def:homogeneousLength}
	For a multi-index \(k\in\NN^n_0\) define its \emph{$\alpha$-length} 
	by \[[k]_\alpha=k_1 q_1+\ldots +k_nq_n.\]
	We use the usual multiindex notation, in particular $v^k = v_1^{k_1} \dots v_n^{k_n}$.
\end{definition}

\begin{proposition}[\cite{FS82}*{p.~23}]\label{res:triangular}
	With respect to a standard basis \(\{X_1,\ldots,X_n\}\) there are constants \(c_{j,a,b} \in \mathbb R\)
	such that for all \(v,w\in G\) and \(j=1,\ldots,n\)
	\begin{equation}\label{eq:triangular}
		(v \cdot w)_j = v_j+w_j+\sum_{\substack{a,b\in\NN^n_0\setminus\{0\}\\ [a]_\alpha+[b]_\alpha=q_j}} c_{j,a,b} v^a w^b.
	\end{equation}
\end{proposition}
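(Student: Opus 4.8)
The plan is to combine polynomiality of the group law in standard coordinates with the scaling behaviour forced by the dilations, so no hard analysis is needed.

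First I would recall that, since $\lie g$ is nilpotent, the Baker--Campbell--Hausdorff series is a finite sum of iterated brackets, so that in standard coordinates the multiplication map $(v,w)\mapsto v\cdot w$ is polynomial \cite{CG90}*{Thm.~1.2.1}. Hence, for each $j$, there is a finite expansion
\[
	(v\cdot w)_j = P_j(v,w) = \sum_{a,b\in\NN^n_0} c_{j,a,b}\, v^a w^b
\]
with uniquely determined coefficients $c_{j,a,b}\in\RR$. It remains to see which coefficients can be nonzero.

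Next I would exploit that $\alpha_\lambda\colon G\to G$ is a group automorphism for every $\lambda>0$, and that in standard coordinates $\alpha_\lambda(v)_i=\lambda^{q_i}v_i$ (see \cref{rem:dilationsExtend}). Applying $\alpha_\lambda$ to $v\cdot w$ gives $P_j(\alpha_\lambda v,\alpha_\lambda w)=\alpha_\lambda(v\cdot w)_j=\lambda^{q_j}P_j(v,w)$, while on the monomial level $\alpha_\lambda(v)^a\alpha_\lambda(w)^b=\lambda^{[a]_\alpha+[b]_\alpha}v^a w^b$. Comparing coefficients of each of the finitely many monomials $v^a w^b$ then yields $\lambda^{[a]_\alpha+[b]_\alpha}c_{j,a,b}=\lambda^{q_j}c_{j,a,b}$ for all $\lambda>0$, so $c_{j,a,b}=0$ unless $[a]_\alpha+[b]_\alpha=q_j$. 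Finally, the monomials with $a=0$ or $b=0$ are pinned down by the group axioms: from $v\cdot 0=v$ and $0\cdot w=w$ we get $P_j(v,0)=v_j$ and $P_j(0,w)=w_j$, i.e.\ $c_{j,a,0}=1$ if $a=e_j$ and $0$ otherwise, and symmetrically for $c_{j,0,b}$ (here $e_j\in\NN_0^n$ denotes the multi-index with a single $1$ in position $j$). Splitting off the $a=0$ and $b=0$ contributions from $P_j$ leaves exactly
\[
	(v\cdot w)_j = v_j+w_j+\sum_{\substack{a,b\in\NN^n_0\setminus\{0\}\\ [a]_\alpha+[b]_\alpha=q_j}} c_{j,a,b}\, v^a w^b,
\]
which is the assertion.

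I do not expect a genuine obstacle here. The only point requiring a little care is that the expansion of $P_j$ is \emph{finite}, so that the term-by-term comparison of coefficients under $\alpha_\lambda$ is legitimate — and this is precisely what nilpotency, via the terminating BCH series, provides. Everything else is bookkeeping with the $\alpha$-length.
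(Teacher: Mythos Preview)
The paper does not actually prove this proposition; it merely cites \cite{FS82}*{p.~23}. Your argument is correct and is essentially the standard proof found there: polynomiality from the terminating BCH formula, the homogeneity constraint $[a]_\alpha+[b]_\alpha=q_j$ from the fact that each $\alpha_\lambda$ is a group automorphism, and the identification of the $a=0$ and $b=0$ terms from the identity axioms.
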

This triangular group law implies
that the Lebesgue measure on \(\RR^n\) yields a bi-invariant Haar measure on \(G\) 
which we denote by \(\D v\). 

Dilation actions allow one to define a new notion of order for the differential operators and polynomials, which we will use to define a Shubin-type differential calculus.
\begin{definition} \label{def:homogeneousOrder}
	Let \(m\in\RR\). A function \(f\colon G\setminus\{0\}\to \CC\) is \emph{homogeneous of degree \(m\)} (with respect to \(\alpha\)) if \(\alpha_\lambda^*(f)=f\circ\alpha_\lambda=\lambda^m f\) for all~\(\lambda>0\). A (left invariant) differential operator \(D\) on \(G\) is \emph{homogeneous of degree \(m\)} (with respect to \(\alpha\)) if \(D(f\circ\alpha_\lambda)=\lambda^m(Df)\circ\alpha_\lambda\) for all \(\lambda>0\) and \(f\in \Smooth(G)\).
\end{definition}
The Haar measure is \(Q(\alpha)\)-homogeneous with respect to the dilations, in the sense that
\begin{equation} \label{eq:haar_measure_homogeneity}
	\int_G f(\alpha_\lambda(v))\D v = \lambda^{-Q(\alpha)}\int_G f(v)\D v.
\end{equation}
Recall that \(X\in\lie{g}\) defines a left-invariant differential operator on $G$ by
\begin{equation}\label{eq:left-invariant-differential-operator}
	Xf(v)=\frac{\D}{\D s} \Big|_{s=0} f(v\cdot\exp(sX)).
\end{equation}
For \(X\in\lie{g}_j\) the corresponding left-invariant differential operator is homogeneous of degree \(q_j\).
\begin{example}
	Let \(k\in\NN^n_0\). Using the standard coordinates on \(G\) introduced above, the polynomial \(v^k=v_1^{k_1}\cdots v_n^{k_n}\) is homogeneous of degree \([k]_\alpha\). Likewise, the left invariant differential operator \(X^k=X_1^{k_1}\cdots X_n^{k_n}\) is homogeneous of degree  \([k]_\alpha\).
\end{example}
In the following it is handy to introduce a homogeneous quasi-norm on \(G\) as defined in \cite{FS82}*{p.~8}.

\begin{definition}\label{def:quasi-norm}
	Fix a common multiple \(q\) of the weights \(q_1,\ldots,q_n\). We define a \emph{homogeneous quasi-norm} on \(G\) by
	\begin{equation}\label{eq:quasi-norm}\norm{v}_\alpha=\norm{(v_1,\ldots,v_n)}_\alpha
		=\biggl(\sum_{j=1}^n v_j^{\frac{2q}{q_j}}\biggr)^{\!\frac 1 {2q}}\quad\text{for }v\in G.
	\end{equation}
	Moreover, define \(\langle v\rangle_\alpha =(1+\norm{v}_\alpha^{2q})^\frac{1}{2q}\).
\end{definition}
Note that \(\norm{\alpha_\lambda(v)}_\alpha=\lambda\norm{v}_\alpha\) for all \(\lambda>0\) and that \(v\mapsto\norm{v}_\alpha^{2q}\) is a \(2q\)-homogeneous polynomial with respect to \(\alpha\).
The general definition of a homogeneous quasi-norm can be found in \cite{FR16}*{Definition~3.1.33}
and all homogeneous quasi-norms are equivalent \cite{FR16}*{Proposition~3.1.35}.
We may therefore freely change between different homogeneous quasi-norms up to a multiplicative constant, 
and assume that they are of the form \eqref{eq:quasi-norm} if necessary.

\begin{lemma}\label{res:norm_estimates} Let \(G\) be a graded Lie group equipped with the homogeneous quasi-norm \eqref{eq:quasi-norm}. Then the following estimates hold:
	\begin{enumerate}
		\item\label{item:x^alpha} \(\abs{v^k}\leq\norm{v}_\alpha^{[k]_\alpha}\) for all \(k\in\NN^n_{0}\) and $v \in G$,
		\item\label{item:derivatives-mult} there is a constant \(D>0\) such that for all \(k\in\NN^n_0\), \(v,w\in G\) and \(j=1,\ldots,n\) 
		\[\abs{\partial^k_w(v\cdot w)_j}\leq D\langle v\rangle_\alpha^{Q(\alpha)}\langle w\rangle_\alpha^{Q(\alpha)},\]
		\item\label{item:triangle-inequality} there is a constant \(\gamma\geq 1\) such that \(\norm{v \cdot w}_\alpha\leq \gamma(\norm{v}_\alpha+\norm{w}_\alpha)\) for all \(v,w\in G\).
	\end{enumerate}
\end{lemma}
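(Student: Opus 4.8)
The plan is to prove the three estimates in order, using the explicit form of the quasi-norm in \eqref{eq:quasi-norm}, the triangular group law of \cref{res:triangular}, and elementary inequalities for sums of non-negative reals.

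For \refitem{item:x^alpha}, I would argue componentwise. Since $\abs{v_j} \leq \bigl(\sum_{i} v_i^{2q/q_i}\bigr)^{q_j/(2q)} = \norm{v}_\alpha^{q_j}$ for each $j$ — this is just the observation that each summand $v_j^{2q/q_j}$ is bounded by the whole sum, raised to the power $q_j/(2q)$ — multiplying these over $j$ with multiplicities $k_j$ gives $\abs{v^k} = \prod_j \abs{v_j}^{k_j} \leq \prod_j \norm{v}_\alpha^{k_j q_j} = \norm{v}_\alpha^{[k]_\alpha}$, using \cref{def:homogeneousLength}. This step is entirely routine.

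For \refitem{item:derivatives-mult}, I would differentiate the formula \eqref{eq:triangular} for $(v \cdot w)_j$ with respect to $w$. Since $(v\cdot w)_j$ is a polynomial in $w$ of $\alpha$-degree exactly $q_j$ (each monomial $v^a w^b$ has $[b]_\alpha \leq q_j$, and the linear term $w_j$ has degree $q_j$), any derivative $\partial_w^k (v \cdot w)_j$ is again a polynomial whose $w$-monomials have $\alpha$-length at most $q_j \leq Q(\alpha)$ and whose $v$-monomials have $\alpha$-length at most $q_j \leq Q(\alpha)$; moreover only finitely many multi-indices $k$ give a nonzero derivative, and for those the coefficients are bounded by a universal constant. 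Applying \refitem{item:x^alpha} to each monomial bounds it by $\norm{v}_\alpha^{[a]_\alpha}\norm{w}_\alpha^{[b]_\alpha} \leq \langle v\rangle_\alpha^{Q(\alpha)}\langle w\rangle_\alpha^{Q(\alpha)}$ (here I use $\norm{v}_\alpha \leq \langle v \rangle_\alpha$ and monotonicity of $s \mapsto s^{Q(\alpha)}$ for $s\geq 0$, together with $[a]_\alpha, [b]_\alpha \leq Q(\alpha)$). Summing the finitely many monomials and absorbing the count and coefficients into a single constant $D$ gives the claim.

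For \refitem{item:triangle-inequality}, I would estimate $\norm{v\cdot w}_\alpha^{2q} = \sum_j (v\cdot w)_j^{2q/q_j}$ by bounding each $(v \cdot w)_j$. Using \eqref{eq:triangular} and \refitem{item:x^alpha}, $\abs{(v\cdot w)_j} \leq \abs{v_j} + \abs{w_j} + \sum c_{j,a,b}' \norm{v}_\alpha^{[a]_\alpha}\norm{w}_\alpha^{[b]_\alpha}$ with $[a]_\alpha + [b]_\alpha = q_j$; since $\abs{v_j} \leq \norm{v}_\alpha^{q_j}$ and likewise for $w$, and each mixed term $\norm{v}_\alpha^{[a]_\alpha}\norm{w}_\alpha^{[b]_\alpha}$ with $[a]_\alpha+[b]_\alpha = q_j$ is bounded by $(\norm{v}_\alpha + \norm{w}_\alpha)^{q_j}$, we get $\abs{(v\cdot w)_j} \leq C_j (\norm{v}_\alpha+\norm{w}_\alpha)^{q_j}$ for a constant $C_j$. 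Raising to the power $2q/q_j$, summing over $j$, taking the $2q$-th root, and setting $\gamma = \max(1, (\sum_j C_j^{2q/q_j})^{1/(2q)})$ yields the quasi-triangle inequality.

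The main obstacle — more bookkeeping than genuine difficulty — is \refitem{item:derivatives-mult}: one must be careful that differentiation in $w$ does not spoil the $\alpha$-length bounds and that the (a priori $w$-dependent) set of nonzero derivatives $\partial_w^k(v\cdot w)_j$ is finite with uniformly controlled coefficients, which is where the polynomiality of the group law (from the Baker–Campbell–Hausdorff formula via \cref{res:triangular}) is essential. Everything else reduces to the elementary fact that each coordinate $v_j$ is controlled by $\norm{v}_\alpha^{q_j}$ and to monotonicity of power functions on $\RR_{\geq 0}$.
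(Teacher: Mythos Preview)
Your proposal is correct and follows essentially the same approach as the paper: part \refitem{item:x^alpha} from the componentwise bound $\abs{v_j}\leq\norm{v}_\alpha^{q_j}$, and part \refitem{item:derivatives-mult} by differentiating the triangular group law \eqref{eq:triangular} and applying \refitem{item:x^alpha} to the resulting monomials. The only difference is in \refitem{item:triangle-inequality}, where the paper simply cites \cite{FS82}*{1.8} for the quasi-triangle inequality, while you supply a direct argument via $\abs{(v\cdot w)_j}\leq C_j(\norm{v}_\alpha+\norm{w}_\alpha)^{q_j}$; both are standard and your self-contained version is fine.
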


\begin{proof}By definition of the homogeneous quasi-norm  \(\norm{v}_\alpha^{q_j}\geq \abs{v_j}\) holds for \(j=1,\ldots,n\), this implies~\refitem{item:x^alpha}.  The estimate \refitem{item:derivatives-mult} can be deduced from the polynomial group law \eqref{eq:triangular} and \refitem{item:x^alpha}. The analogue of the triangle inequality in \refitem{item:triangle-inequality} is shown in \cite{FS82}*{1.8}.
\end{proof}

\begin{lemma}[\cite{FS82}*{Corollary~1.17}]\label{res:integrable}
	The function \(v\mapsto\langle v\rangle^k_\alpha\) is in \(L^1(G)\) if and only if \(k<-Q(\alpha)\).
\end{lemma}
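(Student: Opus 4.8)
The plan is to reduce this $n$-dimensional integral to a one-dimensional radial integral by writing the Haar measure in polar coordinates adapted to the dilations, and then to read off for which $k$ it converges.

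First I would note that, since $q$ is a common multiple of the weights, each exponent $2q/q_j$ is a positive even integer, so $v\mapsto\norm{v}_\alpha^{2q}$ is a genuine ($2q$-homogeneous) polynomial; in particular $\norm{\argument}_\alpha$ is continuous on $G$, positive away from $0$, and $\norm{\alpha_\rho(v)}_\alpha=\rho\norm{v}_\alpha$ for $\rho>0$, so the unit quasi-sphere $S=\set{\omega\in G}{\norm{\omega}_\alpha=1}$ is compact. Combined with the $Q(\alpha)$-homogeneity \eqref{eq:haar_measure_homogeneity} of the Haar measure, the standard polar-coordinate decomposition on homogeneous groups provides a finite positive measure $\D\sigma$ on $S$ with
\begin{equation*}
	\int_G f(v)\,\D v=\int_0^\infty\!\!\int_S f(\alpha_\rho(\omega))\,\D\sigma(\omega)\,\rho^{Q(\alpha)-1}\,\D\rho
\end{equation*}
for every nonnegative measurable $f$ on $G$. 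Applying this to $f(v)=\langle v\rangle_\alpha^k$ and using that $\langle\alpha_\rho(\omega)\rangle_\alpha^k=(1+\rho^{2q})^{k/2q}$ whenever $\norm{\omega}_\alpha=1$, the right-hand side collapses to $\sigma(S)\int_0^\infty(1+\rho^{2q})^{k/2q}\,\rho^{Q(\alpha)-1}\,\D\rho$ with $\sigma(S)\in(0,\infty)$.

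It then remains to decide convergence of this last integral, which I would split at $\rho=1$. On $(0,1]$ the factor $(1+\rho^{2q})^{k/2q}$ is bounded between two positive constants, so integrability there is equivalent to that of $\rho^{Q(\alpha)-1}$, which holds because $Q(\alpha)=q_1+\dots+q_n\geq 1>0$. On $[1,\infty)$ one has $(1+\rho^{2q})^{k/2q}\asymp\rho^k$, so the integrand is comparable to $\rho^{k+Q(\alpha)-1}$, and $\int_1^\infty\rho^{k+Q(\alpha)-1}\,\D\rho$ converges precisely when $k+Q(\alpha)-1<-1$, i.e.\ $k<-Q(\alpha)$. Combining the two ranges gives the lemma.

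The only step that is not entirely elementary is the polar decomposition of the Haar measure, which is where I expect most of the work to lie if a self-contained argument is desired. It can be avoided altogether by a dyadic estimate: setting $A_j=\set{v\in G}{2^j\leq\norm{v}_\alpha<2^{j+1}}=\alpha_{2^j}(A_0)$ for $j\in\ZZ$, the scaling \eqref{eq:haar_measure_homogeneity} shows that the Haar measure of $A_j$ is $2^{jQ(\alpha)}$ times that of $A_0$, and the latter is finite since \eqref{eq:quasi-norm} forces $A_0$ into a bounded box in $\RR^n$. On $A_j$ one has $\langle v\rangle_\alpha^k\asymp 2^{jk}$ uniformly for $j\geq 0$, while $\langle v\rangle_\alpha^k$ is bounded on the unit ball $\bigcup_{j<0}A_j\cup\simpleset{0}$, which has finite measure. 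Hence $\int_G\langle v\rangle_\alpha^k\,\D v<\infty$ if and only if $\sum_{j\geq 0}2^{j(k+Q(\alpha))}<\infty$, i.e.\ $k<-Q(\alpha)$.
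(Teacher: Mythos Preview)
Your proof is correct. Note, however, that the paper does not actually prove this lemma: it simply quotes it as \cite{FS82}*{Corollary~1.17}, so there is no ``paper's own proof'' to compare against here. For what it is worth, your first argument via polar coordinates is exactly the approach taken in \cite{FS82}: the polar decomposition of the Haar measure you invoke is their Proposition~1.15, and the integrability statement is then the immediate Corollary~1.17. Your alternative dyadic argument is a nice self-contained variant that bypasses the polar-coordinate machinery entirely.
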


\subsection{Left-invariant differential operators and the Rockland condition}\label{sec:diff-op-rockland}
One can identify the (complex) universal enveloping algebra~\(\lie{U}(\lie{g})\) of \(\lie{g}\) with the left-invariant differential operators (with complex coefficients) on \(G\), see for example \cite{FR16}*{Section~1.3}.
Fix a standard basis \(X_1,\ldots,X_n\) of \(\lie{g}\) as before.
By the Poincar\'e--Birkhoff--Witt Theorem every left-invariant differential operator can be uniquely written as 
\(\sum_{k \in \mathbb N_0^n} c_k X^k=\sum_{k \in \mathbb N_0^n} c_k X_1^{k_1}\cdots X^{k_n}_n\) 
with \(c_k\in\CC\) non-zero only for finitely many $k$.

The dilations \(A\) induce a grading on \(\lie U(\lie g)\) so that \(\lie{U}^m(\lie g)\) for \(m\in\NN_0\) corresponds precisely to all (with respect to \(\alpha\)) \(m\)-homogeneous left-invariant differential operators. Given a standard basis one has
\begin{align*}
	\lie{U}^m(\lie g)=\set[\Big]{{\sum}_{[k]_\alpha= m} c_k X^k}{c_k\in\CC}.
\end{align*}
A left-invariant homogeneous differential operator \(D\) on the Abelian group \(G=\RR^n\) is a constant coefficients operator. It is hypoelliptic if and only if it is elliptic, that is its Fourier transform, i.e.\ its symbol, \(d(\xi)=\widehat D(\xi)\) is invertible for
 all \(\xi\neq 0\). Rockland \cite{Roc78} generalized this condition to the setting of left-invariant differential operators on graded groups using the representation theory of \(G\).
\begin{definition} \cite{FR16}*{Section 1.7}
	Let \(\pi\) be an irreducible unitary representation of \(G\) on a Hilbert space \(\mathcal H_\pi\). Its \emph{infinitesimal representation} \(\D \pi\) is a Lie algebra representation of \(\lie{g}\) on the space of smooth vectors \(\mathcal H^\infty_\pi\) given by
	\begin{equation}
		\D\pi(X) v= \lim_{s\to 0}\frac{\pi(\exp(sX))v-v}{s} \qquad\text{for }v\in\mathcal H^\infty_\pi.
	\end{equation}	
\end{definition}
By the universal property of the universal enveloping algebra this induces also a representation of \(\lie{U}(\lie{g})\) on \(\mathcal H^\infty_\pi\), also denoted by \(\D\pi\).
\begin{definition}\label{def:rockland-cond-diff}
	A left-invariant homogeneous differential operator \(D\) on \(G\) satisfies the \emph{Rockland condition} if  \(\D \pi(D)\) is injective on \(\mathcal H^\infty_\pi\) for all \(\pi\in\widehat G\setminus\simpleset{\pi_{\mathrm {triv}}}\). Here, \(\widehat G\) denotes the equivalence classes of irreducible unitary representations of \(G\) and $\pi_{\mathrm{triv}}$ denotes the trivial representation on $\CC$.
\end{definition}
A homogeneous left-invariant differential operator on \(G\) is hypoelliptic 
if and only if it satisfies the Rockland condition \cites{Bea77,HN79}.

\begin{example}\label{ex:standard-rockland}
	Let $G$ be any graded Lie group with dilations $\alpha$, weights \(q_1,\ldots,q_n\),
	and standard basis $X_1, \dots, X_n$.
	Let \(q\) be a common multiple of all weights. Then the $2q$-homogeneous operator
	\begin{equation*}
		R = \sum_{j=1}^n(-1)^{\frac{q}{q_j}}X_j^{\frac{2q}{q_j}}
	\end{equation*}
	satisfies the Rockland condition \cite{HN78}*{Lemma~6.2.1}. 	
\end{example}
\begin{example} \label{ex:stratified-rockland}
	The previous example can be adapted under the assumption that \(X_{j_1},\ldots, X_{j_k}\) for a subset \(\{j_1,\ldots,j_k\}\subset \{1,\ldots,n\}\) generate \(\lie{g}\) as a Lie algebra. In this case, for every common multiple \(q\) of \(q_{j_1},\ldots,q_{j_k}\) the following operator satisfies the Rockland condition \cite{FR16}*{Corollary~4.1.10}
	\begin{equation*}
		R = \sum_{i=1}^k(-1)^{\frac{q}{q_{j_i}}}X_{j_i}^{\frac{2q}{q_{j_i}}}.
	\end{equation*}
	In particular, for a stratified group, where \(\lie{g}_1=\bigoplus_{j=1}^k\RR X_j\) generates \(\lie{g}\), the \emph{Sublaplacian} \(\smash{-\sum_{j=1}^k X_j^2}\) satisfies the Rockland condition.
\end{example}
	More examples can be found in \cite{FR16}*{Section 4.1.2}.
\begin{remark}
	Later on, also Rockland conditions for non-differential operators were introduced, see \cites{Glo91,CGGP92}. The definition can be found in \cref{sec:rockland}.
\end{remark}

\subsection{Essentially homogeneous distributions}

A dilation action \(\alpha\) on a graded group \(G\)
also induces \(\Rp\)-actions on the spaces of Schwartz functions and tempered distributions by
\begin{align}\label{eq:dilations_on_schwartz}
	\alpha_\lambda^* f &= f \circ \alpha_\lambda && \text{for \(f\in\Schwartz(G)\) and \(\lambda>0\),} \\
	\langle{\alpha_\lambda}_*u,f\rangle &=\langle u,\alpha_\lambda^* f\rangle &&\text{for \(u\in\Schwartz'(G)\),  \(f\in\Schwartz(G)\) and \(\lambda>0\).}
	\label{eq:dilations_on_tempered}
\end{align}
Note that this definition is set up so that the Fourier transform of a symbol of order $m$ still has order $m$ (as a tempered distribution). In particular, there will be no shift between the order of pseudodifferential operators and their defining distributions later on.

\begin{remark}\label{rem:schwartz-embedding}
	In this article, whenever $\alpha$ is a map between spaces, then $\alpha^*$ shall denote the induced pullback map between function spaces given by \(\alpha^*f=f\circ\alpha\). Moreover, \(\alpha_*\) always means the map between the corresponding distribution spaces induced by duality. 
	
	A point where this might lead to confusions is when one embeds function spaces into distribution spaces.
	Let \(u\colon\Schwartz(G)\hookrightarrow \Schwartz'(G)\) be the map \(f\mapsto f \D v\), where \(\D v\) denotes the Lebesgue measure. Then, using our convention, \({\alpha_\lambda}_*(u(f))=\lambda^{-Q(\alpha)}u(\alpha_{\lambda^{-1}}^*f)\) holds for all \(f\in\Schwartz(G)\). In the following we often drop \(u\) from the notation, so the upper or lower star in \(\alpha^*_\lambda(f)\) or \({\alpha_\lambda}_*(f)\) indicates whether \(f\) should be viewed as a function or a distribution. 
\end{remark}

The following space of essentially homogeneous distributions
generalizes $m$-homogeneous distributions for which $(\alpha_\lambda)_* u = \lambda^m u$ and plays an important role for the calculi on graded groups or filtered manifolds.

\begin{definition}[\cites{Tay84, BG88, vEY19}]\label{def:ess_graded_group}For \(m\in\RR\) a distribution \(u\in\Algebra E'(G)+\Schwartz(G)\) is called \emph{essentially \(m\)-homogeneous} if \({\alpha_\lambda}_* u-\lambda^m u\in\Schwartz(G)\) for all \(\lambda >0\). The space of all essentially \(m\)-homogeneous distributions is denoted by \(\ess^m(G)\). Moreover define the quotient space \(\Sigma^m(G)= \ess^m(G)/\Schwartz(G)\).
\end{definition}
Note that for \(u\in\ess^m(G)\), the singular support must be \(\Rp\)-invariant by the essential homogeneity. As \(u\in\mathcal E'(G)+\Schwartz(G)\) the singular support must also be compact and hence \(\singsupp(u)\subset\{0\}\).

For \(u\in\ess^k(G)\) and \(v\in\ess^\ell(G)\) their convolution $u*v$ as distributions belongs to \(\ess^{k+\ell}(G)\) for all \(k,\ell\in\RR\) \cite{Tay84}*{Proposition~2.3}, which is also a special case of the results in \cref{sec:calculus} for \(\X=\simpleset{0}\). 

Moreover, for \(u\in\ess^m(G)\) let \(u^*\) be defined by \(\langle u, f\rangle =\conj{\langle u, f^*\rangle}\) with \(f^*(v)=\conj{f(v^{-1})}\) for \(v\in G\). Then \(u^*\in\ess^m(G)\) holds. Extending linearly, \(\ess(G) = \bigoplus_{m\in\RR}\ess^m(G)\) is a \(^*\)-algebra and as \(\Schwartz(G)\) is a two-sided \(\Algebra E'(G)\)-module and invariant under the involution, also \(\Sigma(G) =\bigoplus_{m\in\RR}\Sigma^m(G)\) is a \(^*\)-algebra.

\begin{example}\label{ex:diffop-as-hom-distr}
		Suppose \(P\) is an \(m\)-homogeneous left-invariant differential operator on \(G\) or equivalently an element of \(\lie{U}^m(\lie{g})\). Then it defines an \(m\)-homogeneous distribution \(u_P\) by \(\langle u_P, f\rangle =\langle \delta_0, Pf\rangle =Pf(0)\) for \(f\in \Schwartz(G)\), where \(\delta_0\) denotes the Dirac distribution at~\(0\). Consequently,  \(u_P\) can be viewed as an element of \(\ess^m(G)\). 
	\end{example}

\section{Action groupoids and convolution algebras}\label{sec:action-groupoids-convolution-algebra}
In this section, we study convolution algebras of Schwartz functions and certain fibred distributions for a polynomial action of a graded Lie group \(G\) on a vector space~\(\X = \RR^d\).
Afterwards in \cref{sec:shubin-tangent-groupoid}, the notion of a Shubin tangent groupoid is introduced which is an action groupoid of \(G\) on a space \(\X\times\RR\). The corresponding fibred distributions are together with a zoom action of \(\Rp\) crucial to define a Shubin-type pseudodifferential calculus in \cref{sec:calculus}. 

We always consider right actions on manifolds and the induced left actions on function spaces
by pull-backs in order to be compatible with \eqref{eq:dilations_on_schwartz}.
In this article, right actions of a Lie group $G$ on a manifold $N$ will usually be denoted by $\theta \colon N \times G \to N$. We also write $x \cdot v = \theta_v(x) = \theta(x,v)$.

\begin{definition}[Action groupoid]\label{def:action_groupoid}
	Let $N$ be a smooth manifold with a smooth right action by a Lie group \(G\). 
	The \emph{action groupoid} \(N\rtimes G\) is a Lie groupoid with arrow space \(N\times G\) and unit space \(N\). The \emph{unit map} \(u \colon N \to N \times G\), the \emph{range} and \emph{source map} \(r,s \colon N \times G \to N\),
	the \emph{inverse} $I \colon N \times G \to N \times G$ 
	and the \emph{multiplication} $M \colon (N \rtimes G)^{(2)} \to N \times G$ are given by
	\begin{gather*}
		u(x)=(x,e), \qquad r(x,v)=x, \qquad s(x,v)=x\cdot v,\\
		I(x,v) = (x,v)^{-1}=(x\cdot v,v^{-1}) ,\qquad M((x,v), (x \cdot v, w)) = (x,v)\cdot(x\cdot v,w)=(x,vw)
	\end{gather*}
	where $x \in N$ and $v,w \in G$.
	We usually identify $(N \rtimes G)^{(2)} \cong \set{((x,v),(y,w))}{s(x,v) = r(y,w)} = \set{((x,v), (x \cdot v,w))}{x \in N, v,w \in G}$ with $(x,v,w) \in N \times G \times G$.
\end{definition}
We denote the inversion and multiplication of the group $G$ by $i \colon G \to G$ and $m \colon G \times G \to G$.

\subsection{Polynomial actions}

The actions that we shall consider in the rest of this article are all polynomial, in the following sense:

\begin{definition}[Polynomial actions] \label{def:polynomial:action}
	Consider \(\X=\RR^d\). We call a smooth right action \(\theta \colon \X \times G \to \X\) of a graded Lie group \(G\) on \(\X\) \emph{polynomial} if each component \(\theta(x,v)_j\) is a polynomial in \(x\) and \(v\) for \(j=1,\ldots,d\). 
	Here, \(v\) denotes standard coordinates on \(G\) (see \cref{sec:dilations}) 
	and $x$ denotes linear coordinates on $\X$.
\end{definition}

\begin{definition} \label{def:polynomialDiffeo}
	A \emph{polynomial diffeomorphism} $f \colon \RR^d \to \RR^d$ is a bijection 
	such that $f$ and $f^{-1}$ are polynomial.
\end{definition}
Note that if $\theta \colon \X \times G \to \X$ is a polynomial action 
then each $\theta_v \colon \X \to \X$ is a polynomial diffeomorphism with inverse $\theta_{-v}$.
The following lemma can be applied to this ``polynomial family'' of diffeomorphisms:

\begin{lemma} \label{res:determinantIsConstant}
	Let $Y = \RR^k$ and
	$f \colon \RR^d \times Y \to \RR^d$, $g \colon \RR^d \times Y \to \RR^d$ be two polynomials.
	Define $f_y \colon \RR^d \to \RR^d$, $f_y(x) = f(x,y)$ and similarly for $g$ and assume that $f_y$ and $g_y$ are inverse to each other for all $y \in \RR^d$.
	Then the map $(x,y) \mapsto \det(D_x f_y)$ is constant and non-zero.
\end{lemma}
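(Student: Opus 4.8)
The statement is that for a polynomial family of diffeomorphisms $(f_y)_{y \in Y}$ of $\RR^d$ with polynomial inverses $(g_y)$, the Jacobian determinant $\det(D_x f_y)$ does not depend on $(x,y)$ and is nonzero. The nonvanishing is immediate: since $g_y \circ f_y = \id$, the chain rule gives $D_{f_y(x)} g_y \cdot D_x f_y = I$, so $D_x f_y$ is invertible for every $(x,y)$, hence $\det(D_x f_y) \neq 0$ everywhere. The substance is constancy, and the key observation is that $(x,y) \mapsto \det(D_x f_y)$ is a polynomial in $(x,y)$, while the same reasoning applied to $g$ shows $(x',y) \mapsto \det(D_{x'} g_y)$ is also a polynomial, and these two polynomials satisfy $\det(D_x f_y) \cdot \det(D_{f_y(x)} g_y) = 1$ for all $(x,y)$.

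**Main step.** From $p(x,y) \cdot q(f_y(x), y) = 1$ with $p, q$ polynomials and $f$ polynomial, I want to conclude that $p$ is a nonzero constant. The clean argument: fix $y$. Then $x \mapsto p(x,y)$ is a polynomial on $\RR^d$ which is nowhere zero (being invertible in $\RR$). A polynomial in $d$ variables over $\RR$ that never vanishes must be constant — indeed, restrict to any affine line $t \mapsto p(x_0 + t e, y)$; this is a one-variable real polynomial with no real root, but more to the point, since $p(\argument, y) \cdot q(f_y(\argument), y) = 1$ identically, $p(\argument,y)$ divides $1$ in the polynomial ring $\RR[x_1,\dots,x_d]$ (because its product with another polynomial is the constant $1$), hence is a unit, i.e.\ a nonzero constant $c(y)$. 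So $p(x,y) = c(y)$ is independent of $x$. It remains to see $c(y)$ is independent of $y$: but now $c(y) = \det(D_x f_y)$ for any fixed $x$, say $x = 0$, and $y \mapsto \det(D_x f_y)|_{x=0}$ is a polynomial in $y$; the same divisibility argument in $\RR[y_1,\dots,y_k]$, using $c(y) \cdot q(f_y(0),y) = 1$, shows $c(y)$ is a unit in $\RR[y_1,\dots,y_k]$, hence a nonzero constant. This gives the claim.

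**Alternative.** One could equally argue analytically: $U = \set{(x,y)}{\det(D_x f_y) \neq 0}$ is all of $\RR^d \times Y$ by the nonvanishing already shown, and on a connected open set a continuous (here polynomial) function into $\RR \setminus \simpleset 0$ need not be constant in general — so connectedness alone is not enough, which is exactly why one needs the multiplicative identity $p \cdot (q \circ \Theta) = 1$. The algebraic route via units in the polynomial ring is the most economical and avoids any case distinction on the sign or degree.

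**Expected obstacle.** There is no serious obstacle; the only point requiring a moment of care is the logical step ``a real polynomial whose product with another polynomial equals $1$ is a nonzero constant'' — this is just the fact that the units of $\RR[x_1,\dots,x_d]$ are the nonzero scalars, which follows from that ring being an integral domain together with a degree count. One should state this cleanly (perhaps inline) so that the reader does not conflate it with the weaker and false claim that a nowhere-vanishing real polynomial is constant without the extra hypothesis that $1/p$ is also polynomial (e.g.\ $x^2+1$ is nowhere zero but not constant). Applying the argument twice — once in the $x$-variables with $y$ frozen, once in the $y$-variables after collapsing the $x$-dependence — then finishes the proof.
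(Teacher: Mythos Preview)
Your proof is correct and follows essentially the same approach as the paper: both use the chain rule to obtain $\det(D_x f_y)\cdot \det(D_{f_y(x)} g_y)=1$ and then invoke that the units of a polynomial ring over $\RR$ are the nonzero constants. The only difference is cosmetic: the paper observes directly that both factors are polynomials in all variables $(x,y)$ simultaneously (since substituting the polynomial $f_y(x)$ into the polynomial $\det(D_{x'}g_y)$ yields a polynomial in $(x,y)$) and concludes in one step, whereas you split this into two stages by first freezing $y$ and then varying it --- an unnecessary detour, but harmless.
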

Here $D_x f_y$ is the Jacobi matrix of $f_y \colon \RR^d \to \RR^d$ at $x \in \RR^d$.

\begin{proof}
	Differentiating both sides of $x = g_y \circ f_y(x)$ we obtain
	$\id = D_{f_y(x)} g_y \cdot D_x f_y$.
	Taking determinants,
	\begin{equation*} 
		1 = \det(D_{f_y(x)} g_y) \det( D_x f_y) \,.
	\end{equation*} 
	All entries of the Jacobi matrix $D_x f_y$ are polynomials in $x$ and $y$.
	It follows that $\det(D_{x} f_y)$ is a polynomial.
	Similarly, $\det(D_{x} g_{y})$ is polynomial,
	and so is the expression $\det(D_{f_y(x)} g_{y})$ obtained by substituting $x$ with the polynomial $f_y(x)$.
	Therefore the constant polynomial $1$ is the product of two polynomials,
	which is only possible if these two polynomials are both constant and non-zero.
\end{proof}
Letting $k = 0$, i.e.\ $Y = \simpleset 0$, we obtain in particular that 
for any polynomial diffeomorphism $f \colon \X \to \X$, 
the map $x \mapsto \det(D_x f)$ is constant and non-zero. 

Recall that for any smooth (right) action $\theta \colon N \times G \to N$ of a Lie group $G$ on a manifold $N$ and $X \in \lie g$, the Lie algebra of $G$, we may consider the \emph{fundamental vector field} $\widehat X (x) \coloneqq \frac{\D}{\D s} \big|_{s=0} \theta(x,\exp(sX))$.

\begin{lemma} \label{res:polynomial_action:properties}
	Let $\theta$ be a polynomial right action of a graded Lie group $G$ on $\X = \RR^d$.
	Then:
	\begin{enumerate}
		\item For any $X \in \lie g$, the fundamental vector field $\widehat X$ is polynomial,
		i.e.\ in linear coordinates on $\X$ all coefficients of $\widehat X$ are polynomial. \label{item:polynomial_action:i}
		\item The Jacobian determinant $\det(D_x \theta_v) = 1$ is constant. \label{item:noJ}
	\end{enumerate}
\end{lemma}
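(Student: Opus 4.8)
The plan is to prove both parts by elementary differential calculus, using the group action axioms together with the polynomiality hypothesis and the previously established Lemma \ref{res:determinantIsConstant}. For part \refitem{item:polynomial_action:i}, I would start from the defining formula $\widehat X(x) = \frac{\D}{\D s}\big|_{s=0} \theta(x, \exp(sX))$. Writing $v(s) = \exp(sX)$ in standard coordinates on $G$, each component $v(s)_k$ is a smooth function of $s$ (real-analytic, in fact, since exponential coordinates make this polynomial in $s$ for nilpotent groups, but smoothness suffices). Since each component $\theta(x,v)_j$ is a polynomial in the variables $(x,v)$, the composite $s \mapsto \theta(x, v(s))_j$ is smooth, and by the chain rule its derivative at $s = 0$ equals $\sum_k \frac{\partial \theta_j}{\partial v_k}(x, 0) \cdot \frac{\D v(s)_k}{\D s}\big|_{s=0}$. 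The first factor $\frac{\partial \theta_j}{\partial v_k}(x,0)$ is a polynomial in $x$ (being a partial derivative of a polynomial, evaluated at $v = 0$), and the second factor is a constant (the $k$-th coordinate of $X$ in the standard basis). Hence each component of $\widehat X$ is a polynomial in $x$, which is exactly the claim.

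For part \refitem{item:noJ}, I would apply Lemma \ref{res:determinantIsConstant} with $Y = G \cong \RR^n$ in standard coordinates, $f(x,v) = \theta(x,v) = \theta_v(x)$ and $g(x,v) = \theta(x, -v) = \theta_{-v}(x)$. Both $f$ and $g$ are polynomial by the definition of a polynomial action (and since group inversion $v \mapsto -v$ is polynomial in standard coordinates, or one can simply note that $\theta_{-v}$ is again a polynomial action composed with a polynomial map of $G$). The group axioms give $\theta_{-v} \circ \theta_v = \theta_{v \cdot (-v)} = \theta_0 = \id_\X$ and likewise $\theta_v \circ \theta_{-v} = \id_\X$, so $f_v$ and $g_v$ are mutually inverse for every $v \in G$. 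Lemma \ref{res:determinantIsConstant} then yields that $(x,v) \mapsto \det(D_x \theta_v)$ is a non-zero constant $c \in \RR^*$. To pin down $c = 1$, evaluate at the identity $v = 0$: since $\theta_0 = \id_\X$, we get $D_x \theta_0 = \id$, so $\det(D_x \theta_0) = 1$, and hence $c = 1$ everywhere.

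I do not anticipate a genuine obstacle here; both parts are routine consequences of polynomiality plus the action axioms, and the only mild subtlety is the bookkeeping in part \refitem{item:polynomial_action:i} to make sure that differentiation of a polynomial in $(x,v)$ along a smooth curve in $v$ really does produce something polynomial in $x$ — which it does, because evaluation of a polynomial at a fixed value of one block of variables is again a polynomial in the remaining variables. One should take a moment to confirm that $\theta_{-v}$ (equivalently, $v \mapsto v^{-1} = -v$ in standard coordinates) is polynomial, but this is already implicit in the surrounding discussion (Proposition \ref{res:triangular} gives a triangular, hence polynomial, group law, and in standard coordinates on a graded group the inverse of $v$ is simply $-v$), so Lemma \ref{res:determinantIsConstant} applies cleanly.
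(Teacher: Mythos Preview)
Your proof is correct and follows essentially the same approach as the paper: part \refitem{item:polynomial_action:i} is immediate from the polynomiality of $\theta(x,\exp(sX))$ in $(x,s)$, and part \refitem{item:noJ} is Lemma \ref{res:determinantIsConstant} plus evaluation at $v=0$. Your write-up simply spells out the chain-rule and inverse-map details that the paper leaves implicit.
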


\begin{proof}
	The first part is immediate since $\theta(x,\exp(sX))$ is polynomial in $x$ and $s$.
	The second part follows from the previous lemma and the observation that $\det(D_0 \theta_0) = \det(\id) = 1$.
\end{proof}

\begin{lemma}\label{res:polynomial_action}
	Let \(\theta\) be a polynomial right action of a graded Lie group \(G\) on \(\X=\RR^d\). Let \(\norm{\,\cdot\,}\) be a norm on \(\X\) and set \(\langle x\rangle =(1+\norm{x}^2)^\frac12\). Then there are \(B\in \NN\) and \(D_1, D_2 >0\) such that 
	\begin{enumerate}
		\item for all \(k\in\NN^{d+n}_0\), \(j=1,\ldots,d\) and \((x,v)\in \X\times G\) 
		\begin{align}\label{eq:polynomial1}
			\abs{\partial^k\theta(x,v)_j}\leq D_1 \langle x \rangle^B \langle v \rangle_\alpha^B ,\end{align}
		\item for all \((x,v)\in \X\times G\)
		\begin{align}\label{eq:polynomial2}
			\frac{\langle x \rangle}{\langle \theta_v(x) \rangle^B}\leq D_2 \langle v \rangle_\alpha^B.
		\end{align}
	\end{enumerate}
\end{lemma}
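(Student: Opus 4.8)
The plan is to prove the first estimate directly from the polynomiality of $\theta$, and then to bootstrap the second estimate from it by using that $\theta$ is an action.

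\textit{First estimate.} Each component $\theta(x,v)_j$ is a polynomial in the linear coordinates on $\X$ and the standard coordinates on $G$, so every partial derivative $\partial^k\theta(x,v)_j$ is again such a polynomial, and it vanishes identically once $\abs{k}$ exceeds the total degree of $\theta(\cdot,\cdot)_j$. Hence only finitely many of the polynomials $\partial^k\theta(\cdot,\cdot)_j$ are non-zero. For a monomial $c\,x^a v^b$ occurring in one of them I would estimate $\abs{x^a}$ by a constant multiple of $\langle x\rangle^{\abs{a}}$ (using equivalence of the chosen norm on $\X$ with the maximum norm) and $\abs{v^b}\le\norm{v}_\alpha^{[b]_\alpha}\le\langle v\rangle_\alpha^{[b]_\alpha}$ by \cref{res:norm_estimates}\,\refitem{item:x^alpha}. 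Summing over the finitely many monomials in the finitely many non-vanishing polynomials, and exploiting $\langle x\rangle\ge 1$ and $\langle v\rangle_\alpha\ge 1$ to enlarge every exponent that occurs to a single $B\in\NN$, then gives \eqref{eq:polynomial1} for a suitable $D_1$.

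\textit{Second estimate.} I would keep the same $B$. First record two elementary facts: in standard (exponential) coordinates the inverse of $v\in G$ is $-v$; and since $q$ is a common multiple of the weights, the exponents $2q/q_j$ in \eqref{eq:quasi-norm} are even, so $\norm{-v}_\alpha=\norm{v}_\alpha$ and hence $\langle v^{-1}\rangle_\alpha=\langle v\rangle_\alpha$. Because $\theta$ is a right action we have $x=\theta(\theta_v(x),v^{-1})$, so applying \eqref{eq:polynomial1} with $k=0$ at the point $(\theta_v(x),v^{-1})$ yields $\abs{x_j}\le D_1\langle\theta_v(x)\rangle^B\langle v\rangle_\alpha^B$ for $j=1,\dots,d$. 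Passing back from the coordinates $x_j$ to $\norm{x}$ (again by norm equivalence), squaring, and absorbing the additive $1$ in $\langle x\rangle^2=1+\norm{x}^2$ by means of $\langle\theta_v(x)\rangle^{2B}\langle v\rangle_\alpha^{2B}\ge 1$, one obtains $\langle x\rangle\le D_2\langle\theta_v(x)\rangle^B\langle v\rangle_\alpha^B$ for a suitable $D_2$, which is \eqref{eq:polynomial2}.

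I do not expect a genuine obstacle here. The only point requiring some care is the bookkeeping that makes the \emph{same} exponent $B$ work in both estimates --- which is possible precisely because $\langle v^{-1}\rangle_\alpha=\langle v\rangle_\alpha$ --- together with the repeated, harmless passages between the given norm on $\X$, the maximum norm, and the homogeneous quasi-norm on $G$ supplied by \cref{res:norm_estimates}.
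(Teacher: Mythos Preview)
Your proof is correct and follows essentially the same approach as the paper: the paper also observes that only finitely many derivatives are non-zero so that \eqref{eq:polynomial1} is immediate, and then derives \eqref{eq:polynomial2} from \eqref{eq:polynomial1} by setting $k=0$ and replacing $(x,v)$ with $(\theta_v(x),v^{-1})$. You have simply spelled out the details (monomial estimates, $\langle v^{-1}\rangle_\alpha=\langle v\rangle_\alpha$, norm equivalence) that the paper leaves to the reader.
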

\begin{proof}
	As the action is polynomial, only finitely many derivatives have to be considered in \eqref{eq:polynomial1}
	and is it easy to find constants $B$, $D_1$ satisfying \eqref{eq:polynomial1}.
	Then \eqref{eq:polynomial2} follows from \eqref{eq:polynomial1} by setting $k = 0$ 
	and replacing $x$ and $v$ with \(\theta_v(x)\) and \(v^{-1}\). 
\end{proof}
The following Fa\`{a} di Bruno's formula will be useful in several estimates in the next sections.

\begin{lemma} \label{res:chain_rule}
	Let $F \colon \RR^k \to \RR$ and $G \colon \RR^\ell \to \RR^k$ be smooth. For a multiindex
	$L \in \NN_0^\ell$ we can write
	\begin{equation}
		\label{eq:chain_rule_constants}
		\partial_x^L(F \circ G)(x) = \sum_{K \in \NN_0^k, \abs K \leq \abs L} (\partial_x^K F)(G(x)) \cdot \Chain{G}{L}{K}{x} ,
	\end{equation}
	where each $\ChainNoArg G L K$ is a smooth function on $\RR^\ell$.
	Moreover, there exists a constant $C_{L}$ such that the following holds:
	Whenever $\abs{\partial_x^A G_j(x)} \leq H(x)$ holds for some function $H \colon \RR^\ell \to [1,\infty)$, 
	for all $0 \neq A \leq L$ and all $j \in \simpleset{1,\dots, k}$,
	then $\abs{\Chain G L K x} \leq C_{L} H(x)^{\abs L}$ for all $\abs K \leq \abs L$.
\end{lemma}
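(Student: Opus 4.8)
The plan is to prove the Faà di Bruno formula \eqref{eq:chain_rule_constants} together with the polynomial growth bound on the coefficient functions $\ChainNoArg G L K$ by a single induction on $\abs L$. I would first establish the formula itself, then track the estimate through the inductive step.

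\textbf{Step 1: The formula.} For $\abs L = 0$ the statement is trivial, with $\ChainNoArg G 0 0 = 1$ (and $\ChainNoArg G 0 K = 0$ for $K \neq 0$). For the inductive step, suppose \eqref{eq:chain_rule_constants} holds for some multiindex $L$, and let $L' = L + e_i$ for some coordinate direction $i$. Applying $\partial_{x_i}$ to both sides and using the ordinary chain rule on $(\partial_x^K F)(G(x))$, namely $\partial_{x_i}\big[(\partial_x^K F)(G(x))\big] = \sum_{j=1}^k (\partial_x^{K + e_j} F)(G(x)) \cdot \partial_{x_i} G_j(x)$, together with the product rule, one obtains a new sum of the same shape. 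Collecting terms, this shows that the coefficients satisfy the recursion
\begin{equation*}
	\Chain{G}{L'}{K}{x} = \partial_{x_i}\Chain{G}{L}{K}{x} + \sum_{j=1}^k \partial_{x_i}G_j(x) \cdot \Chain{G}{L}{K - e_j}{x},
\end{equation*}
where the second summand is understood to vanish if $K - e_j$ is not a valid multiindex. Since $\ChainNoArg G L K = 0$ whenever $\abs K > \abs L$, the right-hand side only involves $K$ with $\abs K \leq \abs{L'}$, and each $\ChainNoArg{G}{L'}{K}{}$ is manifestly smooth. This establishes \eqref{eq:chain_rule_constants} for all $L$.

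\textbf{Step 2: The estimate.} I would prove by the same induction that there is a constant $C_L$, depending only on $L$ (and $k$, $\ell$), such that the stated implication holds. Assume $\abs{\partial_x^A G_j(x)} \leq H(x)$ for all $0 \neq A \leq L'$ and all $j$, where $H \colon \RR^\ell \to [1,\infty)$; then in particular the same hypothesis holds with $L$ in place of $L'$, so the inductive hypothesis gives $\abs{\ChainNoArg G L K x} \leq C_L H(x)^{\abs L}$ for $\abs K \leq \abs L$. One subtlety: to control $\partial_{x_i}\ChainNoArg G L K$ in the recursion above I actually want a bound on the \emph{derivative} of the coefficient function, so it is cleaner to strengthen the inductive hypothesis to: for every $0 \neq A \leq L$ with... — rather, the simplest fix is to carry the inductive statement in the form ``$\abs{\partial_x^B \ChainNoArg{G}{L-B}{K}{x}}$ is controlled'' — but this gets awkward. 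Instead I would phrase the induction so that the coefficient $\ChainNoArg{G}{L'}{K}{}$ is expressed, by iterating the recursion, as a universal polynomial (with coefficients depending only on $L'$) in the quantities $\partial_x^A G_j$ for $0 \neq A \leq L'$, in which \emph{every monomial has total weight exactly $\abs{L'}$} when the variable $\partial_x^A G_j$ is assigned weight $\abs A$, and involving at most $\abs K \leq \abs{L'}$ such factors. Granting this structural description — which itself follows from the recursion by an easy bookkeeping induction on $\abs{L'}$, since passing from $L$ to $L'$ either raises the weight of one factor by $1$ or multiplies by one new factor $\partial_{x_i}G_j$ of weight $1$ — the bound is immediate: each monomial is a product of factors each $\leq H(x)$, with total count at most $\abs K$; using $H \geq 1$ we enlarge the count of factors up to $\abs{L'}$, so each monomial is $\leq H(x)^{\abs{L'}}$, and summing over the finitely many monomials (a number depending only on $L'$) yields $\abs{\ChainNoArg{G}{L'}{K}{x}} \leq C_{L'} H(x)^{\abs{L'}}$.

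\textbf{Main obstacle.} The only real subtlety is the one flagged above: the naive induction on $\abs L$ using the recursion for $\ChainNoArg G {L'} K$ requires bounding $\partial_{x_i}\ChainNoArg G L K$, not just $\ChainNoArg G L K$ itself, so one must either strengthen the inductive hypothesis to include derivatives of the coefficient functions or — as I prefer — prove once and for all the explicit structural statement that $\ChainNoArg G L K$ is a universal ``weight-$\abs L$, length-$\leq\abs K$'' polynomial in the partial derivatives $\partial_x^A G_j$ with $0 \neq A \leq L$. Everything else is routine: the chain and product rules for the formula, and for the estimate the trivial observations that a product of $\leq \abs L$ factors each bounded by $H(x)\geq 1$ is bounded by $H(x)^{\abs L}$, and that there are only finitely many terms. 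I expect the write-up to be short once the structural description is stated.
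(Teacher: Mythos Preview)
Your proposal is correct and lands on essentially the same argument as the paper: the paper states directly that $\ChainNoArg G L K$ is a polynomial in the derivatives $\partial_x^A G_j$ with $0 \neq A \leq L$, in which each monomial has exactly $\abs K$ factors (encoded there by maps $N$ with $\sum_A N(A,j) = K_j$), and then the bound $H(x)^{\abs K} \leq H(x)^{\abs L}$ is immediate. Your Step~1 recursion is a fine way to see this, and your ``Main obstacle'' paragraph correctly identifies that one should bypass the naive induction and go straight to the structural description --- which is precisely what the paper does from the outset.
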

Here, $A \leq L$ means that $A_i \leq L_i$ for all $i = 1, \dots, \ell$.

\begin{proof}
	Using the chain and the product rule to compute derivatives, it is easy to show that $\partial_x^L(F \circ G)(x)$ is of the given form with
	\begin{equation*}
		\Chain G L K x = \sum_{N} c_N \prod_{\substack{(A,j) \in \NN_0^\ell \times \simpleset{1, \dots, k} \\ 0 \neq A \leq L}} \bigl(\partial_x^A G_j(x)\bigr)^{N(A,j)}
	\end{equation*}
	where $c_N \in \NN_0$ and the sum runs over all maps $N \colon \set{(A,j) \in \NN_0^\ell \times \simpleset{1, \dots, k}}{0 \neq A \leq L} \to \NN_0$
	with the property that $\sum_{A} N(A,j) = K_j$ for each $j = 1, \dots, k$.
	Letting $C_{K,L} = \sum_N c_N$ the estimate $\abs{\Chain G L K x} \leq C_{K,L} H(x)^{\abs K}$ is immediate.
	Then let $C_L = \max_{\abs K \leq \abs L} C_{K,L}$.
\end{proof}
\subsection{Schwartz convolution algebra}
In this section we shall demonstrate that the convolution product of Schwartz functions
on the groupoid $\X \rtimes G$ is well-defined and continuous,
if the action of $G$ on $\X = \RR^d$ is polynomial.

Throughout this section let $\X = \RR^d$ be endowed with a polynomial action $\theta$ of a graded Lie group $G$, see \cref{def:polynomial:action}. By $\Schwartz(\X\rtimes G)$ we denote the Fr\'echet $^*$-algebra obtained
by endowing $\Schwartz(\X \times G)$ with the involution and product defined by
\begin{align}
	f^*(x,v)&=\conj{f(x \cdot v, v^{-1})},\label{eq:involution}\\
	(f*g)(x,v)&=\int_G f(x,w)g(x \cdot w,w^{-1}v)\D w \label{eq:convolution}
\end{align}
for  \(f,g\in \Schwartz(\X \times G)\) and \((x,v)\in \X\times G\).

\begin{lemma} \label{res:algebra:schwartz}
	$\Schwartz(\X \rtimes G)$ is a well-defined Fr\'echet $^*$-algebra.
\end{lemma}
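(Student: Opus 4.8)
The plan is to verify separately the three ingredients of the asserted structure: that the involution \eqref{eq:involution} maps $\Schwartz(\X\times G)$ continuously into itself, that the product \eqref{eq:convolution} maps $\Schwartz(\X\times G)\times\Schwartz(\X\times G)$ continuously into $\Schwartz(\X\times G)$, that $*$ is associative, and that $(f*g)^{*}=g^{*} * f^{*}$. Throughout I would work with the seminorms $\norm{f}_{N,k}=\sup_{(x,v)\in\X\times G}\langle x\rangle^N\langle v\rangle_\alpha^N\,\abs{\partial^k_{x,v}f(x,v)}$ for $N\in\NN_0$ and $k\in\NN_0^{d+n}$, which generate the Fréchet topology of $\Schwartz(\X\times G)$ (they are polynomially comparable to the standard ones, since $\norm{v}_\alpha$ and the Euclidean norm are polynomially comparable by \cref{res:norm_estimates}). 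Since in standard coordinates $v^{-1}=-v$ and $\langle -v\rangle_\alpha=\langle v\rangle_\alpha$ by \eqref{eq:quasi-norm}, the involution reads $f^{*}(x,v)=\conj{f(\theta_v(x),-v)}$. Applying Fa\`{a} di Bruno's formula (\cref{res:chain_rule}) to $f$ composed with the polynomial map $(x,v)\mapsto(\theta_v(x),-v)$ and bounding the derivatives of $\theta_v(x)_j$ by \eqref{eq:polynomial1} (those of $-v$ being constant), one sees that $\abs{\partial^L_{x,v}f^{*}(x,v)}$ is a finite sum of terms $\abs{(\partial^Kf)(\theta_v(x),-v)}$ times a fixed power of $\langle x\rangle\langle v\rangle_\alpha$; using \eqref{eq:polynomial2} to trade the rapid decay $\langle\theta_v(x)\rangle^{-N'}$ of $\partial^Kf$ for decay in $\langle x\rangle$ at the cost of a polynomial in $\langle v\rangle_\alpha$, and choosing $N'$ large enough, one bounds $\norm{f^{*}}_{N,L}$ by a single seminorm of $f$. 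Antilinearity of $f\mapsto f^{*}$ and $f^{**}=f$ are immediate from $\theta_{-v}\circ\theta_v=\id$.

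For the product, I would first observe that the integral in \eqref{eq:convolution} converges absolutely and locally uniformly in $(x,v)$: indeed $\abs{g(\theta_w(x),w^{-1}v)}\leq\norm{g}_{0,0}$ and $\abs{f(x,w)}\leq\norm{f}_{N,0}\langle w\rangle_\alpha^{-N}$, while $w\mapsto\langle w\rangle_\alpha^{-N}$ is integrable over $G$ for $N>Q(\alpha)$ by \cref{res:integrable}; the analogous bound for the $(x,v)$-derivatives of the integrand justifies differentiation under the integral sign, so $f*g\in\Smooth(\X\times G)$. To estimate $\norm{f*g}_{N,k}$ I would expand $\partial^k_{x,v}$ of the integrand by the Leibniz rule — $f(x,w)$ contributing only $x$-derivatives $\partial^{k_1}_xf(x,w)$, the remaining derivatives acting on $g\circ\Phi_w$ with $\Phi_w(x,v)=(\theta_w(x),w^{-1}v)$ — and apply \cref{res:chain_rule} to $g\circ\Phi_w$. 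The partial derivatives of the components of $\Phi_w$ are bounded by a fixed power of $\langle x\rangle\langle v\rangle_\alpha\langle w\rangle_\alpha$: those of $\theta_w(x)_j$ by \eqref{eq:polynomial1}, and those of $(w^{-1}v)_j=((-w)\cdot v)_j$ by the polynomial group law (\cref{res:triangular}) together with the derivative estimate for the group multiplication in \cref{res:norm_estimates}. Hence $\abs{\partial^k_{x,v}(f*g)(x,v)}$ is bounded by a finite sum of integrals over $G$ of $\abs{\partial^{k_1}_xf(x,w)}\,\abs{(\partial^Kg)(\theta_w(x),w^{-1}v)}$ times a fixed power of $\langle x\rangle\langle v\rangle_\alpha\langle w\rangle_\alpha$. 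Now the prefactor $\langle x\rangle^N$ and the $\langle x\rangle$-powers coming from the chain-rule coefficients are absorbed via \eqref{eq:polynomial2} in the form $\langle x\rangle\leq D_2\langle w\rangle_\alpha^B\langle\theta_w(x)\rangle^B$ against the decay $\langle\theta_w(x)\rangle^{-N'}$ of $\partial^Kg$; the prefactor $\langle v\rangle_\alpha^N$ and the $\langle v\rangle_\alpha$-powers from the chain-rule coefficients are absorbed via the quasi-triangle inequality $\langle v\rangle_\alpha\leq C\langle w\rangle_\alpha\langle w^{-1}v\rangle_\alpha$ (a consequence of \cref{res:norm_estimates}) against the decay $\langle w^{-1}v\rangle_\alpha^{-N'}$ of $\partial^Kg$. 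What remains is $\int_G(\text{a fixed power of }\langle w\rangle_\alpha)\cdot\abs{\partial^{k_1}_xf(x,w)}\,\D w$, which by the rapid decay of $f$ in $w$ and \cref{res:integrable} is finite and bounded uniformly in $(x,v)$, once $N'$ and the decay order used for $f$ are taken large enough (depending only on $N$, $k$ and the fixed constants). Summing the finitely many terms yields $\norm{f*g}_{N,k}\leq C\,\norm{f}_{N_1,k_1}\norm{g}_{N_2,k_2}$, which proves at once that $f*g\in\Schwartz(\X\times G)$ and that $*$ is jointly continuous.

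It then remains to check the algebraic identities. Associativity of $*$ follows from the absolute convergence just established (so that Fubini applies to the defining double integral), the change of variables $w=us$, the action property $\theta_{us}=\theta_s\circ\theta_u$, and the left-invariance of the Haar measure $\D w$; the identity $(f*g)^{*}=g^{*} * f^{*}$ follows similarly by a single change of variables using the left-invariance of $\D w$. The main obstacle is the seminorm estimate for the convolution: one has to organise the bookkeeping so that every positive power of $\langle x\rangle$ and $\langle v\rangle_\alpha$ generated by differentiating the composite $g\circ\Phi_w$ — both from the chain-rule coefficients and from substituting $\theta_w(x)$ and $w^{-1}v$ into $g$ — is absorbed uniformly in $(x,v)$, leaving an integrand dominated by a fixed integrable function of $w$ alone.
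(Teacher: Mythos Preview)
Your argument is correct. The paper takes a somewhat different, more structural route for the convolution: rather than estimating the seminorms of $f*g$ directly from the integral, it factors $*$ as a composition of four elementary maps on Schwartz spaces, namely the tensor product $\Schwartz(\X\times G)\times\Schwartz(\X\times G)\to\Schwartz(\X\times G\times\X\times G)$, the pullback by the polynomial diffeomorphism $(x,v,x',v')\mapsto(x,v,x'+x\cdot v,v^{-1}v')$, restriction of the third variable to~$0$, and integration over the second variable, and then checks continuity of each factor separately. The only nontrivial step is the pullback, and that again comes down to the chain rule (\cref{res:chain_rule}), but now applied once to a single polynomial diffeomorphism on $\RR^{2(d+n)}$ with no integral to carry through the estimate. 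Your direct approach has the virtue of being self-contained and of making transparent exactly which inequalities from \cref{res:polynomial_action} and \cref{res:norm_estimates} absorb which powers of $\langle x\rangle$, $\langle v\rangle_\alpha$, $\langle w\rangle_\alpha$; the paper's factorisation is slicker to write down and isolates the analytic content into the single standard fact that pullback by a polynomial diffeomorphism preserves $\Schwartz$, at the price of hiding those concrete bounds. For the involution both proofs are essentially the same (pullback by the polynomial diffeomorphism $(x,v)\mapsto(x\cdot v,-v)$), and the algebraic identities are handled the same way.
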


\begin{proof}
	Of course, $\Schwartz(\RR^d \times G)$ can be defined by picking standard coordinates on $G \cong \RR^n$ 
	(which are unique up to linear diffeomorphism), and endowing $\Schwartz(\RR^d \times \RR^n)$ with the usual Schwartz seminorms $\norm\argument_{k,\ell}$ defined by
	\begin{equation} \label{eq:schwartz:seminorm}
		\norm{f}_{k,\ell}
		= \!\!\!
		\sup_{\substack{(x,v)\in \RR^{d + n} \\ (a,b) \in \NN_0^{d+n}\!,\, \abs{(a,b)}\leq k}}\!\!\!(1+\norm{(x,v)})^{\ell}\abs{\partial^{(a,b)} f(x,v)}
	\end{equation}
	for \(k,\ell\in\NN_0\).
	It is well-known that $\Schwartz(\X \rtimes G)$ is a Fr\'echet space when endowed with these seminorms,
	and it only remains to check that the involution and product are continuous 
	and compatible in the usual way $(f * g)^* = g^* * f^*$.
	By our assumptions, the map $\X \times G \ni (x,v) \mapsto (x \cdot v, v^{-1}) \in \X \times G$
	is a polynomial diffeomorphism (recall that $v^{-1} = -v$ in standard coordinates),
	hence $f \mapsto f^*$ is continuous
	(to check this one can use \cref{res:chain_rule}). 
	The convolution can be written as $* = I_2 \circ r_3 \circ \tilde\psi \circ \tensor$,
	with maps 
	\begin{align*}
		\Schwartz(\X \times G) \times \Schwartz(\X \times G) \xrightarrow{\tensor} \Schwartz(\X \times G \times \X \times G)
		\xrightarrow{\tilde\psi} \Schwartz(\X \times G \times \X \times G)
		\xrightarrow{r_3} \Schwartz(\X \times G \times G)
		\xrightarrow{I_2} \Schwartz(\X \times G) 
	\end{align*}
	defined as follows: $\tensor$ is the multiplication, $(f \tensor g)(x,v,x',v') = f(x,v) g(x',v')$;
	$\tilde \psi$ is the pull-back with the polynomial diffeomorphism
	$\psi \colon \X \times G \times \X \times G \to \X \times G \times \X \times G$, 
	$(x,v,x',v') \mapsto (x,v,x'+x \cdot v,v^{-1} v')$;
	$r_3$~restricts to the third variable being $0$, i.e.\ $r_3(f)(x,w,v) = f(x,w,0,v)$;
	and $I_2$ integrates out the second variable, $I_2(f)(x,v) = \int_G f(x,w,v) \D w$.
	It is straightforward to verify that all these four maps are continuous, and so is $*$.
	E.g.\ for $I_2$ this follows from the estimate
	\begin{equation*}
		\norm{I_2(f)}_{k,\ell} = \!\!\!\!
		\sup_{\substack{(x,v)\in \RR^{d+n}\\ (a,b) \in \NN_0^{d+n}, \abs{(a,b)}\leq k}} \!\!\!\!
		\abs[\Big]{\int_G (1+\norm{(x,v)})^{\ell} \partial_{(x,v)}^{(a,b)} f(x,w,v)\D w} 
		\leq 
		\int_G \frac{\norm{f}_{k,\ell+n+1}}{(1+\norm{w})^{n+1}}\D w \leq C \norm f_{k,\ell+n+1}
	\end{equation*} 
	for some constant $C > 0$.
	It is straightforward to verify that $(f*g)^* = g^* * f^*$ for all $f, g \in \Schwartz(\X \times G)$.
\end{proof}

\begin{remark}
	The Fr\'echet $^*$-algebra $\Schwartz(\X \rtimes G)$ can also be obtained as the Fr\'echet algebraic crossed product $\Schwartz(\X) \rtimes G$ \cites{ENN88,Sch93} where the action of $G$ on $\Schwartz(\X)$ is by pull-back.
	This more ``analytic'' interpretation will not be relevant in this article.
\end{remark}

\subsection{Convolution algebra of fibred distributions}\label{subsec:convolution_alg_distr}
In this section, we extend the convolution and involution obtained in the previous section from
$\Schwartz(\X \rtimes G)$ to a bigger algebra of fibred distributions.
The construction is analogous to \cite{LMV17},
but since Shubin-type pseudodifferential operators are in general not properly supported,
we need to adapt the spaces of functions and distributions to our setting.

\paragraph{Slowly increasing functions}

\begin{definition} \label{def:om:oc}
	For \(d\in\NN_0\) let \(\mathcal{O}_M(\RR^d)\) denote the space of \emph{slowly increasing functions}
	\begin{align*}
		\Algebra{O}_M(\RR^d) &= \set{\varphi \in \Smooth(\RR^d)}{\forall f \in\Schwartz(\RR^d) \, \forall b\in\NN^d_0\colon f\cdot\partial^b \varphi \text{ is bounded}}		
	\end{align*}
	with seminorms for \(f \in\Schwartz(\RR^d)\) and \(b\in\NN^d_0\) given by
	\[\norm{\varphi}_{f,b}=\sup_{x\in \RR^d}\abs{f(x)\partial^b \varphi(x)}.\]
	Endow the dual space \(\Algebra O_M'(\RR^d)\) of \(\Algebra O_M(\RR^d)\) 
	with the strong dual topology.
\end{definition}

\begin{proposition}\label{res:properties:om}
	The space of slowly increasing functions and its dual have the following properties:
	\begin{enumerate}
		\item\label{item:om:nuclear} \(\Algebra O_M(\RR^d)\) is complete, nuclear and barrelled and \(\Algebra O_M'(\RR^d)\) is complete and nuclear \cite{Gro55}*{Th\'eor\`eme 16 in Chapitre 2},
		\item for \(\varphi \in\Smooth(\RR^d)\) \cite{Hor66}*{p.~417}
		\[\varphi\in\Algebra O_M(\RR^d) \Longleftrightarrow \forall b\in\NN^d_0\, \exists k\in\NN_0\colon (1+\norm{x}^2)^{-k}\partial^b \varphi(x) \text{ vanishes at infinity},\]
		\item \(\Algebra{O}_M(\RR^d)\) is the multiplier algebra of \((\Schwartz(\RR^d),\,\cdot\,)\) in the sense that for \(\varphi \in\Smooth(\RR^d)\) the map \(f\mapsto \varphi\cdot f\) is a continuous map \(\Schwartz(\RR^d)\to\Schwartz(\RR^d)\) if and only if \(\varphi \in\Algebra{O}_M(\RR^d)\) \cite{Hor66}*{Proposition~4.11.5},
		\item\label{res:continuous_mult_OM} the product \(p\colon \Algebra O_M(\RR^d)\times\Algebra O_M(\RR^d)\to\Algebra O_M(\RR^d)\) is continuous \cite{Schw66}*{p.~248}, see also \cite{Lar13}*{Proposition~3.5}.
	\end{enumerate}
\end{proposition}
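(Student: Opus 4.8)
The plan is to read Proposition~\ref{res:properties:om} as a dictionary of classical facts about the multiplier space $\Algebra O_M(\RR^d)$ and to assemble the four assertions from the sources cited in the statement, supplying the short arguments needed for the two reformulations. The only point that is genuinely delicate, and for which I would ultimately defer to the literature, is the joint continuity of the product in the last item.

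For \refitem{item:om:nuclear} there is nothing to do beyond citing \cite{Gro55}: Grothendieck shows there that $\Algebra O_M(\RR^d)$, topologized by exactly the seminorms of \cref{def:om:oc}, is complete, nuclear and barrelled, and that its strong dual $\Algebra O_M'(\RR^d)$ is complete and nuclear. For the second assertion I would simply unwind the definition. If $\varphi\in\Algebra O_M(\RR^d)$, then applying the defining condition to the Schwartz function $x\mapsto(1+\norm{x}^2)^{-k}$ yields, for each multiindex $b$, a bound $\abs{\partial^b\varphi(x)}\leq C_b(1+\norm{x}^2)^{k_b}$, whence $(1+\norm{x}^2)^{-(k_b+1)}\partial^b\varphi$ vanishes at infinity; conversely, polynomial bounds on all derivatives of $\varphi$ immediately make $f\cdot\partial^b\varphi$ bounded for every $f\in\Schwartz(\RR^d)$, since Schwartz functions decay faster than every inverse power of $1+\norm{x}^2$. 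This is the characterization recorded in \cite{Hor66}.

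For the third assertion, the implication ``$\varphi\in\Algebra O_M(\RR^d)$ is a multiplier of $\Schwartz(\RR^d)$'' follows from the Leibniz rule: $\partial^b(\varphi f)$ is a finite sum of products $\partial^c\varphi\cdot\partial^{b-c}f$, with $\partial^c\varphi$ of polynomial growth and $\partial^{b-c}f$ Schwartz, so $\varphi f\in\Schwartz(\RR^d)$ together with the estimates witnessing continuity; the converse is obtained by feeding translated bump functions into the continuous multiplication map to recover the polynomial bounds on $\varphi$ and its derivatives, and here I would cite \cite{Hor66}. Finally, in \refitem{res:continuous_mult_OM} one must show that the product $p\colon\Algebra O_M(\RR^d)\times\Algebra O_M(\RR^d)\to\Algebra O_M(\RR^d)$ is jointly continuous. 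This is the main obstacle: the topology of $\Algebra O_M(\RR^d)$ is not metrizable, so joint continuity does \emph{not} follow formally from separate continuity. The shape of a self-contained argument would be to fix a Schwartz weight $f$ and an order $b$, expand $f\cdot\partial^b(\varphi\psi)$ by the Leibniz rule, and dominate each resulting term by a product of a seminorm of $\varphi$ and a seminorm of $\psi$ against suitable auxiliary Schwartz weights; but arranging this so as to obtain honest joint continuity rather than mere separate continuity is precisely what is carried out in \cite{Schw66} and \cite{Lar13}, so in the paper I would simply invoke those references.
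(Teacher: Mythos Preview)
Your proposal is correct and matches the paper's treatment: the paper does not give a proof of this proposition at all, simply stating each item with a citation to the classical sources (Grothendieck, Horv\'ath, Schwartz, Larcher). Your write-up goes slightly beyond the paper by sketching the easy implications in items (ii) and (iii) and by explaining why item (iv) is the only genuinely nontrivial point, which is a reasonable expansion of what the paper leaves implicit.
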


\begin{proposition} \label{res:OM:pullback}
	Let $\psi \colon \RR^k \to \RR^\ell$ be a polynomial map. Then the pull-back
	$\psi^* \colon \Algebra O_M(\RR^\ell) \to \Algebra O_M(\RR^k)$ is continuous.
\end{proposition}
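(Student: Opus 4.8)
The plan is to check first that $\psi^*$ sends $\Algebra O_M(\RR^\ell)$ into $\Algebra O_M(\RR^k)$ and then to prove continuity by estimating the generating seminorms of $\Algebra O_M(\RR^k)$ directly; both steps rest on Fa\`{a} di Bruno's formula (\cref{res:chain_rule}) applied with $G = \psi$. Since $\psi$ is polynomial, every partial derivative $\partial^A \psi_j$ is a polynomial, so in the expansion
\[
	\partial^b(\varphi \circ \psi)(x) = \sum_{\abs B \leq \abs b} (\partial^B \varphi)(\psi(x)) \cdot \Chain{\psi}{b}{B}{x}
\]
each coefficient $\ChainNoArg{\psi}{b}{B}$ is again a polynomial. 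For well-definedness I would then argue: if $\varphi \in \Algebra O_M(\RR^\ell)$, then by the polynomial-growth characterization of $\Algebra O_M$ in \cref{res:properties:om} each $\partial^B \varphi$ is bounded by a polynomial; precomposing with the polynomial $\psi$ and multiplying by the polynomial $\ChainNoArg{\psi}{b}{B}$ preserves this, so $\partial^b(\varphi \circ \psi)$ is bounded by a polynomial for every multi-index $b$, and hence $\psi^* \varphi \in \Algebra O_M(\RR^k)$ by the same characterization.

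For continuity, recall that the topology of $\Algebra O_M(\RR^k)$ is generated by the seminorms $\norm{\argument}_{f,b}$ with $f \in \Schwartz(\RR^k)$ and $b \in \NN^k_0$, so it suffices to bound each $\norm{\psi^* \varphi}_{f,b}$ by a \emph{fixed finite} sum of $\Algebra O_M(\RR^\ell)$-seminorms of $\varphi$. Writing $h_B \defeq f \cdot \ChainNoArg{\psi}{b}{B} \in \Schwartz(\RR^k)$, the expansion above yields
\[
	\norm{\psi^* \varphi}_{f,b} \leq \sum_{\abs B \leq \abs b}\ \sup_{x \in \RR^k} \abs{h_B(x)\,(\partial^B \varphi)(\psi(x))}.
\]
If for every $B$ with $\abs B \leq \abs b$ I can produce a strictly positive $g_B \in \Schwartz(\RR^\ell)$ with $\abs{h_B(x)} \leq g_B(\psi(x))$ for all $x$, then the $B$-th summand is at most $\sup_x \abs{g_B(\psi(x))\,(\partial^B \varphi)(\psi(x))} \leq \sup_{y \in \RR^\ell} \abs{g_B(y)\,\partial^B \varphi(y)} = \norm{\varphi}_{g_B, B}$, and summing over the finitely many such $B$ gives the required estimate, with constants and test functions depending only on $f$, $\psi$ and $b$.

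Thus the whole statement reduces to the elementary claim that \emph{for every $h \in \Schwartz(\RR^k)$ and every polynomial $\psi \colon \RR^k \to \RR^\ell$ there is a strictly positive $g \in \Schwartz(\RR^\ell)$ with $\abs{h(x)} \leq g(\psi(x))$ for all $x$}. To prove it I would exploit that $\norm{\psi(x)}$ grows at most polynomially in $\norm x$ (since $\norm{\psi(x)}^2 = \sum_j \psi(x)_j^2$ is a polynomial): picking $C_0 > 0$ and $D \in \NN$ with $\norm{\psi(x)} \leq C_0(1 + \norm x)^D$, the identity $\psi(x) = y$ forces $1 + \norm x \geq (\norm y / C_0)^{1/D}$, so the Schwartz decay $\abs{h(x)} \leq c_N (1 + \norm x)^{-N}$ shows that
\[
	\tilde h(y) \defeq \sup \set{\abs{h(x)}}{\psi(x) = y} \qquad (\text{with } \sup \emptyset = 0)
\]
satisfies $\tilde h(y) \leq c'_N(1 + \norm y)^{-N}$ for every $N$; being also bounded and lower semicontinuous, $\tilde h$ is a bounded rapidly decreasing function. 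Any such function is pointwise dominated by a Schwartz function: fixing $\chi \in \SmoothCompactSupp(\RR)$ with $\chi \equiv 1$ on $[-1,1]$, $\chi \geq 0$ and $\supp \chi \subseteq [-2,2]$, and setting $b_j \defeq \sup \set{\tilde h(z)}{\norm z^2 \geq j}$ (a decreasing sequence which is rapidly decreasing in $j$), the function
\[
	g(y) \defeq \E^{-\norm y^2} + \sum_{j \geq 0} b_j\, \chi(\norm y^2 - j)
\]
is smooth (the sum is locally finite), strictly positive, and a straightforward estimate using the polynomial bounds on the derivatives of $y \mapsto \norm y^2$ shows $g \in \Schwartz(\RR^\ell)$; moreover $g(y) \geq b_{\lfloor \norm y^2 \rfloor} \geq \tilde h(y)$. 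Since $\abs{h(x)} \leq \tilde h(\psi(x)) \leq g(\psi(x))$, the claim follows with $C = 1$, and hence the proposition.

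The step I expect to be the real obstacle is precisely this last construction --- the (elementary but slightly fiddly) fact that a bounded rapidly decreasing function is pointwise dominated by a Schwartz function --- together with the bookkeeping forced by the non-metrizable topology of $\Algebra O_M$: continuity must be witnessed by \emph{finitely many} of the Schwartz-weighted seminorms $\norm{\argument}_{f,b}$ rather than a single one, which is exactly why the reduction above produces a finite sum indexed by the $B$ with $\abs B \leq \abs b$. Everything else --- the Fa\`{a} di Bruno expansion, the polynomiality of its coefficients, and the polynomial-growth description of $\Algebra O_M$ from \cref{res:properties:om} --- is routine. (Alternatively, one could deduce continuity from a closed graph argument, since $\Algebra O_M(\RR^d)$ and its strong dual are sufficiently well behaved, but the estimate above is self-contained.)
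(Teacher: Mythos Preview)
Your proof is correct and follows essentially the same route as the paper: the Fa\`a di Bruno expansion, the observation that each $\ChainNoArg{\psi}{b}{B}$ is polynomial (so $f\cdot\ChainNoArg{\psi}{b}{B}\in\Schwartz(\RR^k)$), the reduction to bounding $\sup_x|h(x)\,(\partial^B\varphi)(\psi(x))|$, and the passage through $\tilde h(y)=\sup_{\psi(x)=y}|h(x)|$ are all exactly what the paper does. The only substantive difference is that the paper outsources the step ``a rapidly decreasing function is pointwise dominated by a Schwartz function'' to a reference (\cite{Gar04}), whereas you construct such a majorant explicitly via the locally finite sum $\sum_j b_j\,\chi(\norm{y}^2-j)$; your construction is correct and makes the argument self-contained.
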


\begin{proof}
	For any $f \in \Schwartz(\RR^k)$ and $a \in \NN_0^k$ we estimate 
	\begin{align*}
		\norm{\psi^*(\varphi)}_{f,a} 
		= \sup_{x \in \RR^k}\abs{f(x) \partial^a(\varphi \circ \psi)(x)}
		&= \sup_{x \in \RR^k}\abs[\Big]{f(x) {\sum}_{\abs{b} \leq \abs{a}} \partial^b \varphi(\psi(x)) \cdot \Chain \psi a b x} \\
		&\leq {\sum}_{\abs{b} \leq \abs{a}} \sup_{x \in \RR^k}\abs[\big]{(f \cdot \ChainNoArg \psi a b) (x) \cdot \partial^b \varphi(\psi(x))} ,
	\end{align*}
	where $\ChainNoArg \psi a b$ is polynomial in derivatives of $\psi$,
	and hence a polynomial.
	Therefore $f \cdot \ChainNoArg{\psi}a b \in \Schwartz(\RR^k)$
	and it suffices to estimate $\sup_{x \in \RR^k} \abs{f_1(x) \cdot \partial^b \varphi(\psi(x))}$ by seminorms of $\varphi$ for $f_1 \in \Schwartz(\RR^k)$.
	Consider the function $f_2 \colon \RR^\ell \to \CC$, defined by
	\begin{equation*}
		f_2(y) = \sup_{\substack{x \in \RR^k \\ \psi(x) = y}} \abs{f_1(x)}
	\end{equation*}
	where $\sup \emptyset = 0$.
	We compute for any function $g$ on $\RR^\ell$ that
	\begin{equation*}
		\sup_{y \in \RR^\ell} \abs{f_2(y) g(y)} 
		= \sup_{y \in \RR^\ell} \sup_{\substack{x \in \RR^k \\ \psi(x) = y}} \abs{f_1(x) g(y)}
		= \sup_{y \in \RR^\ell} \sup_{\substack{x \in \RR^k \\ \psi(x) = y}} \abs{f_1(x) g(\psi(x))}
		= \sup_{x \in \RR^k} \abs{f_1(x) g(\psi(x))} .
	\end{equation*}
	In particular, the function $f_2 \cdot p$ is bounded on $\RR^\ell$ for any polynomial $p$ on $\mathbb R^\ell$, i.e.\ $f_2$ is rapidly decreasing. 
	By \cite{Gar04} there exists a Schwartz function $f_3 \in \Schwartz(\RR^\ell)$ such that $f_2(y) \leq f_3(y)$ holds for all $y \in \RR^\ell$.
	Now note that
	\begin{equation*}
		\sup_{x \in \RR^k}\abs[\big]{f_1 (x) \cdot \partial^b \varphi(\psi(x))} 
		=
		\sup_{y \in \RR^\ell} \abs{f_2 (y) \partial^b \varphi(y) }
		\leq
		\sup_{y \in \RR^\ell} \abs{f_3 (y) \partial^b \varphi(y) }
		\leq \norm{\varphi}_{f_3, b} .
	\end{equation*}
	Therefore $\norm{\psi^*(\varphi)}_{f,a}$ can be estimated by a finite linear combination of seminorms of $\varphi$, showing that $\psi^*$ is indeed continuous.
\end{proof}
In particular, this proposition applies to the polynomial maps $i$, $m$, $r$, $s$, $I$ and $M$.
Moreover, we may define $\Algebra{O}_M(G)$ for a graded Lie group $G$ by using standard coordinates on $G$ 
(which are unique up to a linear diffeomorphism).

It is straightforward to verify that the inclusions $\Schwartz(\RR^d) \to \Algebra{O}_M(\RR^d) \to \Algebra E(\RR^d)$ are continuous with dense image, where $\Algebra E(\RR^d) = \Smooth(\RR^d)$. Therefore, there are inclusions $\Algebra E'(\RR^d) \to \Algebra O'_M(\mathbb R^d) \to \Schwartz'(\RR^d)$,
and one can take the Fourier transform of an element of $\Algebra O'_M(\mathbb R^d)$.
The image is characterized as follows:

\begin{proposition}[\cite{Schw66}*{Th\'eor\`eme XV on p.~268}]\label{res:properties:oc}
	The Fourier transform defines an isomorphism
	\(\Algebra{O}_M'(\RR^d)\to\Algebra{O}_C(\RR^d)\)
	where \begin{equation}\Algebra{O}_C(\RR^d)=\set{\varphi \in 
		\Smooth(\RR^d) }{ \exists k\in\NN_0\,\forall b\in\NN^d_0 \colon (1+\norm{x}^2)^{-k}\partial^b \varphi(x) \text{ vanishes at infinity}}.\end{equation}
	Defining the topology of $\Algebra O_C(\RR^d)$ through this isomorphism
	yields a complete nuclear and barrelled space $\Algebra O_C(\RR^d)$,
	for which the inclusion $\Algebra{O}_C(\RR^d) \to \Algebra{O}_M(\RR^d)$ is continuous with dense image.
	
	Moreover, the inclusion $\Schwartz(\RR^d) \to \mathcal O_C(\RR^d)$ is continuous with dense image.
	Therefore $\mathcal O'_C(\RR^d) \to \Schwartz'(\RR^d)$ is a continuous injection when the dual $\Algebra{O}_C'(\RR^d)$ of $\Algebra O_C(\RR^d)$ is endowed with the strong dual topology.
	The restriction of the Fourier transform gives a topological isomorphism \(\Algebra{O}_C'(\RR^d)\to\Algebra{O}_M(\RR^d)\).
\end{proposition}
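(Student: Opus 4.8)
\emph{Proof strategy.} The statement collects several results from Schwartz's theory of the multiplier space $\Algebra O_M(\RR^d)$, and the plan is to reduce everything to the properties of $\Algebra O_M(\RR^d)$ recorded in \cref{res:properties:om} together with the fact that $\mathcal F$ is a topological automorphism of $\Schwartz'(\RR^d)$ restricting to one of $\Schwartz(\RR^d)$. The single genuinely analytic point is the set-theoretic identity $\mathcal F(\Algebra O'_M(\RR^d)) = \Algebra O_C(\RR^d)$, with $\Algebra O_C(\RR^d)$ the function space in the statement; once this is established, the topology of $\Algebra O_C(\RR^d)$ is \emph{defined} by transport along $\mathcal F$, so that $\mathcal F\colon \Algebra O'_M(\RR^d)\to\Algebra O_C(\RR^d)$ is a topological isomorphism by construction.

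\emph{Fourier transform of $\Algebra O'_M$.} For the inclusion ``$\subseteq$'' I would use that $\Schwartz(\RR^d)$ is dense in $\Algebra O_M(\RR^d)$, so that every $u\in\Algebra O'_M(\RR^d)$ satisfies an estimate $\abs{u(\psi)}\le C\sup_{x,\,\abs{b}\le N}\abs{f(x)\,\partial^b\psi(x)}$ for some $N\in\NN_0$ and $f\in\Schwartz(\RR^d)$. Factoring $u$ through the completion of $\Schwartz(\RR^d)$ in that norm and representing the continuous functionals on $C_0(\RR^d)$ by finite Radon measures, one obtains $u=\sum_{\abs b\le N}\partial^b\nu_b$ with each $\nu_b=f\mu_b$ a finite measure of rapid decay. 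Then $\widehat u(\xi)=\sum_{\abs b\le N}c_b\,\xi^b\,\widehat{\nu_b}(\xi)$, and since $x^a\nu_b$ is again a finite measure of rapid decay for every $a$, all derivatives $\partial^a\widehat{\nu_b}=\mathcal F((-2\pi\I x)^a\nu_b)$ are bounded; hence every $\partial^a\widehat u$ is $O((1+\norm\xi^2)^{N/2})$ with the \emph{same} exponent, which is exactly the condition defining $\Algebra O_C(\RR^d)$. Conversely, for $\varphi\in\Algebra O_C(\RR^d)$ with $(1+\norm x^2)^{-k}\partial^b\varphi$ vanishing at infinity for all $b$, the function $\phi\defeq(1+\norm x^2)^{-k}\varphi$ has all derivatives bounded, so $\varphi=(1+\norm x^2)^k\phi$ and $\mathcal F^{-1}\varphi=P(D)\mathcal F^{-1}\phi$ with $P(D)=(1-(2\pi)^{-2}\Delta)^k$; as $\mathcal F^{-1}\phi$ is a finite sum of derivatives of rapidly decreasing functions it lies in $\Algebra O'_M(\RR^d)$, and hence so does $\mathcal F^{-1}\varphi$.

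\emph{Topology, inclusions and the second isomorphism.} By \cref{res:properties:om}, $\Algebra O'_M(\RR^d)$ is complete and nuclear, hence so is $\Algebra O_C(\RR^d)$; it is barrelled since $\Algebra O_M(\RR^d)$ is reflexive (a complete barrelled nuclear space is Montel), so that its strong dual is barrelled. For the continuous dense inclusions $\Schwartz(\RR^d)\hookrightarrow\Algebra O_C(\RR^d)\hookrightarrow\Algebra O_M(\RR^d)$ I would write $\Algebra O_C(\RR^d)$ as the countable union of the Fréchet spaces cut out, for fixed $k$, by the seminorms $\varphi\mapsto\sup_x\abs{(1+\norm x^2)^{-k}\partial^b\varphi(x)}$, verify that this inductive-limit topology agrees with the one transported along $\mathcal F$ (here one invokes Schwartz's description of the bounded subsets of $\Algebra O_M(\RR^d)$), and then obtain continuity from $\norm{\varphi}_{f,b}\le\bigl(\sup_x(1+\norm x^2)^k\abs{f(x)}\bigr)\sup_x\abs{(1+\norm x^2)^{-k}\partial^b\varphi(x)}$ and density by multiplying with cut-offs $\chi(\argument/n)$. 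Dualizing $\Schwartz(\RR^d)\hookrightarrow\Algebra O_C(\RR^d)$ gives the continuous injection $\Algebra O'_C(\RR^d)\hookrightarrow\Schwartz'(\RR^d)$. Finally, the map $\mathcal F\colon\Algebra O'_C(\RR^d)\to\Algebra O_M(\RR^d)$ is the transpose of $\mathcal F\colon\Algebra O'_M(\RR^d)\to\Algebra O_C(\RR^d)$ under the pairing $\langle\widehat u,\psi\rangle=\langle u,\widehat\psi\rangle$, and as the transpose of a topological isomorphism between the reflexive spaces $\Algebra O_M(\RR^d)$ and $\Algebra O_C(\RR^d)$ it is again a topological isomorphism.

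\emph{Main obstacle.} The crux is the structure theorem for $\Algebra O'_M(\RR^d)$ together with the bookkeeping showing that Fourier-transforming the representing measures and multiplying them by coordinate monomials does not spoil the \emph{uniformity} of the polynomial bound over all derivatives --- this uniformity is precisely what distinguishes $\Algebra O_C$ from $\Algebra O_M$. Matching the transported topology on $\Algebra O_C(\RR^d)$ with its natural inductive-limit topology is technically delicate as well, but is classical and can be taken from \cite{Schw66}.
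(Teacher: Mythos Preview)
The paper does not give its own proof of this proposition: it is stated with the citation \cite{Schw66}*{Th\'eor\`eme XV on p.~268} and used as a black box, with no accompanying \texttt{proof} environment. So there is nothing to compare your attempt against on the paper's side.

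Your sketch is a reasonable outline of the classical argument. One point is a bit loose: in the inclusion $\Algebra O_C(\RR^d)\subseteq\mathcal F(\Algebra O'_M(\RR^d))$ you reduce to $\phi=(1+\norm x^2)^{-k}\varphi$ having all derivatives vanishing at infinity and then assert that ``$\mathcal F^{-1}\phi$ is a finite sum of derivatives of rapidly decreasing functions''. This is exactly the structure theorem for $\Algebra O'_M$ applied to $\mathcal F^{-1}\phi$, so invoking it here presupposes what you want to show; you need an independent reason why $\mathcal F^{-1}\phi\in\Algebra O'_M(\RR^d)$ (for instance, pair it with $\psi\in\Algebra O_M(\RR^d)$ via $\langle\mathcal F^{-1}\phi,\psi\rangle=\langle\phi,\mathcal F^{-1}\psi\rangle$ and use that $\mathcal F^{-1}\psi\in\Algebra O'_C(\RR^d)$ together with the fact that $\dot{\mathcal B}$ pairs continuously with integrable distributions). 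Also, in the forward inclusion you only argue boundedness of $\partial^a\widehat{\nu_b}$, while $\Algebra O_C$ asks for vanishing at infinity; this is easily fixed by taking $k$ one unit larger, but it should be said. Since the paper treats the whole statement as a citation, these details would in any case have to be sourced from \cite{Schw66}.
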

Note that in particular the Fourier transform of a distribution in $\Algebra O'_M(\RR^d)$ is in $\Algebra O_M(\RR^d)$.
Using standard coordinates one can also introduce $\Algebra O_C(G)$ for a graded Lie group $G$.

In the following, $E \completedtensor F$ denotes a completed tensor product of locally convex topological vector spaces. In this article, at least one of the spaces is nuclear, so that all reasonable tensor products agree.

\begin{lemma}[\cite{Schw54}*{p.~115}] \label{res:OM_continuity}For all \(k,\ell\in\NN\)
	the map \(\Algebra O_M(\RR^k)\times\Algebra O_M(\RR^\ell)\to \Algebra O_M(\RR^{k}\times\RR^\ell)\) defined by \((\varphi_1,\varphi_2)\mapsto \varphi_1\tensor \varphi_2\) with $\varphi_1 \tensor \varphi_2(x,y) = \varphi_1(x)\varphi_2(y)$ is continuous and induces an isomorphism
	\(\Algebra O_M(\RR^k)\completedtensor \Algebra O_M(\RR^\ell)\cong \Algebra O_M(\RR^k\times\RR^\ell)\).
	Moreover, \(\Algebra O'_C(\RR^k)\completedtensor \Algebra O'_C(\RR^\ell)\cong \Algebra O'_C(\RR^k\times\RR^\ell)\) holds.
\end{lemma}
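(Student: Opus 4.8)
The plan is to obtain the first isomorphism by a direct seminorm estimate together with nuclearity, and then to deduce the second one from it via the Fourier transform. \emph{Continuity of the canonical bilinear map.} Fix $f\in\Schwartz(\RR^k\times\RR^\ell)$ and a multiindex $c=(a,b)\in\NN_0^k\times\NN_0^\ell$, so that $\partial^c(\varphi_1\tensor\varphi_2)(x,y)=(\partial^a\varphi_1)(x)\,(\partial^b\varphi_2)(y)$. The functions $x\mapsto\sup_y\abs{f(x,y)}^{1/2}$ and $y\mapsto\sup_x\abs{f(x,y)}^{1/2}$ are rapidly decreasing, so by \cite{Gar04} (as in the proof of \cref{res:OM:pullback}) there are $g_1\in\Schwartz(\RR^k)$ and $g_2\in\Schwartz(\RR^\ell)$ with $\abs{f(x,y)}\le g_1(x)\,g_2(y)$ for all $(x,y)$. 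Hence
\[
	\norm{\varphi_1\tensor\varphi_2}_{f,c}
	=\sup_{(x,y)}\abs[\big]{f(x,y)\,(\partial^a\varphi_1)(x)\,(\partial^b\varphi_2)(y)}
	\le\norm{\varphi_1}_{g_1,a}\,\norm{\varphi_2}_{g_2,b},
\]
so the bilinear map $(\varphi_1,\varphi_2)\mapsto\varphi_1\tensor\varphi_2$ is jointly continuous. By the universal property of the projective tensor product and completeness of $\Algebra O_M(\RR^k\times\RR^\ell)$, it factors through a continuous linear map on the projective completed tensor product; since $\Algebra O_M(\RR^k)$ is nuclear (\cref{res:properties:om}), the projective and injective topologies agree, and we obtain a continuous linear map $\Phi\colon\Algebra O_M(\RR^k)\completedtensor\Algebra O_M(\RR^\ell)\to\Algebra O_M(\RR^k\times\RR^\ell)$.

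Next I would show that $\Phi$ is a topological isomorphism. Its range is dense, since it contains $\Schwartz(\RR^k)\tensor\Schwartz(\RR^\ell)$, which is dense in $\Schwartz(\RR^k\times\RR^\ell)$ and hence in $\Algebra O_M(\RR^k\times\RR^\ell)$. To upgrade density to a topological isomorphism I would pass to the vector-valued picture: a smooth function of $(x,y)$ lies in $\Algebra O_M(\RR^k\times\RR^\ell)$ exactly when $x\mapsto\varphi(x,\argument)$ is a slowly increasing $\Algebra O_M(\RR^\ell)$-valued smooth function, i.e.\ $\Algebra O_M(\RR^k\times\RR^\ell)\cong\Algebra O_M(\RR^k;\Algebra O_M(\RR^\ell))$, and under this identification $\Phi$ becomes the canonical map $\Algebra O_M(\RR^k)\completedtensor\Algebra O_M(\RR^\ell)\to\Algebra O_M(\RR^k;\Algebra O_M(\RR^\ell))$; that the latter is a topological isomorphism for any complete space in place of $\Algebra O_M(\RR^\ell)$ — equivalently, that the $\epsilon$-product of $\Algebra O_M(\RR^k)$ with a complete locally convex space $F$ is $\Algebra O_M(\RR^k;F)$ — is Schwartz's result \cite{Schw54}*{p.~115}. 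I expect this last step to be the main obstacle: it is not a formal consequence of the continuity estimate, but rests on the theory of vector-valued differentiable functions and of nuclear ($\epsilon$-)tensor products, and is the reason the lemma is quoted rather than reproved.

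Finally, the statement for $\Algebra O_C'$ follows by Fourier transform. By \cref{res:properties:oc} the Fourier transform $\mathcal F\colon\Algebra O_C'(\RR^d)\to\Algebra O_M(\RR^d)$ is a topological isomorphism for every $d$, so $\Algebra O_C'(\RR^d)$ is nuclear as well, and since the completed tensor product is functorial for continuous linear maps between nuclear spaces, $\mathcal F\completedtensor\mathcal F\colon\Algebra O_C'(\RR^k)\completedtensor\Algebra O_C'(\RR^\ell)\to\Algebra O_M(\RR^k)\completedtensor\Algebra O_M(\RR^\ell)$ is a topological isomorphism. Composing it with $\Phi$ and with $\mathcal F^{-1}$ on $\RR^k\times\RR^\ell$ yields a topological isomorphism $\Algebra O_C'(\RR^k)\completedtensor\Algebra O_C'(\RR^\ell)\xrightarrow{\sim}\Algebra O_C'(\RR^k\times\RR^\ell)$; on an elementary tensor $u_1\tensor u_2$ it sends $u_1\tensor u_2$ to $u_1\tensor u_2$ by Fubini for the Fourier transform, so by density it coincides with the canonical map, as claimed.
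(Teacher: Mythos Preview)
Your proof is correct and follows essentially the same overall architecture as the paper: establish continuity of the bilinear map, then appeal to nuclearity and a known tensor-product identification for the isomorphism, and finally deduce the $\Algebra O_C'$ statement by Fourier transform (which is exactly what the paper does in one line).

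The only genuine difference is in how continuity is proved. You argue directly on seminorms, dominating the given Schwartz weight $f(x,y)$ by a product $g_1(x)g_2(y)$ of Schwartz functions via the square-root trick and \cite{Gar04}. The paper instead observes that $\varphi_1\mapsto\varphi_1\tensor 1$ and $\varphi_2\mapsto 1\tensor\varphi_2$ are pull-backs along the coordinate projections $\RR^{k+\ell}\to\RR^k$ and $\RR^{k+\ell}\to\RR^\ell$, hence continuous by \cref{res:OM:pullback}, and then uses continuity of the pointwise product on $\Algebra O_M(\RR^{k+\ell})$ (\cref{res:properties:om}~\refitem{res:continuous_mult_OM}) to conclude. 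Their route is a bit slicker in that it avoids the auxiliary domination argument and reuses results already established in the paper; yours is more self-contained and makes the seminorm estimate explicit. For the isomorphism step the paper points to the proof of \cite{Tre67}*{Theorem~51.6} rather than to Schwartz's vector-valued description, but these are two packagings of the same nuclearity argument.
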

\begin{proof}
	 We start by showing that the map \(\Algebra O_M(\RR^k)\times \Algebra O_M(\RR^\ell)\to\Algebra O_M(\RR^{k+\ell})\) given by \((\varphi_1,\varphi_2)\mapsto \varphi_1\tensor \varphi_2\) is continuous.
	 Note that the map $\mathcal O_M(\RR^k) \to \mathcal O_M(\RR^{k+\ell})$, $\varphi_1 \mapsto \varphi_1 \tensor 1$ is just the pull-back with the projection $\RR^{k+\ell} \to \RR^k$ onto the first $k$ coordinates
	 and therefore continuous by \cref{res:OM:pullback}. Similarly the map $\varphi_2 \mapsto 1 \tensor \varphi_2$ is continuous.
	Since the multiplication \(\Algebra O_M(\RR^{k+\ell})\times\Algebra O_M(\RR^{k+\ell})\to\Algebra O_M(\RR^{k+\ell})\) is continuous by \cref{res:properties:om} \refitem{res:continuous_mult_OM} the claim follows.
	To show that the induced map \(\Algebra O_M(\RR^k)\completedtensor \Algebra O_M(\RR^\ell)\to\Algebra O_M(\RR^{k+\ell})\) is an isomorphism one can argue as in the proof of \cite{Tre67}*{Theorem~51.6}. This uses that \(\Algebra{O}_M(\RR^k)\) is nuclear, see \cref{res:properties:om} \refitem{item:om:nuclear}.
	The statement for $\mathcal O_C'$ follows by applying Fourier transform, see \cref{res:properties:oc}.
\end{proof}

\paragraph{Fibred distributions}

\begin{definition}
	For \(m\geq n\), let \(Y=\RR^m\), \(B=\RR^n\) and  \(F=\RR^{m-n}\). A \emph{polynomial fibre projection} is a map \(\pi\colon Y\to B\) together with a polynomial diffeomorphism \(\psi_\pi\colon Y\to B\times F\) such that \(\pi={\pr_1}\circ \psi_\pi\).
\end{definition}
\begin{example} \label{ex:polynomialFibreProjection}
	In the following let $\mathcal G = \X \rtimes G$ be the action groupoid of a polynomial right action of a graded Lie group $G$ on $\X = \mathbb R^d$.
 	The range and source map \(r,s\colon \mathcal G\to \X\) are polynomial fibre projections. We fix as polynomial diffeomorphisms \(\psi_r=\id\) and \(\psi_s(x,v)=(x\cdot v, v)\).
\end{example}
Note that \(\pi^*\colon \Algebra O_M(B)\to \Algebra O_M(Y)\) is continuous by \cref{res:OM:pullback}.

\begin{definition}
	Let \(\pi\colon Y\to B\) be a polynomial fibre projection. 
	Consider  $\mathcal O_M(Y)$ as an $\mathcal O_M(B)$-module via
	\begin{equation*}
		(f \cdot_\pi \varphi)(y)=\pi^* f(y)\cdot \varphi(y)= f(\pi(y))\cdot \varphi(y)
	\end{equation*}
	for $f\in\Algebra O_M(B)$ and $\varphi\in\Algebra O_M(Y)$,
	and $\mathcal O_M(B)$ as an $\mathcal O_M(B)$-module with respect to the multiplication.
	We define the \emph{$\pi$-fibred distribution} $\mathcal O_\pi'(Y) \coloneqq \mathcal L_{\Algebra O_M(B)}^\pi(\Algebra O_M(Y),\Algebra O_M(B))$
	as the space of continuous $\Algebra O_M(B)$-linear maps $\mathcal O_M(Y) \to \mathcal O_M(B)$.
\end{definition}
\begin{example} \label{ex:schwartz-as-fibred}
	Given a Schwartz function $f \in \Schwartz(Y)$, it is straightforward to verify that
	\begin{align*}
		v_f(\varphi)(x)=\int_F (\psi_s^{-1})^*f(x,v)(\psi_s^{-1})^*\varphi(x,v)\D v
	\end{align*}
	defines a \(\pi\)-fibred distribution \(v_f\in\Algebra O_\pi'(Y)\). In particular,
	\begin{align}
		u_f(\varphi)(x) &= \int_G f(x,v) \varphi(x,v) \D v , \label{eq:Schwartz-as-r-fibred}
		\\
		\tilde{u}_f(\varphi)(x) &= \int_G f(x\cdot v^{-1}, v)\varphi(x\cdot v^{-1},v)\D v  \label{eq:Schwartz-as-s-fibred}
	\end{align}
	with $\varphi \in \mathcal O_M(\X \times G)$ and $x \in \X$,
	define an $r$-fibred distribution $u_f \in \mathcal O_r'(\mathcal G)$ and an $s$-fibred distribution $\tilde u_f \in \mathcal O_s'(\mathcal G)$.
\end{example}
A fibred distribution $u \in \Algebra O'_\pi(Y)$ shall be thought of as a special distribution on $Y$,
which can be obtained by plugging a test function $\varphi \in \SmoothCompactSupp(Y) \eqqcolon \Algebra D(Y)$ into $u$
and integrating the result over $B$.
In other words such a distribution is really a ``smooth family'' over \(B\) of distributions 
on the fibre \(F\).
To make this interpretation more concrete, define the integration map 
\begin{equation}
	\IntegrationMap_\pi \colon \Algebra O'_\pi(Y) \to \Algebra D'(Y) \,,
	\quad
	\langle \IntegrationMap_\pi(u), \varphi\rangle = \int_B u(\varphi)(x) \D x
\end{equation}
for $\varphi \in \Algebra D(Y)$. As \(\varphi\) is smooth with compact support, it belongs to $\mathcal O_M(Y)$, so that we can apply $u$ to it. Suppose that the support of \(\varphi\) is contained in a compact set \(K\subset Y\). Let \(\chi\in\SmoothCompactSupp(B)\) be constant~\(1\) on the compact set \(\pi(K)\subset B\). Then by \(\Algebra O_M(B)\)-linearity of \(u\) one has \begin{equation*}\chi\cdot u(\varphi)=u(\pi^*\chi\cdot \varphi)=u(\varphi).\end{equation*}
This shows that \(u(\varphi)\) is compactly supported and, hence, that the integration is well-defined. 
Note that \(\IntegrationMap_r(u_f)=\IntegrationMap_s(\tilde u_f)\) holds for the $r$ and $s$-fibred distributions associated to a Schwartz function $f \in \Schwartz(Y)$ (see \cref{ex:schwartz-as-fibred}). 

\begin{lemma} \label{res:PhiR:injective}
	The map $\IntegrationMap_\pi \colon \mathcal O_\pi'(Y) \to \mathcal D'(Y)$ is injective.
\end{lemma}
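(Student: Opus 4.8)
The plan is to show that if $u \in \mathcal O_\pi'(Y)$ satisfies $\IntegrationMap_\pi(u) = 0$ in $\mathcal D'(Y)$, then $u(\varphi) = 0$ in $\mathcal O_M(B)$ for every $\varphi \in \mathcal O_M(Y)$. The essential point is that $u(\varphi)$ is a genuine \emph{function} on $B$ (an element of $\mathcal O_M(B) \subseteq \Smooth(B)$), so to conclude $u(\varphi) = 0$ it suffices to show that it vanishes when paired against enough test functions on $B$, or even just that it vanishes at every point of $B$.

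First I would reduce to localized test functions. Fix $\varphi \in \mathcal O_M(Y)$ and a point $x_0 \in B$; I want to compute $u(\varphi)(x_0)$. Pick a bump function $\chi \in \SmoothCompactSupp(B)$ with $\chi \equiv 1$ near $x_0$ and, using the trivialization $\psi_\pi \colon Y \to B \times F$, a function $\eta$ on $Y$ which is supported in $\pi^{-1}(\supp \chi)$ and equals $1$ on a neighborhood of $\pi^{-1}(x_0) \cap (\text{relevant compact set})$; the difficulty is that fibres are noncompact, so $\eta$ cannot be taken compactly supported globally. Instead I would proceed more carefully: by $\mathcal O_M(B)$-linearity, $\chi \cdot u(\varphi) = u(\pi^*\chi \cdot \varphi)$, and $\pi^*\chi \cdot \varphi \in \mathcal O_M(Y)$ is supported in the ``tube'' $\pi^{-1}(\supp\chi)$. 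So without loss of generality $\varphi$ is supported in such a tube and I must show $u(\varphi)$ vanishes near $x_0$ whenever $\IntegrationMap_\pi(u) = 0$.

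The key step is to approximate $\varphi$ (supported in a tube $\pi^{-1}(\supp\chi)$) in the topology of $\mathcal O_M(Y)$ by elements of $\SmoothCompactSupp(Y) = \Algebra D(Y)$ that are still supported in a slightly larger tube. Using the trivialization, this amounts to approximating a function on $\supp\chi \times F$ by compactly supported ones in the $\mathcal O_M$-topology in the $F$-direction, which is possible because $\Algebra D(\RR^{m-n})$ is dense in $\mathcal O_M(\RR^{m-n})$; multiplying by a fixed cutoff in the $B$-direction keeps everything within a fixed tube, and one checks density using the explicit seminorms of \cref{def:om:oc} together with \cref{res:OM_continuity}. Then for $\varphi_k \to \varphi$ with $\varphi_k \in \Algebra D(Y)$ supported in a fixed tube $T$, continuity of $u$ gives $u(\varphi_k) \to u(\varphi)$ in $\mathcal O_M(B)$, in particular pointwise on $B$. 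On the other hand, for $\varphi_k \in \Algebra D(Y)$ the computation preceding the lemma shows $u(\varphi_k) \in \SmoothCompactSupp(B)$ and that $\int_B u(\varphi_k)(x)\,\D x = \langle \IntegrationMap_\pi(u), \varphi_k\rangle = 0$; more usefully, testing against any $\rho \in \Algebra D(B)$, we have $\rho \cdot u(\varphi_k) = u(\pi^*\rho \cdot \varphi_k)$ and $\pi^*\rho \cdot \varphi_k \in \Algebra D(Y)$, so $\int_B \rho(x) u(\varphi_k)(x)\,\D x = \langle \IntegrationMap_\pi(u), \pi^*\rho \cdot \varphi_k\rangle = 0$. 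Passing to the limit, $\int_B \rho(x) u(\varphi)(x)\,\D x = 0$ for all $\rho \in \Algebra D(B)$; since $u(\varphi)$ is continuous, $u(\varphi) \equiv 0$. As $\varphi \in \mathcal O_M(Y)$ was arbitrary, $u = 0$.

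I expect the main obstacle to be the density argument: showing that $\varphi \in \mathcal O_M(Y)$ supported in a tube $\pi^{-1}(K)$ (with $K \subseteq B$ compact) can be approximated in $\mathcal O_M(Y)$ by $\Algebra D(Y)$-functions supported in a fixed slightly larger tube. Transporting via $\psi_\pi$ to $K \times F \subseteq B \times F$, one fixes $\theta \in \Algebra D(B)$ with $\theta \equiv 1$ on $K$ and a sequence $\zeta_k \in \Algebra D(F)$ with $\zeta_k \to 1$ in $\mathcal O_M(F)$ (e.g.\ $\zeta_k(y) = \zeta(y/k)$ for a fixed $\zeta \equiv 1$ near $0$), and sets $\varphi_k = (\theta \tensor \zeta_k) \cdot \varphi$; continuity of multiplication on $\mathcal O_M$ (\cref{res:properties:om}\refitem{res:continuous_mult_OM}) together with \cref{res:OM_continuity} gives $\varphi_k \to \varphi$ in $\mathcal O_M(B \times F) \cong \mathcal O_M(Y)$, and all $\varphi_k$ are supported in $\supp\theta \times \RR^{m-n}$, hence in a fixed tube. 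One should double-check that $\zeta_k \to 1$ genuinely holds in $\mathcal O_M(F)$ with the seminorms $\norm{\argument}_{f,b}$ — this is where the ``slowly increasing'' (as opposed to bounded) nature matters, but it follows from the standard fact that dilated bump functions converge to $1$ in $\mathcal O_M$. Everything else is routine bookkeeping with the fibre projection and the integration map.
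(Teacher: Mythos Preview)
Your argument is correct and rests on exactly the same two ingredients as the paper's proof: density of $\Algebra D(Y)$ in $\Algebra O_M(Y)$, and $\Algebra O_M(B)$-linearity of $u$ to localize with a bump function on the base. The paper organizes these more economically: instead of fixing $\varphi \in \Algebra O_M(Y)$, cutting it to a tube, and then approximating by compactly supported functions, it invokes density at the outset to obtain some $\varphi \in \Algebra D(Y)$ with $u(\varphi) \neq 0$, picks a single bump $\chi \in \Algebra D(B)$ concentrated near a point where $u(\varphi)$ does not vanish, and shows directly that $\langle \IntegrationMap_\pi(u), \pi^*\chi \cdot \varphi\rangle \neq 0$ by an elementary estimate. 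This bypasses both the tube approximation and the passage to the limit; in particular your work on approximating within a fixed tube and checking $\zeta(\cdot/k) \to 1$ in $\Algebra O_M(F)$ is correct but unnecessary. Note also that once you are willing to use density of $\Algebra D(Y)$ in $\Algebra O_M(Y)$, the initial reduction via $\chi$ is superfluous: for $\varphi \in \Algebra D(Y)$ and any $\rho \in \Algebra D(B)$ one already has $\pi^*\rho \cdot \varphi \in \Algebra D(Y)$, so your ``$\int_B \rho\, u(\varphi) = 0$'' argument applies immediately without first passing to a tube.
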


\begin{proof} Let $u_1, u_2 \in \mathcal O'_\pi(Y)$ and assume that $u_1 \neq u_2$. 
	Since $\Algebra D(Y)$ is dense in $\Algebra O_M(Y)$,
	there exists a test function $\varphi \in \Algebra D(Y)$ such that $u_1(\varphi) \neq u_2(\varphi)$.
	There is a point $y \in B$ such that $u_1(\varphi)(y) \neq u_2(\varphi)(y)$,
	and by continuity there exist an open neighbourhood $U \subset B$ of $y$
	such that $\abs{u_i(\varphi)(x) - u_i(\varphi)(y)} \leq \frac 1 3 \abs{u_1(\varphi)(y) - u_2(\varphi)(y)}$ holds for all $x \in U$ and $i=1,2$.
	Choose a smooth cut-off function $\chi \colon B \to [0,1]$ with support in $U$ satisfying $\chi(y) = 1$.
	Then 
	\begin{align*}
		&\abs[\big]{\langle \IntegrationMap_\pi(u_1) - \IntegrationMap_\pi(u_2), \pi^* \chi \cdot \varphi \rangle}
		= \abs[\Big]{\int_B (u_1 - u_2)(\pi^* \chi \cdot \varphi)(x) \D x}
		= \abs[\Big]{\int_B (u_1 - u_2)(\varphi)(x) \chi(x) \D x}
		\\&\!\geq \abs[\Big]{ \!\int_B (u_1(\varphi)(y) - u_2(\varphi)(y)) \chi(x) \D x } - \! \int_B \abs{u_1(\varphi)(x) - u_1(\varphi)(y)} \chi(x) \D x - \!\int_B \abs{u_2(\varphi)(x) - u_2(\varphi)(y)} \chi(x) \D x 
		\\&\!\geq \frac 1 3 \abs{u_1(\varphi)(y) - u_2(\varphi)(y)} \int_B \chi(x) \D x > 0 .
	\end{align*}
	Hence $\IntegrationMap_\pi(u_1) \neq \IntegrationMap_\pi(u_2)$.
\end{proof}
For the fibred distributions \(\mathcal E'_\pi(Y)\) of \cite{LMV17} for a submersion \(\pi\colon Y\to B\), one can locally trivialize \(Y=B\times F\) and obtain a description of fibred distributions as a family of \(u_x\in\mathcal E'(F)\) for \(x\in B\), see \cite{LMV17}*{Proposition~2.7}. In our given setting, $Y$ can be globally identified with the product $B \times F$.
This allows one to describe fibred distributions in the following way.
By extending functions constantly in $B$, one obtains a map
\begin{equation}
	\AlgebraicIso_\pi \colon \mathcal L^\pi_{\mathcal O_M(B)}(\mathcal O_M(Y), \mathcal O_M(B)) \to \mathcal L(\mathcal O_M(F), \mathcal O_M(B)) ,
	\quad
	\AlgebraicIso_\pi(u)(\varphi) = u (\psi_\pi^*(1 \tensor \varphi)) .
\end{equation}
\begin{lemma} \label{res:identifications:fibredDistributions}
	The map $\AlgebraicIso_\pi \colon \mathcal O'_\pi( Y) \to \mathcal L(\Algebra O_M(F),\Algebra O_M(B))$
	is an isomorphism of vector spaces.
\end{lemma}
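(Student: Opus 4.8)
The plan is to exhibit an explicit two-sided inverse of $\AlgebraicIso_\pi$, after first reducing to the trivial fibration $\pr_1 \colon B \times F \to B$.

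For the reduction, I would use that $\psi_\pi \colon Y \to B \times F$ is a polynomial diffeomorphism, so by \cref{res:OM:pullback} both $\psi_\pi^* \colon \Algebra O_M(B \times F) \to \Algebra O_M(Y)$ and $(\psi_\pi^{-1})^*$ are continuous, hence $\psi_\pi^*$ is a linear isomorphism. Since $\pr_1 \circ \psi_\pi = \pi$ it intertwines the module structures, $\psi_\pi^*(f \cdot_{\pr_1} \Phi) = f \cdot_\pi \psi_\pi^*(\Phi)$, so precomposition $u \mapsto u \circ \psi_\pi^*$ is an isomorphism $\Algebra O_\pi'(Y) \to \mathcal L_{\Algebra O_M(B)}^{\pr_1}(\Algebra O_M(B \times F), \Algebra O_M(B)) =: \Algebra O_{\pr_1}'(B\times F)$, and one checks directly that it carries $\AlgebraicIso_\pi$ to the restriction map $v \mapsto \bigl(\varphi \mapsto v(1 \tensor \varphi)\bigr)$, where $1 \tensor \varphi$ now denotes the function $(x,y) \mapsto \varphi(y)$ on $B \times F$ (this corresponds to taking $\psi_{\pr_1} = \id$). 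So it suffices to prove that $v \mapsto \bigl(\varphi \mapsto v(1 \tensor \varphi)\bigr)$ is a linear isomorphism $\Algebra O_{\pr_1}'(B \times F) \to \mathcal L(\Algebra O_M(F), \Algebra O_M(B))$.

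Next, I would use \cref{res:OM_continuity} to identify $\Algebra O_M(B \times F) \cong \Algebra O_M(B) \completedtensor \Algebra O_M(F)$; under this identification the $\Algebra O_M(B)$-module structure $f \cdot_{\pr_1} \argument$ becomes multiplication on the first tensor factor, and $1 \tensor \varphi$ is the elementary tensor $1 \tensor \varphi$ with $1 \in \Algebra O_M(B)$ the constant function. Given a continuous linear $T \colon \Algebra O_M(F) \to \Algebra O_M(B)$, define
\[
	\widetilde T \defeq m_B \circ (\id_{\Algebra O_M(B)} \completedtensor T) \colon \Algebra O_M(B) \completedtensor \Algebra O_M(F) \to \Algebra O_M(B),
\]
where $m_B$ is the (linearised pointwise) multiplication $\Algebra O_M(B) \completedtensor \Algebra O_M(B) \to \Algebra O_M(B)$, continuous by \cref{res:properties:om} \refitem{res:continuous_mult_OM}, and $\id \completedtensor T$ is continuous by functoriality of the completed tensor product (legitimate here since all spaces involved are nuclear by \cref{res:properties:om} \refitem{item:om:nuclear}). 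On elementary tensors $\widetilde T(g \tensor \varphi) = g \cdot T(\varphi)$; since $\widetilde T$ is continuous and the span of elementary tensors is dense in $\Algebra O_M(B) \completedtensor \Algebra O_M(F)$, it follows that $\widetilde T$ is $\Algebra O_M(B)$-linear, so $T \mapsto \widetilde T$ maps into $\Algebra O_{\pr_1}'(B \times F)$.

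Finally I would verify that $T \mapsto \widetilde T$ is inverse to the restriction map. One composite is immediate: $\widetilde T(1 \tensor \varphi) = 1 \cdot T(\varphi) = T(\varphi)$. For the other, let $v \in \Algebra O_{\pr_1}'(B\times F)$ and set $T(\varphi) = v(1 \tensor \varphi)$; then $\widetilde T$ and $v$ agree on elementary tensors, since $\widetilde T(g \tensor \varphi) = g \cdot v(1 \tensor \varphi) = v(g \cdot_{\pr_1}(1 \tensor \varphi)) = v(g \tensor \varphi)$ by $\Algebra O_M(B)$-linearity of $v$, hence $\widetilde T = v$ by density and continuity. This proves the claim. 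The only delicate points are keeping the several module structures aligned through the identifications and the two standard facts about nuclear locally convex spaces invoked above (density of the algebraic tensor product in $\completedtensor$ and functoriality of $\completedtensor$ for continuous linear maps), which I would justify as in the proof of \cite{Tre67}*{Theorem~51.6} already cited for \cref{res:OM_continuity}; everything else is routine bookkeeping.
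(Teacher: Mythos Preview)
Your proof is correct and follows essentially the same approach as the paper: both construct the inverse as $m_B \circ (\id \completedtensor T)$ (after pulling back via $\psi_\pi$), verify $\Algebra O_M(B)$-linearity on elementary tensors using density, and check the two compositions directly. The only cosmetic difference is that you separate out the reduction to the trivial fibration as a preliminary step, whereas the paper folds $(\psi_\pi^{-1})^*$ into the formula for the inverse from the outset.
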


\begin{proof}
	We define a map $\AlgebraicIso_\pi^{-1} \colon \mathcal L(\Algebra O_M(F),\Algebra O_M(B)) \to \mathcal O'_\pi(Y)$
	and check that it is the inverse of $\AlgebraicIso_\pi$. Given $U \in  \mathcal L(\Algebra O_M(F),\Algebra O_M(B))$ define
	$\AlgebraicIso_\pi^{-1}(U) = p \circ (\mathrm{id} \tensor U)\circ(\psi_\pi^{-1})^*$,
	where $\mathrm{id} \tensor U \colon \Algebra O_M(B) \completedtensor \Algebra O_M(F) \to \Algebra O_M(B) \completedtensor \Algebra O_M(B)$ is continuous
	and $p \colon \Algebra O_M(B) \completedtensor \Algebra O_M(B) \to \Algebra O_M(B)$ is the map induced by the continuous multiplication, see \cref{res:properties:om}~\refitem{res:continuous_mult_OM}. Here we identify $\Algebra O_M(B\times F)$ with $\Algebra O_M(B) \completedtensor \Algebra O_M(F)$, see \cref{res:OM_continuity}. Observe that
	\begin{equation}\label{eq:pi-pullback}
		\pi^*f=\psi_\pi^*(\pr_1^*f) =\psi_\pi^*(f\otimes 1) \quad\text{ for all }f\in\Algebra O_M(B).
	\end{equation}
	To show that \(\AlgebraicIso_\pi^{-1}(U)\) is indeed \(\Algebra O_M(B)\)-linear, it suffices to consider by continuity all \(f\in\Algebra O_M(B)\) and functions of the form $\varphi = \psi_\pi^*(\varphi_1 \tensor \varphi_2)$ for \(\varphi_1\in \Algebra O_M(B)\) and \(\varphi_2\in \Algebra O_M(F)\). For these one has by \eqref{eq:pi-pullback}
	\begin{equation*}
		\Psi_\pi^{-1}(U)(\pi^*f\cdot \varphi)=p\circ(\id\otimes U)\circ(\psi_\pi^{-1})^*(\psi^*_\pi(f\otimes 1)\cdot\psi_\pi^*(\varphi_1\otimes\varphi_2))=f\cdot \Psi_\pi^{-1}(U)(\varphi).
	\end{equation*} 
	It is clear that $\Psi_\pi(\Psi_\pi^{-1} U)(\varphi) = \AlgebraicIso_\pi^{-1}(U)(\psi_\pi^*(1 \tensor \varphi)) = U(\varphi)$ for all \(\varphi\in \Algebra O_M(F)\).
	It remains to check $\Psi_\pi^{-1}(\Psi_\pi(u)) = u$ for \(\varphi\in\Algebra O_M(Y)\). As both maps are continuous and \(\Algebra O_M(B)\)-linear it suffices to show they coincide on functions of the form \(\psi^*_\pi(1\otimes\varphi)\) for \(\varphi\in\Algebra O_M(F)\) for which one computes
	\begin{equation*}
		\Psi_\pi^{-1}(\Psi_\pi(u))(\psi_\pi^*(1\tensor \varphi)) = \Psi_\pi(u)(\varphi) 
		 =u(\psi_\pi^* (1\tensor \varphi)).\qedhere
	\end{equation*} 
\end{proof}

\begin{example}
	Recall that Schwartz functions give rise to \(r\)- and \(s\)-fibred distributions on \(\mathcal G\), 
	see \cref{ex:schwartz-as-fibred}. Under the isomorphisms from the previous lemma one obtains
	\begin{align*}
		U_f(\varphi)(x)&=\Psi_r(u_f)(\varphi)(x)=\int_G f(x,v)\varphi(v)\D v,\\
		\tilde U_f(\varphi)(x)&=\Psi_s(\tilde u_f)(\varphi)(x)=\int_G f(x\cdot v^{-1} ,v)\varphi(v)\D v.
	\end{align*}
\end{example}
It follows from \cite{Tre67}*{Proposition~50.5} that the there are isomorphisms of vector spaces
\begin{equation} \label{eq:fibredDistribution:Isos}
	\mathcal O_M(F)' \completedtensor \mathcal O_M(B) 
	\cong \mathcal L(\Algebra O_M(F), \Algebra O_M(B))
	\cong \mathcal L_{\Algebra O_M(B)}^\pi (\Algebra O_M(Y), \Algebra O_M(B))
	= \mathcal O'_\pi(Y) .
\end{equation}

\begin{remark}
	In \cite{Tre67}*{Proposition~50.5} it is even shown that the isomorphism
	$\mathcal O_M(F)' \completedtensor \mathcal O_M(B) \cong \mathcal L(\Algebra O_M(F), \Algebra O_M(B))$
	is an isomorphism of topological vector spaces 
	if the latter space is endowed with the topology of uniform convergence on bounded sets.
	One can show that under $\Psi_\pi^{-1}$ from \cref{res:identifications:fibredDistributions},
	the topology of uniform convergence on bounded subsets of \(\mathcal L(\Algebra O_M(F),\Algebra O_M(B))\)
	induces the topology of uniform convergence on sets of the form $\overline{\mathrm{conv}}(\psi_\pi^*(B_1 \tensor B_2))$ 
	on \(\mathcal L^\pi_{\mathcal O_M(B)}(\Algebra O_M(Y),\Algebra O_M(B))\), 
	where $B_1 \subset \Algebra O_M(F)$ and $B_2 \subset \Algebra O_M(B)$ are both bounded and $\overline{\mathrm{conv}}$ denotes the closed convex hull.
	Characterizing spaces for which this topology coincides with the topology of uniform convergence on bounded sets
	is Grothendieck's problem of topologies \cite{Gro55}*{Ch.~I, p.~33-34}, but the authors do not know whether it has a positive answer for $\Algebra O_M(\RR^k)$.
	Since these topologies do not play a role in the following, we omit the details.
\end{remark}

\paragraph{Convolution}

That \(\Algebra O'_r(\mathcal G)\) is a convolution algebra could be seen using the same arguments as in \cite{LMV17}*{Theorem~3.2}.
Here, we give a description of the convolution under the isomorphisms of \cref{res:identifications:fibredDistributions}.

\begin{proposition}\label{res:convolution}
	Let \(\mathcal G=\X\rtimes G\) be the action groupoid of a polynomial action of the graded Lie group \(G\) on a space \(\X\cong\RR^d\). Then \(\Algebra{O}'_r(\mathcal G)\) admits a convolution extending \eqref{eq:convolution} that gives it the structure of an associative unital algebra.
\end{proposition}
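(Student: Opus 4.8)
The plan is to transport the statement along the vector-space isomorphism $\AlgebraicIso_r\colon\Algebra O_r'(\mathcal G)\xrightarrow{\ \sim\ }\mathcal L(\Algebra O_M(G),\Algebra O_M(\X))$ of \cref{res:identifications:fibredDistributions} (with $B=\X$, $F=G$ and the fixed polynomial diffeomorphism $\psi_r=\id$ of \cref{ex:polynomialFibreProjection}), under which a fibred distribution $u$ becomes a continuous linear map $U=\AlgebraicIso_r(u)$, to be thought of as the range-fibrewise family $x\mapsto u_x\in\Algebra O_M'(G)$ with $U(\psi)(x)=\langle u_x,\psi\rangle$. For $u,v\in\Algebra O_r'(\mathcal G)$ I would then \emph{define} $u*v\coloneqq\AlgebraicIso_r^{-1}(U*V)$, where
\[
	U*V \coloneqq p\circ(\id\completedtensor U)\circ\sigma^*\circ(\id\completedtensor V)\circ m^* \colon \Algebra O_M(G)\to\Algebra O_M(\X)
\]
and: $m\colon G\times G\to G$ is the group multiplication and $\sigma\colon\X\times G\to G\times\X$, $\sigma(x,w)=(w,\theta_w(x))$; the product spaces are identified via \cref{res:OM_continuity}, so that $m^*\psi\in\Algebra O_M(G\times G)$ is $(w,v')\mapsto\psi(wv')$, the map $\id\completedtensor V$ applies $V$ to the $v'$-variable (landing in $\Algebra O_M(G\times\X)$), the pull-back $\sigma^*$ lands in $\Algebra O_M(\X\times G)$, the map $\id\completedtensor U$ applies $U$ to the $w$-variable (landing in $\Algebra O_M(\X\times\X)$), and $p$ is induced by the multiplication of $\Algebra O_M(\X)$. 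Unwound, this reads $(u*v)_x(\psi)=\langle u_x,\ w\mapsto\langle v_{\theta_w(x)},\ v'\mapsto\psi(wv')\rangle\rangle$.

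The first step is well-definedness. The maps $m$ and $\sigma$ are polynomial (the former by the triangular group law \cref{res:triangular}, the latter because $\theta$ is a polynomial action), hence $m^*$ and $\sigma^*$ are continuous by \cref{res:OM:pullback}; tensoring the identity with the continuous maps $U$ and $V$ is continuous exactly as in the proof of \cref{res:identifications:fibredDistributions}, all spaces being nuclear by \cref{res:properties:om}~\refitem{item:om:nuclear}; and $p$ is continuous by \cref{res:properties:om}~\refitem{res:continuous_mult_OM}. So $U*V\in\mathcal L(\Algebra O_M(G),\Algebra O_M(\X))$, whence $u*v\in\Algebra O_r'(\mathcal G)$ (in particular $\Algebra O_M(\X)$-linearity is automatic, since $\AlgebraicIso_r$ is an isomorphism), and $(u,v)\mapsto u*v$ is bilinear and separately continuous once $\mathcal L(\Algebra O_M(G),\Algebra O_M(\X))$ carries the topology of uniform convergence on bounded sets, under which $\AlgebraicIso_r$ becomes a topological isomorphism (remark after \eqref{eq:fibredDistribution:Isos}).

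Next I would verify the three algebraic properties. \emph{Extension of \eqref{eq:convolution}}: for $f,g\in\Schwartz(\X\times G)$ one has $\AlgebraicIso_r(u_f)(\psi)(x)=\int_G f(x,v)\psi(v)\,\D v$, and pushing $\psi$ through the five maps (using the substitution $v=wv'$ and Fubini, valid for Schwartz functions) gives $\bigl(\AlgebraicIso_r(u_f)*\AlgebraicIso_r(u_g)\bigr)(\psi)(x)=\int_G\!\int_G f(x,w)\,g(\theta_w(x),v')\,\psi(wv')\,\D v'\,\D w=\AlgebraicIso_r(u_{f*g})(\psi)(x)$, so $u_f*u_g=u_{f*g}$. \emph{Unit}: the distribution $e\in\Algebra O_r'(\mathcal G)$ with $e(\varphi)(x)=\varphi(x,0)$ (i.e.\ $e_x=\delta_0$, corresponding to $\delta_0\completedtensor 1$ under \eqref{eq:fibredDistribution:Isos}) is $\Algebra O_M(\X)$-linear, and evaluating the composite at $w=0$, resp.\ at $v'=0$, collapses it, showing $e*u=u=u*e$. \emph{Associativity}: since $*$ is bilinear and separately continuous and since the image of the algebraic tensor product $\Schwartz(\X)\tensor\Schwartz(G)\subseteq\Schwartz(\X\times G)$ under $f\mapsto u_f$ is dense in $\Algebra O_r'(\mathcal G)$ — using that $\Schwartz$ is dense in $\Algebra O_M(\X)$ and, via the Fourier isomorphism $\Algebra O_M'(G)\cong\Algebra O_C(G)$ together with the density of $\Schwartz$ in $\Algebra O_C(G)$ (\cref{res:properties:oc}), also in $\Algebra O_M'(G)$, plus density of algebraic tensor products in completed ones — associativity reduces to that of the Schwartz convolution of \cref{res:algebra:schwartz}. (Alternatively one expands $(U*V)*W$ and $U*(V*W)$ into compositions of pull-backs and $\completedtensor$-maps and matches them using $m\circ(m\times\id)=m\circ(\id\times m)$ and the action axiom $\theta_{w_1}(\theta_{w_0}(x))=\theta_{w_0w_1}(x)$.)

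I expect the main obstacle to be the analytic content hidden in the formula $(u*v)_x(\psi)=\langle u_x,\,w\mapsto\langle v_{\theta_w(x)},\,v'\mapsto\psi(wv')\rangle\rangle$: one has to know that $(x,w)\mapsto\langle v_{\theta_w(x)},\,v'\mapsto\psi(wv')\rangle$ is again slowly increasing in a way that varies slowly increasingly in $x$ and continuously in $\psi$, so that $u_x$ may legitimately be applied and the outcome lies in $\Algebra O_M(\X)$. Recasting the convolution as the above composition of continuous maps is precisely what handles this, the work being done by the polynomiality of $\theta$ (hence of $\sigma$) together with the stability of $\Algebra O_M$ under polynomial pull-backs (\cref{res:OM:pullback}), multiplication (\cref{res:properties:om}) and the tensor identifications (\cref{res:OM_continuity}). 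The only other subtlety is that $\AlgebraicIso_r$ is a priori merely a vector-space isomorphism, so in the density step one should either fix the bounded-convergence topology as above or carry out the purely algebraic expansion instead.
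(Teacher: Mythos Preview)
Your definition $U*V = p\circ(\id\completedtensor U)\circ\sigma^*\circ(\id\completedtensor V)\circ m^*$ is the paper's formula in different packaging: the paper writes $U_1*U_2=\Delta^*\circ(\id\tensor U_1)\circ\psi_s^*\circ(U_2\tensor\id)\circ F^*\circ m^*$, and since your $\sigma$ is the flip composed with $\psi_s$ and $p=\Delta^*$ under $\Algebra O_M(\X)\completedtensor\Algebra O_M(\X)\cong\Algebra O_M(\X\times\X)$, the two composites unwind to the same thing. Your checks of well-definedness, of the extension of \eqref{eq:convolution}, and of the unit are correct and agree with the paper.

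The one soft spot is the density argument for associativity. You assert that $*$ is separately continuous for the bounded-convergence topology on $\mathcal L(\Algebra O_M(G),\Algebra O_M(\X))$ but do not prove it, and this is precisely where the ground is unstable. Writing $(U*V)(\psi)=\AlgebraicIso_r^{-1}(U)\bigl(A(\psi)\bigr)$ with $A=\sigma^*\circ(\id\completedtensor V)\circ m^*$ fixed, continuity in $U$ would follow if $U\mapsto\AlgebraicIso_r^{-1}(U)=p\circ(\id\completedtensor U)$ were continuous from $\mathcal L(\Algebra O_M(G),\Algebra O_M(\X))$ to $\mathcal L(\Algebra O_M(\X\times G),\Algebra O_M(\X))$ for the bounded-convergence topologies. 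But the paper's own remark after \eqref{eq:fibredDistribution:Isos} says that $\AlgebraicIso_r^{-1}$ is only known to be continuous into the weaker topology of uniform convergence on closed convex hulls of sets $B_1\tensor B_2$, and whether these exhaust all bounded subsets of $\Algebra O_M(\X\times G)$ is exactly Grothendieck's open problem of topologies for $\Algebra O_M$. (Your parenthetical that ``$\AlgebraicIso_r$ becomes a topological isomorphism'' slightly misreads the remark: that is asserted for $\Algebra O_M'(G)\completedtensor\Algebra O_M(\X)\cong\mathcal L$, not for $\AlgebraicIso_r$.) The paper therefore avoids any density or continuity argument and proves associativity by a direct diagrammatic expansion, using coassociativity of $m$ and of $\Delta$ together with two identities relating $\psi_s^*$ to $\overline m^*$ and to $\Delta^*$ --- which is your stated alternative. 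Commit to that route.
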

\begin{proof}
	We define a convolution on $\mathcal L(\Algebra O_M(G), \Algebra O_M(\X))$ 
	and use the isomorphism $\Psi_r$ to obtain a convolution on $\Algebra O'_r(\mathcal G)$.
	Denote by $\Delta \colon \X \to \X \times \X$, $\Delta(x) = (x,x)$ the diagonal embedding
	and by \(F\colon G \times G \to G \times G\), \(F(v,w)=(w,v)\) the flip map. 
	In the following, we use the identification $\Algebra O_M(\RR^k \times \RR^\ell) \cong \Algebra O_M(\RR^k) \completedtensor \Algebra O_M(\RR^\ell)$ from \cref{res:OM_continuity}
	and the map $\psi_s$ from \cref{ex:polynomialFibreProjection}.
	For $U_1, U_2 \in \mathcal L(\Algebra O_M(G), \Algebra O_M(\X))$, let $U_1 * U_2$ be the composition
\begin{multline*}
	\Algebra O_M(G) 
	\xrightarrow{m^*} \Algebra O_M(G \times G)\xrightarrow{F^*} \Algebra O_M(G \times G)
	\xrightarrow{U_2 \tensor \mathrm{id}} \Algebra O_M(\X \times G)
	\\
	\xrightarrow{\psi_s^*} \Algebra O_M(\X \times G)
	\xrightarrow{\mathrm{id} \tensor U_1} \Algebra O_M(\X \times \X)
	\xrightarrow{\Delta^*} \Algebra O_M(\X) \,,
\end{multline*}
which we also write graphically as
\begin{equation*}
	U_1 * U_2 = 
	\begin{boxdiagram}{M}
		\DrawOneToTwoBox{m^*}{M}{G}{L}{U}
		\DrawTwoLineBox{F^*}{L}{G}{U}{G}
		\DrawOneLineBox{U_2}{U}{G}
		\DrawTwoLineBox{\psi_s^*}{L}{G}{U}{S}
		\DrawOneLineBox{U_1}{L}{G}
		\DrawTwoToOneBox{\Delta^*}{L}{S}{U}{S}{M}
		\DrawFinalLines{{M/S}}
	\end{boxdiagram} \,.
\end{equation*}
	The meaning of the graphical expression should be clear: it needs to be read from left to right,
	a solid line stands for a tensor factor $\Algebra O_M(G)$ and a dashed line for $\Algebra O_M(\X)$,
	maps are represented by boxes and applied to the tensor factors corresponding to the lines that enter the boxes.
	The maps $m^*$, \(F^*\), $\psi_s^*$ and $\Delta^*$ are continuous by \cref{res:OM:pullback}.
	Therefore $U_1 * U_2$ is indeed a continuous linear map from $\Algebra O_M(G)$ to $\Algebra O_M(G)$. 
	
	To see that the convolution defined above generalizes the convolution in \eqref{eq:convolution},
	we compute for \(f_1,f_2\in\Schwartz(\mathcal G)\) and \(\varphi\in\Algebra O_M(G)\) that 
	\begin{align} \label{eq:UonSchwartzFunction}
		\psi_s^* \circ ( U_{f_2}\tensor\id) \circ F^*\circ m^* \varphi & \colon & (y,w) & \mapsto \int_G f_2(y \cdot w,v) \varphi(w v) \D v 
		\\
		\Delta^* \circ (\id\tensor U_{f_1}) \circ \psi_s^* \circ ( U_{f_2}\tensor\id) \circ F^*\circ m^* \varphi & \colon & x & \mapsto \int_{G \times G} f_1(x,w) f_2(x \cdot w,v) \varphi(w v) \D w \D v  \notag
		\\
		&&&\phantom{XXX} =
		\int_{G} \int_{G} f_1(x,w) f_2(x \cdot w,w^{-1}v) \D w \varphi(v) \D v. \notag
	\end{align}
	This shows $U_{f_1} * U_{f_2} = U_{f_1 * f_2}$, hence $*$ extends the previously defined convolution on $\Schwartz(\X \rtimes G)$.
	
	To show associativity of $*$, let \(\overline m= m\circ F\) to shorten notation. 
	Associativity of $m$ implies that $\overline m \circ (\overline m \times \id)(u,v,w) = \overline m(vu,w) = wvu = \overline m(u,wv) = \overline m \circ (\id \times \overline m)(u,v,w)$.
	Similarly, $(\Delta \times \id) \circ \Delta = (\id \times \Delta) \circ \Delta$ agree as maps $\X \to \X \times \X \times \X$.
	Dualizing we obtain
	\begin{equation*}
		\begin{boxdiagram}{B}
		\DrawBox{\overline m^*}{C}{U}{{HL/G}}{{U,C}}
		\DrawBox{\overline m^*}{B}{A}{{U/G}}{{B,A}}
		\DrawFinalLines{{C/G,B/G,A/G}}
		\end{boxdiagram}
	=
		\begin{boxdiagram}{B}
			\DrawBox{\overline m^*}{L}{A}{{HU/G}}{{A,L}}
			\DrawBox{\overline m^*}{C}{B}{{L/G}}{{C,B}}
			\DrawFinalLines{{C/G,B/G,A/G}}
		\end{boxdiagram}
	\quad
	\text{and}
	\quad
		\begin{boxdiagram}{B}
			\DrawBox{\Delta^*}{B}{A}{{B/S, A/S}}{{U}}
			\DrawBox{\Delta^*}{C}{U}{{U/S,C/S}}{{HL}}
			\DrawFinalLines{{HL/S}}
		\end{boxdiagram}
	=
		\begin{boxdiagram}{B}
			\DrawBox{\Delta^*}{C}{B}{{C/S,B/S}}{{L}}
			\DrawBox{\Delta^*}{L}{A}{{A/S,L/S}}{{HU}}
			\DrawFinalLines{{HU/S}}
		\end{boxdiagram} \,\,\,.
	\end{equation*}
	A similar computation yields
	\begin{equation*}
		\begin{boxdiagram}{B}
			\DrawBox{\psi_s^*}{L}{A}{{L/G,A/S}}{{L,A}}
			\DrawBox{\overline m^*}{C}{B}{{L/G}}{{C,B}}
			\DrawFinalLines{{C/G,B/G,A/S}}
		\end{boxdiagram} 
	= 
		\begin{boxdiagram}{B}
			\DrawBox{\overline m^*}{C}{B}{{L/G}}{{C,B}}
			\DrawBox{\psi_s^*}{B}{A}{{B/G,A/S}}{{B,A}}
			\DrawBox{\psi_s^*}{C}{A}{{C/G,A/S}}{{C,A}}
			\DrawFinalLines{{C/G,A/S}}
			\DrawLeftLine{B}{{2/3/G}}
			\DrawRightLine{B}{{3/4/G}}
		\end{boxdiagram} 
\quad\text{and}\quad
		\begin{boxdiagram}{B}
			\DrawBox{\psi_s^*}{C}{A}{{C/G,A/S}}{{C,A}}
			\DrawBox{\psi_s^*}{C}{B}{{C/S}}{{C,B}}
			\DrawBox{\Delta^*}{B}{A}{{B/S,A/S}}{{U}}
			\DrawFinalLines{{U/S}}
			\DrawLeftLine{B}{{0/1/S}}
			\DrawRightLine{B}{{1/2/S}}
		\end{boxdiagram} 
		= 
			\begin{boxdiagram}{B}
				\DrawBox{\Delta^*}{B}{A}{{B/S,A/S}}{{U}}
				\DrawBox{\psi_s^*}{C}{U}{{C/G,U/S}}{{C,U}}
				\DrawFinalLines{{C/G,U/S}}
			\end{boxdiagram} 
	\end{equation*}
	where the dots on the second line next to $\psi_s^*$ shall indicate that this map does not act on the second argument, i.e.\ acts on the first and third arguments. Using these equations and that we may change the order of maps acting on different tensor factors we compute
\begin{align*}
	(U_1 * U_2) * U_3 &= 
	\begin{boxdiagram}{HU}
		\DrawBox{\overline m^*}{L}{A}{{HU/G}}{{L,A}}
		\DrawBox{U_3}{A}{A}{{A/G}}{{A}}
		\DrawBox{\psi_s^*}{L}{A}{{L/G,A/S}}{{L,A}}
		\DrawEmptyBox
		\DrawBox{\overline m^*}{C}{B}{{L/G}}{{B,C}}
		\DrawBox{U_2}{B}{B}{{B/G}}{{B}}
		\DrawBox{\psi_s^*}{C}{B}{{C/G,B/S}}{{C,B}}
		\DrawBox{U_1}{C}{C}{{C/G}}{{C}}
		\DrawBox{\Delta^*}{C}{B}{{C/S,B/S}}{{L}}
		\DrawBox{\Delta^*}{L}{A}{{L/S,A/S}}{{HU}}
		\DrawFinalLines{{HU/S}}
	\end{boxdiagram}
	\\
	&=
	\begin{boxdiagram}{HU}
		\DrawBox{\overline m^*}{L}{A}{{HU/G}}{{L,A}}
		\DrawBox{U_3}{A}{A}{{A/G}}{{A}}
		\DrawBox{\overline m^*}{C}{B}{{L/G}}{{B,C}}
		\DrawBox{\psi_s^*}{B}{A}{{B/G,A/S}}{{B,A}}
		\DrawBox{\psi_s^*}{C}{A}{{C/G,A/S}}{{C,A}}
		\DrawBox{U_2}{B}{B}{}{{B}}
		\DrawBox{\psi_s^*}{C}{B}{{C/G,B/S}}{{C,B}}
		\DrawBox{U_1}{C}{C}{{C/G}}{{C}}
		\DrawBox{\Delta^*}{C}{B}{{C/S,B/S}}{{L}}
		\DrawBox{\Delta^*}{L}{A}{{L/S,A/S}}{{HU}}
		\DrawFinalLines{{HU/S}}
		\DrawLeftLine{B}{{4/5/G}}
		\DrawRightLine{B}{{5/6/G}}
	\end{boxdiagram} 
	\\
	&=
	\begin{boxdiagram}{HL}
		\DrawBox{\overline m^*}{C}{U}{{HL/G}}{{C,U}}
		\DrawBox{\overline m^*}{B}{A}{{U/G}}{{B,A}}
		\DrawBox{U_3}{A}{A}{{A/G}}{{A}}	
		\DrawBox{\psi_s^*}{B}{A}{{B/G,A/S}}{{B,A}}
		\DrawBox{U_2}{B}{B}{{B/S}}{{B}}
		\DrawBox{\psi_s^*}{C}{A}{{C/G,A/S}}{{C,A}}
		\DrawBox{\psi_s^*}{C}{B}{{C/G}}{{C,B}}
		\DrawBox{\Delta^*}{B}{A}{{B/S,A/S}}{{U}}	
		\DrawBox{U_1}{C}{C}{{C/G}}{{C}}
		\DrawBox{\Delta^*}{C}{U}{{C/S,U/S}}{{HL}}
		\DrawFinalLines{{HL/S}}
		\DrawLeftLine{B}{{5/6/G}}
		\DrawRightLine{B}{{6/7/G}}
	\end{boxdiagram} 
	\\
	&=
	\begin{boxdiagram}{HL}
		\DrawBox{\overline m^*}{C}{U}{{HL/G}}{{C,U}}
		\DrawBox{\overline m^*}{B}{A}{{U/G}}{{B,A}}
		\DrawBox{U_3}{A}{A}{{A/G}}{{A}}	
		\DrawBox{\psi_s^*}{B}{A}{{B/G,A/S}}{{B,A}}
		\DrawBox{U_2}{B}{B}{{B/S}}{{B}}
		\DrawBox{\Delta^*}{B}{A}{{B/S,A/S}}{{U}}
		\DrawEmptyBox
		\DrawBox{\psi_s^*}{C}{U}{{C/G,U/S}}{{C,U}}
			\DrawBox{U_1}{C}{C}{{C/G}}{{C}}
		\DrawBox{\Delta^*}{C}{U}{{C/S,U/S}}{{HL}}
		\DrawFinalLines{{HL/S}}
	\end{boxdiagram} 
	= U_1 * (U_2 * U_3) .
\end{align*}
	
	Finally, we check that the map $\delta_0 \in \mathcal L(\mathcal O_M(G), \mathcal O_M(\X))$, $\delta_0(\varphi)(x) = \varphi(0)$ is a unit for $*$.  
	Indeed, for $\varphi \in \mathcal O_M(G)$, we obtain that
	$\psi_s^* \circ (\delta_0 \tensor \mathrm{id}) \tensor \overline m^*(\varphi)$ maps $(x,w)$ to $\varphi(w)$,
	hence $(\delta_0 * U)(\varphi) = U(\varphi)$.
	Moreover, $(U \tensor \mathrm{id}) \tensor \overline m^*(\varphi)$ maps $(x,w)$ to $U(v \mapsto \varphi(wv))(x)$,
	hence $\psi_s^* \circ (U \tensor \mathrm{id}) \tensor \overline m^*(\varphi)$ maps 
	$(x,w)$ to $U(v \mapsto \varphi(wv))(x w)$.
	Applying $\mathrm{id} \tensor \delta_0$ evaluates this expression for $w = 0$,
	hence $(U * \delta_0)(\varphi) = U(\varphi)$.
\end{proof}
Recall that $\Psi_r^{-1}(U) = \Delta^* \circ (\mathrm{id}\tensor U)$. Therefore $\Psi_r^{-1}(U_1 * U_2)$ is given by
\begin{multline*}
	\begin{boxdiagram}{M}
		\DrawOneToTwoBox{\overline m^*}{L}{G}{C}{B}
		\DrawOneLineBox{U_2}{B}{G}
		\DrawTwoLineBox{\psi_s^*}{C}{G}{B}{S}
		\DrawOneLineBox{U_1}{C}{G}
		\DrawTwoToOneBox{\Delta^*}{C}{S}{B}{S}{L}
		\DrawTwoToOneBox{\Delta^*}{L}{S}{A}{S}{HU}
		\DrawFinalLines{{HU/S}}
	\end{boxdiagram}
	=
	\begin{boxdiagram}{M}
		\DrawOneToTwoBox{\overline m^*}{L}{G}{C}{B}
		\DrawOneLineBox{U_2}{B}{G}
		\DrawTwoLineBox{\psi_s^*}{C}{G}{B}{S}
		\DrawOneLineBox{U_1}{C}{G}
		\DrawTwoToOneBox{\Delta^*}{B}{S}{A}{S}{U}
		\DrawTwoToOneBox{\Delta^*}{C}{S}{U}{S}{HL}
		\DrawFinalLines{{HL/S}}
	\end{boxdiagram}
	\\
	=
	\begin{boxdiagram}{M}
		\DrawOneToTwoBox{ m^*}{L}{G}{C}{B}
		\DrawTwoLineBox{F^*}{C}{G}{B}{S}
		\DrawOneLineBox{U_2}{B}{G}
		\DrawTwoLineBox{\psi_s^*}{C}{G}{B}{S}
		\DrawTwoToOneBox{\Delta^*}{B}{S}{A}{S}{U}
		\DrawTwoToOneBox{u_1}{C}{G}{U}{S}{HL}
		\DrawFinalLines{{HL/S}}
	\end{boxdiagram}
	= 
	\begin{boxdiagram}{M}
		\DrawOneToTwoBox{ m^*}{L}{G}{C}{B}
		\DrawBox[3]{s^* u_2}{C}{A}{{C/G,B/G,A/S}}{{L,U}}
		\DrawTwoToOneBox{u_1}{L}{G}{U}{S}{M}
		\DrawFinalLines{{M/S}}
	\end{boxdiagram}
\end{multline*}
if we define
\begin{equation}
	s^* u_2 \colon \mathcal O_M(\X \times G \times G) \to \mathcal O_M(\X \times G) ,
	\quad
	s^* u_2 = (\Delta^* \tensor \id)
	\circ (\id \tensor \psi_s^*)
	\circ (\id \tensor \AlgebraicIso_r(u_2) \tensor \id)
	\circ (\id \tensor F^*) .
\end{equation}
Using that $\id \tensor m^* = M^*$, the following description of the convolution on $\mathcal O_r'(\mathcal G)$, similar to \cite{LMV17}*{Theorem~3.2},
is immediate.

\begin{corollary}
	The convolution of two distributions $u_1, u_2 \in \mathcal O_r'(\mathcal G)$ is given by
	\begin{equation}
		u_1 * u_2 = u_1 \circ s^* u_2 \circ M^* .
	\end{equation}
\end{corollary}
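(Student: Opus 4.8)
The plan is to read the statement off the chain of string-diagram identities for $\Psi_r^{-1}(U_1 * U_2)$ displayed immediately before the corollary, so that essentially no new computation is required. Recall from \cref{res:convolution} that the convolution on $\mathcal O_r'(\mathcal G)$ was \emph{defined} by transporting the convolution on $\mathcal L(\mathcal O_M(G),\mathcal O_M(\X))$ through the isomorphism $\Psi_r$ of \cref{res:identifications:fibredDistributions}, i.e.\ $u_1 * u_2 = \Psi_r^{-1}\bigl(\Psi_r(u_1) * \Psi_r(u_2)\bigr)$. First I would use the formula $\Psi_r^{-1}(U) = \Delta^* \circ (\id \tensor U)$ to identify $\Psi_r^{-1}(U_1 * U_2)$ (with $U_i = \Psi_r(u_i)$) with the first diagram of that chain, and then follow the manipulations already carried out there — coassociativity of $\Delta^*$, the identity $\overline m^* = F^* \circ m^*$, and commuting boxes that act on disjoint tensor slots — to bring the diagram into the form: $m^*$ on the left, then a block acting on the $\X$-slot together with the two $G$-slots, then $u_1$.

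Second, I would note that this intermediate block is, by construction, exactly the operator $s^* u_2 = (\Delta^* \tensor \id) \circ (\id \tensor \psi_s^*) \circ (\id \tensor \Psi_r(u_2) \tensor \id) \circ (\id \tensor F^*)$ introduced just above, so that the diagram reads $u_1 \circ s^* u_2 \circ (\id \tensor m^*)$, where $u_1$ is understood as the $\mathcal O_M(\X)$-linear map $\mathcal O_M(\X \times G) \to \mathcal O_M(\X)$ it corresponds to under \cref{res:identifications:fibredDistributions}. Finally, since $M \colon \X \times G \times G \to \X \times G$ is $(x,v,w) \mapsto (x,vw)$, its pull-back is $M^* = \id_{\mathcal O_M(\X)} \tensor m^*$ under the identification of \cref{res:OM_continuity}; substituting this gives $u_1 * u_2 = u_1 \circ s^* u_2 \circ M^*$. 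Continuity of all maps involved and compatibility with \eqref{eq:convolution} on $\Schwartz(\X \rtimes G)$ have already been established inside the proof of \cref{res:convolution}, so no separate verification is needed, and in particular the right-hand side automatically defines an element of $\mathcal O_r'(\mathcal G)$.

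The only genuine obstacle is the bookkeeping in this diagram chase — keeping track of which tensor slot each box acts on, and in particular checking that the two copies of $\Delta^*$ (one coming from $\Psi_r^{-1}(U_1)$, one hidden in the definition of $s^* u_2$) combine correctly via coassociativity. I expect this to be routine, since the preceding pages already carry out the substantially harder associativity diagram chase in full detail.
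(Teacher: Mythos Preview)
Your proposal is correct and matches the paper's approach exactly: the paper derives the diagram chain for $\Psi_r^{-1}(U_1*U_2)$, introduces the notation $s^*u_2$ for the intermediate block, observes that $\id\tensor m^* = M^*$, and then states the corollary as ``immediate'' with no further proof. Your write-up simply makes explicit the bookkeeping that the paper leaves implicit.
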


\paragraph{Involution}
It will not be possible to define an involution on all of $\Algebra O'_{r}(\mathcal G)$.
Similar to \cite{LMV17}, we therefore introduce:

\begin{definition}	
	Define \(\Algebra O'_{r,s}(\mathcal G)=\Algebra O'_r(\mathcal G)\cap \IntegrationMap_r^{-1}\IntegrationMap_s(\Algebra O'_s(\mathcal G))\),
	i.e.\ the space of range fibred distributions that can also be fibred along the source fibres.
\end{definition}
\begin{example} 
	For \(f\in\Schwartz(\mathcal G)\) the \(r\)-fibred distribution \(u_f\in\Algebra O'_r(\mathcal G)\) is contained in \(\Algebra O'_{r,s}(\mathcal G)\). Namely using \cref{res:polynomial_action:properties} \refitem{item:noJ} one computes \(\int_r(u_f)=\int_s(\tilde u_f)\). 
\end{example}
The groupoid involution yields a map \(I_*\colon \Algebra O'_r(\mathcal G)\to\Algebra O'_s(\mathcal G)\) defined by \(I_*(u)=u\circ I^*\) and similarly \(I_*\colon \Algebra O'_s(\mathcal G)\to\Algebra O'_r(\mathcal G)\). 
The following diagram commutes:
\begin{equation} \label{diag:inversions}
			\begin{tikzcd}
					\Schwartz(G \times \X)  \arrow[d,"I^*"]\arrow[r,"u"] &  \mathcal O'_r(\mathcal G) \arrow[d,"I_*"] \arrow[r,"\IntegrationMap_r"]&\mathcal D'(\X \times G) \phantom{.}\arrow[d,"I_*"]\\
					\Schwartz(G \times \X)  \arrow[r,  "\tilde u"] &\mathcal O'_s(\mathcal G)  \arrow[r,"\IntegrationMap_s"]&\mathcal D'(\X \times G) .
				\end{tikzcd}
		\end{equation}

Similar to \cite{AMY21}*{Lemma~7.3}, one can show that the convolution can be restricted to \(\Algebra O'_{r,s}(\mathcal G)\).
\begin{proposition}\label{res:convolution-on-rs-fibred}
	The convolution on \(\Algebra O'_r(\mathcal G)\) restricts to \(\Algebra O'_{r,s}(\mathcal G)\).
\end{proposition}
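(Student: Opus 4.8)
The plan is to show that if $u_1, u_2 \in \Algebra O'_{r,s}(\mathcal G)$, then $u_1 * u_2$ again lies in $\Algebra O'_{r,s}(\mathcal G)$; since we already know $u_1 * u_2 \in \Algebra O'_r(\mathcal G)$ by \cref{res:convolution}, it remains to produce a source-fibred distribution $w \in \Algebra O'_s(\mathcal G)$ with $\IntegrationMap_s(w) = \IntegrationMap_r(u_1 * u_2)$. The natural candidate for $w$ is the ``source convolution'' $\tilde u_1 *_s \tilde u_2$ of the source-fibred representatives $\tilde u_i \in \Algebra O'_s(\mathcal G)$ obtained via $\IntegrationMap_s(\tilde u_i) = \IntegrationMap_r(u_i)$; here $*_s$ denotes the convolution on $\Algebra O'_s(\mathcal G)$, which is defined completely analogously to the $r$-fibred case (using $\psi_r$ in place of $\psi_s$, or equivalently transporting the $r$-fibred convolution through $I_*$). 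So the proof reduces to the identity $\IntegrationMap_s(\tilde u_1 *_s \tilde u_2) = \IntegrationMap_r(u_1 * u_2)$.

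First I would recall that $I_* \colon \Algebra O'_r(\mathcal G) \to \Algebra O'_s(\mathcal G)$ intertwines the two integration maps with $I_*$ on $\mathcal D'(\X \times G)$, by the commuting diagram \eqref{diag:inversions}; and that $I_*$ is an anti-homomorphism for the convolutions, i.e.\ $I_*(u_1 * u_2) = I_*(u_2) *_s I_*(u_1)$ — this follows formally from $I$ being the groupoid inversion, exactly as $(f * g)^* = g^* * f^*$ for Schwartz functions, and can be checked on the box-diagram description of the convolution in \cref{res:convolution}. Next, on Schwartz functions $f \in \Schwartz(\mathcal G)$ one has $\IntegrationMap_r(u_f) = \IntegrationMap_s(\tilde u_f)$ (using \cref{res:polynomial_action:properties}~\refitem{item:noJ} so that the change of variables in the source fibration has unit Jacobian), and $I_*$ sends the ``integrated'' distribution $\IntegrationMap_r(u_f)$ to itself up to the canonical identification — more precisely $I_*$ acts on $\mathcal D'(\X \times G)$ by pull-back along the inversion diffeomorphism, which again has unit Jacobian by the same lemma. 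The upshot is that for Schwartz functions the statement is a direct computation, so the real content is the extension to general fibred distributions.

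For the general case I would argue by a density/continuity argument together with the algebraic identities above. Concretely: given $u_1, u_2 \in \Algebra O'_{r,s}(\mathcal G)$ with source representatives $\tilde u_1, \tilde u_2$, set $w = I_*( I_*(u_2) * I_*(u_1) )$... — more cleanly, observe $\tilde u_i = I_*(I_*(\tilde u_i))$ and that $\IntegrationMap_r \circ I_* = I_* \circ \IntegrationMap_s$ together with $\IntegrationMap_s \tilde u_i = \IntegrationMap_r u_i$ forces $I_*(\tilde u_i)$ and $u_i$ to have the same image under $\IntegrationMap_r$; since $\IntegrationMap_r$ is injective (\cref{res:PhiR:injective}), $I_*(\tilde u_i) = u_i$, i.e.\ $\tilde u_i = I_*(u_i)$. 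Then
\begin{equation*}
	\IntegrationMap_s(\tilde u_1 *_s \tilde u_2)
	= \IntegrationMap_s\bigl(I_*(u_1) *_s I_*(u_2)\bigr)
	= \IntegrationMap_s\bigl(I_*(u_2 * u_1)\bigr)
\end{equation*}
— wait, the anti-homomorphism property gives $I_*(u_1) *_s I_*(u_2) = I_*(u_2 * u_1)$, not what we want. The correct bookkeeping is to define $w = I_*\bigl(I_*(u_2) * I_*(u_1)\bigr) = I_*(\tilde u_2 \ \widetilde{*}\ \tilde u_1)$ appropriately; the cleanest route is simply to check $I_*(u_1 * u_2) = I_*(u_2) *_s I_*(u_1)$ directly from the diagrammatic convolution formula, conclude the right-hand side lies in $\Algebra O'_s(\mathcal G)$, and then apply $I_*$ back together with \eqref{diag:inversions} and injectivity of $\IntegrationMap_r$ to land $u_1 * u_2$ in $\IntegrationMap_r^{-1}\IntegrationMap_s(\Algebra O'_s(\mathcal G))$. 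The main obstacle is precisely verifying the anti-homomorphism identity $I_*(u_1 * u_2) = I_*(u_2) *_s I_*(u_1)$ at the level of the box-diagram/tensor-factor description of the convolution: this is a diagram chase through the maps $m^*, F^*, \psi_s^*, \psi_r^*, \Delta^*, I^*$, using associativity/coassociativity of $m$ and $\Delta$ and the compatibility of $I$ with $m$ (that $i \circ m = m \circ F \circ (i \times i)$), all of which are polynomial maps so that continuity is automatic from \cref{res:OM:pullback}. Everything else is formal manipulation with the identifications of \cref{res:identifications:fibredDistributions} and the injectivity of $\IntegrationMap_r$.
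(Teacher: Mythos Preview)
Your overall strategy---construct a source-convolution $*_s$ on $\Algebra O'_s(\mathcal G)$ and show that $\IntegrationMap_s(\tilde u_1 *_s \tilde u_2) = \IntegrationMap_r(u_1 * u_2)$---is exactly the paper's strategy. But your execution contains a genuine error that makes the argument collapse.

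The mistake is the claim that $\tilde u_i = I_*(u_i)$. From $\IntegrationMap_r \circ I_* = I_* \circ \IntegrationMap_s$ and $\IntegrationMap_s(\tilde u_i) = \IntegrationMap_r(u_i)$ you correctly get
\[
\IntegrationMap_r(I_*(\tilde u_i)) = I_*(\IntegrationMap_s(\tilde u_i)) = I_*(\IntegrationMap_r(u_i)),
\]
but this is \emph{not} $\IntegrationMap_r(u_i)$: the map $I_*$ on $\mathcal D'(\X\times G)$ is the pushforward by the groupoid inversion, not the identity. So injectivity of $\IntegrationMap_r$ gives nothing here. In fact $I_*(u)$ and the source representative $\tilde u$ differ exactly by a transpose: $\tilde u = I_*(u^t)$ (cf.\ the proof of \cref{res:equiv_rs_fibred}). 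Your ``cleanest route'' at the end inherits the same confusion: the identity $I_*(u_1*u_2) = I_*(u_2) *_s I_*(u_1)$ holds for \emph{all} $u_i \in \Algebra O'_r(\mathcal G)$ and never uses the hypothesis $u_i \in \Algebra O'_{r,s}(\mathcal G)$; it produces an $s$-fibred distribution with integrated distribution $I_*(\IntegrationMap_r(u_1*u_2))$, not $\IntegrationMap_r(u_1*u_2)$, so it cannot witness membership in $\Algebra O'_{r,s}(\mathcal G)$.

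What the paper does instead is to expand both sides directly and match them. Writing $\IntegrationMap_r(u_1*u_2) = \IntegrationMap \circ u_1 \circ s^*u_2 \circ M^*$, the point is that the \emph{outermost} occurrence of $u_1$ sits under an integral over $\X$ applied to a compactly supported argument, so one may replace $u_1$ by $\tilde u_1$ there (this is precisely where $\IntegrationMap_r u_1 = \IntegrationMap_s \tilde u_1$ is used). On the other side one expands $\IntegrationMap_s(\tilde u_1 \mathbin{\tilde*} \tilde u_2)$ with $\tilde* $ defined via $I_*$, and a diagram chase through $m^*, F^*, \psi_s^*, \Delta^*, I^*$ (using the groupoid identities you mention) reduces it to $\tilde u_1 \circ s^* u_2$ as well. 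The hypothesis on $u_2$ enters symmetrically on that side. The algebraic identities you list are the right ones; what is missing from your argument is the correct identification of \emph{where} the replacement $u_i \leadsto \tilde u_i$ is legitimate.
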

\begin{proof}
	Define a convolution of \(s\)-fibred distributions by \(v_1\mathbin{\tilde{*}}v_2=I_*(I_*(v_2)*I_*(v_1))\) for \(v_1,v_2\in\Algebra O'_s(\mathcal G)\). 
		Let \(u_1,u_2\in\Algebra O'_{r,s}(\mathcal G)\) with \(\IntegrationMap_r(u_i)=\IntegrationMap_s(\tilde{u}_i)\) for \(i=1,2\). We show that \(\IntegrationMap_r(u_1*u_2)=\IntegrationMap_s(\tilde{u}_1\mathbin{\tilde{*}}\tilde{u}_2)\) to see that \(u_1*u_2\in\Algebra O'_{r,s}(\mathcal G)\).
	One has on the one hand
	\begin{equation*}
		\IntegrationMap_r(u_1*u_2)=\IntegrationMap\circ u_1\circ s^*u_2\circ M^* = \IntegrationMap \circ \tilde{u}_1\circ s^*u_2\circ M^*
	\end{equation*}
	and on the other hand
	\begin{equation*}
		\IntegrationMap_s(\tilde u_1\mathbin{\tilde *}\tilde u_2)
		=\IntegrationMap \circ I_*(\tilde u_2) \circ s^*(I_* \tilde u_1)\circ M^*\circ I^* 
		= \IntegrationMap \circ u_2 \circ I^* \circ  s^*(I_*\tilde u_1)\circ (\id\tensor F^*) \circ I^*_{13} \circ (I^* \tensor \id) \circ M^*
	\end{equation*}
	where the notation $I^*_{13}$ shall mean that the map $I^*$ is applied to the first and third tensor factor.
	We used that $I \circ M = M \circ (I \times \id) \circ I_{13} \circ (\id \times F)$,
	which the reader can easily verify by evaluating both sides on $(x,v,w) \in \X \times G \times G$. 
	Similarly, one checks that
	\begin{gather*}
		\begin{boxdiagram}{M}
			\DrawTwoLineBox{\psi_s^*}{C}{G}{B}{S}
			\DrawTwoToOneBox{\Delta^*}{B}{S}{A}{S}{U}
			\DrawTwoLineBox{I^*}{C}{G}{U}{S}
			\DrawFinalLines{{U/S,C/G}}
		\end{boxdiagram}
		=
		\begin{boxdiagram}{M}
			\DrawTwoLineBox{I^*}{C}{G}{A}{S}
			\DrawBox{\Delta^*}{B}{A}{{A/S}}{{U}}
			\DrawLeftLine{B}{{0/1/S}}
			\DrawRightLine{B}{{1/2/S}}
			\DrawFinalLines{{U/S,C/G}}
		\end{boxdiagram} \,\,\,\raisebox{-.2cm}{,}
		\\
		\begin{boxdiagram}{M}
			\DrawTwoLineBox{I^*}{L}{G}{U}{S}
			\DrawOneLineBox{i^*}{L}{G}
			\DrawFinalLines{{U/S,L/G}}
		\end{boxdiagram}
		=
		\begin{boxdiagram}{M}
			\DrawTwoLineBox[5]{(\psi_s^{-1})^*}{L}{G}{U}{S}
			\DrawFinalLines{{U/S,L/G}}
		\end{boxdiagram}
		\qquad\text{and}\qquad
		\begin{boxdiagram}{M}
			\DrawLeftLine{B}{{0/1/S}}
			\DrawTwoLineBox[5]{(\psi_s^{-1})^*}{C}{G}{A}{S}
			\DrawRightLine{B}{{1/2/S}}
			\DrawBox{\Delta^*}{B}{A}{{A/S}}{{U}}
			\DrawFinalLines{{U/S}}
		\end{boxdiagram}
		=
		\begin{boxdiagram}{M}
			\DrawTwoLineBox{\psi_s^*}{C}{G}{B}{S}
			\DrawTwoToOneBox{\Delta^*}{B}{S}{A}{S}{U}
			\DrawTwoLineBox[5]{(\psi_s^{-1})^*}{C}{G}{U}{S}
			\DrawFinalLines{{C/G,U/S}}
		\end{boxdiagram} \,\,\,\raisebox{-.2cm}{.}
	\end{gather*}
	Using also that $\AlgebraicIso_r(I_* \tilde u_1)(\varphi) = \tilde u_1(I^*(1 \tensor \varphi)) = \tilde u_1(1 \tensor i^* \varphi) = \AlgebraicIso_s(\tilde u_1)(i^* \varphi) = \tilde U_1 \circ i^*(\varphi)$
	holds for all $\varphi \in \Algebra O_M(G)$ and that 
	$(\id \times F) \circ (\Delta \times \id) \circ \Delta = (\Delta \times \id) \circ \Delta$
	holds on $\X$,
	we compute
	\begin{align*}
		u_2\circ I^*\circ s^*(I_*\tilde u_1)\circ F^*_{23} \circ I^*_{13} \circ I^*_{12}
		&=
		\begin{boxdiagram}{M}
			\DrawTwoLineBox{I^*}{B}{G}{A}{S}
			\DrawTwoLineBox{I^*}{C}{G}{A}{S}
			\DrawOneToTwoBox{F^*}{C}{G}{C}{B}
			\DrawTwoLineBox{F^*}{C}{G}{B}{G}
			\DrawOneLineBox{i^*}{B}{G}
			\DrawOneLineBox{\text{\small$\tilde U_1$}}{B}{G}
			\DrawTwoLineBox{\psi_s^*}{C}{G}{B}{S}
			\DrawTwoToOneBox{\Delta^*}{B}{S}{A}{S}{U}
			\DrawTwoLineBox{I^*}{C}{G}{U}{S}
			\DrawOneLineBox{U_2}{C}{G}
			\DrawTwoToOneBox{\Delta^*}{C}{S}{U}{S}{HL}
			\DrawFinalLines{{HL/S}}
			\DrawLeftLine{B}{{1/2/G}}
			\DrawRightLine{B}{{2/3/G}}
			\AddOverbrace{s^*(I_*\tilde u_1)}{A}{4}{8}
			\AddOverbrace{u_2}{U}{10}{11}
		\end{boxdiagram}
		\\
		&= \begin{boxdiagram}{M}
			\DrawTwoLineBox{I^*}{B}{G}{A}{S}
			\DrawTwoLineBox{I^*}{C}{G}{A}{S}
			\DrawBox{i^*}{B}{B}{}{{B}}
			\DrawOneLineBox{\text{\small$\tilde U_1$}}{B}{G}
			\DrawTwoLineBox{I^*}{C}{G}{A}{S}
			\DrawBox{\Delta^*}{B}{A}{{A/S}}{{U}}
			\DrawOneLineBox{U_2}{C}{G}
			\DrawTwoToOneBox{\Delta^*}{C}{S}{U}{S}{HL}
			\DrawFinalLines{{HL/S}}
			\DrawLeftLine{B}{{1/2/G,4/5/S}}
			\DrawRightLine{B}{{2/3/G,5/6/S}}
		\end{boxdiagram}
		\\
		&= \begin{boxdiagram}{M}
			\DrawOneLineBox{U_2}{C}{G}
			\DrawBox[5]{(\psi_s^{-1})^*}{B}{A}{{B/G,A/S}}{{B,A}}
			\DrawOneLineBox{\text{\small$\tilde U_1$}}{B}{G}
			\DrawTwoLineBox{F^*}{C}{S}{B}{S}
			\DrawTwoToOneBox{\Delta^*}{B}{S}{A}{S}{U}
			\DrawEmptyBox
			\DrawTwoToOneBox{\Delta^*}{C}{S}{U}{S}{HL}
			\DrawFinalLines{{HL/S}}
		\end{boxdiagram}
		\\
		&= \begin{boxdiagram}{M}
			\DrawTwoLineBox{F^*}{C}{G}{B}{G}
			\DrawOneLineBox{U_2}{B}{G}
			\DrawLeftLine{B}{{2/3/S}}
			\DrawBox[5]{(\psi_s^{-1})^*}{C}{A}{{C/G,A/S}}{{C,A}}
			\DrawEmptyBox
			\DrawRightLine{B}{{3/5/S}}
			\DrawBox{\Delta^*}{B}{A}{{A/S}}{{U}}
			\DrawOneLineBox{\text{\small$\tilde U_1$}}{C}{G}
			\DrawTwoToOneBox{\Delta^*}{C}{S}{U}{S}{HL}
			\DrawFinalLines{{HL/S}}
		\end{boxdiagram}
		\\
		&= \begin{boxdiagram}{M}
			\DrawTwoLineBox{F^*}{C}{G}{B}{G}
			\DrawOneLineBox{U_2}{B}{G}
			\DrawTwoLineBox{\psi_s^*}{C}{G}{B}{S}
			\DrawTwoToOneBox{\Delta^*}{B}{S}{A}{S}{U}
			\AddUnderbrace{s^* u_2}{C}{1}{4}
			\DrawBox[5]{(\psi_s^{-1})^*}{C}{U}{{C/G,U/S}}{{C,U}}
			\DrawOneLineBox{\text{\small$\tilde U_1$}}{C}{G}
			\DrawTwoToOneBox{\Delta^*}{C}{S}{U}{S}{HL}
			\DrawFinalLines{{HL/S}}
			\AddUnderbrace[4]{\tilde u_1}{C}{5}{7}
		\end{boxdiagram}
		= \tilde u_1\circ s^*u_2 \,. \tag*{\qedhere}
	\end{align*}
\end{proof}
Analogously to \cite{LMV17}*{Theorem~3.2} \(\Algebra O_{r,s}'(\mathcal G)\) can be equipped with an involution.
\begin{lemma}\label{res:equiv_rs_fibred}
	Let \(u\in\Algebra{O}'_r(\mathcal G)\). Then the following are equivalent:
	\begin{enumerate}
		\item \(u\in\Algebra{O}'_{r,s}(\mathcal G)\),\label{item:range_source}
		\item there is \(u^*\in\Algebra O_r'(\mathcal G)\) such that 
		\(\IntegrationMap_r(u^*) = \conj{I_*(\IntegrationMap_r(u))} \),\label{item:adjoint}
		\item there is \(u^t\in\Algebra O_r'(\mathcal G)\) such that \(\IntegrationMap_r(u^t)=I_*(\IntegrationMap_r(u))\).\label{item:transpose}
	\end{enumerate}  
\end{lemma}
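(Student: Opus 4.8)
The plan is to reduce the statement to the fact that the groupoid inversion is an involutive, measure\nobreakdash-preserving diffeomorphism, so that the equivalences become a short diagram chase using \eqref{diag:inversions}. First I would record the structural facts about $I\colon\X\times G\to\X\times G$, $I(x,v)=(x\cdot v,v^{-1})$. Since $(x\cdot v)\cdot v^{-1}=x$, the map $I$ is an involutive polynomial diffeomorphism, $I\circ I=\id$. In standard coordinates the inversion on $G$ is $v\mapsto -v$ and $\det(D_x\theta_v)=1$ by \cref{res:polynomial_action:properties}~\refitem{item:noJ}, so writing $D I$ in block form one gets $\abs{\det(D I)}=1$; equivalently, $I$ preserves the measure $\D x\,\D v$. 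Consequently $I^*$ is an involution on $\mathcal O_M(\X\times G)$, on $\Schwartz(\X\times G)$ and on $\Algebra D(\X\times G)$, and the induced $I_*$ is an involution on $\mathcal D'(\X\times G)$.

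Next I would use that $I^*\circ I^*=\id$ to see that the two maps $I_*\colon\mathcal O_r'(\mathcal G)\to\mathcal O_s'(\mathcal G)$ and $I_*\colon\mathcal O_s'(\mathcal G)\to\mathcal O_r'(\mathcal G)$, $u\mapsto u\circ I^*$, are mutually inverse, and by commutativity of \eqref{diag:inversions} they intertwine the integration maps: $\IntegrationMap_s\circ I_*=I_*\circ\IntegrationMap_r$ on $\mathcal O_r'(\mathcal G)$ and symmetrically. I would also introduce the complex conjugation $\conj u(\varphi)=\conj{u(\conj\varphi)}$; since $I$ is a real map and conjugation is multiplicative, $\conj u$ is again a continuous $\mathcal O_M(\X)$\nobreakdash-linear map, so $\conj\argument$ is an involution on $\mathcal O_r'(\mathcal G)$ which commutes with $I_*$ and satisfies $\IntegrationMap_r(\conj u)=\conj{\IntegrationMap_r(u)}$ (conjugation of distributions on the right). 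The analogous statements hold on $\mathcal O_s'(\mathcal G)$.

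With these tools the three implications are immediate. For \refitem{item:range_source}$\Leftrightarrow$\refitem{item:transpose}: if $u\in\mathcal O'_{r,s}(\mathcal G)$, pick $v\in\mathcal O_s'(\mathcal G)$ with $\IntegrationMap_r(u)=\IntegrationMap_s(v)$ and set $u^t=I_*(v)\in\mathcal O_r'(\mathcal G)$, so that $\IntegrationMap_r(u^t)=I_*(\IntegrationMap_s(v))=I_*(\IntegrationMap_r(u))$; conversely, given such $u^t$, put $v=I_*(u^t)\in\mathcal O_s'(\mathcal G)$ and compute $\IntegrationMap_s(v)=I_*(\IntegrationMap_r(u^t))=I_*(I_*(\IntegrationMap_r(u)))=\IntegrationMap_r(u)$, whence $u\in\mathcal O'_{r,s}(\mathcal G)$. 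For \refitem{item:range_source}$\Leftrightarrow$\refitem{item:adjoint} one runs the same argument with an extra conjugation: take $u^*=\conj{I_*(v)}$ in one direction, and $v=I_*(\conj{u^*})$ in the other, using that $\conj\argument$ commutes with $I_*$ and is an involution. By injectivity of $\IntegrationMap_r$ (\cref{res:PhiR:injective}) the elements $u^*$ and $u^t$ are then uniquely determined, which one may record as well.

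I do not expect a genuine obstacle here; the argument is essentially mechanical. The only points requiring a moment of care are that $\abs{\det(D I)}=1$ — so that $I_*$ is literally an involution on $\mathcal D'(\X\times G)$ rather than merely up to a factor — and that complex conjugation really does commute both with $I_*$ and with the integration maps, both of which follow at once from the explicit formulas together with the reality of $I$.
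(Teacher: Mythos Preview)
Your proposal is correct and follows essentially the same approach as the paper: both arguments reduce to chasing the commutative square \eqref{diag:inversions} together with the involutivity of $I_*$ and complex conjugation. The only cosmetic difference is that the paper proves the cycle \refitem{item:range_source}$\Rightarrow$\refitem{item:adjoint}$\Rightarrow$\refitem{item:transpose}$\Rightarrow$\refitem{item:range_source} (defining $u^*(\varphi)=\conj{\tilde u(I^*\conj\varphi)}$ and then $u^t(\varphi)=\conj{u^*(\conj\varphi)}$), whereas you prove the two biconditionals \refitem{item:range_source}$\Leftrightarrow$\refitem{item:transpose} and \refitem{item:range_source}$\Leftrightarrow$\refitem{item:adjoint} directly; the underlying computations are the same.
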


\begin{proof}
	To show \refitem{item:range_source} implies \refitem{item:adjoint} let \(\tilde{u}\in \Algebra O'_s(\mathcal G)\) be such that \(\IntegrationMap_r(u)=\IntegrationMap_s(\tilde{u})\). 
	Define \(u^* 
	\in\Algebra{O}'_r(\mathcal G)\) by \(u^*(\varphi)=\conj{\tilde{u}(I^*\conj \varphi)}\) for $\varphi \in \mathcal O_M(\mathcal G)$.
	It is immediate from \eqref{diag:inversions} that 
	$\IntegrationMap_r(u^*) = \overline{I_*(\IntegrationMap_s(\tilde u))} = \overline{I_*(\IntegrationMap_r(u))}$.
	For \refitem{item:adjoint} implies \refitem{item:transpose} define \(u^t(\varphi)=\conj{u^*(\conj{\varphi})}\) for \(\varphi\in\Algebra O_M(\mathcal G)\). Lastly, suppose that the transpose \(u^t\in\Algebra O_r'(\mathcal G)\) in \refitem{item:transpose} exists. Then define \(\tilde{u} = I_* u^t \in\Algebra O_s'(\mathcal G)\).
	By \eqref{diag:inversions} we obtain 
	$\IntegrationMap_s \tilde u = I_* \IntegrationMap_r(u^t) = \IntegrationMap_r u$. 
\end{proof}
By \cref{res:PhiR:injective} the adjoint in \refitem{item:adjoint} and transpose in \refitem{item:transpose} are unique if they exist. 
\begin{proposition}\label{res:involution}
	The map \(^*\colon\Algebra{O}'_{r,s}(\mathcal G)\to\Algebra{O}'_{r,s}(\mathcal G)\) defines an involution on the algebra \(\Algebra O_{r,s}'(\mathcal G)\) extending the involution \eqref{eq:involution} on \(\Schwartz(\mathcal G)\).
\end{proposition}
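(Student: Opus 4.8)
The plan is to derive every property of $^*$ from the single defining relation $\IntegrationMap_r(u^*) = \overline{I_*(\IntegrationMap_r(u))}$ obtained in the proof of \cref{res:equiv_rs_fibred}, together with the injectivity of the integration maps $\IntegrationMap_r$ and $\IntegrationMap_s$ (\cref{res:PhiR:injective}) and the commutativity of inversion with integration recorded in diagram~\eqref{diag:inversions}. I will use repeatedly that complex conjugation of distributions commutes with $I_*$, with $\IntegrationMap_r$ and $\IntegrationMap_s$, and with the groupoid convolution $*$ (the last because $*$ is $\CC$-bilinear, not sesquilinear), and that $I \circ I = \id$ on $\mathcal G$, whence $I_* \circ I_* = \id$ on $\mathcal D'(\mathcal G)$ and on all the fibred distribution spaces.

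First I would settle well-definedness and involutivity simultaneously. For $u \in \Algebra O'_{r,s}(\mathcal G)$ the distribution $u^*$ produced in the proof of \cref{res:equiv_rs_fibred} lies a priori only in $\Algebra O'_r(\mathcal G)$; applying the operation $w \mapsto \overline{I_*(w)}$ once more to the defining relation and using $I_* I_* = \id$ gives $\overline{I_*(\IntegrationMap_r(u^*))} = \IntegrationMap_r(u)$. By the equivalence of \refitem{item:range_source} and \refitem{item:adjoint} in \cref{res:equiv_rs_fibred}, applied with $u^*$ in the role of $u$ and with $u$ itself as the required distribution, this shows $u^* \in \Algebra O'_{r,s}(\mathcal G)$, so that $^*$ maps $\Algebra O'_{r,s}(\mathcal G)$ into itself; and by uniqueness of the adjoint (\cref{res:PhiR:injective}) the same identity yields $(u^*)^* = u$. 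Conjugate-linearity, $(\lambda u_1 + u_2)^* = \overline\lambda\, u_1^* + u_2^*$, is immediate from the defining relation, since $\IntegrationMap_r$ is injective and linear, $I_*$ is linear, and conjugation is conjugate-linear.

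The substantive point, and the step I expect to be the main obstacle, is anti-multiplicativity $(u_1 * u_2)^* = u_2^* * u_1^*$. Here I would not recompute anything diagrammatically but instead reuse two facts already obtained inside the proof of \cref{res:convolution-on-rs-fibred}: if $\tilde u_i$ denotes the (unique) $s$-fibred representative of $u_i \in \Algebra O'_{r,s}(\mathcal G)$, then $\tilde u_1 \mathbin{\tilde{*}} \tilde u_2$ is the $s$-fibred representative of $u_1 * u_2$, and $v_1 \mathbin{\tilde{*}} v_2 = I_*(I_* v_2 * I_* v_1)$. Rewriting the defining relation of $^*$ in the form $u^* = \overline{I_*(\tilde u)}$ — valid because $u^*(\varphi) = \overline{\tilde u(I^*\overline\varphi)}$ and pull-back by $I$ commutes with complex conjugation — one then computes
\begin{align*}
	(u_1 * u_2)^*
	&= \overline{I_*\bigl(\tilde u_1 \mathbin{\tilde{*}} \tilde u_2\bigr)}
	= \overline{I_*\bigl(I_*(I_* \tilde u_2 * I_* \tilde u_1)\bigr)}
	= \overline{I_* \tilde u_2 * I_* \tilde u_1} \\
	&= \overline{I_* \tilde u_2} * \overline{I_* \tilde u_1}
	= u_2^* * u_1^* ,
\end{align*}
using $I_* I_* = \id$ and the compatibility of conjugation with $I_*$ and with $*$ (which in turn rests on the structure maps $s^*$, $M^*$, $I^*$ being real); that the convolution on the right is the one on $\Algebra O'_{r,s}(\mathcal G)$ is guaranteed by \cref{res:convolution-on-rs-fibred}.

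Finally I would check that $^*$ restricts to \eqref{eq:involution} on $\Schwartz(\mathcal G)$. For $f \in \Schwartz(\mathcal G)$ one has $u_f \in \Algebra O'_{r,s}(\mathcal G)$ with $s$-fibred representative $\tilde u_f$, so $u_f^* = \overline{I_*(\tilde u_f)}$; unwinding this with the explicit formula~\eqref{eq:Schwartz-as-s-fibred} for $\tilde u_f$, the substitution $v \mapsto v^{-1}$ (admissible since the Haar measure on $G$ is bi-invariant), and the definition~\eqref{eq:involution} of $f^*$, yields $u_f^*(\varphi)(x) = \int_G \overline{f(x\cdot v, v^{-1})}\,\varphi(x,v)\,\D v = u_{f^*}(\varphi)(x)$, i.e.\ $u_f^* = u_{f^*}$. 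The genuine work thus sits entirely in \cref{res:convolution-on-rs-fibred} (and the groupoid identity $I \circ M = M \circ (I \times \id) \circ I_{13} \circ (\id \times F)$ on which it relies); everything else is bookkeeping with conjugations and with the involution $I_* I_* = \id$.
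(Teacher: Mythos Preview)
Your proof is correct and follows essentially the same route as the paper's: well-definedness and involutivity are obtained from the defining relation and \cref{res:equiv_rs_fibred}, anti-multiplicativity is deduced from the identity $\widetilde{u_1*u_2}=\tilde u_1\mathbin{\tilde *}\tilde u_2$ established in the proof of \cref{res:convolution-on-rs-fibred} together with $I_*I_*=\id$, and the Schwartz compatibility is the same direct computation. The only cosmetic difference is that the paper passes through the transpose via $(u_1*u_2)^t=u_2^t*u_1^t$ (and then conjugates), whereas you carry the complex conjugation along throughout using $u^*=\overline{I_*(\tilde u)}$; the underlying algebra is identical.
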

\begin{proof}
	Let $u \in \mathcal O_{r,s}'(\mathcal G)$ and $u^* \in \mathcal O_r'(\TG)$ be the unique element satisfying \(\IntegrationMap_r(u^*) = \smash{\conj{I_*(\IntegrationMap_r(u))}} \).
	This implies \( \smash{\conj{I_*(\IntegrationMap_r(u^*))}} = \IntegrationMap_r(u) \),
	so that $(u^*)^* = u$ and $u^* \in \Algebra O_{r,s}'(\TG)$ by \cref{res:equiv_rs_fibred} \refitem{item:adjoint} $\Rightarrow$ \refitem{item:range_source}. So the involution $^*$ is well-defined on $\Algebra O'_{r,s}(\mathcal G)$.
	
	Moreover, it is easy to check that \(u\mapsto u^*\) is antilinear. To show that \((u_1*u_2)^*=u_2^**u_1^*\), note that this is equivalent to \((u_1*u_2)^t=u_2^t*u_1^t\). 
	By the proof of \cref{res:convolution-on-rs-fibred} one has $\widetilde{u_1 * u_2} = \tilde u_1 \mathbin{\tilde*} \tilde u_2$, so
	\begin{align*}
		(u_1*u_2)^t=\widetilde{u_1*u_2}\circ I^* =(\tilde{u}_1 \mathbin{\tilde *}\tilde{u}_2)\circ I^* = (I_*\tilde u_2)*(I_*\tilde u_1)= u_2^t*u_1^t.
	\end{align*}	
To see that the involution extends the one on \(\Schwartz(\mathcal G)\), one needs to show \(u_{f}^*=u_{f^*}\) for \(f\in\Schwartz(\mathcal G)\) which holds as
\begin{equation*}
	u_f^*(\varphi)(x)=\conj{\tilde u_f(I^*\conj{\varphi})}(x)=\conj{\int_G f(x\cdot v^{-1},v)I^*\conj\varphi(x\cdot v^{-1},v)\D v}
	=\int_G \conj{f(x\cdot v,v^{-1})}\varphi(x,v)\D v = u_{f^*}(\varphi)(x)
\end{equation*}
for all \(\varphi\in\Algebra O_M(\X\times G)\) and \(x\in \X\).
\end{proof}

\begin{remark}
	An important ingredient in the calculus of \cite{vEY19} is that properly supported smooth densities form a two-sided ideal (see \cite{vEY19}*{Proposition~9} and \cite{LMV17}*{Proposition~3.3}). For the Shubin calculus in \cref{sec:calculus} we will use Schwartz functions as the residual class. However, \(\Schwartz(\mathcal G)\) does not define an ideal in \(\mathcal O'_{r,s}(\mathcal G)\).  For example, consider the action of \(G\) on \(\X=G\) by multiplication. Let \(\varphi\in\Schwartz(G\times G)\) with \(\varphi(0,0)\neq 0\) and define \(u\in\Algebra{O}_{r,s}'(\mathcal G)\) 
	by \(u(f)(x,w)=f(x,x)\). Then 
	\begin{equation}
		u*\varphi(x,v)=\varphi(0,x^{-1}v)
	\end{equation}
	does not decay rapidly on the diagonal in \(G\times G\).	
	Only when we restrict later on to essentially homogeneous distributions on the tangent groupoid, Schwartz functions become an ideal. We remark that in the general situation above one can show that \(\Algebra O_M(\X)\completedtensor\Schwartz(G)\) defines a two-sided ideal in \(\mathcal O_{r,s}'(\mathcal G)\). 
\end{remark}

\paragraph{Operator representation}
For \(u\in\Algebra O'_r(\mathcal G)\) there is an operator 
\begin{equation} \label{eq:Op}
	\Op(u)\colon\Algebra O_M(\X)\to\Algebra O_M(\X) ,
	\quad 
	\Op(u) = u \circ s^* ,
\end{equation}
i.e.\ for $\varphi\in\Algebra O_M(\X)$ this means that $\mathrm{Op}(u) \varphi = u(s^* \varphi)$. 
Under the isomorphisms of \cref{res:identifications:fibredDistributions} we obtain
$\mathrm{Op}(U) = \Delta^* \circ (\mathrm{id} \tensor U) \circ s^*$
if $U = \AlgebraicIso_r(u)$.
As in \cite{AMY21}*{Prop.~5.1} one can show the following result.
\begin{lemma}\label{res:fibred-distr-as-operators}
	The map \(\Op\colon\Algebra O_r'(\mathcal G)\to\mathcal L(\Algebra O_M(\X),\Algebra O_M(\X))\) is a homomorphism. 
\end{lemma}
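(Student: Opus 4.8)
The plan is to show that $\Op$ is linear and multiplicative (and unital), the codomain $\mathcal L(\Algebra O_M(\X),\Algebra O_M(\X))$ being an algebra under composition. Linearity is immediate, since $u\mapsto u\circ s^*$ is linear, and $\Op(u)$ really lands in this codomain because $s^*\colon\Algebra O_M(\X)\to\Algebra O_M(\X\times G)$ is continuous by \cref{res:OM:pullback} ($s$ being polynomial). So the substance is the identity $\Op(u_1*u_2)=\Op(u_1)\circ\Op(u_2)$ for $u_1,u_2\in\Algebra O_r'(\mathcal G)$. First I would rewrite both sides using $\Op(u)=u\circ s^*$ (see \eqref{eq:Op}) and the corollary above, $u_1*u_2=u_1\circ s^* u_2\circ M^*$: this gives $\Op(u_1*u_2)=u_1\circ s^* u_2\circ M^*\circ s^*$ and $\Op(u_1)\circ\Op(u_2)=u_1\circ s^*\circ u_2\circ s^*=u_1\circ s^*\circ\Op(u_2)$. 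Since $u_1$ occurs as the outermost map on both sides, it suffices to establish the identity of continuous linear maps $\Algebra O_M(\X)\to\Algebra O_M(\X\times G)$
\begin{equation*}
	s^* u_2\circ M^*\circ s^* = s^*\circ\Op(u_2) .
\end{equation*}

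The only geometric input needed is associativity of the action. Put $\sigma=\theta\times\id_G\colon\X\times G\times G\to\X\times G$, $\sigma(x,v,w)=(\theta_v(x),w)$, which is polynomial; then $s\circ M=\theta\circ(\id_\X\times m)=s\circ\sigma$ because $\theta_{vw}(x)=\theta_w(\theta_v(x))$, so by functoriality of the pullback $M^*\circ s^*=\sigma^*\circ s^*$. Writing $U_2=\AlgebraicIso_r(u_2)$, a short computation with the definition $s^* u_2=(\Delta^*\tensor\id)\circ(\id\tensor\psi_s^*)\circ(\id\tensor U_2\tensor\id)\circ(\id\tensor F^*)$ — of exactly the kind carried out diagrammatically in the proof of \cref{res:convolution} — shows $s^* u_2\circ\sigma^*=s^*\circ\Delta^*\circ(\id\tensor U_2)$: checking this on elementary tensors $\chi_1\tensor\chi_2$ and extending by density and continuity, both composites send $\chi\in\Algebra O_M(\X\times G)$ to the function $(x,v)\mapsto U_2\bigl(a\mapsto\chi(\theta_v(x),a)\bigr)(\theta_v(x))$. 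Combining these two facts with the factorization $\Op(u_2)=\Delta^*\circ(\id\tensor U_2)\circ s^*$ recorded after \eqref{eq:Op}, I obtain
\begin{equation*}
	s^* u_2\circ M^*\circ s^* = s^* u_2\circ\sigma^*\circ s^* = s^*\circ\Delta^*\circ(\id\tensor U_2)\circ s^* = s^*\circ\Op(u_2) ,
\end{equation*}
which is the desired identity. Unitality is then a one-line check: $\Op$ sends the unit of $\Algebra O_r'(\mathcal G)$ (which acts along the fibres by evaluation at $0\in G$) to $\id_{\Algebra O_M(\X)}$, since $\theta_0=\id_\X$ as $0$ is the identity of $G$.

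I expect the only obstacle to be notational bookkeeping — tracking on which tensor factor each of $F^*$, $\psi_s^*$, $\id\tensor U_2\tensor\id$ and $\Delta^*$ acts — together with the two elementary identities $s\circ M=s\circ\sigma$ (a direct check on points of $\X\times G\times G$) and $s^* u_2\circ\sigma^*=s^*\circ\Delta^*\circ(\id\tensor U_2)$ (check on elementary tensors, extend by density and continuity, exactly as in the proof of \cref{res:convolution}). No analytic difficulty arises beyond the continuity statements already established in \cref{res:OM:pullback}, \cref{res:OM_continuity} and \cref{res:identifications:fibredDistributions}.
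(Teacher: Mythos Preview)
Your proof is correct and is essentially the same argument as the paper's, just packaged via the corollary $u_1*u_2=u_1\circ s^*u_2\circ M^*$ rather than directly through the $U_1*U_2$ description. The paper verifies the two preliminary identities
\[
(\id\tensor\overline m^*)\circ s^* \;=\; (\id\tensor s^*)\circ s^*
\qquad\text{and}\qquad
(\Delta^*\tensor\id)\circ(\id\tensor\psi_s^*)\circ(\id\tensor s^*)\;=\;s^*\circ\Delta^*
\]
diagrammatically and then strings them together with $U_1,U_2$; your two identities $s\circ M=s\circ\sigma$ and $s^*u_2\circ\sigma^*=s^*\circ\Delta^*\circ(\id\tensor U_2)$ encode exactly the same geometric content (associativity of the action and the compatibility of $\psi_s$ with $\Delta$), so the proofs coincide up to notation.
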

\begin{proof}
	One easily checks that 
	\begin{equation*}
		\begin{boxdiagram}{M}
			\DrawOneToTwoBox{s^*}{M}{S}{L}{A}
			\DrawOneToTwoBox{\overline m^*}{L}{G}{C}{B}
			\DrawFinalLines{{A/S,B/G,C/G}}
		\end{boxdiagram}
		=
		\begin{boxdiagram}{M}
			\DrawOneToTwoBox{s^*}{U}{S}{B}{A}
			\DrawOneToTwoBox{s^*}{A}{S}{C}{A}
			\DrawFinalLines{{A/S,C/G}}
			\DrawLeftLine{B}{{1/2/G}}
			\DrawRightLine{B}{{2/3/G}}
		\end{boxdiagram}
		\quad\text{and}\quad
		\begin{boxdiagram}{M}
			\DrawOneToTwoBox{s^*}{A}{S}{C}{A}
			\DrawOneToTwoBox{\psi_s^*}{C}{G}{C}{B}
			\DrawTwoToOneBox{\Delta^*}{B}{S}{A}{S}{U}
			\DrawFinalLines{{U/S,C/G}}
			\DrawLeftLine{B}{{0/1/S}}
			\DrawRightLine{B}{{1/2/S}}
		\end{boxdiagram}
		=
		\begin{boxdiagram}{M}
			\DrawTwoToOneBox{\Delta^*}{L}{S}{U}{S}{M}
			\DrawOneToTwoBox{s^*}{M}{S}{L}{U}
			\DrawFinalLines{{U/S,L/G}}
		\end{boxdiagram} \,\,\,\raisebox{-0.2cm}{.}
	\end{equation*}
	Now let \(U_1, U_2 \in \mathcal L(\mathcal O_M(G), \mathcal O_M(\X))\). Then we compute
	\begin{align*}
		\Delta^* \circ (\mathrm{id} \tensor (U_1 * U_2)) \circ s^*  &= 
		\begin{boxdiagram}{HU}
			\DrawOneToTwoBox{s^*}{HU}{S}{L}{A}
			\DrawOneToTwoBox{\overline m^*}{L}{G}{C}{B}
			\DrawOneLineBox{U_2}{B}{G}
			\DrawTwoLineBox{\psi_s^*}{C}{G}{B}{S}
			\DrawOneLineBox{U_1}{C}{G}
			\DrawTwoToOneBox{\Delta^*}{C}{S}{B}{S}{L}
			\DrawTwoToOneBox{\Delta^*}{L}{S}{A}{S}{HU}
			\DrawFinalLines{{HU/S}}
		\end{boxdiagram} \\
	&= 
	\begin{boxdiagram}{U}
		\DrawOneToTwoBox{s^*}{U}{S}{B}{A}
		\DrawOneLineBox{U_2}{B}{G}
		\DrawOneToTwoBox{s^*}{A}{S}{C}{A}
		\DrawOneToTwoBox{\psi_s^*}{C}{G}{C}{B}
		\DrawTwoToOneBox{\Delta^*}{B}{S}{A}{S}{U}
		\DrawOneLineBox{U_1}{C}{G}
		\DrawTwoToOneBox{\Delta^*}{C}{S}{U}{S}{HL}
		\DrawLeftLine{B}{{2/3/S}}
		\DrawRightLine{B}{{3/4/S}}
		\DrawFinalLines{{HL/S}}
	\end{boxdiagram} \\
	&= 
	\begin{boxdiagram}{M}
		\DrawOneToTwoBox{s^*}{M}{S}{L}{U}
		\DrawOneLineBox{U_2}{L}{G}
		\DrawTwoToOneBox{\Delta^*}{L}{S}{U}{S}{M}
		\DrawEmptyBox
		\DrawOneToTwoBox{s^*}{M}{S}{L}{U}
		\DrawOneLineBox{U_1}{L}{G}
		\DrawTwoToOneBox{\Delta^*}{L}{S}{U}{S}{M}
		\DrawFinalLines{{M/S}}
	\end{boxdiagram} \\
		&= \Delta^* \circ (\mathrm{id} \tensor U_1) \circ s^* \circ \Delta^* \circ (\mathrm{id} \tensor U_2)\circ s^*.
		\tag*{\qedhere}
	\end{align*}
\end{proof}
For a right action of a graded Lie group \(G\) on \(\X = \RR^d\), define the \emph{shear map} 
\begin{equation} \label{eq:shearmap}
	\Theta\colon \X\times G\to \X\times \X , \qquad
	(x,v)\mapsto(x,\theta_v(x)) .
\end{equation} In fact, it defines a groupoid homomorphism from the action groupoid \(\X\rtimes G\) to the pair groupoid \(\X\times \X\), restricting to the identity on the unit space $\X$.
The action is free if and only if \(\Theta\) is injective. It is transitive if and only if \(\Theta\) is a surjective, and in this case $\Theta$ is even a surjective submersion \cite{Hel79}*{Prop.~II.4.3}. 
The following strengthening of the two properties will be useful.
\begin{definition}\label{def:polynomially-free}
	The polynomial right action \(\theta\) of \(G\) on \(\X\) is called \emph{polynomially free} if the shear map \(\Theta\colon \X\times G\to \X\times \X\) admits a polynomial left inverse \(\Omega\colon \X\times \X\to \X\times G\). It is called \emph{polynomially transitive} if \(\Theta\) is a polynomial fibre projection.
\end{definition}

\begin{lemma}\label{res:Phi}
	Let \(\theta\) denote a polynomial right action of \(G\) on \(\X\). Then there is a map \(\Theta_*\colon \Algebra O'_r(\mathcal G)\to \Algebra O'_r(\X\times \X)\) defined by \(\Theta_*u=u\circ\Theta^*\). It has the following properties
		\begin{enumerate}
			\item\label{item:free} if \(\theta\) is polynomially free, \(\Theta_*\) is injective,
			\item\label{item:transitive} if \(\theta\) is polynomially transitive, \(\Theta_*\) restricts to a surjective map \(\Theta_*\colon\Schwartz(\mathcal G)\to\Schwartz(\X\times \X)\).
		\end{enumerate}	
\end{lemma}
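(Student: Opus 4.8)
The plan is to verify well-definedness of $\Theta_*$ first, and then to treat \refitem{item:free} and \refitem{item:transitive} separately. For well-definedness, note that $\Theta$ is polynomial, so the pull-back $\Theta^*\colon\Algebra O_M(\X\times\X)\to\Algebra O_M(\X\times G)$ is continuous by \cref{res:OM:pullback}, and hence $\Theta_*u=u\circ\Theta^*$ is a continuous linear map $\Algebra O_M(\X\times\X)\to\Algebra O_M(\X)$ for every $u\in\Algebra O'_r(\mathcal G)$. Since $\Theta$ is a groupoid homomorphism into the pair groupoid that is the identity on units, we have $\pr_1\circ\Theta=r$, so $\Theta^*(\pr_1^*f\cdot\varphi)=r^*f\cdot\Theta^*\varphi$ for $f\in\Algebra O_M(\X)$; together with the $\Algebra O_M(\X)$-linearity of $u$ this shows $\Theta_*u\in\Algebra O'_r(\X\times\X)$. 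For \refitem{item:free}, if $\Omega\colon\X\times\X\to\X\times G$ is a polynomial left inverse of $\Theta$, then $\Theta^*\circ\Omega^*=(\Omega\circ\Theta)^*=\id$ on $\Algebra O_M(\X\times G)$, so $\Theta^*$ is surjective; therefore $u_1\circ\Theta^*=u_2\circ\Theta^*$ forces $u_1=u_2$, i.e.\ $\Theta_*$ is injective.

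For \refitem{item:transitive} I would work through the injections $f\mapsto u_f$, with $u_f(\varphi)(x)=\int_G f(x,v)\varphi(x,v)\,\D v$, which realizes $\Schwartz(\mathcal G)$ inside $\Algebra O'_r(\mathcal G)$, and $g\mapsto u_g$, with $u_g(\psi)(x)=\int_\X g(x,y)\psi(x,y)\,\D y$, realizing $\Schwartz(\X\times\X)$ inside $\Algebra O'_r(\X\times\X)$ (both are injective, e.g.\ by testing against $\varphi,\psi\in\Algebra D$). Polynomial transitivity means $\Theta$ is a polynomial fibre projection, so there is a polynomial diffeomorphism $\psi_\Theta\colon\X\times G\to(\X\times\X)\times F$ with $F=\RR^{n-d}$ and $\pr_{\X\times\X}\circ\psi_\Theta=\Theta$; since $\pr_1\circ\psi_\Theta=\pr_1\circ\Theta=\pr_1$, it has the form $\psi_\Theta(x,v)=(x,\theta_v(x),\rho(x,v))$ and $\psi_\Theta^{-1}(x,y,w)=(x,\gamma(x,y,w))$ with $\rho,\gamma$ polynomial.

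The central ingredient is a sub-claim: pull-back along any polynomial diffeomorphism $\phi$ of $\RR^m$ maps $\Schwartz(\RR^m)$ to itself. This holds because $\phi^{-1}$ being polynomial gives a crude bound $\langle z\rangle\leq C\langle\phi(z)\rangle^k$, hence $\langle\phi(z)\rangle\geq c\langle z\rangle^{1/k}$; feeding this into the Schwartz seminorms of $f\circ\phi$ and expanding $\partial^a(f\circ\phi)$ by Fa\`{a} di Bruno's formula \cref{res:chain_rule} (whose coefficients $\ChainNoArg\phi ab$ are polynomials in $z$ here) lets the rapidly decaying factor $(\partial^bf)\circ\phi$ dominate the polynomial growth of $\ChainNoArg\phi ab$. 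In particular $f\circ\psi_\Theta^{-1}\in\Schwartz((\X\times\X)\times F)$ for $f\in\Schwartz(\X\times G)$. Next, for fixed $x$ the slice $v\mapsto(\theta_v(x),\rho(x,v))$ is a polynomial diffeomorphism of $\RR^n$ with polynomial inverse $(y,w)\mapsto\gamma(x,y,w)$, so by \cref{res:determinantIsConstant} its Jacobian determinant is a global nonzero constant $c_0$. Substituting $v=\gamma(x,y,w)$ in $u_f(\Theta^*\psi)(x)=\int_G f(x,v)\psi(x,\theta_v(x))\,\D v$ and applying Fubini then gives
\begin{equation*}
\Theta_*u_f(\psi)(x)=\int_\X\Big(|c_0|^{-1}\!\int_F (f\circ\psi_\Theta^{-1})(x,y,w)\,\D w\Big)\psi(x,y)\,\D y=u_g(\psi)(x),
\end{equation*}
with $g(x,y)=|c_0|^{-1}\int_F(f\circ\psi_\Theta^{-1})(x,y,w)\,\D w$, which is Schwartz on $\X\times\X$ because integrating a Schwartz function over part of its variables stays Schwartz. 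Thus $\Theta_*$ sends $\Schwartz(\mathcal G)$ into $\Schwartz(\X\times\X)$. Surjectivity is obtained by reversing this: given $g\in\Schwartz(\X\times\X)$, pick $\chi\in\Schwartz(F)$ with $\int_F\chi=1$, put $h(x,y,w)=|c_0|\,g(x,y)\chi(w)\in\Schwartz((\X\times\X)\times F)$ and $f=h\circ\psi_\Theta\in\Schwartz(\X\times G)$ by the sub-claim; then $\Theta_*u_f=u_g$ by the displayed formula.

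I expect the main obstacle to be this sub-claim together with the change of variables it enables: proving the polynomial lower bound $\langle\phi(z)\rangle\geq c\langle z\rangle^{1/k}$, threading it through Fa\`{a} di Bruno, and checking carefully that the Jacobian supplied by \cref{res:determinantIsConstant} is constant in the parameter $x$ as well as in $v$, so that $c_0$ may be pulled out of the $x$-integral. By contrast, well-definedness and \refitem{item:free} are formal once one records the identities $\pr_1\circ\Theta=r$ and $\Omega\circ\Theta=\id$.
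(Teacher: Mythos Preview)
Your proof is correct and follows essentially the same route as the paper: well-definedness and \refitem{item:free} are handled identically, and for \refitem{item:transitive} both you and the paper trivialize via the polynomial diffeomorphism $\psi_\Theta$, invoke \cref{res:determinantIsConstant} to get a constant Jacobian (in both $x$ and $v$), and reduce $\Theta_*$ on Schwartz functions to integrating out the $F$-variable up to that constant. The only difference is that you spell out the sub-claim that polynomial diffeomorphisms preserve Schwartz space and the bump-function lift for surjectivity, whereas the paper leaves these as implicit/easy steps.
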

\begin{proof}
	It is easy to check that \(u\circ\Theta^*\) is \(\Algebra O_M(\X)\)-linear with respect to the range map \(r\colon \X\times \X\to \X\) given by \((x,y)\mapsto x\) since \(\Theta\)
	intertwines the range maps. Therefore, \(\Theta_*\) is well-defined.
	
	To show \refitem{item:free} let \(\Omega\) be a polynomial map satisfying \(\Omega\circ\Theta=\id\), then \(\Theta^*\circ\Omega^*=\id_{\Algebra O_M(\X\times G)}\). Consequently, every \(\varphi\in\Algebra O_M(\X\times G)\) can be written as \(\Theta^*\tilde\varphi\) for some \(\tilde\varphi\in\Algebra O_M(\X\times \X)\). Thus, \(\Theta_*u=0\) for \(u\in\Algebra O'_r(\mathcal G)\) implies \(u=0\).
	
	When \(\theta\) is polynomially transitive, there is a polynomial diffeomorphism \(\psi\colon \X\times G\to \X\times \X\times\RR^{n-d}\) such that \(\Theta=(\pr_1,\pr_2)\circ\psi\). 
	Using this identification we show below that the map $\Theta_*$ becomes, up to multiplication by a non-zero constant, the map
	$\Schwartz(\X\times \X\times\RR^{n-d})	\to \Schwartz(\X \times \X)$
	which integrates out the last variables. From this, \refitem{item:transitive} follows easily.
		
	Note that \(\psi\) is of the form \(\psi(x,v)=(x,\psi_x(v))\), where \(\psi_x\colon G\to \X\times\RR^{n-d}\) is a polynomial diffeomorphism for every \(x\in \X\).
	The inverse is of the form $\psi^{-1}(x,y,z) = (x, \psi_x^{-1}(y,z))$.
	Hence \cref{res:determinantIsConstant} shows that \((x,v)\mapsto \abs{\det D_v(\psi_x)}\) 
	is a non-zero constant, denoted by \(c\) in the following.
	Thus, for \(f\in\Schwartz(\X \times \X \times \RR^{n-d})\), \(\varphi\in\Algebra O_r(\X \times \X)\) and \(x\in \X\) we compute
	\begin{multline*}
		\Theta_*(u_{\psi^* f})(\varphi)(x) 
		= u_{\psi^* f}(\Theta^* \varphi)(x)
		= \int_{G} \psi^* f(x,v) \Theta^* \varphi(x,v) \D v
		\\ 
		= \int_{G} f(x, \psi_x(v)) \varphi((\pr_1,\pr_2)(x,\psi_x(v))) \D v
		= \frac 1 c \int_{\X \times \RR^{n-d}} f(x, y,z) \varphi(x,y) \D y \D z
		=  u_{\tilde f}(\varphi)(x),
	\end{multline*}
	where $\tilde f \in \Schwartz(\X \times \X)$ is obtained from $f$ by integrating out the $z$-variable and multiplying with $\frac 1 c$.
\end{proof}

\begin{corollary}\label{res:Op-injective}
		Let \(\theta\) denote a polynomial right action of \(G\) on \(\X\). Then the following holds
		\begin{enumerate}
			\item\label{item:Op-free} when \(\theta\) is polynomially free, then \(\Op\colon\Algebra O'_r(\mathcal G)\to\mathcal L(\Algebra O_M(\X),\Algebra O_M(\X))\) is injective.
			\item\label{item:Op-transitive} when \(\theta\) is polynomially transitive, then \(\Op(\Schwartz(\mathcal G))\) consists precisely of operators that extend to continuous maps \(\Schwartz'(\X)\to\Schwartz(\X)\).
		\end{enumerate}	
\end{corollary}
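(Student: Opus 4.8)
The plan is to reduce both assertions, via the shear map $\Theta$ and the map $\Theta_*$ of \cref{res:Phi}, to elementary facts about the pair groupoid $\X \times \X$.

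The key identity is $\Op(u) = \AlgebraicIso_r(\Theta_* u)$ for $u \in \Algebra O'_r(\mathcal G)$, where on the right $\AlgebraicIso_r \colon \Algebra O'_r(\X \times \X) \to \mathcal L(\Algebra O_M(\X), \Algebra O_M(\X))$ is the isomorphism of \cref{res:identifications:fibredDistributions} for the range fibre projection $r \colon \X \times \X \to \X$, $r(x,y) = x$, with datum $\psi_r = \id$. To see this, recall $\Op(u) = u \circ s^*$, and note that $\Theta$, being a groupoid homomorphism that is the identity on units, satisfies $s_{\X \rtimes G} = s_{\X \times \X} \circ \Theta$ with $s_{\X \times \X}(x,y) = y$; since moreover $s_{\X \times \X}^* \varphi = \psi_r^*(1 \tensor \varphi)$, one computes $\AlgebraicIso_r(\Theta_* u)(\varphi) = (u \circ \Theta^*)(s_{\X \times \X}^* \varphi) = u(s_{\X \rtimes G}^* \varphi) = \Op(u)(\varphi)$ for $\varphi \in \Algebra O_M(\X)$. (Continuity of $\Theta^*$ on slowly increasing functions is \cref{res:OM:pullback}.) It is also worth recording that for $g \in \Schwartz(\X \times \X)$ one has $\AlgebraicIso_r(u_g)(\varphi)(x) = \int_\X g(x,y) \varphi(y)\,\D y$, i.e.\ $\AlgebraicIso_r(u_g)$ is the integral operator with kernel $g$.

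Given this, \refitem{item:Op-free} is immediate: if $\theta$ is polynomially free, $\Theta_*$ is injective by \cref{res:Phi} \refitem{item:free}, and $\AlgebraicIso_r$ is an isomorphism, so $\Op = \AlgebraicIso_r \circ \Theta_*$ is injective. For \refitem{item:Op-transitive}, if $\theta$ is polynomially transitive, then by \cref{res:Phi} \refitem{item:transitive} the map $\Theta_*$ carries $\Schwartz(\mathcal G)$ onto $\Schwartz(\X \times \X)$ (as subspaces of $\Algebra O'_r(\X \times \X)$), whence $\Op(\Schwartz(\mathcal G)) = \AlgebraicIso_r(\Schwartz(\X \times \X))$ is precisely the set of integral operators with a Schwartz kernel $g \in \Schwartz(\X \times \X)$. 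It then remains to show this set coincides with the operators that extend to continuous maps $\Schwartz'(\X) \to \Schwartz(\X)$. One inclusion is elementary: for $g \in \Schwartz(\X \times \X)$ the formula $u \mapsto \bigl(x \mapsto \langle u, g(x,\,\cdot\,)\rangle\bigr)$ defines such a continuous extension of $\AlgebraicIso_r(u_g)$. The converse is the Schwartz kernel theorem together with nuclearity of $\Schwartz(\X)$, which gives $\mathcal L(\Schwartz'(\X), \Schwartz(\X)) \cong \Schwartz(\X) \completedtensor \Schwartz(\X) \cong \Schwartz(\X \times \X)$, so that any continuous $T \colon \Schwartz'(\X) \to \Schwartz(\X)$ is integration against some $g \in \Schwartz(\X \times \X)$ and hence restricts on $\Algebra O_M(\X) \hookrightarrow \Schwartz'(\X)$ to $\AlgebraicIso_r(u_g) \in \Op(\Schwartz(\mathcal G))$.

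I expect the only genuinely delicate point to be the bookkeeping around $\Op = \AlgebraicIso_r \circ \Theta_*$ and the description of $u_g$ on the pair groupoid — keeping track of which $\X$-factor plays the role of the fibre and that $\psi_r$ can be taken to be the identity — and invoking the kernel theorem in the precise form $\mathcal L(\Schwartz'(\X), \Schwartz(\X)) \cong \Schwartz(\X \times \X)$. No further analytic difficulty arises; everything else is an application of \cref{res:Phi} and \cref{res:identifications:fibredDistributions}.
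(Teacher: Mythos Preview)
Your proof is correct and follows essentially the same route as the paper: both reduce to the pair groupoid via $\Theta_*$ and \cref{res:Phi}, then invoke $\Schwartz(\X\times\X)\cong\mathcal L(\Schwartz'(\X),\Schwartz(\X))$ for part~\refitem{item:Op-transitive}. For part~\refitem{item:Op-free} the paper passes through Schwartz kernels and the injectivity of $\IntegrationMap_r$ (\cref{res:PhiR:injective}), whereas your factorization $\Op=\AlgebraicIso_r\circ\Theta_*$ with $\AlgebraicIso_r$ already an isomorphism is a slightly cleaner variant of the same idea.
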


\begin{proof}
	Note that \(\Op(u)=\Theta_*(u)\circ s^*\), where \(s\colon \X\times \X\to \X\) is the source map \(s(x,y)=y\). The Schwartz kernel of \(\Op(u)\) is, therefore, given by \(\IntegrationMap_r(\Theta_*(u))\).
	
	To see \refitem{item:Op-free} suppose that \(\Op(u)=\Op(v)\) for \(u,v\in\Algebra O'_r(\mathcal G)\), then their Schwartz kernels coincide. By the injectivity of \(\IntegrationMap_r\) (\cref{res:PhiR:injective}) this implies \(\Theta_*(u)=\Theta_*(v)\). Hence, the first claim follows as \(\Theta_*\) is injective by \cref{res:Phi}~\refitem{item:free}.
	Claim \refitem{item:Op-transitive} follows from \cref{res:Phi}~\refitem{item:transitive} and \(\Schwartz(\X\times \X)\cong\mathcal L(\Schwartz'(\X),\Schwartz(\X))\).
\end{proof}

\begin{lemma} \label{res:OpDual}
	For all $f_1, f_2 \in \SmoothCompactSupp(\X)$ and $u \in \Algebra O_{r,s}'(\mathcal G)$ the following equality holds
	\begin{equation} \label{eq:OpDual}
		\int_\X (\Op(u) f_1)(x) f_2(x) \D x = \int_\X f_1(x) (\Op(u^t) f_2)(x) \D x .
	\end{equation}
\end{lemma}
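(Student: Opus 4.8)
The plan is to first obtain an explicit description of the transpose $u^t$ and then to reduce the claimed identity to the defining property $\IntegrationMap_r(u) = \IntegrationMap_s(\tilde u)$ of an element of $\Algebra O'_{r,s}(\mathcal G)$. Let $\tilde u \in \Algebra O'_s(\mathcal G)$ be the source-fibred distribution with $\IntegrationMap_s(\tilde u) = \IntegrationMap_r(u)$, which exists since $u \in \Algebra O'_{r,s}(\mathcal G)$ and is unique by \cref{res:PhiR:injective}. Unwinding the definitions of $u^*$ and $u^t$ in the proof of \cref{res:equiv_rs_fibred} gives $u^t = \tilde u \circ I^*$ (this is $\Algebra O_M(\X)$-linear over $r$ because $I$ intertwines $r$ and $s$). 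Since $s \circ I = r$ as maps $\X \times G \to \X$, we have $I^* \circ s^* = r^*$, and therefore, for $f_2 \in \SmoothCompactSupp(\X)$,
\begin{equation*}
	\Op(u^t) f_2 = u^t(s^* f_2) = \tilde u(I^* s^* f_2) = \tilde u(r^* f_2) .
\end{equation*}

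Next I would rewrite both sides of \eqref{eq:OpDual} using the module structures. Put $\varphi \coloneqq (r^* f_2)\cdot(s^* f_1) \in \Algebra O_M(\X \times G)$, i.e.\ $\varphi(x,v) = f_2(x) f_1(x\cdot v)$; this indeed lies in $\Algebra O_M(\X \times G)$ because $r^*, s^* \colon \Algebra O_M(\X) \to \Algebra O_M(\X\times G)$ are continuous by \cref{res:OM:pullback} and multiplication on $\Algebra O_M$ is continuous. As $u$ is $\Algebra O_M(\X)$-linear over $r$ and $\tilde u$ over $s$, we get $f_2 \cdot u(s^* f_1) = u(\varphi)$ and $f_1 \cdot \tilde u(r^* f_2) = \tilde u(\varphi)$ (using commutativity of the pointwise product), so the left-hand side of \eqref{eq:OpDual} equals $\int_\X u(\varphi)(x)\D x$ and the right-hand side equals $\int_\X \tilde u(\varphi)(x)\D x$. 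Moreover $u(\varphi)$ is supported in the compact set $\supp f_2$ and $\tilde u(\varphi)$ in $\supp f_1$, so both integrals are well defined, and it remains to show
\begin{equation*}
	\int_\X u(\varphi)(x)\D x = \int_\X \tilde u(\varphi)(x)\D x .
\end{equation*}

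If $\varphi$ were compactly supported this would be precisely $\langle \IntegrationMap_r(u), \varphi\rangle = \langle \IntegrationMap_s(\tilde u), \varphi\rangle$, which holds since $\IntegrationMap_r(u) = \IntegrationMap_s(\tilde u)$. In general I would approximate: fix $\chi \in \SmoothCompactSupp(G)$ with $\chi \equiv 1$ near $0$, set $\chi_n(v) = \chi(v/n)$ and $\varphi_n \coloneqq \varphi \cdot \pr_G^*\chi_n \in \SmoothCompactSupp(\X\times G)$. Then $u(\varphi_n) = f_2 \cdot u(s^*f_1 \cdot \pr_G^*\chi_n)$ and $\tilde u(\varphi_n) = f_1 \cdot \tilde u(r^* f_2 \cdot \pr_G^*\chi_n)$ remain supported in $\supp f_2$, resp.\ $\supp f_1$, and $\int_\X u(\varphi_n)(x)\D x = \langle \IntegrationMap_r(u),\varphi_n\rangle = \langle \IntegrationMap_s(\tilde u),\varphi_n\rangle = \int_\X \tilde u(\varphi_n)(x)\D x$ for every $n$. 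Granting the convergence $\varphi_n \to \varphi$ in $\Algebra O_M(\X\times G)$, continuity of $u$ and $\tilde u$ yields $u(\varphi_n)\to u(\varphi)$ and $\tilde u(\varphi_n)\to \tilde u(\varphi)$ in $\Algebra O_M(\X)$; as these functions are supported in a fixed compact set, testing against the seminorm $\norm{\cdot}_{g,0}$ for a Schwartz $g$ equal to $1$ there shows the convergence is uniform, hence $\int_\X u(\varphi_n)\D x \to \int_\X u(\varphi)\D x$ and likewise for $\tilde u$; letting $n \to \infty$ then finishes the proof.

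The one substantial point — which I expect to be the main obstacle — is the convergence $\varphi_n \to \varphi$ in $\Algebra O_M(\X\times G)$. For a Schwartz function $g$ on $\X\times G$ and a multiindex $(a,b)$ one applies the Leibniz rule to $\partial^{(a,b)}\bigl(\varphi \cdot (\pr_G^*\chi_n - 1)\bigr)$. Every resulting term is supported where $\chi_n \neq 1$, hence on $\supp f_2 \times \set{v \in G}{\norm{v}_\alpha \geq cn}$ for some $c > 0$; the $v$-derivatives of $\pr_G^*\chi_n$ are bounded uniformly in $n$; and the derivatives of $\varphi(x,v) = f_2(x)f_1(x\cdot v)$ are, by \cref{res:polynomial_action} and the chain rule \cref{res:chain_rule} together with the fact that $x$ ranges over the compact set $\supp f_2$, bounded by a fixed power of $\langle v\rangle_\alpha$. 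Since $g$ decays faster than any power of $\langle v\rangle_\alpha$ while the supports recede to infinity in $v$, the seminorm $\norm{\varphi\cdot(\pr_G^*\chi_n - 1)}_{g,(a,b)}$ tends to $0$, which is exactly what is needed.
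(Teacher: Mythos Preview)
Your proof is correct and follows essentially the same approach as the paper: both rewrite each side as $\int_\X u(\varphi)$ and $\int_\X \tilde u(\varphi)$ (the paper phrases this as $\int_\X u^t(I^*\varphi)$, which is the same thing since $u^t = \tilde u \circ I^*$), observe that the desired equality is $\langle \IntegrationMap_r(u),\varphi\rangle = \langle \IntegrationMap_s(\tilde u),\varphi\rangle$ for the non-compactly-supported $\varphi = r^*f_2\cdot s^*f_1$, and then justify this by an approximation argument with compactly supported cutoffs converging in $\Algebra O_M$. The only cosmetic difference is that the paper cuts off in both the $\X$ and $G$ variables simultaneously while you cut off only in $G$; your choice is slightly more efficient and your verification of the $\Algebra O_M$-convergence is more detailed than the paper's.
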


\begin{proof}
	Using \(\IntegrationMap_ru =I_*\left(\IntegrationMap_r u^t\right) \) we compute 
	\begin{multline*}
		\int_\X (\Op(u) f_1)(x) f_2(x) \D x
		=
		\int_\X u(s^* f_1)(x) f_2(x) \D x
		=
		\int_\X u(s^* f_1 \cdot r^* f_2 )(x) \D x
		\\
		\overset{\mathclap{(*)}}{=}
		\int_\X u^t(I^*(s^* f_1 \cdot r^* f_2) )(x) \D x
		= 
		\int_\X u^t(r^* f_1 \cdot s^* f_2) (x) \D x
		=
		\int_\X f_1(x) (\Op(u^t)f_2) (x) \D x .
	\end{multline*}
	We need to justify ($*$) since we only know that 
	$\langle\IntegrationMap_r u, \varphi\rangle = \langle \IntegrationMap_s u^t \circ I^*, \varphi\rangle$
	holds for all $\varphi \in \mathcal D'(\TG)$,
	but $\varphi = s^* f_1 \cdot r^* f_2$ might have non-compact support.
	Choose a smooth cut-off function $\chi \in \SmoothCompactSupp(\TG)$ which is $1$ in a neighbourhood of $(0,0)$ and let $\chi_j(x,v) = \chi(\frac 1 j x, \frac 1 j v)$
	where $G$ is identified with the vector space $\lie g$ using standard coordinates.
	Then $\chi_j \varphi \to \varphi$ in $\Algebra O_M(\TG)$,
	hence $u(\chi_j \varphi) \to u(\varphi)$ in $\Algebra O_M(\X)$.
	But the support of $u(\chi_j \varphi)$ is contained in the support of $u(\varphi)$,
	hence contained in the compact support of $f_2$.
	Therefore $u(\chi_j\varphi) \to u(\varphi)$ in $\SmoothCompactSupp(\X)$
	and $\langle\IntegrationMap_r u, \chi_j\varphi\rangle =\int_\X u(\chi_j\varphi)(x) \D x \to \int_\X u(\varphi)(x) \D x = \langle\IntegrationMap_r u, \varphi\rangle$.
	Similarly, $\langle \IntegrationMap_s u^t \circ I^*, \chi_j \varphi \rangle \to \langle \IntegrationMap_s u^t \circ I^*, \varphi \rangle$
	and the equality above remains true for $\varphi = s^* f_1 \cdot r^* f_2$.
\end{proof}

\paragraph{Groupoid homomorphisms}
Let us examine functoriality of action groupoids and their algebras of fibred distributions.
Later, we will apply these results to the zoom action on the Shubin tangent groupoid. 

Throughout this section, assume that $G$, $H$ are graded Lie groups, that $\X$, $\Y$ are vector spaces
and that \(\theta\colon \X\times G\to \X\) and \(\vartheta\colon \Y\times H\to \Y\) are polynomial right actions.
The following lemma is easy to prove.
\newcommand{\XX}{\,\X} 
\begin{lemma}\label{res:groupoidHom}
	Let \(\tau_G\colon G\to H\) be a Lie group homomorphism and \(\tau_{\XX}\colon \X\to \Y\) be a smooth map. Suppose \(\tau_{\XX}\) is compatible with the actions in the sense that \(\tau_{\XX}(\theta_v(x))=\vartheta_{\tau_G(v)}(\tau_{\XX}(x))\) for all \(x\in \X\) and \(v\in G\). Then \(\tau\colon \X\rtimes G \to\Y \rtimes H\) with \(\tau(x,v)=(\tau_{\XX}(x),\tau_G(v))\) defines a Lie groupoid homomorphism.
\end{lemma}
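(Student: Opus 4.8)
The plan is to verify directly that $\tau = (\tau_\X, \tau_G) \colon \X \rtimes G \to \Y \rtimes H$, $\tau(x,v) = (\tau_\X(x),\tau_G(v))$, is compatible with all the structure maps of the two action groupoids listed in \cref{def:action_groupoid}. Smoothness of $\tau$ on the arrow space $\X \times G$ and of the base map $\tau_\X$ on the unit space $\X$ is immediate, since $\tau_\X$ and $\tau_G$ are smooth. Compatibility with the range maps requires nothing beyond unwinding definitions: $r(\tau(x,v)) = \tau_\X(x) = \tau_\X(r(x,v))$. The source maps are where the compatibility hypothesis enters: $s(\tau(x,v)) = \vartheta_{\tau_G(v)}(\tau_\X(x))$, which by assumption equals $\tau_\X(\theta_v(x)) = \tau_\X(s(x,v))$.

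For the unit map one computes $\tau(u(x)) = (\tau_\X(x), \tau_G(e_G)) = (\tau_\X(x), e_H) = u(\tau_\X(x))$, using that $\tau_G$ sends the identity to the identity. For the multiplication I would first note that $\tau \times \tau$ sends composable pairs to composable pairs: if $s(x,v) = r(x \cdot v, w)$, i.e.\ $x\cdot v = x \cdot v$ in the obvious sense, then the source-map identity just established gives $s(\tau(x,v)) = \tau_\X(\theta_v(x)) = r(\tau(x \cdot v, w))$, so $\tau \times \tau$ restricts to a map $(\X \rtimes G)^{(2)} \to (\Y \rtimes H)^{(2)}$. Then, using that $\tau_G$ is a group homomorphism together with compatibility once more,
\[
\tau\bigl((x,v) \cdot (x \cdot v, w)\bigr) = \tau(x, vw) = \bigl(\tau_\X(x), \tau_G(v)\tau_G(w)\bigr) = \bigl(\tau_\X(x), \tau_G(v)\bigr) \cdot \bigl(\tau_\X(x) \cdot \tau_G(v), \tau_G(w)\bigr) = \tau(x,v) \cdot \tau(x \cdot v, w).
\]
Compatibility with the inversion $I$ then follows automatically for groupoid morphisms, or can be checked by the same bookkeeping: $\tau\bigl((x,v)^{-1}\bigr) = \tau(x \cdot v, v^{-1}) = \bigl(\tau_\X(x) \cdot \tau_G(v), \tau_G(v)^{-1}\bigr) = \tau(x,v)^{-1}$.

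I do not expect any real obstacle here: the statement is a pure bookkeeping exercise, and the only places where the hypotheses are used in an essential way are the source-map identity (compatibility of $\tau_\X$ with the actions) and the unit and multiplication identities (that $\tau_G$ is a group homomorphism). Accordingly I would keep the written proof to one or two sentences, or even leave it to the reader as the statement already announces.
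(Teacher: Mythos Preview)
Your proposal is correct and matches the paper's treatment: the paper states that the lemma is ``easy to prove'' and gives no proof at all. Your direct verification of the groupoid structure maps is exactly the routine check the authors had in mind, and your closing remark that one could leave it to the reader is precisely what they did.
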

\begin{lemma}\label{res:homomorphism} 
	Let \(\tau_G\colon G\to H\) be a Lie group homomorphism and \(\tau_{\XX}\colon \X\to \Y\) a polynomial map.  Suppose \(\tau_{\XX}\) has a polynomial right inverse \(\sigma_{\Y}\colon \Y\to \X\), i.e.\ \(\tau_{\XX}\circ\sigma_\Y=\id_\Y\), which satisfies \(\sigma_\Y(\vartheta_{\tau_G(v)}(y))=\theta_v(\sigma_\Y(y))\) for all \(y\in\Y\) and \(v\in G\). Then the following holds:
	\begin{enumerate}
		\item \label{item:conv-auto} There is a homomorphism \(\tau_*\colon\Algebra O'_r(\X\rtimes G)\to \Algebra O'_r(\Y\rtimes H)\) with respect to convolution defined by	
		\begin{equation*}
			\tau_*u(\varphi)=\sigma_\Y^*(u(\tau^*\varphi)) \quad \text{for }u\in\Algebra O'_r(\X\rtimes G)\text{ and } \varphi\in\Algebra O_M(\Y\rtimes H).
		\end{equation*}				
		\item \label{item:schwartz-auto} If \(\tau_G\colon G\to G\) is a group automorphism, \(\tau_*\) restricts to a homomorphism \(\Schwartz(\X\rtimes G)\to\Schwartz(\Y\rtimes G)\) with \(\tau_*(u_f)=u_{\abs{\det(D_v\tau_G^{-1})}(\sigma_\Y\times\tau_G^{-1})^*f}\) for \(f\in\Schwartz(\X\rtimes G)\),
		where \(v\mapsto \det(D_v\tau_G^{-1})\) is constant by \cref{res:determinantIsConstant}.
		\end{enumerate}
\end{lemma}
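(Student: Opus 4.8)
The plan is to work with the concrete models of $\Algebra O'_r$ and of its convolution from \cref{res:identifications:fibredDistributions} and \cref{res:convolution}, treating the two parts in turn. Before anything else I would record that a Lie group homomorphism between graded (hence nilpotent, simply connected) Lie groups is polynomial -- in exponential coordinates it is just the linear map $\D\tau_G$ -- so that $\tau=\tau_\X\times\tau_G$ and $\sigma_\Y$ are polynomial and, by \cref{res:OM:pullback}, all pull-backs $\tau^*,\sigma_\Y^*,\tau_G^*$ occurring below are continuous on the relevant $\Algebra O_M$-spaces. In particular $\tau_*u=\sigma_\Y^*\circ u\circ\tau^*$ is a well-defined continuous linear map $\Algebra O_M(\Y\rtimes H)\to\Algebra O_M(\Y)$; to see it is $\Algebra O_M(\Y)$-linear for the range map I would use $\tau^*(r^*g\cdot\varphi)=r^*(\tau_\X^*g)\cdot\tau^*\varphi$ for $g\in\Algebra O_M(\Y)$, then $\Algebra O_M(\X)$-linearity of $u$, and finally $\sigma_\Y^*\tau_\X^*g=(\tau_\X\circ\sigma_\Y)^*g=g$, which gives $\tau_*u(r^*g\cdot\varphi)=g\cdot\tau_*u(\varphi)$. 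Hence $\tau_*u\in\Algebra O'_r(\Y\rtimes H)$.

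For part \refitem{item:conv-auto} I would transport $\tau_*$ through the isomorphism $\AlgebraicIso_r$: evaluating $\tau_*u$ on $1\tensor\psi$ and using $\tau^*(1\tensor\psi)=1\tensor\tau_G^*\psi$ shows that $\tau_*$ corresponds to the map $T(U)=\sigma_\Y^*\circ U\circ\tau_G^*$ from $\mathcal L(\Algebra O_M(G),\Algebra O_M(\X))$ to $\mathcal L(\Algebra O_M(H),\Algebra O_M(\Y))$, where on each side $*$ is given by the box formula $U_1*U_2=\Delta_{\X}^*\circ(\id\tensor U_1)\circ(\psi_s^{\X})^*\circ(U_2\tensor\id)\circ F_G^*\circ m_G^*$ of \cref{res:convolution}. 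Every factor other than the $U_i$ is a pull-back along a polynomial map, so verifying $T(U_1*U_2)=T(U_1)*T(U_2)$ is just a matter of commuting the $\sigma_\Y^*$'s and $\tau_G^*$'s past the $U_i$ (maps on distinct tensor factors commute), using $\sigma_\Y^*\circ\Delta_{\X}^*=\Delta_{\Y}^*\circ(\sigma_\Y^*\tensor\sigma_\Y^*)$ (from $\Delta_{\X}\circ\sigma_\Y=(\sigma_\Y\times\sigma_\Y)\circ\Delta_{\Y}$) together with the group-homomorphism relations $m_G^*\circ\tau_G^*=(\tau_G^*\tensor\tau_G^*)\circ m_H^*$ and $(\tau_G^*\tensor\tau_G^*)\circ F_H^*=F_G^*\circ(\tau_G^*\tensor\tau_G^*)$. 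After this reshuffling both sides become the same expression, provided the two polynomial maps $\Y\times G\to\X\times H$
\[(\id_{\X}\times\tau_G)\circ\psi_s^{\X}\circ(\sigma_\Y\times\id_G)\quad\text{and}\quad(\sigma_\Y\times\id_H)\circ\psi_s^{\Y}\circ(\id_\Y\times\tau_G)\]
coincide, i.e.\ $(y,v)\mapsto(\theta_v(\sigma_\Y(y)),\tau_G(v))$ equals $(y,v)\mapsto(\sigma_\Y(\vartheta_{\tau_G(v)}(y)),\tau_G(v))$ -- but this is exactly the assumed compatibility $\sigma_\Y\circ\vartheta_{\tau_G(v)}=\theta_v\circ\sigma_\Y$. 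Combined with $T(\delta_0)=\delta_0$ (as $\tau_G(0)=0$) and associativity of $*$ this proves that $\tau_*$ is a unital algebra homomorphism.

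For part \refitem{item:schwartz-auto} one now has $H=G$ and $\tau_G$ invertible with polynomial inverse $\tau_G^{-1}$ (again a group automorphism), while $\det(\D_v\tau_G^{-1})$ is a nonzero constant by \cref{res:determinantIsConstant}. Plugging $u_f(\varphi)(x)=\int_G f(x,v)\varphi(x,v)\,\D v$ into the definition of $\tau_*$, substituting $w=\tau_G(v)$ and pulling out the constant Jacobian, I would obtain $\tau_*(u_f)=u_g$ with $g=\abs{\det(\D_v\tau_G^{-1})}\,(\sigma_\Y\times\tau_G^{-1})^*f$, which is the asserted formula. It remains to check $g\in\Schwartz(\Y\times G)$. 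Here I would use that $\sigma_\Y\times\tau_G^{-1}$ has the polynomial left inverse $\tau_\X\times\tau_G$, so $\norm{(y,w)}$ is dominated by a polynomial in $\norm{(\sigma_\Y(y),\tau_G^{-1}(w))}$; hence precomposition with $\sigma_\Y\times\tau_G^{-1}$ turns rapid decay into rapid decay, and by \cref{res:chain_rule} the derivatives of $f\circ(\sigma_\Y\times\tau_G^{-1})$ are polynomials times derivatives of $f$ evaluated along $\sigma_\Y\times\tau_G^{-1}$, hence again rapidly decaying; so $g$ is Schwartz and the induced map $\Schwartz(\X\times G)\to\Schwartz(\Y\times G)$ is continuous. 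Since $f\mapsto u_f$ is injective by \cref{res:PhiR:injective}, $\tau_*$ therefore restricts to an algebra homomorphism $\Schwartz(\X\rtimes G)\to\Schwartz(\Y\rtimes G)$.

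I expect the real content to sit in two spots. The diagram bookkeeping in part \refitem{item:conv-auto} is the longest step, but it is entirely formal once one sees that the single non-tautological ingredient is the point-map identity supplied by the compatibility hypothesis. The subtler point is the final step of part \refitem{item:schwartz-auto} -- that $(\sigma_\Y\times\tau_G^{-1})^*$ preserves $\Schwartz$ -- which is precisely where the hypothesis that $\sigma_\Y$ has a \emph{polynomial} left inverse is used: it forces $\norm{\sigma_\Y(y)}$ to grow at least polynomially in $\norm y$, so no decay is lost in the ``extra'' directions of $\X$ (without it one only gets $g\in\Algebra O_M(\Y\times G)$).
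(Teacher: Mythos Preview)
Your proposal is correct and follows essentially the same route as the paper: transport through $\AlgebraicIso_r$ to the model $T(U)=\sigma_\Y^*\circ U\circ\tau_G^*$, then verify the homomorphism property using exactly the three relations you list (the $\Delta$-relation, the group-homomorphism relation, and the $\psi_s$-compatibility coming from the hypothesis on $\sigma_\Y$), and for part \refitem{item:schwartz-auto} the direct substitution $w=\tau_G(v)$. Your treatment of part \refitem{item:schwartz-auto} is in fact slightly more complete than the paper's, which states the formula but does not spell out why $(\sigma_\Y\times\tau_G^{-1})^*f$ remains Schwartz; your use of the polynomial left inverse $\tau_\X\times\tau_G$ to control decay is the right justification.
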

\begin{proof}				
		To see \refitem{item:conv-auto}, note first that any Lie group homomorphism is linear in standard coordinates, hence polynomial.
		The polynomial maps \(\tau\) and \(\sigma_{\Y}\) induce continuous maps \(\tau^*\colon \Algebra O_M(\Y\rtimes H)\to\Algebra O_M(\X\rtimes G)\) and \(\sigma_{\Y}^*\colon\Algebra O_M(\X)\to\Algebra O_M(\Y)\), so that \(\tau_*u\colon \Algebra O_M(\Y\rtimes H)\to\Algebra O_M(\Y)\) is well-defined and continuous. It is \(\Algebra O_M(\Y)\)-linear as \(\tau^*(r^*(f))=r^*(\tau_{\XX}^*(f))\) for \(f\in\Algebra O_M(\Y)\) so that for all \(\varphi\in\Algebra O_M(\Y\rtimes H)\)
		\begin{equation*}
			\tau_*u(r^*f\cdot \varphi)=\sigma_\Y^*(u(r^*(\tau_{\XX}^*f)\cdot \tau^*\varphi))=(\tau_{\XX}\circ\sigma_\Y)^*(f)\cdot \tau_*u(\varphi)=f\cdot \tau_*u(\varphi).
		\end{equation*}
		To show that \(\tau_*\) is a homomorphism, we use the isomorphism from \cref{res:identifications:fibredDistributions}. One computes that \(\Psi_r(\tau_*u)=\sigma_\Y^*\circ \AlgebraicIso_r(u) \circ\tau_G^*\) for \(u\in\Algebra O'_r(\X\rtimes G)\). Let \(U_1,U_2\in\mathcal L(\Algebra O_M(G),\Algebra O_M(\X))\) and note that \(\sigma_\Y^*\circ \Delta^*=\Delta^*\circ (\sigma_\Y^*\tensor\sigma_\Y^*)\) and \(F^*\circ m^*\circ\tau_G^*=(\tau_G^*\tensor\tau_G^*)\circ F^*\circ m^*\). Moreover, using that \(\sigma_{\Y}\) and \(\tau_G\) are compatible, one gets \((\id\times\tau_G)\circ\psi_s\circ(\sigma_\Y\times\id)=(\sigma_\Y\times\id)\circ\psi_s\circ(\id\times\tau_G)\) and thus
		\begin{align*}
			\sigma_\Y^*\circ (U_1*U_2)\circ\tau_G^* &= \Delta^*\circ (\sigma_\Y^*\tensor \sigma_\Y^*)\circ (\id\otimes U_1)\circ\psi^*_s\circ(U_2\otimes \id)\circ(\tau_G^*\tensor\tau_G^*)\circ F^*\circ m^*\\
			&= \Delta^*\circ (\id\tensor \sigma_\Y^*\circ U_1 )\circ(\sigma_\Y\times\id)^*\circ\psi^*_s\circ(\id\times\tau_G)^*\circ(U_2\circ\tau_G^*\tensor\id)\circ F^*\circ m^*\\
			&=(\sigma_\Y^*\circ U_1\circ\tau_G^*)*(\sigma_\Y^*\circ U_2\circ\tau_G^*).
		\end{align*}		
		For \refitem{item:schwartz-auto} suppose that \(\varphi_G\) is an automorphism of \(G\), let \(f\in\Schwartz(\X\rtimes G)\), \(\varphi\in\Algebra O_M(\Y\rtimes G)\) and compute
		\begin{align*}
			\tau_*u_f(\varphi)(y)&= u_f(\tau^*\varphi)(\sigma_\Y(y))=\int_G f(\sigma_\Y(y),v)\varphi(\tau(\sigma_\Y(y),v))\D v\\
			& = \int_G \abs{\det D_v\tau_G^{-1}} f(\sigma_\Y(y),\tau_G^{-1}(v))\varphi(y,v)\D v =u_{\abs{\det D_v\tau_G^{-1}}(\sigma_\Y\times\tau_G^{-1})^*f}(\varphi)(y). \qedhere
		\end{align*}
\end{proof}
When \(\X=\Y\times\RR^m\) is equipped with a family of \(G\)-actions in the fibre \(\Y\) and \(\tau_{\XX}\colon\X\to\Y\) is a fibre projection, \(\tau_*\) restricts to a \(^*\)-homomorphism \(\Algebra O'_{r,s}(\X\rtimes G)\to \Algebra O'_{r,s}(\Y\rtimes H)\).
\begin{lemma}\label{res:*-homomorphism}
	Let \(\X=\Y\times\RR^m\) for some \(m\in\NN_0\), \(\tau_\X(y,z)=y\) and \(\sigma_\Y(y)=(y,z_0)\) for a fixed \(z_0\in\RR^m\). Suppose that  \(\theta\colon \X\times G\to \X\) and \(\vartheta^z\colon \Y\times H\to \Y\) for \(z\in \RR^m\) are polynomial actions satisfying \(\theta_v(y,z)=(\vartheta^z_{\tau_G(v)}(y),z)\) for all \((y,z)\in\X\) and \(v\in G\). Then 
	\begin{equation*}
		\tau_*\colon \Algebra O'_{r,s}(\X\rtimes G)\to \Algebra O'_{r,s}(\Y\rtimes H)
	\end{equation*}
	is a well-defined \(^*\)-homomorphism. 
\end{lemma}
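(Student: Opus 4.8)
The plan is to take the convolution‑homomorphism part from \cref{res:homomorphism}~\refitem{item:conv-auto} and to upgrade it by hand to a $^*$‑homomorphism of the $r,s$‑fibred subalgebras. Put $\vartheta \defeq \vartheta^{z_0}$. The hypotheses of \cref{res:homomorphism}~\refitem{item:conv-auto} hold for the Lie group homomorphism $\tau_G$, the polynomial projection $\tau_\X\colon\X\to\Y$ and its polynomial right inverse $\sigma_\Y$: one has $\tau_\X\circ\sigma_\Y=\id_\Y$, and $\sigma_\Y(\vartheta_{\tau_G(v)}(y))=\theta_v(\sigma_\Y(y))$ is exactly the $z=z_0$ instance of the assumed identity $\theta_v(y,z)=(\vartheta^z_{\tau_G(v)}(y),z)$. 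This gives the convolution homomorphism $\tau_*\colon\Algebra O'_r(\X\rtimes G)\to\Algebra O'_r(\Y\rtimes H)$, together with the formula $\AlgebraicIso_r(\tau_*u)=\sigma_\Y^*\circ\AlgebraicIso_r(u)\circ\tau_G^*$ recorded in its proof.

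To treat the remaining claims I would show that for $u\in\Algebra O'_{r,s}(\X\rtimes G)$ the element $\tau_*(u^t)$ is the transpose of $\tau_*u$, i.e.\ $\IntegrationMap_r(\tau_*(u^t))=I_*\bigl(\IntegrationMap_r(\tau_*u)\bigr)$ in $\Algebra D'(\Y\times H)$, with $u^t$ the transpose of \cref{res:equiv_rs_fibred}~\refitem{item:transpose} and $I_*$ the inversion push‑forward of diagram~\eqref{diag:inversions} for $\Y\rtimes H$. Granting this, \cref{res:equiv_rs_fibred}~\refitem{item:transpose} $\Rightarrow$ \refitem{item:range_source} yields $\tau_*u\in\Algebra O'_{r,s}(\Y\rtimes H)$; and since the involution is recovered from the transpose by $u^*(\varphi)=\conj{u^t(\conj\varphi)}$ (see the proof of \cref{res:involution}) and the pull‑backs $\tau^*,\sigma_\Y^*$ commute with complex conjugation, one gets $(\tau_*u)^*=\tau_*(u^*)$ as well. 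So everything reduces to that one identity of distributions on $\Y\times H$. By injectivity of $\IntegrationMap_r$ (\cref{res:PhiR:injective}) it suffices to verify it, and using $\tau_*w(\varphi)(y)=w(\tau^*\varphi)(\sigma_\Y(y))$ it becomes
\begin{equation*}
	\int_\Y u^t(\tau^*\varphi)(\sigma_\Y(y))\,\D y \;=\; \int_\Y u\bigl((I_{\Y\rtimes H}\circ\tau)^*\varphi\bigr)(\sigma_\Y(y))\,\D y \qquad\text{for }\varphi\in\Algebra D(\Y\times H).
\end{equation*}

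The two inputs are: \emph{(a)} $\tau\circ I_{\X\rtimes G}$ and $I_{\Y\rtimes H}\circ\tau$ agree on the $G$‑invariant slice $\sigma_\Y(\Y)\times G\subseteq\X\times G$ — again immediate from the form of $\theta$, since $\vartheta^z$ enters only through $z=z_0$ after precomposition with $\sigma_\Y$; and \emph{(b)} the function $u(\psi)\circ\sigma_\Y\in\Algebra O_M(\Y)$ depends only on the restriction $\psi|_{\sigma_\Y(\Y)\times G}$ for $\psi\in\Algebra O_M(\X\times G)$, which follows from $\AlgebraicIso_r(\tau_*u)=\sigma_\Y^*\circ\AlgebraicIso_r(u)\circ\tau_G^*$, from $\Algebra O_M(\X)\cong\Algebra O_M(\Y)\completedtensor\Algebra O_M(\RR^m)$ (\cref{res:OM_continuity}), and from the fact that $\sigma_\Y^*$ is $\id_\Y\completedtensor\ev_{z_0}$ under this identification. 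Combining \emph{(a)}, \emph{(b)} with the defining relation $\IntegrationMap_r(u^t)=I_*(\IntegrationMap_r(u))$ of $u^t$ gives the identity. (Equivalently: $\tau_*$ factors as restriction of $u$ to the invariant slice $\Y\times\simpleset{z_0}$ followed by push‑forward along the genuine groupoid homomorphism $(\id_\Y,\tau_G)$, and each step commutes with inversion by computations in the style of \cref{res:convolution-on-rs-fibred} and \cref{res:involution}.)

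The main obstacle is that $\tau$ is \emph{not} a groupoid homomorphism: since the $G$‑action on the $\Y$‑direction of the fibre over $z$ depends on $z$, forgetting $z$ fails to intertwine source maps and inversions away from $z=z_0$, so the transpose of $u$ cannot be pushed forward naively and one must genuinely exploit that $\tau_*$ feels $u$ only on the invariant slice $\sigma_\Y(\Y)$. A further technical nuisance, handled exactly as in the proof of \cref{res:OpDual}, is that $\tau^*\varphi$ is not compactly supported in the $z$‑direction (nor in the $G$‑direction unless $\tau_G$ is proper), so the pairings above are not literally legitimate against $\IntegrationMap_r$; this is repaired by a cut‑off approximation $\chi_j\to1$, exploiting that $\tau_*u(\varphi)$ has compact support in $\Y$ for $\varphi\in\Algebra D(\Y\times H)$ because $\tau_*u$ is a fibred distribution.
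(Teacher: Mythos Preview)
Your overall route is the same as the paper's: invoke \cref{res:homomorphism} for the convolution part, then verify the $r,s$-condition via the transpose characterization of \cref{res:equiv_rs_fibred}, using that $\tau\circ I_{\X\rtimes G}$ and $I_{\Y\rtimes H}\circ\tau$ agree on the invariant slice $\sigma_\Y(\Y)\times G$ (your (a), the paper's $\tau|_{\Y\times\{z_0\}\times G}=I\circ\tau\circ I|_{\Y\times\{z_0\}\times G}$). The paper phrases the middle step in terms of the $s$-fibred companion $\tilde u$ rather than $u^t$, but by \cref{res:equiv_rs_fibred} these are interchangeable.

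There is, however, a genuine gap at the point where you say ``combining (a), (b) with the defining relation $\IntegrationMap_r(u^t)=I_*(\IntegrationMap_r(u))$ gives the identity''. That defining relation is an equality of distributions on $\X\times G$ and yields
\[
\int_{\RR^m}\!\int_\Y u^t(\psi)(y,z)\,\D y\,\D z=\int_{\RR^m}\!\int_\Y u(I^*_\X\psi)(y,z)\,\D y\,\D z
\]
for compactly supported $\psi$, whereas what you need is the equality of the \emph{slice} integrals at $z=z_0$,
\[
\int_\Y u^t(\tau^*\varphi)(y,z_0)\,\D y=\int_\Y u(I^*_\X\tau^*\varphi)(y,z_0)\,\D y.
\]
Your proposed remedy, a cut-off $\chi_j\to 1$, only serves to make the $\X$-integrals converge (and even that fails here: $u^t(\tau^*\varphi)(y,z)$ is independent of $z$ by your (b), so the $\X$-integral diverges). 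It does \emph{not} localize to $z=z_0$.

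The fix is exactly the argument the paper uses: take a \emph{fixed} bump $\chi\in\SmoothCompactSupp(\RR^m)$ supported near $z_0$. Because the action fixes the $z$-coordinate, both $r$ and $I_\X$ commute with the $z$-projection, so $u^t((\pr_z\circ r)^*\chi\cdot\tau^*\varphi)=\chi\cdot u^t(\tau^*\varphi)$ and $I^*_\X((\pr_z\circ r)^*\chi\cdot\tau^*\varphi)=(\pr_z\circ r)^*\chi\cdot I^*_\X\tau^*\varphi$. The defining relation then gives
\[
\int_{\RR^m}\chi(z)\Bigl(\int_\Y u^t(\tau^*\varphi)(y,z)\,\D y-\int_\Y u(I^*_\X\tau^*\varphi)(y,z)\,\D y\Bigr)\D z=0
\]
for all such $\chi$, and continuity in $z$ of the bracketed function yields its vanishing at $z_0$. (The paper runs this as a contradiction argument, but the content is the same.) Once you insert this localization step, your argument is complete and coincides with the paper's.
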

\begin{proof}
		The assumptions on the actions imply that \(\sigma_\Y(\vartheta_{\tau_G(v)}(y))=\theta_v(\sigma_\Y(y))\) for all \(y\in\Y\) and \(v\in G\), therefore \(\tau_*\colon  \Algebra O'_{r}(\X\rtimes G)\to \Algebra O'_{r}(\Y\rtimes H)\) is a homomorphism by \cref{res:homomorphism}. To see that \(\tau_*\) restricts to \(\Algebra O_{r,s}'(\X\rtimes G)\), define \(\tau_*\tilde u=\sigma_\Y^*\circ \tilde u \circ \tau^* =\sigma_\Y^*\circ \tilde u\circ I^*\circ \tau^*\circ I^*\) for \(\tilde u\in\Algebra O'_s(\X\rtimes G)\). The last equality holds as \(\tau|_{\Y\times\{z_0\}\times G}=I\circ\tau\circ I|_{\Y\times\{z_0\}\times G}\). The second description shows that it defines indeed a map \(\tau_*\colon\Algebra O'_s(\X\rtimes G)\to\Algebra O'_s(\Y\rtimes H)\) as \(I^*\circ\tau^*\circ I^*\circ s^*=I^*\circ \tau^*\circ r^*=I^*\circ r^*\circ\tau_X^*=s^*\circ\tau_{\XX}^*\).
		
		Suppose that \(u\in\Algebra O_{r,s}'(\X\rtimes G)\) and that \(\tilde u\in\Algebra O_s'(\X\rtimes G)\) satisfies \(\IntegrationMap_ru =\IntegrationMap_s\tilde u\). We show that \(\IntegrationMap_r(\tau_*u)=\IntegrationMap_s(\tau_*\tilde u)\) to see that \(\tau_*u\in\Algebra O'_{r,s}(\Y\rtimes H)\).
	Assume on the contrary that \(\langle \IntegrationMap_r(\tau_*u),\varphi\rangle\neq\langle\IntegrationMap_s(\tau_*\tilde u),\varphi\rangle\) for some \(\varphi\in\mathcal D(\Y\rtimes H)\) and let
	\begin{equation*}
		\varepsilon= \left|\int_\Y u(\tau^*\varphi)(\sigma_\Y(y))\D y- \int_\Y \tilde u(\tau^*\varphi)(\sigma_\Y(y))\D y\right|>0.
	\end{equation*}
	Consider the continuous maps
	\begin{equation*}
		p(u)\colon z\mapsto \int_\Y u(\tau^*\varphi)(y,z)\D y \quad\text{ and }\quad p(\tilde u)\colon z\mapsto \int_\Y \tilde u(\tau^*\varphi)(y,z)\D y. 
	\end{equation*}
	By continuity one can find a neighbourhood of \(z_0\) in which \(\abs{p(u)(z)-p(\tilde u)(z)}\geq \frac\varepsilon 3\). Take a non-zero \(\chi\in\SmoothCompactSupp(\RR^m)\) which is supported in this neighbourhood and satisfies \(\chi(z)\geq 0\) for all \(z\in\RR^m\). Note that the range and source map of \(\X\rtimes G\) satisfy \(\pr_2\circ r= \pr_2\circ s\) so that \((\pr_2\circ r)^*\chi=(\pr_2\circ s)^*\chi\). Estimating as in the proof of \cref{res:PhiR:injective} gives
	\begin{align*}
		&\abs{\langle\IntegrationMap_r(u)-\IntegrationMap_s(\tilde u),(\pr_2\circ r)^*\chi\cdot\tau^*\varphi\rangle} = \left|\int_\X u(r^*(\pr_2^*\chi)\cdot\tau^*\varphi)- \tilde u(s^*(\pr_2^*\chi)\cdot\tau^*\varphi)\D x\right|\\
		= &\left|\int_{\RR^m} \chi(z)(p(u)(z)-p(\tilde u)(z) )\D z \right|\geq  \frac \varepsilon 3\int_{\RR^m} \chi(z)\D z>0.
	\end{align*}
	This contradicts \(\IntegrationMap_ru =\IntegrationMap_s\tilde u\). It remains to show that \(\tau_*\) is compatible with the involutions 
	\begin{equation*}
		(\tau_* u)^*=\widetilde{\tau_*u}\circ I^* = (\sigma_\Y^{*}\circ \tilde u \circ I^*\circ\tau^*\circ I^*)\circ I^*= \sigma_\Y^{*}\circ \tilde u \circ I^*\circ \tau^* =\tau_*(u^*).\qedhere
	\end{equation*}
\end{proof}
\begin{corollary}\label{res:automorphism} 
	Let \(\tau_G\colon G\to G\) be a Lie group automorphism and \(\tau_{\XX}\colon \X\to \X\) a polynomial diffeomorphism with \(\tau_{\XX}(\theta_v(x))=\theta_{\tau_G(v)}(\tau_{\XX}(x))\) for all \(x\in \X\) and \(v\in G\). Then the following holds:	 
	\begin{enumerate}
		\item The map \(\tau_*\colon\Algebra O'_r(\mathcal G)\to \Algebra O'_r(\mathcal G)\) with \(\tau_*u=(\tau_{\XX}^{-1})^*\circ u\circ\tau^*\) is an automorphism with inverse \((\tau^{-1})_*\).
		\item It restricts to a \(^*\)-automorphism of \(\Algebra O'_{r,s}(\mathcal G)\).
		\item  It restricts to a \(^*\)-automorphism of \(\Schwartz(\mathcal G)\) with \(\tau_*(u_f)=u_{\abs{\det(D_v\tau_G^{-1})}(\tau^{-1})^*f}\) for \(f\in\Schwartz(\mathcal G)\),
		where \(v\mapsto \det(D_v\tau_G^{-1})\) is constant by \cref{res:determinantIsConstant}.
	\end{enumerate}	 
\end{corollary}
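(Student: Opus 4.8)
\emph{The plan is} to deduce the first and third statements directly from \cref{res:homomorphism} and to establish the $^*$\nobreakdash-compatibility of the second statement by a short direct computation, since the relevant ``residual ideal'' lemma \cref{res:*-homomorphism} is only stated for fibre projections and does not apply verbatim when $\tau_\X$ is a general polynomial diffeomorphism.

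First I would observe that $\tau := (\tau_\X,\tau_G) \colon \mathcal G \to \mathcal G$ is a Lie groupoid \emph{isomorphism}: it is a groupoid homomorphism by \cref{res:groupoidHom}, and applying \cref{res:groupoidHom} to $\tau_\X^{-1}$ and $\tau_G^{-1}$ (their compatibility with $\theta$ follows by substituting $v \mapsto \tau_G^{-1}(v)$ into the hypothesis and applying $\tau_\X^{-1}$) shows $\tau^{-1} = (\tau_\X^{-1},\tau_G^{-1})$ is its inverse. In particular $\sigma_\Y := \tau_\X^{-1}$ is a polynomial two-sided inverse of $\tau_\X$ compatible with $\theta$ in the sense of \cref{res:homomorphism}, so \cref{res:homomorphism}~\refitem{item:conv-auto} yields the convolution homomorphism $\tau_* u = (\tau_\X^{-1})^* \circ u \circ \tau^*$ and \refitem{item:schwartz-auto} gives the Schwartz formula of the third statement. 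Applying \cref{res:homomorphism} to $\tau^{-1}$ (with right inverse $\tau_\X$) gives $(\tau^{-1})_* v = \tau_\X^* \circ v \circ (\tau^*)^{-1}$, and from these explicit formulas one reads off $(\tau^{-1})_* \circ \tau_* = \id$ and $\tau_* \circ (\tau^{-1})_* = \id$ using $\tau_\X^* \circ (\tau_\X^{-1})^* = \id$ and $\tau^* \circ (\tau^*)^{-1} = \id$; hence $\tau_*$ is an automorphism, proving the first statement.

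For the second statement I would work with the three structural identities $r \circ \tau = \tau_\X \circ r$, $s \circ \tau = \tau_\X \circ s$ and $I \circ \tau = \tau \circ I$ (the last one since $\tau$ is a groupoid homomorphism). Given $u \in \Algebra O'_{r,s}(\mathcal G)$ with $\tilde u \in \Algebra O'_s(\mathcal G)$ satisfying $\IntegrationMap_r u = \IntegrationMap_s \tilde u$, set $\tau_* \tilde u := (\tau_\X^{-1})^* \circ \tilde u \circ \tau^*$, which lies in $\Algebra O'_s(\mathcal G)$ by $\tau^* \circ s^* = s^* \circ \tau_\X^*$. Writing $c := \abs{\det D\tau_\X} > 0$, which is constant by \cref{res:determinantIsConstant}, the change of variables $x = \tau_\X(y)$ gives for $\varphi \in \mathcal D(\mathcal G)$
\begin{equation*}
	\langle \IntegrationMap_r(\tau_* u), \varphi \rangle = c\,\langle \IntegrationMap_r u, \tau^* \varphi \rangle
	\qquad\text{and}\qquad
	\langle \IntegrationMap_s(\tau_* \tilde u), \varphi \rangle = c\,\langle \IntegrationMap_s \tilde u, \tau^* \varphi \rangle ,
\end{equation*}
where $\tau^* \varphi \in \mathcal D(\mathcal G)$ because $\tau$ is a diffeomorphism. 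Since $\IntegrationMap_r u = \IntegrationMap_s \tilde u$, the two sides agree, so $\tau_* u \in \Algebra O'_{r,s}(\mathcal G)$. Then $(\tau_* u)^* = \tau_*(u^*)$ follows by checking that $\tau_*(u^*)$ satisfies the defining relation $\IntegrationMap_r\bigl((\tau_* u)^*\bigr) = \conj{I_*(\IntegrationMap_r(\tau_* u))}$: insert the displayed identity, use $I^* \circ \tau^* = \tau^* \circ I^*$ (dual to $I \circ \tau = \tau \circ I$) and that $c$ is a positive real, then invoke injectivity of $\IntegrationMap_r$ (\cref{res:PhiR:injective}) for uniqueness. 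Together with the bijectivity from the first statement this makes $\tau_*$ a $^*$\nobreakdash-automorphism of $\Algebra O'_{r,s}(\mathcal G)$.

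The only real obstacle is recognizing that \cref{res:*-homomorphism} cannot simply be quoted (its hypothesis forces $\tau_\X$ to be a projection), so the source-fibred behaviour and the involution identity must be re-derived; once the three structural identities above are isolated, everything reduces to the change of variables and a diagram chase through \eqref{diag:inversions}. The one point to watch is that $\IntegrationMap_r$ and $\IntegrationMap_s$ only pair with compactly supported test functions, but this is harmless because $\tau$ is a diffeomorphism and hence $\tau^*$ maps $\mathcal D(\mathcal G)$ into itself.
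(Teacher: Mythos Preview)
Your proof is correct, and your observation about \cref{res:*-homomorphism} is well taken: as stated, that lemma requires $\tau_\X$ to be a projection $\Y\times\RR^m\to\Y$ with a section $\sigma_\Y$, so the case $m=0$ only covers $\tau_\X=\id$, not a general polynomial diffeomorphism. The paper's proof simply cites \cref{res:homomorphism} and \cref{res:*-homomorphism} for all three claims, evidently intending that the argument of \cref{res:*-homomorphism} be adapted rather than quoted verbatim.

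Your direct argument for (ii) is in fact cleaner than such an adaptation would be. The contradiction-with-cutoffs argument in the proof of \cref{res:*-homomorphism} is needed there precisely because $\sigma_\Y$ is only a one-sided inverse of $\tau_\X$, so one cannot simply change variables in $\int_\X$. In your automorphism setting $\tau_\X$ is invertible with constant Jacobian $c=\abs{\det D\tau_\X}$, and the substitution $x=\tau_\X(y)$ gives $\langle\IntegrationMap_r(\tau_*u),\varphi\rangle=c\,\langle\IntegrationMap_r u,\tau^*\varphi\rangle$ (and likewise for $\IntegrationMap_s$) in one line, from which $\tau_*u\in\Algebra O'_{r,s}(\mathcal G)$ and $(\tau_*u)^*=\tau_*(u^*)$ follow immediately via $I\circ\tau=\tau\circ I$ and injectivity of $\IntegrationMap_r$. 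So both approaches reach the same conclusion, but yours exploits the extra structure available in the automorphism case and avoids the auxiliary cutoff argument.
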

\begin{proof}
	Under the assumptions \(\tau\colon \mathcal G\to\mathcal G\) is a groupoid automorphism with inverse \(\tau^{-1}=\tau_{\XX}^{-1}\times\tau_G^{-1}\). Then \((\tau^{-1})_*\) is a homomorphism by \cref{res:homomorphism} and is inverse to \(\tau_*\). The remaining claims follow from \cref{res:homomorphism} and \cref{res:*-homomorphism}.
\end{proof}

\section{Shubin tangent groupoids}\label{sec:shubin-tangent-groupoid}

In this section we define an abstract Shubin tangent groupoid.
Our two main examples are the double dilation groupoid and the representation groupoid, introduced in \cref{sec:shubin:DDG} and \cref{sec:shubin:Rep}.
In both cases, the following assumptions will be satisfied:

\begin{definition} \label{assumption}
	Let $G$ be a graded Lie group with dilations $\alpha$, $\X = \RR^d$ a graded vector space with dilations \(\beta\) and \(\theta^1\colon \X\times G\to \X\) a right action of \(G\) on $\X$. We say that this data defines a \emph{Shubin action} if the following assumptions are satisfied:
	\begin{enumerate}
		\item\label{assumption:polynomial} The action $\theta^1$ is a polynomial action of $G$ on $\X$ 
		in the sense of \cref{def:polynomial:action}.
		\item \label{assumption:compatible} For \(t\neq 0\) set \(\theta^t_v=\beta_t\circ \theta^1_{\alpha_t(v)}\circ\beta_{t^{-1}}\) (see \cref{rem:dilationsExtend} for $t < 0$). Then the map given by
		\begin{equation*}
			\theta(x,t,v)=(\theta^t_v(x),t)
		\end{equation*}
		extends to a smooth map \(\theta\colon \X\times\RR\times G\to \X\times\RR\).
	\end{enumerate}
	We say it has property 
	\begin{enumerate}
	\myitem{(P)}\label{assumption:free} 
	if the shear map \(\Theta^1\colon \X\times G\to \X\times \X\), \((x,v)\mapsto(x,\theta^1_v(x))\) from \eqref{eq:shearmap} is a polynomial diffeomorphism (see \cref{def:polynomialDiffeo}),
	\myitem{(R)}\label{assumption:homomorphisms}
	if $\theta^0_v \colon \X \to \X$ is linear for all $v \in G$.
	\end{enumerate}	
\end{definition}
Suppose that \(\theta^1\) defines a Shubin action, then \(\theta\) defines a polynomial action of \(G\) on \(\X\times\RR\). Namely as \(\alpha\), \(\beta\) and \(\theta\) are polynomial one can write
\[\theta^t_v(x)=\sum_{j=-k}^k p_j(x,v)t^j\]
for some polynomials \(p_j\) and \(k\in\NN_0\). As this extends smoothly to \(t=0\), we must have \(p_j=0\) for \(j<0\). Hence, \(\theta\) is polynomial in \(x,t,v\).
It is easy to check that all $\theta^t$ are actions of $G$ on $\X$. 
Then $\theta$ is an action of $G$ on $\X \times \RR$.

Furthermore, one has for all \(\lambda,t\neq 0\) and \(v\in G\)
\begin{equation}\label{eq:compatibility}
	\beta_\lambda\circ \theta^t_{\alpha_\lambda(v)}\circ\beta_{\lambda^{-1}}=\beta_{\lambda t}\circ\theta^1_{\alpha_{\lambda t}(v)}\circ\beta_{(\lambda t)^{-1}}=\theta^{\lambda t}_v.
\end{equation}
By continuity the action $\theta^0$ satisfies \(\beta_\lambda\circ\theta^0_{\alpha_\lambda(v)}\circ\beta_{\lambda^{-1}}=\theta^0_v\) for all \(v\in G\) and \(\lambda\neq0\).
Conversely, we show in the following lemma that if a field of actions over \(\RR\) satisfies a similar compatibility condition to \eqref{eq:compatibility} where \(\lambda\) one the right hand side is replaced by \(\lambda^k\) for some \(k\in\NN\), then it can be rescaled to define a Shubin action.
\begin{lemma}\label{res:rescaling}
	Let \(G\) be a graded group with dilations \(\alpha\) and \(\X=\RR^d\) a graded vector space with dilations  \(\beta\). Suppose $\vartheta \colon \X \times \RR \times G \to \X \times \RR$ is a right polynomial action of $G$, which is a field of actions \((\vartheta^t)_{t\in\RR}\) with \(\vartheta((x,t),v)=(\vartheta^t(x,v),t)\) for all \(t\in\RR\). Suppose there is \(k\in\NN\) such that
	\begin{equation}\label{eq:compatibility:rescaled}\beta_{\lambda}\circ\vartheta^{ t}_{\alpha_\lambda(v)}\circ\beta_{\lambda^{-1}} = \vartheta^{\lambda^kt}_{v}\quad \text{for all }\lambda\neq 0\text{, }t\in\RR\text{ and }v\in G\end{equation}
	holds, then \(\theta^1=\vartheta^1\) defines a Shubin action with \(\theta^t=\vartheta^{t^k}\) for all \(t\in\RR\).
\end{lemma}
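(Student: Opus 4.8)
The plan is to verify directly the two defining conditions of \cref{assumption} for the family obtained from $\theta^1 = \vartheta^1$, using the rescaled compatibility relation \eqref{eq:compatibility:rescaled} to pin down the deformed actions. The point will be that the exponent $k$ in \eqref{eq:compatibility:rescaled} can be absorbed into a reparametrisation $t \mapsto t^k$ of the deformation parameter.

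For \cref{assumption}~\refitem{assumption:polynomial}: since $\vartheta \colon \X \times \RR \times G \to \X \times \RR$ is a polynomial action, its first component $(x,t,v) \mapsto \vartheta^t_v(x)$ is a polynomial in $x$, $t$ and $v$; restricting to $t = 1$ shows that $\theta^1 = \vartheta^1$ is a polynomial action of $G$ on $\X = \RR^d$, so \refitem{assumption:polynomial} holds.

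For \cref{assumption}~\refitem{assumption:compatible}: for $t \neq 0$ the Shubin recipe produces $\theta^t_v = \beta_t \circ \theta^1_{\alpha_t(v)} \circ \beta_{t^{-1}} = \beta_t \circ \vartheta^1_{\alpha_t(v)} \circ \beta_{t^{-1}}$, and applying \eqref{eq:compatibility:rescaled} with $\lambda = t$ (and with the deformation parameter there set equal to $1$) gives $\theta^t_v = \vartheta^{t^k}_v$. Hence $\theta(x,t,v) = (\theta^t_v(x),t) = (\vartheta^{t^k}_v(x),t)$ for all $t \neq 0$. I would then observe that the right-hand side makes sense for every $t \in \RR$: since $(x,s,v) \mapsto \vartheta^s_v(x)$ is polynomial, substituting $s = t^k$ yields a polynomial — in particular smooth — map $\X \times \RR \times G \to \X \times \RR$, $(x,t,v) \mapsto (\vartheta^{t^k}_v(x),t)$, which restricts to $\theta$ on $t \neq 0$. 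This is exactly the smooth extension demanded by \refitem{assumption:compatible}, so $\theta^1 = \vartheta^1$ is a Shubin action. By construction its deformations are $\theta^t = \vartheta^{t^k}$ for $t \neq 0$, and letting $t \to 0$ (so $t^k \to 0$) and using continuity of $\vartheta$ extends this identity to $t = 0$, where $\theta^0 = \vartheta^0$.

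I do not expect a genuine obstacle. The only points requiring a little care are that the reparametrisation $t \mapsto t^k$ preserves polynomiality, hence smoothness, across $t = 0$ (which is why condition \refitem{assumption:compatible} can be checked at all), and that \eqref{eq:compatibility:rescaled} is being invoked at the possibly negative value $\lambda = t$; this is legitimate because $\alpha_\lambda$ and $\beta_\lambda$ are defined for all $\lambda \in \RR$ by \cref{rem:dilationsExtend} and the hypothesis of \cref{res:rescaling} asks for \eqref{eq:compatibility:rescaled} to hold for all $\lambda \neq 0$.
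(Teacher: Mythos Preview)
Your proof is correct and follows essentially the same route as the paper's: verify polynomiality of $\theta^1$ by restriction, use \eqref{eq:compatibility:rescaled} with $\lambda=t$ to identify $\theta^t=\vartheta^{t^k}$ for $t\neq 0$, and then observe that the polynomial map $(x,t,v)\mapsto(\vartheta^{t^k}_v(x),t)$ furnishes the smooth extension across $t=0$. The paper's version is simply terser.
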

\begin{proof}
	The action \(\theta^1\) is polynomial and $\theta^t_v=\smash{\beta_t\circ\vartheta^1_{\alpha_t(v)}\circ\beta_{t^{-1}}=\vartheta^{t^k}_v}$ holds for \(t\neq 0\) by \eqref{eq:compatibility}.
	So $\theta^t$ extends smoothly to \(t=0\) by \(\vartheta^0\).	
\end{proof}

\begin{definition}[Shubin tangent groupoid and zoom action] \label{def:shubin_groupoid}
	Given the data from \cref{assumption}, 
	the \emph{Shubin tangent groupoid} \(\TG\) is defined as the smooth action groupoid,
	see \cref{def:action_groupoid}, of the right \(G\)-action $\theta$ on \(\X \times\RR\).
	In particular its object space is $\X \times \RR \times G$ and its unit space is $\X \times \RR$.
	The \emph{Shubin zoom action} of \(\Rp\) on \(\TG\) is given by
	\begin{equation}
		\tau_\lambda(x,t,v)=(\beta_{\lambda^{-1}}(x),\lambda^{-1}t,\alpha_{\lambda}(v))
	\end{equation}
	for $(x,t,v) \in \TG$, with underlying map 
	\(\tau_\lambda^0(x,t)=(\beta_{\lambda^{-1}}(x),\lambda^{-1}t)\) on the unit space.
\end{definition}

The fact that \(\theta\) is a field of actions \(\theta^t\) yields that the Shubin tangent groupoid is a smooth field of Lie groupoids over \(\RR\)  in the sense of \cite{LR01}*{Def.~5.2} with respect to the projection \(p\colon\TG\to\RR\) given by \(p(x,t,v)=t\).
In particular, for each $t \in \RR$, the preimage $\X\rtimes^tG\defeq p^{-1}(t)$ is the action Lie groupoid with respect to \(\theta^t\). 
Since the actions $\theta$ and $\theta^t$ are polynomial,
we can use the convolution algebras from \cref{sec:action-groupoids-convolution-algebra}.

The convolution of $\Algebra O_r'(\TG)$ is `pointwise in $t$' in the following sense.

\begin{lemma} \label{res:evaluationOfDistributions}
	The evaluation map $\ev_t \colon \mathcal O'_r(\mathcal G) \to \mathcal O'_r(\X \rtimes^t G)$, $u \mapsto \ev_t (u) \eqqcolon u_t$ is a homomorphism for the respective convolutions
	defined in \cref{res:convolution}.
	Its restriction $\mathcal O'_{r,s}(\mathcal G) \to \mathcal O'_{r,s}(\X \rtimes^t G)$ is a $^*$-homomorphism for the respective convolutions and involutions from \cref{res:involution}.
\end{lemma}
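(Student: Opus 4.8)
The plan is to exhibit $\ev_t$ as a special case of the homomorphism $\tau_*$ produced by \cref{res:homomorphism}, applied to the inclusion of the fibre $\X \rtimes^t G$ into $\TG$, and then to use \cref{res:*-homomorphism} for the $^*$-statement. Concretely, set $\Y = \X$ with the action $\vartheta = \theta^t$, and $\X_{\text{total}} = \X \times \RR$ with the action $\theta$ of \cref{def:shubin_groupoid}; take $\tau_G = \id_G$, take $\tau_{\XX} \colon \X \times \RR \to \X$ to be the projection $\tau_{\XX}(x,s) = x$, and take the polynomial right inverse $\sigma_\Y \colon \X \to \X \times \RR$, $\sigma_\Y(x) = (x,t)$. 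Since $\theta$ is a field of actions with $\theta((x,s),v) = (\theta^s_v(x), s)$, the compatibility hypothesis $\sigma_\Y(\vartheta_{\tau_G(v)}(y)) = \theta_v(\sigma_\Y(y))$ of \cref{res:homomorphism} reads $(\theta^t_v(y), t) = \theta((y,t),v)$, which holds by definition. First I would check that the resulting map $\tau_*$ of \cref{res:homomorphism}\refitem{item:conv-auto} is precisely $\ev_t$, i.e.\ that $\tau_* u(\varphi) = \sigma_\Y^*(u(\tau^*\varphi))$ agrees with the evaluation $u_t$; this is essentially the statement that a range-fibred distribution on $\TG$, which under \cref{res:identifications:fibredDistributions} is a smooth family indexed by $\X \times \RR$ of distributions on $G$, restricts on the slice $\RR = \simpleset{t}$ to a family indexed by $\X$, and that the convolution formula from \cref{res:convolution} is performed slicewise. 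Then \cref{res:homomorphism}\refitem{item:conv-auto} immediately gives that $\ev_t$ is an algebra homomorphism for the convolutions of \cref{res:convolution}.

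For the second statement, I would invoke \cref{res:*-homomorphism} with $\Y \times \RR^m = \X \times \RR$ (so $m=1$, $\Y = \X$), $\tau_\X(y,s) = y$, $\sigma_\Y(y) = (y,t)$, and the family $\vartheta^s = \theta^s$, $s \in \RR$. The hypothesis $\theta_v(y,s) = (\vartheta^s_{\tau_G(v)}(y), s)$ of \cref{res:*-homomorphism} is again exactly the definition of $\theta$ as a field of actions, and $\tau_G = \id_G$ is trivially a group automorphism. Hence \cref{res:*-homomorphism} yields that $\tau_* = \ev_t$ restricts to a well-defined $^*$-homomorphism $\mathcal O'_{r,s}(\TG) \to \mathcal O'_{r,s}(\X \rtimes^t G)$, which is what is claimed.

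The only genuine content, and thus the main obstacle, is the identification in the first step: verifying that the abstractly-defined map $\tau_*$ coincides with the ``obvious'' evaluation-at-$t$ map, and in particular that evaluation is compatible with the convolution product of \cref{res:convolution}. This is where one must unwind the definition $\AlgebraicIso_r$ of \cref{res:identifications:fibredDistributions} together with the graphical definition of $*$ in the proof of \cref{res:convolution}: one checks that all the structural maps ($m^*$, $F^*$, $\psi_s^*$, $\Delta^*$) occurring there commute with restriction to the slice $\X \times \simpleset{t}$ because the fibre coordinate $t$ is simply carried along. Equivalently, one observes $\psi_s$ for $\TG$ restricts on $p^{-1}(t)$ to $\psi_s$ for $\X \rtimes^t G$, and the diagonal and multiplication maps are insensitive to the $\RR$-factor, so $(u * v)_t = u_t * v_t$. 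Everything else is formal once the hypotheses of \cref{res:homomorphism} and \cref{res:*-homomorphism} are matched up as above; I would keep the write-up short and defer the routine slicewise bookkeeping to those two lemmas.
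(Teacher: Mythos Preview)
Your approach is exactly the one the paper takes: apply \cref{res:homomorphism} with $\tau_G=\id_G$, $\tau_\X=\pi\colon(x,s)\mapsto x$, $\sigma_\Y=\iota_t\colon x\mapsto(x,t)$, and then \cref{res:*-homomorphism} for the $^*$-part. The ``main obstacle'' you flag is not actually an obstacle: in the paper $\ev_t$ is \emph{defined} as the map $u\mapsto \iota_t^*\circ u\circ(\pi\times\id_G)^*$ coming from \cref{res:homomorphism}, so there is no independent evaluation map to be matched and the slicewise bookkeeping you mention can be omitted entirely.
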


\begin{proof}
	Consider the maps \(\pi\colon \X\times\RR\to \X\) defined by \((x,t)\mapsto x\). Then \(\iota_t\colon \X\to \X\times\RR\) defined by \(\iota_t(x)=(x,t)\) is a polynomial right inverse of \(\pi\). Moreover, \(\iota_t(\sigma^t_v(x))=\sigma_v(\iota_t(x))\) holds for all \(x\in\X\), \(t\in\RR\) and \(v\in G\). Then by \cref{res:homomorphism} the induced map \(\ev_t\colon \Algebra O'_r(\TG)\to\Algebra O'_r(X\rtimes^t G)\) given by \(u\mapsto \iota_t^*\circ u\circ(\pi\times\id_G)^*\) is a homomorphism with respect to convolution. By \cref{res:*-homomorphism}, \(\ev_t\) restricts to a \(^*\)-homomorphism $\mathcal O'_{r,s}(\mathcal G) \to \mathcal O'_{r,s}(\X \rtimes^t G)$.
\end{proof}
This structure is essential to have an interpretation of the tangent groupoid
as a deformation of an operator at $t=1$ to its symbol at $t=0$,
which is explained in detail in \cref{sec:calculus}.

Property \refitem{assumption:compatible} in \cref{assumption} is needed to show the following result.

\begin{lemma}\label{res:zoom_action}
	The zoom action $\tau_\lambda$ on $\TG$ is by Lie groupoid automorphisms. 
\end{lemma}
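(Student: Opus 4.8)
The plan is to verify directly that $\tau_\lambda \colon \TG \to \TG$, $\tau_\lambda(x,t,v) = (\beta_{\lambda^{-1}}(x), \lambda^{-1}t, \alpha_\lambda(v))$, is a groupoid morphism for each $\lambda > 0$, and then observe that $\tau_{\lambda^{-1}}$ is its inverse, so that each $\tau_\lambda$ is an automorphism. The cleanest way to organize this is to invoke \cref{res:groupoidHom}: since $\TG = (\X \times \RR) \rtimes G$ is an action groupoid, it suffices to exhibit a Lie group homomorphism on the $G$-factor, a smooth map on the unit space $\X \times \RR$, and to check that these two maps are compatible with the action $\theta$. Here the group homomorphism is $\tau_G \coloneqq \alpha_\lambda \colon G \to G$ (a Lie group automorphism, being an exponentiated dilation), and the unit-space map is $\tau^0_\lambda \colon \X \times \RR \to \X \times \RR$, $(x,t) \mapsto (\beta_{\lambda^{-1}}(x), \lambda^{-1}t)$, which is smooth (even a diffeomorphism, with inverse the corresponding map for $\lambda^{-1}$).

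The one nontrivial point is the compatibility condition $\tau^0_\lambda(\theta_v(x,t)) = \theta_{\alpha_\lambda(v)}(\tau^0_\lambda(x,t))$ for all $(x,t) \in \X \times \RR$ and $v \in G$. Written out, the left-hand side is $\tau^0_\lambda(\theta^t_v(x), t) = (\beta_{\lambda^{-1}}(\theta^t_v(x)), \lambda^{-1} t)$ and the right-hand side is $(\theta^{\lambda^{-1}t}_{\alpha_\lambda(v)}(\beta_{\lambda^{-1}}(x)), \lambda^{-1}t)$, so the claim reduces to the identity
\begin{equation*}
	\beta_{\lambda^{-1}} \circ \theta^t_v = \theta^{\lambda^{-1}t}_{\alpha_\lambda(v)} \circ \beta_{\lambda^{-1}} ,
\end{equation*}
equivalently $\theta^{\lambda^{-1}t}_{\alpha_\lambda(v)} = \beta_{\lambda^{-1}} \circ \theta^t_v \circ \beta_\lambda$. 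For $t \neq 0$ this is exactly the compatibility relation \eqref{eq:compatibility} (with $\lambda$ there replaced by $\lambda^{-1}$), which was derived from the definition $\theta^t_v = \beta_t \circ \theta^1_{\alpha_t(v)} \circ \beta_{t^{-1}}$. For $t = 0$ it is the identity $\beta_\mu \circ \theta^0_{\alpha_\mu(v)} \circ \beta_{\mu^{-1}} = \theta^0_v$ (with $\mu = \lambda^{-1}$) that was noted, by continuity from the $t \neq 0$ case, right after \eqref{eq:compatibility}. So the compatibility holds for all $t \in \RR$.

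Having checked the hypotheses of \cref{res:groupoidHom}, we conclude that $\tau_\lambda \colon \TG \to \TG$ is a Lie groupoid homomorphism for every $\lambda > 0$. The same lemma applied with $\lambda^{-1}$ in place of $\lambda$ gives a groupoid homomorphism $\tau_{\lambda^{-1}}$, and since $\alpha$ and $\beta$ are $\Rp$-actions one has $\tau_{\lambda^{-1}} \circ \tau_\lambda = \tau_1 = \id$ and likewise in the other order; hence each $\tau_\lambda$ is a Lie groupoid automorphism of $\TG$. The only step requiring any real input is the compatibility identity above, and this is not an obstacle at all — it is precisely property \refitem{assumption:compatible} of \cref{assumption} (packaged as \eqref{eq:compatibility} and its $t=0$ limit), which is exactly why that assumption was imposed; the rest is routine bookkeeping.
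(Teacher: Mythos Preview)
Your proposal is correct and follows essentially the same approach as the paper: both verify the compatibility condition $\tau^0_\lambda(\theta_v(x,t)) = \theta_{\alpha_\lambda(v)}(\tau^0_\lambda(x,t))$ for $t\neq 0$ using \eqref{eq:compatibility} (the paper computes it out, you cite it), extend to $t=0$ by continuity, and then invoke \cref{res:groupoidHom}. Your explicit remark that $\tau_{\lambda^{-1}}$ provides the inverse is a small but welcome addition the paper leaves implicit.
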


\begin{proof}
	One computes for all \(\lambda>0\) and \((x,t,v)\in\TG\) with \(t\neq 0\) that
	\begin{align*}
		\tau_\lambda^0(\theta_v(x,t))&=\tau_\lambda^0(\beta_t(\theta^1_{\alpha_t(v)}(\beta_{t^{-1}}(x)),t) = (\beta_{\lambda^{-1}t}(\theta^1_{\alpha_t(v)}(\beta_{t^{-1}}(x)),\lambda^{-1}t) 
		\\& = (\theta_{\alpha_\lambda(v)}^{\lambda^{-1}t }(\beta_{\lambda^{-1}}(x)), \lambda^{-1}t)
		= \theta_{\alpha_\lambda(v)}(\beta_{\lambda^{-1}}(x),\lambda^{-1}t)=\theta_{\alpha_\lambda(v)}(\tau^0_\lambda(x,t)).
	\end{align*}
	By continuity, this also holds for \(t=0\). Then the claim follows from \cref{res:groupoidHom}.
\end{proof}
Furthermore, by \cref{res:automorphism} there is an induced \(\Rp\)-action on \(\Algebra O'_r(\TG)\) whose properties are summarized in the following result.
\begin{corollary} \label{res:zoomAutomorphisms}
	There is an \(\Rp\)-action \(\tau_*\) on $\Algebra{O}'_r(\TG)$ by homomorphisms defined by 
	\begin{equation*}
		({\tau_\lambda}_* u)f = (\tau^0_{\lambda^{-1}})^*(u(\tau_{\lambda}^* f)) \quad\text{for \(\lambda>0\), \(u\in\Algebra O'_r(\TG)\), \(f\in\Algebra O_M(\TG).\)}
	\end{equation*}
	It restricts to an action on $\Algebra O'_{r,s}(\TG)$
	by $^*$-homomorphisms.
	Moreover, the action can be restricted to \(\Schwartz(\mathcal G)\subseteq \Algebra O'_{r,s}(\TG)\) with \({\tau_\lambda}_*u_f=u_{{\tau_\lambda}_*f}\) where \begin{equation*}{\tau_\lambda}_*f(x,t,v)=\lambda^{-Q(\alpha)}f(\tau_{\lambda^{-1}}(x,v,t))\qquad\text{for } f\in\Schwartz(\TG).\end{equation*}
\end{corollary}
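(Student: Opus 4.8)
The plan is to derive all three assertions from \cref{res:automorphism} applied to the zoom maps. Fix $\lambda>0$ and apply that corollary with the graded group $G$, the vector space $\X\times\RR$ carrying the polynomial $G$-action $\theta$ constructed above, the Lie group automorphism $\tau_G=\alpha_\lambda\colon G\to G$, and $\tau_\X=\tau^0_\lambda\colon\X\times\RR\to\X\times\RR$, $(x,t)\mapsto(\beta_{\lambda^{-1}}(x),\lambda^{-1}t)$. In standard coordinates on $\X$ and the linear coordinate on $\RR$, the map $\tau^0_\lambda$ is linear, hence a polynomial diffeomorphism with polynomial inverse $\tau^0_{\lambda^{-1}}$, and $\alpha_\lambda$ is a Lie group automorphism by construction of the dilations. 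The compatibility $\tau^0_\lambda\circ\theta_v=\theta_{\alpha_\lambda(v)}\circ\tau^0_\lambda$ demanded by \cref{res:automorphism} is precisely the identity established in the proof of \cref{res:zoom_action}. The associated groupoid automorphism $\tau^0_\lambda\times\alpha_\lambda$ is the zoom map $\tau_\lambda$ of \cref{def:shubin_groupoid}, and since $(\tau_\X^{-1})^*=(\tau^0_{\lambda^{-1}})^*$, the distribution map produced by \cref{res:automorphism} is $u\mapsto(\tau^0_{\lambda^{-1}})^*\circ u\circ\tau_\lambda^*$, which coincides with the formula in the statement. Therefore \cref{res:automorphism} yields at once that each ${\tau_\lambda}_*$ is an automorphism of $\Algebra O'_r(\TG)$, restricts to a $^*$-automorphism of $\Algebra O'_{r,s}(\TG)$, and restricts further to a $^*$-automorphism of $\Schwartz(\TG)$.

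Next, one checks that $\lambda\mapsto{\tau_\lambda}_*$ is an $\Rp$-action. Since $\alpha$ and $\beta$ are $\Rp$-actions, one has $\tau_\lambda\circ\tau_\mu=\tau_{\lambda\mu}$ and $\tau_1=\id$ on $\TG$; combining this with the contravariance of pull-backs and commutativity of $\Rp$ gives ${\tau_\lambda}_*\circ{\tau_\mu}_*={\tau_{\lambda\mu}}_*$ and ${\tau_1}_*=\id$, so $\tau_*$ is a genuine action by the respective (homo-, $^*$-homo-)morphisms on the three algebras. For Schwartz functions, part~(3) of \cref{res:automorphism} gives ${\tau_\lambda}_*u_f=u_{\abs{\det(D_v\alpha_{\lambda^{-1}})}\,(\tau_{\lambda^{-1}})^*f}$; here $\alpha_{\lambda^{-1}}$ is in standard coordinates the linear map $v_j\mapsto\lambda^{-q_j}v_j$, whose Jacobian determinant is the positive constant $\lambda^{-Q(\alpha)}$, and $(\tau_{\lambda^{-1}})^*f=f\circ\tau_{\lambda^{-1}}$. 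Hence ${\tau_\lambda}_*u_f=u_{{\tau_\lambda}_*f}$ with ${\tau_\lambda}_*f=\lambda^{-Q(\alpha)}\,f\circ\tau_{\lambda^{-1}}$, as claimed.

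I do not expect a genuine obstacle here: the content is entirely contained in \cref{res:automorphism} and \cref{res:zoom_action}, and the only points requiring attention are (i) confirming that $\tau^0_\lambda$ is a (linear, hence polynomial) diffeomorphism of $\X\times\RR$ so that \cref{res:automorphism} applies on the nose, and (ii) the bookkeeping of composition orders in the action property and of the sign of the exponent in the Jacobian factor. Continuity of $\tau_*$ in $\lambda$ is not part of the claim, but would follow from continuity of the maps $\tau_\lambda^*$ and $(\tau^0_{\lambda^{-1}})^*$ together with the estimates of \cref{sec:action-groupoids-convolution-algebra} if needed later.
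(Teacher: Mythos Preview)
Your proof is correct and follows exactly the paper's approach: the paper simply notes that by \cref{res:automorphism} (applied to the groupoid automorphisms $\tau_\lambda$ established in \cref{res:zoom_action}) one obtains the induced $\Rp$-action with all stated properties. Your write-up spells out the verification of the hypotheses and the Jacobian computation $\abs{\det(D_v\alpha_{\lambda^{-1}})}=\lambda^{-Q(\alpha)}$ in more detail than the paper, but the argument is the same.
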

In the following, it will be useful to know that properties of the action \(\theta^1\) transfer to \(\theta^t\) for \(t\neq 0\).
\begin{lemma} \label{res:pair_groupoid} 
	Let $\theta^1$ be a Shubin action of \(G\) on \(\X\). When \(\theta^1\) is polynomially free, polynomially transitive or satisfies \ref{assumption:free}, then \(\theta^t\) has the same property for all \(t\neq 0\).
\end{lemma}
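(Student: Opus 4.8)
The plan is to exhibit each $\Theta^t$, for $t \neq 0$, as a conjugate of the shear map $\Theta^1$ by polynomial diffeomorphisms, and then to observe that the three properties in question are all manifestly preserved under pre- and post-composition with polynomial diffeomorphisms.

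First I would record that, by \cref{rem:dilationsExtend} and its evident analogue for the dilations $\beta$ on $\X = \RR^d$, for every $t \neq 0$ the maps $\alpha_t \colon G \to G$ and $\beta_t \colon \X \to \X$ are linear isomorphisms — in particular polynomial diffeomorphisms — with inverses $\alpha_{t^{-1}}$ and $\beta_{t^{-1}}$. Starting from the defining relation $\theta^t_v = \beta_t \circ \theta^1_{\alpha_t(v)} \circ \beta_{t^{-1}}$ of \cref{assumption}~\refitem{assumption:compatible}, a direct computation on $(x,v) \in \X \times G$ gives
\begin{equation*}
	\Theta^t(x,v) = \bigl(x,\, \beta_t(\theta^1_{\alpha_t(v)}(\beta_{t^{-1}}(x)))\bigr) = \bigl((\beta_t \times \beta_t) \circ \Theta^1 \circ (\beta_{t^{-1}} \times \alpha_t)\bigr)(x,v) ,
\end{equation*}
so that $\Theta^t = (\beta_t \times \beta_t) \circ \Theta^1 \circ (\beta_{t^{-1}} \times \alpha_t)$, where $\beta_t \times \beta_t$ and $\beta_{t^{-1}} \times \alpha_t$ are polynomial diffeomorphisms of $\X \times \X$ and of $\X \times G$, respectively.

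From this identity the three claims follow by inspection. If $\theta^1$ satisfies \ref{assumption:free}, i.e.\ $\Theta^1$ is a polynomial diffeomorphism, then $\Theta^t$ is a composite of polynomial diffeomorphisms, hence itself one, so $\theta^t$ satisfies \ref{assumption:free}. If $\theta^1$ is polynomially free with polynomial left inverse $\Omega^1 \colon \X \times \X \to \X \times G$, then $\Omega^t \coloneqq (\beta_t \times \alpha_{t^{-1}}) \circ \Omega^1 \circ (\beta_{t^{-1}} \times \beta_{t^{-1}})$ is polynomial and satisfies $\Omega^t \circ \Theta^t = \id_{\X \times G}$, since the conjugating factors cancel and $\Omega^1 \circ \Theta^1 = \id$; hence $\theta^t$ is polynomially free. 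If $\theta^1$ is polynomially transitive, write $\Theta^1 = (\pr_1, \pr_2) \circ \psi$ for a polynomial diffeomorphism $\psi \colon \X \times G \to \X \times \X \times \RR^{n-d}$ as in \cref{def:polynomially-free}; then $\psi^t \coloneqq (\beta_t \times \beta_t \times \id_{\RR^{n-d}}) \circ \psi \circ (\beta_{t^{-1}} \times \alpha_t)$ is a polynomial diffeomorphism with $(\pr_1, \pr_2) \circ \psi^t = \Theta^t$, so $\Theta^t$ is again a polynomial fibre projection and $\theta^t$ is polynomially transitive. There is no genuine obstacle here; the only points requiring care are getting the dilation parameters in the conjugation identity right — it is $\alpha_t$, not $\alpha_{t^{-1}}$, that appears on the right — and the fact, already established in \cref{rem:dilationsExtend}, that the dilations extend to honest polynomial diffeomorphisms for all $t \neq 0$, negative $t$ included.
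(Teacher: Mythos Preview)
Your proof is correct and follows essentially the same approach as the paper: both derive the identity $\Theta^t = (\beta_t \times \beta_t) \circ \Theta^1 \circ (\beta_{t^{-1}} \times \alpha_t)$ and then observe that the three properties transfer under conjugation by polynomial diffeomorphisms. The paper leaves the verification of the transfer as ``straightforward'', whereas you spell out each case explicitly; your treatment is a faithful elaboration of the paper's argument.
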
 

\begin{proof} 
	Let \(t\neq 0\). As \(\theta^t_v = \beta_t\circ \theta^1_{\alpha_t(v)}\circ\beta_{t^{-1}}\), one deduces that the shear map \(\Theta^t\) of \(\theta^t\) is given by
	\begin{equation*}
		\Theta^t=(\beta_{t}\times\beta_t)\circ \Theta^1\circ(\beta_{t^{-1}}\times\alpha_{t}).
	\end{equation*}
	Since $\beta_t \times \beta_t$ and $\beta_{t^{-1}} \times \alpha_t$ are polynomial diffeomorphisms, it is straightforward to verify that the properties of $\Theta^1$ transfer to $\Theta^t$.
\end{proof}
Recall that the shear map $\Theta^t$ is a groupoid homomorphism.
In particular, if property~\ref{assumption:free} holds, then $\Theta^t$ is an isomorphism between the action groupoid $\X \rtimes^t G$ and the pair groupoid $\X \times \X$ for every $t \neq 0$.
In this case, $\Algebra O_r'(\X \rtimes^t G)$ for $t \neq 0$ can be identified with Schwartz kernels of certain operators on $\X$, see \cref{res:Op-injective}.

In general, the maps \(\theta^t_v\colon \X\to \X\) for \(v\in G\) define a polynomial family of diffeomorphisms in the sense of \cref{res:determinantIsConstant}. The same is true for the orbit maps under assumption \ref{assumption:free}.
\begin{lemma}\label{res:orbitmaps}
	Consider for \(t\neq 0\) and \(x\in \X\) the orbit map \(\OrbitMap t x\colon G\to \X\) defined by \(v\mapsto\theta^t(x,v)\). Suppose that property \ref{assumption:free} holds, then \(\OrbitMap t x\) for \(x\in \X\) is a family of polynomial diffeomorphisms in the sense of \cref{res:determinantIsConstant}.  In particular, \((x,v)\mapsto \det D_v(\OrbitMap 1 x) \) is a non-zero constant.
\end{lemma}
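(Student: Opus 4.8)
The plan is to apply \cref{res:determinantIsConstant} to the family $\{\OrbitMap t x\}_{x \in \X}$, the key input being that property \ref{assumption:free} makes the shear map a polynomial diffeomorphism and that, by \cref{res:pair_groupoid}, this property is inherited by $\theta^t$ for every $t \neq 0$. First I would record that property \ref{assumption:free} forces $\dim G = \dim \X$, since $\Theta^1 \colon \X \times G \to \X \times \X$ is then a diffeomorphism between manifolds of dimensions $d + n$ and $2d$; write $n = d$ for this common dimension, so that each $\OrbitMap t x \colon G \to \X$ is a map between spaces of equal dimension.

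Fix $t \neq 0$. By \cref{res:pair_groupoid} the action $\theta^t$ also satisfies \ref{assumption:free}, so $\Theta^t \colon \X \times G \to \X \times \X$, $(x,v) \mapsto (x, \OrbitMap t x(v))$, is a polynomial diffeomorphism. Since $\Theta^t$ leaves the first $\X$-coordinate unchanged, its inverse has the form $(\Theta^t)^{-1}(x,y) = (x, \Omega^t(x,y))$ for some polynomial map $\Omega^t \colon \X \times \X \to G$. Comparing the $G$-components in $\Theta^t \circ (\Theta^t)^{-1} = \id$ and $(\Theta^t)^{-1} \circ \Theta^t = \id$ gives $\OrbitMap t x(\Omega^t(x,y)) = y$ and $\Omega^t(x, \OrbitMap t x(v)) = v$ for all $x \in \X$, $v \in G$, $y \in \X$; that is, $\OrbitMap t x$ and $\Omega^t(x, \argument)$ are mutually inverse polynomial maps for every fixed $x$.

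Now I would invoke \cref{res:determinantIsConstant} with its ``$\RR^d$'' taken to be $G \cong \X \cong \RR^n$, with parameter space $Y = \X$, and with $f(v,x) = \theta^t(x,v)$ (polynomial in $(v,x)$ because $\theta$ is polynomial in $(x,t,v)$, cf.\ the discussion following \cref{assumption}) and $g(y,x) = \Omega^t(x,y)$. By the previous paragraph $f_x$ and $g_x$ are inverse to each other for all $x$, so \cref{res:determinantIsConstant} shows that $\{\OrbitMap t x\}_{x \in \X}$ is a family of polynomial diffeomorphisms in the required sense and that $(v,x) \mapsto \det(D_v \OrbitMap t x)$ is constant and non-zero; specializing to $t = 1$ gives the final assertion. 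I do not expect any genuine obstacle here — the proof is a direct bookkeeping argument — the only points needing a moment's care are noting $\dim G = \dim \X$ and observing that the inverse of $\Theta^t$ preserves the first factor, so that it indeed produces the inverse family $\Omega^t(x,\argument)$ required to feed into \cref{res:determinantIsConstant}.
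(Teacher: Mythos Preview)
Your proof is correct and follows essentially the same approach as the paper: the paper's one-line proof simply says to apply \cref{res:determinantIsConstant} with parameter space $Y = \X$ to the maps $\theta^t$ and $\pr_2 \circ (\Theta^t)^{-1}$, which is precisely your $\Omega^t$. Your version merely spells out the implicit steps (that $n = d$, that $(\Theta^t)^{-1}$ fixes the first factor, and that \cref{res:pair_groupoid} transfers property \ref{assumption:free} to $\theta^t$), but the argument is the same.
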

\begin{proof}
	Apply \cref{res:determinantIsConstant} with $Y = \X$ to the maps
	\(\theta^t \colon \X\times G\to \X \) and \(\pr_2\circ(\Theta^t)^{-1}\colon \X\times \X\to G\).
\end{proof}
Property~\ref{assumption:homomorphisms} is needed to obtain a Rockland condition for elliptic elements in the corresponding calculus.
Properties \ref{assumption:free} and \ref{assumption:homomorphisms} are satisfied in our two main examples, 
but can be given up in the study of more general calculi, see \cref{sec:shubin:weird-examples}.

\subsection{Shubin double dilation groupoid} \label{sec:shubin:DDG}
In this section, we specify the Shubin action for our first goal \ref{item:example:rocklandWithPotential}, which is to study Rockland operators with potentials.

For a graded Lie group $G$ we write $\Space G$ for the graded vector space $\RR^n$ obtained by using standard coordinates on $G$ and forgetting the group structure.

\begin{lemma}
	Let $G$ be a Lie group with two gradings defined by dilations $\alpha$ and $\beta$ and let $\X = \Space G$.
	Then  $\theta^1(x,v) \coloneqq x v$ defines a Shubin action satisfying properties \ref{assumption:free} and \ref{assumption:homomorphisms}, see \cref{assumption}.
\end{lemma}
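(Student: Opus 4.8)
The plan is to verify the two conditions in \cref{assumption} and then properties \ref{assumption:free} and \ref{assumption:homomorphisms}. First, $\theta^1(x,v) = x \cdot v$ is polynomial in $x$ and $v$: this is exactly the triangular group law of \cref{res:triangular} (applied with the $\beta$-grading on $\X = \Space G$), so condition \refitem{assumption:polynomial} holds. For condition \refitem{assumption:compatible}, I would compute the deformed action explicitly. Writing $\theta^t_v(x) = \beta_t(\beta_{t^{-1}}(x) \cdot \alpha_t(v))$ for $t \neq 0$ and using \eqref{eq:triangular} for the product with respect to the $\beta$-coordinates, one gets for the $j$-th component (with $p_j = q_j^\beta$ the $\beta$-weight of the $j$-th coordinate, and $q_i^\alpha$ the $\alpha$-weights)
\begin{equation*}
	\theta^t_v(x)_j = x_j + t^{p_j}\alpha_t(v)_j + \sum_{[a]_\beta + [b]_\beta = p_j} c_{j,a,b}\, t^{-p_j}\, x^a\, (t^{a \cdot 1}\text{-factors})\cdots
\end{equation*}
so I should be careful: the cleanest route is to observe that $\beta_t$ scales the $j$-th coordinate by $t^{p_j}$, that each monomial $x^a (\alpha_t v)^b$ appearing in $(\beta_{t^{-1}}(x)\cdot\alpha_t(v))_j$ carries a total $t$-power of $-[a]_\beta - [b]_\beta^\beta + [b]_\alpha$ from the $\beta_{t^{-1}}$ on $x$ and the $\alpha_t$ on $v$, and that the outer $\beta_t$ multiplies by $t^{p_j} = t^{[a]_\beta + [b]_\beta}$. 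Adding these, the exponent of $t$ in the monomial indexed by $(a,b)$ is $[b]_\alpha \geq 0$, and for the linear term $t^{p_j}(\alpha_t v)_j = t^{p_j + q_j^\alpha} v_j$ it is likewise nonnegative. Hence $\theta^t_v(x)$ is a polynomial in $x$, $v$ and $t$ (no negative powers of $t$ occur), which visibly extends smoothly to $t = 0$; so $\theta^1$ is a Shubin action.

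Next, property \ref{assumption:free}: the shear map is $\Theta^1(x,v) = (x, x \cdot v)$, and its inverse is $\Theta^{-1}(x,y) = (x, x^{-1} \cdot y)$ — since $G$ is nilpotent, both the group multiplication and inversion are polynomial in standard coordinates (again \cref{res:triangular} together with the Baker--Campbell--Hausdorff formula, which terminates), so $\Theta^1$ is a polynomial diffeomorphism. Finally, property \ref{assumption:homomorphisms}: from the $t$-power count above, the only monomials surviving at $t = 0$ are those with $[b]_\alpha = 0$, i.e.\ with $b = 0$, plus the linear term $v_j$ exactly when $p_j + q_j^\alpha = 0$, which never happens as weights are positive. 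Thus $\theta^0_v(x)_j = x_j + \sum_{[a]_\beta = p_j,\, b = 0} c_{j,a,0}\, x^a$; but in the group law \eqref{eq:triangular} the constants $c_{j,a,b}$ are nonzero only for $a, b \in \NN_0^n \setminus \{0\}$, so every term with $b = 0$ vanishes and $\theta^0_v(x) = x$. In particular $\theta^0_v = \id$ is linear, so \ref{assumption:homomorphisms} holds.

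The one step requiring genuine care — and the likely main obstacle — is the bookkeeping of $t$-powers in condition \refitem{assumption:compatible}: one must confirm that conjugating by $\beta_t$ and inserting $\alpha_t$ really does cancel all negative powers of $t$. The key point is that $\beta$ grades $\X = \Space G$ \emph{compatibly with the group law} (the constants $c_{j,a,b}$ in \eqref{eq:triangular} for the $\beta$-coordinates satisfy $[a]_\beta + [b]_\beta = p_j$ on the nose), so the $\beta_t$-conjugation exactly compensates, and the extra $\alpha_t$ on the $v$-variables can only \emph{raise} the $t$-power. Everything else is a direct consequence of nilpotency (polynomiality of multiplication and inversion) and of \cref{res:triangular}; I would invoke \cref{res:rescaling} only if the power count had produced $t^{\lambda^k}$-type behaviour, which here it does not.
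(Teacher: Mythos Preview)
Your argument is correct in outline, but the paper's proof is considerably shorter because it exploits one fact you state but do not use directly: since $\beta$ defines a grading on $G$, each $\beta_t$ is a \emph{group automorphism}. Hence for $t\neq 0$
\[
\theta^t_v(x)=\beta_t\bigl(\beta_{t^{-1}}(x)\cdot\alpha_t(v)\bigr)=\beta_t(\beta_{t^{-1}}(x))\cdot\beta_t(\alpha_t(v))=x\cdot\beta_t(\alpha_t(v)),
\]
and $\beta_t(\alpha_t(v))_j=t^{p_j+q_j^\alpha}v_j$ extends smoothly with value $0$ at $t=0$. This gives condition \refitem{assumption:compatible} and $\theta^0_v(x)=x$ (so properties \ref{assumption:homomorphisms} and the triviality of $\theta^0$) in one line, without any monomial-by-monomial $t$-bookkeeping. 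Your coordinate computation is the unpacked version of the identity $\beta_t(xy)=\beta_t(x)\beta_t(y)$.

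One minor slip: the $t$-power you attach to the monomial $x^a v^b$ before applying the outer $\beta_t$ should be $-[a]_\beta+[b]_\alpha$, not $-[a]_\beta-[b]_\beta+[b]_\alpha$ (the $v$-variables are scaled only by $\alpha_t$). After multiplying by $t^{p_j}=t^{[a]_\beta+[b]_\beta}$ the correct total exponent is $[b]_\beta+[b]_\alpha$, not $[b]_\alpha$. This does not affect your conclusions, since both quantities are nonnegative and vanish only for $b=0$. Property \ref{assumption:free} is handled identically in both proofs.
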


\begin{proof}
	That the action \(\theta^1\) is polynomial follows from the polynomial group law \eqref{eq:triangular}.
	We compute 
	\begin{equation*}
		\theta^t_v(x)=\beta_t\circ\theta^1_{\alpha_t(v)}\circ\beta_{t^{-1}}(x)=\beta_t(\beta_{t^{-1}}(x)\alpha_t(v))=x \beta_t(\alpha_t(v))
	\end{equation*}
	for \(t\neq 0\).
	This extends smoothly to \(t=0\) with \(\theta^0_v(x)=x\). In particular, property \ref{assumption:homomorphisms} holds. Moreover, the action of \(G\) on itself by right multiplication satisfies property \ref{assumption:free} as \(\Theta^1(x,v)=(x,x v)\) has the polynomial inverse \((x,y)\mapsto (x,x^{-1} y)\).	
\end{proof}

\begin{definition}[Dilation groupoid] \label{def:doubleDilationGroupoid}
	Let $G$ be a Lie group with two gradings defined by dilations $\alpha$ and $\beta$.
	The \emph{double dilation groupoid} $\TGdil$ of $G$ is the Shubin tangent groupoid, see \cref{def:shubin_groupoid},
	for $\theta(x,t,v) = x \beta_t \alpha_t (v)$.
	If $G$ is a graded Lie group with dilations $\alpha$, the Shubin tangent groupoid obtained for $\beta = \alpha$ 
	is also referred to as the \emph{standard dilation groupoid} of $G$. 
\end{definition}
More explicitly, the groupoid \(\TGdil = \Space G \times \RR \times G\) has unit space \(\TGdil^{(0)} = \Space G \times\RR\)
and structure maps
\begin{gather*}
	u(x,t)=(x,t,0), \qquad r(x,t,v)=(x,t), \qquad s(x,t,v)=(x\beta_{t}(\alpha_t(v)),t),\\
	(x,v,t)^{-1}=(x\beta_{t}(\alpha_t(v)),t,v^{-1}) ,\qquad (x,t,v)\cdot(x\beta_{t}(\alpha_t(v)),t,w)=(x,t,vw),
\end{gather*}
for $x \in \Space  G$, $v,w\in G$ and $t\in \RR$ (see \cref{def:action_groupoid}).

Recall that $\theta^0$ is just the identity, so that the range and source map coincide at $t = 0$. 
Therefore the groupoid \(\Space{G} \rtimes^0 G = \Space{G} \times G\) can be understood as the tangent bundle of $G$.
The groupoid structure is such that the fibre \(\simpleset x \times G \cong r^{-1}(x,0)=s^{-1}(x,0)\) is equipped with the multiplication on \(G\).

\begin{remark}\label{rem:isomorphism_standard_form} 
	By assembling the isomorphisms \(\Phi^t = \Theta^t \colon \Space G \rtimes^t G \to \Space G\times \Space G\) for \(t\neq 0\) from \cref{res:pair_groupoid} and \(\Phi^0 = \id\) for \(t=0\) one obtains an isomorphism \(\Phi\colon\TGdil\to\mathbb T_HG\), which allows one to write the tangent groupoid in the more familiar form 

	\[ \mathbb T_HG\coloneqq\Space G \times G \times\{0\}\cup \Space G\times \Space G\times\RR^*\rightrightarrows \Space G\times\RR.\]
	The isomorphism is equivariant for the \(\Rp\)-action $\tau_\lambda$ on $\TGdil$
	and the $\Rp$-action on \(\mathbb T_H G\) given by
	\begin{align*}
		(x,y,t) \cdot \lambda&=
		(\beta_{\lambda^{-1}}(x),\beta_{\lambda^{-1}}(y),\lambda^{-1}t)\quad\text{for }t\neq 0,\\
		(x,v,0) \cdot \lambda&=(\beta_{\lambda^{-1}}(x),\alpha_\lambda(v),0).		
	\end{align*}
	for \(\lambda>0\), \(x,y\in\Space G\) and \(v\in G\). 
	Note that the smooth structure and the $\Rp$-action above differ from the smooth structure 
	and zoom action of the tangent groupoid used for a Hörmander type calculus (see \cite{Ewe23a}). 
	There, one uses the action \((x,t)\cdot v= (x\gamma_{t}(v),t)\) for a dilation \(\gamma\), 
	which is not necessarily a composition of two dilations, to describe the tangent groupoid as an action groupoid. 	
\end{remark}

\subsection{Shubin representation groupoid} \label{sec:shubin:Rep}
As sketched in the introduction,
we would also like to study operators on $G$, like the Harmonic oscillator on \(\RR^n\),
that are representations of Rockland operators on a higher step group $\overline G$,
see \ref{item:example:representationGroupoid}.

For a graded Lie group \(G\) of dimension \(n\) and highest weight \(r \coloneqq q_n\), 
Mohsen constructs in \cite{Moh22} a \((2n+1)\)-dimensional graded group \(\overline{G}\) of highest weight \(r+1\). 
The construction yields $\overline{\RR^n} = H_n$, 
and \(\overline{H_n}\) is the Dynin--Folland group used in \cites{Dyn75,Fol94,RR20,RR22}.
In the following, we recall the construction of \(\overline{G}\) as a semidirect product.
Using Fourier transform there is a corresponding action groupoid \((\lie{g}\times\RR)\rtimes_\vartheta G\),
from which we obtain a Shubin tangent groupoid. 

Let \(D\colon\lie{g}\to\lie{g}\) be the linear map from \cref{def:dilation} such that \(A_\lambda=\exp(\ln(\lambda)D)\) are the dilations on~$\lie g$ for \(\lambda>0\). In the following, denote the coadjoint action of $G$ on $\lie g^*$ by $\Ad^*$,
and its infinitesimal action of $\lie g$ on $\lie g^*$ by $\ad^*$.
Define
\begin{align*}
	\widehat\vartheta^\lie g &\colon \lie g \times (\lie g^* \times \RR) \to \lie g^* \times \RR,
	&
	\widehat\vartheta^{\lie g} (X,\xi,\tau) &= \widehat\vartheta^{\lie g}_X (\xi,\tau) = (\ad^*_X \xi, \xi(D X)),
	\\
	\widehat\vartheta &\colon G \times (\lie g^* \times \RR) \to \lie g^* \times \RR , &
	\widehat{\vartheta}(\exp X, \xi,\tau) &= \widehat{\vartheta}_{\exp X}(\xi,\tau)
	= \Bigl(\Ad^*_{\exp X}\xi,\tau + \sum_{k=1}^\infty \frac{1}{k!}(\ad^*_X)^{k-1}\xi(DX)\Bigr).
\end{align*}
Here, we use that the exponential map \(\exp\colon\lie{g}\to G\) is a diffeomorphism to define \(\widehat\vartheta\).
The reason for the notation $\widehat\vartheta$ will become clear in \cref{res:representation-groupoid-fundamentals}.

\begin{lemma}
	The map $\widehat\vartheta$ defines a left action of $G$ on $\lie g^* \times \RR$
	with infinitesimal action $\widehat\vartheta^{\lie g}$ of $\lie g$ on $\lie g^* \times \RR$.
\end{lemma}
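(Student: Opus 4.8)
The plan is to recognise $\widehat\vartheta$ as the unique left $G$-action integrating the infinitesimal action $\widehat\vartheta^{\lie g}$, rather than to verify the group law head-on. Write $V \coloneqq \lie g^* \times \RR$ and $Z_X \coloneqq \widehat\vartheta^{\lie g}_X$, viewed as the vector field $Z_X(\xi,\tau) = (\ad^*_X\xi,\,\xi(DX))$ on $V$, with $D$ the grading derivation from \cref{def:dilation}. This $Z_X$ depends linearly on $X$, is affine (indeed linear) in $(\xi,\tau)$, and is complete: since $\lie g$ is nilpotent, $\ad_X$ and hence $\ad^*_X$ is nilpotent, so the flow is polynomial in the time variable.

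First I would compute the flow of $Z_X$. The first component solves $\dot\xi = \ad^*_X\xi$, so $\xi(s) = e^{s\ad^*_X}\xi(0) = \Ad^*_{\exp(sX)}\xi(0)$ (the one-parameter subgroup of $GL(\lie g^*)$ with generator $\ad^*_X$); the second solves $\dot\tau = \xi(s)(DX)$, so
\begin{equation*}
	\tau(s) = \tau(0) + \Bigl(\tfrac{e^{s\ad^*_X}-1}{\ad^*_X}\,\xi(0)\Bigr)(DX) = \tau(0) + \sum_{k=1}^\infty \tfrac1{k!}(\ad^*_X)^{k-1}\xi(0)(DX),
\end{equation*}
using $\tfrac{e^z-1}{z} = \sum_{k\geq 1}\tfrac{z^{k-1}}{k!}$ (a polynomial in the nilpotent operator $\ad^*_X$). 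Comparing with the defining formula and using that $\exp\colon\lie g\to G$ is a diffeomorphism, so every $g\in G$ is uniquely $\exp X$, one sees that the time-$1$ flow $\Phi^1_{Z_X}$ is exactly $\widehat\vartheta_{\exp X}$; in particular $\widehat\vartheta_e = \Phi^1_{Z_0} = \id$.

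Next I would check that $X\mapsto Z_X$ is a Lie-algebra anti-homomorphism, $[Z_X,Z_Y] = -Z_{[X,Y]}$, this being the sign appropriate for fundamental vector fields of a left action. As the $Z_X$ are linear in $(\xi,\tau)$, one reads off $[Z_X,Z_Y](\xi,\tau) = \bigl([\ad^*_Y,\ad^*_X]\xi,\;(\ad^*_X\xi)(DY) - (\ad^*_Y\xi)(DX)\bigr)$. The first component equals $-\ad^*_{[X,Y]}\xi$ because the coadjoint representation $\ad^*$ is a Lie-algebra homomorphism. For the second component, unwinding $\ad^*$ gives $(\ad^*_X\xi)(DY) - (\ad^*_Y\xi)(DX) = -\xi\bigl([X,DY] + [DX,Y]\bigr) = -\xi\bigl(D[X,Y]\bigr)$, where the last equality is precisely the statement that $D$ is a derivation of $\lie g$ (differentiate $e^{uD}[\,\cdot\,,\,\cdot\,] = [e^{uD}\,\cdot\,,\,e^{uD}\,\cdot\,]$ at $u = 0$). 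Hence $[Z_X,Z_Y] = -Z_{[X,Y]}$.

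Finally, the standard integration of infinitesimal actions (Palais' theorem — here one can even be explicit, $\exp$ being a diffeomorphism) shows that a linear $X\mapsto Z_X$ into complete vector fields on $V$ satisfying $[Z_X,Z_Y] = -Z_{[X,Y]}$ integrates, $G$ being simply connected, to a unique smooth left $G$-action on $V$ whose value at $\exp X$ is $\Phi^1_{Z_X}$ and whose infinitesimal action is $X\mapsto Z_X$. By the second paragraph this action is $\widehat\vartheta$, which is therefore a left action with infinitesimal action $\widehat\vartheta^{\lie g}$. I expect the only delicate point to be the sign bookkeeping in the bracket computation (conventions for $\ad^*$ and the anti-homomorphism rule for left actions); the single content-carrying identity, $[X,DY]+[DX,Y] = D[X,Y]$, is just that the dilation generator $D$ is a derivation. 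A direct verification of $\widehat\vartheta_g\widehat\vartheta_h = \widehat\vartheta_{gh}$ from the formula is of course also possible — it amounts to the cocycle identity $c(gh,\xi) = c(h,\xi) + c(g,\Ad^*_h\xi)$ for $c(\exp X,\xi) = \bigl(\tfrac{e^{\ad^*_X}-1}{\ad^*_X}\xi\bigr)(DX)$ — but having to expand $gh = \exp(\mathrm{BCH}(X,Y))$ makes that route noticeably more painful, which is why the infinitesimal approach is preferable.
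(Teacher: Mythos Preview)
Your argument is correct and follows essentially the same route as the paper. The paper phrases it in the language of linear operators on $V=\lie g^*\times\RR$: it checks that $D$ is a derivation, deduces the operator identity $[\widehat\vartheta^{\lie g}_X,\widehat\vartheta^{\lie g}_Y]=\widehat\vartheta^{\lie g}_{[X,Y]}$ (your vector-field identity $[Z_X,Z_Y]=-Z_{[X,Y]}$ is the same statement, the sign flip coming from the passage linear operator $\leftrightarrow$ linear vector field), and then observes that $\sum_{k\ge 0}\tfrac1{k!}(\widehat\vartheta^{\lie g}_X)^k=\widehat\vartheta_{\exp X}$, so $\widehat\vartheta$ is the exponentiated representation and hence a group action. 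Your flow computation and appeal to Palais are exactly this exponentiation step in geometric dress; the single substantive input in both versions is the derivation identity $D[X,Y]=[DX,Y]+[X,DY]$.
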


\begin{proof}
	Note that \(D\colon \lie{g}\to\lie{g}\) is a derivation, 
	that is \(D([X,Y])=[DX,Y]+[X,DY]\) for \(X,Y\in\lie{g}\).
	Indeed, this equality is clearly true for \(X\in\lie{g}_i\) and \(Y\in\lie{g}_j\), where \(1\leq i,j\leq r\).
	Using this, it is straightforward to verify that 
	$[\widehat\vartheta^{\lie g}_X, \widehat\vartheta^{\lie g}_Y] = \widehat\vartheta^{\lie g}_{[X,Y]}$.
	Since
	\begin{multline*}
		\sum_{k=0}^\infty\frac{1}{k!} (\widehat\vartheta^{\lie g}_X)^k (\xi,\tau)
		=\left(\sum_{k=0}^\infty \frac{1}{k!}(\ad^*_X)^k\xi,\tau+ \sum_{k=1}^\infty \frac{1}{k!}(\ad^*_X)^{k-1}\xi(DX)\right)
		\\
		= \Bigl(\Ad^*_{\exp X}\xi,\tau + \sum_{k=1}^\infty \frac{1}{k!}(\ad^*_X)^{k-1}\xi(DX)\Bigr)
		= \widehat{\vartheta}_{\exp X}(\xi,\tau)
	\end{multline*}
	the map $\widehat \vartheta$ is obtained by exponentiating $\widehat\vartheta^{\lie g}$,
	hence $\widehat\vartheta_{vw} = \widehat\vartheta_v \widehat\vartheta_w$ for all \(v,w\in G\).
\end{proof}

\begin{definition} Let $G$ be a graded Lie group. We define the Lie group $\overline G$ as the semidirect product
	$(\lie g^* \times \RR)\rtimes_{\widehat\vartheta} G$ and denote its Lie algebra by $\overline{\lie g}$.
	More concretely, $\overline G = \lie g^* \times \RR \times G$ and $\overline{\lie g} = \lie g^* \times \RR \times \lie g$ as sets and the product and Lie bracket are determined by
	\begin{align} \label{eq:Dynin-Folland:group-product}
		(\xi, \tau, v) (\eta, \kappa, w) &= ((\xi,\tau) + \widehat\vartheta_v(\eta, \kappa), v w),
		\\
		\label{eq:Dynin-Folland:Lie-bracket}
		[(\xi, \tau, X), (\eta,\kappa, Y)] 
		&= (\widehat\vartheta^{\lie g}_X(\eta, \kappa) - \widehat\vartheta^{\lie g}_Y(\xi,\tau), [X,Y])
	\end{align}
	where $\xi,\eta \in \lie g^*$, $\tau, \kappa \in \RR$, $v,w \in G$ and $X,Y \in \lie g$.
\end{definition}
Assume that the dilations $\alpha$ of $G$ have weights $1 \leq q_1 \leq \dots \leq q_n$
and write $r \coloneqq q_n$ for the highest weight. Choose a standard basis $X_1, \dots, X_n$.
Let $\alpha^\vee$ be the dilations on $\lie g$ determined by $\alpha^\vee_\lambda X_j = \lambda^{r+1-q_j} X_j$. 
We also write $\alpha^\vee$ for the dual dilations on $\lie g^*$
determined by $\langle \alpha^\vee_\lambda \xi, X \rangle = \langle \xi, \alpha^\vee_\lambda X \rangle$ for $\xi \in \lie g^*$, $X \in \lie g$. 
Note that \(Q(\alpha^\vee) = n(r+1)-Q(\alpha)\).
Let~$\rho_\lambda$ denote the dilation $\rho_\lambda(\tau) = \lambda^{r+1} \tau$ on $\RR$.
Mohsen proved the following result in \cite{Moh22}.

\begin{lemma}
	The group $\overline G$ becomes a graded Lie group when its Lie algebra is equipped with the dilations $\gamma_\lambda(\xi,\tau,v) = (\alpha^\vee_\lambda \xi, \rho_\lambda \tau ,\alpha_\lambda v)$.
\end{lemma}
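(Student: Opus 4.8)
The statement asserts that $\overline G$, equipped with the dilations $\gamma_\lambda(\xi,\tau,v) = (\alpha^\vee_\lambda \xi, \rho_\lambda \tau, \alpha_\lambda v)$ on $\overline{\lie g} = \lie g^* \times \RR \times \lie g$, is a graded Lie group. Since $\overline G$ is already known to be a Lie group (constructed above as a semidirect product), and since $G$ is graded hence nilpotent and simply connected, it is immediate that $\overline G = \lie g^* \times \RR \times G$ is simply connected and nilpotent. Thus the only real content is to verify that the family $\{\gamma_\lambda\}_{\lambda > 0}$ is an \emph{integer family of dilations} in the sense of \cref{def:dilation}, i.e.\ that each $\gamma_\lambda$ is a Lie algebra automorphism of $\overline{\lie g}$ with respect to the bracket \eqref{eq:Dynin-Folland:Lie-bracket}, and that $\gamma_\lambda = \mathrm{Exp}(\ln(\lambda) \overline D)$ for a diagonalizable $\overline D$ with positive integer eigenvalues.

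\textbf{Key steps.} First I would record the eigenvalue data: in the standard basis $X_1, \dots, X_n$ of $\lie g$ with weights $q_1 \leq \dots \leq q_n = r$, the dual basis $\xi_1, \dots, \xi_n$ of $\lie g^*$ satisfies $\gamma_\lambda \xi_j = \lambda^{r+1-q_j}\xi_j$ (so the $\xi_j$ carry weights $r+1-q_j \in \{1, \dots, r\}$), the central $\RR$-direction carries weight $r+1$, and the $X_j$ carry weights $q_j$. All weights lie in $\{1, \dots, r+1\} \subset \NN$, so the generator $\overline D$ defined by these eigenvalues is diagonalizable with positive integer eigenvalues and $\gamma_\lambda = \mathrm{Exp}(\ln(\lambda)\overline D)$. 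Second, and this is the crux, I would check that $\gamma_\lambda$ respects the bracket \eqref{eq:Dynin-Folland:Lie-bracket}, which reduces to showing
\begin{equation*}
	\widehat\vartheta^{\lie g}_{\alpha_\lambda X}(\alpha^\vee_\lambda \eta, \rho_\lambda \kappa) = (\alpha^\vee_\lambda, \rho_\lambda)\bigl(\widehat\vartheta^{\lie g}_X(\eta, \kappa)\bigr)
\end{equation*}
for all $\lambda > 0$, $X \in \lie g$, $\eta \in \lie g^*$, $\kappa \in \RR$, together with the fact that $\alpha_\lambda$ is already an automorphism of $\lie g$. Writing $\widehat\vartheta^{\lie g}_X(\eta,\kappa) = (\ad^*_X \eta, \eta(DX))$, the first component becomes the identity $\ad^*_{\alpha_\lambda X}(\alpha^\vee_\lambda \eta) = \alpha^\vee_\lambda(\ad^*_X \eta)$, which dualizes to $\alpha^\vee_\lambda \bigl([\,\alpha_\lambda X, \alpha^{\vee\,-1}_\lambda(\argument)\,]\bigr) = [X, \argument]$ as maps $\lie g \to \lie g$; this is a weight-counting check using that $[\lie g_i, \lie g_j] \subseteq \lie g_{i+j}$ and that $\alpha_\lambda$ and $\alpha^\vee_\lambda$ act on $\lie g_k$ by $\lambda^k$ and $\lambda^{r+1-k}$ respectively, so on a bracket landing in $\lie g_{i+j}$ the scalars match: $\lambda^i \cdot \lambda^{-(r+1-j)} \cdot \lambda^{r+1-(i+j)} = 1$. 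The second component is the check $(\alpha^\vee_\lambda \eta)(D \alpha_\lambda X) = \lambda^{r+1}\bigl(\eta(DX)\bigr)$, again a weight count: on $\lie g_k$ one has $D = k\cdot\id$, so $D\alpha_\lambda X = \alpha_\lambda DX$, and pairing with $\alpha^\vee_\lambda \eta$ picks up $\lambda^{r+1-k}\cdot\lambda^{k} = \lambda^{r+1}$. Once this is verified for Lie algebra elements, the corresponding statement $\gamma_\lambda(vw) = \gamma_\lambda(v)\gamma_\lambda(w)$ on the group level — equivalently $\widehat\vartheta_{\alpha_\lambda v} \circ (\alpha^\vee_\lambda, \rho_\lambda) = (\alpha^\vee_\lambda, \rho_\lambda) \circ \widehat\vartheta_v$ — follows by exponentiating, using the formula $\widehat\vartheta = \sum_k \frac{1}{k!}(\widehat\vartheta^{\lie g})^k$ established in the preceding lemma.

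\textbf{Main obstacle.} There is no deep obstacle; the entire argument is bookkeeping with weights. The one place demanding a little care is making sure the asymmetric choice $\alpha^\vee_\lambda X_j = \lambda^{r+1-q_j}X_j$ on $\lie g$ (as opposed to $\alpha_\lambda$) is exactly what makes $\widehat\vartheta^{\lie g}$ homogeneous: the pairing $\eta(DX)$ in the $\RR$-slot forces the weights on $\lie g^*$ and on the copy of $\lie g$ inside $\overline{\lie g}$ to be complementary (summing to $r+1$), which is precisely the definition of $\alpha^\vee$ versus $\alpha$. I would therefore organize the proof as: (1) state the eigenbasis and weights of $\overline D$, noting all are in $\NN$; (2) verify $\gamma_\lambda$ preserves \eqref{eq:Dynin-Folland:Lie-bracket} via the two weight computations above; (3) conclude $\overline G$ is graded by \cref{def:dilation} and the remark following it, since it is simply connected nilpotent with an integer family of dilations. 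Since the lemma is attributed to \cite{Moh22}, I would keep the write-up brief, presenting the weight table and the two homogeneity identities and leaving the elementary verification of the Jacobi-type identities to the reader or to the cited source.
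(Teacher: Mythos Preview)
Your proposal is correct and follows exactly the approach the paper indicates: the paper's proof is the single sentence ``One proves this by showing that $\gamma_\lambda$ defines Lie algebra automorphisms of $\overline{\lie g}$,'' and you have simply supplied the weight computations that make this verification explicit. Your two homogeneity identities for the components of $\widehat\vartheta^{\lie g}$ are precisely what is needed, and the paper itself records the group-level consequence $\widehat\vartheta_{\alpha_\lambda(v)} \circ (\alpha^\vee_\lambda \times \rho_\lambda) = (\alpha^\vee_\lambda \times \rho_\lambda) \circ \widehat\vartheta_v$ immediately after the lemma as equation \eqref{eq:compatibility:Dynin-Folland}.
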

One proves this by showing that $\gamma_\lambda$ defines Lie algebra automorphisms of $\lie g$.
It follows that $\gamma_\lambda$ acts by Lie group automorphisms.
Writing this out using \eqref{eq:Dynin-Folland:group-product}, one obtains 
\begin{equation} \label{eq:compatibility:Dynin-Folland}
	\widehat{\vartheta}_{\alpha_\lambda(v)} \circ (\alpha_\lambda^\vee \times \rho_\lambda) = (\alpha_\lambda^\vee \times \rho_\lambda) \circ \widehat\vartheta_v .
\end{equation}

\begin{remark}
	The group $\overline G$ is a central extension of the semidirect product $\lie g^* \rtimes G$.
	Indeed, the Lie bracket in $\lie g^* \rtimes \lie g$ is $[(\xi,X),(\eta,Y)] = (\ad^*_X \eta - \ad^*_Y \xi, [X,Y])$. Identifying $\overline{\lie g}$ with $\lie g^* \times \lie g \times \RR$
	by swapping $\lie g$ and $\RR$, the Lie bracket of $\overline{\lie g}$ is of the form
	\begin{equation}
		[(\xi,X,\tau), (\eta,Y,\kappa)]
		=
		([(\xi,X), (\eta,Y)], \varepsilon((\xi,X), (\eta,Y)))
	\end{equation}
	for the \(2\)-cocycle $\varepsilon((\xi,X), (\eta,Y)) = \eta(D X) - \xi(D Y)$,
	so that $\overline{\lie g}$ is indeed a central extension as claimed.		
\end{remark}
For $f \in \Schwartz(\lie g \times \RR)$, define its Euclidean Fourier transform $\mathcal F(f) \coloneqq \widehat{f} \in\Schwartz(\lie g^* \times \RR)$ by
\begin{equation*}
	\widehat{f}(\xi,\tau)
	= \int_{\RR^{n+1}} \E^{-\I\langle (\xi, \tau), (x,t) \rangle}f(x,t)\D x \D t 
\end{equation*}
where $\langle(\xi,\tau), (x,t) \rangle = \xi(x) + \tau t$.
The following proposition shows that $\overline G$ can be obtained as the Fourier transform of
(a rescaled) Shubin tangent groupoid.

\begin{proposition} \label{res:representation-groupoid-fundamentals}
	Let $G$ be a graded Lie group with dilations $\alpha$.
	\begin{enumerate}
		\item \label{item:repGroupoid:i}
		The formula
		\begin{equation}\label{eq:action-for-representation-groupoid}
			\vartheta_{\exp X}(x,t)= \Bigl(\Ad(\exp (-X))x+t\sum_{k=1}^\infty\frac{1}{k!}\ad(-X)^{k-1}(DX), t\Bigr)
		\end{equation}
		defines a polynomial right action of $G$ on $\lie g\times\RR$, which satisfies
		$\widehat{\vartheta_v^* f} = \widehat\vartheta_{-v}^* \widehat f$ for $f \in \Schwartz(\lie g \times \RR)$ and $v \in G$. 
		
		\item \label{item:repGroupoid:ii} The equality
		$\mathcal F((\alpha_\lambda^\vee \times \rho_\lambda)^* f) = \lambda^{-Q(\alpha^\vee)-r-1} (\alpha_{\lambda^{-1}}^\vee \times \rho_{\lambda^{-1}})^* \mathcal F(f)$
		holds for all $f \in \Schwartz(\lie g \times \RR)$ and $\lambda \neq 0$. 
		
		\item \label{item:repGroupoid:iii} The actions $\alpha$, $\alpha^\vee$ and $\vartheta$ are compatible in the sense that
		$\alpha^\vee_{\lambda}\circ\vartheta^{ t}_{\alpha_\lambda(v)}\circ\alpha^\vee_{\lambda^{-1}} = \vartheta^{\lambda^{r+1} t}_{v}$
		holds for all $\lambda\neq 0$, $t\in\RR$ and $v \in G$.
	\end{enumerate}
\end{proposition}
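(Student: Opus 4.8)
The proposition has three parts, all of which follow by direct, if somewhat lengthy, computation with the exponential-coordinate formulas. I would treat each part in turn and try to reduce the work by exploiting the relation between $\vartheta$ on $\lie g \times \RR$ and the already-established action $\widehat\vartheta$ on $\lie g^* \times \RR$.

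\textbf{Part \refitem{item:repGroupoid:i}.} First I would observe that $\vartheta$ is polynomial: by the nilpotency of $\lie g$, the maps $\Ad(\exp(-X)) = \exp(\ad(-X))$ and $X \mapsto \sum_{k\geq 1} \frac{1}{k!}\ad(-X)^{k-1}(DX)$ are finite sums (the sums terminate because $\ad$ is nilpotent), and in exponential coordinates on $G$ the assignment $v \mapsto X = \log v$ is polynomial (indeed linear). That $\vartheta$ is a \emph{right} action is best checked not by hand but by transporting the already-verified \emph{left} action property of $\widehat\vartheta$: I would show $\widehat{\vartheta_v^* f} = \widehat\vartheta_{-v}^*\, \widehat f$ for all $v\in G$ and $f\in\Schwartz(\lie g\times\RR)$, which is the core identity. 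For this, write $v = \exp X$, apply the Euclidean Fourier transform to $\vartheta_{\exp X}^* f(x,t) = f(\Ad(\exp(-X))x + t c(X),\,t)$ where $c(X) = \sum_{k\geq1}\frac{1}{k!}\ad(-X)^{k-1}(DX)$, perform the linear change of variables $x \mapsto \Ad(\exp X) x'$ in the $x$-integral (whose Jacobian is $\det\Ad(\exp X) = e^{\operatorname{tr}\ad X} = 1$ by nilpotency), and read off a phase $\E^{-\I\langle(\xi,\tau),(\Ad(\exp X)x', tc(X))\rangle}$. The $x'$-part gives $\Ad^*_{\exp X}\xi$ paired with $x'$, and the $t$-dependent term contributes $\tau + \langle \xi, \Ad(\exp X) c(X)\rangle = \tau + \sum_{k\geq1}\frac1{k!}(\ad^*_X)^{k-1}\xi(DX)$ after using $\Ad(\exp X)\ad(-X)^{k-1} = $ appropriate rearrangement; this is exactly the $\RR$-component of $\widehat\vartheta_{\exp X}(\xi,\tau)$. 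Hence $\widehat{\vartheta_{\exp X}^* f} = \widehat\vartheta_{\exp X}^* \widehat f = \widehat\vartheta_{-(-\exp X)}^*\widehat f$; matching signs carefully (the right-action convention forces the $-X$), we get $\widehat{\vartheta_v^* f} = \widehat\vartheta_{-v}^*\widehat f$. Since $\widehat\vartheta$ is a (genuine left) action and Fourier transform is bijective on $\Schwartz$, the right-action property of $\vartheta$ follows: $\vartheta_v^*\circ\vartheta_w^* = \vartheta_{wv}^*$ corresponds under $\mathcal F$ to $\widehat\vartheta_{-v}^*\widehat\vartheta_{-w}^* = \widehat\vartheta_{-(v w)}^* = \widehat\vartheta_{(-w)(-v)}^*$, which holds because $-(vw) = \log(vw)^{-1}$ and $\widehat\vartheta$ is a left action with $\widehat\vartheta_{v'}\widehat\vartheta_{w'} = \widehat\vartheta_{v'w'}$.

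\textbf{Parts \refitem{item:repGroupoid:ii} and \refitem{item:repGroupoid:iii}.} Part \refitem{item:repGroupoid:ii} is the standard scaling behaviour of the Fourier transform: for a linear bijection $L$ of $\RR^{n+1}$ one has $\mathcal F(L^* f) = |\det L|^{-1} (L^{-\top})^* \mathcal F(f)$; here $L = \alpha_\lambda^\vee \times \rho_\lambda$, whose transpose-inverse is $\alpha_{\lambda^{-1}}^\vee \times \rho_{\lambda^{-1}}$ (the dual dilation is defined precisely so that $(\alpha_\lambda^\vee)^\top = \alpha_\lambda^\vee$ on $\lie g^*$), and $|\det L| = \lambda^{Q(\alpha^\vee)}\cdot\lambda^{r+1} = \lambda^{Q(\alpha^\vee)+r+1}$, giving the claimed constant. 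One has to be slightly careful that this identity, stated for $\lambda>0$, persists for $\lambda<0$ using \cref{rem:dilationsExtend}; since both sides are polynomial in the finitely many monomials of $\lambda$ occurring, the identity extends. For part \refitem{item:repGroupoid:iii}, I would again transport the already-known compatibility \eqref{eq:compatibility:Dynin-Folland}, namely $\widehat\vartheta_{\alpha_\lambda(v)}\circ(\alpha_\lambda^\vee\times\rho_\lambda) = (\alpha_\lambda^\vee\times\rho_\lambda)\circ\widehat\vartheta_v$, across the Fourier transform using part \refitem{item:repGroupoid:i} and part \refitem{item:repGroupoid:ii}: pulling back a Schwartz function and taking $\mathcal F$, the left side $\alpha^\vee_\lambda\circ\vartheta^t_{\alpha_\lambda(v)}\circ\alpha^\vee_{\lambda^{-1}}$ corresponds to $(\alpha^\vee_{\lambda^{-1}})\!\phantom{}^\top$-type operations composed with $\widehat\vartheta_{-\alpha_\lambda(v)}$, and \eqref{eq:compatibility:Dynin-Folland} (with $v$ replaced by $-v$, using that $\alpha_\lambda$ is a group homomorphism so $\alpha_\lambda(-v) = -\alpha_\lambda(v)$) rearranges this to the Fourier transform of $\vartheta^{\lambda^{r+1}t}_v$. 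Alternatively and perhaps more cleanly, one can verify \refitem{item:repGroupoid:iii} directly on $\lie g\times\RR$: a short computation with \eqref{eq:action-for-representation-groupoid} using that $\alpha^\vee_\lambda\circ\Ad(\exp(-X))\circ\alpha^\vee_{\lambda^{-1}} = \Ad(\exp(-\alpha_\lambda^{?}X))$ — one needs $\alpha^\vee_\lambda\,\ad(X)\,\alpha^\vee_{\lambda^{-1}} = \lambda^{-r-1}\ad(\alpha_\lambda X)$ on $\lie g$ (which holds because $D$ has weight structure compatible with $\alpha^\vee_\lambda = \lambda^{r+1}\alpha_\lambda^{-1}$ as operators, i.e.\ $\alpha^\vee_\lambda = \lambda^{r+1}(\alpha_\lambda)^{-1}$) and that $\alpha^\vee_\lambda(DX) = \lambda\,\alpha_\lambda^{?}\cdots$. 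The cleanest formulation: note $\alpha_\lambda^\vee = \lambda^{r+1}(\alpha_\lambda)^{-1}$ as linear maps on $\lie g$ (since $\alpha_\lambda^\vee X_j = \lambda^{r+1-q_j}X_j = \lambda^{r+1}(\alpha_\lambda)^{-1}X_j$), then $\alpha_\lambda^\vee \ad(X)\alpha_{\lambda^{-1}}^\vee = (\alpha_\lambda)^{-1}\ad(X)\alpha_\lambda = \ad((\alpha_\lambda)^{-1}X)$ and $\alpha_\lambda^\vee(DX) = \lambda^{r+1}(\alpha_\lambda)^{-1}(DX) = \lambda^{r+1}D((\alpha_\lambda)^{-1}X)$ using $[D,\alpha_\lambda]=0$; plugging $w = \alpha_\lambda(v)$, $X = \log w = \alpha_\lambda(\log v)$ into \eqref{eq:action-for-representation-groupoid} and conjugating, the $\lambda^{r+1}$ lands exactly on $t$, yielding $\vartheta^{\lambda^{r+1}t}_v$.

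\textbf{Main obstacle.} The genuinely fiddly point is part \refitem{item:repGroupoid:i}, specifically getting all the signs and the left-versus-right bookkeeping correct in the identity $\widehat{\vartheta_v^* f} = \widehat\vartheta_{-v}^*\widehat f$, and then correctly deducing the right-action property from it — one must check that $-(vw) = (-w)(-v)$ in the group $G$ written additively in exponential coordinates is \emph{not} generally true (the group is non-abelian), so the deduction really does go through the identity $\widehat\vartheta_{v'}\widehat\vartheta_{w'} = \widehat\vartheta_{v'w'}$ rather than through any naive sign cancellation; concretely, $\vartheta_v^*\vartheta_w^*f$ has Fourier transform $\widehat\vartheta_{-v}^*\widehat\vartheta_{-w}^*\widehat f = \widehat\vartheta_{(-v)(-w)}^*\widehat f$, and one must verify $(-v)(-w) = -(wv)$ in $G$, i.e.\ $v^{-1}w^{-1} = (wv)^{-1}$, which \emph{is} true, so $\vartheta_v^*\vartheta_w^* f$ has the same Fourier transform as $\vartheta_{wv}^*f$, hence equals it. Once this is pinned down, the rest is routine linear algebra with the exponential/Jacobian being trivial by nilpotency.
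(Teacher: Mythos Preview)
Your overall strategy matches the paper's, and parts \refitem{item:repGroupoid:ii} and \refitem{item:repGroupoid:iii} are handled essentially the same way (the paper takes the Fourier-transport route for \refitem{item:repGroupoid:iii}; your direct alternative via the identity $\alpha_\lambda^\vee = \lambda^{r+1}\alpha_\lambda^{-1}$ on $\lie g$ is a valid and slightly cleaner shortcut).

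The one point worth flagging is the organization of part \refitem{item:repGroupoid:i}. The paper first proves the pairing identity
\[
\langle(\xi,\tau),\vartheta_{\exp X}(x,t)\rangle = \langle\widehat\vartheta_{\exp X}(\xi,\tau),(x,t)\rangle
\]
by a direct one-line computation (using $\xi\circ\Ad_{\exp(-X)} = \Ad^*_{\exp X}\xi$ and $\xi\circ\ad(-X)^{k-1} = (\ad^*_X)^{k-1}\xi$). From this, the right-action property is immediate (the transpose of the left action $\widehat\vartheta$ is a right action), and the Fourier identity follows by the substitution $(x,t)\mapsto\vartheta_{-v}(x,t)$, whose Jacobian is $1$ by \cref{res:polynomial_action:properties}. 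Your route---prove the Fourier identity first via a change of variables in the integral, then deduce the action property---is logically equivalent but lands you in exactly the sign bookkeeping you flag as the main obstacle, and indeed your sketch slips: after the substitution the $t$-phase is $\tau - \xi(\Ad(\exp X)c(X))$, which matches the $\RR$-component of $\widehat\vartheta_{\exp(-X)} = \widehat\vartheta_{-v}$, not of $\widehat\vartheta_{\exp X}$ as you write. The paper's ordering avoids this by separating the linear-algebra duality from the Fourier step.
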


\begin{proof}
	We use again that $G$ is nilpotent, so that $\exp \colon \lie g \to G$ is a diffeomorphism and
	all sums in \eqref{eq:action-for-representation-groupoid} are finite.
	It follows that the action $\vartheta^t$ of $G$ on $\lie g$ is polynomial.
	One computes for \(X\in\lie{g}\), \((\xi,\tau)\in\lie{g}^*\times\RR\) and \((x,t)\in \lie{g}\times\RR\) 
	\begin{multline*}
		\langle (\xi,\tau),\vartheta_{\exp X}(x,t)\rangle 
		= \xi\Bigl(\Ad(\exp(-X))x + t\sum_{k=1}^\infty \frac{1}{k!}\ad(-X)^{k-1}(DX)\Bigr)+\tau t\\
		= \Ad^*_{\exp(X)}\xi(x)+\left(\tau +\sum_{k=1}^\infty \frac{1}{k!} (\ad^*_X)^{k-1}\xi(DX)\right) t 
		= \langle \widehat\vartheta_{\exp X}(\xi,\tau),(x,t)\rangle .
	\end{multline*}
	Since $\widehat \vartheta$ is a left action, it follows that $\vartheta$ is a right action.
	Therefore $\vartheta_v$ is a polynomial diffeomorphism whose Jacobian determinant is constantly $1$ by \cref{res:polynomial_action:properties}~\refitem{item:noJ}.
	Hence
	\begin{align*}
		\widehat{\vartheta^*_v f}(\xi,\tau)
		&= \int_{\RR^{n+1}} \E^{-\I\langle(\xi,\tau),(x,t)\rangle}f(\vartheta_v(x,t))\D x \D t
		= \int_{\RR^{n+1}} \E^{-\I\langle(\xi,\tau),\vartheta_{-v}(x,t)\rangle}f(x,t)\D x \D t
		\\
		&= \int_{\RR^{n+1}} \E^{-\I\langle\widehat\vartheta_{-v}(\xi,\tau),(x,t)\rangle}f(x,t)\D x \D t 
		= (\widehat{\vartheta}_{-v}^*\widehat{f})(\xi,\tau) .
	\end{align*}
	This proves \refitem{item:repGroupoid:i}. Similarly, we compute
	\begin{align*}
		\mathcal F((\alpha_\lambda^\vee \times \rho_\lambda)^* f)(\xi,\tau)
		&= \int_{\RR^{n+1}} \E^{-\I\langle(\xi,\tau),(x,t)\rangle}f(\alpha_\lambda^\vee x,\rho_\lambda t)\D x \D t\\
		&= \lambda^{-Q(\alpha^\vee)-r-1} \int_{\RR^{n+1}} \E^{-\I\langle(\xi,\tau),(\alpha_{\lambda^{-1}}^\vee x,\rho_{\lambda^{-1}} t)\rangle}f(x,t)\D x \D t	\\
		&= \lambda^{-Q(\alpha^\vee)-r-1} (\alpha_{\lambda^{-1}}^\vee \times \rho_{\lambda^{-1}})^* \mathcal F(f).
	\end{align*}
	The statement in \refitem{item:repGroupoid:iii} is equivalent to $(\alpha^\vee_\lambda \times \rho_\lambda) \circ \vartheta_{\alpha_\lambda(v)} \circ (\alpha^\vee_{\lambda^{-1}} \times \rho_{\lambda^{-1}}) = \vartheta_v$.
	It suffices to check that both sides induce the same pull-back on Schwartz functions. Under Fourier transform one has
	\begin{equation*}
		\mathcal F(((\alpha^\vee_\lambda \times \rho_\lambda) \circ \vartheta_{\alpha_\lambda(v)} \circ (\alpha^\vee_{\lambda^{-1}} \times \rho_{\lambda^{-1}}))^* f)
		=
		((\alpha^\vee_{\lambda^{-1}} \times \rho_{\lambda^{-1}}) \circ \widehat\vartheta_{\alpha_\lambda(-v)} \circ (\alpha^\vee_\lambda \times \rho_\lambda))^*  \mathcal F f
		=
		\widehat\vartheta_{-v} ^*  \mathcal F f
		=
		\mathcal F(\widehat\vartheta_{v} ^*  f)
	\end{equation*}
	where we used the results of part \refitem{item:repGroupoid:i} and \refitem{item:repGroupoid:ii} and \eqref{eq:compatibility:Dynin-Folland}.
\end{proof}

\begin{example}\label{ex:representation_groupoid_heisenberg}
	For \(G=\RR^n\) with trivial grading (all elements of $\lie g$ are of degree $1$) one computes \(\vartheta_v(x,t)=(x+tv,t)\). For the \((2n+1)\)-dimensional Heisenberg group \(G=H_n\) as in \cref{ex:heisenberg-algebra} one gets 
	\begin{equation*}
		\vartheta_{v}(x,t)=\left(x_1+tv_1,\ldots, x_{2n}+tv_{2n},x_{2n+1}+\sum_{j=1}^n (v_{n+j}x_j- v_j x_{n+j}) +2tv_{2n+1},t\right).
	\end{equation*}	
\end{example}
By \cref{res:rescaling}, the rescaled action $\theta^t_{v}(x) = (\vartheta_{v}^{t^{r+1}}(x), t)$ of $G$ on $\lie g$
together with the dilations $\alpha$ and $\beta = \alpha^\vee$ defines a Shubin groupoid.

\begin{definition}[Representation groupoid]
	Let $G$ be a graded Lie group with dilations $\alpha$ and \(\X=\underline{\lie{g}}\) the underlying vector space of $\lie g$.
	The Shubin groupoid of the rescaled action $\theta$ and the dilations $\alpha$, $\beta=\alpha^\vee$
	is called \emph{representation groupoid} \(\TGrep\) of $G$.
\end{definition}

\begin{proposition}
	Let $G$ be a graded Lie group with dilations $\alpha$. Then the representation groupoid $\TGrep$ has properties \ref{assumption:free} and \ref{assumption:homomorphisms}.
\end{proposition}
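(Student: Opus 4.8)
The plan is to check the two properties directly from the formula for $\vartheta$ in \eqref{eq:action-for-representation-groupoid}: property \ref{assumption:homomorphisms} is essentially immediate, while property \ref{assumption:free} needs a short triangularity argument.

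For \ref{assumption:homomorphisms} I would argue as follows. By construction $\theta^t_v = \vartheta^{t^{r+1}}_v$, so $\theta^0 = \vartheta^0$, and putting $t = 0$ in \eqref{eq:action-for-representation-groupoid} gives $\theta^0_{\exp X}(x) = \Ad(\exp(-X))x$, i.e.\ $\theta^0_v = \Ad_{v^{-1}}$ for $v \in G$ (consistent with $\vartheta$ being a right action, since $\Ad_{(vw)^{-1}} = \Ad_{w^{-1}}\Ad_{v^{-1}}$). Each $\Ad_{v^{-1}}$ is a linear automorphism of $\X = \underline{\lie g}$, so \ref{assumption:homomorphisms} holds; in fact $\theta^0$ is the (inverse) adjoint action announced in the introduction.

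For \ref{assumption:free} I would first recall that $\theta^1 = \vartheta^1$ is polynomial by \cref{res:representation-groupoid-fundamentals} \refitem{item:repGroupoid:i}, so the shear map $\Theta^1 \colon \X \times G \to \X \times \X$, $(x,v) \mapsto (x, \theta^1_v(x))$, is polynomial; it remains to exhibit a polynomial inverse (over $\RR$ this does not come for free from $\Theta^1$ being a bijective polynomial map). Fix a standard basis $X_1, \dots, X_n$ of $\lie g$ with weights $q_1 \le \dots \le q_n$, and for $v \in G$ write $X = v_1 X_1 + \dots + v_n X_n \in \lie g$ for its standard coordinates, so that $DX = q_1 v_1 X_1 + \dots + q_n v_n X_n$. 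Since $\ad X_m$ raises the grading degree by $q_m \ge 1$, every bracket of positive length occurring in $\Ad(\exp(-X)) = \sum_{k\ge 0}\tfrac1{k!}\ad(-X)^k$ and in $\sum_{k\ge 1}\tfrac1{k!}\ad(-X)^{k-1}(DX)$ strictly increases the degree. Feeding this into \eqref{eq:action-for-representation-groupoid} one obtains, for each $j$,
\[
	(\theta^1_v(x))_j = x_j + q_j v_j + P_j\bigl((x_i)_{q_i < q_j},\,(v_i)_{q_i < q_j}\bigr)
\]
for a polynomial $P_j$ (with $P_j \equiv 0$ when $q_j = q_1$). Establishing this triangular form---in particular that the coefficient of $v_j$ is precisely $q_j$ (from the identity term of $\Ad(\exp(-X))$ applied to $x_j X_j$ and from the $k=1$ term $DX$), while all other $x$- and $v$-dependence involves only strictly-lower-weight components---is the one slightly delicate point, and I expect it to be the main (though routine) obstacle.

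Granting the displayed formula, the inverse is built recursively in the weight: given $(x,y) \in \X \times \X$, solve $y_j = x_j + q_j v_j + P_j(\dots)$ for $v_j$ by first treating the smallest weight ($v_j = (y_j - x_j)/q_j$) and then ascending, at each stage substituting the already-computed lower-weight $v_i$, which by induction are polynomials in $(x,y)$. This produces a polynomial map $\Omega \colon \X \times \X \to \X \times G$, $\Omega(x,y) = (x, v(x,y))$, with $\Theta^1 \circ \Omega = \id$; uniqueness of the solution $v$ gives $\Omega \circ \Theta^1 = \id$ as well. Hence $\Theta^1$ is a polynomial diffeomorphism in the sense of \cref{def:polynomialDiffeo}, i.e.\ \ref{assumption:free} holds.
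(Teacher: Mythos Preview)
Your proposal is correct and follows essentially the same approach as the paper's own proof. Both arguments establish \ref{assumption:homomorphisms} by observing that $\theta^0=\vartheta^0$ is the inverse adjoint action and hence linear, and both verify \ref{assumption:free} by extracting from \eqref{eq:action-for-representation-groupoid} a triangular formula $(\theta^1_v(x))_j = x_j + q_j v_j + (\text{terms in strictly lower-weight components})$, then inverting recursively; your organization by weight rather than by index is equivalent since brackets always strictly increase the grading degree.
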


\begin{proof}
	Note that the rescaling from \cref{res:rescaling} does not change the actions at \(t=0\) and \(t=1\), so we can consider \(\vartheta^0\) and \(\vartheta^1\).
	Since \(\vartheta^0\) is up to an inverse just the adjoint representation
	it is clear that \ref{assumption:homomorphisms} holds and it remains to show that
	\(\vartheta^1\) satisfies Property \ref{assumption:free}.
	Let \(x=(x_1,\ldots,x_n)\in \lie g\) and \(y=(y_1,\ldots,y_n)\in \lie g\). We show that there is a unique \(v=(v_1,\ldots,v_n)\in G\) such that \(\vartheta^1_v(x)=y\), which depends polynomially on \(x,y\). This is done iteratively for every component, making use of the triangular group law. We show for \(j=1,\ldots,n\) that there are polynomials \(p_j\) and only depending on \(x, v_1,\ldots, v_{j-1}\) such that
	\begin{equation}\label{eq:diagonal_action}
		(\vartheta_v(x))_j=p_j(x,v_1,\ldots,v_{j-1})+ q_j v_j.
	\end{equation}
	This implies that \(v\) is uniquely determined by setting iteratively \(v_j=\frac1{q_j}(y_j-p_j(x,v_1,\ldots,v_{j-1}))\) for \(j=1,\ldots,n\).
	To show \eqref{eq:diagonal_action}, note first that \((\Ad(-v)x)_j\) only depends on \(x,v_1,\ldots,v_{j-1}\). This holds as \(\Ad_{-v}=v^{-1}xv\) and by the polynomial group law \cref{res:triangular} there is a polynomial \(\widetilde p_j\) such that
	\begin{equation*}(v^{-1}xv)_j=x_j+\widetilde p_j(v_1,\ldots,v_{j-1},x_1,\ldots,x_{j-1}).\end{equation*} 	
	Next consider the terms \((\frac{1}{k!}\ad(-X)^{k-1}(DX))_j\) for \(X=v_1X_1+\ldots+v_nX_n\) and \(k\in\NN\). For \(k=1\) this is \(q_jv_j\). For \(k>1\) we note that \(\ad(-X)^{k-1}(DX)_j\) can only depend on \(v_1,\ldots,v_{j-1}\) as \(\lie{g}\) is graded. This shows that \eqref{eq:diagonal_action} holds. 
\end{proof}
For the representation groupoid, the action \(\theta^0=\vartheta^0\) is not trivial. Hence, the range and source map of the groupoid \(\underline{\lie{g}}\rtimes^0 G\) at \(t=0\) do not coincide, so contrary to the double dilation groupoid it cannot be viewed as a bundle of groups. 

By construction, the \(C^*\)-algebra of the groupoid \((\underline{\lie{g}}\times\RR)\rtimes_\vartheta G\) is isomorphic to the group \(C^*\)-algebra \(C^*(\overline G)\). However, the smooth structures of the groupoid \((\X\times\RR)\rtimes_\vartheta G\) and the groupoid of the rescaled action \((\X\times\RR)\rtimes_\theta G\) differ. When \(r\) is even, they still have isomorphic groupoid \(C^*\)-algebras, as the map \(t\mapsto t^{r+1}\) is a homeomorphism in this case. When \(r\) is odd, \((\X\times\RR)\rtimes_\theta G\) only captures the behaviour of \((\X\times\RR)\rtimes_\vartheta G\) for \(t\geq 0\).
\subsection{Examples without assumption (P) or (R)} \label{sec:shubin:weird-examples}
Finally, we give examples for Shubin actions which do not satisfy assumption \ref{assumption:free} or \ref{assumption:homomorphisms} but could still be interesting to study.
\begin{example}[Group bundle]\label{ex:bundle-action}
	Let $G$ be a Lie group with two gradings defined by dilations \(\alpha,\beta_2\) 
	and $\RR^d$ be a graded vector space with dilations \(\beta_1\).
	Let \(\X=\RR^d\times G\) be equipped with the \(\Rp\)-action \(\beta=(\beta_1,\beta_2)\) and \(\theta^1_v(x,w)=(x,w v)\) for \(x\in\RR^d\) and \(v,w\in G\). This defines a Shubin action with property~\ref{assumption:homomorphisms}. 
	The action \(\theta^1\) is polynomially free, but not transitive.
\end{example}
A closed, simply connected subgroup \(H\subseteq G\) with Lie algebra \(\lie{h}\) is called a \emph{graded subgroup} if  \(\lie{h}_i=\lie{g}_i\cap \lie{h}\) for \(i \in \NN \) defines a grading on \(\lie{h}\).
In particular, one can choose a standard basis \(Y_{1},\ldots,Y_n\) of \(\lie{h}\). It can be extended to a standard basis of \(\lie{g}\), denote the additional basis elements by \(X_1,\ldots,X_d\). We identify in the following \(H\) with \(\RR^n\) and \(G\) with \(\RR^{n+d}\) using these standard bases and exponential coordinates as before. Denote these coordinates by \(\phi_H\) and \(\phi_G\), respectively. Moreover, \(\RR^{d}\) is identified with the homogeneous space \(H\backslash G\) via \(\phi_{H\backslash G}\colon (x_1,\ldots,x_d)\mapsto [\exp(x_dX_d)\cdots\exp(x_1X_1)]\), see \cite{CG90}*{Thm.~1.2.12}. 
\begin{example}[Normal subgroup]
	Let \(H\subseteq G\) be a normal graded subgroup. Then set \(\X=H\), where we use the basis \(Y_1, \dots, Y_n\) above to identify \(H\) with \(\RR^n\), and set \(\beta_\lambda =\alpha_\lambda|_H\) for \(\lambda > 0\). Now \(\theta^1_v(x)=v^{-1} x v\) for $v \in G$
	defines a Shubin action of $G$ on $\X$ with property \ref{assumption:homomorphisms} which is in general neither free nor transitive.
\end{example}

\begin{example}[Homogeneous space]\label{ex:homogeneous-space}
	Let \(H\subseteq G\) be a graded subgroup, let \(\X=H\backslash G\) and identify it with \(\RR^d\) using the basis \(X_1,\ldots,X_d\) as above.
	Set \(\beta_\lambda[x]=[\alpha_\lambda(x)]\) for \(x\in G\) and \(\lambda > 0\). We claim that \(\theta^1_v([x])=[x\cdot v]\) is a polynomial action of $G$ on $H \backslash G$	which is polynomially free. To see this, we define new coordinates $\phi \colon \RR^n \times \RR^d \to G$ on $G$ by
	\begin{equation*}
		\phi(v'_1, \dots v'_n, v_1, \dots, v_d) = \exp(v'_nY_n)\cdots\exp(v'_1Y_1)\exp(v_dX_d)\cdots\exp(v_1X_1) .
	\end{equation*}
	As \(Y_n,\ldots, Y_1, X_d,\ldots, X_1\) is a weak Malcev basis, there is a polynomial diffeomorphism \(p\colon \RR^{n}\times\RR^d\to\RR^{n}\times\RR^d\) such that \(\phi_G=\phi\circ p\). Hence, the multiplication of $G$ is polynomial also in the new coordinates~$\phi$.
	In these coordinates, the projection $G \to H \backslash G$ is simply the projection $\pi_2 \colon \RR^n \times \RR^d \to \RR^d$ to the second factor, hence $\theta^1$ is polynomial.
	Moreover, the map $\X \times G \to \X \times G$, $(x,(v',v)) \mapsto (x, (0,x)(v',v))$ is a polynomial diffeomorphism
	and $\Theta^1(x, (v',v)) = (x, \pi_2 ((0,x) (v',v)))$, showing that $\theta^1$ is polynomially transitive.
	 However, \(\theta^1\) is not free unless \(H\) is trivial.
\end{example}
The following example is a special case of \cref{ex:homogeneous-space}, using the standard basis \(X_1,X_2,X_3\) of the \(3\)-dimensional Heisenberg Lie algebra and the graded subalgebra generated by \(X_2\). 
\begin{example}\label{ex:grushin-action}
	Consider the Heisenberg group \(H_1=\{(v_1,v_2,v_3)\colon v_i\in\RR\}\) from \cref{ex:heisenberg-algebra} with dilations \(\alpha_\lambda(v_1,v_2,v_3)=(\lambda^kv_1,\lambda^lv_2,\lambda^{k+l}v_3)\) with \(k,l\in\NN\). Let \(\X=\RR^{2}=\{(x,y)\colon x,y\in\RR\}\) be equipped with dilations \(\beta_\lambda(x,y)=(\lambda^p x,\lambda^q y)\).  Consider the following right polynomial action
	\begin{equation*}
		\theta^1_{(v_1,v_2,v_3)}(x,y)=\left(x+v_1,y+v_3+\tfrac12 xv_2\right).
	\end{equation*}
	One computes
	\begin{align*}
		\beta_t\circ\theta^1_{\alpha_t(v_1,v_2,v_3)}\circ\beta_{t^{-1}}(x,y)=\left(x+t^{k+p}v_1,y+t^{k+l+q}v_3+\tfrac{1}{2}t^{l+q-p}xv_2\right),
	\end{align*} 
	Hence, \(\theta^1\) is a Shubin action if and only if \(l+q\geq p\).
	In this case, it satisfies property \ref{assumption:homomorphisms}. Furthermore, one has \(\theta^0_{v}=\id\) for all \(v\in H_1\) whenever \(l+q>p\). The action \(\theta^1\) is polynomially transitive, but not free.
	In \cref{ex:grushin-fundamental-vectorfields} the relation of this action with Grushin and Kolmogorov operators is discussed.
\end{example}
The following example shows that property \ref{assumption:homomorphisms} is not automatically satisfied.
\begin{example}
	Let \(K_3\) be the step \(3\) graded Lie group defined in \cite{CG90}*{Example~1.3.10}, also called the \emph{Engel group}.
	As a space \(K_3\) is \(\RR^4\) with the following group law containing quadratic terms
	\begin{multline*}
		(v_1,v_2,v_3,v_4)\cdot(w_1,w_2,w_3,w_4)=(v_1+w_1,v_2+w_2,v_3+w_3+\tfrac{1}{2}(v_1w_2-w_1v_2),\\ v_4+w_4+\tfrac{1}{2}(v_1w_3-w_1v_3)+\tfrac{1}{12}(v_1^2w_2-v_1w_1(v_2+w_2)+w_1^2v_2)) .
	\end{multline*}
	Its standard dilations are given by \(\alpha_\lambda(v_1,v_2,v_3,v_4)=(\lambda v_1, \lambda v_2, \lambda^2v_3,\lambda^3v_4)\) for \(\lambda>0\).
	Let \(\X=\Space{K_3}\) and let \(G=K_3\) act on \(\X\) by right multiplication \(\theta_v(x)=x\cdot v\). Contrary to the double dilation groupoid, we let \(\beta_\lambda(x_1,x_2,x_3,x_4)=(\lambda x_1,\lambda x_2,\lambda x_3,\lambda x_4)\) for \(\lambda >0\) which is not a group dilation on \(K_3\). Then one computes
	\begin{align*}
		\theta^0_v(x)=\lim_{t\to 0}\beta_t(\beta_{t^{-1}}(x)\cdot\alpha_t(v))=\left(x_1,x_2,x_3,x_4+\tfrac{1}{12}x_1(x_1v_2-x_2v_1)\right),
	\end{align*} 
	so the data defines a Shubin action. However, \(\theta^0_v\) is not linear for all \(v\in G\) so that property \ref{assumption:homomorphisms} is not satisfied.
\end{example}

\section{Shubin-type pseudodifferential calculus}\label{sec:calculus}

In the following, we always assume that \(G\) is a graded Lie group with dilations \(\alpha\), \(\X\cong \RR^d\) a graded vector space with dilations \(\beta\) and \(\theta^1\) a Shubin action of $G$ on $\X$ as introduced in \cref{assumption}. We write \(\TG\) for the corresponding Shubin tangent groupoid from \cref{def:shubin_groupoid}.
We denote the weights of $\alpha$ by $q_1, \dots, q_n$ and the weights of $\beta$ by $r_1, \dots, r_d$.

\subsection{Shubin-type differential calculus}
To motivate our construction of the pseudodifferential calculus, we define an algebra of certain differential operators with polynomial coefficients on \(\X\) which will be contained in our pseudodifferential calculus. Namely, we set for \(m\in\NN_0\) 
\[\Algebra{A}_m=\set[\bigg]{\sum\nolimits_{[a]_\alpha+[b]_\beta\leq m}c_{a,b}x^b (\widehat X^1)^a }{ c_{a,b}\in\CC}\subseteq \Algebra{A}_{m+1}\subseteq \ldots\]
where $\widehat X^t_j$ denotes the fundamental vector field of $X_j$ with respect to $\theta^t$.
Here, $a \in \NN_0^n$, $b \in \NN_0^d$ and we use the multiindex notation from 
\cref{def:homogeneousOrder} and similarly $\smash{(\widehat X^1)^a} = (\widehat X_1^1)^{a_1} \dots (\widehat X_n^1)^{a_n}$. 
By \cref{res:polynomial_action:properties} \refitem{item:polynomial_action:i}
$\Algebra A_m$ consists of differential operators with polynomial coefficients.
Moreover, \(\Algebra A =\bigcup_{m\in\NN_0}\Algebra A_m\) is a filtered algebra, which is an easy consequence of the following result.

Recall from \cref{def:homogeneousOrder} that we say that a polynomial $p$ on $\X$ is  \emph{homogeneous of degree} $k$ with respect to $\beta$ if $p \circ \beta_\lambda = \lambda^k p$.
We shall say that $p$ has $\beta$-\emph{degree} $\leq k$ if $p(x) = \sum_{[b]_\beta \leq k} c_{b} x^b$.

\begin{lemma} \label{res:fundamentalVFsOnPolynomials}
	Using standard coordinates on $G$ and $\X$ (so that $X_i$ has degree $q_i$ and $x_j$ has degree $r_j$),
	$\widehat X_i^0 x_j$ is a homogeneous polynomial of degree $q_i + r_j$ with respect to $\beta$,
	and $\widehat X_i^1 x_j$ is a polynomial of $\beta$-degree $\leq q_i + r_j$ of the form
	$\widehat X_i^1 x_j = \widehat X_i^0 x_j + (\text{terms of $\beta$-degree $\leq q_i + r_j -1$})$. 
\end{lemma}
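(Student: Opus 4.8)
## Proof plan

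The plan is to compute the fundamental vector fields $\widehat X_i^t$ explicitly enough to read off how they act on the coordinate functions $x_j$, and then track degrees under the dilations $\beta$. Recall that for the action $\theta^t$ we have $\theta^t_v = \beta_t \circ \theta^1_{\alpha_t(v)} \circ \beta_{t^{-1}}$ for $t \neq 0$, which extends smoothly to $t = 0$ since $\theta^1$ is a Shubin action (as discussed after \cref{assumption}, $\theta$ is polynomial in $x, t, v$). Writing $\theta^t(x,v)_j = \sum_{k \geq 0} p_{j,k}(x,v)\, t^k$ for polynomials $p_{j,k}$, the compatibility relation \eqref{eq:compatibility} — or rather the homogeneity it encodes — forces each $p_{j,k}$ to have a definite bidegree: if $x_\ell$ has $\beta$-weight $r_\ell$ and $v_\ell$ has $\alpha$-weight $q_\ell$, then $p_{j,k}(\beta_\mu x, \alpha_\mu v) = \mu^{r_j - k} p_{j,k}(x,v)$. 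This is the key structural input and it is where property \refitem{assumption:compatible} of \cref{assumption} really gets used.

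First I would make this homogeneity precise. Conjugating the relation $\beta_\mu \circ \theta^t_{\alpha_\mu(v)} \circ \beta_{\mu^{-1}} = \theta^{\mu t}_v$ (valid for $t \neq 0$, hence for all $t$ by continuity) and looking at the $j$-th component gives $\mu^{r_j}\, \theta^{\mu t}(\beta_{\mu^{-1}} x, v)_j = \theta^t(x, \alpha_\mu v)_j$; substituting the power series in $t$ and comparing coefficients of $t^k$ yields $p_{j,k}(\beta_{\mu^{-1}} x, v)\, \mu^{r_j}\mu^k = p_{j,k}(x, \alpha_\mu v)$, i.e. $p_{j,k}$ is homogeneous of bidegree $(r_j - k, \text{complementary})$ in the sense that under the \emph{joint} dilation $(\beta_\mu, \alpha_\mu)$ it scales by $\mu^{r_j - k}$. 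Since all weights are positive integers, $p_{j,0}$ is $\beta$-homogeneous of degree $r_j$ in $x$ alone (no $v$-dependence can survive because $v$ has positive weight and the total bidegree would exceed $r_j$ — actually $p_{j,0}(x,v)$ can depend on $v$ only through terms whose $\alpha$-length is nonpositive, i.e. not at all, so $p_{j,0}(x,v) = x_j$ after checking $\theta^0$ is an action fixing... no: $p_{j,0}(x,v) = \theta^0(x,v)_j$, which need not be $x_j$, but is $\beta$-homogeneous of degree $r_j$ jointly, and since $v$ has positive weight, actually $p_{j,0}$ depends on $v$ too in general — I must be careful here and just record the joint-bidegree statement).

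Next I would differentiate. By definition $\widehat X_i^t(x)_j = \frac{\D}{\D s}\big|_{s=0} \theta^t(x, \exp(sX_i))_j$. In standard coordinates $\exp(sX_i)$ is the point with $i$-th coordinate $s$ and all others $0$, so $\widehat X_i^t(x)_j = \partial_{v_i} \theta^t(x,v)_j \big|_{v=0} = \sum_k \big(\partial_{v_i} p_{j,k}\big)(x,0)\, t^k$. Thus $\widehat X_i^1 x_j = \sum_k \big(\partial_{v_i} p_{j,k}\big)(x,0)$ and $\widehat X_i^0 x_j = \big(\partial_{v_i} p_{j,0}\big)(x,0)$. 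From the joint homogeneity of $p_{j,k}$: under $\beta_\mu$ the polynomial $x \mapsto (\partial_{v_i} p_{j,k})(x,0)$ scales by $\mu^{r_j - k}$ times $\mu^{-q_i}$ from the derivative... let me recompute: $p_{j,k}$ jointly scales by $\mu^{r_j-k}$, so $\partial_{v_i} p_{j,k}$ jointly scales by $\mu^{r_j - k - q_i}$, and setting $v = 0$ leaves a polynomial in $x$ that is $\beta$-homogeneous of degree $r_j - k - q_i$... that has the wrong sign. The resolution is that the relevant quantity for the vector-field \emph{coefficient} carries an extra $q_i$: I should instead track the degree of $\widehat X_i^t$ as a homogeneous differential operator. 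For $X_i \in \lie g_{q_i}$, the fundamental vector field $\widehat X_i^0$ is homogeneous of degree $q_i$ with respect to the combined dilation (this is the content of the grading; the argument parallels the statement in \cref{sec:dilations} that left-invariant operators from $\lie g_{q_i}$ are $q_i$-homogeneous, applied to the action $\theta^0$ which is $\beta$-equivariant in the sense $\beta_\mu \circ \theta^0_{\alpha_\mu v} = \theta^0_v \circ \beta_\mu$). Hence $\widehat X_i^0(x_j \circ \beta_\mu) = \mu^{?}(\widehat X_i^0 x_j)\circ \beta_\mu$; since $x_j \circ \beta_\mu = \mu^{r_j} x_j$ and $\widehat X_i^0$ homogeneous of degree... the convention in \cref{def:homogeneousOrder} gives $\widehat X_i^0 x_j$ homogeneous of degree $r_j + q_i$. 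For $\widehat X_i^1$, the extra terms $p_{j,k}$ with $k \geq 1$ contribute, after the $\partial_{v_i}$ and evaluation at $v=0$, polynomials in $x$ of strictly smaller $\beta$-degree, giving exactly the claimed form $\widehat X_i^1 x_j = \widehat X_i^0 x_j + (\text{lower } \beta\text{-degree})$.

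The main obstacle I anticipate is bookkeeping the two sign conventions — the ``degree of a differential operator'' convention of \cref{def:homogeneousOrder} versus the naive scaling degree of a polynomial coefficient, which differ by $q_i$ — and making the extraction of coefficients $p_{j,k}$ from the smooth extension at $t=0$ fully rigorous (one needs that $\theta$ polynomial in $(x,t,v)$, already established, lets us treat $t$ as a genuine polynomial variable so that the power series is finite and coefficient comparison is legitimate). Once the joint-homogeneity of each $p_{j,k}$ is in hand, the two assertions follow by specializing $k = 0$ (giving the homogeneous statement for $\theta^0$) and summing over $k$ (giving the ``leading term plus lower order'' statement for $\theta^1$). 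I would also double-check the edge case that the $k=0$ term of $\widehat X_i^1 x_j$ genuinely equals $\widehat X_i^0 x_j$, which is immediate since $\theta^0$ is the $t$-constant term of $\theta$.
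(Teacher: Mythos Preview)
Your overall strategy---expand $\theta^t(x,v)_j$ as a polynomial $\sum_k p_{j,k}(x,v)\,t^k$, extract the homogeneity of each $p_{j,k}$ from the compatibility relation, then differentiate in $v_i$ and evaluate at $v=0$---is sound, but the execution contains a sign error that you noticed but misdiagnosed. From $\beta_\lambda\circ\theta^t_{\alpha_\lambda(v)}\circ\beta_{\lambda^{-1}}=\theta^{\lambda t}_v$ one obtains, after comparing $t$-coefficients,
\[
p_{j,k}(\beta_{\lambda^{-1}} x,\,\alpha_\lambda v)=\lambda^{\,k-r_j}\,p_{j,k}(x,v),
\]
so $p_{j,k}$ is homogeneous of degree $r_j-k$ under the dilation $(\beta_\mu,\alpha_{\mu^{-1}})$, \emph{not} $(\beta_\mu,\alpha_\mu)$ as you wrote. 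With the corrected relation, applying $\partial_{v_i}$ picks up a factor $\mu^{+q_i}$ (because $(\alpha_{\mu^{-1}}v)_i=\mu^{-q_i}v_i$), and evaluation at $v=0$ shows that $x\mapsto(\partial_{v_i}p_{j,k})(x,0)$ is $\beta$-homogeneous of degree $r_j+q_i-k$. Both assertions then follow at once: $k=0$ gives the homogeneity of $\widehat X_i^0 x_j$, and summing over $k\ge 0$ at $t=1$ gives the claimed decomposition of $\widehat X_i^1 x_j$. The issue was never the operator-versus-coefficient degree convention; it was the direction of the $\alpha$-dilation on $v$.

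The paper's proof uses the same compatibility relation but applies it directly to the function $\widehat X_i^t x_j$ rather than to the coefficients $p_{j,k}$, deriving the single identity
\[
\beta_\lambda^*(\widehat X_i^t x_j)=\lambda^{q_i+r_j}\,\widehat X_i^{t/\lambda} x_j
\]
in one line. Setting $t=0$ yields the homogeneity of $\widehat X_i^0 x_j$; setting $t=1$ and sending $\lambda\to\infty$ (using that $\theta$ is polynomial, so $\widehat X_i^{1/\lambda}\to\widehat X_i^0$) shows both that $\widehat X_i^1 x_j$ has $\beta$-degree at most $q_i+r_j$ and that its top-degree part equals $\widehat X_i^0 x_j$. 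This bypasses the $t$-expansion entirely; your route is essentially the coefficient-by-coefficient version of the same computation.
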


\begin{proof}
	As a consequence of \cref{res:polynomial_action} \refitem{item:polynomial_action:i},
	$\widehat X_i^t x_j$ is a polynomial on $\X$ for all $t \in \RR$. 
	Using the compatibility of the actions \eqref{eq:compatibility}, we compute that for all $t \in \RR$ and $\lambda > 0$
	\begin{equation*}
		\beta_\lambda^* (\widehat X^t_i x_j)(x)
		= \frac{\D}{\D s} \Big|_{s=0} x_j\bigl(\theta^t_{\exp(s X_i)}(\beta_\lambda x)\bigr)
		= \frac{\D}{\D s} \Big|_{s=0} x_j\bigl(\beta_\lambda \theta_{\exp(\alpha_\lambda(s X_i))}^{t/\lambda}(x)\bigr)
		= \lambda^{q_i + r_j} \widehat X_i^{t/\lambda} x_j(x) \,.
	\end{equation*}
	Hence $\beta_\lambda^* (\widehat X^t_i x_j) = \lambda^{q_i + r_j} \widehat X_i^{t/\lambda} x_j$.
	Choosing $t = 0$ shows that $\widehat X^0_i x_j$ is of homogeneous order $q_i + r_j$.
	Taking $t = 1$ we obtain 
	\begin{equation*}
		\lambda^{-q_i - r_j} \beta_\lambda^* (\widehat X^1_i x_j) = \widehat X_i^{1/\lambda} x_j \,.
	\end{equation*}
	Since the action $\theta$ is polynomial in all arguments we have 
	$\lim_{\lambda \to \infty} \widehat X_i^{1/\lambda} = \widehat X_i^0$, 
	so that the limit $\lambda \to \infty$ exists on both sides of the previous equation. 
	It follows that the homogeneous degree of $\widehat X^1_i x_j$ is at most $q_i + r_j$,
	and that this polynomial is of the form 
	$\widehat X^0_i x_j + (\text{terms of $\beta$-degree $\leq q_i + r_j -1$})$. 
\end{proof}

\begin{example}
	For the double dilation groupoid, the action \(\theta^1_v(x)=x\cdot v\) is given by right multiplication, so that \(\widehat{X}^1\) for \(X\in\lie{g}\) is the corresponding left-invariant differential operator \eqref{eq:left-invariant-differential-operator}.
\end{example}
\begin{example}\label{ex:fundamental-vf-RG}
	For the representation groupoid of the Heisenberg group \(H_n\) we compute using  \cref{ex:representation_groupoid_heisenberg} for \(j=1,\ldots,n\)
	\begin{equation*}
		\widehat X_j^1 = \frac\partial{\partial x_j}-x_{n+j}\frac{\partial}{\partial x_{2n+1}},  \qquad
		\widehat X_{n+j}^1 = \frac\partial{\partial x_{n+j}}+x_j\frac{\partial}{\partial x_{2n+1}}, \qquad
		\widehat X_{2n+1}^1 = 2\frac{\partial}{\partial x_{2n+1}}.
	\end{equation*}
\end{example}

\begin{example}
	For the group bundle from \cref{ex:bundle-action} one computes for \(X\in\lie{g}\)
	\begin{equation*}
		\widehat X^1 f(x,w) = X(f(x))(w).
	\end{equation*}
	Hence, the operators we get only act as differential operators in the fibres. In particular, this shows that \(\Algebra A\) is in general only a subalgebra of differential operators with polynomial coefficients.
\end{example}

\begin{example}\label{ex:grushin-fundamental-vectorfields}
	For the action of \(H_1\) on \(\X=\RR^2\) from \cref{ex:grushin-action} one computes 
	\begin{equation*}
		\widehat X_1^1 = \frac{\partial}{\partial x} \qquad
		\widehat X_{2}^1 = \frac 12x\frac\partial{\partial y}, \qquad
		\widehat X_{3}^1 = \frac{\partial}{\partial y}.
	\end{equation*}
	Consider the Sublaplacian \(X_1^2+4X_2^2\) on the Heisenberg group $H_1$. Then the corresponding operator \((\widehat X^1_1)^2+4\cdot(\widehat X^1_{2})^2 = \partial_x^2+x^2 \partial_{y}^2\) on \(\X\) is a Grushin operator.	
	Another Rockland operator on \(H_1\) is \(-X_1^2+2X_2\). Under the action \(\theta^1\) it corresponds to the Kolmogorov operator \(-(\widehat X^1_1)^2+2\widehat X^1_{2} = -\partial^2_x+x\partial_{y}\).
	
	We observe in this example that elements of \(\Algebra A\) do not necessarily have a unique writing in the multi-index notation: \(\widehat X_{2}^1 = \tfrac12 x\partial_{y}=\tfrac12 x\widehat X_{3}^1\).
\end{example}

\begin{lemma} \label{res:propertyP:uniqueRep}
	Suppose that property \ref{assumption:free} is satisfied. Then \(\Algebra A\) is the algebra of all differential operators on \(\X\) with polynomial coefficients. Moreover, every \(P\in \Algebra A\) can be uniquely written as \(P=\sum_{a,b} c_{a,b}x^b(\widehat X^1)^a\).
\end{lemma}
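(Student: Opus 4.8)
The plan is to use property \ref{assumption:free} to turn the fundamental vector fields $\widehat X^1_1,\dots,\widehat X^1_n$ into a global polynomial frame of $T\X$ with especially good coefficient matrix, and then argue entirely inside the Weyl algebra $\mathcal D$ of differential operators on $\X$ with polynomial coefficients. First I would record that under \ref{assumption:free} each orbit map $\OrbitMap 1 x\colon G\to\X$ is a polynomial diffeomorphism by \cref{res:orbitmaps}, so in particular $n=d$. Let $M(x)\in\RR^{n\times n}$ be the coefficient matrix, $\widehat X^1_j=\sum_k M(x)_{kj}\,\partial_{x_k}$; unravelling the definitions gives $M(x)=D_0\OrbitMap 1 x$. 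Its entries are polynomials in $x$ by \cref{res:polynomial_action:properties}~\refitem{item:polynomial_action:i}, and by \cref{res:orbitmaps} the determinant $\det M(x)=c$ is a non-zero constant. Hence $M(x)^{-1}=\tfrac1c\,\mathrm{adj}(M(x))$ again has polynomial entries, so that
\begin{equation*}
	\partial_{x_k}=\sum\nolimits_{j=1}^n (M(x)^{-1})_{jk}\,\widehat X^1_j
\end{equation*}
expresses the coordinate partials as polynomial‐coefficient combinations of the $\widehat X^1_j$.

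For the first statement, note that $\Algebra A=\mathrm{span}\{x^b(\widehat X^1)^a\}$ is by hypothesis a subalgebra of $\mathcal D$, and it visibly contains every coordinate function $x_i$ (take $b=e_i$, $a=0$) and every $\widehat X^1_j$ (take $b=0$, $a=e_j$), hence the whole subalgebra they generate. By the displayed formula this subalgebra contains every $\partial_{x_k}$, and it contains all polynomials, so it contains every operator $\sum_\gamma p_\gamma(x)\,\partial_x^\gamma$; thus it is all of $\mathcal D$. Since trivially $\Algebra A\subseteq\mathcal D$, we conclude $\Algebra A=\mathcal D$. For uniqueness I would pass to the order filtration on $\mathcal D$ and use principal symbols. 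Write $\ell_j(x,\xi)=\sigma_1(\widehat X^1_j)(x,\xi)=\sum_k M(x)_{kj}\,\xi_k$; multiplicativity of the principal symbol gives $\sigma_m\bigl(x^b(\widehat X^1)^a\bigr)=x^b\,\ell(x,\xi)^a$ whenever $|a|=m$. Suppose $\sum_{a,b}c_{a,b}\,x^b(\widehat X^1)^a=0$ with not all coefficients zero and let $m$ be the largest $|a|$ occurring with a non-zero coefficient; applying $\sigma_m$ annihilates the terms with $|a|<m$ and yields
\begin{equation*}
	\sum\nolimits_{|a|=m} p_a(x)\,\ell(x,\xi)^a=0,\qquad p_a(x)=\sum\nolimits_b c_{a,b}\,x^b .
\end{equation*}
For each fixed $x$ the linear map $\xi\mapsto M(x)^{t}\xi$ is invertible ($\det M(x)=c\neq0$), so the monomials $\{\ell(x,\cdot)^a:|a|=m\}$ form a basis of the degree-$m$ homogeneous polynomials in $\xi$; hence $p_a(x)=0$ for all $x$ and all $|a|=m$, and linear independence of the $x^b$ forces $c_{a,b}=0$ whenever $|a|=m$. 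A downward induction on $m$ then kills all $c_{a,b}$. (Equivalently, the symbols $x^b\,\ell(x,\xi)^a$ form a basis of $\mathrm{gr}\,\mathcal D\cong\RR[x,\xi]$, since $\{\ell^a\}_{|a|=m}$ and $\{\xi^\gamma\}_{|\gamma|=m}$ are related by a matrix over $\RR[x]$ with non-zero constant determinant, and the standard filtered‐basis lemma then gives $\Algebra A=\mathcal D$ and uniqueness simultaneously.)

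The reordering bookkeeping making $\Algebra A$ an algebra, the multiplicativity of $\sigma_m$, and Cramer's rule are all routine. The only place where property \ref{assumption:free} is genuinely used — and, I expect, the one real point of the argument — is the passage above from ``$M(x)$ invertible'' to ``$M(x)^{-1}$ polynomial'': this needs $\det M(x)$ to be a \emph{constant}, not merely non-vanishing, which is precisely what \cref{res:orbitmaps} provides. Without \ref{assumption:free} (e.g.\ for the group bundle of \cref{ex:bundle-action}) the fundamental vector fields no longer form a frame and $\Algebra A$ is a proper subalgebra, so this step is exactly where the hypothesis enters.
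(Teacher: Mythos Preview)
Your proof is correct and follows essentially the same route as the paper: both identify the coefficient matrix $M(x)=D_0\OrbitMap 1 x$, invoke \cref{res:orbitmaps} to get that $\det M(x)$ is a non-zero constant so that $M(x)^{-1}$ has polynomial entries, and conclude that the $\partial_{x_k}$ lie in $\Algebra A$. The paper is terser on uniqueness (it simply asserts that the standard unique representation in terms of $x^b\partial_x^a$ transfers), whereas your principal-symbol argument makes this step explicit; both are fine.
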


\begin{proof}
	Recall that $\widehat X^1_j(x) = \frac{\partial}{\partial v_j} \theta^1(x,v)_k |_{v = 0} \frac{\partial}{\partial x_k} = \frac{\partial}{\partial v_j} \OrbitMap 1 x(v)_k |_{v = 0} \frac{\partial}{\partial x_k}$.
	Therefore $\widehat X^1(x) = D_v \OrbitMap 1 {x} |_{v = 0} \frac{\partial}{\partial x}$,
	using the obvious matrix notation and the orbit map from \cref{res:orbitmaps}.
	By this lemma, $\det(D_v \OrbitMap 1 x)$ is constant and non-zero as a function of $v$ and $x$.
	Therefore $D_v \OrbitMap 1 x |_{v = 0}$ is invertible and the inverse matrix has again polynomial entries.
	Hence $\smash{\frac{\partial}{\partial x}} = (D_v \OrbitMap 1 x |_{v = 0})^{-1} \smash{\widehat X^1} \in \Algebra A$.
	Since every differential operator $P$ with polynomial coefficients can be uniquely written in the form 
	\(P=\sum_{a,b}d_{a,b}x^b\big(\smash{\frac \partial {\partial x}}\big)^a\), it follows that it can also be uniquely written as above with $\smash{\widehat X^1}$.
\end{proof}
Assume that the Shubin groupoid has property \ref{assumption:homomorphisms}. 
Then the polynomial action $\theta^0$ of $G$ on $\X = \RR^d$ is by linear maps,
hence the fundamental vector fields have linear coefficients,
and $\widehat X_i^0 x_j$ is a linear combination of $x_k$ and thus in $\X^*$.
Therefore $X \cdot \xi \coloneqq \widehat X^0 \xi$ for $X \in \lie g$ and $\xi \in \lie g^*$ 
defines an action of $\lie g$ on $\X^*$.
\cref{res:fundamentalVFsOnPolynomials} even gives that this action is compatible with the grading of $\X^*$,
induced by $\beta$.
Therefore one obtains the following additional structure at \(t=0\):
\begin{lemma}\label{res:r-implies-group-at-0}
Under assumption \ref{assumption:homomorphisms}, the semidirect product $\X^* \rtimes^0 \lie g$ is a graded Lie algebra,
in which $[X_j, x_i] = \widehat X_j^0 x_i$.
\end{lemma}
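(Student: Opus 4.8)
The plan is to unwind the definition of the semidirect product Lie algebra and then verify the grading axiom one generator-type at a time, reducing everything to \cref{res:fundamentalVFsOnPolynomials} and to the fact that, for a right action, $X \mapsto \widehat X^0$ is a Lie algebra homomorphism into vector fields on $\X$.

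First I would fix notation. As a vector space $\X^* \rtimes^0 \lie g = \X^* \oplus \lie g$, with $\X^*$ viewed as an abelian Lie algebra and the $\lie g$-module structure on $\X^*$ given by $X \cdot \xi \coloneqq \widehat X^0 \xi$; this lands in $\X^*$ because under \ref{assumption:homomorphisms} the map $\theta^0_v$ is linear, so $\widehat X^0$ is a vector field on $\X$ with linear coefficients and hence preserves the subspace $\X^* \subset \Smooth(\X)$ of linear functionals. The bracket on $\X^* \rtimes^0 \lie g$ is then
\begin{equation*}
	[(\xi, X), (\eta, Y)] = (\widehat X^0 \eta - \widehat Y^0 \xi, [X,Y]_{\lie g}) ,
\end{equation*}
and in particular $[X_j, x_i] = \widehat X_j^0 x_i$, as asserted. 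The only nontrivial axiom to check is the Jacobi identity, which for a semidirect product with abelian ideal amounts to the statement that $\rho \colon \lie g \to \mathfrak{gl}(\X^*)$, $\rho(X) = \widehat X^0|_{\X^*}$, is a Lie algebra homomorphism. I would deduce this from the identity $\widehat{[X,Y]}^0 = [\widehat X^0, \widehat Y^0]$ of vector fields on $\X$ — valid because each $\theta^t$, in particular $\theta^0$, is a right action — by restricting both sides to the invariant subspace $\X^*$.

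For the grading, let $\X^*_k = \lspan\{x_j : r_j = k\}$ and $\lie g_k = \lspan\{X_i : q_i = k\}$ be the degree-$k$ parts with respect to $\beta$ and $\alpha$, and put $(\X^* \rtimes^0 \lie g)_k = \X^*_k \oplus \lie g_k$. Since $\X^* = \bigoplus_k \X^*_k$ and $\lie g = \bigoplus_k \lie g_k$ are finite direct sums indexed by positive integers, so is $\X^* \rtimes^0 \lie g = \bigoplus_k (\X^* \rtimes^0 \lie g)_k$. For the bracket inclusion $[(\X^* \rtimes^0 \lie g)_j, (\X^* \rtimes^0 \lie g)_k] \subseteq (\X^* \rtimes^0 \lie g)_{j+k}$ it suffices, by bilinearity, to treat three cases: $[\lie g_j, \lie g_k] \subseteq \lie g_{j+k}$ is the grading of $\lie g$; $[\X^*_j, \X^*_k] = 0$ since $\X^*$ is abelian; and $[\lie g_j, \X^*_k] \subseteq \X^*_{j+k}$ is exactly the content of \cref{res:fundamentalVFsOnPolynomials}, which says $\widehat X_i^0 x_l$ is $\beta$-homogeneous of degree $q_i + r_l$, hence lies in $\X^*_{q_i + r_l}$. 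Assembling these gives the asserted grading. I do not expect a serious obstacle here; the only point that needs care is the variance convention for fundamental vector fields — it is crucial that $\theta^0$ is a \emph{right} action, so that $X \mapsto \widehat X^0$ is a homomorphism (not an anti-homomorphism) of Lie algebras, which is what makes the bracket displayed above an honest Lie bracket and $\X^* \rtimes^0 \lie g$, rather than a twisted variant, the correct graded Lie algebra.
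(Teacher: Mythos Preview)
Your proposal is correct and follows essentially the same approach as the paper. The paper's justification is the paragraph immediately preceding the lemma (there is no separate proof environment): property~\ref{assumption:homomorphisms} forces $\widehat X^0$ to preserve $\X^*$, giving a $\lie g$-action on $\X^*$, and \cref{res:fundamentalVFsOnPolynomials} supplies the grading compatibility; you simply flesh out these steps (explicit Jacobi via the right-action convention, case split for the graded bracket) more carefully than the paper does.
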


\begin{example} \label{ex:representationLA:heisenbergGroup}
	Consider the representation groupoid of the Heisenberg group. Then $\lie h^* \rtimes^0 \lie h$ has Lie brackets
	\begin{equation*}
		[X_j,X_{n+j}] = X_{2n+1} , \quad [X_j, x_{2n+1}] = -x_{n+j}, \quad [X_{n+j}, x_{2n+1}] = x_j,
	\end{equation*}
	for $j = 1, \dots, n$ and the Lie brackets of other generators are $0$.
	Since $(\lie h^* \rtimes^0 \lie h)_1$ is spanned by $X_1, \dots, X_{2n}$ and $x_{2n+1}$ and generates $\lie h^* \rtimes^0 \lie h$,
	it follows that $\lie h^* \rtimes^0 \lie h$ is stratified.
\end{example}
\begin{example}\label{ex:symbol-group-grushin}
	For the action of the Heisenberg group \(H_1\) on \(\X=\RR^2\) from \cref{ex:grushin-action} one has to distinguish the two cases \(l+p>q\) and \(l+p=q\). When \(l+p>q\), the action at \(t=0\) is trivial and, hence, \(\X^*\rtimes^0\lie{h}_1=\X^*\times \lie{h}_1\). When \(l+p=q\), one has \(\theta^0_{(v_1,v_2,v_3)}(x,y)=(x,y+\tfrac{1}{2}xv_2)\). Then one computes that the non-trivial brackets in \(\X^*\rtimes^0 \lie{h}_1\) are given by \([X_1,X_2]=X_3\) and \([X_2,y]=x\). This means that the Lie algebra \(\X^*\rtimes^0 \lie{h}_1\) is generated by \(X_1,X_2,y\).
\end{example}
Now assume that the Shubin groupoid also has property \ref{assumption:free},
so that any element of $\Algebra A$ can be uniquely written in the form of \cref{res:propertyP:uniqueRep}.
Then define the \emph{principal cocosymbol} of an operator of order \(m\) by
\begin{equation}\label{eq:principalcoco}\begin{aligned}
		\widecheck\sigma_m\colon \mathcal{A}_m&\to \lie{U}^m(\X^* \rtimes^0 \lie{g})\\
		\sum_{[a]_\alpha+[b]_\beta\leq m}c_{a,b}x^b (\widehat X^1)^a&\mapsto \sum_{[a]_\alpha+[b]_\beta= m}c_{a,b} ( -\I x )^b  X^a. 
\end{aligned}\end{equation}
Recall that the (complex) universal enveloping algebra \(\lie{U}(\X^* \rtimes^0 \lie{g})\) of the graded Lie algebra $\X^* \rtimes^0 \lie g$ is graded
and \(\lie{U}^m(\X^* \rtimes^0 \lie{g})\) denotes the elements of degree $m$, see \cref{sec:diff-op-rockland}.
The name principal cocosymbol is justified since it behaves like the usual principal symbol: 
\begin{proposition}
	Under assumptions \ref{assumption:free} and \ref{assumption:homomorphisms}
	the maps $\widecheck \sigma_m$ assemble into an isomorphism of algebras
	\begin{equation*}
		\widecheck\sigma\colon\bigoplus_{m\in\NN_0}\mathcal{A}_m/\mathcal{A}_{m-1}\to \lie{U}(\X^* \rtimes^0 \lie{g}) \,.
	\end{equation*}
\end{proposition}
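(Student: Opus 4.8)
The map $\widecheck\sigma$ is a linear map by construction — it is well defined because, under property \ref{assumption:free}, \cref{res:propertyP:uniqueRep} guarantees that the coefficients $c_{a,b}$ in $P=\sum_{a,b}c_{a,b}x^b(\widehat X^1)^a$ are unique, and visibly $\widecheck\sigma_m$ annihilates $\mathcal A_{m-1}$. The real content is that the assembled map $\widecheck\sigma\colon\bigoplus_m\mathcal A_m/\mathcal A_{m-1}\to\lie U(\X^*\rtimes^0\lie g)$ is bijective and multiplicative. The plan is to obtain the inverse directly from the universal property of the universal enveloping algebra, so that multiplicativity is automatic. As a preliminary one recalls that $\mathcal A$ is a filtered algebra, so that $\mathrm{gr}\,\mathcal A\coloneqq\bigoplus_m\mathcal A_m/\mathcal A_{m-1}$ is a graded algebra; this rests on \cref{res:fundamentalVFsOnPolynomials}: moving a fundamental vector field $\widehat X_j^1$ past a multiplication operator raises the $\beta$-degree of the polynomial coefficient by at most $q_j$ (Leibniz rule plus the estimate $\deg_\beta(\widehat X_j^1x_i)\le q_j+r_i$), while reordering the $\widehat X_i^1$ into normal order produces commutators $[\widehat X_i^1,\widehat X_j^1]=\widehat{[X_i,X_j]}^1$, which by the grading of $\lie g$ is a linear combination of $\widehat X_l^1$ with $q_l=q_i+q_j$, so neither operation raises the total $\alpha/\beta$-degree. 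Under \ref{assumption:free} and \cref{res:propertyP:uniqueRep} the monomials $x^b(\widehat X^1)^a$ with $[a]_\alpha+[b]_\beta\le m$ are a basis of $\mathcal A_m$, hence $\{\overline{x^b(\widehat X^1)^a}:[a]_\alpha+[b]_\beta=m\}$ is a basis of $\mathrm{gr}_m\mathcal A$; comparing with the Poincaré--Birkhoff--Witt basis $\{x^bX^a\}$ of $\lie U(\X^*\rtimes^0\lie g)$ (ordering so that all $x_i$ precede all $X_j$) shows that each $\widecheck\sigma_m$ is already a linear isomorphism onto $\lie U^m(\X^*\rtimes^0\lie g)$, so $\widecheck\sigma$ is a linear bijection.

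\textbf{The inverse.} Under property \ref{assumption:homomorphisms}, \cref{res:r-implies-group-at-0} makes $\X^*\rtimes^0\lie g$ a graded Lie algebra with $[X_j,x_i]=\widehat X_j^0x_i\in\X^*$. I would define a linear map $\phi\colon\X^*\rtimes^0\lie g\to\mathrm{gr}\,\mathcal A$ on the standard basis by $\phi(x_i)=\I\,\overline{x_i}$ (in degree $r_i$) and $\phi(X_j)=\overline{\widehat X_j^1}$ (in degree $q_j$), and check it is a Lie algebra homomorphism for the commutator bracket of $\mathrm{gr}\,\mathcal A$. There are three bracket relations: $[x_i,x_{i'}]=0$ maps to $[\I\,\overline{x_i},\I\,\overline{x_{i'}}]=-\overline{[x_i,x_{i'}]_{\mathcal A}}=0$ since multiplication operators commute; $[X_i,X_j]_{\lie g}$ maps to $[\overline{\widehat X_i^1},\overline{\widehat X_j^1}]=\overline{\widehat{[X_i,X_j]}^1}$, using that $X\mapsto\widehat X^1$ is a Lie algebra homomorphism for the right action $\theta^1$ (and that the contributing $\widehat X_l^1$ all have $\alpha$-degree $q_i+q_j$); and $[X_j,x_i]=\widehat X_j^0x_i$ maps to $[\overline{\widehat X_j^1},\I\,\overline{x_i}]=\I\,\overline{[\widehat X_j^1,x_i]_{\mathcal A}}=\I\,\overline{\widehat X_j^1x_i}$, which equals $\I\,\overline{\widehat X_j^0x_i}=\phi(\widehat X_j^0x_i)$ because, by \cref{res:fundamentalVFsOnPolynomials}, the polynomial $\widehat X_j^1x_i$ agrees with $\widehat X_j^0x_i$ modulo terms of $\beta$-degree $\le q_j+r_i-1$, i.e.\ modulo $\mathcal A_{q_j+r_i-1}$. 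By the universal property of $\lie U$, $\phi$ extends to a homomorphism of associative algebras $\tilde\phi\colon\lie U(\X^*\rtimes^0\lie g)\to\mathrm{gr}\,\mathcal A$; it preserves the grading and satisfies $\tilde\phi(x^bX^a)=\I^{\abs{b}}\,\overline{x^b(\widehat X^1)^a}$.

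\textbf{Conclusion.} Evaluating the compositions on the respective bases and using $\I^{\abs{b}}(-\I)^{\abs{b}}=1$ gives $\widecheck\sigma\circ\tilde\phi=\id$ on the monomials $x^bX^a$ and $\tilde\phi\circ\widecheck\sigma=\id$ on the classes $\overline{x^b(\widehat X^1)^a}$, hence on all of $\lie U(\X^*\rtimes^0\lie g)$ and $\mathrm{gr}\,\mathcal A$ respectively. Therefore $\widecheck\sigma$ is a bijection whose inverse $\tilde\phi$ is an algebra homomorphism, so $\widecheck\sigma$ is itself an isomorphism of algebras.

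I expect the main obstacle to be the first step: verifying that $\mathrm{gr}\,\mathcal A$ is a well-behaved graded algebra requires the somewhat fiddly bookkeeping that neither commuting fundamental vector fields past polynomials nor straightening their products into normal order ever raises the combined degree — \cref{res:fundamentalVFsOnPolynomials} together with the grading of $\lie g$ is exactly what makes this work, but the degree estimates must be tracked carefully. The Lie-algebra-homomorphism check is the conceptual heart of the argument, but it reduces cleanly to \cref{res:fundamentalVFsOnPolynomials} and \cref{res:r-implies-group-at-0}; one only has to be careful with the factors of $\I$ so as to land on a genuine inverse rather than a sign-twisted map.
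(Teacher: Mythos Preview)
Your proof is correct and follows essentially the same route as the paper's. Both arguments rest on \cref{res:fundamentalVFsOnPolynomials} to see that the commutation relations in $\mathrm{gr}\,\mathcal A$ match those of $\X^*\rtimes^0\lie g$ (up to the $-\I$ twist), and both use the PBW-type bases on each side to conclude bijectivity. The only difference is packaging: the paper phrases the argument as a comparison of presentations --- both $\mathcal A$ and $\lie U(\X^*\rtimes^0\lie g)$ are quotients of the same tensor algebra, and after passing to the associated graded the defining relations coincide --- whereas you construct the inverse explicitly via the universal property of $\lie U$ and then verify it is inverse to $\widecheck\sigma$ on bases. Your version is slightly more explicit; the paper's is more compressed but appeals to the same ingredients.
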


\begin{proof}
	Clearly \(\Algebra{A}_{m-1} \subseteq \ker(\widecheck\sigma_m)\),
	so that $\widecheck \sigma_m$ descends to a map $\Algebra A_m / \Algebra A_{m-1} \to \lie U^m(\X^* \rtimes^0 \lie g)$. Extending linearly, one obtains the  map $\widecheck \sigma$.
	
	Note that $\Algebra A$ is the quotient of the tensor algebra of $\X^* \times \lie g$
	by the ideal generated by elements $x_i \tensor x_j - x_j \tensor x_i$,
	$X_i \tensor x_j - x_j \tensor X_i - \smash{\widehat X^1_i(x_j)}$ and $X_i \tensor X_j - X_j \tensor X_i - [X_i, X_j]$,
	whereas $\mathfrak U(\X^* \rtimes^0 \lie g)$ is the quotient of the tensor algebra of $\X^* \times \lie g$ by the ideal generated by $(-\I x_i) \tensor (-\I x_j) - (-\I x_j) \tensor (-\I x_i)$,
	$X_i \tensor (-\I x_j) - (-\I x_j) \tensor X_i - (-\I \smash{\widehat X^0_i(x_j)})$ 
	and $X_i \tensor X_j - X_j \tensor X_i - [X_i, X_j]$. 
	When taking the quotient $\Algebra A_m / \Algebra A_{m-1}$ and direct sum over $m$,
	all terms of lower order drop out of the above generators.
	The first and third sets of generators are homogeneous and thus remain unchanged, 
	while it follows from \cref{res:fundamentalVFsOnPolynomials} that the second one becomes
	$X_i \tensor x_j - x_j \tensor X_i - \smash{\widehat X^0_i(x_j)}$.
	Consequently the map $\widecheck \sigma$, 
	induced by mapping $x_i$ to $-\I x_i$ and $X_j$ to $X_j$, is an isomorphism of algebras.
\end{proof}
Note that the factors of $-\I$ in the definition of $\widecheck \sigma$ and the previous proof
are not necessary to obtain an isomorphism, but are included to make the formulas compatible with the results
of the next section, where $\widecheck \sigma$ is obtained via an inverse Fourier transform.
\begin{remark}
	For the double dilation groupoid, one can also verify this more concretely:
	$X^a (x^b)$ is of homogeneous order $[b]_\beta - [a]_\alpha$,
	which is (unless $a = 0$) always strictly smaller than $[b]_\beta + [a]_\alpha$.
	The non-commutativity in the principal cocosymbols for a double dilation groupoid comes from the non-commutativity of $G$, but there is no additional contribution from the action since $\theta^0$ is trivial.
	In other words, the principal cocosymbol lives in $\lie U(\X^* \times \lie g)$. 
\end{remark}

\subsection{Definition of pseudodifferential operators and their principal cosymbol}

In order to define a pseudodifferential calculus as in \cite{vEY19} based on essentially homogeneous distributions, we employ the space \(\Algebra{O}'_r(\TG)\) introduced in \cref{subsec:convolution_alg_distr} on a Shubin tangent groupoid \(\TG\). 
The space of proper distributions used by \cite{vEY19} is not suitable here as Shubin operators are not properly supported in general. 

Recall from \cref{res:evaluationOfDistributions} that a distribution \(\P\in \Algebra O'_r(\TG)\) determines a family of fibred distributions \(\P_t = \ev_t (\P)\) on \(\X\rtimes^t G\) for \(t\in\RR\). There are representations \(\Op_t\colon\Algebra O'_r(\TG)\to\mathcal L(\Algebra{O}_M(\X),\Algebra{O}_M(\X))\) defined by
\(\Op_t(\P)\varphi=\Op (\ev_t(\P)) \varphi = \P_t((\theta^t)^*\varphi)\), 
where $\Op$ is the map from \eqref{eq:Op} for the groupoid $\X \rtimes^t G$
and $\theta^t$ is the source map in the groupoid $\X\rtimes^t G$.

\begin{definition}
	For \(m\in\RR\) let \(\PPseu^m\) denote the space of all \(\mathbb P\in \Algebra O'_r(\TG) \) which are \emph{essentially homogeneous of order \(m\)}, that is,  
	\begin{equation}
		\lambda^m\mathbb P-{\tau_\lambda}_* \mathbb P\in\Schwartz(\TG) 
		\qquad\text{for all $\lambda > 0$.}
	\end{equation}
	Here, we implicitly use the inclusion \(u\colon \Schwartz(\TG)\hookrightarrow \Algebra O'_r(\TG)\) from \cref{ex:schwartz-as-fibred}.
	
	An element \(\P_1\in\Algebra O_r'(\X\rtimes^1 G)\) is called a \emph{pseudodifferential fibred distribution of order \(m\)} if there is an extension \(\mathbb P\in\PPseu^m \).
	The corresponding continuous map \(\Op(\P_1)\colon \Algebra O_M(\X)\to\Algebra O_M(\X)\) is a called \emph{Shubin pseudodifferential operator of order \(m\)} on \(\X\).	We denote by \(\Pseu^m\) the space of all pseudodifferential fibred distributions of order $m$
	and by \(\Op(\Pseu^m)\) the space of Shubin pseudodifferential operators of order \(m\).
\end{definition}
We shall also write \(\PPseu\defeq\bigcup_{m\in\RR}\PPseu^m\). 
We first show that the pseudodifferential calculus contains the algebra \(\mathcal A\) of certain differential operators described in the previous section.
\begin{example}\label{ex:diff_ops_pol_coeff} 
Consider \(P=\sum_{[a]_\alpha+[b]_\beta\leq m}c_{a,b}x^b (\widehat X^1)^a\in\Algebra A_m\) with \(m\in\NN_0\) and \(c_{a,b}\in\CC\). 
	Define \(\P\in\Algebra O_r'(\TG)\) by setting for \(\varphi\in \Algebra O_M(\TG)\)
	\[\P(\varphi)(x,t)=\sum_{[a]_\alpha+[b]_\beta\leq m}t^{m-[a]_\alpha-[b]_\beta}c_{a,b}x^b X^a_v \varphi(x,t,0).\] 
	Here $X^a_v$ means that the left invariant differential operator $X^a$ is applied to the last variable $v$.
	We compute for \(\lambda>0\), \(\varphi \in\Algebra O_M(\TG)\) and \((x,t)\in \X\times\RR\) \begin{align*}
		{\tau_\lambda}_*(\P)(\varphi)(x,t)&=\P(\varphi\circ\tau_\lambda)(\beta_\lambda(x),\lambda t)= \sum_{[a]_\alpha+[b]_\beta\leq m}(\lambda t)^{m-[a]_\alpha-[b]_\beta}c_{a,b}(\beta_\lambda(x))^b X^a_v(\varphi\circ\tau_\lambda )(\beta_\lambda(x),\lambda t,0)\\
		&= \lambda^m\sum_{[a]_\alpha+[b]_\beta\leq m}t^{m-[a]_\alpha-[b]_\beta}c_{a,b}x^b X^a_v(\varphi)(x,t,0)=\lambda^m\P(\varphi)(x,t),
	\end{align*} so that \(\P\) is \(m\)-homogeneous with respect to the zoom action.
	In particular, it is essentially homogeneous of order $m$.
	To compute the corresponding operator at \(t=1\) note that for \(X\in\lie{g}\) and \(\varphi\in\Algebra O_M(\X)\)
	\begin{align*}
		X_v((\theta^1)^*\varphi)(x,v)&=\frac{\D}{\D s}\Big|_{s=0} (\theta^1)^*\varphi(x,v\cdot\exp(sX)) =\frac{\D}{\D s}\Big|_{s=0} \varphi(\theta^1_{\exp(sX)}(\theta^1_v(x)) \\&
		=\widehat X^1 \varphi(\theta^1_v(x))=(\theta^1)^*(\widehat X^1\varphi)(x,v).
	\end{align*}
	Therefore, one has for \(x\in X\)
	\begin{align*}
		\Op_1(\P)(\varphi)(x)&=\P_1((\theta^1)^*\varphi)(x)=\sum_{[a]_\alpha+[b]_\beta\leq m}c_{a,b}x^b X^a_v((\theta^1)^*\varphi)(x,0) \\&
		=\sum_{[a]_\alpha+[b]_\beta\leq m}c_{a,b}x^b (\theta^1)^*((\widehat X^1)^a\varphi)(x,0)=P\varphi(x).
	\end{align*}
 Hence, \(P=\Op_1(\P)\) belongs to \(\Op(\Pseu^m)\). 
 Also note that \(\P_0\) consists of the highest order part in the sense that for \(\varphi\in\Algebra{O}_M(\X\rtimes^0 G)\)
 \begin{equation*}
 	\P_0(\varphi)(x)=\sum_{[a]_\alpha+[b]_\beta= m}c_{a,b}x^bX^a_v\varphi(x,0).
 \end{equation*}
 In fact, we see that \(\Algebra A_m \subseteq \Op(\Pseu^m)\cap \Algebra A\). 
\end{example}
\begin{example}\label{ex:identity_operator}
	As a particular case of the previous example, the identity operator \(\id\colon\Algebra O_M(\X)\to \Algebra O_M(\X)\) is contained in \(\Op(\Pseu^0)\). Namely, for \(\mathbb I\in \PPseu^0\) defined by
	\(\mathbb I \varphi(x,t)=\varphi(x,t,0)\) for \(\varphi\in\Algebra O_M(\TG)\), we get that \(\Op_1(\mathbb I)=\id\).
\end{example}
\begin{remark}
	As \cref{ex:grushin-fundamental-vectorfields} shows, the map \(\Op\colon\Pseu^m\to\Op(\Pseu^m)\) is not necessarily injective. However, it is injective by \cref{res:Op-injective} when \(\theta^1\) is polynomially free in the sense of \cref{def:polynomially-free}. This is, for example, the case under property \ref{assumption:free}.
\end{remark}
Recall that $\Algebra O_r'(\TG) \cong \Algebra{O}_M(\X\times\RR)\completedtensor\Algebra O_M'(G)$ holds by \eqref{eq:fibredDistribution:Isos}.
In the following, we consider the Euclidean Fourier transform in the \(v\)-direction (understanding \(v\in G\) as an element of the underlying vector space \(\underline{\lie g}\) of the Lie algebra $\lie g$)
\[\widehat{\argument}=\mathcal F_{v\to\xi}\colon \Algebra{O}_M(\X\times\RR)\completedtensor\Algebra O_M'(G)\to \Algebra O_M(\X\times\RR\times \lie g^*) .\]
More precisely, $\widehat{\argument}$ is obtained from the Euclidean Fourier transform $\Algebra O_M'(\underline{\lie g}) \to \Algebra O_C(\lie g^*)$, the inclusion $\Algebra O_C(\lie g^*) \subseteq \Algebra O_M(\lie g^*)$, see \cref{res:properties:oc}, and the isomorphism $\Algebra{O}_M(\X\times\RR)\completedtensor\Algebra O_M(\lie g^*) \cong \Algebra{O}_M(\X\times\RR\times\lie g^*)$, see \cref{res:OM_continuity}.
We denote also by $\alpha$ the induced dilations on the dual $\lie g^*$, defined by
\(\langle \alpha_\lambda(\xi), X\rangle =\langle \xi,\alpha_\lambda(X)\rangle\) for $X \in \lie g$, $\xi \in \lie g^*$.
The Shubin zoom action transforms as follows
$\widehat{{\tau_\lambda}_* \P}(x,t,\xi)
=\widehat{\P}(\beta_\lambda(x),\lambda t,\alpha_{\lambda}(\xi))$.
We therefore equip \(\X \times\RR\times \lie g^*\) with the dilations 
\begin{equation}
	\widehat\tau_\lambda(x,t,\xi)=(\beta_\lambda(x),\lambda t,\alpha_\lambda(\xi)) ,
\end{equation}
so that $\widehat{{\tau_\lambda}_* \P} = \widehat\tau_\lambda^* \widehat \P$.
Additionally, the inverse Fourier transform in the \((x,t)\)-direction will be useful
\[\widecheck{\argument}=\mathcal F^{-1}_{(x,t)\to(\eta,\tau)}\colon \Algebra{O}_M(\X\times\RR)\completedtensor\Algebra O_M'(G)\to \Algebra O_C'(\X^*\times\RR\times G), \]
where we used that the inverse Fourier transform maps $\Algebra{O}_M(\X \times \RR)$ to $\Algebra{O}_C'(\X^* \times \RR)$, 
that $\Algebra{O}'_M(\X^*)$ includes into $\Algebra{O}'_C(\X^*)$ by \cref{res:properties:oc}
and that $\Algebra O_C'(\X^*\times \RR) \completedtensor\Algebra O_C'(G) \cong \Algebra O_C'(\X^*\times\RR\times G)$ by \cref{res:OM_continuity}.
It satisfies \(\widecheck{{\tau_\lambda}_*\P}={(\widecheck\tau_\lambda)}_*\widecheck{\P}\), where \begin{equation}
	(\widecheck\tau_\lambda(\eta,\tau,v)=(\beta_\lambda(\eta),\lambda\tau,\alpha_\lambda(v))
\end{equation}
and \((\widecheck\tau_\lambda)_*\) is defined as in \eqref{eq:dilations_on_tempered}.

\begin{proposition}[\cites{Tay84,BG88}]\label{res:equivalence-pseudo-symbol}
	Let \(\P\in\Schwartz'(\TG)\). Then the following are equivalent:
	\begin{enumerate}
		\item\label{item:pseudo} \(\P\in\PPseu^m\),
		\item\label{item:widehat1} \(\widehat \P\in\Smooth(\X\times\RR\times\lie g^{*})\) and \(\widehat{\P}(\beta_\lambda(x),\lambda t, \alpha_\lambda(\xi))-\lambda^{m} \widehat{\P}(x,t,\xi)\in\Schwartz(\X\times\RR\times\lie{g}^*)\) for all \(\lambda >0\),
		\item\label{item:widehat2} there is a \(m\)-homogeneous function \(P\in \Smooth(\X\times\RR\times\lie{g}^*\setminus\{(0,0,0)\})\), i.e. \(P(\beta_\lambda(x),\lambda t, \alpha_\lambda(\xi))=\lambda^{m} P(x,t,\xi)\) for all \(\lambda>0\),  such that for all smooth \(\chi\) vanishing in a neighbourhood of \((0,0,0)\) and constant \(1\) outside a compact set (equivalently: for one such $\chi$) there is a \(f\in \Schwartz(\X\times\RR\times\lie{g}^*)\) such that \(\widehat{\P}=\chi\cdot P+f\), 
		\item\label{item:widecheck} \(\widecheck\P\in \Algebra E'(\X^*\times\RR\times G)+\Schwartz(\X^*\times\RR\times G)\), has singular support in \(\{(0,0,0)\}\) and satisfies \begin{equation*}{(\widecheck\tau_\lambda)}_*\widecheck \P-\lambda^m\widecheck{\P}\in\Schwartz(\X^*\times\RR\times G)\quad \text{for all }\lambda>0.\end{equation*}
	\end{enumerate}
\end{proposition}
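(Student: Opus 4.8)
The plan is to translate all four conditions, via the partial Fourier transforms introduced before the statement, into statements about essentially homogeneous distributions on honest graded groups, and then quote the material assembled in \cref{sec:homogeneous_distributions} (cf.\ \cites{Tay84,BG88}). By \cref{res:properties:oc} and \cref{res:OM_continuity}, $\widehat{\argument}=\mathcal F_{v\to\xi}$ is a topological isomorphism from $\Algebra O_r'(\TG)\cong\Algebra O_M(\X\times\RR)\completedtensor\Algebra O_M'(G)$, see \eqref{eq:fibredDistribution:Isos}, onto $\Algebra O_M(\X\times\RR)\completedtensor\Algebra O_C(\lie g^*)\subseteq\Algebra O_M(\X\times\RR\times\lie g^*)$, it restricts to an isomorphism of the respective Schwartz spaces, and it intertwines ${\tau_\lambda}_*$ with $\widehat\tau_\lambda^*$; likewise $\widecheck{\argument}=\mathcal F^{-1}_{(x,t)\to(\eta,\tau)}$ is injective, preserves Schwartz functions, and intertwines ${\tau_\lambda}_*$ with $(\widecheck\tau_\lambda)_*$. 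The crucial point is that $\widehat\tau_\lambda(x,t,\xi)=(\beta_\lambda x,\lambda t,\alpha_\lambda\xi)$ and $\widecheck\tau_\lambda(\eta,\tau,v)=(\beta_\lambda\eta,\lambda\tau,\alpha_\lambda v)$ are genuine integer families of dilations, so that $\X\times\RR\times\lie g^*$ and $\X^*\times\RR\times G$ become graded groups that are Fourier-dual to one another, and that $\widehat\P=\mathcal F(\widecheck\P)$ is the full Euclidean Fourier transform of $\widecheck\P$.

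Given this, \refitem{item:pseudo}$\Leftrightarrow$\refitem{item:widehat1} is nearly formal: the isomorphism above turns ``$\P\in\Algebra O_r'(\TG)$ with ${\tau_\lambda}_*\P-\lambda^m\P\in\Schwartz(\TG)$'' into ``$\widehat\P\in\Algebra O_M(\X\times\RR)\completedtensor\Algebra O_C(\lie g^*)$ with $\widehat\tau_\lambda^*\widehat\P-\lambda^m\widehat\P\in\Schwartz$'', which obviously implies \refitem{item:widehat1}; conversely, once \refitem{item:widehat2} is available one has $\widehat\P=\chi\cdot P+f$, and since the $\xi$-derivatives of $\chi P$ have at most a fixed polynomial degree (they become bounded once $[b]_\alpha>m$, by homogeneity) while its $(x,t)$-derivatives have strictly decreasing degree, $\chi P$ lies in $\Algebra O_M(\X\times\RR)\completedtensor\Algebra O_C(\lie g^*)$, hence so does $\widehat\P$. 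Thus everything reduces to \refitem{item:widehat1}$\Leftrightarrow$\refitem{item:widehat2} and \refitem{item:widehat2}$\Leftrightarrow$\refitem{item:widecheck}.

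For \refitem{item:widehat2}$\Rightarrow$\refitem{item:widehat1} one simply computes $\widehat\tau_\lambda^*(\chi P)-\lambda^m\chi P=\lambda^m(\chi\circ\widehat\tau_\lambda-\chi)\,P$, which is compactly supported since $\chi\circ\widehat\tau_\lambda$ and $\chi$ are both $\equiv1$ off a compact set, and adds $\widehat\tau_\lambda^* f-\lambda^m f\in\Schwartz$. For the converse I would define $P$ on $\X\times\RR\times\lie g^*\setminus\{(0,0,0)\}$ by prescribing its values on the quasi-sphere of a $\widehat\tau$-homogeneous quasi-norm to coincide with those of $\widehat\P$ and extending $\widehat\tau$-homogeneously; the cocycle identity for $h_\lambda\defeq\widehat\tau_\lambda^*\widehat\P-\lambda^m\widehat\P$ together with the standard telescoping estimate then yields $\widehat\P-\chi P\in\Schwartz$, and the equivalence ``for one $\chi$'' $\Leftrightarrow$ ``for all $\chi$'' together with the uniqueness of $P$ is immediate, a difference of two admissible cutoffs being compactly supported. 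Finally \refitem{item:widehat2}$\Leftrightarrow$\refitem{item:widecheck} is the Fourier-dual description of essentially homogeneous distributions on the graded group $\X^*\times\RR\times G$, applied to $\widecheck\P$: a tempered distribution on it lies in $\Algebra E'+\Schwartz$, has singular support in $\{(0,0,0)\}$ and satisfies $(\widecheck\tau_\lambda)_*\widecheck\P-\lambda^m\widecheck\P\in\Schwartz$ for all $\lambda>0$ if and only if its Euclidean Fourier transform $\widehat\P$ is smooth of the form $\chi P+f$ with $P$ smooth and $\widehat\tau$-homogeneous of degree $m$ off the origin and $f\in\Schwartz$, the membership assertions being read off from the Fourier transforms of the two summands; this is exactly (the content of) \cref{sec:homogeneous_distributions}, and chaining the three equivalences proves the proposition. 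The expected main obstacle is precisely the uniform-in-$\lambda$ control of $h_\lambda$ in \refitem{item:widehat1}$\Rightarrow$\refitem{item:widehat2} together with the matching of the membership spaces under Fourier transform in \refitem{item:widehat2}$\Leftrightarrow$\refitem{item:widecheck}; both are classical, the only feature specific to our setting being that one works on the honest graded groups $\X\times\RR\times\lie g^*$ and $\X^*\times\RR\times G$, which, unlike the zoom action on $\TG$ itself, carry positive dilations.
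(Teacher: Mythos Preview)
Your proposal is correct and follows the same strategy as the paper: both reduce the four conditions to the classical equivalences for essentially homogeneous distributions on a graded group, citing \cite{Tay84}*{Lemma~2.2, Proposition~2.1} and \cite{BG88}*{Proposition~12.72} for \refitem{item:widehat1}$\Leftrightarrow$\refitem{item:widehat2} and for the passage to \refitem{item:widecheck}.

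The one organisational difference worth noting is how the loop is closed back to \refitem{item:pseudo}. You argue \refitem{item:widehat2}$\Rightarrow$\refitem{item:pseudo} by claiming directly that $\chi P\in\Algebra O_M(\X\times\RR)\completedtensor\Algebra O_C(\lie g^*)$ from growth considerations on derivatives. This is morally right, but the paper never characterises that completed tensor product as a concrete function space, so your justification (``$\xi$-derivatives have at most a fixed polynomial degree, $(x,t)$-derivatives have strictly decreasing degree'') does not quite pin down membership. The paper instead closes the loop via \refitem{item:widecheck}$\Rightarrow$\refitem{item:pseudo}: from $\widecheck\P\in\Algebra E'(\X^*\times\RR\times G)+\Schwartz(\X^*\times\RR\times G)$ one uses the tensor factorisations $\Algebra E'(\X^*\times\RR\times G)\cong\Algebra E'(\X^*\times\RR)\completedtensor\Algebra E'(G)$ and the analogous one for Schwartz, and then observes that Fourier transform sends $\Algebra E'(\X^*\times\RR)$ and $\Schwartz(\X^*\times\RR)$ into $\Algebra O_M(\X\times\RR)$ while $\Algebra E'(G),\Schwartz(G)\subseteq\Algebra O_M'(G)$, giving $\P\in\Algebra O_M(\X\times\RR)\completedtensor\Algebra O_M'(G)=\Algebra O_r'(\TG)$ without ever having to describe $\Algebra O_M\completedtensor\Algebra O_C$ concretely. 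You flag exactly this membership issue as the ``expected main obstacle'', and the paper's route through \refitem{item:widecheck} is the clean resolution.
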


\begin{remark}
	We often start with a function $\varphi$ (``$= \widehat \P$'') satisfying \refitem{item:widehat1}. Then $\varphi$ defines a tempered distribution,
	hence is of the form $\varphi = \widehat\P$ for some $\P\in \Schwartz'(\TG)$. 
	The previous proposition then implies $\P \in \PPseu^m$.
	Similarly, we may start with a distribution $u$ (``$= \widecheck \P$'') satisfying \refitem{item:widecheck},
	which is automatically a tempered distribution, hence of the form $\widehat\P$ for some $\P \in \PPseu^m$.
\end{remark}
\begin{proof}
	The implication \refitem{item:pseudo}\(\Rightarrow\)\refitem{item:widehat1} is clear. The equivalence of \refitem{item:widehat1} and \refitem{item:widehat2} is \cite{Tay84}*{Lemma~2.2} or \cite{BG88}*{Proposition~12.72}.
	 Furthermore, \refitem{item:widehat1} implies \refitem{item:widecheck} is shown in \cite{Tay84}*{Proposition~2.1}. The last implication \refitem{item:widecheck}\(\Rightarrow\)\refitem{item:pseudo} is clear once we note that \(\mathcal F_{(\eta,\tau)\to(x,t)}\widecheck\P\) belongs to \(\Algebra O'_r(\mathcal G)\). By \cite{Tre67}*{Theorem~51.6, Corollary~51.7}
	\begin{equation*}\Algebra E'(\X^*\times\RR\times G)+\Schwartz(\X^*\times\RR\times G)\cong \Algebra E'(\X^*\times\RR)\completedtensor\Algebra E'(G)+\Schwartz(\X^*\times\RR)\completedtensor\Schwartz(G)
	\end{equation*}
	holds, $\Algebra E'(G)$ and $\Algebra S(G)$ are contained in $\Algebra O_M'(G)$ 
	and the Fourier transforms of $\Algebra E'(\X^* \times \RR)$ and $\Schwartz(\X^* \times \RR)$ are in $\Algebra O_M(\X \times \RR)$.
\end{proof}
In particular, \refitem{item:widehat2} implies the following symbol estimates.
Fix homogeneous quasi-norms $\norm\argument_\alpha$ on $G$ for the dilations $\alpha$ 
and $\norm\argument_\beta$ on $\X$ for $\beta$ as in \eqref{eq:quasi-norm} with $q$ being a common multiple of the weights $q_1, \dots, q_n$ of $\alpha$ and the weights $r_1, \dots, r_d$ of $\beta$.
\begin{corollary}\label{res:symbol_estimates}
	Let \(\mathbb P\in\PPseu^m\), then for all \((a,b,c)\in\NN^{n+d+1}_0\) there is a constant \(C_{a,b,c}>0\) such that
		\begin{align}\label{eq:symbol_estimates}
			\abs{\partial^a_\xi\partial^b_x \partial^c_t\widehat{\P}(x,t,\xi)}\leq C_{a,b,c} (1+\norm{\xi}_\alpha+\norm{x}_\beta+\abs{t})^{m-[a]_\alpha-[b]_\beta-c}	.	\end{align}
\end{corollary}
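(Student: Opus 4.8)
The plan is to deduce the symbol estimates from the characterization in Proposition~\ref{res:equivalence-pseudo-symbol}~\refitem{item:widehat2}, namely that $\widehat\P = \chi \cdot P + f$ for some homogeneous $P \in \Smooth(\X \times \RR \times \lie g^* \setminus \simpleset{(0,0,0)})$, a Schwartz function $f$, and a cutoff $\chi$ which vanishes near the origin and equals $1$ outside a compact set. Since $f \in \Schwartz(\X\times\RR\times\lie g^*)$, its derivatives of all orders decay faster than any polynomial, so it trivially satisfies a bound of the form \eqref{eq:symbol_estimates}; likewise on the compact region where $\chi$ is not yet constant, $\chi \cdot P$ is smooth with compact support (after subtracting $f$, or directly: $\chi P$ is smooth everywhere and the product rule keeps it so), and the estimate is immediate on any bounded set because both sides are bounded there and the right-hand side stays bounded below. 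Hence everything reduces to estimating $\partial_\xi^a \partial_x^b \partial_t^c P(x,t,\xi)$ on the region where $\chi \equiv 1$, i.e.\ outside a compact neighbourhood of the origin.

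The key step is to exploit homogeneity of $P$ together with the compatibility of the dilations with the quasi-norms. First I would observe that differentiating the homogeneity identity $P(\beta_\lambda x, \lambda t, \alpha_\lambda \xi) = \lambda^m P(x,t,\xi)$ shows that $\partial_\xi^a \partial_x^b \partial_t^c P$ is homogeneous of degree $m - [a]_\alpha - [b]_\beta - c$ with respect to the combined dilation $\widehat\tau_\lambda$: indeed $\partial_{\xi_j}$ applied to a $k$-homogeneous function (where $\xi_j$ has $\alpha$-weight $q_j$) lowers the homogeneous degree by $q_j$, and similarly for $\partial_{x_i}$ (weight $r_i$) and $\partial_t$ (weight $1$), by the standard argument recalled after \cref{def:homogeneousOrder}. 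Each such derivative of $P$ is continuous on $\X \times \RR \times \lie g^* \setminus \simpleset{(0,0,0)}$, hence bounded on the compact ``unit sphere'' $S = \set{(x,t,\xi)}{\norm x_\beta^{2q} + |t|^{2q} + \norm\xi_\alpha^{2q} = 1}$ for the quasi-norms from \eqref{eq:quasi-norm}. Given any $(x,t,\xi) \neq (0,0,0)$ with $N \coloneqq (\norm x_\beta^{2q} + t^{2q} + \norm\xi_\alpha^{2q})^{1/2q}$, writing $(x,t,\xi) = \widehat\tau_N(x',t',\xi')$ with $(x',t',\xi') \in S$ and using homogeneity gives
\[
	\abs{\partial_\xi^a \partial_x^b \partial_t^c P(x,t,\xi)} = N^{m - [a]_\alpha - [b]_\beta - c} \abs{\partial_\xi^a \partial_x^b \partial_t^c P(x',t',\xi')} \leq C_{a,b,c}' \, N^{m-[a]_\alpha-[b]_\beta-c}
\]
on the region $N \geq c_0 > 0$ where $\chi \equiv 1$. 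Since on that region $N$ is comparable to $1 + N$ and, by equivalence of quasi-norms and the elementary inequality $N \leq \norm x_\beta + |t| + \norm\xi_\alpha$ up to constants (and conversely), $1 + N$ is comparable to $1 + \norm\xi_\alpha + \norm x_\beta + |t|$, we get the claimed bound with an adjusted constant. Note one must be slightly careful about the sign of the exponent $m - [a]_\alpha - [b]_\beta - c$: if it is negative, $N^{m-\dots}$ is largest for small $N$, but we only work in $N \geq c_0$ where it is bounded, and the comparison $N \simeq 1 + N$ there still yields a polynomial bound in $1 + \norm\xi_\alpha + \norm x_\beta + |t|$.

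Finally I would patch the two regions together: on $\set{N \leq c_0}$ (a compact set) all derivatives of $\widehat\P = \chi P + f$ are bounded and $(1 + \norm\xi_\alpha + \norm x_\beta + |t|)^{m - [a]_\alpha - [b]_\beta - c}$ is bounded below by a positive constant, so the estimate holds there for free; on $\set{N \geq c_0}$ we combine the Schwartz decay of $f$ with the homogeneity bound for $\chi P = P$ just obtained. Taking the maximum of the two constants finishes the argument. I do not expect a genuine obstacle here — the only points requiring care are the bookkeeping of dilation weights under differentiation and the comparison of the quasi-norm ``radius'' $N$ with the Euclidean-type expression $1 + \norm\xi_\alpha + \norm x_\beta + |t|$ on the relevant region, both of which are routine given \cref{res:norm_estimates} and the equivalence of homogeneous quasi-norms.
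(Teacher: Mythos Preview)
Your proposal is correct and is precisely the standard argument the paper has in mind: the corollary is stated immediately after Proposition~\ref{res:equivalence-pseudo-symbol} with the remark that part~\refitem{item:widehat2} implies the symbol estimates, and your decomposition $\widehat{\P}=\chi P+f$ together with the homogeneity-plus-compactness argument for $P$ is exactly how one unpacks that implication.
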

In this estimate, we may replace \(1+\norm{\xi}_\alpha+\norm{x}_\beta+\abs{t}\)  by 
\begin{equation} \label{eq:JapaneseBracket:homogeneous}
	\langle (x,t,\xi)\rangle_H = \left(1+\norm{\xi}_\alpha^{2q}+\norm{x}_\beta^{2q}+ t^{2q}\right)^{\frac{1}{2q}}
\end{equation}
using the equivalence of all homogeneous quasi-norms.

\begin{remark}
	If $\P \in \PPseu^m$ for some $m \in \RR$, then one can study the $\lambda$-dependence of the Schwartz function \(\lambda^m{\tau_{\lambda^{-1}}}_* \P-\P\).
	Choose a smooth cut-off $\omega \colon \RR \to [0,1]$ such that
	\(\omega(\lambda)=0\) when \(\lambda \leq 1\) and \(\omega(\lambda)=1\) when \(\lambda \geq 2\).
	Then the function \((\lambda,x,t,v)\mapsto\omega(\lambda) (\lambda^m{\tau_{\lambda^{-1}}}_* \P- \P)(x,t,v)\) belongs to \(\Algebra{O}_M(\RR)\completedtensor \Schwartz(\TG)\).
\end{remark}

\begin{proposition}\label{res:singular_support}
	The singular support of \(\P\in\PPseu^m\) is contained in \(\X\times\RR\times\{0\}\).
\end{proposition}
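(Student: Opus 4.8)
The plan is to reduce to a local statement and then to run the classical integration-by-parts argument on the Fourier side, using the symbol estimates of \cref{res:symbol_estimates}. Fix a point $(x_0,t_0,v_0)\in\X\times\RR\times G$ with $v_0\neq 0$; as $\singsupp(\P)$ is closed it suffices to show that $\P$ (more precisely the distribution $\IntegrationMap_r(\P)$) agrees with a smooth function near $(x_0,t_0,v_0)$. Since $v_0\neq 0$, fix an index $j\in\{1,\dots,n\}$ with $(v_0)_j\neq 0$, an open neighbourhood $W\subseteq G$ of $v_0$ and a constant $c>0$ with $\abs{v_j}\geq c$ for all $v\in W$. Recall that $\widehat\P=\mathcal F_{v\to\xi}\P$ lies in $\Smooth(\X\times\RR\times\lie g^*)$ and satisfies \eqref{eq:symbol_estimates}, so that $\P$ is recovered from $\widehat\P$ by the inverse Fourier transform in the $\xi\leftrightarrow v$ variables.

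Fix $N\in\NN_0$ with $m-N<-Q(\alpha)$ and use the identity $\E^{\I\langle\xi,v\rangle}=(\I v_j)^{-N}\partial_{\xi_j}^N\E^{\I\langle\xi,v\rangle}$, valid on $W$. Integrating by parts $N$ times in $\xi_j$ (tested against functions supported in $\X\times\RR\times W$) shows that, on $\X\times\RR\times W$,
\begin{equation*}
	\P(x,t,v)=(2\pi)^{-n}\,\frac{(-1)^N}{(\I v_j)^N}\int_{\lie g^*}\E^{\I\langle\xi,v\rangle}\,\partial_{\xi_j}^N\widehat\P(x,t,\xi)\,\D\xi .
\end{equation*}
Applying \eqref{eq:symbol_estimates} to the multi-index $a=Ne_j$ (so $[a]_\alpha=Nq_j\geq N$) and using $m-N<0$ we obtain $\abs{\partial_{\xi_j}^N\widehat\P(x,t,\xi)}\leq C_N(1+\norm{\xi}_\alpha)^{m-N}$, uniformly in $(x,t)\in\X\times\RR$ — the negative exponent means the bound only improves as $\norm{x}_\beta+\abs t$ grows, so no localisation in $(x,t)$ is needed. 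By the choice of $N$, the function $(1+\norm{\xi}_\alpha)^{m-N}$ is integrable over $\lie g^*$ (cf.\ \cref{res:integrable}, applied to $\lie g^*$ with the dilations dual to $\alpha$, whose homogeneous dimension is again $Q(\alpha)$). Hence the displayed integral converges absolutely, uniformly for $(x,t)\in\X\times\RR$ and $v\in W$, and the usual argument confirms that it indeed represents $\P$ there. Enlarging $N$ makes differentiation under the integral legitimate: a $\partial_x$- or $\partial_t$-derivative only lowers the order in \eqref{eq:symbol_estimates}, while a $\partial_{v_k}$-derivative produces a factor $\I\xi_k$ of $\alpha$-order $q_k$ (absorbed by $q_k$ additional integrations by parts) and a factor bounded on $W$. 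Thus $\P$ is of class $C^\ell$ near $(x_0,t_0,v_0)$ for every $\ell$, hence smooth there.

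Since $v_0\neq 0$ was arbitrary and $G\setminus\{0\}=\bigcup_{j=1}^n\{v\in G:v_j\neq 0\}$, it follows that $\P$ is smooth on $\X\times\RR\times(G\setminus\{0\})$, that is $\singsupp(\P)\subseteq\X\times\RR\times\{0\}$. The only nonroutine point is making the oscillatory-integral representation and the differentiation under it precise, which however works exactly as in the $\RR^n$ case thanks to \eqref{eq:symbol_estimates}. Alternatively, one can avoid oscillatory integrals entirely and argue directly from \cref{res:equivalence-pseudo-symbol}~\refitem{item:widecheck}: there $\widecheck\P$ lies in $\Algebra E'(\X^*\times\RR\times G)+\Schwartz(\X^*\times\RR\times G)$ and is smooth on $(\X^*\times\RR\times G)\setminus\{(0,0,0)\}$, so for a cut-off $\rho\in\SmoothCompactSupp(G)$ with $\rho\equiv 1$ near $v_0$ and $0\notin\supp\rho$ one gets $\rho\cdot\widecheck\P\in\SmoothCompactSupp(\X^*\times\RR\times G)+\Schwartz(\X^*\times\RR\times G)$; applying $\mathcal F_{(\eta,\tau)\to(x,t)}$, which commutes with multiplication by the $v$-only function $\rho$, shows that $\rho\cdot\P$, and hence $\P$ near $\X\times\RR\times\{v_0\}$, is smooth.
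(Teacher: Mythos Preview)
Your proof is correct. Your primary argument runs the standard integration-by-parts estimate against the symbol bounds of \cref{res:symbol_estimates} to show that the inverse Fourier integral in $\xi$ converges absolutely (together with all derivatives) once $v$ stays away from $0$; this is a self-contained, hands-on route. The paper instead takes the shortcut you describe in your final paragraph: it invokes \cref{res:equivalence-pseudo-symbol}~\refitem{item:widecheck}, multiplies $\widecheck\P$ by a cut-off $\chi\in\SmoothCompactSupp(G)$ supported away from $0$, observes that $\chi\cdot\widecheck\P\in\Schwartz(\X^*\times\RR\times G)$ since $\singsupp(\widecheck\P)\subseteq\{(0,0,0)\}$, and then Fourier-transforms back in $(\eta,\tau)$ to conclude that $(1\tensor\chi)\cdot\P$ is smooth. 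Both arguments are standard; the paper's is shorter because all the analytic work has already been packaged into \cref{res:equivalence-pseudo-symbol}, whereas your direct computation makes the mechanism (decay in $\xi$ via integration by parts) explicit and avoids the detour through the $(x,t)$-Fourier transform.
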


\begin{proof}
	Let \(v_0 \in G \setminus \simpleset 0\).
	Choose a cut-off \(\chi\in\SmoothCompactSupp(G)\) which is constant~\(1\) near \(v_0\) and vanishes in a neighbourhood of~\(0\).
	By \cref{res:equivalence-pseudo-symbol}, \(\widecheck{\P}=\mathcal F^{-1}_{(x,t)\to(\eta,\tau)} \P\) belongs to \(\mathcal E'(\X^*\times\RR\times G)+\Schwartz(\X^*\times\RR\times G)\) and its singular support is contained in~\(\{(0,0,0)\}\). 
	Then $\mathcal F^{-1}_{(x,t)\to(\eta,\tau)}((1_{\X \times \RR} \tensor \chi)\cdot \P)=\chi\cdot \widecheck{\P}$ is in $\Schwartz(\X^*\times\RR\times G)$, hence
	\((1_{\X \times \RR} \tensor \chi)\cdot \P\) is smooth.
\end{proof}

\paragraph{Principal cosymbol} To define a principal cosymbol as in \cite{vEY19} the following lemma is crucial.
\begin{lemma}\label{res:smoothingat1implies0}
	Let \(\mathbb P\in\PPseu^m\) such that \(\P_1\in\Schwartz(\X\rtimes^1 G)\). Then \(\P_0\in\Schwartz(\X\rtimes^0 G)\).
\end{lemma}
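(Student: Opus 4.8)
The key observation is that $\mathbb{P} \in \PPseu^m$ means $\mathbb P$ is essentially homogeneous, so $\lambda^m \mathbb P - {\tau_\lambda}_* \mathbb P \in \Schwartz(\TG)$ for all $\lambda > 0$. Evaluating this at $t = 0$ and $t = 1$ will relate $\mathbb P_0$ to $\mathbb P_1$ up to Schwartz errors, provided we track how the zoom action interacts with the evaluation maps $\ev_t$. The point is that the zoom action $\tau_\lambda$ rescales the $t$-coordinate by $\lambda^{-1}$: it maps the fibre over $t$ to the fibre over $\lambda^{-1} t$. So by choosing $\lambda$ large we can push the fibre over $t = 1$ towards the fibre over $t = 0$, and the essential homogeneity controls the difference.

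\emph{Step 1: Relate the action of $\tau_\lambda$ at different fibres.} I would first make precise that ${\tau_\lambda}_* \mathbb P$ restricted to the fibre $\X \rtimes^0 G$ equals (up to the rescaling $\beta_{\lambda^{-1}}$ on $\X$ and $\alpha_\lambda$ on $G$) the distribution $\mathbb P_0$ itself, because $\tau_\lambda^0(x,0) = (\beta_{\lambda^{-1}} x, 0)$ stays in the $t=0$ fibre; whereas ${\tau_\lambda}_* \mathbb P$ restricted to $t = \lambda^{-1}$ equals the rescaled $\mathbb P_1$. Concretely, using \cref{res:evaluationOfDistributions} and that the zoom action is a groupoid automorphism (\cref{res:zoom_action}), I get that $\ev_0({\tau_\lambda}_* \mathbb P)$ is a dilate of $\mathbb P_0$ and $\ev_{\lambda^{-1}}({\tau_\lambda}_* \mathbb P)$ is a dilate of $\mathbb P_1$. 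More useful: the family $t \mapsto \mathbb P_t$ is smooth (indeed $\mathbb P \in \Algebra O_M(\X \times \RR) \completedtensor \Algebra O_M'(G)$), so $\mathbb P_0 = \lim_{t \to 0} \mathbb P_t$ in an appropriate sense, and I want to express $\mathbb P_0$ as a limit involving $\mathbb P_1$.

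\emph{Step 2: Pass to the Fourier-transformed picture.} This is probably cleanest using \cref{res:equivalence-pseudo-symbol}: $\mathbb P \in \PPseu^m$ iff $\widehat{\mathbb P} \in \Smooth(\X \times \RR \times \lie g^*)$ with $\widehat{\mathbb P}(\beta_\lambda x, \lambda t, \alpha_\lambda \xi) - \lambda^m \widehat{\mathbb P}(x,t,\xi) \in \Schwartz$, and the symbol estimates of \cref{res:symbol_estimates} hold. Now $\mathbb P_1 \in \Schwartz(\X \rtimes^1 G)$ translates (via Fourier transform in $v$, which is the Euclidean Fourier transform since $\mathbb P_1 \in \Algebra O_M(\X) \completedtensor \Algebra O_M'(G)$) into the statement that $\xi \mapsto \widehat{\mathbb P}(x, 1, \xi)$, together with all its $x$-derivatives, is Schwartz in $(x,\xi)$ jointly. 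Writing $\widehat{\mathbb P} = \chi \cdot P + f$ with $P$ homogeneous of degree $m$ away from the origin and $f \in \Schwartz$ as in \cref{res:equivalence-pseudo-symbol}\refitem{item:widehat2}, the hypothesis $\mathbb P_1 \in \Schwartz$ forces the homogeneous part $P$ to vanish on the slice $t = 1$ (since $P(x,1,\xi)$ is homogeneous in $(x,\xi)$ but the full symbol is Schwartz there, the homogeneous term cannot survive unless it is zero). By homogeneity, $P(\beta_\lambda x, \lambda, \alpha_\lambda \xi) = \lambda^m P(x,1,\xi) = 0$, so $P$ vanishes on the entire hypersurface $\{t > 0\}$, hence on $\{t \neq 0\}$ by considering $t < 0$ separately or by continuity, hence on all of $\X \times \RR \times \lie g^* \setminus \{0\}$ by continuity — in particular $P(x, 0, \xi) = 0$. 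Then $\widehat{\mathbb P}_0(x,\xi) = \widehat{\mathbb P}(x,0,\xi) = \chi(x,0,\xi) P(x,0,\xi) + f(x,0,\xi) = f(x,0,\xi)$, which is Schwartz in $(x,\xi)$, giving $\mathbb P_0 \in \Schwartz(\X \rtimes^0 G)$.

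\emph{The main obstacle.} The delicate point is the claim that $\mathbb P_1 \in \Schwartz$ forces the homogeneous symbol $P$ to vanish identically on $\{t = 1\}$ (and hence everywhere). One has to argue carefully that a function which is homogeneous of degree $m$ with respect to the anisotropic dilations $\widehat\tau$, smooth away from the origin, and whose restriction to the slice $t = 1$ is Schwartz, must be the zero function: the slice $t = 1$ is a global transversal to the dilation orbits in the half-space $t > 0$ (every orbit with $t > 0$ meets it exactly once), so $P$ on $\{t > 0\}$ is determined by $P(\cdot, 1, \cdot)$; if the latter decays rapidly while $P(\cdot, 1, \cdot)$ is also, say, bounded below in norm on its support by homogeneity considerations along rays, one gets a contradiction unless $P \equiv 0$ there. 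One must also handle the interplay with the cut-off $\chi$ and confirm that "$\mathbb P_1 \in \Schwartz$" is genuinely equivalent to "$\xi \mapsto \widehat{\mathbb P}(\cdot, 1, \cdot)$ is Schwartz in $(x,\xi)$" — this uses that the Fourier transform in the $v$ variable is an isomorphism $\Schwartz(\X \rtimes^1 G) \cong \Schwartz(\X \times \lie g^*)$, which is standard. An alternative, perhaps slicker, route avoiding the explicit $P$: use essential homogeneity directly, $\lambda^m {\tau_{\lambda^{-1}}}_* \mathbb P - \mathbb P \in \Schwartz(\TG)$, restrict to a neighbourhood of $t = 0$, and observe that as $\lambda \to \infty$ the distribution $\lambda^m {\tau_{\lambda^{-1}}}_* \mathbb P$ restricted near $t = 0$ is governed by $\mathbb P_1$ (because ${\tau_{\lambda^{-1}}}$ sends $t = \lambda$ near... no — sends $t = 1$ to $t = \lambda$, going the wrong way); so one should use $\lambda \to 0^+$, where ${\tau_{\lambda^{-1}}}_*$ with $\lambda^{-1}$ large sends the fibre $t=1$ towards $t = 0$ — then $\mathbb P_0 = \ev_0(\mathbb P)$ differs from a rescaled $\ev_0(\lambda^m {\tau_{\lambda^{-1}}}_* \mathbb P)$, which involves $\mathbb P_{\lambda^{-1} \cdot 0} = \mathbb P_0$ trivially, so this naive approach needs the symbol picture after all. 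I would therefore go with the Fourier/symbol argument of Step 2 as the main line.
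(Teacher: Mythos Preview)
Your Step~2 via the decomposition $\widehat{\mathbb P} = \chi P + f$ from \cref{res:equivalence-pseudo-symbol} is exactly the paper's approach: once $\widehat{\mathbb P}_1$ is Schwartz, essential homogeneity makes every $\widehat{\mathbb P}_t$ with $t>0$ Schwartz, so the homogeneous part $P$ vanishes at $t=0$ by continuity and $\widehat{\mathbb P}_0 = f(\cdot,0,\cdot)$ is Schwartz. One correction to your parenthetical: $P(x,1,\xi)$ is \emph{not} homogeneous in $(x,\xi)$ (the dilations $\widehat\tau_\lambda$ move the $t$-coordinate), but your transversal argument in the obstacle paragraph is the right fix --- and note that you only need $P|_{t=0}=0$, which follows from $P(y,0,\eta)=\lim_{\lambda\to\infty}\lambda^{-m}P(\beta_\lambda y,1,\alpha_\lambda\eta)=0$ since $P(\cdot,1,\cdot)$ is Schwartz; the stronger claim $P\equiv 0$ on $\{t>0\}$ need not hold.
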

\begin{proof}
	It suffices to show that \(\widehat{\P}_0\) is a Schwartz function. As \(\widehat{\P}_1\) is Schwartz and \(\mathbb P\) is essentially homogeneous, also \(\widehat{\P}_t\) is Schwartz for all \(t>0\). Let \(P\) be the homogeneous function as in \cref{res:equivalence-pseudo-symbol}, it must vanish for \(t>0\) and therefore by continuity also at \(t=0\). Hence, \(\widehat{\P}_0\) is a Schwartz function.
\end{proof}	
\begin{definition}
	For \(m\in\RR\), define the \emph{space of essentially homogeneous distributions at \(t=0\)} by
	\begin{equation*}
		\Ess^m=\set{ u\in \Algebra O'_r(\X\rtimes^0 G) }{ {\tau_\lambda}_*u-\lambda^mu\in\Schwartz(\X\rtimes^0 G) \text{ for all $\lambda > 0$}},
	\end{equation*} Here, \(\tau\) is understood as the restriction of the zoom action to \(t=0\). 
\end{definition}
Note that an analogous result to \cref{res:equivalence-pseudo-symbol} holds for \(\Ess^m\).
\begin{definition}
Define the
	\emph{space of principal cosymbols of order \(m\)} as the quotient space \[\Symb^m=\Ess^m/\Schwartz(\X\rtimes^0 G).\]  The \emph{principal cosymbol map}
	\(\sigma_m\colon\Pseu^m\to\Symb^m\) is defined as follows. For \(\P_1\in\Pseu^m\) take any extension \(\P\in\PPseu^m\) and let \(\sigma_m(\P_1)=[\P_0]\). The principal cosymbol map is well-defined by \cref{res:smoothingat1implies0}.
\end{definition}

\begin{remark} \label{remark:P:principalSymbolOfOperators}
	Under the assumption that \(\theta^1\) is polynomially free also \(\sigma_m(P)\) for \(P\in\Op(\Pseu^m)\) is well-defined by \cref{res:Op-injective}. This fails if \(\Op_1\) is not injective, as \(P\in\Op(\Pseu^m)\) may lift to different \(\P_1\in\Pseu^m\) having different principal cosymbols. 
	
	This is similar to the situation in \cite{AMY22}. They circumvent the problem by defining a principal symbol which does not depend on all representations of the group at \(t=0\) but only on a subset -- the Helffer--Nourrigat cone. It is conceivable that a similar approach works if property \ref{assumption:homomorphisms} holds.
\end{remark}

\begin{proposition}\label{res:short_exact_sequence}
	For \(m\in\RR\) the map \(\PPseu^{m-1}\hookrightarrow\PPseu^m\) with \(\P\mapsto t\P\) induces a short exact sequence
	\begin{equation*}
		\begin{tikzcd}
		0\arrow[r]&	\PPseu^{m-1} \arrow[r] & \PPseu^m\arrow[r,"\ev_0"] & \Ess^m\arrow[r] &0.
		\end{tikzcd}
	\end{equation*}
	It admits a linear split \(r_m\colon \Ess^m\to\PPseu^m\) with the property that \(r_m(u)_1=u\) under the canonical identification \(\X\rtimes^1 G= \X\times G=\X\rtimes^0 G\) as range fibred spaces over \(\X\). Moreover, this induces a short exact sequence
	\begin{equation*}
		\begin{tikzcd}
			0\arrow[r]&	\Pseu^{m-1} \arrow[r] & \Pseu^m\arrow[r,"\sigma_m"] & \Symb^m\arrow[r] &0.
		\end{tikzcd}
	\end{equation*}
\end{proposition}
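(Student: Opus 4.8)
The plan is to run the whole argument in the symbol picture of \cref{res:equivalence-pseudo-symbol}, which reduces every assertion to an elementary manipulation of smooth essentially homogeneous functions on \(\X\times\RR\times\lie g^*\); throughout I fix a common multiple \(q\) of the weights of \(\alpha\) and \(\beta\), the associated quasi-norms, and write \(\widehat\tau_\lambda(x,t,\xi)=(\beta_\lambda(x),\lambda t,\alpha_\lambda(\xi))\) for the transformed zoom dilations as in \cref{res:symbol_estimates}. The one auxiliary fact used repeatedly is that if \(g\in\Schwartz(\X\times\RR\times\lie g^*)\) vanishes on \(\{t=0\}\), then \(g/t=\int_0^1(\partial_tg)(x,st,\xi)\,\D s\) is again Schwartz; equivalently, a smooth function whose product with \(t\) is Schwartz is itself Schwartz (separate the estimates into \(\abs t\le 1\) and \(\abs t\ge1\)).

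\emph{First sequence.} First I would verify that \(\P\mapsto t\P\) (multiplication of the range fibred distribution by the polynomial coordinate function \(t\)) sends \(\PPseu^{m-1}\) into \(\PPseu^m\): since \(t\circ\tau^0_\lambda=\lambda t\) one has \({\tau_\lambda}_*(t\P)=\lambda t\cdot{\tau_\lambda}_*\P\), hence \(\lambda^m t\P-{\tau_\lambda}_*(t\P)=\lambda t(\lambda^{m-1}\P-{\tau_\lambda}_*\P)\in t\Schwartz(\TG)\subseteq\Schwartz(\TG)\). This map is injective (if \(t\P=0\) then \(\P(\varphi)\in\Smooth(\X\times\RR)\) vanishes off \(\{t=0\}\), hence is \(0\)), and clearly \(\ev_0(t\P)=0\). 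For the reverse inclusion \(\ker\ev_0\subseteq t\,\PPseu^{m-1}\), let \(\P\in\PPseu^m\) with \(\P_0=0\); then \(\widehat\P\) is smooth with \(\widehat\P|_{t=0}=\widehat{\P_0}=0\) by \cref{res:equivalence-pseudo-symbol}~\refitem{item:widehat1}, so Hadamard's lemma gives \(\widehat\P=t\widehat\Q\) with \(\widehat\Q\) smooth. Substituting \(\widehat\P=t\widehat\Q\) into the essential \(m\)-homogeneity of \(\widehat\P\) and cancelling a factor \(\lambda t\) shows that \(t\cdot(\widehat\Q\circ\widehat\tau_\lambda-\lambda^{m-1}\widehat\Q)\) is Schwartz, so by the auxiliary fact \(\widehat\Q\) satisfies \cref{res:equivalence-pseudo-symbol}~\refitem{item:widehat1} in order \(m-1\); thus \(\Q\in\PPseu^{m-1}\) and \(\P=t\Q\). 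Together with the split constructed next, this gives exactness of the first sequence.

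\emph{The split.} Given \(u\in\Ess^m\), let \(P_0\) be the genuinely \(m\)-homogeneous (for \(\widehat\tau^0_\lambda=(\beta_\lambda,\alpha_\lambda)\)) smooth function on \((\X\times\lie g^*)\setminus\{0\}\) obtained by homogeneously extending \(\widehat u\) from the compact quasi-norm sphere, and put \(f_0=\widehat u-\chi_0P_0\in\Schwartz(\X\times\lie g^*)\) for a cut-off \(\chi_0\) vanishing near \(0\) and equal to \(1\) off a compact set (the \(\Ess^m\)-analogue of \cref{res:equivalence-pseudo-symbol}~\refitem{item:widehat2}). I would choose a cut-off \(\chi\) on \(\X\times\RR\times\lie g^*\) with \(\chi=0\) on \(\{N\le 1/4\}\), \(\chi=1\) on \(\{N\ge 1/2\}\) and \(\chi|_{t=0}=\chi_0\) (note \(\chi|_{t=\pm1}\equiv1\) since \(N(x,\pm1,\xi)\ge1\)), and define \(P\) on \((\X\times\RR\times\lie g^*)\setminus\{0\}\) to be \(m\)-homogeneous for \(\widehat\tau\) with \(P|_{t=0}=P_0\) and \(P|_{t=\pm1}=\chi_0P_0\); the slices \(t=0,\pm1\) meet every \(\widehat\tau\)-orbit, and because \(\chi_0P_0\) is genuinely homogeneous for large argument its homogeneous extension is locally constant in \(t\) near \(\{t=0\}\) away from the origin, so \(P\) is smooth. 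Picking \(\eta\in\Schwartz(\RR)\) with \(\eta(0)=\eta(1)=1\), I let \(r_m(u)\in\PPseu^m\) be the element with \(\widehat{r_m(u)}=\chi P+f_0\tensor\eta\); here \(\chi P\) is smooth (the cut-off kills the singularity of \(P\) at \(0\)) and essentially \(m\)-homogeneous because \(\chi P\circ\widehat\tau_\lambda-\lambda^m\chi P=\lambda^m(\chi\circ\widehat\tau_\lambda-\chi)P\) is compactly supported and smooth, hence Schwartz, and adding \(f_0\tensor\eta\in\Schwartz\) changes nothing. Restricting to \(t=0\) and to \(t=1\) one reads off \(\widehat{r_m(u)}|_{t=0}=\chi_0P_0+f_0=\widehat u=\widehat{r_m(u)}|_{t=1}\), i.e.\ \(\ev_0(r_m(u))=u\) and \(r_m(u)_1=u\) under the canonical identification. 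All choices are independent of \(u\) and linear in \(\widehat u\), so \(r_m\) is linear.

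\emph{Second sequence.} The inclusion \(\Pseu^{m-1}\hookrightarrow\Pseu^m\) comes from \((t\P)_1=\P_1\) and is trivially injective; \(\sigma_m\) is well defined by \cref{res:smoothingat1implies0}, and it vanishes on \(\Pseu^{m-1}\) since \((t\P)_0=0\). Conversely, if \(\P_1\in\Pseu^m\) with \(\sigma_m(\P_1)=0\), choose an extension \(\P\in\PPseu^m\), so \(\P_0\in\Schwartz(\X\rtimes^0 G)\); pick \(h\in\Schwartz(\TG)\) restricting to \(\P_0\) at \(t=0\), so that \(\P-h\in\PPseu^m\) with \((\P-h)_0=0\), and apply the first sequence to write \(\P-h=t\Q\) with \(\Q\in\PPseu^{m-1}\). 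Then \(\P_1=\Q_1+h_1\in\Pseu^{m-1}\), using that \(\Schwartz(\X\rtimes^1 G)\subseteq\Pseu^{m'}\) for every \(m'\) (a Schwartz \(f\) extends to the Schwartz, hence essentially \(m'\)-homogeneous, distribution \(u_{f\tensor\psi}\in\Algebra O_r'(\TG)\) with \(\psi(1)=1\)). Surjectivity of \(\sigma_m\) is immediate from \(\sigma_m(r_m(u)_1)=[u]\). The steps I expect to require the most care are the two Hadamard-type divisions by \(t\) in the first step (keeping track of the seminorm bounds) and, in the split, checking that the homogeneous extension \(P\) is smooth across \(\{t=0\}\) while simultaneously matching the prescribed values at \(t=0\) and \(t=1\) — which is exactly what dictates the specific choices of \(\chi\) and \(\eta\).
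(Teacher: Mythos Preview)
Your proof is correct and follows the same strategy as the paper: the exactness at \(\PPseu^m\) is obtained via the identical Hadamard division \(\widehat\Q=t^{-1}\widehat\P\) together with the observation that the Schwartz error \(f_\lambda\) divided by \(t\) remains Schwartz, and the second short exact sequence is deduced from the first just as you do. The only cosmetic difference is in the split: the paper builds the homogeneous extension explicitly as \(P(x,t,\xi)=\omega\bigl(\tfrac{1}{t}\norm{(x,\xi)}_{\beta,\alpha}\bigr)U(x,\xi)\) and corrects by \((1-\omega(t))(\widehat u-\chi_2 P)\), obtaining \(r_m(u)_t=u\) for all \(\abs t\le 1\), whereas you extend \(P\) homogeneously from the slices \(t=0,\pm1\) and correct with \(f_0\otimes\eta\); both produce an essentially \(m\)-homogeneous symbol with the required restrictions at \(t=0\) and \(t=1\).
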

\begin{proof}
	The inclusion \(\PPseu^{m-1}\subseteq \ker(\ev_0)\) is evident. For the converse inclusion suppose that \(\P \in \PPseu^m\) is in the kernel of \(\ev_0\). To show that \(\P=t\Q\) for some \(\Q\in\PPseu^{m-1}\), consider \(\widehat \P\). By \cref{res:equivalence-pseudo-symbol} it is a smooth function and for \(\lambda>0\) there is \(f_\lambda\in\Schwartz(\X\times\RR\times\lie{g}^*)\) such that 
	\begin{equation*}
		\widehat{\P}(\beta_\lambda(x),\lambda t, \alpha_\lambda(\xi))-\lambda^{m} \widehat{\P}(x,t,\xi) =f_\lambda(x,t,\xi).
	\end{equation*}
	As \(\widehat \P\) vanishes at \(t=0\), the function \(\widehat\Q\defeq t^{-1}\widehat \P\) is well-defined and smooth. Moreover, one computes
	\begin{equation*}
		\widehat{\Q}(\beta_\lambda(x),\lambda t, \alpha_\lambda(\xi))-\lambda^{m-1} \widehat{\Q}(x,t,\xi) 
		=\frac{f_\lambda(x,t,\xi)}{\lambda t}.
	\end{equation*}	
	Note that \(f_\lambda\) also vanishes at \(t=0\), so that \((x,t,\xi)\mapsto(\lambda t)^{-1}f_\lambda(x,t,\xi)\) is a well-defined Schwartz function. Then \(\Q\in\PPseu^{m-1}\) holds by \cref{res:equivalence-pseudo-symbol}.

	Next, we show surjectivity of $\ev_0$.
	Fix smooth cut-offs $\chi_1 \colon X \times \lie g^* \to [0,1]$ and $\chi_2 \colon X \times \RR \times \lie g^* \to [0,1]$
	which vanish in a neighbourhood of ${(0,0)}$ and ${(0,0,0)}$ and are constant $1$ outside compact sets and
	$\omega \colon \RR \to [0,1]$ such that $\omega(t) = 1$ for $\abs t \geq 2$ and $\omega(t) = 0$ for $\abs t \leq 1$. 
	Let \(u\in\Ess^m\).
	By the analogous result to \cref{res:equivalence-pseudo-symbol},
	there is a $m$-homogeneous function $U \in \Smooth(X \times \lie g^* \setminus \simpleset{(0,0)})$
	and a Schwartz function $f \in \Schwartz(\X \times \lie g^*)$ such that
	$\widehat u = \chi_1 \cdot U + f$.
	Then $P(x,t,\xi) = \omega(\frac 1 t \norm{(x,\xi)}_{\beta,\alpha}) U(x,\xi)$ defines an $m$-homogeneous function on $\X \times \RR \times \lie g^* \setminus \simpleset{(0,0,0)}$.
	Here $\norm{\argument}_{\beta,\alpha}$ denotes a homogeneous quasi-norm with respect to the dilations $(\beta,\alpha)$ on $\X \times \lie g^*$
	and for $t = 0$, $(x,\xi) \neq (0,0)$ we suppose that $\omega(\frac 1 t \norm{(x,\xi)}_{\beta,\alpha}) = 1$.
	Let $\widehat \P \in \Smooth(\X \times \RR \times \lie g^*)$ be defined by
	\begin{equation*}
		\widehat \P(x,t,\xi) = \chi_2(x,t,\xi) P(x,t,\xi) + (1-\omega(t))( \widehat u(x,\xi) - \chi_2(x,t,\xi) P(x, t, \xi)).
	\end{equation*}
	Note that $(1-\omega(t)) (\chi_1(x,\xi) - \chi_2(x,t,\xi) \omega(\frac 1 t \norm{(x,\xi)}_{\beta,\alpha}))$ has compact support,
	therefore the second summand of $\widehat \P$ is Schwartz. 
	By \cref{res:equivalence-pseudo-symbol}, $\widehat \P$ is the Fourier transform of an element $\P \in \PPseu^m$.
	Let $r_m(u) = \P$. Clearly, $r_m$ is linear and $r_m(u)_t = u$ holds for all $\abs t \leq 1$.
	
	Exactness of the second sequence is easily checked using exactness of the first.
\end{proof}

\paragraph{Automorphisms} We examine now under which changes of coordinates the calculus is invariant. 
\begin{lemma}\label{res:diffeo_groupoid}
	Let \(F\colon \X\to \X\) be a polynomial diffeomorphism satisfying \(F\circ \theta^1_v=\theta^1_v\circ F\) for all \(v\in G\).
	Suppose that the map \((x,t)\mapsto (\beta_t\circ F \circ\beta_{t^{-1}}(x),t)\) extends smoothly to a diffeomorphism \(\mathbb F^{(0)}\) of \(\TG^{(0)}=\X\times\RR\). Then the map \(\mathbb F\colon \TG\to\TG\) defined by 
	\begin{equation*}
		\mathbb F(x,t,v)=\left(\mathbb F^{(0)}(x,t),v\right)\quad\text{for }(x,t,v)\in\TG,
	\end{equation*}
	defines a zoom-equivariant automorphism of $\TG$. 
\end{lemma}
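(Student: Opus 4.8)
The plan is to verify the three assertions in order: that $\mathbb F$ is well-defined and smooth, that it is a groupoid automorphism, and that it is equivariant for the zoom action.

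First I would check that $\mathbb F$ is a well-defined smooth map $\TG \to \TG$. By hypothesis, $(x,t) \mapsto \mathbb F^{(0)}(x,t)$ is a smooth diffeomorphism of $\X \times \RR$ extending $(x,t) \mapsto (\beta_t \circ F \circ \beta_{t^{-1}}(x), t)$, so $\mathbb F(x,t,v) = (\mathbb F^{(0)}(x,t), v)$ is smooth on $\TG = \X \times \RR \times G$. Its inverse is $\mathbb F^{-1}(x,t,v) = ((\mathbb F^{(0)})^{-1}(x,t), v)$, which is again smooth since $\mathbb F^{(0)}$ is a diffeomorphism; hence $\mathbb F$ is a diffeomorphism of $\TG$.

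Next I would show $\mathbb F$ is a Lie groupoid homomorphism. I intend to apply \cref{res:groupoidHom} with $\tau_G = \id_G$ and $\tau_{\X \times \RR} = \mathbb F^{(0)}$. It remains to check the compatibility condition $\mathbb F^{(0)}(\theta_v(x,t)) = \theta_v(\mathbb F^{(0)}(x,t))$ for all $(x,t) \in \X \times \RR$ and $v \in G$. For $t \neq 0$, we compute using $\theta^t_v = \beta_t \circ \theta^1_{\alpha_t(v)} \circ \beta_{t^{-1}}$ and the hypothesis $F \circ \theta^1_w = \theta^1_w \circ F$ (with $w = \alpha_t(v)$) that
\begin{align*}
	\mathbb F^{(0)}(\theta_v(x,t))
	&= (\beta_t F \beta_{t^{-1}} \beta_t \theta^1_{\alpha_t(v)} \beta_{t^{-1}}(x), t)
	= (\beta_t F \theta^1_{\alpha_t(v)} \beta_{t^{-1}}(x), t) \\
	&= (\beta_t \theta^1_{\alpha_t(v)} F \beta_{t^{-1}}(x), t)
	= (\beta_t \theta^1_{\alpha_t(v)} \beta_{t^{-1}} \beta_t F \beta_{t^{-1}}(x), t)
	= \theta_v(\mathbb F^{(0)}(x,t)).
\end{align*}
Since both sides of $\mathbb F^{(0)}(\theta_v(x,t)) = \theta_v(\mathbb F^{(0)}(x,t))$ are continuous in $t$ and agree for $t \neq 0$, the equality also holds at $t = 0$. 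Thus \cref{res:groupoidHom} applies and $\mathbb F$ is a Lie groupoid homomorphism; being a diffeomorphism with smooth inverse $\mathbb F^{-1}$ of the same form (which is a homomorphism by the same argument, since $F^{-1}$ also commutes with all $\theta^1_v$), it is a groupoid automorphism.

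Finally I would check zoom-equivariance, i.e.\ $\mathbb F \circ \tau_\lambda = \tau_\lambda \circ \mathbb F$ for all $\lambda > 0$. Since both $\mathbb F$ and $\tau_\lambda$ act as the identity in the $G$-variable, this reduces to $\mathbb F^{(0)} \circ \tau_\lambda^0 = \tau_\lambda^0 \circ \mathbb F^{(0)}$ on $\X \times \RR$, where $\tau_\lambda^0(x,t) = (\beta_{\lambda^{-1}}(x), \lambda^{-1} t)$. For $t \neq 0$ we have
\begin{equation*}
	\mathbb F^{(0)}(\tau_\lambda^0(x,t)) = (\beta_{\lambda^{-1} t} F \beta_{(\lambda^{-1}t)^{-1}} \beta_{\lambda^{-1}}(x), \lambda^{-1} t) = (\beta_{\lambda^{-1}} \beta_t F \beta_{t^{-1}}(x), \lambda^{-1} t) = \tau_\lambda^0(\mathbb F^{(0)}(x,t)),
\end{equation*}
using $\beta_{\lambda^{-1} t} = \beta_{\lambda^{-1}} \beta_t$ and $\beta_{(\lambda^{-1}t)^{-1}} \beta_{\lambda^{-1}} = \beta_{t^{-1}} \beta_\lambda \beta_{\lambda^{-1}} = \beta_{t^{-1}}$. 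Both sides are continuous in $t$, so the identity extends to $t = 0$, proving zoom-equivariance.

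I do not expect a serious obstacle here; the only slightly delicate point is the passage to $t = 0$, which is handled each time by continuity once the identity is established for $t \neq 0$ — this is exactly what the smooth-extension hypothesis on $\mathbb F^{(0)}$ is there to guarantee.
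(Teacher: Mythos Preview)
Your proof is correct and follows essentially the same route as the paper's: verify $\mathbb F^{(0)}\circ\theta_v=\theta_v\circ\mathbb F^{(0)}$ for $t\neq 0$ using $\theta^t_v=\beta_t\theta^1_{\alpha_t(v)}\beta_{t^{-1}}$ and the commutation hypothesis, pass to $t=0$ by continuity, deduce that $\mathbb F$ is a groupoid automorphism, and then check zoom-equivariance by a direct computation for $t\neq 0$ plus continuity. The paper cites \cref{res:automorphism} (which also records that $\mathbb F^{(0)}$ is polynomial) where you cite \cref{res:groupoidHom}; this is only a difference in packaging.

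One small slip to fix: you write that ``both $\mathbb F$ and $\tau_\lambda$ act as the identity in the $G$-variable'', but $\tau_\lambda$ acts by $\alpha_\lambda$ on the $G$-variable. The reduction you want is still valid, since $\mathbb F$ is the identity in that slot and therefore both $\mathbb F\circ\tau_\lambda$ and $\tau_\lambda\circ\mathbb F$ produce $\alpha_\lambda(v)$ in the last coordinate; just correct the justification accordingly.
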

\begin{proof}
	First note that $\mathbb F^{(0)} \circ \theta_v = \theta_v \circ \mathbb F^{(0)}$ for all $v \in G$.
	Namely for all \(t\neq 0\), we have by the compatibility \eqref{eq:compatibility} and using \(F\circ \theta^1_v=\theta^1_v\circ F\)
	\begin{equation*}
		(\beta_t\circ F\circ\beta_{t^{-1}})\circ \theta^t_v= \beta_t\circ F\circ\beta_{t^{-1}}\circ (\beta_t\circ\theta^1_{\alpha_t(v)}\circ\beta_{t^{-1}})=\beta_t\circ\theta^1_{\alpha_t(v)}\circ F\circ\beta_{t^{-1}}=\theta^t_v\circ(\beta_t\circ F\circ\beta_{t^{-1}}).
	\end{equation*}
	For \(t=0\) this follows by continuity. The map \(\mathbb F^{(0)}\) is polynomial by similar arguments as after \cref{assumption}. By \cref{res:automorphism} the \(G\)-equivariant diffeomorphism \(\mathbb F^{(0)}\) of the unit space \(\TG^{(0)}=\X\times\RR\) extends to an automorphism \(\mathbb F\colon \TG\to\TG\) defined by \((x,t,v)\mapsto(\mathbb F^{(0)}(x,t),v)\). To see that it is zoom-equivariant, we compute for \(t\neq 0\)
	\begin{align*}
		\mathbb F(\tau_\lambda(x,t,v))=\mathbb F(\beta_{\lambda^{-1}}(x),\tfrac{t}\lambda,\alpha_\lambda(x))= (\beta_{t\lambda^{-1}}(F(\beta_{t^{-1}}(x))),\tfrac{t}{\lambda},\alpha_\lambda(x))=\tau_\lambda(\mathbb F(x,t,v))
	\end{align*}
and argue for \(t=0\) using continuity. 
\end{proof}
\begin{corollary}\label{res:invariance}
	In the situation of \cref{res:diffeo_groupoid}, the automorphism \(\mathbb F\) induces an automorphism \(\mathbb F_*\) of \(\PPseu\) defined by \(\mathbb F_*(\P)=((\mathbb F^{(0)})^{-1})^*\circ\P\circ\mathbb F^*\).  It satisfies for \(\P\in\PPseu\)
	\begin{equation*}
		\Op_1({\mathbb F}_*\P)=(F^{-1})^*\circ\Op_1(\P)\circ F^*.
	\end{equation*}
\end{corollary}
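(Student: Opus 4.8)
The plan is to obtain both assertions from \cref{res:automorphism} together with the functoriality of the evaluation maps $\ev_t$ from \cref{res:evaluationOfDistributions}. First I would check that the stated formula really defines an automorphism of $\PPseu$. By the proof of \cref{res:diffeo_groupoid}, the map $\mathbb F^{(0)}$ is a polynomial diffeomorphism of $\X\times\RR$ with $\mathbb F^{(0)}\circ\theta_v=\theta_v\circ\mathbb F^{(0)}$ for all $v\in G$, and $\mathbb F=\mathbb F^{(0)}\times\id_G$. Hence \cref{res:automorphism}, applied to $\TG=(\X\times\RR)\rtimes G$ with group automorphism $\id_G$ and unit-space diffeomorphism $\mathbb F^{(0)}$, shows that $\mathbb F_*=((\mathbb F^{(0)})^{-1})^*\circ(\argument)\circ\mathbb F^*$ is an automorphism of the convolution algebra $\Algebra O'_r(\TG)$ with inverse $(\mathbb F^{-1})_*$, restricting to $\Schwartz(\TG)$. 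The zoom-equivariance $\mathbb F\circ\tau_\lambda=\tau_\lambda\circ\mathbb F$ of \cref{res:diffeo_groupoid}, together with the induced relation $\mathbb F^{(0)}\circ\tau^0_\lambda=\tau^0_\lambda\circ\mathbb F^{(0)}$ on the unit space, yields after substitution into the defining formulas that ${\tau_\lambda}_*\circ\mathbb F_*=\mathbb F_*\circ{\tau_\lambda}_*$ on $\Algebra O'_r(\TG)$. Applying $\mathbb F_*$ to $\lambda^m\P-{\tau_\lambda}_*\P\in\Schwartz(\TG)$ then shows $\mathbb F_*\P\in\PPseu^m$ whenever $\P\in\PPseu^m$; since the same holds for $(\mathbb F^{-1})_*$, the map $\mathbb F_*$ restricts to a bijection of each $\PPseu^m$, hence to an automorphism of $\PPseu$ (which is the restriction of the convolution automorphism of $\Algebra O'_r(\TG)$).

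The core of the argument is the intertwining relation $\ev_1\circ\mathbb F_*=\mathbb F^1_*\circ\ev_1$, where $\mathbb F^1=F\times\id_G\colon\X\rtimes^1 G\to\X\rtimes^1 G$ is the groupoid automorphism produced by the hypothesis $F\circ\theta^1_v=\theta^1_v\circ F$ and $\mathbb F^1_*=(F^{-1})^*\circ(\argument)\circ(F\times\id_G)^*$ is the associated automorphism of $\Algebra O'_r(\X\rtimes^1 G)$ from \cref{res:automorphism}. To prove this I would pass to the isomorphism $\Psi_r$ of \cref{res:identifications:fibredDistributions}, which is injective, and use the formula for induced maps from the proof of \cref{res:homomorphism} (which underlies both \cref{res:evaluationOfDistributions,res:automorphism}): it gives $\Psi_r(\ev_1 u)=\iota_1^*\circ\Psi_r(u)$ with $\iota_1(x)=(x,1)$, as well as $\Psi_r(\mathbb F_* u)=((\mathbb F^{(0)})^{-1})^*\circ\Psi_r(u)$ and $\Psi_r(\mathbb F^1_* w)=(F^{-1})^*\circ\Psi_r(w)$. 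Because $\beta_1=\id$, the diffeomorphism $\mathbb F^{(0)}$ restricts on the slice $t=1$ to $F$, i.e.\ $\mathbb F^{(0)}\circ\iota_1=\iota_1\circ F$, so that $\iota_1^*\circ((\mathbb F^{(0)})^{-1})^*=(F^{-1})^*\circ\iota_1^*$. Chaining these identities gives $\Psi_r(\ev_1\mathbb F_* u)=\Psi_r(\mathbb F^1_*\ev_1 u)$, and injectivity of $\Psi_r$ yields the intertwining relation. This bookkeeping with the various pullback and restriction maps is the step that requires the most care; everything else is formal.

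Finally, I would assemble the operator identity. Since $\theta^1$ is the source map of $\X\rtimes^1 G$, \eqref{eq:Op} gives $\Op_1(\Q)=\ev_1(\Q)\circ(\theta^1)^*$, and $F\circ\theta^1_v=\theta^1_v\circ F$ yields the elementary relation $(\theta^1)^*\circ F^*=(F\times\id_G)^*\circ(\theta^1)^*$ on $\Algebra O_M(\X)$. Combining this with the intertwining relation of the previous paragraph,
\begin{align*}
	\Op_1(\mathbb F_*\P)
	&= \ev_1(\mathbb F_*\P)\circ(\theta^1)^*
	= \mathbb F^1_*(\P_1)\circ(\theta^1)^*\\
	&= (F^{-1})^*\circ\P_1\circ(F\times\id_G)^*\circ(\theta^1)^*\\
	&= (F^{-1})^*\circ\P_1\circ(\theta^1)^*\circ F^*
	= (F^{-1})^*\circ\Op_1(\P)\circ F^* ,
\end{align*}
which is the asserted formula.
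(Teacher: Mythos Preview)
Your proof is correct and follows essentially the same approach as the paper. The paper argues in the same way---using \cref{res:automorphism} for the automorphism property, zoom-equivariance plus the Schwartz-preservation to get $\mathbb F_*(\PPseu^m)\subseteq\PPseu^m$, and then the operator identity---but compresses the last part into a single line of computation; your explicit intertwining relation $\ev_1\circ\mathbb F_*=\mathbb F^1_*\circ\ev_1$ and the identity $(\theta^1)^*\circ F^*=(F\times\id_G)^*\circ(\theta^1)^*$ are exactly what underlie that line.
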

\begin{proof}
	As \(\mathbb F^{(0)}\) and therefore also \(\mathbb F\) are polynomial, the map \(\mathbb F_*\colon\Algebra O_r'(\TG)\to\Algebra{O}_r'(\TG)\) is an automorphism by \cref{res:automorphism}. As \(\mathbb F\) is zoom-equivariant and \(\mathbb F_*\colon\Algebra \Schwartz(\TG)\to\Schwartz(\TG)\), it restricts to maps \(\mathbb F_*\colon \PPseu^m\to\PPseu^m\) for every \(m\in\RR\). One computes for \(\P\in\PPseu\) and \(\varphi\in\Algebra O_M(\X)\) 
	\begin{equation*}
		\Op_1(\mathbb F_*\P)(\varphi)=(\mathbb F_*\P_1)((\theta^1)^*\varphi)=(F^{-1})^*\P_1((\theta^1)^*(F^*\varphi))= (F^{-1})^*(\Op_1(\P)(F^*\varphi)).\qedhere
	\end{equation*}
\end{proof}
Note that the principal cosymbol of \(\mathbb F_*\P\) changes according to \(\mathbb F^{t=0}\).
\begin{example}
	For the double dilation groupoid the unit element \(0\in \underline G\) seems to play a special role for the calculus as it is fixed by the dilations \(\beta\). This is not the case in the sense that the calculus is invariant under shifts by \(x_0\in \underline G\) and that the principal cosymbol is invariant under such shifts. This can be seen by applying the previous results to the diffeomorphism \(F(x)=x_0 x\). In this case \(\mathbb F^{(0)}(x,t)=(\beta_t(x_0) x,t)\) extends smoothly to the identity at \(t=0\). Therefore, the principal cosymbols of \(\P_1\in\Pseu^m\) and the shifted operator \(\mathbb F^1_*\P_1\in\Pseu^m\) coincide. 
\end{example}

\subsection[\texorpdfstring{Filtered \(^*\)-algebra structure}{Filtered *-algebra structure}]{\texorpdfstring{Filtered \boldmath\(^*\)-algebra structure}{Filtered *-algebra structure}}

To see that the pseudodifferential calculus of a Shubin tangent groupoid is closed under composition, 
one needs that \(\Schwartz(\TG)\) forms a two-sided ideal in~$\PPseu$.
We start by showing that it is a left ideal. 

\begin{proposition}\label{res:leftschwartzmultiplier}
	Let \(\P\in\PPseu^m\) and $f \in \Schwartz(\TG)$. Then $\P * f \in \Schwartz(\TG)$ is given by
	\begin{equation} \label{eq:Pstarf}
		\P * f(x,t,v) = (2\pi)^{-n}\int \E^{\I\langle w, \xi\rangle}\widehat{\P}(x,t,\xi)f(\theta(x,t,w),w^{-1}v)\D w\D \xi .
	\end{equation}
	Moreover, $\P$ is a left multiplier of $\Schwartz(\TG)$, i.e.\ \(f\mapsto \P * f\) defines a continuous map \(\Schwartz(\TG)\to\Schwartz(\TG)\).
\end{proposition}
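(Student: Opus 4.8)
The plan is to produce, for each $f\in\Schwartz(\TG)$, a Schwartz function $Q$ given by the (a priori only formal) right-hand side of \eqref{eq:Pstarf}, to check that $f\mapsto Q$ is a continuous linear map $\Schwartz(\TG)\to\Schwartz(\TG)$, and then to identify $u_Q$ with the convolution $\P * u_f$ which is already available in $\Algebra O_r'(\TG)$ by \cref{res:convolution}. Throughout, $\widehat\P=\mathcal F_{v\to\xi}\P\in\Smooth(\X\times\RR\times\lie g^*)$ is the smooth function of \cref{res:equivalence-pseudo-symbol}, obeying the symbol estimates of \cref{res:symbol_estimates}. I would view $\P$ as the family $\P_{x,t}=\mathcal F^{-1}_{\xi\to w}\widehat\P(x,t,\argument)\in\Algebra O_M'(G)$ of distributions on $G$, set $g_{x,t,v}(w)=f(\theta(x,t,w),w^{-1}v)$, and observe that, with the $\xi$–$w$ Fourier pairing, the right-hand side of \eqref{eq:Pstarf} is exactly $Q(x,t,v)=\langle\P_{x,t},g_{x,t,v}\rangle$.

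The core is then two estimates. First, by \cref{res:polynomial_action}, \cref{res:chain_rule} and the quasi-triangle inequality \cref{res:norm_estimates}~\refitem{item:triangle-inequality}, $w\mapsto g_{x,t,v}(w)$ is Schwartz on $G$ with all seminorms bounded by finitely many Schwartz seminorms of $f$ times a polynomial in $\langle(x,t)\rangle$ and $\langle v\rangle$; moreover, since the $\RR$–component of $\theta(x,t,w)$ equals $t$ and since \eqref{eq:polynomial2} (for the action $\theta$ of $G$ on $\X\times\RR$) bounds $\langle(x,t)\rangle$ in terms of $\langle w\rangle_\alpha$ and $\langle\theta(x,t,w)\rangle$, the rapid decay of $f$ gives, for every $M$, a bound $\abs{\partial_w^\gamma g_{x,t,v}(w)}\le C_{\gamma,M}\langle(x,t)\rangle^{-M}\langle v\rangle^{-M}\langle w\rangle^{C}$ with $C$ independent of $M$. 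Second, from $w^\gamma\mathcal F^{-1}_{\xi\to w}a=\mathcal F^{-1}_{\xi\to w}(D_\xi^\gamma a)$ and the fact that $D_\xi^\gamma\widehat\P(x,t,\argument)$ is, by \cref{res:symbol_estimates}, a symbol of order $m-[\gamma]_\alpha$ in $\xi$ that becomes $L^1$ for $[\gamma]_\alpha$ large, one gets the standard fact that $\mathcal F^{-1}_{\xi\to w}\widehat\P(x,t,\argument)$ is rapidly decreasing in $w$ together with all $w$–derivatives; fixing $\rho\in\SmoothCompactSupp(G)$ with $\rho\equiv1$ near $0$ one splits $\P_{x,t}=\rho\P_{x,t}+(1-\rho)\P_{x,t}$, where $\rho\P_{x,t}$ is a compactly supported distribution of order $\le k_0$ ($k_0$ depending only on $m$) and $(1-\rho)\P_{x,t}$ is a Schwartz function on $G$, both with all relevant seminorms bounded polynomially in $\langle(x,t)\rangle$ by \cref{res:symbol_estimates}. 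Pairing $\rho\P_{x,t}$ with $g_{x,t,v}$ only sees $w$ in a fixed compact set, hence is rapidly decreasing in $(x,t,v)$ by the first estimate; pairing $(1-\rho)\P_{x,t}$ with $g_{x,t,v}$ gives the absolutely convergent integral $\int_G (1-\rho)\P_{x,t}(w)\,f(\theta(x,t,w),w^{-1}v)\,\D w$, which one estimates using $\abs{(1-\rho)\P_{x,t}(w)}\lesssim_N\langle(x,t)\rangle^{C}\langle w\rangle^{-N}$, the decay of $f$ in the $\theta(x,t,w)$–slot combined with \eqref{eq:polynomial2} (turning it into decay in $(x,t)$ at the cost of a power of $\langle w\rangle$, absorbed into $\langle w\rangle^{-N}$), and splitting the $w$–integral at $\norm w=\tfrac1{2\gamma}\norm v$ to extract decay in $v$ from $\langle w^{-1}v\rangle$ respectively from $\langle w\rangle^{-N}$. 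Differentiating under the integral, $\partial_{x,t}^a$ lands on $\widehat\P$ (lowering its order by \cref{res:symbol_estimates}, so the splitting still applies) and on $g_{x,t,v}$ through $\theta$, while $\partial_v^b$ lands only on $g_{x,t,v}$; in every case one gets a finite sum of terms of the same shape. Since each bound is linear in $f$ with constants that are finitely many Schwartz seminorms of $f$, this shows $Q\in\Schwartz(\TG)$ and that $f\mapsto Q$ is continuous.

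To identify $u_Q$ with $\P * u_f$, note first that for $\P=u_h$ with $h\in\Schwartz(\TG)$ the claim is immediate: Fourier inversion in $\xi$ turns \eqref{eq:Pstarf} into the convolution formula \eqref{eq:convolution} on the action groupoid $(\X\times\RR)\rtimes G$, so $Q=h*f$ and $u_Q=u_h * u_f$ by \cref{res:convolution}, while $h*f\in\Schwartz(\TG)$ by \cref{res:algebra:schwartz}. For general $\P\in\PPseu^m$, set $\widehat\P_j(x,t,\xi)=\chi(\alpha_{j^{-1}}\xi)\widehat\P(x,t,\xi)$ for a cut-off $\chi\equiv1$ near $0$; then $\P_j=\mathcal F^{-1}_{\xi\to v}\widehat\P_j$ is of the form $u_{h_j}$ with $h_j\in\Schwartz(\TG)$, the $\widehat\P_j$ satisfy the order-$m$ symbol estimates uniformly in $j$ and converge to $\widehat\P$ with all derivatives, and $u_{h_j}\to\P$ in $\Algebra O_r'(\TG)$. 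By the (uniform in $j$) estimates of the previous paragraph together with pointwise convergence of all derivatives, $Q_j:=h_j*f\to Q$ in $\Schwartz(\TG)$; and by the separate continuity of the convolution — clear from its description in \cref{res:convolution} as a composition of continuous linear maps — $u_{h_j}*u_f\to\P * u_f$. Since $u_{h_j}*u_f=u_{Q_j}\to u_Q$, one concludes $\P * u_f=u_Q\in\Schwartz(\TG)$ and the formula \eqref{eq:Pstarf}; continuity of $f\mapsto\P * u_f$ was established along the way.

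\textbf{Main obstacle.} The delicate step is the estimate showing $Q\in\Schwartz(\TG)$: one must extract rapid decay in $x$, $t$ and $v$ simultaneously from the single Schwartz factor $f$, while only $\Algebra O_M$–type (polynomially bounded) control on $\P$ is available, and one cannot exploit any expansion of $\theta$ in $v$ since property~\ref{assumption:free} is not assumed. The decay in $v$ has to be forced by the localization of $\P$ near $w=0$ (the cut-off $\rho$) together with the quasi-triangle inequality, and the decay in $(x,t)$ by the growth estimate \eqref{eq:polynomial2} for the polynomial action together with the fact that $\theta$ preserves the $t$–coordinate. The remaining ingredients — Fourier-inversion bookkeeping, differentiation under the integral, and the density argument — are routine.
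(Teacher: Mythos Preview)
Your strategy is sound and genuinely different from the paper's: you split $\P_{x,t}$ into a compactly supported distribution near $w=0$ plus a Schwartz tail and treat each piece separately, whereas the paper keeps the oscillatory integral intact and regularizes it by inserting $\langle D_w\rangle_\alpha^K/\langle\xi\rangle_\alpha^K$ and then $\langle D_\xi\rangle_\alpha^M/\langle w\rangle_\alpha^M$ against $\E^{\I\langle w,\xi\rangle}$ and integrating by parts. Your decomposition is perhaps more conceptual---it isolates the singularity---while the paper's approach is more uniform and avoids arguing separately for two pieces. For the identification step, the paper simply unwinds the definition of the convolution in $\Algebra O_r'(\TG)$ and computes $(\P*u_f)(\varphi)$ directly via the Fourier pairing; your density argument is a legitimate alternative but less direct.

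Two small gaps. First, the claim $\abs{\partial_w^\gamma g_{x,t,v}(w)}\le C_{\gamma,M}\langle(x,t)\rangle^{-M}\langle v\rangle^{-M}\langle w\rangle^{C}$ with $C$ \emph{independent of $M$} is not correct: converting the decay of $f$ at $\theta(x,t,w)$ into decay in $(x,t)$ via \eqref{eq:polynomial2} costs a power of $\langle w\rangle$ that grows with $M$ (roughly $\langle w\rangle^{BM}$). This does not damage your argument, since you only invoke this bound for $w\in\supp\rho$, where the power of $\langle w\rangle$ is irrelevant; but the claim as written is false and should be dropped. Second, your approximants $\P_j$ with $\widehat\P_j(x,t,\xi)=\chi(\alpha_{j^{-1}}\xi)\widehat\P(x,t,\xi)$ are \emph{not} of the form $u_{h_j}$ with $h_j\in\Schwartz(\TG)$ when $m>0$: cutting off only in $\xi$ leaves the polynomial growth of $\widehat\P$ in $(x,t)$ intact, so $h_j$ lies only in $\Algebra O_M(\X\times\RR)\completedtensor\Schwartz(G)$. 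This is easily repaired by also cutting off in $(x,t)$, after which the rest of your density argument goes through.
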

\begin{proof}
	Let $f \in \Schwartz(\TG)$. Then $\int \E^{\I\langle w, \xi\rangle} f(\theta(x,t,w),w^{-1}v)\D w$ is Schwartz in $\xi$,
	hence the right hand side of \eqref{eq:Pstarf} is well-defined as an iterated integral.
	In \textbf{(1)} we show that every Schwartz seminorm $\norm{\argument}_{k,\ell}$, see \eqref{eq:schwartz:seminorm}, 
	with $k, \ell \in \NN_0$ of it can be bounded by a seminorm of \(f\).
	Afterwards, we show in \textbf{(2)} that $\P * f$ is indeed given by the claimed formula.
	
	\textbf{(1)} Recall that $\langle (x,t,v) \rangle_H$ was defined in \eqref{eq:JapaneseBracket:homogeneous}.
	Since all homogeneous quasi-norms are equivalent, it suffices to estimate the absolute value of 
	\(\smash{\langle (x,t,v)\rangle_H^\ell\partial^\gamma_{(x,t,v)}(\P*f)}\)
	for all \(\gamma\in\NN_0^{m+1+n}\) with $\abs \gamma \leq k$.
	It is a finite linear combination of terms of the form 
	\begin{equation*}\label{eq:osc_integral} \tag{$*$}
		\langle(x,t,v)\rangle^\ell_H \int \E^{\I \langle w, \xi\rangle}\partial^a_{(x,t)} \widehat{\P}(x,t,\xi)\partial^{b}_{(x,t,v)}(f(\theta(x,t,w),w^{-1}v))\D w\D \xi
	\end{equation*}
	with $a \in \NN_0^{d+1}$, $b \in \NN_0^{d+1+n}$, $\abs a + \abs b \leq \abs{\gamma}$.	
	Recall from \cref{def:quasi-norm} the notation
	\(\langle w\rangle_\alpha =(1+\norm{w}_\alpha^{2q})^\frac{1}{2q}\),
	where $q$ is a common multiple of the weights $q_1, \dots, q_n$ of $\alpha$. 
	Define for \(K\in 2q\NN_0\)
	\[ 
	\langle D_w\rangle^K_\alpha
	=
	\biggl(
	1+\sum_{j=1}^n(-1)^{\frac{q}{q_j}}\partial_{w_j}^{\frac{2q}{q_j}}
	\biggr)^\frac{K}{2q} .
	\]
	One computes that for all \(M,K\in 2q\NN_0\)
	\begin{equation} \label{eq:oscillatoryDecay}
		\frac{\langle D_{\xi} \rangle_\alpha^M}{\langle w  \rangle^M_\alpha}\E^{\I\langle w,\xi\rangle}=\E^{\I\langle w,\xi\rangle}
		\quad\text{ and }\quad 
		\frac{\langle D_{w}   \rangle_\alpha^K}{\langle \xi\rangle^K_\alpha}\E^{\I\langle w,\xi\rangle}=\E^{\I\langle w,\xi\rangle}.
	\end{equation}
	Using partial integration with respect to $w$ (which is justified since the integrand is Schwartz in that variable),
	one can rewrite \eqref{eq:osc_integral} for \(K \in 2q\NN_0\), $K > Q(\alpha) + m$ as
	\begin{equation*}
		\langle(x,t,v)\rangle_H^\ell 
		\int \E^{\I\langle w, \xi\rangle} 
		\langle \xi \rangle_{\alpha}^{-K} 
		\partial^a_{(x,t)} \widehat{\P}(x,t,\xi)
		\langle D_w \rangle^K_\alpha \partial^{b}_{(x,v,t)}(f(\theta(x,t,w),w^{-1}v))
		\D w\D \xi .
	\end{equation*}
	Since $\widehat \P$ is a symbol of order $m$, the integrand decays faster than $\langle\xi\rangle_\alpha^{-Q(\alpha)}$
	and is therefore integrable in $\xi$ by \cref{res:integrable}.
	We may therefore change the order of integration and perform another partial integration with respect to $\xi$, obtaining for any $M \in 2q\NN_0$
	\begin{equation*}\label{eq:stuffToEstimate}
		\int \langle(x,t,v)\rangle_H^\ell\langle w\rangle_\alpha^{-M} \E^{\I \langle w, \xi\rangle}
		\underbrace{\langle D_\xi\rangle_\alpha^M(\langle \xi\rangle_{\alpha}^{-K}\partial^a_{(x,t)}\widehat{\P}(x,t,\xi))}_{(\text{i})}
		\underbrace{\langle D_w\rangle^K_\alpha\partial^{b}_{(x,v,t)}(f(\theta(x,t,w),w^{-1}v))}_{(\text{ii})}\D w\D \xi .
		\tag{$**$}
	\end{equation*}
	To estimate the terms (i) and (ii) we frequently use Peetre's inequality 
	$\langle(x,t,\xi)\rangle_H^j\leq \langle(x,t)\rangle_\beta^{\abs j}\langle \xi\rangle^j_\alpha$ for $j \in \RR$, where by abuse of notation $\beta$ denotes the extension of the dilations $\beta$ on $\X$ to $\X \times \RR$ giving $t$ the weight $1$.
	By the symbol estimates for \(\P\) shown in \cref{res:symbol_estimates} and the estimate 
	$\abs{\partial_\xi^e(\langle \xi\rangle_{\alpha}^{-K}) }\lesssim \langle \xi\rangle_{\alpha}^{-K}$
	we get
	\[
	\abs{
		\langle D_\xi\rangle_\alpha^M (\langle \xi\rangle_{\alpha}^{-K} \partial^{a}_{(x,t)} \widehat{\P}(x,t,\xi))
	} 
	\lesssim 
	\langle \xi\rangle_{\alpha}^{-K} \langle (x,t,\xi)\rangle^m_H
	\lesssim
	\langle \xi\rangle_{\alpha}^{m-K} \langle (x,t)\rangle_\beta^{\abs{m}} ,
	\]
	where ``$\lesssim$'' indicates that we omitted a multiplicative constant. 
	Note that (ii) is a linear combination 
	of terms of the form $\partial^b_{(x,t,v)} \partial^c_w (f(\theta(x,t,w),w^{-1}v))$ with $\abs c \leq \abs K$,
	which are estimated by
	\begin{align*}
		\abs{\partial^b_{(x,t,v)} &\partial^c_w (f(\theta(x,t,w),w^{-1}v))} \\
		&\leq 
		\sum_d \abs[\big]{(\partial^d_{(x,t,v)} f)(\theta(x,t,w),w^{-1}v)} 
		\cdot \abs[\big]{\Chain{\theta(x,t,w), w^{-1} v}{(b,c)}{d}{x,t,v,w}} \\
		&\lesssim \norm{f}_{N,\abs b+\abs c} \langle(\theta(x,t,w), w^{-1}v)\rangle_H^{-N}
		(\langle(x,t)\rangle_\beta \langle v\rangle_\alpha \langle w\rangle_\alpha)^{(\abs b + \abs c)(B + Q(\alpha))} 
	\end{align*}
	for any $N \in \NN$	with \(B\in\NN\) being the constant from \cref{res:polynomial_action} 
	for the action \(\theta\) of \(G\) on \(\X\times\RR\).
	Here the first inequality follows from \cref{res:chain_rule} and the triangle inequality.
	The second inequality follows again from \cref{res:chain_rule} since a common bound for the partial derivatives of the argument of $\mathrm{Ch}$ is (up to a multiplicative constant) 
	$( \langle (x,t) \rangle_\beta
	   \langle v \rangle_\alpha 
	   \langle w \rangle_\alpha)^{B + Q(\alpha)}$:
	Indeed, by \eqref{eq:polynomial1} partial derivatives of $\theta$ are bounded by 
	$\langle (x,t) \rangle_\beta^B \langle w \rangle_\alpha^B$
	and by \cref{res:norm_estimates} \refitem{item:derivatives-mult} partial derivatives of 
	$w^{-1} v$ are bounded by $\langle v \rangle_\alpha^{\rule[-1pt]{0pt}{0pt}\smash{Q(\alpha)}} \langle w \rangle_\alpha^{\rule[-1pt]{0pt}{0pt} \smash{Q(\alpha)}}$.
	
	Define $A \coloneqq (k + K)(B + Q(\alpha)) + \abs m + \ell$
	and assume that $N \geq 2AB$.
	Note $\abs b + \abs c \leq k + K$, so that
	the absolute value of the integrand in \eqref{eq:stuffToEstimate} is bounded by 
	\begin{align*}
		\langle w\rangle_\alpha^{-M} 
		&\langle \xi\rangle_{\alpha}^{m-K}
		\norm{f}_{N,k + K} \langle(\theta(x,t,w), w^{-1}v)\rangle_H^{-N}
		(\langle(x,t)\rangle_\beta \langle v\rangle_\alpha \langle w\rangle_\alpha)^{A}
		\\
		&\lesssim
		\langle \xi\rangle_{\alpha}^{m-K}
		\norm{f}_{N,k + K} 
		\frac{
			\langle(x,t)\rangle_\beta^{A}
		}{
			\langle \theta(x,t,w)\rangle_\beta^{N/2}
		}
		\frac{
			\langle v\rangle^{A}_\alpha
		}{
			\langle w^{-1}v \rangle_\alpha^{N/2}
		}
		\langle w\rangle^{A-M}_\alpha
		\\
		&\lesssim
		\langle w\rangle_\alpha^{2 A+AB-M} 
		\langle \xi\rangle_{\alpha}^{m-K}
		\norm{f}_{N,k + K} .
	\end{align*}
	To obtain the second estimate, we used the inequalities 
	\[
	\frac{\langle(x,t)\rangle_\beta}{\langle \theta(x,t,w)\rangle^B_\beta }\lesssim \langle w\rangle_\alpha^B
	\quad\text{ and }\quad
	\frac{\langle v\rangle_\alpha}{\langle w^{-1}v\rangle_\alpha}\lesssim \langle w\rangle_\alpha
	\]
	from \cref{res:polynomial_action} and \cref{res:norm_estimates}.
	
	Choosing $M \geq 2 A + A B + Q(\alpha)$, the right hand side of the above estimate is integrable by \cref{res:integrable},
	we obtain $\norm{\P*f}_{k,\ell} \lesssim \sum\sup_{(x,t,v)} \abs{\eqref{eq:osc_integral}} \lesssim \norm f_{k + K, N}$. Here \eqref{eq:osc_integral} shall be replaced by the expression in that equation,
	and $\sum$ is a reminder that we need to take linear combinations of such terms,
	which was absorbed in $\lesssim$.
	
	\textbf{(2)} Let $\varphi \in \SmoothCompactSupp(\TG)$. 
	Then
	$s^* u_f( M^*(\varphi) )(x,t,w) = \int_G f(\theta(x,t,w), v) \varphi(w v) \D v$
	holds by a computation similar to \eqref{eq:UonSchwartzFunction}.
	We compute
	\begin{multline*}
		(\P * f)(\varphi)(x,t) = \P \circ s^* u_f \circ M^* (\varphi)(x,t)
		= \langle \P_{x,t}, (s^* u_f \circ M^* (\varphi))_{x,t} \rangle
		\\ = \langle \widehat \P_{x,t}, \mathcal F^{-1}_{v \to \xi} (s^* u_f \circ M^* (\varphi))_{x,t} \rangle
		= (2\pi)^{-n} \int \E^{\I\langle w,\xi\rangle} \widehat \P(x,t,\xi) f(\theta(x,t,w), w^{-1} v) \varphi(v) \D w \D \xi \D v 
	\end{multline*}
	where the last equality follows by substituting $v$ with $w^{-1}v$ and applying Fubini's theorem to change the order of integration.
    Then \eqref{eq:Pstarf} holds since $ \SmoothCompactSupp(\TG) $ is dense in $\Algebra O_M(\TG)$.
\end{proof}
Under assumption \ref{assumption:free}, we have seen in \cref{res:orbitmaps} that the orbit maps \(G\to \X\) with \(\OrbitMap 1 x \colon  v\mapsto\theta^1_v(x)\) are a polynomial family of diffeomorphisms and that $c = \abs{\det D_v(\OrbitMap 1 x)}$ is independent of $x$ and $v$.
In this case, we can describe the operator \(\Op_1(\P)\) more explicitly as
\begin{multline} \label{eq:rep_as_osc_int}
	\Op_1(\P)f(x)=\langle \P_{x,1}, (\OrbitMap{1}{x})^*f\rangle 
	= \frac{1}{(2\pi)^{n}}\int \E^{\I \langle v,\xi\rangle }\widehat{\P}_1(x,\xi)f(\OrbitMap 1 x(v))\D v\D\xi 
	\\ =\frac{c}{(2\pi)^{n}}\int \E^{\I \langle (\OrbitMap{1}{x})^{-1}(y),\xi\rangle }\widehat{\P}_1(x,\xi)f(y) \D y\D\xi.
\end{multline}
Hence, the operator can be viewed as a Fourier integral operator with symbol \(\widehat\P_1\) and phase function \((x,y,\xi)\mapsto \langle (\OrbitMap{1}{x})^{-1}(y),\xi\rangle\). For example, for the double dilation groupoid one has $(\OrbitMap{1}{x})^{-1}(y)=x^{-1}y$.

\begin{corollary}\label{res:op_on_schwartz}
	Suppose \(\theta^1\) is polynomially transitive and let \(\P\in\PPseu^m\). Then for every \(t\neq 0\), \(\Op_t(\P)\) restricts to a continuous operator \(\Schwartz(\X)\to\Schwartz(\X)\). 
\end{corollary}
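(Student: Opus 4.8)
\emph{Proof idea.} The plan is to reduce the statement to two facts established earlier: that $\P$ is a left multiplier of $\Schwartz(\TG)$ (\cref{res:leftschwartzmultiplier}) and that, under polynomial transitivity, $\Op$ of a Schwartz function on $\X\rtimes^t G$ is exactly an operator with Schwartz kernel (\cref{res:Op-injective}~\refitem{item:Op-transitive}). Given these, an arbitrary $f\in\Schwartz(\X)$ can be factored as $f=\Op_t(g)h$ with $g$ a Schwartz element of $\X\rtimes^t G$ and $h\in\Schwartz(\X)$; then $\Op_t(\P)f=\Op_t(\P_t *_t g)h$ lands in $\Schwartz(\X)$ because $\P_t *_t g$ is again Schwartz. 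Continuity is then a formality via the closed graph theorem.

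First I would transport the multiplier property from $\TG$ to the fibre $\X\rtimes^t G$: for fixed $t\neq 0$ and any $g\in\Schwartz(\X\rtimes^t G)$, choosing $\chi\in\SmoothCompactSupp(\RR)$ with $\chi\equiv 1$ near $t$, the function $\tilde g(x,s,v)=\chi(s)g(x,v)$ lies in $\Schwartz(\TG)$, so $\P* u_{\tilde g}=u_{\P*\tilde g}$ with $\P*\tilde g\in\Schwartz(\TG)$ by \cref{res:leftschwartzmultiplier}. Applying the homomorphism $\ev_t$ (\cref{res:evaluationOfDistributions}) and using $\ev_t(u_h)=u_{h_t}$ for $h\in\Schwartz(\TG)$, one gets $\P_t *_t u_g = u_{(\P*\tilde g)_t}$ with $(\P*\tilde g)_t\in\Schwartz(\X\times G)$. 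Hence $\P_t *_t g\in\Schwartz(\X\rtimes^t G)$.

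Next, fix $t\neq 0$; by \cref{res:pair_groupoid} the action $\theta^t$ is again polynomially transitive, so \cref{res:Op-injective}~\refitem{item:Op-transitive} says that $\Op(u_g)$ for $g\in\Schwartz(\X\rtimes^t G)$ runs precisely over the operators with Schwartz kernel in $\Schwartz(\X\times\X)$. Given $f\in\Schwartz(\X)$, pick $\rho,h\in\Schwartz(\X)$ with $\int_\X\rho h=1$; the rank-one operator $\varphi\mapsto f\cdot\int_\X\rho\varphi$ has Schwartz kernel, hence equals $\Op_t(g)$ for some $g\in\Schwartz(\X\rtimes^t G)$, and $\Op_t(g)h=f$. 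Since $\Op_t=\Op\circ\ev_t$ is a homomorphism (\cref{res:fibred-distr-as-operators}, \cref{res:evaluationOfDistributions}),
\[
	\Op_t(\P)f=\Op_t(\P)\,\Op_t(g)h=\Op_t\big(\P_t *_t g\big)h\in\Schwartz(\X),
\]
where the last membership uses the previous step ($\P_t *_t g$ is Schwartz, so $\Op_t(\P_t *_t g)$ extends to $\Schwartz'(\X)\to\Schwartz(\X)$, in particular sends $h$ into $\Schwartz(\X)$). Thus $\Op_t(\P)\big(\Schwartz(\X)\big)\subseteq\Schwartz(\X)$.

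Finally, for continuity I would invoke the closed graph theorem: $\Schwartz(\X)$ is Fréchet, and if $\varphi_j\to\varphi$ and $\Op_t(\P)\varphi_j\to\psi$ in $\Schwartz(\X)$, then both convergences also hold in $\Algebra O_M(\X)$, on which $\Op_t(\P)$ is continuous by construction \eqref{eq:Op}; since $\Algebra O_M(\X)$ is Hausdorff, $\psi=\Op_t(\P)\varphi$ and the graph is closed. (Alternatively one could carry the continuity through the factorization, tracking continuity of $f\mapsto g\mapsto \P_t *_t g\mapsto \Op_t(\P_t *_t g)h$.) I do not expect a genuine obstacle here; the one point that requires a little care is the bookkeeping of the first step, namely that the multiplier property of \cref{res:leftschwartzmultiplier} — stated on the whole tangent groupoid — descends to each fibre $\X\rtimes^t G$, and that polynomial transitivity (not merely transitivity) is what makes the rank-one factorization available.
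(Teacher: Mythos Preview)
Your proof is correct and follows essentially the same approach as the paper: both use that $\P_t$ is a left multiplier of $\Schwartz(\X\rtimes^t G)$ (from \cref{res:leftschwartzmultiplier}) together with the identification of $\Op_t$ on Schwartz elements with Schwartz-kernel operators under polynomial transitivity (\cref{res:Op-injective}~\refitem{item:Op-transitive}). The paper's version is slightly more streamlined in that it pushes $\P_t$ to the pair groupoid via $\Theta^t_*$ and writes $\Op_t(\P)$ directly as the continuous composition $f\mapsto f\otimes\varphi\mapsto u*(f\otimes\varphi)\mapsto (x\mapsto (u*(f\otimes\varphi))(x,0))$ for a fixed $\varphi\in\SmoothCompactSupp(\X)$ with $\varphi(0)=1$, thereby obtaining continuity without a separate closed graph argument; your rank-one factorization $f=\Op_t(g)h$ is the same idea read on the action-groupoid side.
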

\begin{proof}
	By \cref{res:leftschwartzmultiplier}, \(\P_t\) is a left multiplier of \(\Schwartz(\X\rtimes^{t} G)\). Using \cref{res:Op-injective}~\refitem{item:Op-transitive}, \((\Theta^t)_*\P_t\) is a left multiplier of \(\Schwartz(\X\times \X)\). 
	Observe that the induced convolution $*$ on $\Algebra O_r'(\X \times \X)$ is just the usual composition of Schwartz kernels.
	We claim that any distribution \(u\in\Algebra O_r'(\X\times \X)\) which is a left multiplier of \(\Schwartz(\X\times \X)\) defines a continuous map \(\Op(u) \colon \Schwartz(\X)\to\Schwartz(\X)\). 
	Fix \(\varphi\in\SmoothCompactSupp(\X)\) with \(\varphi(0)=1\).
	Then $\Op(u)$ is continuous since it can be written as the composition
	\begin{equation*}
		\Schwartz(\X) \xrightarrow{f \mapsto f \tensor \varphi} 
		\Schwartz(\X \times \X) \xrightarrow{g \mapsto u * g} 
		\Schwartz(\X \times \X) \xrightarrow{g \mapsto (x \mapsto g(x,0))} \Schwartz(\X) . \qedhere
	\end{equation*}
\end{proof}
To show the existence of an adjoint on $\PPseu^m$, let us verify the condition
\refitem{item:adjoint} in \cref{res:equiv_rs_fibred}.

\begin{proposition}\label{res:adjoint}
	Let \(\P\in\PPseu^m\). Then there is a unique \(\P^*\in\PPseu^m\) satisfying \(\IntegrationMap_r(\P^*)=\conj{I_*\IntegrationMap_r(\P)}\),
	where \(I\) denotes the inverse of \(\TG\).
\end{proposition}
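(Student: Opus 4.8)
The plan is to construct $\P^*$ explicitly via the Fourier picture of \cref{res:equivalence-pseudo-symbol} and then check that it has the claimed defining property and essential homogeneity of order $m$. First I would recall that by \cref{res:equiv_rs_fibred}, to show $\P \in \PPseu^m \subseteq \Algebra O'_{r,s}(\TG)$ admits an adjoint it suffices to exhibit \emph{some} $\P^* \in \Algebra O'_r(\TG)$ with $\IntegrationMap_r(\P^*) = \conj{I_*(\IntegrationMap_r(\P))}$; uniqueness is then automatic from injectivity of $\IntegrationMap_r$ (\cref{res:PhiR:injective}). What remains is to produce such a $\P^*$ that moreover lies in $\PPseu^m$, i.e.\ is essentially homogeneous of order $m$ for the zoom action. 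So the real content is: (1) $\PPseu^m \subseteq \Algebra O'_{r,s}(\TG)$, and (2) the adjoint stays inside $\PPseu^m$.

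For (1), I would first observe that $\PPseu^m$ is stable under the involution at the level of the unit-space-independent data: writing $\P = \mathcal F^{-1}_{(\eta,\tau)\to(x,t)} \widecheck \P$ with $\widecheck \P \in \Algebra E'(\X^*\times\RR\times G) + \Schwartz(\X^*\times\RR\times G)$ essentially $m$-homogeneous and with singular support in $\{(0,0,0)\}$ (characterization \refitem{item:widecheck} of \cref{res:equivalence-pseudo-symbol}), the natural candidate is the distribution whose $\widecheck{\,\cdot\,}$-transform is the ``group-adjoint'' $v \mapsto \conj{\widecheck\P(-\eta,\tau,v^{-1})}$ (complex conjugate, combined with the groupoid inversion $I$ acting as $v \mapsto v^{-1}$ on the $G$-fibre and $x \mapsto \theta_v(x)$ on the base, together with the complex conjugation built into $\conj{I_*(\cdot)}$). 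Concretely, mimicking \cref{ex:diffop-as-hom-distr} and the essentially homogeneous $^*$-algebra structure on $\ess(G)$ recalled after \cref{def:ess_graded_group}, the operation $u \mapsto u^*$ with $\langle u^*, f\rangle = \conj{\langle u, f^*\rangle}$, $f^*(x,t,v) = \conj{f(\theta_v(x),t,v^{-1})}$, sends $\Algebra E'(\X^*\times\RR\times G)+\Schwartz$ into itself, preserves the singular-support and essential-homogeneity conditions (since $\tau_\lambda$ intertwines with $I$ by \cref{res:zoom_action} and the zoom action is already known to restrict to $\Schwartz(\TG)$ by \cref{res:zoomAutomorphisms}), and hence maps $\PPseu^m$ into $\PPseu^m$. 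This simultaneously shows $\PPseu^m \subseteq \Algebra O'_{r,s}(\TG)$ — indeed $u^*$ witnesses condition \refitem{item:adjoint} of \cref{res:equiv_rs_fibred} — and produces the required $\P^* \in \PPseu^m$ with $\IntegrationMap_r(\P^*) = \conj{I_*\IntegrationMap_r(\P)}$.

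In more detail, the key steps, in order, are: (i) Use \cref{res:symbol_estimates} and the fact that $\widehat\P \in \Smooth(\X\times\RR\times\lie g^*)$ is a symbol of order $m$ to see that $\widecheck\P = \mathcal F^{-1}_{(x,t)\to(\eta,\tau)}\mathcal F_{\xi\to v}^{-1}\widehat\P$ (equivalently the distribution attached to $\P$) satisfies \refitem{item:widecheck}; this packages $\P$ as a genuine compactly-supported-mod-Schwartz distribution on which the groupoid involution $I^*$ and complex conjugation act. (ii) Define $\P^*$ by $\P^*(\varphi) = \conj{\tilde\P(I^*\conj\varphi)}$ for $\varphi \in \Algebra O_M(\TG)$, where $\tilde\P \in \Algebra O'_s(\TG)$ is the $s$-fibred incarnation of $\P$ — this is exactly the formula from the proof of \cref{res:equiv_rs_fibred}, \refitem{item:range_source}$\Rightarrow$\refitem{item:adjoint}, once we know $\P \in \Algebra O'_{r,s}(\TG)$; alternatively, and more self-containedly, work on the Fourier side and set $\widehat{\P^*}(x,t,\xi)$ to be the symbol obtained from $\conj{\widehat\P}$ by the transformation dual to $(x,v) \mapsto (\theta_v(x), v^{-1})$, computed exactly as in \eqref{eq:Pstarf}/\cref{res:leftschwartzmultiplier} but with $f$ replaced by $\P$ and conjugation inserted. (iii) Verify $\widehat{\P^*}$ is again a symbol of order $m$: the transformation involves only a polynomial change of variables in $v$ (of constant Jacobian $1$ by \cref{res:polynomial_action:properties}\refitem{item:noJ}, resp.\ \cref{res:orbitmaps} under \ref{assumption:free}) together with the group inversion $i$ (polynomial, \cref{res:OM:pullback}) and the action $\theta$, whose derivatives are polynomially bounded by \cref{res:polynomial_action}; Faà di Bruno (\cref{res:chain_rule}) together with Peetre's inequality then yields the estimates \eqref{eq:symbol_estimates} for $\widehat{\P^*}$, and essential homogeneity of order $m$ follows from $\tau_\lambda \circ I = I \circ \tau_\lambda$ (implicit in \cref{res:zoom_action}) and the conjugation-compatibility of the zoom action on $\Schwartz(\TG)$. (iv) Finally, check $\IntegrationMap_r(\P^*) = \conj{I_*\IntegrationMap_r(\P)}$ directly from the defining formula and the commuting diagram \eqref{diag:inversions}, and invoke \cref{res:PhiR:injective} for uniqueness.

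I expect the main obstacle to be step (iii) — controlling the symbol seminorms of $\widehat{\P^*}$ under the nonlinear substitution induced by the groupoid inversion $(x,v) \mapsto (\theta_v(x), v^{-1})$. This is where the non-properness of Shubin operators bites: one cannot localize in $x$, so the growth of $\theta_v(x)$ in both $x$ and $v$ must be absorbed, and the argument will have to rerun the oscillatory-integral estimates of \cref{res:leftschwartzmultiplier} almost verbatim, now with $\widehat\P$ itself (a symbol, not a Schwartz function) appearing in the integrand. The saving grace is that all the needed polynomial bounds — constancy of the Jacobian, \eqref{eq:polynomial1}, \eqref{eq:polynomial2}, \cref{res:norm_estimates}\refitem{item:derivatives-mult}, \cref{res:integrable} — are already in place, so the estimate is mechanical if somewhat lengthy; I would present it compactly by reducing, as in \eqref{eq:rep_as_osc_int}, to an oscillatory integral and citing the bounds of \cref{res:leftschwartzmultiplier} rather than re-deriving them.
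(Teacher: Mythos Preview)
Your overall strategy and identification of the crux (controlling what happens under the substitution $(x,v)\mapsto(\theta_v(x),v^{-1})$) are right, and your ``alternative'' route on the $\widehat{\,\cdot\,}$-side is indeed what the paper does. But your primary approach via the $\widecheck{\,\cdot\,}$-picture has a genuine gap. You propose that the adjoint corresponds on that side to the pointwise formula $\conj{\widecheck\P(-\eta,\tau,v^{-1})}$. This ignores the action: the groupoid inversion is $I(x,t,v)=(\theta^t_v(x),t,v^{-1})$, and the substitution $x\mapsto\theta^t_v(x)$ does \emph{not} become a pointwise map after inverse Fourier transform in $(x,t)$---it mixes $\eta$ and $v$ through a $v$-dependent differential/multiplication operator in $\eta$. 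So there is no simple formula for $\widecheck{\P^*}$ in terms of $\widecheck\P$, and you cannot verify the conditions of characterization \refitem{item:widecheck} of \cref{res:equivalence-pseudo-symbol} this way. (Your formula would be correct only for a trivial action, e.g.\ the double dilation groupoid at $t=0$.) Relatedly, the appeal to \eqref{eq:rep_as_osc_int} at the end is inapposite: that formula needs property~\ref{assumption:free}, which is not assumed here.

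What the paper actually does is closer to your ``alternative'', but carried out in full rather than cited. One \emph{defines}
\[
\widehat{\P^*}(x,t,\xi)=(2\pi)^{-n}\lim_{j\to\infty}\int_{G\times\lie g^*}\E^{\I\langle w,\eta\rangle}\chi_j(w)\,\conj{\widehat\P(\theta^t_w(x),t,\xi+\eta)}\,\D w\,\D\eta,
\]
regularizes via $\langle D_w\rangle_\alpha^K/\langle\eta\rangle_\alpha^K$ and $\langle D_\eta\rangle_\alpha^M/\langle w\rangle_\alpha^M$ exactly as in \cref{res:leftschwartzmultiplier}, and shows the limit exists and is smooth. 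Essential $m$-homogeneity is then checked by a direct substitution inside the oscillatory integral using the compatibility $\theta^{\lambda t}_{\alpha_{\lambda^{-1}}(w)}(\beta_\lambda(x))=\beta_\lambda(\theta^t_w(x))$; the abstract argument ``$\tau_\lambda\circ I=I\circ\tau_\lambda$ hence $(\tau_{\lambda*}\P)^*=\tau_{\lambda*}(\P^*)$'' only becomes available \emph{after} one knows $\P\in\Algebra O'_{r,s}(\TG)$, which is precisely what is being proved. Finally, the identity $\IntegrationMap_r(\P^*)=\conj{I_*\IntegrationMap_r(\P)}$ is verified by pairing with $\varphi\in\mathcal D(\TG)$ and a careful sequence of Fubini/partial-integration steps---this is a separate computation, not a consequence of the bounds in \cref{res:leftschwartzmultiplier}, since here there is no Schwartz function providing decay and each interchange of integrals must be justified.
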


\begin{proof}
	The uniqueness of $\P^*$ is immediate from the injectivity of $\IntegrationMap_r$ (see \cref{res:PhiR:injective}) and it remains to show existence.
	Using \cref{res:chain_rule}, the symbol estimates~\eqref{eq:symbol_estimates}, and 
	$\abs{\theta(x,t,w)} \lesssim (\langle x \rangle \langle t \rangle \langle w \rangle_\alpha)^{B}$
	from \cref{res:polynomial_action}
	we obtain
	\begin{align*}
		\abs[\big]{\partial_{(x,t,w,\xi,\eta)}^{(a,b,c,d,e)} \conj{\widehat{\P}(\theta(x,t,w),\xi+\eta)}}
		&= \abs[\Big]{\sum_f \partial^f \widehat{\P}(\theta(x,t,w), \xi+\eta) \cdot \Chain{\theta(x,t,w), \xi+\eta }{(a,b,c,d,e)}{f}{x,t,w,\xi,\eta} }\\
		&\lesssim \langle (\theta(x,t,w), \xi+\eta) \rangle_H^m (\langle x \rangle \langle t \rangle \langle w \rangle_\alpha)^{B(\abs a + \abs b + \abs c)} \\
		&\lesssim \langle x \rangle^{B(\abs m+\abs a)} \langle t \rangle^{B(\abs m+\abs b)} \langle w \rangle^{B(\abs m+\abs c)} \langle \xi \rangle^{\abs m} \langle \eta \rangle^{\abs m}. \tag{$\#$} \label{eq:symbolComposedWithPolynomial}
	\end{align*}
	Choose a cut-off function $\chi \in \Smooth(G)$ which is constant $1$ inside a ball of radius $1$ and
	$0$ outside a ball of radius $2$, and set $\chi_j(w) = \chi(w/j)$. We define
	\[
	\widehat{\P^*}(x,t,\xi)
	=
	(2 \pi)^{-n} \lim_{j \to \infty} \int_{G \times \lie g^*} 
		\E^{\I \langle w,\eta\rangle} \chi_j(w) \conj{\widehat{\P}(\theta_w^t(x),t,\xi+\eta)} \D w \D \eta.
	\]
	Let us show that the limit exists:
	Using \eqref{eq:oscillatoryDecay} and integrating by parts we obtain
	\begin{align*}
	\int_{G \times \lie g^*} 
	\E^{\I \langle w,\eta\rangle} \chi_j(w) \conj{\widehat{\P}(\theta_w^t(x),\xi+\eta)} \D w \D \eta
	&=
	\int_{G \times \lie g^*} 
	\E^{\I \langle w,\eta\rangle} \frac{ \langle D_w \rangle_\alpha^K }{\langle \eta \rangle_\alpha^K} \chi_j(w) \conj{\widehat{\P}(\theta_w^t(x),t,\xi+\eta)} \D w \D \eta
	\\
	&=
	\int_{G \times \lie g^*} 
	\E^{\I \langle w,\eta\rangle}  \frac{ \langle D_\eta \rangle_\alpha^M }{\langle w \rangle_\alpha^M}  \frac{ \langle D_w \rangle_\alpha^K }{\langle \eta \rangle_\alpha^K} \chi_j(w) \conj{\widehat{\P}(\theta_w^t(x),t,\xi+\eta)} \D \eta \D w.
	\end{align*}
	Here, we choose $K \in 2q\NN$, $K > \abs m + Q(\alpha)$, 
	so that \cref{res:integrable} and the above estimate \eqref{eq:symbolComposedWithPolynomial} show
	that integrand on the right of the first line is integrable with respect to $\eta$
	also without integrating with respect to $w$ first.
	We can then swap the order of integration by Fubini's theorem and integrate by parts again.
	Note that
	$\langle D_\eta \rangle_\alpha^M  \langle \eta\rangle^{-K}_\alpha
	=
	\langle \eta\rangle^{-K}_\alpha \sum_{e} c_e(\eta) \partial_\eta^e $
	where all coefficients $c_e$ are bounded in $\eta$.
	So another application of \cref{res:integrable} and the above estimate \eqref{eq:symbolComposedWithPolynomial}
	show that for big enough $M \in 2 q \NN_0$ the integral at the end of the equation above exists also without the oscillatory factor.
	By the dominated convergence theorem, the limit $j \to \infty$ exists and is given by
	\begin{equation*} \label{eq:Pstarhat}
	\widehat{\P^*}(x,t,\xi)
	= (2\pi)^{-n}
	\int_{G \times \lie g^*} 
	\E^{\I \langle w,\eta\rangle}  \frac{ \langle D_\eta \rangle_\alpha^M }{\langle w \rangle_\alpha^M}  \frac{ \langle D_w \rangle_\alpha^K }{\langle \eta \rangle_\alpha^K} \conj{\widehat{\P}(\theta^t_w(x),t,\xi+\eta)} \D \eta \D w. \tag{$\#\#$}
	\end{equation*}
	In particular, $\widehat{\P^*}$ does not depend on the cut-off $\chi_j$ that is chosen in order to define it,
	nor on the choice of $K$ or $M$ in the previous equation (as long as $K$ and $M$ are big enough).
	
	Next, we demonstrate that $\widehat{\P^*}$ is smooth and essentially homogeneous of order $m$.
	Then \(\mathbb P^*\in\PPseu^m\) by \cref{res:equivalence-pseudo-symbol}.
	Choosing $M$ big enough and using the estimate \eqref{eq:symbolComposedWithPolynomial}
	the integral in \eqref{eq:Pstarhat} still exists when $\widehat\P$ is replaced by partial derivatives
	$\partial^{(a,b,d)}_{(x,t,\xi)} \widehat \P$; 
	exchanging the partial differentiations and integration we obtain that $\smash{\widehat{\P^*}}$ is smooth. 
	Moreover, by the essential homogeneity of $\smash{\widehat\P}$, there exists for every $\lambda > 0$ a Schwartz function $f_\lambda$ such that
	$\lambda^m \widehat\P(x,t,\xi) = \widehat\P(\beta_\lambda(x), \lambda t, \alpha_\lambda(\xi)) + f_\lambda(x,t,\xi)$. Hence
	\begin{align*}
		\widehat{\P^*}&(\beta_\lambda(x),\lambda t,\alpha_\lambda(\xi))
		= (2 \pi)^{-n} \lim_{j \to \infty} \int_{G \times \lie g^*} 
		\E^{\I \langle w,\eta\rangle} \chi_j(w) \conj{\widehat{\P}(\theta_w^{\lambda t}(\beta_\lambda(x)),\lambda t,\alpha_\lambda(\xi)+\eta)} \D \eta \D w
		\\
		&= (2 \pi)^{-n} \lim_{j \to \infty} \int_{G \times \lie g^*} 
		\E^{\I \langle \alpha_{\lambda^{-1}}(w),\alpha_\lambda(\eta)\rangle} 
		\chi_j(\alpha_{\lambda^{-1}}(w)) \conj{\widehat{\P}(\theta_{\alpha_{\lambda^{-1}}(w)}^{\lambda t}(\beta_\lambda(x)),\lambda t,\alpha_\lambda(\xi)+\alpha_\lambda(\eta))} \D \eta \D w
		\\
		&= (2 \pi)^{-n} \lim_{j \to \infty} \int_{G \times \lie g^*}
		\E^{\I \langle w,\eta\rangle} \chi_j(\alpha_{\lambda^{-1}}(w)) \big(\lambda^m \conj{\widehat\P(\theta_w^t(x),t, \xi+\eta)}
		- \conj{ f_\lambda(\theta_w^t(x),t, \xi + \eta)}\big)\D \eta\D w 
		\\
		&= \lambda^m \widehat{\P^*}(x, t, \xi) - (2 \pi)^{-n} \lim_{j \to \infty} \int_{G \times \lie g^*}
		\E^{\I \langle w,\eta\rangle} \chi_j(\alpha_{\lambda^{-1}}(w)) \conj{ f_\lambda(\theta_w^t(x),t, \xi + \eta)}\D \eta\D w 
	\end{align*}
	where we have used the compatibility $\theta_{\alpha_{\lambda^{-1}}(w)}^{\lambda t}(\beta_\lambda(x)) = \beta_\lambda(\theta_w^t(x))$ 
	and that $\chi_j \circ \alpha_{\lambda^{-1}}$ is a cut-off, so that it can be replaced by $\chi_j$
	since we have already seen that $\smash{\widehat{\P^*}}$ does not depend on the choice of cut-off.
	The second integral can be modified just as in our derivation of \eqref{eq:Pstarhat} 
	(yielding that the second integral equals \eqref{eq:Pstarhat} with $f_\lambda$ instead of $\widehat \P$).
	That this defines a Schwartz function can be shown exactly as in the proof of \cref{res:leftschwartzmultiplier}.
	
	Finally, we verify that \(\IntegrationMap_r(\P^*) = \conj{I_*\IntegrationMap_r(\P)}\). 
	Write $\widecheck \varphi$ for the inverse Fourier transform of \(\varphi\in\mathcal D(\TG)\) with respect to the last variable
	and recall that it decays rapidly with respect to this variable.
	We compute
	\newcommand{\stepFirstSubstitution}{1}
	\newcommand{\stepRemoveDecayingFactor}{2}
	\newcommand{\stepFourierInversion}{3}
	\newcommand{\stepDominatedConvergence}{4}
	\newcommand{\stepSecondSubstituion}{5}
		\begin{align*}
		\langle \IntegrationMap_r(\P^*),\varphi\rangle 
		&= \int \P^*(\varphi)(x,t) \D x \D t 
		= \int \widehat{\P^*}(x,t,\xi) \widecheck \varphi(x,t,\xi) \D \xi \D x \D t 
		\\
		&= (2 \pi)^{-n}
		\lim_{j \to \infty}
		\int \E^{\I \langle w,\eta\rangle} 
		\frac{ \langle D_{w} \rangle_\alpha^K }{ \langle \eta\rangle^{K}_\alpha } \chi_j(w)
		\conj{\widehat{\P}(\theta_w^t(x),t,\xi+\eta)} \widecheck\varphi(x,t,\xi)
		\D w \D \eta\D \xi \D x \D t 
		\\
		&\overset{\mathclap{(\stepFirstSubstitution)}}{=} (2 \pi)^{-n}
		\lim_{j \to \infty}
		\int \E^{\I \langle w,\eta\rangle} 
		\frac{ \langle D_{w} \rangle_\alpha^K }{ \langle \eta\rangle^{K}_\alpha } \chi_j(w)
		\conj{\widehat{\P}(\theta^t_w(x),t,\xi)} \widecheck\varphi(x,t,\xi-\eta)
		\D w \D \eta \D \xi \D x \D t 
		\\
		&\overset{\mathclap{(\stepRemoveDecayingFactor)}}{=}
		(2\pi)^{-2n}
		\lim_{j \to \infty}
		\int \E^{\I (\langle w,\eta\rangle + \langle v, \xi - \eta \rangle)} 
		\chi_j(w)
		\conj{\widehat{\P}(\theta^t_w(x),t,\xi)} \varphi(x,t,v)
		\D w \D \eta \D v \D \xi \D x \D t 
		\\
		&\overset{\mathclap{(\stepFourierInversion)}}{=} (2 \pi)^{-n}
		\lim_{j \to \infty}
		\int \E^{\I \langle v, \xi \rangle} 
		\chi_j(v)
		\conj{\widehat{\P}(\theta_v^t(x),t,\xi)} \varphi(x,t,v)
	    \D v \D \xi \D x \D t 
		\\
		&\overset{\mathclap{(\stepDominatedConvergence)}}{=} (2 \pi)^{-n}
		\int \E^{\I \langle v, \xi \rangle} 
		\conj{\widehat{\P}(\theta_v^t(x),t,\xi)} \varphi(x,t,v)
		\D v \D \xi \D x \D t 
		\\
		&\overset{\mathclap{(\stepSecondSubstituion)}}{=} (2\pi)^{-n} \int \E^{\I \langle v, \xi \rangle} \conj{\widehat\P(x,t,\xi)}  \varphi (\theta_{-v}^t(x),t,v) \D v \D \xi \D x \D t 
		\\
		&= (2\pi)^{-n} \conj{ \int  \E^{\I \langle v, \xi \rangle} \widehat\P(x,t,\xi) \conj{\varphi (\theta^t_v(x),t,-v)} \D v \D \xi \D x \D t }
		= \conj{ \int \widehat\P( \widecheck{ \overline{\varphi} \circ I })(x,t) \D x \D t }
		= \langle \conj{I_*\IntegrationMap_r(\P)},\varphi\rangle .
	\end{align*}
	In (\stepFirstSubstitution) we have substituted $\xi$ by $\xi-\eta$.
	Note that in order to do so, we first need to change the order of integration to first integrate with respect to $\xi$,
	then $w$, then $\eta$.
	This is justified by Fubini's theorem since the integral still exists when the integrand is replaced by its absolute value.
	Then we can substitute and revert to the original order of integration.
	In (\stepRemoveDecayingFactor) we substituted the definition of $\widecheck\varphi$,
	used Fubini's theorem again to swap the integration over $v$ and $\eta$,
	and removed $ \langle D_{\eta} \rangle_\alpha^M / \langle w\rangle^{M}_\alpha$ by partial integration.
	Step (\stepFourierInversion) is just the Fourier inversion formula, 
	which applies because the integrand is smooth and compactly supported in $w$, 
	and in (\stepDominatedConvergence) we used the dominated convergence theorem (note that $\varphi$ has compact support).
	In step (\stepSecondSubstituion) we substitute $(x,t)$ by $\theta_v(x,t)$, where the Jacobian determinant is $1$ by \cref{res:polynomial_action:properties}.
	Note that we can only do this after changing the order of integration,
	which we can justify by Fubini's theorem once we know that the integrand decays in $\xi$.
	This can be shown as above: Use partial integration to introduce $ \langle D_{v} \rangle_\alpha^L / \langle \xi\rangle^{L}_\alpha $ into the integral, then use the dominated convergence theorem to introduce a cut-off $\chi_j(\xi)$
	and integrate by parts again to get rid of $ \langle D_{v} \rangle_\alpha^L / \langle \xi\rangle^{L}_\alpha $.
\end{proof}

\begin{corollary}\label{res:Schwartz_multiplier}
	Let \(\P\in\PPseu^m\). Then the following holds:
	\begin{enumerate}
		\item\label{item:r,s} \(\P\in\mathcal O'_{r,s}(\TG)\), i.e.\ there exist \(\widetilde{\P}\in\Algebra O'_s(\TG)\) such that \(\IntegrationMap_s(\widetilde{\P})=\IntegrationMap_r(\P)\),		
		\item\label{item:multiplier} \(\P\) is a two-sided multiplier of \(\Schwartz(\TG)\), that is, 
		the maps \(\P * \argument \colon \Schwartz(\TG) \to \Schwartz(\TG)\) and 
		\(\argument * \P \colon \Schwartz(\TG) \to \Schwartz(\TG)\),
		which convolve a Schwartz function with $\P$ from the left and right,
		are both well-defined and continuous,
		\item\label{item:transpose-p} there is a unique \(\P^t\in\PPseu^m\) satisfying \(\IntegrationMap_r(\P^t)=I_*\IntegrationMap_r(\P)\).
		\item\label{item:op-transpose} suppose \(\theta^1\) is polynomially transitive, then \(\Op_t(\P)\) extends to a continuous operator \(\Schwartz'(\X)\to\Schwartz'(\X)\) for every \(t\neq 0\).	
	\end{enumerate}
\end{corollary}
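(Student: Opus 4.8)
The plan is to deduce all four claims from the existence of the adjoint \(\P^*\in\PPseu^m\) constructed in \cref{res:adjoint}, together with the characterizations of \(\Algebra O'_{r,s}(\TG)\) in \cref{res:equiv_rs_fibred} and the left-multiplier property \cref{res:leftschwartzmultiplier}.

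For \refitem{item:r,s} and \refitem{item:transpose-p} I would argue as follows. By \cref{res:adjoint} there is \(\P^*\in\PPseu^m\subseteq\Algebra O'_r(\TG)\) with \(\IntegrationMap_r(\P^*)=\conj{I_*\IntegrationMap_r(\P)}\), which is exactly condition \refitem{item:adjoint} of \cref{res:equiv_rs_fibred}; the implication \refitem{item:adjoint}\(\Rightarrow\)\refitem{item:range_source} there gives \(\P\in\Algebra O'_{r,s}(\TG)\), i.e.\ \refitem{item:r,s}. Applying next \refitem{item:range_source}\(\Rightarrow\)\refitem{item:transpose} of the same lemma produces \(\P^t\in\Algebra O'_r(\TG)\) with \(\IntegrationMap_r(\P^t)=I_*\IntegrationMap_r(\P)\); concretely \(\P^t(\varphi)=\conj{\P^*(\conj\varphi)}\), so \(\P^t\) is the complex conjugate of \(\P^*\). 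Since \(\tau_\lambda\) and the inclusion \(\Schwartz(\TG)\hookrightarrow\Algebra O'_r(\TG)\) have ``real'' coefficients, the operation \(u\mapsto\conj u\) commutes with \({\tau_\lambda}_*\) and preserves \(\Schwartz(\TG)\); hence \(\P^*\in\PPseu^m\) forces \(\P^t\in\PPseu^m\). Uniqueness of \(\P^t\) is the injectivity of \(\IntegrationMap_r\) (\cref{res:PhiR:injective}).

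For \refitem{item:multiplier}, the left-multiplier statement is \cref{res:leftschwartzmultiplier}. For the right one, I would note that \(f*\P\) is defined in \(\Algebra O'_{r,s}(\TG)\) for \(f\in\Schwartz(\TG)\) by \cref{res:convolution-on-rs-fibred} and \refitem{item:r,s}, and that \(f*\P=(\P^**f^*)^*\): indeed \((f^*)^* * (\P^*)^* = f*\P\) using that \(^*\) reverses convolution on \(\Algebra O'_{r,s}(\TG)\) (\cref{res:involution}), that \(u_f^*=u_{f^*}\), and that \((\P^*)^*=\P\). Since the involution is continuous on \(\Schwartz(\TG)\) (\cref{res:algebra:schwartz}) and \(\P^**\argument\) maps \(\Schwartz(\TG)\) continuously into itself (\cref{res:leftschwartzmultiplier}, as \(\P^*\in\PPseu^m\)), the composite \(f\mapsto(\P^**f^*)^*=f*\P\) is a continuous self-map of \(\Schwartz(\TG)\).

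Finally, for \refitem{item:op-transpose} assume \(\theta^1\) polynomially transitive and fix \(t\neq 0\). Then \(\Op_t(\P)\) and \(\Op_t(\P^t)\) both restrict to continuous maps \(\Schwartz(\X)\to\Schwartz(\X)\) by \cref{res:op_on_schwartz} (using \(\P^t\in\PPseu^m\) for the second). By \cref{res:evaluationOfDistributions} the evaluation \(\ev_t\) restricts to a \(^*\)-homomorphism on \(r,s\)-fibred distributions, so \(\P_t\in\Algebra O'_{r,s}(\X\rtimes^t G)\) and, \(\ev_t\) commuting with \(^*\) and with conjugation, \((\P_t)^t=(\P^t)_t\). \cref{res:OpDual} applied to \(u=\P_t\) then yields, for all \(f_1,f_2\in\SmoothCompactSupp(\X)\),
\begin{equation*}
	\int_\X (\Op_t(\P)f_1)(x)\,f_2(x)\,\D x = \int_\X f_1(x)\,(\Op_t(\P^t)f_2)(x)\,\D x ,
\end{equation*}
which extends to all \(f_1,f_2\in\Schwartz(\X)\) by density and continuity. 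Hence \(\Op_t(\P)\) is the bilinear transpose of the continuous operator \(\Op_t(\P^t)\colon\Schwartz(\X)\to\Schwartz(\X)\) and therefore extends to a continuous operator \(\Schwartz'(\X)\to\Schwartz'(\X)\) via \(\langle\Op_t(\P)u,f\rangle=\langle u,\Op_t(\P^t)f\rangle\). The one point requiring care is the verification in \refitem{item:transpose-p} that conjugation of fibred distributions respects the zoom action and the subspace \(\Schwartz(\TG)\), so that \(\P^t\in\PPseu^m\); all remaining assertions are formal consequences of \cref{res:adjoint}, \cref{res:leftschwartzmultiplier} and \cref{res:OpDual}.
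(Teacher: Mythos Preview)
Your proof is correct and follows essentially the same route as the paper: deduce \refitem{item:r,s} and \refitem{item:transpose-p} from the existence of \(\P^*\in\PPseu^m\) via \cref{res:equiv_rs_fibred}, obtain the right-multiplier property in \refitem{item:multiplier} from \(f*\P=(\P^**f^*)^*\) together with \cref{res:leftschwartzmultiplier} and continuity of the involution, and derive \refitem{item:op-transpose} from \cref{res:OpDual} and \cref{res:op_on_schwartz} by duality. Your explicit justification that \(\P^t=\conj{\P^*}\in\PPseu^m\) (via conjugation commuting with \({\tau_\lambda}_*\) and preserving \(\Schwartz(\TG)\)) actually spells out a point the paper leaves implicit.
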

\begin{proof}
		Using that \(\P^*\in\Algebra O'_r(\TG)\) exists by \cref{res:adjoint}, \refitem{item:r,s} and \refitem{item:transpose-p} are immediate from \cref{res:equiv_rs_fibred}.	
		
		To see \refitem{item:multiplier} note that for \(f\in\Schwartz(\TG)\) one has \(f*\mathbb P=(\mathbb P^**f^*)^*\) by \cref{res:involution}. Hence, the claim follows from \cref{res:leftschwartzmultiplier} and the continuity of the involution on \(\Schwartz(\TG)\), see \cref{res:algebra:schwartz}.
		
		For \refitem{item:op-transpose} suppose now that \(\theta^1\) is polynomially transitive, then $\Op_t(\P) \colon \Schwartz(\X) \to \Schwartz(\X)$ is continuous by \cref{res:op_on_schwartz}. Therefore,
		the equation \eqref{eq:OpDual} in \cref{res:OpDual} remains valid for all $f_1, f_2 \in \Schwartz(\X)$,
		meaning precisely that \(\Op_t(\P^t)=\Op_t(\P)^t\). The claim follows by duality as \(\Op_t(\P^t)\colon\Schwartz(\X)\to\Schwartz(\X)\) is continuous by \cref{res:op_on_schwartz}. \qedhere
	
\end{proof}
Now, we can show that our operators form indeed a calculus in the sense
that their compositions and formal adjoints also belong to it. Note that by \cref{res:Schwartz_multiplier}, \(u\in\Ess^m\) is a two-sided multiplier of \(\Schwartz(\X\rtimes^0 G)\), so that the convolution \([u]*[v]=[u*v]\) of \([u]\in\Symb^\ell\) and \([v]\in\Symb^m\) is well-defined. Furthermore, \([u]^*=[u^*]\) and \([u]^t=[u^t]\) are well-defined.
\begin{theorem}
	The calculus has the following properties:
	\begin{enumerate}
		\item For \(\P_1\in\Pseu^\ell\) and \(\Q_1\in\Pseu^m\) their composition satisfies \(\P_1*\Q_1\in \Pseu^{\ell+m}\) and \(\sigma_{\ell+m}(\P_1*\Q_1)=\sigma_{\ell}(\P_1)*\sigma_{m}(\Q_1)\),
		\item For \(\P_1\in\Pseu^m\) also \(\P_1^*,\P_1^t\in \Pseu^{m}\) with \(\sigma_{m}(\P_1^*)=\sigma_{m}(\P_1)^*\) and \(\sigma_{m}(\P_1^t)=\sigma_{m}(\P_1)^t\).		
		\end{enumerate}
\end{theorem}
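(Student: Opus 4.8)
The plan is to reduce both statements to machinery already established: that the zoom action $\tau_{\lambda *}$ is an algebra automorphism of $\Algebra O'_r(\TG)$ (\cref{res:zoomAutomorphisms}), that every $\P\in\PPseu$ is a two-sided multiplier of $\Schwartz(\TG)$ and admits an adjoint $\P^*$ and a transpose $\P^t$ in $\PPseu$ of the same order (\cref{res:adjoint}, \cref{res:Schwartz_multiplier}), and that $\ev_0$ and $\ev_1$ are $^*$-homomorphisms for the respective convolutions and involutions (\cref{res:evaluationOfDistributions}). Additionally I will use that the principal cosymbol map is independent of the chosen extension (\cref{res:smoothingat1implies0}, \cref{res:short_exact_sequence}) and that the convolution and involution on $\Symb$ are well-defined, as noted before the statement.

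For the composition, first choose extensions $\P\in\PPseu^\ell$ of $\P_1$ and $\Q\in\PPseu^m$ of $\Q_1$ and form $\P*\Q\in\Algebra O'_r(\TG)$ via \cref{res:convolution}. Writing ${\tau_\lambda}_*\P=\lambda^\ell\P+f_\lambda$ and ${\tau_\lambda}_*\Q=\lambda^m\Q+g_\lambda$ with $f_\lambda,g_\lambda\in\Schwartz(\TG)$ and using that ${\tau_\lambda}_*$ is a homomorphism gives
\[
	{\tau_\lambda}_*(\P*\Q)=\lambda^{\ell+m}\,\P*\Q+\lambda^\ell\,\P*g_\lambda+\lambda^m\,f_\lambda*\Q+f_\lambda*g_\lambda ,
\]
and all three correction terms lie in $\Schwartz(\TG)$ since $\P$ and $\Q$ are two-sided multipliers of $\Schwartz(\TG)$ by \cref{res:Schwartz_multiplier}~\refitem{item:multiplier}; hence $\P*\Q\in\PPseu^{\ell+m}$. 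Applying $\ev_1$, which is a homomorphism, yields $\ev_1(\P*\Q)=\P_1*\Q_1$, so $\P_1*\Q_1\in\Pseu^{\ell+m}$. For the symbol, applying $\ev_0$ and using that it is a homomorphism gives $\sigma_{\ell+m}(\P_1*\Q_1)=[(\P*\Q)_0]=[\P_0*\Q_0]=[\P_0]*[\Q_0]=\sigma_\ell(\P_1)*\sigma_m(\Q_1)$.

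For adjoints and transposes, take an extension $\P\in\PPseu^m$ of $\P_1$ and use \cref{res:adjoint} and \cref{res:Schwartz_multiplier}~\refitem{item:transpose-p} to obtain $\P^*,\P^t\in\PPseu^m$. Since $\ev_1$ is a $^*$-homomorphism and is built from pull-backs along real polynomial maps — hence commutes with complex conjugation and therefore with the transpose — we get $\ev_1(\P^*)=\P_1^*$ and $\ev_1(\P^t)=\P_1^t$, so $\P_1^*,\P_1^t\in\Pseu^m$. The same argument applied to $\ev_0$ gives $\sigma_m(\P_1^*)=[(\P_0)^*]=[\P_0]^*=\sigma_m(\P_1)^*$ and $\sigma_m(\P_1^t)=[(\P_0)^t]=[\P_0]^t=\sigma_m(\P_1)^t$. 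I do not anticipate a genuine obstacle: all analytic content has already been absorbed into \cref{res:leftschwartzmultiplier}, \cref{res:adjoint} and \cref{res:Schwartz_multiplier}, and the only point that needs a line of care is the compatibility of $\ev_1$ and $\ev_0$ with the transpose, which follows from them being $^*$-homomorphisms that commute with conjugation.
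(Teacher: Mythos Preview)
Your proof is correct and follows essentially the same approach as the paper. The only cosmetic difference is in the algebraic decomposition for essential homogeneity: you expand ${\tau_\lambda}_*(\P*\Q)$ into three correction terms $\lambda^\ell\P*g_\lambda+\lambda^m f_\lambda*\Q+f_\lambda*g_\lambda$, while the paper uses the two-term telescoping identity $\lambda^{\ell+m}(\P*\Q)-{\tau_\lambda}_*(\P*\Q)=(\lambda^\ell\P-{\tau_\lambda}_*\P)*\lambda^m\Q+{\tau_\lambda}_*\P*(\lambda^m\Q-{\tau_\lambda}_*\Q)$; both reduce to the same multiplier property from \cref{res:Schwartz_multiplier}, and your treatment of the adjoint and transpose via \cref{res:evaluationOfDistributions} is exactly what the paper has in mind.
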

\begin{proof}
	\begin{enumerate}
		\item Let \(\P\in\PPseu^\ell\) and \(\Q\in\PPseu^m\) extend \(\P_1\) and \(\Q_1\). Then \(\P*\Q\in\Algebra{O}'_r(\TG)\) and moreover, since both are essentially homogeneous
		\begin{align*}\lambda^{\ell+m}\mathbb (\P*\Q)-{\tau_\lambda}_*(\P*\Q)
			&=(\lambda^\ell\P-{\tau_\lambda}_* \P)*\lambda^m\Q+{\tau_\lambda}_*\P*(\lambda^m\Q-{\tau_\lambda}_*\Q)\\
			&\subset \Schwartz(\TG)*\lambda^m\Q+{\tau_\lambda}_*\P*\Schwartz(\TG),\end{align*}
		where we used \cref{res:zoomAutomorphisms}.
		As \({\tau_\lambda}_*\P\) and \(\lambda^m \Q\) are Schwartz multipliers by \cref{res:Schwartz_multiplier},
		we obtain \(\P*\Q\in\PPseu^{\ell+m}\), so that \(\P_1*\Q_1\in\Pseu^{\ell+m}\). Moreover \([\P_0*\Q_0]=[\P_0]*[\Q_0]\) shows the claim on the principal cosymbol. 
		\item This follows directly from \cref{res:adjoint} and \([\P_0]^*=[\P_0^*]\), respectively, \([\P_0]^t=[\P_0^t]\).\qedhere
	\end{enumerate}		
\end{proof}

\subsection{Asymptotic completeness and ellipticity}
First, we determine the residual class of the calculus. We define $\Pseu^{-\infty} = \Schwartz(X \rtimes^1 G)$, which is justified by the following result.
\begin{lemma}
	Let $(m_j)_{j \in \NN_0}$ be a sequence of real numbers such that $m_j \to -\infty$ as $j \to \infty$.
	A fibred distribution \(\P_1 \in \Algebra O_r'(\X \rtimes^1 G)\) belongs to \(\bigcap_{j\in\NN_0}\Pseu^{m_j}\) if and only if \(\P_1\in\Schwartz(\X\rtimes^1G)\). When \(\theta^1\) is polynomially transitive, this implies that \(\Op(\P_1)\) extends to a continuous map \(\Op(\P_1)\colon\Schwartz'(\X)\to\Schwartz(\X)\).
\end{lemma}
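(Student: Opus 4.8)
The plan is to treat the two implications of the equivalence separately: the implication that a Schwartz fibred distribution lies in every $\Pseu^{m_j}$ is essentially formal, while the converse is where the hypothesis $m_j \to -\infty$ enters, through the symbol estimates of \cref{res:symbol_estimates}. The addendum about $\Op(\P_1)$ will then be a one-line consequence of \cref{res:Op-injective}.

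For the formal implication, suppose $\P_1 \in \Schwartz(\X \rtimes^1 G)$, so that $\P_1 = u_{P_1}$ for a Schwartz function $P_1$ on $\X \times G$. Picking any $\rho \in \Schwartz(\RR)$ with $\rho(1) = 1$, the function $P(x,t,v) = \rho(t) P_1(x,v)$ lies in $\Schwartz(\TG)$, and the associated $u_P \in \Algebra O'_r(\TG)$ satisfies $\ev_1(u_P) = \P_1$. Since the zoom action preserves $\Schwartz(\TG)$ by \cref{res:zoomAutomorphisms}, both $\lambda^m u_P$ and ${\tau_\lambda}_* u_P$ are Schwartz, so $\lambda^m u_P - {\tau_\lambda}_* u_P \in \Schwartz(\TG)$ for every $m \in \RR$ and every $\lambda > 0$. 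Hence $u_P \in \PPseu^{m_j}$ for all $j$, and $\P_1 \in \bigcap_j \Pseu^{m_j}$.

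For the converse, fix for each $j$ an extension $\P^{(j)} \in \PPseu^{m_j}$ with $\ev_1(\P^{(j)}) = \P_1$. By \cref{res:equivalence-pseudo-symbol}, each $\widehat{\P^{(j)}}$ is a genuine smooth function on $\X \times \RR \times \lie g^*$; since the Fourier transform in the group direction commutes with restricting $t = 1$, the function $\widehat{\P_1}$ coincides with the smooth function $\widehat{\P^{(j)}}|_{t=1}$. Specializing the symbol estimates of \cref{res:symbol_estimates} to $c = 0$ and $t = 1$ yields, for all multiindices $a, b$,
\[
	\abs{\partial^a_\xi \partial^b_x \widehat{\P_1}(x,\xi)} \leq C^{(j)}_{a,b,0} \bigl(2 + \norm{\xi}_\alpha + \norm{x}_\beta\bigr)^{m_j - [a]_\alpha - [b]_\beta} .
\]
Given $a$, $b$ and $N \in \NN_0$, choosing $j$ so large that $m_j \leq -N$ bounds the left-hand side by a constant times $(2 + \norm{\xi}_\alpha + \norm{x}_\beta)^{-N}$; as any homogeneous quasi-norm dominates a fixed positive power of the Euclidean norm, letting $N \to \infty$ gives $\widehat{\P_1} \in \Schwartz(\X \times \lie g^*)$, hence $\P_1 \in \Schwartz(\X \rtimes^1 G)$ by Fourier inversion. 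Finally, when $\theta^1$ is polynomially transitive, \cref{res:Op-injective}~\refitem{item:Op-transitive} applied to $\X \rtimes^1 G$ identifies $\Op(\Schwartz(\X \rtimes^1 G))$ with exactly the operators extending continuously to maps $\Schwartz'(\X) \to \Schwartz(\X)$, so $\Op(\P_1)$ is one of them. I do not expect any real obstacle: the only two points needing a little care are that $\widehat{\P^{(j)}}$ is a smooth function, so that its restriction to $t=1$ is well defined pointwise, which is part of \cref{res:equivalence-pseudo-symbol}, and the elementary passage from arbitrary polynomial decay in a homogeneous quasi-norm to genuine Schwartz decay.
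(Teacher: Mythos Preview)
Your proof is correct and follows essentially the same approach as the paper: extend a Schwartz function on $\X\rtimes^1 G$ to a Schwartz function on $\TG$ (which lies in $\PPseu^m$ for every $m$), use the symbol estimates of \cref{res:symbol_estimates} at $t=1$ together with $m_j\to-\infty$ to force $\widehat{\P_1}\in\Schwartz$, and invoke \cref{res:Op-injective}~\refitem{item:Op-transitive} for the last claim. Your version is simply more explicit about the extension and the passage from quasi-norm decay to Schwartz decay.
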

\begin{proof}
If \(\P_1\in\Schwartz(\X\rtimes^{1} G)\), it is clear that it can be extended to \(\P\in\Schwartz(\TG)\subset\PPseu^m\) for all \(m\in\RR\).
Conversely, suppose that \(\P_1\in \bigcap_{j\in\NN_0}\Pseu^{m_j}\). By \cref{res:symbol_estimates} its full symbol \(\widehat{\P}_1\) is a Schwartz function. Hence, \(\P_1\) belongs to \(\Schwartz(\X\rtimes^1 G)\). When \(\theta^1\) is polynomially transitive, \(\Op\) maps \(\Schwartz(\X\rtimes^1 G)\) to \(\mathcal L(\Schwartz'(\X),\Schwartz(\X))\) by \cref{res:Op-injective} \refitem{item:Op-transitive}. 
\end{proof}
We show that the calculus is asymptotically complete.
\begin{proposition}\label{res:asymptotic_completeness} 
	Let $m \in \RR$ and \((\P^k_{1})_{k\in\NN_0}\) be a sequence such that \(\P^k_{1}\in\Pseu^{m-k}\).
	Then there is a \(\P_1\in\Pseu^{m}\) such that for all \(j\in\NN_0\)
	\begin{equation*}
		\P_1 -\sum_{k=0}^{j-1}\P^k_{1}\in\Pseu^{m-j}.
	\end{equation*}
\end{proposition}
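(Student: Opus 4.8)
The plan is to imitate the classical Borel-type summation argument for asymptotic symbol expansions, but carried out on the level of essentially homogeneous distributions on the Shubin tangent groupoid. First I would lift each $\P^k_1 \in \Pseu^{m-k}$ to an extension $\P^k \in \PPseu^{m-k}$ using \cref{res:short_exact_sequence} (or simply by the definition of $\Pseu^{m-k}$). By \cref{res:equivalence-pseudo-symbol} each $\widehat{\P^k} \in \Smooth(\X \times \RR \times \lie g^*)$ and, after fixing one cut-off $\chi$ vanishing near the origin and equal to $1$ outside a compact set, we may write $\widehat{\P^k} = \chi \cdot P^k + f^k$ with $P^k$ a $(m-k)$-homogeneous smooth function on $\X \times \RR \times \lie g^* \setminus \simpleset{(0,0,0)}$ and $f^k \in \Schwartz$. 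Without loss of generality we can replace $\P^k$ by the distribution whose Fourier transform is $\chi \cdot P^k$, i.e.\ assume $\widehat{\P^k} = \chi \cdot P^k$ is essentially homogeneous of order $m - k$ (this changes each $\P^k_1$ only by a Schwartz function, which is harmless). The goal is then to form a suitable ``convergent sum'' $\widehat \P = \sum_{k} \omega(\norm{(x,t,\xi)}_H / R_k)\, \chi(x,t,\xi)\, P^k(x,t,\xi)$, where $\omega$ is a cut-off vanishing near $0$ and equal to $1$ near infinity, $\norm{\argument}_H$ is the homogeneous quasi-norm from \eqref{eq:JapaneseBracket:homogeneous}, and $R_k \to \infty$ is a rapidly increasing sequence chosen so that the sum converges in $\Smooth(\X \times \RR \times \lie g^*)$ together with all the symbol estimates needed.

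\textbf{Key steps.}
(1) Reduce to $\widehat{\P^k} = \chi \cdot P^k$ with $P^k$ homogeneous of degree $m-k$, as above.
(2) Observe that each term $\omega(\norm{(x,t,\xi)}_H/R_k)\,\chi\,P^k$ is compactly-away-from-origin supported (the $\omega$-factor makes it vanish on a neighbourhood of the origin whose size grows with $R_k$) and that $\omega(\norm{\argument}_H/R_k)\,\chi\,P^k$ agrees with $\chi\,P^k$ outside a compact set, so it differs from an essentially $(m-k)$-homogeneous function only by a compactly supported, hence Schwartz, function. Thus each individual term defines an element of $\PPseu^{m-k}$.
(3) Choose $R_k$ so large that on the region $\norm{(x,t,\xi)}_H \geq R_k$ one has, for all multi-indices $(a,b,c)$ with $[a]_\alpha + [b]_\beta + c \le k$,
\[
  \abs{\partial^a_\xi \partial^b_x \partial^c_t \bigl(\omega(\norm{(x,t,\xi)}_H/R_k)\,\chi\,P^k\bigr)(x,t,\xi)}
  \le 2^{-k} \langle (x,t,\xi) \rangle_H^{\,m - k + [a]_\alpha + [b]_\beta + c} .
\]
This uses the homogeneity of $P^k$ (so its derivatives obey exactly the symbol estimate \eqref{eq:symbol_estimates} with $m$ replaced by $m-k$ away from the origin) together with the fact that derivatives of $\omega(\norm{\argument}_H/R_k)$ carry negative powers of $R_k$.
(4) Set $\widehat \P = \sum_{k \ge 0} \omega(\norm{(x,t,\xi)}_H/R_k)\,\chi\,P^k$. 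On any fixed ball the sum is finite, so $\widehat \P \in \Smooth(\X \times \RR \times \lie g^*)$; the estimates in (3) show that for every fixed $j$ the tail $\sum_{k \ge j}$ satisfies symbol estimates of order $m - j$, and that the finite partial sum $\sum_{k=0}^{j-1} \omega(\cdots/R_k)\,\chi\,P^k$ differs from $\sum_{k=0}^{j-1} \chi P^k$ by a compactly supported function. Consequently $\widehat\P - \sum_{k=0}^{j-1} \widehat{\P^k}$ satisfies the symbol estimate \eqref{eq:symbol_estimates} of order $m-j$, and by \cref{res:equivalence-pseudo-symbol} (the criterion \refitem{item:widehat1} $\Leftrightarrow$ \refitem{item:pseudo}, via the remark that such a function is automatically $\widehat\Q$ for some $\Q \in \PPseu^{m-j}$) we get $\widehat\P = \widehat\P^{(j)}$ with $\P^{(j)} \in \PPseu^{m-j} + \PPseu^m$; being of symbol order $m$ it lies in $\PPseu^m$, and $\P - \sum_{k=0}^{j-1}\P^k \in \PPseu^{m-j}$.
(5) Restrict to $t = 1$: $\P_1 \coloneqq \ev_1(\P) \in \Pseu^m$ and $\P_1 - \sum_{k=0}^{j-1} \P^k_1 = \ev_1(\P - \sum_{k=0}^{j-1}\P^k) \in \Pseu^{m-j}$, using that $\ev_1$ maps $\PPseu^{m-j}$ to $\Pseu^{m-j}$ by definition. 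Finally undo the reduction from step (1): the Schwartz corrections introduced there only modify $\P_1$ and the $\P^k_1$ by Schwartz functions, which lie in every $\Pseu^\ell$, so the conclusion is unaffected.

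\textbf{Main obstacle.}
The routine part is the bookkeeping with multi-indices and the chain rule applied to $\omega(\norm{(x,t,\xi)}_H/R_k)$; the real content — and the step I expect to require the most care — is Step (3)/(4): making the choice of the blow-up radii $R_k$ precise so that the \emph{full} family of symbol seminorms \eqref{eq:symbol_estimates} converges simultaneously, and verifying cleanly that the infinite sum, which is only locally finite, genuinely defines a smooth function whose tails obey the order-$(m-j)$ estimates. One must be slightly careful that differentiating $\omega(\norm{(x,t,\xi)}_H/R_k)$ produces derivatives of the homogeneous quasi-norm, which are themselves only smooth away from the origin — but on the support of $\D \omega(\norm{\argument}_H/R_k)$ one is bounded away from the origin, so this is not a genuine problem, merely something to state. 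A secondary point is to double-check that each truncated homogeneous term really lands in $\PPseu^{m-k}$ (via \cref{res:equivalence-pseudo-symbol} \refitem{item:widehat2}), which is where the hypothesis that $\omega(\norm{\argument}_H/R_k)\,\chi$ is again a legitimate cut-off (vanishing near $0$, equal to $1$ outside a compact set) is used.
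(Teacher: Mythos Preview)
There is a genuine gap at Step~(4). You invoke \cref{res:equivalence-pseudo-symbol} to conclude that $\P - \sum_{k<j}\P^k \in \PPseu^{m-j}$ from the fact that $\widehat\P - \sum_{k<j}\widehat{\P^k}$ satisfies the symbol estimates \eqref{eq:symbol_estimates} of order $m-j$. But those estimates are only a \emph{consequence} of membership in $\PPseu^{m-j}$ (\cref{res:symbol_estimates}), not a characterization: what \cref{res:equivalence-pseudo-symbol}~\refitem{item:widehat1} actually requires is the essential homogeneity $\widehat\tau_\lambda^*\widehat\Q - \lambda^{m-j}\widehat\Q \in \Schwartz$, and your construction does not deliver this. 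Each term $\omega(\norm{\argument}_H/R_k)\,\chi\,P^k$ is essentially homogeneous of order $m-k$, and a sum of distributions essentially homogeneous of \emph{different} orders is in general essentially homogeneous of none of them; already $\widehat\tau_\lambda^*\widehat\P - \lambda^m\widehat\P$ contains the non-Schwartz term $(\lambda^{m-1}-\lambda^m)\,\omega(\norm{\argument}_H/R_1)\,\chi\,P^1$. Put differently, the spaces $\PPseu^\bullet$ are \emph{not} nested: by \cref{res:short_exact_sequence} the inclusion $\PPseu^{m-1}\hookrightarrow\PPseu^m$ is $\P\mapsto t\P$, not the identity.

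The paper's proof repairs exactly this point. Rather than summing the $(m-k)$-homogeneous parts $p_k$ directly, it sums $t^{k}p_k$ --- now every summand is $m$-homogeneous --- and achieves local finiteness via the cut-off $\chi(\varepsilon_k\,t^{-1}\norm{(x,\xi)}_{\beta,\alpha})$, which is itself zoom-invariant (since $t^{-1}\norm{(x,\xi)}_{\beta,\alpha}$ is). Hence $p \coloneqq \sum_k t^{k}p_k\,\chi(\varepsilon_k\,t^{-1}\norm{(x,\xi)}_{\beta,\alpha})$ is a genuine $m$-homogeneous smooth function on $\X\times\RR\times\lie g^*\setminus\{0\}$, and \cref{res:equivalence-pseudo-symbol}~\refitem{item:widehat2} gives $\P\in\PPseu^m$ immediately. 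The Borel-type choice of $\varepsilon_k\to 0$ is then needed only to make $p$ smooth across $t=0$ (where infinitely many summands contribute), and the remainder $\P - \sum_{k<j}t^{k}\P^k \in t^{j}\PPseu^{m-j}+\Schwartz(\TG)$ follows by the same homogeneity bookkeeping.
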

If $\P_1 \in \Pseu^{m}$ has these properties, we write \(\P_1 \sim \sum_{k=0}^{\infty} \P^k_{1}\).
It is not surprising that the estimates in the following proof are very similar 
to the usual estimates when showing asymptotic completeness of certain classes of pseudodifferential operators,
see e.g.\ \cite{Kum81}*{Lemma~3.2} or \cite{Shu87}*{Proposition~3.5}, as they are just the adaption to the tangent groupoid formalism.

\begin{proof}
	Throughout the proof, fix a smooth cut-off function $\chi \colon \RR \to [0,\infty)$ such that 
	$\chi(t) = 1$ for $\abs t \geq 2$ and $\chi(t) = 0$ for $\abs t \leq 1$.
	
	Extend $\P^k_{1} \in \Pseu^{m-k}$ to $\P^k \in \PPseu^{m-k}$. 
	By \cref{res:equivalence-pseudo-symbol} \refitem{item:widehat2} there exist $(m-k)$-homogeneous functions $p_k \in \Smooth(\X \times \RR \times \lie g^* \setminus \simpleset{(0,0,0)})$
	and Schwartz functions $f_k \in \Schwartz(\X \times \RR \times \lie g^*)$ 
	satisfying that $\widehat{\P^k} = \chi(\norm{(x,t,\xi)}) p_k(x,t,\xi) + f_k(x,t,\xi)$.
	We show in \textbf{(1)} that there are $\varepsilon_0, \varepsilon_1, \ldots \in (0,1]$ 
	such that  
	\begin{equation*}
		p(x,t,\xi) = \sum_{k=0}^\infty \underbrace{t^{k} p_k(x,t,\xi) \chi(\varepsilon_k t^{-1} \norm{(x,\xi)}_{\beta,\alpha})}_{\eqqcolon q_k(x,t,\xi)}
	\end{equation*}
	is a well-defined $m$-homogeneous function $p \in \Smooth(\X \times \RR \times \lie g^* \setminus \simpleset{(0,0,0)})$.
	By \cref{res:equivalence-pseudo-symbol} there exists $\P \in \PPseu^{m}$
	with $\smash{\widehat \P = \chi(\norm{(x,t,\xi)}) p(x,t,\xi)}$,
	and we prove in \textbf{(2)} that the corresponding operator $\P_1 \in \Pseu^{m}$ has the required properties.

	\textbf{(1)} If $\varepsilon_k \to 0$ for $k \to \infty$
	and $(x,t,\xi) \in \X \times \RR \times \lie g^*$ is fixed, 
	then the expression $t^{k} \chi(\varepsilon_k t^{-1} \norm{(x,\xi)}_{\beta,\alpha})$ becomes $0$ for $k$ sufficiently large.
	In this case, the sum is finite at every point $(x,t,\xi)$ and $p$ is well-defined.
	Since each of the summands $q_k$ is $m$-homogeneous, see the proof of \cref{res:short_exact_sequence}, so is $p = \sum_{k=0}^\infty q_k$.
	It remains to verify that $p \in \Smooth(\X \times \RR \times \lie g^* \setminus \simpleset{(0,0,0)})$.
	
	For every point $(x_0,t_0,\xi_0)$ with $t_0 \neq 0$
	choose a small enough neighbourhood of $(x_0, t_0, \xi_0)$, 
	on which $t^{-1} \norm{(x,\xi)}_{\beta,\alpha}$ remains bounded.
	On this neighbourhood, only finitely many smooth summands contribute to $p$,
	so $p$ is indeed smooth there.
	
	However, on any open neighbourhood of $(x_0, 0, \xi_0) \neq (0,0,0)$ infinitely many summands may contribute to~$p$.
	We claim that if the $\varepsilon_k$ are chosen small enough, 
	then partial derivatives of arbitrary order exist nevertheless and are of the form
	$\partial_{(x,t,\xi)}^{(a,b,c)} p(x,t,\xi) = \sum_{k=0}^\infty q^{(a,b,c)}_k(x,t,\xi)$ with
	\begin{multline*}
		q_k^{(a,b,c)}(x,t,\xi) =
		\sum_{a' \leq a, b'+b'' \leq b, c' \leq c} C_{a',b',b'',c'} t^{k - b''} \partial_{(x,t,\xi)}^{(a',b',c')} p_k(x,t,\xi) \\
		\sum_{r=0}^{\abs{a} + b + \abs{c}} (\partial^r\chi)(\varepsilon_k t^{-1} \norm{(x,\xi)}_{\beta,\alpha}) \Chain {\varepsilon_k t^{-1} \norm{(x,\xi)}_{\beta,\alpha}}{(a-a',b-b'-b'',c-c')}{r}{x,t,\xi}
	\end{multline*}
	where $\mathrm{Ch}$ was introduced in \cref{res:chain_rule}.
	Indeed this follows easily if differentiation under the sum is allowed, 
	for which it suffices to check that $\sum_{k=0}^\infty q_k^{(a,b,c)}(x,t,\xi)$ 
	converges uniformly in a small enough neighbourhood of $(x_0,0,\xi_0)$.
	In fact, it suffices to show that this sum converges uniformly on 
	$K_R \coloneqq \set{(x,t,\xi)}{\norm{(x,\xi)} \in [R^{-1}, R], \abs t \leq 1}$ for every $R \in \NN$.
To this end, we verify that for $k \in \NN$ we have 
\begin{equation*}
	\max_{\substack{(a,b,c) \in \NN_0^{d+1+n}\\(\abs a + b + \abs c+1)^2 \leq k-1}}\sup_{(x,t,\xi) \in K_k} \abs{q_k^{(a,b,c)}(x,t,\xi)} \lesssim \varepsilon_k
\end{equation*}
	where the omitted multiplicative constant depends only on $p_k$ and its derivatives.
	Choosing $\varepsilon_k$ so small that the left hand side is bounded by $2^{-k}$ the claimed uniform convergence follows.
	
	Fix $k \in \NN$. The following estimates hold for $(x,t,\xi) \in K_k$. Since $K_k$ is compact,
	continuous functions like $\norm{(x,\xi)}_{\beta,\alpha}$ or its derivatives are bounded on $K_k$,
	hence can be absorbed into the omitted constants.
The cut-off $(\partial^r\chi)(\varepsilon_k t^{-1} \norm{(x,\xi)}_{\beta,\alpha})$ is bounded
and non-zero only if $\abs t \leq \varepsilon_k \norm{(x,\xi)}_{\beta,\alpha} \lesssim \varepsilon_k$.
Let $L = \abs a + b + \abs c$.
Using \cref{res:chain_rule} we estimate
$\abs{\Chain{\varepsilon_k t^{-1} \norm{(x,\xi)}_{\beta,\alpha}}{(a-a',b-b'-b'',c-c')}{r}{x,t,\xi}} \lesssim \abs t^{-L(L+1)}$
since partial derivatives of $\varepsilon_k t^{-1} \norm{(x,\xi)}_{\beta,\alpha}$ of order $\ell$ can be estimated by $\abs t^{-\ell-1}$.
Absorbing $p_k$ and its derivatives into the constant, we obtain $\abs{q_k^{(a,b,c)}(x,t,\xi)} \lesssim \abs t^{k - L} \abs t^{- L(L+1)} \abs{\partial \chi({\dots})} \lesssim \varepsilon_k^{k - (L+1)^2}$.
	
	\textbf{(2)} We check that $\P_1 - \sum_{k=0}^{j-1}\P^k_{1}\in\Pseu^{m-j}$,
	i.e.\ that $\P - \sum_{k=0}^{j-1} t^{k}\P^k \in t^{j}\PPseu^{m-j} + \Schwartz(\TG)$.
	Note that 
	\begin{align*}
		\widehat \P - \sum_{k=0}^{j-1} t^{k}\widehat {\P^k} 
		&= \chi(\norm{(x,t,\xi)})\Bigl(p(x,t,\xi) - \sum_{k=0}^{j-1} t^{k} p_k(x,t,\xi)\Bigr) - \sum_{k=0}^{j-1} t^{k} f_k(x,t,\xi) \\
		&= \chi(\norm{(x,t,\xi)}) \sum_{k=0}^{j-1} t^{k} p_k(x,t,\xi) (\chi-1)(\varepsilon_k t^{-1} \norm{(x,\xi)}_{\beta,\alpha}) \\
		&\phantom{XXX} + t^{j} \chi(\norm{(x,t,\xi)}) \sum_{k=j}^\infty  t^{k-j} p_k(x,t,\xi) \chi(\varepsilon_k t^{-1} \norm{(x,\xi)}_{\beta,\alpha}) - \sum_{k=0}^{j-1} t^{k} f_k(x,t,\xi) . \tag{$*$} \label{eq:errorDecomposition}
	\end{align*}
	Note that the support of the function $h(x,t,\xi) = (\chi-1)(\frac{\varepsilon_k}{t} \norm{(x,\xi)}_{\beta,\alpha}) \chi(\norm{(x,t,\xi)})$
		is contained in
		$\set{(x,t,\xi)}{\abs{t} \geq \frac {\varepsilon_k} 2 \norm{(x,\xi)}_{\beta,\alpha}, \norm{(x,t,\xi)} \geq 1}$,
		which is contained in $\set{(x,t,\xi)}{\abs t \geq  c}$ for a small enough $c > 0$.
		Therefore $t^{-j} h(x,t,\xi)$ is still smooth. Moreover, it is essentially homogeneous of order $-j$.
		Using \cref{res:equivalence-pseudo-symbol}, the inverse Fourier transform of the first summand in 
		 \eqref{eq:errorDecomposition} is indeed in $t^{j} \PPseu^{m-j}$. It is clear that this also holds for the second summand
		 and that the third summand is Schwartz.
\end{proof}
In analogy to the classical theory, elliptic shall mean that the principal (co)symbol is invertible.

\begin{definition}
	Let \(\P_1\in\Pseu^m\) be a pseudodifferential fibred distribution with extension \(\P\in\PPseu^m\).
	Then $\P_1$ is called \emph{elliptic} if $\P_0$ has an inverse $\Q_0 \in \Algebra O_r'(\X \rtimes^0 G)$ up to Schwartz functions,
	meaning that $\P_0 * \Q_0 - \mathbb{I}_0 \in \Schwartz(\X \rtimes^0 G)$
	and $\Q_0 * \P_0 - \mathbb{I}_0 \in \Schwartz(\X \rtimes^0 G)$.
\end{definition}
Here, \(\mathbb I\) was defined in \cref{ex:identity_operator}.
To see that ellipticity of $\P_1$ does not depend on the chosen extension $\P$, we need the following lemma,
which can be proven in the same way as \cite{vEY19}*{Lemma~55}.
\begin{lemma}
	Let \(\P_0 \in \Ess^m\). If there exists $\Q_0 \in \Algebra O_r'(\X \rtimes^0 G)$ such that $\P_0 * \Q_0 - \mathbb{I}_0 \in \Schwartz(\X \rtimes^0 G)$
	and $\Q_0 * \P_0 - \mathbb{I}_0 \in \Schwartz(\X \rtimes^0 G)$ then $\Q_0 \in \Ess^{-m}$.
\end{lemma}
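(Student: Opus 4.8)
The statement to prove is: if $\P_0 \in \Ess^m$ has a two-sided inverse $\Q_0 \in \Algebra O_r'(\X \rtimes^0 G)$ modulo Schwartz functions, then automatically $\Q_0 \in \Ess^{-m}$. The plan is to mimic the argument of \cite{vEY19}*{Lemma~55}, exploiting the fact that the zoom action ${\tau_\lambda}_*$ is a family of homomorphisms of the convolution algebra $\Algebra O_r'(\X \rtimes^0 G)$ for which $\Schwartz(\X \rtimes^0 G)$ is invariant (by \cref{res:zoomAutomorphisms}), together with uniqueness of inverses modulo the ideal of smoothing operators.

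First I would record that $\Schwartz(\X \rtimes^0 G)$ is a two-sided ideal in the algebra of essentially homogeneous distributions at $t = 0$: this is the ``only when we restrict to essentially homogeneous distributions'' remark already made in the text, and it follows because for $\P_0 \in \Ess^\ell$ the maps $\P_0 * \argument$ and $\argument * \P_0$ send $\Schwartz(\X \rtimes^0 G)$ into itself (the $t = 0$ analogue of \cref{res:Schwartz_multiplier}). Hence the quotient $\Sigma(\X \rtimes^0 G) = \bigoplus_\ell \Ess^\ell / \Schwartz(\X \rtimes^0 G)$ is a graded algebra with unit $[\mathbb I_0]$, and $[\P_0] \in \Sigma^m$ is invertible there with inverse $[\Q_0]$. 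Next, apply ${\tau_\lambda}_*$ to the relation $\P_0 * \Q_0 - \mathbb I_0 \in \Schwartz(\X \rtimes^0 G)$. Since ${\tau_\lambda}_*$ is a homomorphism fixing $\mathbb I_0$ (it is $0$-homogeneous: ${\tau_\lambda}_* \mathbb I_0 = \mathbb I_0$, as one checks from the explicit formula in \cref{ex:identity_operator}) and preserves $\Schwartz(\X \rtimes^0 G)$, we get $({\tau_\lambda}_* \P_0) * ({\tau_\lambda}_* \Q_0) - \mathbb I_0 \in \Schwartz(\X \rtimes^0 G)$, and similarly on the other side. Now use $\P_0 \in \Ess^m$, i.e.\ ${\tau_\lambda}_* \P_0 = \lambda^m \P_0 + S_\lambda$ with $S_\lambda \in \Schwartz(\X \rtimes^0 G)$; plugging this in and using that $\Schwartz(\X \rtimes^0 G)$ is an ideal, we obtain $(\lambda^m \P_0) * ({\tau_\lambda}_* \Q_0) - \mathbb I_0 \in \Schwartz(\X \rtimes^0 G)$, that is, $\P_0 * (\lambda^{-m} {\tau_\lambda}_* \Q_0) - \mathbb I_0 \in \Schwartz(\X \rtimes^0 G)$ (and the same from the left).

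So both $\Q_0$ and $\lambda^{-m} {\tau_\lambda}_* \Q_0$ are two-sided inverses of $\P_0$ modulo $\Schwartz(\X \rtimes^0 G)$. The key step is then the uniqueness of the inverse in the quotient algebra: if $\Q_0, \Q_0'$ both satisfy $\P_0 * \Q_0 - \mathbb I_0, \Q_0' * \P_0 - \mathbb I_0 \in \Schwartz(\X \rtimes^0 G)$, then $\Q_0' - \Q_0 = \Q_0' * (\mathbb I_0 - \P_0 * \Q_0) + (\Q_0' * \P_0 - \mathbb I_0) * \Q_0 \in \Schwartz(\X \rtimes^0 G)$, using once more that $\Schwartz(\X \rtimes^0 G)$ is a two-sided ideal and that $\Q_0, \Q_0'$ are two-sided Schwartz multipliers. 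Applying this with $\Q_0' = \lambda^{-m} {\tau_\lambda}_* \Q_0$ yields $\lambda^{-m} {\tau_\lambda}_* \Q_0 - \Q_0 \in \Schwartz(\X \rtimes^0 G)$ for all $\lambda > 0$, i.e.\ ${\tau_\lambda}_* \Q_0 - \lambda^{-m} \Q_0 \in \Schwartz(\X \rtimes^0 G)$, which is exactly the statement $\Q_0 \in \Ess^{-m}$.

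The only genuinely delicate point is making sure that all the manipulations with $\Schwartz(\X \rtimes^0 G)$ are legitimate: one must know that a distribution in $\Ess^\ell$ (for the relevant values of $\ell$, here $\ell \in \{m, -m, 0\}$) multiplies $\Schwartz(\X \rtimes^0 G)$ into $\Schwartz(\X \rtimes^0 G)$ from both sides, so that products like $\Q_0' * S$ with $S$ Schwartz land in $\Schwartz(\X \rtimes^0 G)$. For $\P_0$, ${\tau_\lambda}_* \P_0$ and $\mathbb I_0$ this is clear since they are essentially homogeneous; for $\Q_0$ and $\lambda^{-m}{\tau_\lambda}_*\Q_0$ one does \emph{not} yet know essential homogeneity --- that is precisely what we are proving --- but one does know a priori that $\Q_0 \in \Algebra O_r'(\X \rtimes^0 G)$ is a two-sided Schwartz multiplier as a hypothesis (it inverts $\P_0$ modulo Schwartz, so $\Q_0 * S = \Q_0 * \P_0 * \Q_0' * S + \dots$ expresses $\Q_0 * S$ in terms of essentially homogeneous multipliers acting on Schwartz functions; alternatively one simply includes ``$\Q_0$ is a two-sided multiplier of $\Schwartz(\X \rtimes^0 G)$'' in the setup, as is implicit in the definition of ellipticity). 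I expect the proof in the paper to proceed exactly along these lines, and this bookkeeping about the ideal property is the main thing to be careful about; everything else is the formal algebraic argument that inverses are unique and intertwine with any algebra automorphism.
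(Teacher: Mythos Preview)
Your proposal is correct and follows exactly the approach the paper indicates: the paper does not give a proof but simply says the lemma ``can be proven in the same way as \cite{vEY19}*{Lemma~55}'', and what you have written is precisely that argument transported to the present setting. Your flagging of the one genuine subtlety --- that $\Schwartz(\X\rtimes^0 G)$ is not a two-sided ideal in all of $\Algebra O_r'(\X\rtimes^0 G)$, so one must know the relevant distributions are Schwartz multipliers before the uniqueness-of-inverses computation goes through --- is apt and in fact goes beyond what the paper spells out.
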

Hence if $\P,\tilde \P \in \PPseu^m$ are two extensions of $\P_1 \in \Pseu^m$, then $\P_0 - \tilde \P_0 \in \Schwartz(\X \rtimes^0 G)$ by \cref{res:smoothingat1implies0}.
If $\Q_0$ is an inverse of $\P_0$ up to Schwartz functions, then it is essentially homogeneous by the previous lemma, 
hence a multiplier of $\Schwartz(\X \rtimes^0 G)$ by \cref{res:Schwartz_multiplier}.
Therefore $\tilde \P_0 * \Q_0 - \mathbb I_0 = \P_0 * \Q_0 - \mathbb I_0 - (\P_0 - \tilde \P_0) * \Q_0 \in \Schwartz(\X \rtimes^0 G)$
and a similar computation with the order of $\tilde \P_0$ and $\Q_0$ reversed
show that also $\tilde \P_0$ is invertible up to Schwartz functions.
Quotienting out Schwartz functions, we obtain that for $\P_1$ elliptic $\sigma_m(\P_1)$ has an inverse in $\Sigma_\Gamma^{-m}$.

Using the properties of the calculus, like the short exact sequence induced by the principal cosymbol and the asymptotic completeness, one can construct a parametrix using the standard argument (see \cite{vEY19}*{Theorem~60}).
\begin{theorem}\label{res:parametrix}
	Let \(\P_1\in\Pseu^m\). Then \(\P_1\) is elliptic if and only if it has a parametrix \(\Q_1\in\Pseu^{-m}\), that is \(\P_1*\Q_1-\mathbb I_1\) and \(\Q_1*\P_1-\mathbb I_1\) belong to \(\Pseu^{-\infty}\).
	
	When \(\theta^1\) is polynomially transitive, this implies that \(\Op(\P_1)\Op(\Q_1)-\id\) and  \(\Op(\Q_1)\Op(\P_1)-\id\) are continuous maps \(\Schwartz'(\X)\to\Schwartz(\X)\).
\end{theorem}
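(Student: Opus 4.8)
The plan is to run the classical parametrix argument inside the tangent-groupoid formalism, exactly as in \cite{vEY19}*{Theorem~60}, using the short exact sequence \cref{res:short_exact_sequence}, the composition theorem, the asymptotic completeness \cref{res:asymptotic_completeness}, and the fact that $\Pseu^{-\infty} = \Schwartz(\X \rtimes^1 G)$ is a two-sided ideal of $\Pseu \coloneqq \bigcup_{m} \Pseu^m$.

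\textbf{Parametrix implies elliptic.} Suppose $\P_1$ has a parametrix $\Q_1 \in \Pseu^{-m}$. Since the full symbol of a residual element is Schwartz, its principal cosymbol vanishes; applying $\sigma_0$ to $\P_1 * \Q_1 - \mathbb I_1 \in \Pseu^{-\infty}$ and using $\sigma_{\ell+m}(\P_1 * \Q_1) = \sigma_\ell(\P_1) * \sigma_m(\Q_1)$ gives $\sigma_m(\P_1) * \sigma_{-m}(\Q_1) = \sigma_0(\mathbb I_1) = [\mathbb I_0]$ in $\Symb^0$, and likewise $\sigma_{-m}(\Q_1) * \sigma_m(\P_1) = [\mathbb I_0]$. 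Choosing a representative $\Q_0$ of $\sigma_{-m}(\Q_1)$, the distribution $\P_0$ is invertible up to Schwartz functions in $\Algebra O_r'(\X \rtimes^0 G)$, i.e.\ $\P_1$ is elliptic; by the discussion preceding the theorem this is independent of the chosen extension.

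\textbf{Elliptic implies parametrix.} Conversely, assume $\P_1$ elliptic, so $\sigma_m(\P_1)$ has an inverse $[\Q_0] \in \Symb^{-m}$ by the discussion after the previous lemma. Lift it along $0 \to \Pseu^{-m-1} \to \Pseu^{-m} \xrightarrow{\sigma_{-m}} \Symb^{-m} \to 0$ from \cref{res:short_exact_sequence} to some $\Q_1^{(0)} \in \Pseu^{-m}$. Then $R_1 \coloneqq \mathbb I_1 - \P_1 * \Q_1^{(0)}$ has vanishing principal cosymbol, hence $R_1 \in \Pseu^{-1}$, and $R_1^{* k} \in \Pseu^{-k}$ by the composition theorem. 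By asymptotic completeness (\cref{res:asymptotic_completeness}) there is $S_1 \in \Pseu^0$ with $S_1 \sim \sum_{k \geq 0} R_1^{* k}$; setting $\Q_1^R \coloneqq \Q_1^{(0)} * S_1 \in \Pseu^{-m}$, the telescoping $(\mathbb I_1 - R_1) * \sum_{k=0}^{j-1} R_1^{* k} = \mathbb I_1 - R_1^{* j}$ gives $\P_1 * \Q_1^R - \mathbb I_1 \in \Pseu^{-j}$ for all $j$, hence $\P_1 * \Q_1^R - \mathbb I_1 \in \bigcap_j \Pseu^{-j} = \Pseu^{-\infty}$; so $\Q_1^R$ is a right parametrix. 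Reversing left and right convolution (equivalently, using the transpose/involution from \cref{res:Schwartz_multiplier} on a right parametrix of $\P_1^t$) produces a left parametrix $\Q_1^L \in \Pseu^{-m}$. Finally $\Q_1^L \equiv \Q_1^L * (\P_1 * \Q_1^R) \equiv (\Q_1^L * \P_1) * \Q_1^R \equiv \Q_1^R \pmod{\Pseu^{-\infty}}$, where the congruences use that $\Pseu^{-\infty}$ is a two-sided ideal of $\Pseu$ — a consequence of \cref{res:Schwartz_multiplier}\refitem{item:multiplier} together with the homomorphism property of $\ev_1$ from \cref{res:evaluationOfDistributions}. Thus $\Q_1 \coloneqq \Q_1^R$ is a two-sided parametrix.

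\textbf{Operator statement and main obstacle.} When $\theta^1$ is polynomially transitive, $\Op$ is an algebra homomorphism by \cref{res:fibred-distr-as-operators} and $\Op(\mathbb I_1) = \id$ by \cref{ex:identity_operator}, so $\Op(\P_1)\Op(\Q_1) - \id = \Op(\P_1 * \Q_1 - \mathbb I_1) = \Op(g_1)$ with $g_1 \in \Pseu^{-\infty} = \Schwartz(\X \rtimes^1 G)$, which extends to a continuous map $\Schwartz'(\X) \to \Schwartz(\X)$ by \cref{res:Op-injective}\refitem{item:Op-transitive}; the same applies to $\Op(\Q_1)\Op(\P_1) - \id$. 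The only genuinely delicate point is making the Neumann-series manipulations rigorous: that $R_1^{* k}$ drops exactly one order per factor, that asymptotic completeness yields $S_1$ with the stated expansion, and — most crucially — that $\Pseu^{-\infty}$ is a two-sided ideal so that the left and right parametrices can be identified. This last point is precisely where one uses that, after restricting to essentially homogeneous distributions on $\TG$, Schwartz functions form an ideal (in contrast to the general situation in $\Algebra O'_{r,s}$ noted earlier).
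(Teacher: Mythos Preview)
Your proof is correct and follows exactly the standard argument the paper refers to (the paper does not spell out a proof but simply cites \cite{vEY19}*{Theorem~60}, together with the short exact sequence and asymptotic completeness, and handles the operator statement via \cref{res:Op-injective}). The ingredients you invoke---\cref{res:short_exact_sequence}, the composition theorem, \cref{res:asymptotic_completeness}, and the two-sided multiplier property from \cref{res:Schwartz_multiplier}~\refitem{item:multiplier} restricted to $t=1$ via \cref{res:evaluationOfDistributions}---are precisely what the paper has set up for this purpose.
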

In particular, \(\Op (\P_1)\) is hypoelliptic in the following sense when \(\theta^1\) is polynomially transitive.
\begin{corollary}\label{res:hypoelliptic}
	Suppose \(\theta^1\) is polynomially transitive and let \(\P_1 \in \Pseu^m\) be elliptic. Then \(\Op(\P_1)u\in\Schwartz(\X)\) for \(u\in\Schwartz'(\X)\) implies that \(u\in\Schwartz(\X)\). 
\end{corollary}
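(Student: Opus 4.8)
The plan is to deduce hypoellipticity directly from the parametrix statement in \cref{res:parametrix}. Assume $\theta^1$ is polynomially transitive and let $\P_1 \in \Pseu^m$ be elliptic. By \cref{res:parametrix} there is a parametrix $\Q_1 \in \Pseu^{-m}$ such that $\Q_1 *_1 \P_1 - \mathbb I_1 \in \Pseu^{-\infty} = \Schwartz(\X \rtimes^1 G)$, and moreover (by the last sentence of that theorem) the operator $R \coloneqq \Op_1(\Q_1)\Op_1(\P_1) - \id$ is a continuous map $\Schwartz'(\X) \to \Schwartz(\X)$.

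Now suppose $u \in \Schwartz'(\X)$ with $\Op_1(\P_1) u \in \Schwartz(\X)$. First I would note that $\Op_1(\Q_1)$ maps $\Schwartz(\X)$ continuously into $\Schwartz(\X)$: this is \cref{res:op_on_schwartz} applied to $\Q_1 \in \PPseu^{-m}$ (more precisely, to any extension of $\Q_1$), using polynomial transitivity of $\theta^1$. Hence $\Op_1(\Q_1)\bigl(\Op_1(\P_1) u\bigr) \in \Schwartz(\X)$. On the other hand, applying the identity $\Op_1(\Q_1)\Op_1(\P_1) = \id + R$ to $u$ gives
\begin{equation*}
	\Op_1(\Q_1)\bigl(\Op_1(\P_1)u\bigr) = u + R u .
\end{equation*}
Here one should check that $\Op_1(\Q_1)\Op_1(\P_1) u = \Op_1(\Q_1 *_1 \P_1) u$, which holds since $\Op_1$ is a representation of the convolution algebra (cf.\ \cref{res:fibred-distr-as-operators} and the discussion preceding \cref{def:shubin_groupoid} of $\Op_t$ as a representation), so that $\Op_1(\Q_1)\Op_1(\P_1) = \Op_1(\Q_1 *_1 \P_1) = \Op_1(\mathbb I_1 + (\Q_1 *_1 \P_1 - \mathbb I_1)) = \id + \Op_1(\Q_1 *_1 \P_1 - \mathbb I_1)$, and the second summand is exactly $R$.

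Since $R u \in \Schwartz(\X)$ (as $R$ maps $\Schwartz'(\X)$ into $\Schwartz(\X)$ and $u \in \Schwartz'(\X)$), we conclude
\begin{equation*}
	u = \Op_1(\Q_1)\bigl(\Op_1(\P_1)u\bigr) - R u \in \Schwartz(\X) ,
\end{equation*}
which is the claim. The only genuinely non-routine point is making sure all the operator identities make sense at the level of $\Schwartz'(\X)$ rather than just $\Smooth(\X)$-valued objects; this is precisely what polynomial transitivity buys us, via \cref{res:op_on_schwartz} and \cref{res:Schwartz_multiplier}~\refitem{item:op-transpose} (continuity of $\Op_1(\P_1)$ on $\Schwartz'(\X)$), so that the composition $\Op_1(\Q_1)\Op_1(\P_1)$ is well-defined on $\Schwartz'(\X)$ and agrees with $\Op_1(\Q_1 *_1 \P_1)$ there. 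Everything else is a one-line bookkeeping argument, so I do not anticipate a serious obstacle.
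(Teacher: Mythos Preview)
Your proof is correct and follows essentially the same approach as the paper: use the parametrix $\Q_1$ to write $u = \Op(\Q_1)\Op(\P_1)u - \Op(f)u$ with $f = \Q_1 *_1 \P_1 - \mathbb I_1 \in \Schwartz(\X \rtimes^1 G)$, then observe that both summands lie in $\Schwartz(\X)$ by \cref{res:op_on_schwartz} and the fact (from \cref{res:Op-injective}~\refitem{item:Op-transitive} or the last sentence of \cref{res:parametrix}) that $\Op(f)$ maps $\Schwartz'(\X)$ into $\Schwartz(\X)$. Your extra care in justifying the operator identities on $\Schwartz'(\X)$ is appropriate but not a departure from the paper's argument.
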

\begin{proof}
	Let \(\Q_1\) be a parametrix for \(\P_1\) and \(f=\Q_1*\P_1-\mathbb I_1\in\Schwartz(\X\rtimes^1 G)\). Then \(u=\Op(\Q_1)\Op(\P_1)u-\Op(f)u\) is contained in \(\Schwartz(\X)\) by \cref{res:op_on_schwartz} and \cref{res:Op-injective}.
\end{proof}
\section{Shubin-type calculus under properties (P) and (R)}\label{sec:PR}

In the following, we briefly rephrase ellipticity in terms of a Rockland condition when property \ref{assumption:homomorphisms} is satisfied. We show that elements of \(\Pseu^m\) for \(m<0\) belong to the groupoid \(C^*\)-algebra \(C^*(\X\rtimes^1G)\). Moreover, under assumption \ref{assumption:homomorphisms}, elements of \(\Pseu^0\) belong to the multiplier algebra \(M(C^*(\X\rtimes^1 G))\).
Afterwards, we restrict to the case where properties \ref{assumption:homomorphisms} and \ref{assumption:free} hold. We study further mapping properties of the pseudodifferential operators, in particular a scale of Sobolev spaces is introduced. Moreover, spectral properties are shown.

\subsection{Property (R) and the Rockland condition}
In the following we assume that property \ref{assumption:homomorphisms} is satisfied and characterize ellipticity in terms of a Rockland condition as described in \cref{sec:homogeneous_distributions}.
Recall that under assumption \ref{assumption:homomorphisms} \(\X^*\rtimes^0 G\) is a graded group with dilations \(\beta_\lambda\times\alpha_\lambda\) as seen in \cref{res:r-implies-group-at-0}. 

Recall from  \cref{def:ess_graded_group} that \(\Sigma^m(\X^*\rtimes^0 G)\) denotes the quotient space of all essentially \(m\)-homogeneous distributions with respect to \(\beta_\lambda\times\alpha_\lambda\) by Schwartz functions.
By the same arguments as in the proof of \cref{res:equivalence-pseudo-symbol}, the partial inverse Fourier transform in the \(x\)-direction \(u\mapsto\widecheck{u}\) induces a bijection
\begin{equation*}
	\widecheck{\,\cdot\,}\colon\Symb^m\to \Sigma^m(\X^*\rtimes^0 G).
\end{equation*}
Note that \(\widecheck{u*v}=\widecheck u*\widecheck v\) for \(u\in\Symb^\ell\) and \(v\in\Symb^m\), 
whereas \(\langle \widecheck{u^*}, f \rangle = \langle \widecheck{u}^*, f_-\rangle\) where $f_-(\eta,v) = f(-\eta,v)$.	
Consequently, \(u\in\Symb^m\) has an inverse in $\Symb^{-m}$ if and only if \(\widecheck u\in \Sigma^m(\X^*\rtimes^0 G)\) has an inverse in $\Sigma^{-m}(\X^* \rtimes^0 G)$. The latter can be characterized by the Rockland Theorem of \cite{CGGP92}*{Theorem~6.2}, see \cref{res:rockland:ess-hom} for a reformulation in terms of essentially homogeneous distributions. 
\begin{proposition}
	Suppose property \ref{assumption:homomorphisms} is satisfied and let \(\P_1\in\Pseu^m\). Then \(\P_1\) is elliptic if and only if \(\widecheck\sigma_m(\P_1)\) and \(\widecheck\sigma_m(\P_1)^*\) satisfy the Rockland condition on the graded group \(\X^*\rtimes^0 G\).
\end{proposition}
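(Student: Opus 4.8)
The plan is to reduce the statement to the already-established characterization of invertibility in $\Symb^m$ together with the Rockland theorem of \cite{CGGP92} as recalled in \cref{res:rockland:ess-hom}. First I would recall that, by the definition of ellipticity together with the discussion following the definition, $\P_1 \in \Pseu^m$ is elliptic if and only if its principal cosymbol $\sigma_m(\P_1) = [\P_0] \in \Symb^m$ has a two-sided inverse in $\Symb^{-m}$. Next, I would invoke the bijection $\widecheck{\,\cdot\,}\colon \Symb^m \to \Sigma^m(\X^*\rtimes^0 G)$ established just before the statement: since $\widecheck{u*v} = \widecheck u * \widecheck v$ and $\widecheck{\Unit_0}$ corresponds to the unit $\delta_0$ of the convolution algebra $\Sigma(\X^*\rtimes^0 G)$ (as $\Unit_0$ is the identity distribution, fibrewise $\delta_0$), invertibility of $[\P_0]$ in $\Symb$ is equivalent to invertibility of $\widecheck{[\P_0]} = \widecheck{\sigma_m(\P_1)}$ in $\Sigma(\X^*\rtimes^0 G)$.

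Then I would identify $\widecheck{\sigma_m(\P_1)}$ with the distribution $\widecheck\sigma_m(\P_1) \in \lie U^m(\X^*\rtimes^0\lie g) \subseteq \ess^m(\X^*\rtimes^0 G)$ from \eqref{eq:principalcoco} in the differential-operator case, or more generally observe — as remarked after \cref{res:equivalence-pseudo-symbol} — that the principal cosymbol of a pseudodifferential distribution is, after inverse Fourier transform in the $x$-direction, exactly an essentially $m$-homogeneous distribution on the graded group $\X^*\rtimes^0 G$ whose class in $\Sigma^m$ is the relevant object. (Strictly, the proposition as phrased concerns $\widecheck\sigma_m$, so I would note that the two notations agree: $\widecheck\sigma_m(\P_1)$ is a representative of the class $\widecheck{\sigma_m(\P_1)} \in \Sigma^m(\X^*\rtimes^0 G)$, and invertibility up to Schwartz functions is a statement about this class.) Finally, I would apply \cref{res:rockland:ess-hom} (the reformulation of \cite{CGGP92}*{Theorem~6.2} for essentially homogeneous distributions): an element of $\Sigma^m(\X^*\rtimes^0 G)$ is invertible in $\Sigma^{-m}$ if and only if both it and its adjoint satisfy the Rockland condition on $\X^*\rtimes^0 G$. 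Combining this chain of equivalences with the compatibility $\langle \widecheck{u^*}, f\rangle = \langle \widecheck u^{\,*}, f_-\rangle$, which shows the adjoint in $\Symb$ corresponds (up to the harmless reflection $f \mapsto f_-$, which does not affect the Rockland condition) to the adjoint in $\Sigma(\X^*\rtimes^0 G)$, yields the claim.

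The only genuine subtlety — and the step I would be most careful about — is checking that the hypotheses of the cited Rockland theorem are actually met: $\X^*\rtimes^0 G$ must be a graded group (this is \cref{res:r-implies-group-at-0}, which uses property \ref{assumption:homomorphisms}), and $\widecheck\sigma_m(\P_1)$ must genuinely lie in the class of essentially homogeneous distributions to which \cite{CGGP92} applies, i.e.\ it must have singular support in $\{0\}$ and be essentially $m$-homogeneous with respect to $\beta_\lambda \times \alpha_\lambda$. The latter follows from \cref{res:equivalence-pseudo-symbol} \refitem{item:widecheck} applied at $t=0$ (via the analogue of that proposition for $\Ess^m$ noted after its statement) together with the transformation law $\widecheck{{\tau_\lambda}_*\P} = (\widecheck\tau_\lambda)_*\widecheck\P$ and the fact that at $t=0$ the zoom action restricts to $\beta_\lambda \times \alpha_\lambda$ on $\X^* \rtimes^0 G$. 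Everything else is bookkeeping: matching the unit, matching the involution, and transporting invertibility across the bijection $\widecheck{\,\cdot\,}$, all of which were already prepared in the paragraph preceding the statement.

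\begin{proof}
	By the definition of ellipticity (and the remark that it is independent of the chosen extension $\P \in \PPseu^m$), $\P_1$ is elliptic if and only if $[\P_0] = \sigma_m(\P_1) \in \Symb^m$ admits a two-sided inverse in $\Symb^{-m}$. Under the bijection $\widecheck{\,\cdot\,}\colon \Symb^m \to \Sigma^m(\X^*\rtimes^0 G)$, which satisfies $\widecheck{u*v}=\widecheck u*\widecheck v$ and sends the class of $\Unit_0$ to the unit of the $^*$-algebra $\Sigma(\X^*\rtimes^0 G)$, this is equivalent to $\widecheck{\sigma_m(\P_1)} \in \Sigma^m(\X^*\rtimes^0 G)$ being invertible in $\Sigma^{-m}(\X^*\rtimes^0 G)$; here $\widecheck\sigma_m(\P_1)$ denotes a representative of this class, and the relation $\langle\widecheck{u^*},f\rangle = \langle \widecheck u^{\,*}, f_-\rangle$ with $f_-(\eta,v)=f(-\eta,v)$ identifies the involution on $\Symb$ with that on $\Sigma(\X^*\rtimes^0 G)$ up to the reflection $f \mapsto f_-$, which leaves the Rockland condition unchanged. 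By \cref{res:r-implies-group-at-0}, $\X^*\rtimes^0 G$ is a graded group with dilations $\beta_\lambda \times \alpha_\lambda$, and by the analogue of \cref{res:equivalence-pseudo-symbol} for $\Ess^m$ the distribution $\widecheck\sigma_m(\P_1)$ is essentially $m$-homogeneous with singular support in $\{0\}$, so it defines an element of $\ess^m(\X^*\rtimes^0 G)$. Applying \cref{res:rockland:ess-hom}, the class of $\widecheck\sigma_m(\P_1)$ in $\Sigma^m(\X^*\rtimes^0 G)$ is invertible in $\Sigma^{-m}(\X^*\rtimes^0 G)$ if and only if both $\widecheck\sigma_m(\P_1)$ and $\widecheck\sigma_m(\P_1)^*$ satisfy the Rockland condition on $\X^*\rtimes^0 G$. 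Combining the equivalences proves the claim.
\end{proof}
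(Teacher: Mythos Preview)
Your proof is correct and follows precisely the approach the paper takes: the proposition is stated immediately after a paragraph that already establishes (via the multiplicativity of $\widecheck{\,\cdot\,}$) that invertibility in $\Symb^m$ is equivalent to invertibility in $\Sigma^m(\X^*\rtimes^0 G)$, and then invokes \cref{res:rockland:ess-hom}, so the paper treats the proposition as an immediate consequence rather than giving a separate proof. Your write-up simply spells out this chain of equivalences with appropriate care about the adjoint and the hypotheses on $\X^*\rtimes^0 G$.
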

Consider a pseudodifferential fibred distribution giving rise to a differential operator with polynomial coefficients as in \cref{ex:diff_ops_pol_coeff} \begin{align*}
	\P_1&=\sum_{[a]_\alpha+[b]_\beta\leq m}c_{a,b}x^b X_v^a \delta_{v=0} &\text{i.e. }&\P_1(\varphi)(x)=\sum_{[a]_\alpha+[b]_\beta\leq m}c_{a,b}x^b X^a \varphi(x,0).
\intertext{As differential operator $\P_1$ has a unique $m$-homogeneous extension $\P \in \PPseu^m$, see \cref{ex:diff_ops_pol_coeff}.
	Then} 
	\widecheck \P_0&=	\sum_{[a]_\alpha+[b]_\beta= m}c_{a,b}(-\I \partial_\eta)^b X_v^a \delta_{(\eta,v) = (0,0)}
	&\text{i.e. }&\langle \widecheck \P_0, \varphi\rangle = \sum_{[a]_\alpha+[b]_\beta= m}c_{a,b}(-\I \partial_\eta)^b X^a \varphi(0,0)
\end{align*}
is a representative of the principal cosymbol $\widecheck\sigma_m(\P_1)$.
The principal cocosymbol from \eqref{eq:principalcoco} coincides with this expression under the inclusion \(\mathfrak U^m(\X^*\rtimes^0G)\hookrightarrow \ess^m(\X^*\rtimes^0G)\), which maps $x^b$ to $\partial_\eta^b$.

In particular, \(\P_1\) is elliptic if and only if the left invariant differential operators corresponding to \(\widecheck\P_0\) and \((\widecheck\P_0)^*\)  satisfy the Rockland condition on the graded group \(\X^*\rtimes^0 G\) in the sense of \cref{def:rockland-cond-diff}.

\begin{example}
	Let $G$ be a graded group with dilations $\alpha$, $\X$ be a graded vector space with dilations $\beta$ and $\theta^1$ a Shubin action of \(G\) on \(\X\) with property \ref{assumption:homomorphisms}. Denote the weights of \(\alpha\) and \(\beta\) by \(q_1,\ldots,q_n\) and \(r_1,\ldots,r_d\) respectively.
	Let \(q\) be a common multiple of all weights and
	\begin{equation*}
		\P_1 = \sum_{j=1}^n(-1)^{\frac{q}{q_j}}X_j^{\frac{2q}{q_j}} \delta_{v=0}+ \sum_{j=1}^d x_j^{\frac{2q}{r_j}} \delta_{v=0}\in \Pseu^{2q}.
	\end{equation*}
	The corresponding operator is $\Op(\P_1) = \sum_{j=1}^n(-1)^{q/q_j} \widehat X_j^{2q/q_j} + \norm{x}_\beta^{2q}$. Then \(\P_1\) is elliptic as
	\[\widecheck{\P}_0=(\widecheck{\P}_0)^*=\sum_{j=1}^n(-1)^{\frac{q}{q_j}}X_j^{\frac{2q}{q_j}} \delta_{(\eta,v) = (0,0)} +\sum_{j=1}^{d}(-1)^{\frac{q}{r_j}}\partial_{\eta_j}^{\frac{2q}{r_j}}\delta_{(\eta,v) = (0,0)}\]
	defines a Rockland operator on the group \(\X^*\rtimes^0 G\) with dilations \(\beta_\lambda\times\alpha_\lambda\) by \cref{ex:standard-rockland}.
\end{example}
When property \ref{assumption:free} holds, principal cosymbols and ellipticity can also be defined for pseudodifferential operators $\Op(\P_1)$, see \cref{remark:P:principalSymbolOfOperators}.
For the double dilation groupoid one can check ellipticity of differential operators with polynomial coefficients as follows. 
\begin{proposition}
	Consider the double dilation groupoid, \cref{def:doubleDilationGroupoid}, of a Lie group $G$ with dilations $\alpha$ and $\beta$.
	Let \(P=\smash\sum_{[a]_\alpha+[b]_\beta\leq m}c_{a,b}x^b X^a \in\Op(\Pseu^m)\). Then \(P\) is elliptic if and only if
	\begin{equation*}
		\sum_{[a]_\alpha+[b]_\beta= m}c_{a,b}x_0^b \pi(X)^a\quad\text{and}\quad \sum_{[a]_\alpha+[b]_\beta= m}\conj{c_{a,b}}x_0^b (-\pi(X))^a
	\end{equation*}
are injective on \(\mathcal H^\infty_\pi\) for all \((x_0,\pi)\in \Space{G}\times\widehat G\setminus\{(0,\pi_\mathrm{triv})\}\) . 
\end{proposition}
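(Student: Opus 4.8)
The plan is to specialize the Rockland characterization of ellipticity established just above to the double dilation groupoid, where the principal (co)symbol group degenerates to a \emph{direct} product $\X^{*}\times G$ whose unitary dual is completely explicit, and then to compute the infinitesimal representations of the principal cocosymbol by hand.

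First I would record the setup. By \cref{sec:shubin:DDG} the double dilation groupoid has properties \ref{assumption:free} and \ref{assumption:homomorphisms}; in particular $\theta^1$ is polynomially free, so $\Op_1$ is injective and the differential operator $P=\sum_{[a]_\alpha+[b]_\beta\le m}c_{a,b}x^bX^a$ (note $\widehat X^1=X$ for this groupoid, i.e.\ the left-invariant operator \eqref{eq:left-invariant-differential-operator}) determines a unique $\P_1\in\Pseu^m$ with a well-defined principal cosymbol $\sigma_m(P)$, cf.\ \cref{remark:P:principalSymbolOfOperators}. As a differential operator $P$ carries the $m$-homogeneous extension from \cref{ex:diff_ops_pol_coeff}, whose restriction at $t=0$ gives, via \eqref{eq:principalcoco} and the inclusion $\lie U^m(\X^{*}\rtimes^0 G)\hookrightarrow\ess^m(\X^{*}\rtimes^0 G)$,
\begin{equation*}
	\widecheck\P_0 \;=\; \sum_{[a]_\alpha+[b]_\beta= m} c_{a,b}\,(-\I\partial_\eta)^b\,X^a ,
\end{equation*}
a left-invariant differential operator on $\X^{*}\rtimes^0 G$. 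Since $\theta^0$ is trivial for the double dilation groupoid, this group is the direct product $\X^{*}\times G$ (with dilations $\beta_\lambda\times\alpha_\lambda$), so the generators $\partial_{\eta_1},\dots,\partial_{\eta_d}$ of the abelian factor $\X^{*}$ are central and commute with $X_1,\dots,X_n$. By the preceding proposition (equivalently, by the remark following it together with \cref{def:rockland-cond-diff}), $P$ is elliptic if and only if the left-invariant differential operators $\widecheck\P_0$ and $(\widecheck\P_0)^{*}$ satisfy the Rockland condition on $\X^{*}\times G$.

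Next I would unwind this Rockland condition. By the standard description of the unitary dual of a direct product of second-countable locally compact groups, every $\rho\in\widehat{\X^{*}\times G}$ is an external tensor product $\rho=\chi_{x_0}\boxtimes\pi$, where $\chi_{x_0}$ is the unitary character of the vector group $\X^{*}$ attached to a point $x_0\in\widehat{\X^{*}}\cong\X=\Space G$ and $\pi\in\widehat G$; its space of smooth vectors is $\mathcal H_\rho^\infty=\mathcal H_\pi^\infty$, and $\rho$ is trivial precisely when $x_0=0$ and $\pi=\pi_{\mathrm{triv}}$. On the enveloping algebra the central factor acts by scalars, $\D\rho(\partial_{\eta_j})=\D\chi_{x_0}(\partial_{\eta_j})=\I(x_0)_j$, so a short computation in which the factors $\pm\I$ cancel gives
\begin{equation*}
	\D\rho(\widecheck\P_0) \;=\; \sum_{[a]_\alpha+[b]_\beta= m} c_{a,b}\,x_0^b\,\pi(X)^a .
\end{equation*}
For the adjoint I would use that $-\I\partial_{\eta_j}$ is formally self-adjoint, that $X_j^{*}=-X_j$ on the unimodular group $G$, and that the abelian and $G$ factors commute, to rewrite $(\widecheck\P_0)^{*}=\sum_{[a]_\alpha+[b]_\beta= m}\conj{c_{a,b}}\,(-\I\partial_\eta)^b(-X)^a$, whence $\D\rho\bigl((\widecheck\P_0)^{*}\bigr)=\sum_{[a]_\alpha+[b]_\beta= m}\conj{c_{a,b}}\,x_0^b\,(-\pi(X))^a$. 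Feeding these two formulas into \cref{def:rockland-cond-diff} and letting $\rho$ range over all nontrivial irreducible unitary representations, i.e.\ over all $(x_0,\pi)\in\Space G\times\widehat G\setminus\simpleset{(0,\pi_{\mathrm{triv}})}$, yields exactly the asserted injectivity criterion.

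I expect the only delicate points to be bookkeeping ones: fixing the Fourier-transform and duality conventions so that the factors of $\I$ cancel as stated and $\D\rho(\partial_{\eta_j})$ produces $(x_0)_j$ rather than $-(x_0)_j$, and handling the adjoint carefully so that the sign lands on $\pi(X)$ (appearing as $(-\pi(X))^a$) and not on $x_0$. Everything else is a direct specialization of the results established earlier in this section together with the elementary representation theory of $\X^{*}\times G$.
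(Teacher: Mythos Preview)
Your proposal is correct and follows essentially the same route as the paper: both reduce the ellipticity criterion to the Rockland condition on $\X^{*}\rtimes^0 G$, observe that for the double dilation groupoid this group is the direct product $\X^{*}\times G$ with unitary dual $\Space G\times\widehat G$, and then evaluate $\D\rho(\widecheck\P_0)$ and $\D\rho((\widecheck\P_0)^{*})$ in the representations $\rho_{(x_0,\pi)}(\eta,v)=\E^{\I\langle\eta,x_0\rangle}\pi(v)$. Your write-up is simply more explicit about the bookkeeping (the cancellation of $\I$'s and the adjoint computation) than the paper's brief proof.
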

\begin{proof}
	This follows from the fact that for the double dilation groupoid \(\X^*\rtimes^0 G=\X^*\times G\), so that \(\widehat{\X^*\rtimes^0 G}\cong \X\times\widehat G=\Space G\times\widehat{G}\). Hence, it suffices to consider for \(x_0\in\Space G\) and \(\pi\) a unitary, irreducible representation of \(G\) the representation of \(\X^*\times G\) given by \(\rho_{(x_0,\pi)}(\eta,v)=\E^{\I\langle \eta ,x_0\rangle}\pi(v)\in\mathcal U(\mathcal H_\pi)\). For these one has
	\[\rho_{(x_0,\pi)}(\widecheck \P_0)=\sum_{[a]_\alpha+[b]_\beta= m}c_{a,b}x_0^b \pi(X)^a\]
	and similarly for \((\widecheck\P_0)^*\). Note that the trivial representation of \(\X^*\times G\) is \(\rho_{(0,\pi_\mathrm{triv})}\). 
\end{proof}
For the representation groupoid, the relevant group is \(\X^*\rtimes^0 G=\lie{g}^*\rtimes G\), where \(G\) acts on \(\lie{g}^*\) by the coadjoint representation.
For the Heisenberg \(H_1\), the irreducible representations of \(\lie{h}_1^*\rtimes H_1\) are computed in \cite{Nie83}*{group \(G_{6,15}\)}.
\begin{example}
	For the Shubin representation calculus of the Heisenberg group \(H_n\), consider the operator
	\begin{equation*}P=-\sum_{j=1}^{2n}\widehat X_j^2+x_{2n+1}^2 \in \Op(\Pseu^2) \end{equation*}
	where the fundamental vector fields were computed in \cref{ex:fundamental-vf-RG}.
	This is the harmonic oscillator on the Heisenberg group defined in \cite{RR20}.
	Its principal cocosymbol is \(\widecheck \P_0 = (\widecheck \P_0)^* = (-\sum_{j=1}^{2n}X_j^2 - \partial_{\eta_{2n+1}}^2) \delta_{(\eta,v) = (0,0)}\). 
	Recall from \cref{ex:representationLA:heisenbergGroup} that \(\lie{h}^*\rtimes H_n\) is a stratified group with Lie algebra generated by \(X_j\) for \(j=1,\ldots,2n\) and \(\partial_{\eta_{2n+1}}\).
	As the corresponding Sublaplacian is a Rockland operator by \cref{ex:stratified-rockland}, we see that \(P\) is elliptic.  
\end{example}\textit{}
\begin{example}
	For the action of \(H_1\) on \(\X=\RR^2\) from \cref{ex:grushin-action}, we have seen in \cref{ex:grushin-fundamental-vectorfields} that certain Rockland operators on \(H_1\) get mapped by \(\Op\) to the Grushin, respectively, Kolmogorov operator on \(\RR^2\). We study now which polynomial potentials can be added to obtain elliptic pseudodifferential distributions in the corresponding Shubin calculus. As the action is polynomially transitive, \cref{res:parametrix} and \cref{res:hypoelliptic} yield then that the corresponding operators have parametrices and are hypoelliptic. 
	
	For the Grushin operator, set \(k=l=p=q=1\). Then by \cref{ex:symbol-group-grushin}, \(\X^*\rtimes^0 \lie{h}_1=\X^*\times\lie{h}_1\), so we get that \(\P_1=X_1^2+4X_2^2-x^2-y^2\in\Pseu^2\) is elliptic. The corresponding operator is \(\Op(\P_1)=\partial_x^2+x^2\partial_y^2-x^2-y^2\).
	
	It is also possible to set \(k=l=q=1\) and \(p=2\), in which case \(\theta^0\) is non-trivial. Now, \(\P_1=X_1^2+4X_2^2-y^2\in\Pseu^2\) is elliptic as its principal cocosymbol is a Sublaplacian on the stratified group \(\X^*\rtimes^0 H_1\). Recall here from \cref{ex:symbol-group-grushin} that its Lie algebra is generated by $X_1, X_2, y$. In this case, \(\Op(\P_1)=\partial_x^2+x^2\partial_y^2-y^2\) holds. 
	
	For the Kolmogorov operator, set \(k=1\) and \(l=2\) so that \(-X^2+2Y\) is homogeneous of order \(2\). Again one can choose \(p=q=1\) and obtain that \(\P_1=-X^2+2Y+x^2+y^2\in\Pseu^2\) is elliptic with \(\Op(\P_1)=-\partial_x^2+x\partial_y+x^2+y^2\). Another possibility is \(p=2\) and \(q=1\), in which case \(\P_1=-X^2+2Y+y^2\in\Pseu^2\) is elliptic with \(\Op(\P_1)=-\partial_x^2+x\partial_y+y^2\).
\end{example}
\begin{lemma}\label{res:existence-elliptic}
	Suppose property \ref{assumption:homomorphisms} holds. Then for every \(m\in\RR\) there is an elliptic \(\P_{m,1}\in\Pseu^m\).
\end{lemma}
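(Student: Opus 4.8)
The plan is to reduce the statement, via the principal cosymbol short exact sequence of \cref{res:short_exact_sequence}, to producing for each $m\in\RR$ an invertible element of $\Sigma^m(\X^*\rtimes^0 G)$, and then to obtain such an element as the convolution kernel of a real power of a positive Rockland operator on $\X^*\rtimes^0 G$.

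First I would recall that $\P_1\in\Pseu^m$ is elliptic exactly when $\sigma_m(\P_1)$ is invertible in $\Symb^{-m}$, and that under property \ref{assumption:homomorphisms} the partial inverse Fourier transform in the $x$-direction gives an algebra isomorphism $\widecheck{\,\cdot\,}\colon\Symb^m\to\Sigma^m(\X^*\rtimes^0 G)$ compatible with the convolution products. Since $\sigma_m\colon\Pseu^m\to\Symb^m$ is surjective by \cref{res:short_exact_sequence}, it then suffices to exhibit, for every $m$, an invertible element of $\Sigma^m(\X^*\rtimes^0 G)$ with inverse in $\Sigma^{-m}$: any $\P_{m,1}\in\Pseu^m$ whose principal cosymbol corresponds to it will be elliptic, and $\Op(\P_{m,1})$ is then the required operator.

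Next I would set $H\coloneqq\X^*\rtimes^0 G$, which is a graded Lie group with dilations $\beta_\lambda\times\alpha_\lambda$ by \cref{res:r-implies-group-at-0}. Writing $w_1,\dots,w_N$ for the weights of these dilations, $Z_1,\dots,Z_N$ for a standard basis of $\lie h=\X^*\oplus\lie g$, and fixing a common multiple $q'$ of $w_1,\dots,w_N$, \cref{ex:standard-rockland} provides the Rockland operator $R=\sum_{j=1}^N(-1)^{q'/w_j}Z_j^{2q'/w_j}$ on $H$, homogeneous of degree $\nu\coloneqq 2q'$. Since left-invariant vector fields on $H$ are formally skew-adjoint (their flows preserve the Haar measure), each summand equals $(\I Z_j)^{2q'/w_j}=\big((\I Z_j)^{q'/w_j}\big)^*(\I Z_j)^{q'/w_j}$, so $R$ is formally self-adjoint and non-negative; being Rockland, it is essentially self-adjoint, with closure $\overline{R}$ a non-negative self-adjoint operator. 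Using the theory of complex powers of positive Rockland operators on graded groups (which I would take from \cref{sec:homogeneous_distributions} and the references recalled there), for each $s\in\RR$ the operator $\overline{R}^{\,s}$ is convolution by a kernel $K_s$ with $u_{K_s}\in\ess^{\nu s}(H)$ and $u_{K_s}*u_{K_{-s}}=\delta_0$. Specializing to $s=m/\nu$ gives $u_{K_{m/\nu}}\in\ess^m(H)$ with inverse $u_{K_{-m/\nu}}\in\ess^{-m}(H)$, hence the desired invertible class in $\Sigma^m(H)$; transporting it back through $\widecheck{\,\cdot\,}$ and lifting along $\sigma_m$ produces $\P_{m,1}$.

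The main obstacle, and the only non-elementary step, is the construction of the complex powers $\overline{R}^{\,s}$ for non-integer $s$: one needs that they are convolution operators whose kernels lie among the essentially homogeneous distributions on $H$, with homogeneity degree linear in $s$, and that $\overline{R}^{\,s}\overline{R}^{\,-s}=\mathrm{id}$ on the level of kernels. For $s\in\nu\ZZ$ this is elementary (iterated convolution of $u_R$, together with its inverse in $\Sigma^{-\nu}(H)$, which exists because $R$ satisfies the Rockland condition), but the general real exponent relies on the functional calculus of $\overline{R}$ together with the homogeneity and decay estimates for its heat kernel — precisely the classical material compiled in \cref{sec:homogeneous_distributions}.
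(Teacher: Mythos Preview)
Your proof is correct and follows the same route as the paper: reduce via the short exact sequence of \cref{res:short_exact_sequence} and the isomorphism $\widecheck{\,\cdot\,}\colon\Symb^m\to\Sigma^m(\X^*\rtimes^0 G)$ to the existence of mutually inverse elements of $\Sigma^{\pm m}(\X^*\rtimes^0 G)$, then lift along $\sigma_m$. The paper simply invokes \cite{CGGP92}*{Theorem~6.1} (transported through \cref{res:equivalence_elliptic_cggp}) for the existence of such a family, whereas you sketch its construction via real powers of a positive Rockland operator on $\X^*\rtimes^0 G$ --- which is essentially how that cited theorem is proved; one small caveat is that \cref{sec:homogeneous_distributions} does not itself develop complex powers or heat-kernel estimates, so for that analytic step you should point to \cite{CGGP92} or \cite{FR16} directly rather than the appendix.
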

\begin{proof}
	By \cite{CGGP92}*{Theorem~6.1} and the isomorphism from \cref{res:equivalence_elliptic_cggp} there is a family \((u_m)_{m\in\RR}\) of \(u_m \in \Sigma^m(\X^*\rtimes^0 G)\) satisfying \(u_{-m}*u_m=u_m*u_{-m}=[\delta_{(\eta,v) = (0,0)}]\) for all \(m\in\RR\). 
	By \cref{res:short_exact_sequence}, we can choose \(\P_{m,1}\in\Pseu^m\) with \(\widecheck\sigma_m(\P_{m,1})=u_m\). 
\end{proof}
\begin{remark}
	Our results can also be adapted to define Shubin type calculi of operators acting on vector bundles over \(\X\), or equivalently matrices of operators as all vector bundles over \(\X\cong\RR^d\) are trivial. In particular, under assumption \ref{assumption:homomorphisms}, one can use the matrix Rockland condition described in \cref{rem:rockland-matrix}. 
\end{remark}
\subsection{Pseudodifferential distributions of negative order and order zero}
The groupoid \(C^*\)-algebra \(C^*(\X\rtimes^1 G)\) (see \cite{Ren80} for the definition) is a \(C^*\)-completion of the convolution algebra \(\Schwartz(\X\rtimes^1 G)=\Pseu^{-\infty}\). In this section, we show that also \(\P_1\in\Pseu^m\) for \(m<0\) is an element of this groupoid \(C^*\)-algebra. Moreover, recall that \(\P_1\in\Pseu^m\)  defines a two-sided multiplier of \(\Schwartz(\X\rtimes^1 G)\) by \cref{res:adjoint}.  We shall show that \(\P_1\in\Pseu^0\) extends to a two-sided multiplier of \(C^*(\X\rtimes^1 G)\). Note that \(G\) is amenable as a nilpotent Lie group and consequently, the full and reduced \(C^*\)-algebra of the transformation groupoid \(\X\rtimes^1 G\) coincide. 
\begin{lemma}\label{res:small-order-groupoid-c-*}
	Suppose \(\P_1\in\Pseu^m\) for \(m<-Q(\alpha)\). Then \(\P_1\in C^*(\X\rtimes^1 G)\). 
\end{lemma}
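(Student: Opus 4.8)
The plan is to show directly that for $\P_1 \in \Pseu^m$ with $m < -Q(\alpha)$, the fibred distribution $\P_1 \in \Algebra O_r'(\X \rtimes^1 G)$ is in fact represented by an $L^1$-kernel and therefore lies in $\Schwartz(\X \rtimes^1 G)$-closure inside the convolution algebra that sits densely in $C^*(\X \rtimes^1 G)$. Concretely, I would first invoke \cref{res:symbol_estimates} to control the full symbol $\widehat \P_1(x,\xi)$: from \eqref{eq:symbol_estimates} with $t = 1$ fixed (and using that $1 + \norm\xi_\alpha + \norm x_\beta \lesssim \langle(x,1,\xi)\rangle_H$) we get $\abs{\partial_\xi^a \partial_x^b \widehat\P_1(x,\xi)} \lesssim \langle(x,1,\xi)\rangle_H^{m - [a]_\alpha - [b]_\beta}$. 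The key point is that since $m < -Q(\alpha)$, after taking the inverse Fourier transform in $\xi$, the resulting convolution kernel $k_1(x, v) = \mathcal F^{-1}_{\xi \to v} \widehat\P_1(x, \cdot)(v)$ is a continuous function on $\X \times G$ which decays rapidly in $v$ (differentiating the symbol in $\xi$ sufficiently often and using \cref{res:integrable}) and has at most polynomial growth in $x$ — more precisely, integrability of $\xi \mapsto \langle(x,1,\xi)\rangle_H^{m}$ over $\lie g^*$ (which holds by \cref{res:integrable} precisely because $m < -Q(\alpha)$) gives a pointwise bound $\abs{k_1(x,v)} \lesssim \langle x \rangle^N \langle v \rangle_\alpha^{-M}$ for suitable $N$ and any $M$.

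Next I would relate $\P_1$ to this kernel: under the identification $\Algebra O_r'(\X \rtimes^1 G) \cong \Algebra{O}_M(\X) \completedtensor \Algebra O_M'(G)$, the distribution $\P_1$ corresponds to the operator-valued expression $\varphi \mapsto \int_G k_1(x, v) \varphi(v)\,\D v$ on the level of $\Psi_r(\P_1) \in \mathcal L(\Algebra O_M(G), \Algebra O_M(\X))$, and via \cref{res:evaluationOfDistributions} and the explicit Fourier description preceding \cref{res:equivalence-pseudo-symbol} this is exactly the statement that $\P_1 = u_{k_1}$ in the notation of \cref{ex:schwartz-as-fibred} \eqref{eq:Schwartz-as-r-fibred}. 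The remaining task is to show that $u_{k_1}$ defines an element of $C^*(\X \rtimes^1 G)$. For a transformation groupoid, $C^*(\X \rtimes^1 G)$ is the completion of $\SmoothCompactSupp(\X \times G)$ (or of $\Schwartz(\X \times G)$, which contains it densely) in the $C^*$-norm, and a standard estimate for crossed-product/groupoid $C^*$-algebras gives that the norm of a convolution operator with kernel $f$ is bounded by $\max\bigl(\sup_x \int_G \abs{f(x,v)}\,\D v,\ \sup_x \int_G \abs{f(x \cdot v, v^{-1})}\,\D v\bigr)$ (the $\mathrm I$-norm). By the kernel bound above together with \cref{res:polynomial_action} \refitem{item:noJ} (the Jacobian is $1$) and \cref{res:norm_estimates} \refitem{item:triangle-inequality}, one checks the $\mathrm I$-norm of $k_1$ is finite. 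To promote "bounded in $\mathrm I$-norm" to "lies in $C^*(\X \rtimes^1 G)$" I would approximate: cut off $k_1$ by $\chi_j(x,v) = \chi(x/j, v/j)$ with $\chi \in \SmoothCompactSupp$ equal to $1$ near the origin, observe $\chi_j k_1 \in \SmoothCompactSupp(\X \times G) \subseteq \Schwartz(\X \times G)$, and show $\chi_j k_1 \to k_1$ in $\mathrm I$-norm (dominated convergence, using the decay bounds). Since the $C^*$-norm is dominated by the $\mathrm I$-norm, $\chi_j k_1$ is Cauchy in $C^*(\X \rtimes^1 G)$ and its limit is $\P_1$.

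The main obstacle I anticipate is bookkeeping rather than conceptual: one must carefully justify that the fibred distribution $\P_1$ really is the one given by the continuous kernel $k_1$ (i.e. that the Fourier-transform description of $\PPseu^m$ restricted to $t = 1$ produces an honest function kernel when $m < -Q(\alpha)$, and that this function is the Schwartz kernel in the sense of $\Algebra O_r'$), and then to verify the $\mathrm I$-norm finiteness uniformly — this needs the growth bound in $x$ to be matched against the measure on $G$ via the polynomial-action estimates \eqref{eq:polynomial2} and the quasi-norm triangle inequality. A secondary point is that one should note $G$ is amenable (nilpotent), so full and reduced groupoid $C^*$-algebras agree and the $\mathrm I$-norm bound on the full $C^*$-norm is available; this is already remarked in the text just before the lemma, so it can be cited directly.
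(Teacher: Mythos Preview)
Your overall strategy --- pass to the continuous kernel $k_1(x,v)=\mathcal F^{-1}_{\xi\to v}\widehat\P_1(x,\cdot)(v)$, bound it, and approximate by compactly supported functions in a norm dominating the $C^*$-norm --- is exactly the paper's approach. The gap is in the kernel estimate you state.

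You write that $k_1$ ``has at most polynomial growth in $x$'' and record the bound $\abs{k_1(x,v)}\lesssim\langle x\rangle^N\langle v\rangle_\alpha^{-M}$, planning to ``match'' the $x$-growth against decay in $v$ via the polynomial-action estimates \eqref{eq:polynomial2}. But if $N>0$ this cannot work: the $\mathrm I$-norm involves $\sup_{x\in\X}\int_G\abs{k_1(x,v)}\,\D v$, and with genuine growth in $x$ the supremum is already infinite before any integration in $v$ takes place. The estimates \eqref{eq:polynomial2} only convert growth in $x$ into growth in $\theta_v(x)$ and $v$; they do not produce the needed uniformity in $x$. The same problem obstructs your dominated-convergence argument for $\chi_j k_1\to k_1$.

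What you are missing is that integrating the symbol estimate over $\xi$ actually yields \emph{decay} in $x$. The substitution $\xi\mapsto\alpha_{(1+\|x\|_\beta)}(\xi)$ gives
\[
\int_{\lie g^*}(1+\|x\|_\beta+\|\xi\|_\alpha)^m\,\D\xi \;=\; D\,(1+\|x\|_\beta)^{m+Q(\alpha)},
\]
and the hypothesis $m<-Q(\alpha)$ makes the exponent $m+Q(\alpha)$ negative. Combined with the oscillatory-integral trick for $v$-decay, this yields $\abs{k_1(x,v)}\lesssim\langle v\rangle_\alpha^{-M}$ \emph{uniformly} in $x$, and indeed $k_1(\cdot,v)\in\Cont_0(\X)$ for each $v$. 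With this in hand the argument is immediate: a cut-off in $v$ alone gives functions in $\Cont_c(G,\Cont_0(\X))$, and dominated convergence in the one-sided norm $\int_G\sup_x\abs{\cdot}\,\D v$ finishes the proof. This is exactly the computation the paper performs.
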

\begin{proof}As \(\X\rtimes^1 G\) is a transformation groupoid it suffices to show that \(\P_1\) lies in the completion of \(\Cont_c(G,\Cont_0(\X))\) with respect to the norm
	\begin{align*}
		\norm{f}_1=\int_G \sup_{x\in\X}{\abs{f(v)(x)}}\,\D v.
	\end{align*}
	In the following it will be useful to observe that by \cref{res:integrable} there is \(D>0\) such that for all \(x\in \X\)
	\begin{multline}\label{eq:estimate-shubin-norm}
		\int_{\lie{g}^*}(1+\norm{x}_\beta+\norm{\xi}_\alpha)^m\D\xi	 = (1+\norm{x}_\beta)^m\int_{\lie{g}^*}\left(1+\frac{\norm{\xi}_\alpha}{1+\norm{x}_\beta}\right)^m\D\xi\\ =(1+\norm{x}_\beta)^m\int_{\lie{g}^*}\left(1+\norm{\alpha_{(1+\norm{x}_\beta)^{-1}}(\xi)}_\alpha\right)^m\D\xi
		 = D(1+\norm{x}_\beta)^{m+Q(\alpha)}.
	\end{multline}
	Consequently, using the symbol estimates from \cref{res:symbol_estimates} there is \(C_1>0\) such that
	\begin{equation}\label{eq:-q(alpha)-symbol}
		\int_{\lie{g}^*} \abs{\widehat \P_1(x,\xi)}\D \xi\leq C_1\int_{\lie{g}^*}(1+\norm{x}_\beta+\norm{\xi}_\alpha)^{m}\D\xi \\
		= C_1D(1+\norm{x}_\beta)^{m+Q(\alpha)}.
	\end{equation}
	We show first that \(\P_1\in \Cont(\X\times G)\). Let \((x,v)\in \X\times G\) and \(\varepsilon>0\). By the estimates in \eqref{eq:-q(alpha)-symbol} \(\widehat \P_{1}(x)\) is in \(L^1(\lie{g}^*)\) so that \(\P_{1}(x)\) belongs to \(\Cont_0(G)\). Let \(U\) be a neighbourhood of \(v\) in \(G\) such that \(\abs{\P_1(x,v)-\P_1(x,w)}<\frac\varepsilon 2\) for all \(w\in U\). Using once more the symbol estimates and the mean value theorem, there is a \(C_2>0\) such that for all \(y\in \X\) and \(\xi\in\lie g^*\)
	\begin{equation*}
		\abs{\widehat \P_1(x,\xi)-\widehat \P_1(y,\xi)}\leq C_2\norm{x-y}_\beta(1+\norm{\xi}_\alpha)^m
	\end{equation*}
	so that \(\norm{\P_1(x)-\P_1(y)}_{\Cont_0(G)}\leq C_2D\norm{x-y}_\beta\). Therefore, for all \((y,w)\in\X\times G\) such that \(\norm{x-y}_\beta<\frac{\varepsilon}{2C_2D}\) and \(w\in U\) one has
	\begin{equation*}
		\abs{\P_1(x,v)-\P_1(y,w)}\leq \abs{\P_1(x,v)-\P_1(x,w)}+\abs{\P_1(x,w)-\P_1(y,w)}<\varepsilon.
	\end{equation*}
	Moreover, as \(\abs{\P_1(x,v)}=(2\pi)^{-n}\abs{\int_{\lie{g}^*}\E^{\I\langle v,\xi\rangle}\widehat\P_1(x,\xi)\D\xi}\) equation \eqref{eq:-q(alpha)-symbol} implies that for each \(\varepsilon>0\) one can find \(K\subset \X\) compact so that \(\abs{\P_1(x,v)}\leq \varepsilon\) for all \(x\notin K\) and \(v\in G\).
	Choose a sequence of smooth cutoff functions \(\chi_k\in\SmoothCompactSupp(G)\) converging pointwise to \(1\). Then \((f_k)_{k\in\NN}\) defined by \(f_k(x,v)=\chi_k(v)\P(x,v)\) is a sequence of functions in \(\Cont_c(G,\Cont_0(\X))\). We claim that \(f_k\) converges to \(\P\) with respect to \(\norm{\,\cdot\,}_1\). As it converges pointwise to \(\P\), it suffices to show by the dominated convergence theorem that \(v\mapsto\norm{\P_1(v)}_{\Cont_0(\X)}\) belongs to \(L^1(G)\). To see this let $M \in 2q\NN_0$ be such that \(M>Q(\alpha)\). Then there is a constant \(C_3>0\) such that 
	\begin{align*}
		\abs{\P_1(x,v)} &= (2\pi)^{-n}\abs[\Big]{\int_{\lie{g}^*} \E^{\I\langle v,\xi\rangle}\widehat \P_1(x,\xi)\D\xi}
		={(2\pi)^{-n}}\abs[\Big]{\int_{\lie{g}^*} \E^{\I\langle v,\xi\rangle}\langle v\rangle_\alpha^{-M}\langle D_\xi\rangle_\alpha^M\widehat \P_1(x,\xi)\D\xi}\\&\leq C_3 \langle v\rangle_\alpha^{-M}\int_{\lie{g}^*}(1+\norm{x}_\beta+\norm{\xi}_\alpha)^m\D\xi=C_3 D \langle v\rangle_\alpha^{-M}
	\end{align*}
	for all \((x,v)\in \X\times G\). Therefore, one obtains	\(
		\int_G {\sup_{x\in\X}\abs{\P_1(x,v)}}\D v\leq C_3D \int_G \langle v\rangle^{-M}_\alpha\D v<\infty
	\).
\end{proof}
\begin{theorem}\label{res:order0_bounded}
	Suppose property \ref{assumption:homomorphisms} holds. Then \(\P_1\in\Pseu^m\) with \(m\leq 0\) belongs to the multiplier algebra \(M(C^*(\X\rtimes^1 G))\). If also property~\ref{assumption:free} holds, \(\Op(\P_1)\) extends to a bounded operator on \(L^2(\X)\).
\end{theorem}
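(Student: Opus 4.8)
The plan is to deduce the $L^2$-statement from the multiplier statement, and to prove the latter by a Hörmander square-root argument, treating the (soft) case $m<0$ separately. First I would record the standard reduction. Write $A\coloneqq\Cst(\X\rtimes^1G)$. By \cref{res:Schwartz_multiplier}~\refitem{item:multiplier} the left and right convolutions $L_{\P_1}=\P_1*_1\argument$ and $R_{\P_1}=\argument*_1\P_1$ preserve the dense subalgebra $\Schwartz(\X\rtimes^1G)\subseteq A$, and associativity of $*_1$ (\cref{res:convolution}) gives $a*_1L_{\P_1}(b)=R_{\P_1}(a)*_1b$; hence once $L_{\P_1}$ and $R_{\P_1}$ are $\norm\argument_A$-bounded they extend to a double centralizer of $A$, i.e.\ $\P_1\in M(A)$. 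Since $\P_1^*\in\Pseu^m$ by \cref{res:adjoint} and $\norm{R_{\P_1}a}_A=\norm{\P_1^**_1a^*}_A$, it suffices to bound $L_{\P_1}$ in the $A$-norm for every $\P_1\in\Pseu^m$ with $m\le 0$; I write $\norm{L_{\P_1}}$ for the corresponding a priori possibly infinite operator norm.

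For $m<0$ the bound is a power trick relying only on \cref{res:small-order-groupoid-c-*}. Set $Q\coloneqq\P_1^**_1\P_1\in\Pseu^{2m}$, so $Q^*=Q$. The $\Cst$-identity gives $\norm{L_{\P_1}a}_A^2=\norm{a^**_1Q*_1a}_A\le\norm a_A\norm{Q*_1a}_A$, hence $\norm{L_{\P_1}}^2\le\norm{L_Q}$; applying the same computation to $Q$ (using $Q^**_1Q=Q^{*2}$, again self-adjoint) yields $\norm{L_Q}^2\le\norm{L_{Q^{*2}}}$, and inductively $\norm{L_Q}^{2^{k}}\le\norm{L_{Q^{*2^{k}}}}$, where $Q^{*2^{k}}$ denotes the $2^{k}$-fold convolution power and lies in $\Pseu^{2^{k+1}m}$. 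Choosing $k$ with $2^{k+1}m<-Q(\alpha)$, \cref{res:small-order-groupoid-c-*} places $Q^{*2^{k}}$ in $A$, so $\norm{L_{Q^{*2^{k}}}}=\norm{Q^{*2^{k}}}_A<\infty$ and therefore $\norm{L_{\P_1}}^2\le\norm{L_Q}<\infty$. (This case uses neither \ref{assumption:free} nor \ref{assumption:homomorphisms}.)

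For $m=0$ I would run the square-root trick inside the calculus. Put $Q\coloneqq\P_1^**_1\P_1\in\Pseu^0$, let $e\coloneqq\widecheck\sigma_0(\mathbb I_1)$ be the unit of $\Sigma^0(\X^*\rtimes^0G)$ and $s\coloneqq\widecheck\sigma_0(\P_1)\in\Sigma^0(\X^*\rtimes^0G)$, using the identification $\Symb^0\cong\Sigma^0(\X^*\rtimes^0G)$ from the start of the section, which is where property \ref{assumption:homomorphisms} enters. By \cite{CGGP92} (see also \cref{sec:rockland}), $\Sigma^0(\X^*\rtimes^0G)$ is a unital Fréchet $^*$-algebra that embeds continuously into $M(\Cst(\X^*\rtimes^0G))$ and is closed under holomorphic functional calculus there. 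Let $\gamma$ be the norm of the positive element $s^**s$ in $M(\Cst(\X^*\rtimes^0G))$ and fix $c>\gamma$; then $c\,e-s^**s$ has spectrum in $[c-\gamma,c]\subset(0,\infty)$, so $b_0\coloneqq(c\,e-s^**s)^{1/2}\in\Sigma^0(\X^*\rtimes^0G)$ is self-adjoint with $b_0^**b_0=c\,e-s^**s$. Lift $b_0$ to a self-adjoint $B_1\in\Pseu^0$ (by \cref{res:short_exact_sequence}, replacing any lift $\tilde B$ by $\tfrac12(\tilde B+\tilde B^*)$ and using \cref{res:adjoint}). Since $\widecheck\sigma_0$ is a $^*$-homomorphism, $E_1\coloneqq c\,\mathbb I_1-Q-B_1^**_1B_1$ lies in $\Pseu^{-1}$ and is self-adjoint, hence $E_1\in M(A)$ by the case $m<0$. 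Then, in $A$,
\[
(\P_1*_1a)^**_1(\P_1*_1a)=c\,a^**_1a-(B_1*_1a)^**_1(B_1*_1a)-a^**_1E_1*_1a\le\bigl(c+\norm{E_1}_{M(A)}\bigr)\,a^**_1a,
\]
using $(B_1*_1a)^**_1(B_1*_1a)\ge0$ and $-\norm{E_1}_{M(A)}\le E_1\le\norm{E_1}_{M(A)}$ in $M(A)$, so $\norm{L_{\P_1}a}_A^2\le(c+\norm{E_1}_{M(A)})\norm a_A^2$; this completes the first statement.

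For the $L^2$-statement assume additionally \ref{assumption:free}. Then $\theta^1$ is free and transitive, so by \cref{res:pair_groupoid} the shear map $\Theta^1$ identifies $\X\rtimes^1G$ with the pair groupoid $\X\times\X$, giving $\Cst(\X\rtimes^1G)\cong\Cst(\X\times\X)=\Comp(L^2(\X))$ and hence $M(\Cst(\X\rtimes^1G))\cong\Bound(L^2(\X))$. Under $\Theta^1_*$ and the formula $\Op(u)=\IntegrationMap_r(\Theta^1_*u)$ from the proof of \cref{res:Op-injective}, $\Op_1$ becomes the Schwartz-kernel representation of $\Comp(L^2(\X))$ on $L^2(\X)$, which extends to the identity $M(\Comp(L^2(\X)))=\Bound(L^2(\X))$; thus $\P_1\in M(\Cst(\X\rtimes^1G))$ yields $\Op(\P_1)\in\Bound(L^2(\X))$. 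The hard part will be the $m=0$ step, namely the construction of $b_0$: one must know that $\Sigma^0(\X^*\rtimes^0G)$ is closed under holomorphic functional calculus in its $\Cst$-completion, which is precisely the depth of the Rockland theorem of \cite{CGGP92} (invertibility must be quantitatively controlled inside $\Sigma^0$, not merely in $\Bound(L^2)$), and this is the only place where property \ref{assumption:homomorphisms} is genuinely used.
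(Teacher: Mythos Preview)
Your argument is essentially the paper's: the power trick for $m<0$ is identical, the order-zero case is the same H\"ormander square-root argument, and the $L^2$-statement is deduced in the same way from $M(\Cst(\X\rtimes^1G))\cong\Bound(L^2\X)$ under \ref{assumption:free}.

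There is, however, a small technical slip in the $m=0$ step: the map $\widecheck{\,\cdot\,}\colon\Symb^0\to\Sigma^0(\X^*\rtimes^0G)$ is only an \emph{algebra} isomorphism, not a $^*$-isomorphism; the paper notes that $\widecheck{u^*}$ differs from $(\widecheck u)^*$ by the flip $(\eta,v)\mapsto(-\eta,v)$. Consequently $\widecheck\sigma_0(Q)=\widecheck\sigma_0(\P_1^*)*s$ is \emph{not} $s^**s$, so your $b_0=(c\,e-s^**s)^{1/2}$ solves the wrong equation and the conclusion $\sigma_0(E_1)=0$ does not follow as written. The fix is to do exactly what the paper does: carry out the square-root construction directly in $\Symb^0$ with its intrinsic involution. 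Since the flip is a group automorphism of $\X^*\rtimes^0G$ (the $G$-action on $\X^*$ being linear), it induces a $^*$-automorphism of $\Sigma^0$, so \cref{res:c-star-and-psi-star-algebra} transports to $\Symb^0$: the latter has a $\Cst$-closure in which it is closed under holomorphic functional calculus, and one takes $q=\sqrt{C\cdot 1-\sigma_0(\P_1)^**\sigma_0(\P_1)}\in\Symb^0$ there. After lifting $q$ to $\Pseu^0$ your inequality $(\P_1*a)^**(\P_1*a)\le(c+\norm{E_1})\,a^**a$ goes through verbatim.
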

\begin{proof}
	Note that for \(\P_1\in\Pseu^m\) one has for all \(f\in\Schwartz(\X\rtimes^1 G)\) by the \(C^*\)-identity
	\begin{equation*}
		\norm{\P_1*f}^2=\norm{(\P_1*f)^* * (\P_1*f)} \leq \norm{\P_1^**\P_1 *f}\norm{f}.
	\end{equation*}
	Hence, if \(\P_1^**\P_1\) extends to a bounded left multiplier of \(C^*(\X\rtimes^1 G)\), so does \(\P_1\). Consider the case \(m<0\) first, then \((\P_1^**\P_1)^{2^k}\) belongs to \(C^*(\X\rtimes^1 G)\) when \(k\) is an integer with \(2^k>-\frac{Q(\alpha)}{2m}\) by \cref{res:small-order-groupoid-c-*}. Hence, applying the observation above to \((\P_1^**\P_1)^{2^k}=((\P_1^**\P_1)^{2^{k-1}})^**(\P_1^**\P_1)^{2^{k-1}}\) iteratively yields that \(\P_1\) is a bounded left multiplier. Similarly, \(\P_1\) is a bounded right multiplier. 
	
	Let now \(\P_1\in\Pseu^0\). Under assumption \ref{assumption:homomorphisms}, the principal cosymbol algebra \(\Symb^0\) has a \(C^*\)-closure, see \cref{res:c-star-and-psi-star-algebra}. Consider \(\sigma_0(\P_1)\in\Cst(\Symb^0)\) and let \(C>\norm{\sigma_0(\P_1)^**\sigma_0(\P_1)}\). Then \(C\cdot 1-\sigma_0(\P_1)^**\sigma_0(\P_1)\) is a positive element of the principal symbol \(C^*\)-algebra and consequently \(q\defeq \sqrt{C\cdot 1-\sigma_0(\P_1)^**\sigma_0(\P_1)}\) exists in \(C^*(\Symb^0)\) and is self-adjoint. As \(\Symb^0\) is closed under holomorphic functional calculus by \cref{res:c-star-and-psi-star-algebra}, one has in fact \(q\in\Symb^0\). The principal symbol map is surjective by \cref{res:short_exact_sequence}, so one can choose a \(\Q_1\in\Pseu^0\) with \(\sigma_0(\Q_1)=q\). Consider \(\mathbb R_1=\mathbb Q_1^**\Q_1-C\cdot \mathbb I_1+\P_1^**\P_1\) and note that \(\mathbb R_1^*=\mathbb R_1\).
	Then one computes \(\sigma_0(\mathbb R_1)=0\), so that \(\mathbb R_1\in\Pseu^{-1}\). Therefore, \(\mathbb R_1\) defines a bounded, two-sided multiplier of \(C^*(\X\rtimes^1 G)\) by the argument above. One has for all \(f\in\Schwartz(\X\rtimes^1 G)\)
	\begin{equation*}
		0\leq (\P_1*f)^**(\P_1*f) = -(\Q_1*f)^**(\Q_1*f)+C\cdot f^**f+f^**\mathbb R_1 *f\leq C\cdot f^**f+f^**\mathbb R_1 *f
	\end{equation*}
	which implies
	\begin{equation*}
			\norm{\P_1*f}^2=\norm{(\P_1*f)^**(\P_1*f)}\leq C\norm{f}^2+\norm{\mathbb R_1}\norm{f}^2.
	\end{equation*}
	Therefore, \(\P_1\colon\Schwartz(\X\rtimes^1 G)\to\Schwartz(\X\rtimes^1 G)\) extends to a left multiplier of \(C^*(\X\rtimes^1 G)\) and by similar arguments also to a right multiplier.
	
If \ref{assumption:free} is satisfied, the shear map \(\Theta^1\) defines an isomorphism between \(\X\rtimes^1 G\)  and the pair groupoid \(\X\times \X\). Then \(C^*(\X\rtimes^1G)\cong C^*(\X\times \X)\cong\Comp(L^2\X)\) and \(M(C^*(\X\rtimes^1G))\cong\ M(\Comp(L^2 \X))=\Bound(L^2\X)\) hold.
\end{proof} 
\begin{corollary}\label{res:neg-order-compact}
	Suppose property \ref{assumption:homomorphisms} holds. Then \(\P_1\in\Pseu^{m}\) with \(m<0\) belongs to \(C^*(\X\rtimes G)\). If also property \ref{assumption:free} holds, \(P\) defines a compact operator on \(L^2(\X)\).
\end{corollary}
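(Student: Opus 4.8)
The plan is to upgrade the multiplier statement of \cref{res:order0_bounded} to genuine membership in the groupoid $C^*$-algebra $B \coloneqq C^*(\X \rtimes^1 G)$ by a squaring-and-roots argument, and then to deduce compactness under \ref{assumption:free} from the identification $B \cong \mathbb K(L^2(\X))$ that already appears in the proof of \cref{res:order0_bounded}.

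First I would set $A \coloneqq \P_1^* *_1 \P_1 \in \Pseu^{2m}$, which is a positive self-adjoint element of $M(B)$ by \cref{res:order0_bounded}. Iterating the composition theorem gives $A^{2^k} \in \Pseu^{2^{k+1}m}$ for all $k \in \NN_0$, so choosing $k$ with $2^{k+1} m < -Q(\alpha)$ we obtain $A^{2^k} \in B$ from \cref{res:small-order-groupoid-c-*}. Since $B$ is a closed two-sided ideal of $M(B)$, applying continuous functional calculus to $A^{2^k} \in B$ with the function $t \mapsto t^{2^{-k}}$ — which vanishes at $0$ and is therefore a uniform limit of polynomials without constant term, each sending the ideal element $A^{2^k}$ back into $B$ — yields $A = (A^{2^k})^{2^{-k}} \in B$. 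Finally I would invoke the standard $C^*$-algebraic fact that $a \in M(B)$ with $a^* a \in B$ forces $a \in B$ (one approximates $a$ in norm by $a \cdot (a^*a) \cdot (\tfrac1n + a^*a)^{-1}$, which lies in $B$ because $B$ is an ideal and $a^*a \in B$), applied to $a = \P_1$; this gives $\P_1 \in B = C^*(\X \rtimes^1 G)$.

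For the compactness assertion, under \ref{assumption:free} the shear map $\Theta^1$ is an isomorphism from $\X \rtimes^1 G$ onto the pair groupoid $\X \times \X$, so $B \cong C^*(\X \times \X) \cong \mathbb K(L^2(\X))$, exactly as in the proof of \cref{res:order0_bounded}. Under this chain of isomorphisms the class of $\P_1$ is sent to the operator with Schwartz kernel $\IntegrationMap_r(\Theta^1_*(\P_1))$, that is to $\Op(\P_1)$ by the formula $\Op(u) = \Theta^1_*(u) \circ s^*$ used in the proof of \cref{res:Op-injective}. Hence $\Op(\P_1)$ is a compact operator on $L^2(\X)$.

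I do not expect a serious obstacle here; the only points that need a little care are that the involution and convolution on $\Pseu$ coincide with the adjoint and product of the $C^*$-algebra $M(B)$ — which is precisely what \cref{res:order0_bounded} delivers, and is what licenses the use of functional calculus and of the ideal property $B \triangleleft M(B)$ — and that the isomorphism $C^*(\X \rtimes^1 G) \cong \mathbb K(L^2(\X))$ intertwines the representation $\Op$ with the inclusion $\mathbb K(L^2(\X)) \subseteq \Bound(L^2(\X))$, which is implicit in \cref{res:Op-injective} and in the proof of \cref{res:order0_bounded}.
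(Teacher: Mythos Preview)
Your proof is correct and follows essentially the same route as the paper: both reduce to showing that some high power $(\P_1^* *_1 \P_1)^{2^k}$ lies in $B = C^*(\X\rtimes^1 G)$ via \cref{res:small-order-groupoid-c-*}, and then climb back down using $C^*$-algebraic facts. The only packaging difference is that the paper works directly in the Calkin algebra $M(B)/B$, using $\|\pi(\P_1)\|^2 = \|\pi(\P_1^* * \P_1)\|$ iteratively, which handles both of your steps (root extraction and the $a^*a\in B\Rightarrow a\in B$ implication) in one stroke; your functional-calculus and approximate-identity arguments are valid but slightly more hands-on versions of the same reasoning.
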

\begin{proof}
	As \(\P_1\in\Pseu^m\) is a multiplier of \(C^*(\X\rtimes^1 G)\) by the previous result, it suffices to show \(\pi(\P_1)=0\), where \(\pi\colon M(C^*(\X\rtimes^1 G))\to M(C^*(\X\rtimes^1 G))/C^*(\X\rtimes^1 G)\) is the projection onto the Calkin algebra. By the \(C^*\)-identity \(\norm{\pi(\P_1)}^2=\norm{\pi(\P_1^**\P_1)}\), it suffices to show that \(\P_1^**\P_1\in\Pseu^{2m}\) belongs to \(C^*(\X\rtimes^1 G)\). Applying this finitely many times one can use that \((\P_1^**\P_1)^K\in C^*(\X\rtimes^1G)\) by \cref{res:small-order-groupoid-c-*} for \(K\in\NN\) large enough. 
\end{proof}
\subsection{Sobolev spaces and mapping properties}
By definition, the pseudodifferential operators act on \(\Algebra O_M(\X)\). So far, we know that under property~\ref{assumption:free} the operators are also continuous as maps \(\Schwartz(\X)\to\Schwartz(\X)\) by \cref{res:op_on_schwartz} and \(\Schwartz'(\X)\to\Schwartz'(\X)\) by \cref{res:Schwartz_multiplier}. In this section, further mapping properties are studied, in particular, a corresponding scale of Sobolev spaces is defined. 
\begin{remark}
	We will always assume property \ref{assumption:free} now. In \cite{AMY22} a scale of Sobolev Hilbert modules over the \(C^*\)-algebra of the foliation defining the pseudodifferential calculus is introduced. A similar approach should also  be possible in the setting of Shubin tangent groupoids. 
\end{remark}
Recall that by \cref{res:orbitmaps} the orbit maps \(\OrbitMap 1 x\colon G\to \X\) for \(x\in \X\) are a family of polynomial diffeomorphisms and that \(c= \abs{\det D_v(\OrbitMap 1 x)}\) is independent of \(x\in \X\) and \(v\in G\).
\begin{lemma}\label{res:Hilbert-Schmidt}
	Suppose property \ref{assumption:free} holds and let \(P\in\Op(\Pseu^m)\) with \(m<-\frac {Q(\alpha)+Q(\beta)}2\). Then \(P\) is a Hilbert--Schmidt operator on \(L^2(\X)\).
\end{lemma}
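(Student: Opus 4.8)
The plan is to use the oscillatory integral representation of $\Op_1(\P)$ available under property \ref{assumption:free} and to check that the resulting Schwartz kernel is square integrable. Fix an extension $\P \in \PPseu^m$ of $\P_1$ and, for each $x \in \X$, let $g_x = \mathcal F^{-1}_{\xi \to v}(\widehat\P_1(x,\argument))$ be the inverse Fourier transform of $\widehat\P_1(x,\argument)$ (a smooth function on $\lie g^*$ by \cref{res:equivalence-pseudo-symbol}). By \eqref{eq:rep_as_osc_int} the operator $P = \Op_1(\P)$ acts on $f \in \Schwartz(\X)$ by $Pf(x) = c \int_\X g_x((\OrbitMap 1 x)^{-1}(y)) f(y) \D y$, using that $\theta^1$ is polynomially transitive under \ref{assumption:free}, so that $P$ maps $\Schwartz(\X)$ to $\Schwartz(\X)$ by \cref{res:op_on_schwartz}. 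Hence the Schwartz kernel of $P$ is $K_P(x,y) = c\, g_x((\OrbitMap 1 x)^{-1}(y))$. Since $\Schwartz(\X)$ is dense in $L^2(\X)$ and an operator with Schwartz kernel in $L^2(\X \times \X)$ is Hilbert--Schmidt, it suffices to prove $K_P \in L^2(\X \times \X)$, and the key estimate is $\int_\X \int_{\lie g^*} \abs{\widehat\P_1(x,\xi)}^2 \D\xi \, \D x < \infty$.

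First I would establish this integrability. By \cref{res:symbol_estimates}, applied to $\P$ and restricted to $t = 1$, there is $C > 0$ with $\abs{\widehat\P_1(x,\xi)} \leq C(1 + \norm{x}_\beta + \norm{\xi}_\alpha)^m$. The computation in \eqref{eq:estimate-shubin-norm}, which uses only the homogeneity $\norm{\alpha_\lambda(\xi)}_\alpha = \lambda \norm{\xi}_\alpha$ and \eqref{eq:haar_measure_homogeneity}, gives, with $2m$ in place of $m$,
\begin{equation*}
	\int_{\lie g^*}(1 + \norm{x}_\beta + \norm{\xi}_\alpha)^{2m}\D\xi = D\,(1 + \norm{x}_\beta)^{2m + Q(\alpha)}
\end{equation*}
for some $D > 0$; this is legitimate because $2m < -Q(\alpha)$, which follows from $m < -\tfrac{Q(\alpha)+Q(\beta)}2 \leq -\tfrac{Q(\alpha)}2$. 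It remains to integrate the right-hand side over $\X$: by the analogue of \cref{res:integrable} for $\X$ equipped with the dilations $\beta$, the function $x \mapsto (1 + \norm{x}_\beta)^{2m+Q(\alpha)}$ lies in $L^1(\X)$ if and only if $2m + Q(\alpha) < -Q(\beta)$, i.e.\ precisely under the hypothesis $m < -\tfrac{Q(\alpha)+Q(\beta)}2$. In particular $\widehat\P_1 \in L^2(\X \times \lie g^*)$, and by Fubini $\widehat\P_1(x,\argument) \in L^2(\lie g^*)$ for almost every $x \in \X$.

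Next I would compute $\norm{K_P}_{L^2}$ by Plancherel. For almost every $x$ the previous step gives $g_x \in L^2(G)$ with $\norm{g_x}_{L^2(G)}^2 = (2\pi)^{-n}\norm{\widehat\P_1(x,\argument)}_{L^2(\lie g^*)}^2$. Substituting $y = \OrbitMap 1 x(v)$ in the inner integral, whose Jacobian determinant equals the constant $c$ by \cref{res:orbitmaps}, we obtain
\begin{equation*}
	\int_\X \abs{K_P(x,y)}^2 \D y = c^2 \int_G \abs{g_x(v)}^2 \, c \, \D v = c^3 (2\pi)^{-n} \int_{\lie g^*} \abs{\widehat\P_1(x,\xi)}^2 \D\xi .
\end{equation*}
Integrating over $x$ yields $\norm{K_P}_{L^2(\X \times \X)}^2 = c^3 (2\pi)^{-n} \int_\X \int_{\lie g^*} \abs{\widehat\P_1(x,\xi)}^2 \D\xi \, \D x$, which is finite by the second paragraph. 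Therefore $P$ extends from $\Schwartz(\X)$ to a Hilbert--Schmidt operator on $L^2(\X)$.

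The argument is routine once the representation \eqref{eq:rep_as_osc_int} is available; the one point that matters is to use Plancherel in the $\xi$ variable instead of a pointwise bound on $K_P$, since this is what effectively doubles the exponent in the symbol estimate and hence produces the sharp threshold $-\tfrac{Q(\alpha)+Q(\beta)}2$ rather than a worse one. A secondary technical remark is that the change of variables $y = \OrbitMap 1 x(v)$ is a global polynomial diffeomorphism with constant Jacobian, so no attention to supports or to behaviour at infinity is needed, and that \cref{res:integrable} applies verbatim to $\X$ with the dilations $\beta$.
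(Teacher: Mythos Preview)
Your argument is correct and follows essentially the same route as the paper: identify the Schwartz kernel via the orbit map $(\OrbitMap{1}{x})^{-1}$ with constant Jacobian, apply Plancherel in the fibre variable to reduce to $\norm{\widehat\P_1}_{L^2(\X\times\lie g^*)}$, and bound the latter using the symbol estimates and \cref{res:integrable}. The paper's proof is more terse (it simply cites \cref{res:symbol_estimates} and \cref{res:integrable} for the finiteness), whereas you spell out the $\xi$-integral via \eqref{eq:estimate-shubin-norm} and then the $x$-integral; the exact value of the constant in front of $\norm{\widehat\P_1}_{L^2}$ is immaterial.
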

\begin{proof}
	The operator \(P=\Op(\P_1)\) for \(\P_1\in\Pseu^m\) is Hilbert--Schmidt if and only if its kernel, which is formally given by \((x,y)\mapsto \P_1((\Theta^1)^{-1}(x,y))\), belongs to \(L^2(\X\times \X)\). As \((\Theta^1)^{-1}(x,y)=(x,(\OrbitMap 1 x)^{-1}(y))\) holds, the \(L^2\)-norm of its kernel is \(\sqrt{c}\norm{\P_1}_{L^2(\X\times G)}\). Using Fourier transform, this norm is equal to \(\tfrac {\sqrt c }{(2\pi)^{n/2}}\norm{\widehat{\P}_1}_{L^2(\X\times\lie g^*)}\), which is finite by the symbol estimates in \cref{res:symbol_estimates} and \cref{res:integrable}.
\end{proof}

\begin{lemma}\label{res:l2_to_continuous}
	Suppose that property \ref{assumption:free} is satisfied and let \(P\in\Op(\Pseu^m)\) with \(m<-\frac {Q(\alpha)}2\). Then \(P\) defines a continuous operator \(L^2(\X)\to \Cont_0(\X)\).
\end{lemma}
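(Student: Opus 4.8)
The plan is to represent $P = \Op(\P_1)$ as an integral operator with continuous, square-integrable-in-$y$ kernel and then apply the Cauchy--Schwarz inequality. Recall from \eqref{eq:rep_as_osc_int} that under property \ref{assumption:free} the operator acts by
\[
	Pf(x) = \frac{c}{(2\pi)^n}\int \E^{\I \langle (\OrbitMap 1 x)^{-1}(y),\xi\rangle}\widehat\P_1(x,\xi)f(y)\D y\D\xi = \int k(x,y) f(y)\D y ,
\]
where $k(x,y) = \P_1((\Theta^1)^{-1}(x,y)) = \P_1(x,(\OrbitMap 1 x)^{-1}(y))$. First I would show that for fixed $x$ the function $y \mapsto k(x,y)$ lies in $L^2(\X)$ with an $L^2$-norm that is bounded uniformly in $x$. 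Since $(\OrbitMap 1 x)^{-1} \colon \X \to G$ is a polynomial diffeomorphism with constant Jacobian determinant $c^{-1}$ (\cref{res:orbitmaps}), substituting $v = (\OrbitMap 1 x)^{-1}(y)$ gives $\int_\X \abs{k(x,y)}^2 \D y = c \int_G \abs{\P_1(x,v)}^2 \D v = \frac{c}{(2\pi)^n} \int_{\lie g^*} \abs{\widehat\P_1(x,\xi)}^2 \D \xi$ by Plancherel. Now the symbol estimates of \cref{res:symbol_estimates} give $\abs{\widehat\P_1(x,\xi)} \leq C (1 + \norm{x}_\beta + \norm{\xi}_\alpha)^m$, and since $2m < -Q(\alpha)$, the computation \eqref{eq:estimate-shubin-norm} (which appeared in the proof of \cref{res:small-order-groupoid-c-*}) yields $\int_{\lie g^*}(1+\norm{x}_\beta+\norm{\xi}_\alpha)^{2m}\D\xi = D(1+\norm{x}_\beta)^{2m + Q(\alpha)} \leq D$, bounded uniformly in $x$ because $2m + Q(\alpha) < 0$. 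Hence $\sup_{x \in \X} \norm{k(x,\cdot)}_{L^2(\X)} =: M < \infty$.

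Next, for $f \in L^2(\X)$ I would deduce pointwise boundedness: $\abs{Pf(x)} \leq \norm{k(x,\cdot)}_{L^2} \norm{f}_{L^2} \leq M \norm{f}_{L^2}$ by Cauchy--Schwarz, so $Pf$ is a bounded function with $\norm{Pf}_\infty \leq M \norm{f}_{L^2}$; in particular $P$ is continuous as a map $L^2(\X) \to L^\infty(\X)$. To upgrade the target space to $\Cont_0(\X)$, I would argue as in the proof of \cref{res:small-order-groupoid-c-*}: continuity of $x \mapsto Pf(x)$ follows because $x \mapsto k(x,\cdot) \in L^2(\X)$ is continuous (the symbol $\widehat\P_1$ is smooth and the $L^2$-bound is locally uniform by the same estimate, so by dominated convergence $k(x_j,\cdot) \to k(x,\cdot)$ in $L^2$ whenever $x_j \to x$, and then $Pf(x_j) \to Pf(x)$). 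For decay at infinity, one may either first check the claim on the dense subspace $\Schwartz(\X) \subseteq L^2(\X)$, where $Pf \in \Schwartz(\X)$ by \cref{res:op_on_schwartz} (property \ref{assumption:free} implies polynomial transitivity via \cref{res:pair_groupoid}, or one can invoke \cref{res:Schwartz_multiplier} directly), hence $Pf \in \Cont_0(\X)$; then extend to all of $L^2(\X)$ using that $\Cont_0(\X)$ is closed in $L^\infty(\X)$ and $P \colon L^2 \to L^\infty$ is continuous, so the image of the closure is contained in the closure of the image.

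The main obstacle, such as it is, is making the last density argument clean: one needs that $P$ maps the dense set $\Schwartz(\X)$ into $\Cont_0(\X)$ and is $L^2 \to L^\infty$ bounded, so that $\overline{P(\Schwartz(\X))} \subseteq \Cont_0(\X)$ forces $P(L^2(\X)) \subseteq \Cont_0(\X)$. Everything else is a routine application of Plancherel's theorem, the symbol estimates \cref{res:symbol_estimates}, the integrability computation \eqref{eq:estimate-shubin-norm}, and Cauchy--Schwarz; no new idea beyond what already appears in the proofs of \cref{res:Hilbert-Schmidt} and \cref{res:small-order-groupoid-c-*} is required.
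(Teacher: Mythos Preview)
Your approach is essentially the paper's: both show that $x\mapsto\P_1(x,\cdot)\in L^2(G)$ with uniformly bounded norm via Plancherel and the symbol estimates, then apply Cauchy--Schwarz to get $\lvert Pf(x)\rvert\lesssim \lVert\P_1(x,\cdot)\rVert_{L^2(G)}\lVert f\rVert_{L^2(\X)}$. The endgame differs slightly. Rather than a density argument, the paper exploits the sharper bound you already computed, $\lVert\widehat\P_1(x,\cdot)\rVert_{L^2}^2\lesssim(1+\lVert x\rVert_\beta)^{2m+Q(\alpha)}\to 0$, to get decay of $Pf$ directly; and for continuity it splits $Pf(x)-Pf(y)$ into a term controlled by $\lVert\P_1(x,\cdot)-\P_1(y,\cdot)\rVert_{L^2(G)}$ (mean value theorem on $\widehat\P_1$) and a term controlled by $\lVert(\OrbitMap 1 x)^*f-(\OrbitMap 1 y)^*f\rVert_{L^2(G)}$, using that $x\mapsto(\OrbitMap 1 x)^*$ is a strongly continuous unitary representation on $L^2$. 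Your dominated-convergence sketch for $k(x_j,\cdot)\to k(x,\cdot)$ in $L^2(\X)$ is a bit glib---$\P_1(x,v)$ is a priori only an $L^2$ function of $v$, and a pointwise majorant independent of $j$ is not obvious, especially since $k(x_j,y)$ also involves $(\OrbitMap 1{x_j})^{-1}$---but this is moot: your density argument (using $P(\Schwartz)\subseteq\Schwartz\subseteq\Cont_0$, $P\colon L^2\to L^\infty$ bounded, $\Cont_0$ closed in $L^\infty$) already delivers $P(L^2)\subseteq\Cont_0$, which gives continuity and decay simultaneously, so the separate continuity step is unnecessary.
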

\begin{proof}
	We first show that \(\P_1\in\Cont_0(\X,L^2(G))\), this is equivalent to \(\widehat{\P}_1\in\Cont_0(\X,L^2(\lie{g}^*))\). By the symbol estimates \cref{res:symbol_estimates} there is a constant \(C>0\) such that for all \(x\in G\)
	\begin{equation*}
		\abs{\widehat \P_1(x,\xi)}\leq C(1+\norm{\xi}_\alpha+\norm{x}_\beta)^m.
	\end{equation*}
	Using a similar computation as in \eqref{eq:estimate-shubin-norm} one obtains that for every \(\varepsilon>0\) there is a compact set \(K\subset \X\) such that \(\norm{\widehat{\P}_1(x)}_{L^2(\lie{g}^*)}<\varepsilon\) for all \(x\notin K\).
	Furthermore, one finds \(D>0\) such that for all \(x,y\in \X\) by the mean value theorem 
	\begin{equation}\label{eq:cont-in-x}
		\abs{\widehat \P_1(x,\xi)-\widehat \P_1(y,\xi)}\leq D\norm{x-y}_\beta(1+\norm{\xi}_\alpha)^m.
	\end{equation}
	Then for \(f\in\Schwartz(\X)\), we have 
	\begin{equation*}
		\abs{Pf(x)}=\abs{\langle \P_1(x),(\OrbitMap 1 x)^*f\rangle} \leq \tfrac{1}{\sqrt c} \norm{\P_1(x)}_{L^2(G)}\norm{f}_{L^2(\X)}\leq \tfrac{1}{\sqrt c} \sup_{x\in G}\norm{\P_1(x)}_{L^2(G)}\,\norm{f}_{L^2(\X)}.
	\end{equation*}
	Moreover, one has for all \(x,y\in \X\)
	\begin{align*}
		\abs{Pf(x)-Pf(y)}\leq \,&\abs{\langle \P_1(x)-\P_1(y),(\OrbitMap 1 x)^*f\rangle}+\abs{\langle \P_1(y),(\OrbitMap 1 x)^*f-(\OrbitMap 1 y)^*f\rangle}\\
		\leq \,& \tfrac{1}{\sqrt c}\norm{\P_1(x)-\P_1(y)}_{L^2(G)}\norm{f}_{L^2(\X)}+\norm{(\OrbitMap 1 x)^*f-(\OrbitMap 1 y)^*f}_{L^2(G)}\sup_{x\in G}\norm{\P_1(x)}_{L^2(G)}.
	\end{align*}
	For \(x\to y\) this converges to zero by \eqref{eq:cont-in-x} and as \(x\mapsto \sqrt{c}\,(\OrbitMap 1 x)^*\) defines a strongly continuous representation \(G\to\mathcal U (L^2(\X)\to L^2(G))\).
\end{proof}
In the following, we fix an elliptic operator \(P_s\in\Op(\Pseu^s)\) for every \(s\in\RR\), this is possible by \cref{res:existence-elliptic}. Then by \cref{res:parametrix} there are \(Q_s\in\Op(\Pseu^{-s})\) and \(R_s\in\Op(\Pseu^{-\infty})\) with \(Q_sP_s=\id+R_s\).
\begin{lemma}
	Suppose property \ref{assumption:free} and \ref{assumption:homomorphisms} are satisfied. Let \(P\in\Op(\Pseu^m)\) with \(m<-Q(\alpha)-Q(\beta)\), then \(P\) is a trace class operator on \(L^2(\X)\).
\end{lemma}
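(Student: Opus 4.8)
The plan is to factor $P \in \Op(\Pseu^m)$ as a product of two Hilbert--Schmidt operators, since a product of two Hilbert--Schmidt operators is trace class and $\Op(\Pseu)$ is closed under composition.

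First I would fix a real number $m'$ with $-\tfrac{Q(\alpha)+Q(\beta)}{2} > m' > m + \tfrac{Q(\alpha)+Q(\beta)}{2}$; such an $m'$ exists precisely because $m < -Q(\alpha) - Q(\beta)$. By \cref{res:existence-elliptic} there is an elliptic $\Q_{1} \in \Pseu^{-m'}$, and by \cref{res:parametrix} it has a parametrix $\Q_1' \in \Pseu^{m'}$, so that $\Q_1' * \Q_1 - \mathbb I_1 = f \in \Pseu^{-\infty} = \Schwartz(\X \rtimes^1 G)$. Writing $\P_1 \in \Pseu^m$ for a lift of $P$ (using that $\Op_1$ is injective under \ref{assumption:free}, \cref{res:Op-injective}), I would then decompose
\begin{equation*}
	\P_1 = \P_1 * (\Q_1' * \Q_1 - f) = (\P_1 * \Q_1') * \Q_1 - \P_1 * f .
\end{equation*}
By the composition property of the calculus, $\P_1 * \Q_1' \in \Pseu^{m + m'}$ and $\Q_1 \in \Pseu^{-m'}$. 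By the choice of $m'$ we have $m + m' < -\tfrac{Q(\alpha)+Q(\beta)}{2}$ and $-m' < -\tfrac{Q(\alpha)+Q(\beta)}{2}$, so $\Op(\P_1 * \Q_1')$ and $\Op(\Q_1)$ are both Hilbert--Schmidt on $L^2(\X)$ by \cref{res:Hilbert-Schmidt}. Hence $\Op(\P_1 * \Q_1') \circ \Op(\Q_1) = \Op(\P_1 * \Q_1' * \Q_1)$ is trace class. Finally, $\P_1 * f \in \Schwartz(\X \rtimes^1 G)$ since $\P_1$ is a two-sided multiplier of Schwartz functions by \cref{res:Schwartz_multiplier}, and $\Op(\P_1 * f)$ then has a Schwartz kernel (it factors through $\Schwartz'(\X) \to \Schwartz(\X)$ under \ref{assumption:free}), hence is also Hilbert--Schmidt and in particular trace class. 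Therefore $P = \Op(\P_1)$ is trace class.

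I do not expect a serious obstacle here; the only points requiring care are bookkeeping the homogeneous orders so that both factors land in the Hilbert--Schmidt range of \cref{res:Hilbert-Schmidt}, and checking that composition of the associated operators corresponds to convolution of the fibred distributions (which holds since $\Op_1$ is a homomorphism, \cref{res:fibred-distr-as-operators}, and is injective under \ref{assumption:free}). Note that property \ref{assumption:homomorphisms} enters only through \cref{res:existence-elliptic}, which guarantees the existence of the auxiliary elliptic $\Q_1$ and its parametrix; everything else uses only \ref{assumption:free}.
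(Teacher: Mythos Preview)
Your strategy is the same as the paper's---factor through a Hilbert--Schmidt decomposition using an auxiliary elliptic element, with a smoothing remainder---but there is a sign slip in the bookkeeping. With your choice $m' < -\tfrac{Q(\alpha)+Q(\beta)}{2}$ you get $-m' > \tfrac{Q(\alpha)+Q(\beta)}{2} > 0$, so $\Q_1 \in \Pseu^{-m'}$ has \emph{positive} order and is certainly not Hilbert--Schmidt; the claimed inequality $-m' < -\tfrac{Q(\alpha)+Q(\beta)}{2}$ is false. The fix is immediate: either swap the orders of $\Q_1$ and $\Q_1'$ (take $\Q_1 \in \Pseu^{m'}$, $\Q_1' \in \Pseu^{-m'}$, so that $\P_1 * \Q_1' \in \Pseu^{m-m'}$ and both $m'$ and $m-m'$ lie below $-\tfrac{Q(\alpha)+Q(\beta)}{2}$ by your constraints on $m'$), or instead choose $m'$ in the interval $\bigl(\tfrac{Q(\alpha)+Q(\beta)}{2},\, -m - \tfrac{Q(\alpha)+Q(\beta)}{2}\bigr)$.

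Once corrected, your argument is essentially the paper's. The paper takes $s = -m/2$ and writes $P = Q_s^2 P_s^2 P + R$ with $Q_s \in \Op(\Pseu^{m/2})$ Hilbert--Schmidt (so $Q_s^2$ trace class) and $P_s^2 P \in \Op(\Pseu^0)$ bounded; this is the ``trace class times bounded'' variant of your ``Hilbert--Schmidt times Hilbert--Schmidt'' factorization, and both rest on \cref{res:Hilbert-Schmidt}, \cref{res:existence-elliptic}, and the parametrix construction.
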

\begin{proof}
	Let \(s=-\frac{m}{2}\) and write \(P=Q_s^2P_s^2P+R\) for some \(R\in\Op(\Pseu^{-\infty})\). Then \(R\) is of trace class. Note that \(Q_s\in\Op(\Psi^{\frac m2})\) is Hilbert--Schmidt by \cref{res:Hilbert-Schmidt}, so that \(Q_s^2\) is of trace class. Moreover, \(P_s^2P\in\Op(\Pseu^{0})\) and is, therefore, bounded by \cref{res:order0_bounded}.
\end{proof}
\begin{definition}
	For \(s\in\RR\) let the \emph{Sobolev space of order \(s\)} be defined by 
	\(\Sob^s(\X)=\set{u\in\Schwartz'(\X) }{ P_su\in L^2(\X)}\) and define for \(u,v\in \Sob^s(\X)\)
	\begin{equation}\label{eq:sobolev_inner_product}
		\langle u,v\rangle_{s}=\langle P_su,P_sv\rangle_{L^2(\X)}+\langle R_s u, R_s v\rangle_{L^2(\X)}.
	\end{equation}
	 We shall also write \(\Sob^s\), when it is clear which \(G\)-space \(\X\) we consider.
\end{definition}
\begin{lemma}\label{res:sobolev_independent_hilbert}
	Suppose that \ref{assumption:free} and \ref{assumption:homomorphisms} are satisfied.
	The space \(\Sob^s(\X)\) has the following properties:
	\begin{enumerate}
		\item it does not depend on the choice of the elliptic pseudodifferential operator \(P_s\) and its parametrix~\(Q_s\),
		\item it is a Hilbert space with respect to the inner product \eqref{eq:sobolev_inner_product},
		\item \(P_s,R_s\colon \Sob^s(\X)\to L^2(\X)\) and \(Q_s\colon L^2(\X)\to \Sob^s(\X)\) are continuous.
	\end{enumerate}
\end{lemma}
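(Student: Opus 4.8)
The plan is to deduce everything from the parametrix identity $Q_s P_s = \mathbb I_1 + R_s$ together with the companion identity $P_s Q_s = \mathbb I_1 + R_s'$ with $R_s' \in \Op(\Pseu^{-\infty})$ (both provided by \cref{res:parametrix}), and from the mapping properties already at our disposal. Recall that property \ref{assumption:free} forces $\theta^1$ to be polynomially transitive, so that every $T \in \Op(\Pseu^m)$ extends to continuous maps $\Schwartz(\X) \to \Schwartz(\X)$ by \cref{res:op_on_schwartz} and $\Schwartz'(\X) \to \Schwartz'(\X)$ by \cref{res:Schwartz_multiplier}, while every $T \in \Op(\Pseu^{-\infty})$ maps $\Schwartz'(\X)$ continuously into $\Schwartz(\X)$ by the lemma preceding \cref{res:asymptotic_completeness}; moreover, since $\Pseu^{-\infty} = \bigcap_k \Pseu^{-k}$ and the calculus is closed under composition ($\Pseu^\ell * \Pseu^m \subseteq \Pseu^{\ell+m}$), one has $\Op(\Pseu^{-\infty}) \circ \Op(\Pseu^m) \subseteq \Op(\Pseu^{-\infty})$ and $\Op(\Pseu^m) \circ \Op(\Pseu^{-\infty}) \subseteq \Op(\Pseu^{-\infty})$. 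In particular $R_s u \in \Schwartz(\X) \subseteq L^2(\X)$ for every $u \in \Schwartz'(\X)$, so the second summand in \eqref{eq:sobolev_inner_product} is always defined and the first is defined precisely for $u \in \Sob^s(\X)$; and $\langle u, u\rangle_s = 0$ gives $P_s u = R_s u = 0$, hence $u = Q_s P_s u - R_s u = 0$, so $\langle \argument, \argument\rangle_s$ is a genuine inner product.

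For completeness, let $(u_n)$ be $\langle\argument,\argument\rangle_s$-Cauchy. Then $P_s u_n \to f$ and $R_s u_n \to g$ in $L^2(\X)$, hence also in $\Schwartz'(\X)$. From $u_n = Q_s P_s u_n - R_s u_n$ and the $\Schwartz'$-continuity of $Q_s$ we get $u_n \to u := Q_s f - g$ in $\Schwartz'(\X)$. Applying the $\Schwartz'$-continuous maps $P_s$ and $R_s$ to this convergence yields $P_s u = f \in L^2(\X)$, so $u \in \Sob^s(\X)$, and $R_s u = g$; therefore $\|u_n - u\|_s^2 = \|P_s u_n - f\|_{L^2}^2 + \|R_s u_n - g\|_{L^2}^2 \to 0$, proving (ii).

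For (iii), the inequalities $\|P_s u\|_{L^2} \le \|u\|_s$ and $\|R_s u\|_{L^2} \le \|u\|_s$ are immediate from \eqref{eq:sobolev_inner_product}. For $Q_s$, note $P_s Q_s f = f + R_s' f$ and $R_s Q_s \in \Op(\Pseu^{-\infty})$; since $R_s'$ and $R_s Q_s$ lie in $\Op(\Pseu^0)$ they are bounded on $L^2(\X)$ by \cref{res:order0_bounded}, so $Q_s f \in \Sob^s(\X)$ and $\|Q_s f\|_s^2 = \|f + R_s' f\|_{L^2}^2 + \|R_s Q_s f\|_{L^2}^2 \lesssim \|f\|_{L^2}^2$, giving continuity of $Q_s \colon L^2(\X) \to \Sob^s(\X)$.

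Finally (i). As a set $\Sob^s(\X)$ does not involve $Q_s$ at all, and if $P_s'$ is another elliptic operator of order $s$, then $P_s' = (P_s' Q_s)\,P_s - P_s' R_s$ with $P_s' Q_s \in \Op(\Pseu^0)$ bounded on $L^2(\X)$ by \cref{res:order0_bounded} and $P_s' R_s \in \Op(\Pseu^{-\infty})$; hence $P_s u \in L^2(\X)$ implies $P_s' u \in L^2(\X)$, and by symmetry the two sets coincide. Each of the two resulting norms makes $\Sob^s(\X)$ a Hilbert space by (ii), and each embeds continuously into $\Schwartz'(\X)$ via $u = Q_s P_s u - R_s u$ (composing $\Sob^s(\X) \to L^2(\X) \hookrightarrow \Schwartz'(\X)$ with the $\Schwartz'$-continuous $Q_s$, and adding $R_s \colon \Sob^s(\X) \to L^2(\X) \hookrightarrow \Schwartz'(\X)$). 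Consequently the identity map between the two Hilbert-space structures has closed graph, so it is bounded by the closed graph theorem, and by symmetry the two norms are equivalent. The step that requires the most care is this last equivalence of norms — the norm \eqref{eq:sobolev_inner_product} genuinely depends on $P_s$ and $Q_s$, so one must invoke the closed graph theorem rather than argue directly — together with keeping track of the order bookkeeping so that every composition with a smoothing operator remains smoothing and every operator of order $\le 0$ is bounded on $L^2(\X)$.
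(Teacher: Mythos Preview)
Your proof is correct and follows essentially the same route as the paper, which simply cites \cite{NR10}*{Proposition~1.5.3} together with the $L^2$-boundedness of order zero operators (\cref{res:order0_bounded}); you have written out that standard argument in full. One small remark: for the equivalence of norms in (i) you can also argue directly by inserting $u = Q_s P_s u - R_s u$ into $P_s' R_s u$ and $R_s'' u$ (where $R_s''$ comes from the second parametrix), which avoids the closed-graph detour, but your closed-graph argument is equally valid.
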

\begin{proof}
	Using that order zero operators on \(L^2(\X)\) are bounded by \cref{res:order0_bounded}, one can show the first two claims as in \cite{NR10}*{Proposition~1.5.3}. The last claim follows from the definition of the norm on \(\Sob^s(\X)\).
\end{proof}
The proof of the following proposition is adapting standard arguments to our situation, see for example \cite{Shu87}*{Section~I.7} and the proof of \cite{DH22}*{Proposition~3.17}.  
\begin{proposition}\label{res:sobolev_properties}
Suppose \ref{assumption:free} and \ref{assumption:homomorphisms} hold. Then the Sobolev spaces have the following properties:
	\begin{enumerate}
		\item \(\Sob^0(\X)=L^2(\X)\),
		\item\label{item:inclusion_schwartz_tempered_distr} for every \(s\in\RR\) one has continuous inclusions
		\(
			\Schwartz(\X)\hookrightarrow \Sob^{s}(\X)\hookrightarrow \Schwartz'(\X)
		\) and \(\Schwartz(\X)\) is dense in \(\Sob^s(\X)\),
		\item\label{item:continuity_pseudo_sob}\(P\in\Op(\Pseu^m)\) defines for every \(s\in\RR\) a continuous operator \(\Sob^s(\X)\to\Sob^{s-m}(\X)\),
		\item\label{item:sobolev_regularity}let \(P\in\Op(\Pseu^m)\) be elliptic and \(u\in\Schwartz'(\X)\), then \(Pu\in\Sob^s(\X)\) for some \(s\in\RR\) implies \(u\in\Sob^{s+m}(\X)\),
		\item for \(s_1< s_2\) there is a compact embedding
		\( \Sob^{s_2}(\X)\hookrightarrow \Sob^{s_1}(\X)\),
		\item\label{item:duality} the pairing \(\Schwartz(\X)\times\Schwartz(\X)\to\CC\) given by \(\langle u,v\rangle=\int u(x)v(x)\D x\) extends for every \(s\in\RR\) to a continuous bilinear map \(\Sob^s(\X)\times\Sob^{-s}(\X)\to\CC\) and \((\Sob^s(\X))'=\Sob^{-s}(\X)\),
		\item\label{item:sobolev_embedding} one has a continuous embedding \(\Sob^s(\X)\hookrightarrow \Cont_0(\X)\) for \(s>\frac{Q(\alpha)}{2}\),
		\item \(\Schwartz(\X)=\bigcap_{s\in\RR}\Sob^s(\X)\) and \(\Schwartz'(\X)=\bigcup_{s\in\RR}\Sob^s(\X)\).
		\end{enumerate} 
\end{proposition}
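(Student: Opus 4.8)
The plan is to deduce all eight statements from the mapping properties of the calculus already at our disposal: $L^2(\X)$-boundedness of operators of order $\leq 0$ (\cref{res:order0_bounded}), compactness of operators of negative order (\cref{res:neg-order-compact}), the continuity $L^2(\X)\to\Cont_0(\X)$ for operators of order $<-Q(\alpha)/2$ (\cref{res:l2_to_continuous}), the Hilbert--Schmidt property of order-$-\infty$ operators on $L^2(\X)$ (\cref{res:Hilbert-Schmidt}, together with the description of smoothing operators from \cref{res:Op-injective}~\refitem{item:Op-transitive}), the composition rule for the calculus, the existence of elliptic operators of every order (\cref{res:existence-elliptic}) and their parametrices (\cref{res:parametrix}); the tool used throughout is the identity $u = Q_sP_su - R_su$ valid for all $u\in\Sob^s(\X)$, where $Q_sP_s=\id+R_s$ with $R_s\in\Op(\Pseu^{-\infty})$. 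First I would prove that $\Sob^0(\X)=L^2(\X)$ with equivalent norms: $P_0$ and $R_0$ are bounded on $L^2(\X)$ (order $0$, resp.\ smoothing) so $\norm{u}_0\lesssim\norm{u}_{L^2(\X)}$, and conversely $\norm{u}_{L^2(\X)}\leq\norm{Q_0}_{\mathrm{op}}\norm{P_0u}_{L^2(\X)}+\norm{R_0u}_{L^2(\X)}\lesssim\norm{u}_0$ from the identity above. For \refitem{item:inclusion_schwartz_tempered_distr}, the inclusion $\Schwartz(\X)\hookrightarrow\Sob^s(\X)$ is continuous because $P_s,R_s$ map $\Schwartz(\X)$ continuously into itself by \cref{res:op_on_schwartz} hence into $L^2(\X)$; the inclusion $\Sob^s(\X)\hookrightarrow\Schwartz'(\X)$ is continuous since, by \cref{res:OpDual}, $\langle u,\varphi\rangle=\langle P_su,Q_s^t\varphi\rangle_{L^2(\X)}-\langle R_su,\varphi\rangle$ with $Q_s^t\varphi\in\Schwartz(\X)$ and $\norm{R_su}_{L^2(\X)}\leq\norm{u}_s$; and $\Schwartz(\X)$ is dense in $\Sob^s(\X)$ because $R_su\in\Schwartz(\X)$ already while $P_su\in L^2(\X)$ can be approximated by $f_n\in\Schwartz(\X)$ and then $Q_sf_n\in\Schwartz(\X)$ converges to $Q_sP_su$ in $\Sob^s(\X)$ by \cref{res:sobolev_independent_hilbert}.

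Items \refitem{item:continuity_pseudo_sob} and \refitem{item:sobolev_regularity} are then immediate from the composition rule. For $P\in\Op(\Pseu^m)$ and $u\in\Sob^s(\X)$ one writes $P_{s-m}Pu=(P_{s-m}PQ_s)(P_su)-(P_{s-m}PR_s)u$; the first factor has order $0$ and is $L^2(\X)$-bounded by \cref{res:order0_bounded}, the second has order $-\infty$ and extends to a bounded map $\Sob^s(\X)\to L^2(\X)$ (a continuous linear map of the Hilbert space $\Sob^s(\X)$ into $\Schwartz(\X)$ composed with $\Schwartz(\X)\hookrightarrow L^2(\X)$), and the analogous decomposition for $R_{s-m}P$ gives $\norm{Pu}_{s-m}\lesssim\norm{u}_s$; as a by-product, taking $P=\id$ one gets continuous embeddings $\Sob^{s_2}(\X)\hookrightarrow\Sob^{s_1}(\X)$ for $s_1\leq s_2$. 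Then \refitem{item:sobolev_regularity} follows from $u=QPu-R'u$ for a parametrix $Q\in\Op(\Pseu^{-m})$ of an elliptic $P$: $Q(Pu)\in\Sob^{s+m}(\X)$ by \refitem{item:continuity_pseudo_sob} and $R'u\in\Schwartz(\X)\subseteq\Sob^{s+m}(\X)$. For the compact embedding, with $m=s_2-s_1>0$ one decomposes the inclusion $\iota\colon\Sob^{s_2}(\X)\to\Sob^{s_1}(\X)$ as $\iota=Q_{s_1}(P_{s_1}Q_{s_2})P_{s_2}-R$ with $R$ smoothing; here $P_{s_1}Q_{s_2}$ has negative order $-m$, hence is compact on $L^2(\X)$ by \cref{res:neg-order-compact}, so $Q_{s_1}(P_{s_1}Q_{s_2})P_{s_2}$ is compact $\Sob^{s_2}(\X)\to\Sob^{s_1}(\X)$, and one checks separately that any order-$-\infty$ operator is compact $\Sob^{s_2}(\X)\to L^2(\X)$ (again via $u=Q_{s_2}P_{s_2}u-R_{s_2}u$, factoring through the compact operator it induces on $L^2(\X)$), so that $R$ is compact into $\Sob^{s_1}(\X)$.

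For \refitem{item:duality} the pairing $\int uv$ extends to $\Sob^s(\X)\times\Sob^{-s}(\X)$ via $\langle u,v\rangle=\langle P_su,Q_s^tv\rangle_{L^2(\X)}-\langle u,R_s^tv\rangle$, using that $Q_s^t$ has order $-s$ so $Q_s^tv\in L^2(\X)$ and that $R_s^tv\in\Schwartz(\X)$ depends continuously on $v\in\Sob^{-s}(\X)$; injectivity of $\Sob^{-s}(\X)\to(\Sob^s(\X))'$ uses density of $\Schwartz(\X)$, and surjectivity follows from the Riesz representation on $\Sob^s(\X)$ together with $P_s^t\colon L^2(\X)\to\Sob^{-s}(\X)$ (and $R_s^t\colon L^2(\X)\to\Schwartz(\X)$). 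For \refitem{item:sobolev_embedding}, from $u=Q_s(P_su)-R_su$ with $Q_s$ of order $-s<-Q(\alpha)/2$, one has $Q_s\colon L^2(\X)\to\Cont_0(\X)$ continuously by \cref{res:l2_to_continuous} and $R_su\in\Schwartz(\X)\hookrightarrow\Cont_0(\X)$ with supremum norm bounded by $\norm{u}_s$. Finally, $\bigcap_{s\in\RR}\Sob^s(\X)=\Schwartz(\X)$ since every differential operator with polynomial coefficients lies in the calculus by \cref{res:propertyP:uniqueRep} and \cref{ex:diff_ops_pol_coeff}, so if $u\in\bigcap_s\Sob^s(\X)$ then $x^b\partial^a u\in\bigcap_s\Sob^s(\X)\subseteq\Cont_0(\X)$ for all multi-indices by \refitem{item:continuity_pseudo_sob} and \refitem{item:sobolev_embedding}, i.e.\ $u\in\Schwartz(\X)$; and $\bigcup_{s\in\RR}\Sob^s(\X)=\Schwartz'(\X)$ since a tempered distribution has finite order, so combining \refitem{item:continuity_pseudo_sob} and \refitem{item:sobolev_embedding} one dominates the relevant Schwartz seminorms on $\Schwartz(\X)$ by the $\Sob^M(\X)$-norm for $M$ large, whence $u$ extends to a bounded functional on $\Sob^M(\X)$ and lies in $(\Sob^M(\X))'=\Sob^{-M}(\X)$ by \refitem{item:duality}.

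The main obstacle is not conceptual but bookkeeping: in each step one must track the smoothing error terms $R_s$ and the various compositions that involve order-$-\infty$ operators, and verify that they are not merely continuous but bounded, indeed compact, between the relevant Hilbert and function spaces. This rests on the facts that under property \ref{assumption:free} an order-$-\infty$ operator is a Schwartz-kernel operator, hence Hilbert--Schmidt on $L^2(\X)$, and that a continuous linear map from a Hilbert space into a Fr\'echet space, post-composed with a continuous map into a Banach space, is automatically bounded. It should also be stressed that properties \ref{assumption:free} and \ref{assumption:homomorphisms} are both used essentially: the former through \cref{res:op_on_schwartz}, \cref{res:l2_to_continuous} and \cref{res:Hilbert-Schmidt}, the latter through the $L^2(\X)$-boundedness (\cref{res:order0_bounded}) and compactness (\cref{res:neg-order-compact}) of operators of nonpositive, respectively negative, order.
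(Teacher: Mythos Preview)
Your proposal is correct and follows essentially the same strategy as the paper: every item is reduced to the parametrix identity $u=Q_sP_su-R_su$ together with the already-established mapping properties (\cref{res:order0_bounded}, \cref{res:neg-order-compact}, \cref{res:l2_to_continuous}) and the fact that differential operators with polynomial coefficients lie in the calculus (\cref{res:propertyP:uniqueRep}). The only cosmetic differences are that for \refitem{item:continuity_pseudo_sob} the paper first treats the target $L^2$ case via $P=PQ_mP_m-PR_m$ and then bootstraps, whereas you go directly with $P_{s-m}Pu=(P_{s-m}PQ_s)P_su-(P_{s-m}PR_s)u$; and for the compactness of the smoothing remainder in item \textit{v.)}\ the paper invokes the Montel property of $\Schwartz(\X)$ rather than your Hilbert--Schmidt argument---both work.
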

\begin{proof}
	\begin{enumerate}[wide=0pt]
		\item This is clear from \cref{res:sobolev_independent_hilbert} and choosing \(P_0=\id\).
		\item The continuity of \(\Schwartz\hookrightarrow\Sob^{s}\) follows from the continuity of \(P_{s},R_{s}\colon\Schwartz \to\Schwartz \) and \(\Schwartz \hookrightarrow L^2\). To see that the image is dense, let \(u\in\Sob^s\) and let \((f_n)_{n\in\NN}\) be a sequence in \(\Schwartz(\X)\) with \(\lim_{n\to\infty} f_n = P_su\in L^2\). Then \((Q_sf_n-R_su)_{n\in\NN}\) is a sequence of Schwartz functions converging to \(u\) in \(\Sob^s\).
		
		For the second embedding, write \(\id =Q_{s}P_{s}-R_{s}\). Then \(R_{s}, P_{s}\colon \Sob^s\to L^2\), \(L^2\hookrightarrow \Schwartz'\) and \(Q_{s}\colon\Schwartz'\to\Schwartz'\) are continuous, so that \(\Sob^{s}\hookrightarrow \Schwartz'\) is continuous. 		
		\item We first show \(P\colon \Sob^{m}\to L^2\) is continuous by writing \(P=PQ_mP_m-PR_m\). Then \(P_m\colon \Sob^m\to L^2\) is continuous and \(PQ_m\in\Op(\Pseu^0)\) defines a continuous map \(L^2\to L^2\) by \cref{res:order0_bounded}. Furthermore, \(R_m\colon\Schwartz'\to\Schwartz\) and \(P\colon \Schwartz\to\Schwartz\hookrightarrow L^2\) are continuous.
		
		Let \(s\in\RR\) be now arbitrary and write \(P=Q_{s-m}P_{s-m}P-R_{s-m}P\). Then \(P_{s-m}P\in\Op(\Pseu^s)\) defines a continuous map \(\Sob^s\to L^2\) by the previous argument and \(Q_{s-m}\colon L^2\to \Sob^{s-m}\) is continuous. Moreover, \(\Sob^{s}\hookrightarrow\Schwartz'\), \(P\colon \Schwartz'\to\Schwartz'\), \(R_{s-m}\colon\Schwartz'\to\Schwartz\) and \(\Schwartz\hookrightarrow \Sob^{s-m}\) are continuous. 
		\item Let \(Q\in\Op(\Pseu^{-m})\) be a parametrix for \(P\). Then writing \(u=QP-R\) for some \(R\in\Op(\Pseu^{-\infty})\) implies the claim using \refitem{item:continuity_pseudo_sob}.
		\item One can write \(\id=Q_{s_1}P_{s_1}Q_{s_2}P_{s_2}+R\) for some \(R\in\Op(\Pseu^{-\infty})\). Then \(P_{s_2}\colon \Sob^{s_2}\to L^2\) is continuous, \(P_{s_1}Q_{s_2}\in\Op(\Pseu^{s_1-s_2})\) is compact as a map \(L^2\to L^2\)  by \cref{res:neg-order-compact} and \(Q_{s_1}\colon L^2\to \Sob^{s_1}\) is continuous. Moreover, \(R\) is compact as every bounded sequence in \(\Schwartz\) has a convergent subsequence \cite{Hor66}*{p.~240}. 
		\item For \(s=0\) this is clear. For other \(s\in\RR\) use the pairing on \(L^2\) to define for \(u\in\Sob^s\) and \(v\in\Sob^{-s}\) 
		\begin{equation*}
			\langle u, v\rangle =\langle P_s u, Q_s^t v\rangle- \langle R_su,v\rangle. 
		\end{equation*}
		From the shown mapping properties, one sees that this is separately continuous. As both spaces are Hilbert spaces, the map is also jointly continuous. Furthermore, it extends the pairing on~\(\Schwartz\).
		
		Consider the continuous map \(l\colon \Sob^{-s}\to(\Sob^s)'\) defined by \(v\mapsto l_v\) with \(l_v(u)=\langle u,v\rangle\) for \(u\in\Sob^s\). The map is injective as the embedding \(\Sob^s\hookrightarrow \Schwartz'\) can be written as the composition of \(l\) with the embedding \((\Sob^s)'\hookrightarrow\Schwartz'\). For surjectivity let \(L\in(\Sob^s)'\). Then there must be a \(v\in\Schwartz'\) such that \(L(u)=\langle u,v\rangle\) for all \(u\in\Schwartz\). To show that \(P_sv\in L^2\), note that one can define \(k\colon u\mapsto L(P_s^tu)=\langle P_s^tu,v\rangle =\langle u,P_sv\rangle\) for \(u\in\Schwartz\). As \(L\) is defined on \(\Sob^s\), one can extend \(k\) to a continuous functional on \(L^2\). As \((L^2)'\cong L^2\) under the pairing, we get \(P_sv\in L^2\).
		By the bounded inverse theorem \(l\) is an isomorphism.
		\item Write \(\id =Q_sP_s-R_s\), then \(P_s\colon \Sob^s\to L^2\) and \(\Sob^s\hookrightarrow L^2\) are continuous and \(Q_s, R_s\colon L^2\to\Cont_0\) by \cref{res:l2_to_continuous}. 
		\item Suppose \(u\in \Sob^s\) for all \(s\in \RR\). To see that \(u\in\Schwartz\), we show that \(Pu\in\Cont_0\) for all differential operators with polynomial coefficients \(P\). By \cref{res:propertyP:uniqueRep}, we know that \(P\in\Op(\Pseu^m)\) for some \(m\in\NN_0\). Consider \(s>\frac{Q(\alpha)}{2}\). 
		As \(u\in \Sob^{s+m}\) one has \(Pu\in \Sob^s \subseteq \Cont_0\) by \refitem{item:sobolev_embedding}. 
		
		Let now \(u\in\Schwartz'\). By the characterization of tempered distributions \cite{Hor66}*{p.~410} it can be written as a finite sum of \(P\phi\), where \(P\) is a differential operator with polynomial coefficients and \(\phi\in(\Cont_0)'\). By \cref{res:propertyP:uniqueRep} we may assume \(P\in\Op(\Pseu^m)\) for some \(m\in\NN_0\). For \(s>\frac{Q(\alpha)}{2}\) it defines a continuous functional on \(\Sob^{s+m}\) as \(\langle P\phi, f\rangle =\langle \phi, P^tf\rangle \) and \(P^t\colon \Sob^{s+m}\to\Cont_0\) is continuous by the previous observations. Hence,  \(P\phi\in \Sob^{-s-m}\) by duality \refitem{item:duality}.\qedhere
		\end{enumerate}
\end{proof}

\subsection{Fredholm operators and spectral properties}
The following properties are shown as for \(G=\RR^n\), see for example \cite{Shu87}.
\begin{proposition}
	Suppose that property \ref{assumption:free} and \ref{assumption:homomorphisms} hold and that \(P\in\Op(\Pseu^m)\) is elliptic. Then it has the following properties:
	\begin{enumerate}
		\item \(P\) defines a Fredholm operator \(P\colon \Sob^s(\X)\to \Sob^{s-m}(\X)\) for all \(s\in\RR\),
		\item its Fredholm index does not depend on \(s\in\RR\) and is given by
		\(\ind P =\dim\ker P-\dim\ker P^*,\)
	\item the Fredholm index just depends on the principal cosymbol of \(P\).
	\end{enumerate}	
\end{proposition}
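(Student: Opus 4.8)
The three assertions are the standard consequences of having a good parametrix calculus plus the mapping properties collected in \cref{res:sobolev_properties}, so the plan is to follow the classical Shubin-type argument (cf.\ \cite{Shu87}*{Section~I.7}) almost verbatim, now using the Sobolev scale $\Sob^s(\X)$ in place of the Euclidean one. First I would fix $P\in\Op(\Pseu^m)$ elliptic and invoke \cref{res:parametrix} to get a parametrix $Q\in\Op(\Pseu^{-m})$ with $QP-\id=R_1$ and $PQ-\id=R_2$ where $R_1,R_2\in\Op(\Pseu^{-\infty})$. By \cref{res:sobolev_properties}~\refitem{item:continuity_pseudo_sob}, $P\colon\Sob^s(\X)\to\Sob^{s-m}(\X)$ and $Q\colon\Sob^{s-m}(\X)\to\Sob^s(\X)$ are bounded for every $s$, and since $R_1,R_2\in\Op(\Pseu^{-\infty})$ they are smoothing, hence factor through $\Schwartz(\X)$ and are compact on every Sobolev space by the compact embedding in \cref{res:sobolev_properties} (item 5 in that list). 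Thus $Q$ is a two-sided inverse of $P$ modulo compacts, which gives the Fredholm property on $\Sob^s(\X)\to\Sob^{s-m}(\X)$ for all $s$.

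For the second claim I would argue that the kernel and cokernel are independent of $s$. If $u\in\Sob^s(\X)$ with $Pu=0$, then by elliptic regularity \cref{res:sobolev_properties}~\refitem{item:sobolev_regularity} we get $u\in\Sob^{s'}(\X)$ for all $s'$, hence $u\in\bigcap_{s'}\Sob^{s'}(\X)=\Schwartz(\X)$ by \cref{res:sobolev_properties} (last item); so $\ker(P\colon\Sob^s\to\Sob^{s-m})=\ker(P\colon\Schwartz(\X)\to\Schwartz(\X))$ is independent of $s$. For the cokernel I would use the duality $(\Sob^{s-m})'=\Sob^{m-s}$ from \cref{res:sobolev_properties}~\refitem{item:duality}: the annihilator of $\mathrm{ran}(P\colon\Sob^s\to\Sob^{s-m})$ is $\ker(P^t\colon\Sob^{m-s}\to\Sob^{-s})$, where $P^t=\Op(\P_1^t)$ is again elliptic of order $m$ (its principal cosymbol is $\sigma_m(\P_1)^t$, invertible); applying the kernel argument to $P^t$ shows this is $\ker(P^t\colon\Schwartz(\X)\to\Schwartz(\X))$, independent of $s$. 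Hence $\ind P$ is $s$-independent and equals $\dim\ker P-\dim\ker P^t$. If one prefers $P^*$ over $P^t$ one passes through the involution on $\PPseu^m$ from \cref{res:adjoint}: on $\Schwartz(\X)$ (with the $L^2$-pairing) $\Op(\P_1^t)$ and $\Op(\P_1^*)$ have the same kernel dimension since $\Op(\P_1^*)$ is the formal adjoint up to conjugation by complex conjugation on $\Schwartz(\X)$, which preserves the dimension of the (finite-dimensional, Schwartz) kernel.

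For the third claim I would use homotopy invariance of the index together with \cref{res:short_exact_sequence}. If $P,P'\in\Op(\Pseu^m)$ are elliptic with the same principal cosymbol, then $\P_1-\P'_1\in\Pseu^{m-1}$ and the straight-line path $t\mapsto \Op((1-t)\P_1+t\P'_1)$ consists of elliptic operators of order $m$ with constant principal cosymbol; each is Fredholm $\Sob^s\to\Sob^{s-m}$ by part (1), and the path is norm-continuous in $\mathcal L(\Sob^s,\Sob^{s-m})$ by \cref{res:sobolev_properties}~\refitem{item:continuity_pseudo_sob} applied to the difference (which is of order $m-1$, hence even compact, but continuity is all that is needed). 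Continuity of the Fredholm index on the space of Fredholm operators then gives $\ind P=\ind P'$.

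\textbf{Main obstacle.} The genuinely delicate point is the dependence of the Fredholm index only on the principal cosymbol \emph{in the class} $\Sigma_\Gamma^m$, i.e.\ the invariance under lower-order perturbations; everything reduces to this and to knowing that the perturbation stays within elliptic operators, which is immediate since ellipticity is a condition on $\sigma_m$ alone. The other subtlety is purely bookkeeping: making sure the cokernel is correctly identified via the duality pairing and that $P^t$ (or $P^*$) is elliptic with invertible principal cosymbol — this uses that $\sigma_m(\P_1^t)=\sigma_m(\P_1)^t$ is invertible whenever $\sigma_m(\P_1)$ is, which follows from $\Symb$ being a $^*$-algebra (equivalently, from the Rockland characterization, since the Rockland condition is stable under the transpose on $\X^*\rtimes^0 G$). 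I do not expect any new analytic difficulty beyond what is already packaged in \cref{res:parametrix} and \cref{res:sobolev_properties}.
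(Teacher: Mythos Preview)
Your proof is correct and follows essentially the same route as the paper's. The only difference is cosmetic: for (iii) the paper skips the homotopy and uses directly that $P-P'\in\Op(\Pseu^{m-1})$ is compact as a map $\Sob^s\to\Sob^{s-m}$ (a fact you already note parenthetically), so compact-perturbation invariance of the index suffices.
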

\begin{proof}
	\begin{enumerate}
		\item Let \(s\in\RR\) and \(Q\) be a parametrix for \(P\) such that \(QP-\id=R\) and \(PQ-\id=S\) with \(R,S\in\Op(\Pseu^{-\infty})\). In particular, \(R\) defines a compact operator \(\Sob^{s}\to\Sob^s\) and \(S\) defines a compact operator \(\Sob^{s-m}\to \Sob^{s-m}\). Hence, \(P\) is Fredholm by Atkinson's Theorem.
		\item Note that the formal adjoint \(P^*\colon\Schwartz\to\Schwartz\) extends to a continuous map \(P^*\colon \Sob^{m-s}\to\Sob^{-s}\). Using the identifications \(\Sob^{m-s}\cong(\Sob^{s-m})'\) and \(\Sob^{-s}\cong(\Sob^{s})'\), one checks that \(P^*\) is the adjoint of \(P\colon \Sob^s\to\Sob^{s-m}\). Consequently, \(\ind P= \dim\ker P-\dim\ker P^*\).
		
		As \(P\) is elliptic \(Pu=0\) for \(u\in\Schwartz'\) implies by \cref{res:hypoelliptic} that \(u\in\Schwartz\). Hence, \(\ker(P)\subseteq \Schwartz\) holds and also \(\ker(P^*)\subseteq \Schwartz\) as \(P^*\) is elliptic, too, so that the index does not depend on \(s\).
		\item If \(\widetilde{P}\) has the same principal cosymbol as \(P\), one gets \(P-\widetilde{P}\in\Op(\Pseu^{m-1})\). As operators in \(\Op(\Pseu^{m-1})\) map continuously from \(\Sob^s\) to \(\Sob^{s-m+1}\) and the embedding \(\Sob^{s-m+1}\hookrightarrow\Sob^{s-m}\) is compact, one sees that~\(\widetilde{P}\) is a compact perturbation of \(P\) and therefore has the same Fredholm index. \qedhere
	\end{enumerate}
\end{proof}
In the following, we consider \(P\in\Op(\Pseu^m)\) elliptic with \(m>0\) as an unbounded operator on \(L^2(\X)\) with domain \(\Sob^m(\X)\). To see that it is closed, let \((u_n)_{n\in\NN}\) be a sequence in \(\Sob^m(\X)\) such that \(u_n\to u\) and \(Pu_n\to f\) in \(L^2(\X)\). As \(L^2(\X)\hookrightarrow\Schwartz'(\X)\) and \(P\colon\Schwartz'(\X)\to\Schwartz'(\X)\) are continuous, we see that \(Pu=f\). By \cref{res:sobolev_properties} \refitem{item:sobolev_regularity} this implies \(u\in \Sob^m(\X)\).

In particular, one can consider its spectrum \(\sigma(P)\) and we get the following inverse operator theorem, see also \cite{Shu87}*{Theorem~8.2, Theorem~25.4}
\begin{lemma}\label{res:inverse_operator}
	Suppose \ref{assumption:free} and \ref{assumption:homomorphisms} hold. If \(P\in\Op(\Pseu^m)\) for \(m>0\) is elliptic and \(\lambda \notin\sigma(P)\), then \((P-\lambda\cdot \id)^{-1}\in\Op(\Pseu^{-m})\). 
\end{lemma}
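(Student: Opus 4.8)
The plan is to recognize that $P - \lambda \cdot \id$ is again an elliptic pseudodifferential operator of order $m$, so that the parametrix machinery of \cref{res:parametrix} produces an operator $Q \in \Op(\Pseu^{-m})$ with $Q(P - \lambda\cdot\id) = \id + R$ and $(P-\lambda\cdot\id)Q = \id + S$ for some $R, S \in \Op(\Pseu^{-\infty})$. The point then is to upgrade this parametrix to a genuine two-sided inverse under the hypothesis $\lambda \notin \sigma(P)$, and to check that the genuine inverse still lies in $\Op(\Pseu^{-m})$.

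First I would observe that $\lambda \cdot \id = \lambda \cdot \Op(\mathbb I_1) \in \Op(\Pseu^0) \subseteq \Op(\Pseu^m)$ by \cref{ex:identity_operator}, and that the principal cosymbol $\sigma_m(\lambda \cdot \id) = 0$ since $m > 0$; hence $P - \lambda \cdot \id \in \Op(\Pseu^m)$ has the same principal cosymbol as $P$, in particular it is elliptic. By \cref{res:parametrix} it has a parametrix $Q \in \Op(\Pseu^{-m})$, i.e.\ $Q(P-\lambda\cdot\id) = \id + R$ with $R \in \Op(\Pseu^{-\infty})$, and similarly on the other side. Next I would use that $\lambda \notin \sigma(P)$ means $T \coloneqq (P - \lambda\cdot\id)^{-1} \colon L^2(\X) \to \Sob^m(\X)$ exists and is bounded (by the closedness of $P$ on domain $\Sob^m(\X)$, discussed just before the statement, together with the bounded inverse theorem). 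Then one writes
\begin{equation*}
	T = Q(P-\lambda\cdot\id)T - RT = Q - RT \,,
\end{equation*}
so it suffices to show $RT \in \Op(\Pseu^{-\infty}) = \Op(\Schwartz(\X\rtimes^1 G))$, since $Q \in \Op(\Pseu^{-m})$ and $\Op(\Pseu^{-\infty}) \subseteq \Op(\Pseu^{-m})$.

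To see $RT$ is smoothing, I would bootstrap using elliptic regularity. Since $R \in \Op(\Pseu^{-\infty})$ maps $\Sob^s(\X) \to \Sob^{s'}(\X)$ continuously for all $s, s'$ (by \cref{res:sobolev_properties}~\refitem{item:continuity_pseudo_sob}, as $R$ has every order), and $T \colon L^2 = \Sob^0 \to \Sob^m$ is bounded, the composite $RT$ maps $L^2 \to \Sob^{s}$ boundedly for every $s$; but also, for any $u \in L^2(\X)$, $RTu = T(P - \lambda\cdot\id)RTu - (P-\lambda\cdot\id)^{-1}\cdots$ — more cleanly, apply the other-sided parametrix: from $(P-\lambda\cdot\id)\widetilde Q = \id + S$ one gets $T = \widetilde Q - TS$, and iterating $T = Q - R T = Q - R\widetilde Q + RTS$, which already shows $T \in \Op(\Pseu^{-m}) + \Op(\Pseu^{-\infty}) + R\,\Op(\Pseu^{-\infty})\cdot S$, and the last term is smoothing provided $RTS$ has a Schwartz kernel. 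For that, $T \colon \Schwartz'(\X) \to \Schwartz'(\X)$ need only be known to be continuous on $\Schwartz'$, which follows because $P - \lambda\cdot\id$ is a multiplier of $\Schwartz(\X\rtimes^1 G)$ (\cref{res:Schwartz_multiplier}) and because $T$ agrees with $\widetilde Q(\id+S)^{-1}$ on a dense subspace — then $S \colon \Schwartz' \to \Schwartz$ and $R \colon \Schwartz'\to\Schwartz$ are smoothing, so $RTS \colon \Schwartz' \to \Schwartz$ is smoothing, hence lies in $\Op(\Pseu^{-\infty})$ by \cref{res:Op-injective}~\refitem{item:Op-transitive}. Altogether $T = Q - R\widetilde Q + RTS \in \Op(\Pseu^{-m})$, using that $\Op$ is injective under property \ref{assumption:free} so these operator identities lift to identities of pseudodifferential fibred distributions.

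The main obstacle I anticipate is the bookkeeping needed to justify that $T = (P-\lambda\cdot\id)^{-1}$, a priori only an abstract bounded operator on $L^2(\X)$, actually extends continuously to $\Schwartz'(\X)$ and maps $\Schwartz(\X) \to \Schwartz(\X)$, so that compositions like $RTS$ make sense as smoothing operators; this is exactly the kind of regularity argument that requires combining the Sobolev mapping properties of \cref{res:sobolev_properties} with the fact (\cref{res:hypoelliptic}) that $Tf \in \Schwartz$ whenever $f \in \Schwartz$, and with the closed graph / continuity of $T$ between the Sobolev spaces. Once that is in place, the identity $T = Q - RTS - R\widetilde Q$ (or a symmetric variant) together with the injectivity of $\Op$ finishes the proof cleanly.
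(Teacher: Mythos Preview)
Your core idea is correct and in fact slightly slicker than the paper's argument. Both proofs first note that $P-\lambda\cdot\id$ is elliptic of order $m$ (so one may take $\lambda=0$) and then write the true inverse as a parametrix plus a smoothing correction. The paper uses only the right parametrix $PQ=\id+R$, obtaining $P^{-1}=Q-P^{-1}R$, and then has to verify that $P^{-1}R\colon\Schwartz'\to\Schwartz$; this forces it to show $P^{-1}\colon\Schwartz\to\Schwartz$ directly, which it does by Sobolev regularity: for every differential operator $D$ with polynomial coefficients one checks $DP^{-1}\colon\Schwartz\to\Cont_0$ via \cref{res:sobolev_properties}~\refitem{item:sobolev_regularity} and \refitem{item:sobolev_embedding}. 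Your two-sided sandwich $T=Q-R\widetilde Q+RTS$ sidesteps this: since $S,R\colon\Schwartz'\to\Schwartz$ are smoothing, you only need $T$ to map $\Schwartz$ continuously into $\Schwartz'$, which follows immediately from $T\colon L^2\to\Sob^m$ and the continuous inclusions $\Schwartz\hookrightarrow L^2$ and $\Sob^m\hookrightarrow\Schwartz'$. Then \cref{res:Op-injective}~\refitem{item:Op-transitive} gives $RTS\in\Op(\Pseu^{-\infty})$.

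Your exposition is muddled in one place, though: you do \emph{not} need $T$ to be continuous on $\Schwartz'$, and your justification of that (``$T$ agrees with $\widetilde Q(\id+S)^{-1}$ on a dense subspace'') is circular and unnecessary. All you need is that $S$ lands in $\Schwartz\subset L^2$, where $T$ is already defined and bounded by hypothesis; that is enough for the sandwich $RTS\colon\Schwartz'\to\Schwartz$ to be continuous. Drop the attempt to extend $T$ to $\Schwartz'$ and the argument becomes clean.
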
 
\begin{proof}
	As \(P-\lambda\cdot \id\in\Op(\Pseu^m)\) is also elliptic, it suffices to show the claim for \(\lambda =0\).
	Let \(Q\in\Op(\Pseu^{-m})\) be a parametrix for \(P\) with \(PQ-\id=R\) and \(R\in\Op(\Pseu^{-\infty})\). Then for all \(u\in\Schwartz\) one has \(P^{-1}u=Qu-P^{-1}Ru\). Therefore, it suffices to show that \(P^{-1}R\colon\Schwartz'\to\Schwartz\). This is the case if \(P^{-1}\) restricts to a continuous map \(P^{-1}\colon\Schwartz\to\Schwartz\). To see this, we show that for every differential operator with polynomial coefficients \(D\), the map \(DP^{-1}\colon\Schwartz\to\Cont_0\) is continuous. By \cref{res:propertyP:uniqueRep} we may suppose \(D\in\Op(\Pseu^k)\). Choose \(s\in\RR\) with \(s>\frac{Q(\alpha)}{2}\) and \(s+k\geq m\). Then \(P^{-1}\colon L^2\to \Sob^m\) restricts by the elliptic regularity shown in \cref{res:sobolev_properties} \refitem{item:sobolev_regularity} to a map \(\Sob^{s+k-m}\to\Sob^{s+k}\) and it is continuous by the inverse operator theorem. Using the continuity of \(\Schwartz\hookrightarrow \Sob^{s+k-m}\), \(D\colon \Sob^{s+k}\to\Sob^s\) and \(\Sob^{s}\hookrightarrow \Cont_0\) from \cref{res:sobolev_properties}, the claim follows. 
\end{proof}
Let \(P=P^*\in\Op(\Pseu^m)\) be formally self-adjoint with \(m>0\). Then denote by \(P_0\) be the unbounded operator on \(L^2(\X)\) with domain \(D(P_0)=\Schwartz(\X)\) given by \(P_0=P|_{\Schwartz(\X)}\). This operator is symmetric as \(P\) is formally self-adjoint, hence \(P_0\) is closable. Analogously to \cite{Shu87}*{Theorem~26.2, Theorem~26.3} one obtains the following result. 
\begin{lemma}
	Suppose \ref{assumption:free} and \ref{assumption:homomorphisms} hold. Let \(P\in\Op(\Pseu^m)\) for \(m>0\) be formally self-adjoint and elliptic. Then the following holds for the unbounded operator \(P_0\):
	\begin{enumerate}
		\item \(P_0\) is essentially self-adjoint,
		\item its closure \(\overline{P_0}\) has compact resolvent,
		\item the spectrum of \(\overline{P_0}\) is discrete and consists of eigenvalues \((\lambda_j)_{j\in\NN}\) of finite multiplicities with \(\abs{\lambda_j}\to\infty\) as \(j\to\infty\). Moreover, there is an orthonormal basis \((\varphi_j)_{j\in\NN}\) of \(L^2(\X)\) such that \(\overline{P_0}\varphi_j=\lambda_j\varphi_j\).
	\end{enumerate}  
\end{lemma}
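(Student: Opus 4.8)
The plan is to follow the classical arguments from Shubin's book \cite{Shu87}*{Theorems~26.2 and 26.3}, now transferred to our setting using the mapping properties and the parametrix already established. Throughout, write $H = \overline{P_0}$ for the closure of $P_0 = P|_{\Schwartz(\X)}$, which exists since $P_0$ is symmetric (as $P = P^*$ is formally self-adjoint). The three claims will be proved in order.

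\textbf{Essential self-adjointness.} First I would show $\overline{P_0}$ coincides with the closed operator $\widetilde P$ on $L^2(\X)$ with domain $\Sob^m(\X)$ considered just before the lemma (the one that is closed by \cref{res:sobolev_properties}~\refitem{item:sobolev_regularity}). Since $\Schwartz(\X)$ is dense in $\Sob^m(\X)$ by \cref{res:sobolev_properties}~\refitem{item:inclusion_schwartz_tempered_distr}, and $P$ maps $\Sob^m(\X)$ continuously to $L^2(\X)$ by \refitem{item:continuity_pseudo_sob}, the graph of $\widetilde P$ is contained in the graph closure of $P_0$, so $\overline{P_0} = \widetilde P$. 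Next, to identify the adjoint $P_0^*$: if $u \in L^2(\X)$ and $v \in L^2(\X)$ satisfy $\langle u, P_0 f\rangle = \langle v, f\rangle$ for all $f \in \Schwartz(\X)$, then since $P$ is formally self-adjoint this says $Pu = v$ in $\Schwartz'(\X)$; as $v = Pu \in L^2(\X) \subseteq \Sob^0(\X)$, elliptic regularity \refitem{item:sobolev_regularity} forces $u \in \Sob^m(\X)$. Hence $D(P_0^*) = \Sob^m(\X) = D(\overline{P_0})$ and $P_0^* = \overline{P_0}$, i.e.\ $P_0$ is essentially self-adjoint.

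\textbf{Compact resolvent and discrete spectrum.} Since $H = \overline{P_0}$ is self-adjoint, its spectrum is real and non-empty, so we may pick $\lambda \notin \sigma(H)$; then $(H - \lambda)^{-1} \colon L^2(\X) \to \Sob^m(\X)$ is bounded. Composing with the compact inclusion $\Sob^m(\X) \hookrightarrow \Sob^0(\X) = L^2(\X)$ from \cref{res:sobolev_properties} (the compact embedding for $s_1 < s_2$, here with $m > 0$), we see $(H-\lambda)^{-1}$ is a compact operator on $L^2(\X)$. (Alternatively one invokes \cref{res:inverse_operator}, which gives $(H-\lambda)^{-1} \in \Op(\Pseu^{-m})$, hence compact by \cref{res:neg-order-compact} since $-m < 0$.) A self-adjoint operator with compact resolvent has discrete spectrum consisting of real eigenvalues of finite multiplicity accumulating only at $\pm\infty$, together with an orthonormal eigenbasis $(\varphi_j)_{j\in\NN}$ of $L^2(\X)$; this is the standard spectral theorem for such operators. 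Finally, $H\varphi_j = \lambda_j \varphi_j$ with $\varphi_j \in D(H) = \Sob^m(\X)$, and since $H\varphi_j = \lambda_j \varphi_j \in \Sob^m(\X) \subseteq \Sob^s(\X)$ for every $s$, repeated application of elliptic regularity \refitem{item:sobolev_regularity} gives $\varphi_j \in \bigcap_s \Sob^s(\X) = \Schwartz(\X)$ by \refitem{item:inclusion_schwartz_tempered_distr}; so in particular each $\varphi_j$ is smooth.

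\textbf{Anticipated obstacle.} The only genuinely delicate point is the identification $P_0^* = \overline{P_0}$, i.e.\ ruling out that $P_0$ has a strictly larger adjoint domain than $\Sob^m(\X)$. This is precisely where ellipticity enters, via the elliptic regularity statement \cref{res:sobolev_properties}~\refitem{item:sobolev_regularity}: one must be careful that the distributional identity $Pu = v$ obtained from testing against $\Schwartz(\X)$ is legitimately an identity in $\Schwartz'(\X)$ (it is, since $P \colon \Schwartz'(\X) \to \Schwartz'(\X)$ is continuous and $\Schwartz(\X)$ is dense in $\Schwartz'(\X)$), so that elliptic regularity applies. Everything else is a routine assembly of results already proven in the excerpt; no new estimates are needed.
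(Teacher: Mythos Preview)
Your proof is correct and uses the same toolkit as the paper (elliptic regularity, the Sobolev mapping properties, and the compact embedding), but the organization differs in two places worth noting.

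For essential self-adjointness, the paper does not first identify $D(\overline{P_0})$ with $\Sob^m(\X)$. Instead it uses the deficiency criterion: it suffices that $\ker(P_0^*\pm i)\subseteq D(\overline{P_0})$. After observing, as you do, that $D(P_0^*)=\{u\in L^2:\ Pu\in L^2\}$, the paper applies the Schwartz hypoellipticity result (\cref{res:hypoelliptic}) to the elliptic operators $P\pm i$ to conclude that any $u$ in the deficiency space already lies in $\Schwartz(\X)=D(P_0)$. Your route---showing directly that $D(\overline{P_0})=\Sob^m(\X)=D(P_0^*)$ via density of $\Schwartz(\X)$ in $\Sob^m(\X)$ and Sobolev elliptic regularity---is equally valid and has the advantage of explicitly identifying the domain of the self-adjoint extension.

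For the compact resolvent, the paper first argues that $\sigma(\overline{P_0})\neq\RR$ in order to pick a \emph{real} $\lambda_0$ in the resolvent set, and then invokes \cref{res:inverse_operator} and \cref{res:neg-order-compact}. Your argument is cleaner: self-adjointness already gives non-real $\lambda$ in the resolvent set, and then either the compact Sobolev embedding $\Sob^m(\X)\hookrightarrow L^2(\X)$ or, as you note, \cref{res:inverse_operator} together with \cref{res:neg-order-compact} yields compactness of the resolvent. The remaining spectral conclusions are drawn identically from the standard theory.
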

\begin{proof}
	As \(P_0\) is symmetric it suffices by \cite{Shu87}*{Theorem~26.1} to show that \(\ker(P_0^*\pm i\cdot \id)\subset D(\overline{P}_0)\) to deduce that \(P_0\) is essentially self-adjoint. Using \(P=P^*\), one deduces that the domain of \(P_0^*\) consists of all \(u\in L^2\) such that \(Pu\in L^2\) and that for these \((P_0)^*u=Pu\). Thus, the kernel of \(P_0^*\pm i\cdot\id\) is the space of all \(u\in L^2\) such that \((P\pm i\cdot\id)u=0\). As \(P\pm i\cdot\id\) is elliptic, \cref{res:hypoelliptic} implies \(\ker(P_0^*\pm i\cdot \id)\subset \Schwartz=D(P_0)\).
	
	Note that \(\overline{P_0}\) is essentially self-adjoint, hence, its spectrum is real. If \(\sigma(\overline {P_0})=\RR\) was true, for every~\(\lambda\in\RR\) a function \(\varphi_\lambda\in\Schwartz\) would exist with \(P_0\varphi_\lambda =\lambda\varphi_\lambda\). As \(P_0\) is symmetric, they would all be orthogonal, contradicting the separability of \(L^2\). Hence, one can choose \(\lambda_0\in\RR\setminus\sigma(\overline{P_0})\). Then \((P-\lambda_0\cdot \id)^{-1}\in \Pseu^{-m}\) by \cref{res:inverse_operator}, and extends therefore to a compact operator on \(L^2\). It is well-known that compact resolvent implies the claim on the spectrum, see for example \cite{NR10}*{Theorem~4.1.6}
\end{proof}
\section{\texorpdfstring{Comparison to other calculi on }{Comparison to other calculi on R\^{}n}}\label{sec:comparison}
In this section, we compare the calculus of the double dilation groupoid and representation groupoid to known calculi when \(G\) is the Abelian group \(\RR^n\). 
\subsection{Comparison to the calculus of Shubin and Helffer--Robert}
Recall that the calculus of Shubin and Helffer--Robert is based on the following symbol classes.
\begin{definition}[\cite{Shu87}*{Def.~23.1}, \cite{Hel84}*{Def.~1.1.1}]
	Let \(\Gamma^m\) denote the space of all \(p\in C^\infty(\RR^{2n})\) such that for all \(a,b\in\NN^n_0\) there is \(C_{ab}>0\) such that for all \((x,\xi)\in\RR^{2n}\)\begin{equation*}\abs{\partial^a_x\partial^b_\xi p(x,\xi)}\leq C_{ab} (1+\norm{x}+\norm{\xi})^{m-\abs{a}-\abs{b}}.\end{equation*}
\end{definition}
For \(p\in \Gamma^m\) the corresponding operator \(\Op(p)\colon\Schwartz(\RR^n)\to\Schwartz(\RR^n)\) is defined by
\begin{equation*}
	\Op(p)f(x)=(2\pi)^{-n}\int \E^{\I\langle y-x,\xi\rangle}p(x,\xi)f(y)\D\xi\D y.
\end{equation*}
Denote by \(\Psi^m(\RR^n)\) the class of all such pseudodifferential operators with symbol in \(\Gamma^m\). 
Note that this gives the same class of operators as the standard Kohn-Nirenberg quantization formula using \(\langle x-y,\xi\rangle=\langle y-x,-\xi\rangle\) as \(\Gamma^m\) is invariant under \((x,\xi)\mapsto(x,-\xi)\).

As a subclass, we denote by \(\Psi^m_{cl}(\RR^n)\) all pseudodifferential operators of order \(m\) which are classical in the sense that their symbol \(p\) admits an homogeneous expansion with respect to the scalings \(\lambda\cdot (x,\xi)=(\lambda x,\lambda \xi)\). This means that there are \(p_i\in\Gamma^{m-i}\) homogeneous of degree \(m-i\) for \(\norm{(x,\xi)}\geq 1\) for all \(i\in\NN_0\) such that for every \(k\in\NN_0\) one has
\begin{equation*}p-\sum_{i=0}^k p_i\in\Gamma^{m-k-1}.\end{equation*}
In this case, one writes \(p\sim\sum_{i=0}^\infty p_i\).

We shall compare \(\Psi^m_{cl}(\RR^n)\) to our operators \(\Op(\Pseu^m)\) in the case of the double dilation or representation groupoid, which coincide for \(G=\RR^n\) with the dilations \(\alpha_\lambda(x)=\beta_\lambda(x)=\lambda x\) for \(\lambda>0\). In particular, one can use the Euclidean norm as a homogeneous quasi-norm.
\begin{lemma}\label{res:lemma_rn}
	For \(\alpha=\beta\) being the usual scalings on \(G=\RR^n\), every \(P\in \Op(\Pseu^m)\) belongs to \(\Psi^m(\RR^n)\) and its symbol is given by \(\widehat \P_1\).
\end{lemma}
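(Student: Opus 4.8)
The plan is to use the oscillatory integral representation \eqref{eq:rep_as_osc_int} of $\Op(\P_1)$, which is available since the relevant action $\theta^1$ is polynomially free (in fact property \ref{assumption:free} holds for both the double dilation and the representation groupoid when $G = \RR^n$). For $G = \RR^n$ with $\alpha_\lambda(x) = \beta_\lambda(x) = \lambda x$, the group law is additive, so the orbit map is $\OrbitMap 1 x(v) = x + v$ for the double dilation groupoid (right multiplication), and one computes $\vartheta^1_v(x) = x + v$ for the representation groupoid as well (see \cref{ex:representation_groupoid_heisenberg} with trivial grading). Hence $(\OrbitMap 1 x)^{-1}(y) = y - x$ and $c = 1$, so \eqref{eq:rep_as_osc_int} becomes
\begin{equation*}
	\Op(\P_1)f(x) = \frac{1}{(2\pi)^n} \int \E^{\I \langle y - x, \xi \rangle} \widehat\P_1(x,\xi) f(y) \D y \D \xi ,
\end{equation*}
which is exactly the Kohn--Nirenberg quantization $\Op(\widehat\P_1)$ of the symbol $\widehat\P_1$, modulo the harmless sign change $\xi \mapsto -\xi$ already discussed after the definition of $\Psi^m(\RR^n)$.

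The remaining point is to check that $\widehat\P_1 \in \Gamma^m$. This is immediate from the symbol estimates in \cref{res:symbol_estimates}: for $G = \RR^n$ with the standard dilations all weights $q_j, r_j$ equal $1$, so the homogeneous quasi-norms $\norm\argument_\alpha$, $\norm\argument_\beta$ are (equivalent to) the Euclidean norm, the $\alpha$- and $\beta$-lengths $[a]_\alpha$, $[b]_\beta$ reduce to $\abs a$, $\abs b$, and \eqref{eq:symbol_estimates} restricted to $t = 1$ (ignoring $\partial_t$) gives exactly
\begin{equation*}
	\abs{\partial_\xi^a \partial_x^b \widehat\P_1(x,\xi)} \leq C_{a,b,0}(1 + \norm\xi + \norm x)^{m - \abs a - \abs b} ,
\end{equation*}
which is the defining estimate for $\Gamma^m$. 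Therefore $P = \Op(\P_1) = \Op(\widehat\P_1) \in \Psi^m(\RR^n)$ with symbol $\widehat\P_1$.

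I expect no serious obstacle here; the only things requiring a line of care are (i) verifying that the orbit map is indeed translation for both groupoids in the abelian case — for the representation groupoid this uses the explicit formula \eqref{eq:action-for-representation-groupoid}, which collapses since $\ad = 0$ and $DX = X$ when all weights are $1$, giving $\vartheta_{\exp X}(x,t) = (x + tX, t)$ and hence $\theta^1_v(x) = x + v$ — and (ii) confirming that the value of $\widehat\P_1$ is independent of the chosen homogeneous extension $\P \in \PPseu^m$, which follows because $\Op_1$ is injective under property \ref{assumption:free} (\cref{res:Op-injective}), so $\P_1$ and thus $\widehat\P_1$ are determined by $P$. One should also note that the integral in \eqref{eq:rep_as_osc_int} is to be understood as an oscillatory integral in the usual way, matching the convention for $\Op(p)$ with $p \in \Gamma^m$.
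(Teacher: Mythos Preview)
Your proposal is correct and follows exactly the same approach as the paper's proof: invoke the oscillatory integral representation \eqref{eq:rep_as_osc_int} together with $(\OrbitMap 1 x)^{-1}(y)=y-x$ in the abelian case, and then read off the required $\Gamma^m$-estimates from \cref{res:symbol_estimates} at $t=1$. The paper's proof is a one-liner citing precisely these two ingredients; you have simply spelled out the details (including the verification for both groupoids and the well-definedness of $\widehat\P_1$ via injectivity of $\Op_1$), which is fine.
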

\begin{proof}
	This is clear from the representation \eqref{eq:rep_as_osc_int} where in this case \((\OrbitMap{1}{x})^{-1}(y)=y-x\)   and the symbol estimates in \cref{res:symbol_estimates}.
\end{proof}
\begin{theorem}\label{res:comparison}
	For \(\alpha=\beta\) being the usual scalings on \(G=\RR^n\), the classes \(\Op(\Pseu^m)\) (for the double dilation groupoid and the representation groupoid) and \(\Psi^m_{cl}(\RR^n)\) coincide.
\end{theorem}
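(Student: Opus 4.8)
The plan is to prove the two inclusions $\Op(\Pseu^m) \subseteq \Psi^m_{cl}(\RR^n)$ and $\Psi^m_{cl}(\RR^n) \subseteq \Op(\Pseu^m)$ separately, using the Fourier-transform description of pseudodifferential distributions from \cref{res:equivalence-pseudo-symbol} and the identification of the symbol via \cref{res:lemma_rn}. For $G = \RR^n$ with $\alpha = \beta$ the standard scalings, the groupoid is the double dilation groupoid (which agrees with the representation groupoid in this case), the action $\theta^1(x,v) = x + v$ is polynomially free with $\OrbitMap 1 x(v) = x + v$, so $(\OrbitMap 1 x)^{-1}(y) = y - x$ and $\Op_1$ is injective; moreover the zoom action on $\X \times \RR \times \lie g^*$ is $\widehat\tau_\lambda(x,t,\xi) = (\lambda x, \lambda t, \lambda \xi)$, i.e.\ the full isotropic scaling in which classicality of Shubin symbols is phrased.

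\emph{First inclusion.} Let $P = \Op(\P_1) \in \Op(\Pseu^m)$ with extension $\P \in \PPseu^m$. By \cref{res:lemma_rn}, $P$ has symbol $\widehat\P_1 \in \Gamma^m(\RR^{2n})$, so $P \in \Psi^m(\RR^n)$; it remains to produce a classical homogeneous expansion. Here I would invoke \cref{res:equivalence-pseudo-symbol}\refitem{item:widehat2}: there is an $m$-homogeneous $P_0 \in \Smooth(\X \times \RR \times \lie g^* \setminus \{0\})$ and a Schwartz $f$ with $\widehat\P = \chi \cdot P_0 + f$. Restricting to $t = 1$ and Taylor-expanding the cutoff and $P_0$ around $t = 1$ — or, more cleanly, applying asymptotic completeness (\cref{res:asymptotic_completeness}) to a sequence of $m-k$-homogeneous extensions obtained from the $t$-Taylor coefficients of $\widehat\P$ at $t = 0$ — yields functions $p_k := (\widehat{\P^k})_1 \in \Gamma^{m-k}$ which are homogeneous of degree $m - k$ for $\norm{(x,\xi)} \geq 1$ (the homogeneity in $(x,\xi,t)$ restricting to $t = 1$), with $\widehat\P_1 - \sum_{k=0}^{K} p_k \in \Gamma^{m-K-1}$ by the error estimate of \cref{res:asymptotic_completeness}. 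That is exactly the statement $\widehat\P_1 \sim \sum p_k$, so $P \in \Psi^m_{cl}(\RR^n)$.

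\emph{Second inclusion.} Conversely, given $P \in \Psi^m_{cl}(\RR^n)$ with symbol $p \sim \sum p_i$, I would build a pseudodifferential distribution realizing it. Take the homogeneous components $p_i \in \Gamma^{m-i}$ homogeneous of degree $m-i$ for $\norm{(x,\xi)} \geq 1$; define $m$-homogeneous functions $P_i$ on $\X \times \RR \times \lie g^* \setminus \{0\}$ by $P_i(x,t,\xi) = t^i \,\tilde p_i(x/t, \xi/t)$ for $t > 0$ (extended by homogeneity to $t \neq 0$ and, after multiplication by the cutoff $\omega(\frac1t\norm{(x,\xi)})$ as in the proof of \cref{res:short_exact_sequence}, to $t = 0$), where $\tilde p_i$ is the exact-homogeneous extension of $p_i$. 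Each $P_i$ is the Fourier transform of some $\P^i \in \PPseu^{m-i}$ by \cref{res:equivalence-pseudo-symbol}, and by construction $(\widehat{\P^i})_1$ agrees with $p_i$ up to Schwartz. Using asymptotic completeness, pick $\P \in \PPseu^m$ with $\P \sim \sum_i t^i \P^i$; then $\widehat\P_1 \sim \sum_i p_i$ in $\Gamma^m$, hence $\widehat\P_1 - p \in \bigcap_k \Gamma^{m-k} = \Schwartz(\RR^{2n})$. Adding the Schwartz correction (which lies in $\Pseu^{-\infty} = \Schwartz(\X \rtimes^1 G)$ and, under $\Op$, gives a smoothing operator with Schwartz kernel) we get $P = \Op(\P_1) + (\text{smoothing})$, and since $\Op(\Pseu^m) + \Op(\Pseu^{-\infty}) = \Op(\Pseu^m)$ this shows $P \in \Op(\Pseu^m)$. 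One checks the quantization conventions match: by \eqref{eq:rep_as_osc_int} and \cref{res:lemma_rn}, $\Op_1(\P)f(x) = (2\pi)^{-n}\int \E^{\I\langle y - x,\xi\rangle}\widehat\P_1(x,\xi) f(y)\,\D y\,\D\xi$, which is precisely the Shubin quantization of $\widehat\P_1$.

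\emph{Main obstacle.} The genuinely delicate point is the precise matching between ``classical homogeneous expansion of a Shubin symbol'' (homogeneity for $\norm{(x,\xi)} \geq 1$, expansion modulo $\Gamma^{m-k}$) and ``essential homogeneity on the tangent groupoid'' (exact homogeneity of $P_i$ on $\X \times \RR \times \lie g^* \setminus \{0\}$, with the deformation parameter $t$ tracking the lower-order terms). Concretely, one must verify that the $t$-expansion of an essentially homogeneous symbol on $\X\times\RR\times\lie g^*$, restricted to $t=1$, reproduces exactly the $(x,\xi)$-homogeneous expansion, and conversely that $t^i \tilde p_i(x/t,\xi/t)$ (suitably cut off near $t=0$) is genuinely smooth and essentially homogeneous of the right order — this is the same bookkeeping that appears in \cref{res:short_exact_sequence} and \cref{res:asymptotic_completeness}, so it should go through, but it is where all the care is needed. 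The symbol and kernel estimates (mapping the operator into $\Psi^m$) are routine given \cref{res:lemma_rn,res:symbol_estimates}.
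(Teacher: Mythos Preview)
Your overall strategy matches the paper's: both inclusions are proved by converting between essential homogeneity on the tangent groupoid and the classical homogeneous expansion of Shubin symbols, using \cref{res:lemma_rn}, the split of \cref{res:short_exact_sequence}, and asymptotic completeness. However, two points need correction.

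\textbf{The explicit extension formula is wrong.} You write $P_i(x,t,\xi) = t^i\,\tilde p_i(x/t,\xi/t)$ and claim this is $m$-homogeneous. Under $(x,t,\xi)\mapsto(\lambda x,\lambda t,\lambda\xi)$ this scales by $\lambda^i$, not $\lambda^m$ or $\lambda^{m-i}$; so it is neither what you claim nor what you need for $\P^i\in\PPseu^{m-i}$. The paper avoids any such formula: it simply observes that each $p_i$ is essentially $(m-i)$-homogeneous in $(x,\xi)$, so $\widecheck p_i\in\Ess^{m-i}$, and then applies the split $r_{m-i}$ from \cref{res:short_exact_sequence}. The crucial property of that split --- stated there but not used in your write-up --- is that $r_{m-i}(u)_1=u$ under the canonical identification of $\X\rtimes^1G$ with $\X\rtimes^0G$. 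This is precisely what guarantees $\widehat{\Q^i_1}=p_i$ on the nose, after which asymptotic completeness and the decomposition $P-Q=(P-\sum_{i<j}Q_i)+(\sum_{i<j}Q_i-Q)$ finish the argument.

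\textbf{The first inclusion needs the same mechanism.} Your ``Taylor coefficients of $\widehat\P$ at $t=0$'' sketch is morally correct but does not by itself produce a homogeneous expansion of $\widehat\P_1$: knowing that $\partial_t^k\widehat\P|_{t=0}$ is essentially $(m-k)$-homogeneous does not immediately control $\widehat\P_1-\sum p_k$. The paper's argument is again iterative via the split: from $\widehat\P_0$ extract the homogeneous $p_0$, use $r_m$ to build $\Q^0\in\PPseu^m$ with $[\Q^0_0]=[\P_0]$ \emph{and} $\widehat{\Q^0_1}=p_0$; then $\P_1-\Q^0_1\in\Pseu^{m-1}$, so $\widehat\P_1-p_0\in\Gamma^{m-1}$ by \cref{res:lemma_rn}. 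Repeat on the remainder. The property $r_m(u)_1=u$ is exactly what turns ``same principal cosymbol at $t=0$'' into ``same leading homogeneous term at $t=1$'', which is the bridge you flagged as the main obstacle but did not actually cross.
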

\begin{proof}
	Suppose first that the symbol \(p\) of \(P\in\Psi^m_{cl}(\RR^n)\) has the homogeneous expansion \(p\sim \sum_{i=0}^\infty p_i\) with \(p_i\) being homogeneous of degree \(m-i\) for \(\norm{(x,\xi)}\geq 1\). For every \(i\in\NN\) the function \(p_i\in \Smooth(\RR^{2n})\) is essentially homogeneous of degree \(m-i\). In particular,  \(\widecheck p_i=\mathcal F^{-1}_{\xi\to v}{p_i}\in \Ess ^{m-i}\). Using the linear split \(r_i\) in \cref{res:short_exact_sequence} there is a \(\Q^i_1\in\Pseu^{m-i}\) with \(\sigma_i(\Q^i_1)=[\widecheck p_i]\) and \(\widehat\Q^i_1= p_i\). By the asymptotic completeness shown in \cref{res:asymptotic_completeness}, there is \(\Q_1\in\Pseu^m\) with \(\Q_1\sim \sum_{i=0}^\infty\Q^i_1\). Let \(Q=\Op(\Q_1)\) and \(Q_i=\Op(\Q^i_1)\). We show that the Schwartz kernel of \(P-Q\) is in \(\Schwartz(\RR^n\times\RR^n)\), then \(P\in\Op(\Pseu^m)\) follows. It suffices to verify \(P-Q\in\Psi^k(\RR^n)\) for every integer \(k\leq m\) by \cite{Shu87}*{p.~179}. One writes
	\begin{equation*}
		P-Q= \left(P-\sum_{i=0}^{m-k-1}Q_i\right)+\left(\sum_{i=0}^{m-k-1}Q_i-Q\right).
	\end{equation*}
	The right term is by definition of the asymptotic sum in \(\Op(\Pseu^{k})\) and, therefore, in \(\Psi^k(\RR^n)\) by \cref{res:lemma_rn}. The left term is also contained in \(\Psi^k(\RR^n)\) as its symbol is given by \(p-\sum_{i=0}^{m-k-1}p_i\in\Gamma^{k}\).
	
	Conversely, let \(P=\Op(\P_1)\in\Op(\Pseu^m)\) and let \(\P\) be an essentially homogeneous extension. By \cref{res:lemma_rn} it suffices to show that \(\widehat \P_1\) admits a homogeneous expansion. 
	
	We proceed by iteratively constructing the terms in the homogeneous expansion. As \(\widehat{\P}_0\) is essentially homogeneous of degree \(m\), there is by \cref{res:equivalence-pseudo-symbol} a function \(p_0\) which is \(m\)-homogeneous for \(\norm{(x,\xi)}\geq 1\) such that \(\widehat{\P}_0-p_0\in\Schwartz(\RR^{2n})\). Choose, using the split in \cref{res:short_exact_sequence}, \(\Q^0\in\PPseu^m\) with principal cosymbol \([\P_0]\) and  \(\widehat\Q^0_1=p_0\). We set \(Q_0=\Op(\Q^0_1)\). Note that \(\sigma_m(\P_1-\Q^0_1)=0\) and therefore \(P-Q_0\in\Op(\Pseu^{m-1})\). In particular, by \cref{res:lemma_rn} one has \(\widehat{\P}_1-p_0\in \Gamma^{m-1}\). 
	
	Suppose now, we found for \(i=0,\ldots,k\) smooth functions \(p_i\) homogeneous of degree \(m-i\) outside \(\norm{(x,\xi)}\geq 1\)  and \(\Q^i\in\PPseu^{m-i}\) with \(\widehat\Q^i_1=p_i\) such that \(P-\sum_{i=0}^k \Op(\Q^i_1)\in\Op(\Pseu^{m-k-1})\), so that by \cref{res:lemma_rn} \(\widehat{\P}_1-\sum_{i=0}^k p_i\in\Gamma^{m-k-1}\). Extend \(\P_1-\sum_{i=0}^k \Q^i_1\in\Pseu^{m-k-1}\) to an element \(\mathbb A_{k+1}\in\PPseu^{m-k-1}\). Let \(p_{k+1}\) be the \((m-k-1)\)-homogeneous function outside \(\norm{(x,\xi)}\geq 1\) such that \( {(\widehat {\mathbb A}_{k+1})}_0-p_{k+1}\in\Schwartz(\RR^{2n})\). Use the split in \cref{res:short_exact_sequence} again to extend \(\widecheck p_{k+1}\) to \(\mathbb Q^{k+1}\) such that \(\widehat\Q^{k+1}_1=p_{k+1}\). Then \(P-\sum_{i=0}^{k+1} \Op(Q^i_1)\in\Op(\Pseu^{m-k-2})\) and \(\widehat{\P}_1-\sum_{i=0}^{k+1} p_i\in\Gamma^{m-k-2}\). This shows that \(\widehat \P_1\) has the homogeneous expansion \(\widehat \P_1\sim \sum_{i=0}^\infty p_i\) and is, hence, classical. 
\end{proof}

\begin{remark}Suppose \(P\in\Psi^\ell_{cl}(\RR^n)\) and \(Q\in\Psi^m_{cl}(\RR^n)\) are classical Shubin pseudodifferential operators on \(G=\RR^n\). Then their commutator satisfies \([P,Q]=PQ-QP\in\Psi^{\ell+m-2}_{cl}(\RR^n)\). In the groupoid calculus, this corresponds to the appearance of \(t^2\) in the definition of the convolution. Namely, in this case one has for \(f,g\in\Schwartz(\mathcal G)\)
	\begin{equation*}
		f*g(x,t,v)=\int f(x,t,w)g\left(x+t^2w,t,v-w\right)\D w .
	\end{equation*}
Hence, \([\P_1,\Q_1]\in\Pseu^{\ell+m-2}(\RR^n)\) can be observed from the fact that \(\P*\Q-\Q*\P\) vanishes to order \(2\) at \(t=0\). However, this argument only works on the Abelian group \(G=\RR^n\) as otherwise the convolution at \(t=0\) is noncommutative. 
\end{remark}

\subsection{Comparison to anisotropic calculi}
The anisotropic calculus described in \cite{BN03} or \cite{NR10}*{Sections~2.5, 2.7} is a variant of the calculus of Shubin and Helffer--Robert, where one equips the phase space \(\RR^{2n}\) with different weights. Let \(q_1,\ldots,q_n,r_1,\ldots,r_n,\) be positive integers and equip \(\RR^n\) with the dilations \begin{equation*}\alpha_\lambda(x_1,\ldots,x_n)=(\lambda^{q_1}x_1,\ldots,\lambda^{q_n}x_n)\quad\text{ and }\beta_\lambda(x_1,\ldots,x_n)=(\lambda^{r_1}x_1,\ldots,\lambda^{r_n}x_n)\end{equation*}
and fix corresponding homogeneous quasi-norms \(\norm{\,\cdot\,}_\alpha\) and \(\norm{\,\cdot\,}_\beta\).
\begin{definition}[\cite{BN03}*{Definition~2.1}]
	Let \(\Gamma_{\alpha,\beta}^m\) denote the space of all \(p\in C^\infty(\RR^{2n})\) such that for all \(a,b\in\NN^n_0\) there is \(C_{\alpha\beta}>0\) such that for all \((x,\xi)\in\RR^{2n}\)\begin{equation*}\abs{\partial^a_\xi\partial^b_x p(x,\xi)}\leq C_{ab} (1+\norm{\xi}_\alpha+\norm{x}_\beta)^{m-[a]_\alpha-[b]_\beta}.\end{equation*}
\end{definition}
Similar to before, we use the notation \(\Psi^m_{\alpha,\beta,cl}(\RR^n)\) for pseudodifferential operators with symbol in \(\Gamma^m_{\alpha,\beta}\) admitting a homogeneous expansion with respect to \(\lambda\cdot(x,\xi)=(\beta_\lambda(x),\alpha_\lambda(\xi))\).
\begin{remark}
	In \cite{BN03}*{Definition~2.1} actually positive rational weights \(q_1,\ldots,q_n,r_1,\ldots,r_n\) are permitted.  In this case, let \(M\in\NN\) be such that \(q_jM\in\NN\) and \(r_jM\in\NN\) for all \(j=1,\ldots,n\) and let 
	\begin{equation*}\widetilde\alpha_\lambda(x_1,\ldots,x_n)=(\lambda^{q_1M}x_1,\ldots,\lambda^{q_nM}x_n)\quad\text{ and }\widetilde{\beta}_\lambda(x_1,\ldots,x_n)=(\lambda^{r_1M}x_1,\ldots,\lambda^{r_nM}x_n).\end{equation*}
	Then \(p\in\Gamma^m_{\alpha,\beta}\) if and only if \(p\in\Gamma^{mM}_{\widetilde\alpha,\widetilde\beta}\). Hence, for the calculus, we may assume that the weights are integers.
\end{remark}
\begin{example} Let	\(q,r\in\NN\) and consider the anharmonic oscillators \(P_{q,r}=(-\Delta)^q+\norm{x}^{2r}\), where \(\norm{\,\cdot\,}\) denotes the Euclidean norm. Equip \(G=\RR^n\) with the dilations \(\alpha_\lambda(x)=\lambda^rx\) and \(\beta_\lambda(x)=\lambda^qx\). Then \(P_{q,r}\in\Psi^{2qr}_{\alpha,\beta,cl}\) with respect to these dilations and is elliptic. 
\end{example}
Using the double dilation groupoid for the dilations \(\alpha,\beta\) on the Abelian group \(G=\RR^n\) and adapting the proof of \cref{res:comparison} to these dilations we obtain the following result. 
\begin{theorem}\label{res:comparison:2}
	For integer dilations \(\alpha,\beta\) on \(\RR^n\), the classes \(\Op(\Pseu^m)\) for the double dilation groupoid and the anisotropic classes \(\Psi^m_{\alpha,\beta,cl}(\RR^n)\) coincide.
\end{theorem}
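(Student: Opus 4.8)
The statement is the anisotropic analogue of \cref{res:comparison}, so the plan is to mirror that proof line by line, replacing the isotropic scalings $\lambda\cdot(x,\xi)=(\lambda x,\lambda\xi)$ by the dilations $\lambda\cdot(x,\xi)=(\beta_\lambda(x),\alpha_\lambda(\xi))$ throughout. First I would record the anisotropic version of \cref{res:lemma_rn}: for the double dilation groupoid on $G=\RR^n$ with dilations $\alpha,\beta$, one has $\theta^1(x,v)=x+v$, hence $(\OrbitMap1x)^{-1}(y)=y-x$, so the representation \eqref{eq:rep_as_osc_int} gives $\Op(\P_1)f(x)=(2\pi)^{-n}\int\E^{\I\langle y-x,\xi\rangle}\widehat\P_1(x,\xi)f(y)\,\D y\,\D\xi$; together with the symbol estimates \cref{res:symbol_estimates} (where the homogeneous quasi-norms are now $\norm\argument_\alpha$ and $\norm\argument_\beta$) this shows $\Op(\Pseu^m)\subseteq\Psi^m_{\alpha,\beta}(\RR^n)$ with symbol $\widehat\P_1$. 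I would also note that the integer-weight reduction from the remark before the theorem lets us assume $q_j,r_j\in\NN$, which is exactly the setting of \cref{assumption} for the double dilation groupoid.

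For the inclusion $\Psi^m_{\alpha,\beta,cl}(\RR^n)\subseteq\Op(\Pseu^m)$: given $P$ with symbol $p\sim\sum_{i\ge0}p_i$, each $p_i$ is homogeneous of degree $m-i$ for $\norm{(x,\xi)}_{\beta,\alpha}\ge1$ with respect to $(\beta_\lambda,\alpha_\lambda)$, hence essentially homogeneous, so $\widecheck p_i=\mathcal F^{-1}_{\xi\to v}p_i\in\Ess^{m-i}$. Using the linear split $r_{m-i}$ of \cref{res:short_exact_sequence} I get $\Q^i_1\in\Pseu^{m-i}$ with $\widehat{\Q^i_1}=p_i$, then take an asymptotic sum $\Q_1\sim\sum_i\Q^i_1\in\Pseu^m$ via \cref{res:asymptotic_completeness}. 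Writing $P-\Op(\Q_1)=\bigl(P-\sum_{i=0}^{N}\Op(\Q^i_1)\bigr)+\bigl(\sum_{i=0}^{N}\Op(\Q^i_1)-\Op(\Q_1)\bigr)$ and using the anisotropic \cref{res:lemma_rn}, both summands lie in $\Psi^{k}_{\alpha,\beta}(\RR^n)$ for any integer $k$, hence $P-\Op(\Q_1)$ has Schwartz kernel (the anisotropic analogue of \cite{Shu87}*{p.~179}, i.e.\ $\bigcap_k\Psi^k_{\alpha,\beta}=\Schwartz$, which follows from the symbol estimates just as in the isotropic case), so $P\in\Op(\Pseu^m)$.

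Conversely, for $P=\Op(\P_1)\in\Op(\Pseu^m)$ with extension $\P\in\PPseu^m$, I build the homogeneous expansion iteratively: $\widehat\P_0$ is essentially $m$-homogeneous, so by (the anisotropic analogue of) \cref{res:equivalence-pseudo-symbol}\refitem{item:widehat2} there is $p_0$, homogeneous of degree $m$ outside a compact set, with $\widehat\P_0-p_0\in\Schwartz$; split it back to $\Q^0\in\PPseu^m$ with $\widehat{\Q^0_1}=p_0$; then $\sigma_m(\P_1-\Q^0_1)=0$ gives $P-\Op(\Q^0_1)\in\Op(\Pseu^{m-1})$, hence $\widehat\P_1-p_0\in\Gamma^{m-1}_{\alpha,\beta}$ by the anisotropic \cref{res:lemma_rn}. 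Iterating (extend $\P_1-\sum_{i\le k}\Q^i_1\in\Pseu^{m-k-1}$, peel off $p_{k+1}$, etc.) produces $\widehat\P_1\sim\sum_i p_i$, so $P$ is classical.

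The steps are all routine once the homogeneity bookkeeping is set up; the only genuine point to verify carefully is that \cref{res:equivalence-pseudo-symbol} (originally quoted from \cites{Tay84,BG88} for dilations of the form $(\beta_\lambda,\lambda,\alpha_\lambda)$ on $\X\times\RR\times\lie g^*$) still yields, at $t=0$, the statement that an essentially $m$-homogeneous $\widehat u$ differs by a Schwartz function from a function that is genuinely $m$-homogeneous (w.r.t.\ $(\beta_\lambda,\alpha_\lambda)$) outside a compact set — this is what matches the definition of "classical" for the anisotropic calculus and is the hinge connecting the two notions of expansion. This is exactly the content of the remark after \cref{res:singular_support} together with the anisotropic version of \cite{Shu87}*{Lemma~2.2}, so I expect no real obstacle, only the need to state explicitly that all the cited $\RR^n$-results (Shubin's $\bigcap_k\Psi^k=\Schwartz$, the homogeneous-expansion lemma) go through verbatim for general integer dilations because their proofs only use the symbol estimates, not isotropy.
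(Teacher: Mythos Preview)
Your proposal is correct and matches the paper's approach exactly: the paper's entire proof is the one-line remark that the result follows by ``adapting the proof of \cref{res:comparison} to these dilations,'' and what you have written is precisely that adaptation spelled out in detail.
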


\begin{appendix}
	
\section{(Essentially) homogeneous distributions on graded groups}\label{sec:homogeneous_distributions}
Distributions on a graded group \(G\) which interact with the dilations nicely naturally play an important role in the calculi on graded groups. 
While \cites{Tay84, vEY19} use essentially homogeneous distributions, see \cref{def:ess_graded_group},
the calculus of \cite{CGGP92} is based on homogeneous distributions (and certain distributions with logarithmic singularities).
The purpose of this appendix is to summarize known results in a concise form.
Applying these results to our setting of essentially homogeneous distributions gives the following two statements used in the main text: 

\begin{theorem}\label{res:rockland:ess-hom}
	Let \(G\) be a graded Lie group. An element \(u \in \Sigma^m(G)\) for \(m\in\RR\) has a left inverse in \(\Sigma^{-m}(G)\) if and only if \(u\) satisfies the Rockland condition (\Cref{def:rockland:general}).
	
	 If \(u_P\) is the distribution associated to a left-invariant differential operator \(P\) as in \cref{ex:diffop-as-hom-distr}, the Rockland condition for \(u_P\) holds if and only if \(P\) satisfies the Rockland condition for differential operators (\cref{def:rockland-cond-diff}).
\end{theorem}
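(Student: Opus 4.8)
The plan is to deduce both assertions from the work of Christ--Geller--G\l owacki--Polin \cite{CGGP92}, once our setup of essentially homogeneous distributions modulo Schwartz functions has been translated into their framework of (approximately) homogeneous convolution kernels. So the first task is to set up this dictionary carefully; after that both statements are short.

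Concretely, I would first invoke the identification of the graded $^*$-algebra $\bigoplus_{m\in\RR}\Sigma^m(G)$ with the convolution algebra of kernels of \cite{CGGP92} (genuinely homogeneous distributions at the non-exceptional orders, and homogeneous-plus-logarithmic representatives at the exceptional orders $m\in -Q(\alpha)-\NN_0$, where a homogeneous extension off $\simpleset 0$ need not exist); this is the content of \cref{res:equivalence_elliptic_cggp}. Under this identification convolution goes to convolution, the class $[\delta_0]=[u_{\id}]\in\Sigma^0(G)$ of the convolution unit goes to their unit, the two smoothing ideals both correspond to $\Schwartz(G)$, and the Rockland condition of \cref{def:rockland:general} for a class $u$ — injectivity of $\pi(u)$ on $\mathcal H_\pi^\infty$ for all $\pi\in\widehat G\setminus\simpleset{\pi_{\mathrm{triv}}}$ — is exactly their Rockland condition. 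Here one uses that $\Schwartz(G)$ is a two-sided ideal in $\ess(G)$ (so that the quotient algebra, and the notion of left inverse modulo $\Schwartz(G)$, make sense), and that every $u\in\ess^m(G)$ has singular support in $\simpleset 0$, so that modulo $\Schwartz(G)$ it really is one of their kernels.

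Granting this, the first assertion is a restatement of \cite{CGGP92}*{Theorem~6.2}: a kernel of order $m$ admits a left inverse, equivalently a left parametrix, of order $-m$ in the algebra if and only if it satisfies the Rockland condition. One only has to unwind the notion of left inverse in our quotient: $[v]*[u]=[\delta_0]$ in $\Sigma(G)$ means $v*u-\delta_0\in\Schwartz(G)$, i.e.\ $v$ is a left parametrix for $u$, which is precisely the statement in \cite{CGGP92}. For the second assertion, let $P\in\lie U^m(\lie g)$ and $u_P\in\ess^m(G)$ be as in \cref{ex:diffop-as-hom-distr}, so $\langle u_P,f\rangle=Pf(0)$. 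Then $u_P$ is genuinely $m$-homogeneous, and for an irreducible unitary $\pi$ the operator $\pi(u_P)$ obtained by the usual action of a compactly supported distribution in a representation is computed, using $X f(g)=\frac{\D}{\D s}\big|_{s=0}f(g\exp(sX))$, by $\pi(u_P)v=P_g\bigl(\pi(g)v\bigr)\big|_{g=e}=\D\pi(P)v$ for every smooth vector $v\in\mathcal H_\pi^\infty$ (up to the fixed transpose convention relating left-invariant operators to convolution kernels, which is immaterial here since it preserves injectivity on $\mathcal H_\pi^\infty$). Hence the Rockland condition of \cref{def:rockland:general} for $u_P$ coincides verbatim with the Rockland condition of \cref{def:rockland-cond-diff} for $P$.

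The main obstacle is the dictionary step: one must match homogeneity degrees including the exceptional orders (where the representative carries a logarithmic singularity rather than being homogeneous), identify the smoothing ideal of \cite{CGGP92} with $\Schwartz(G)$, and check that the abstract Rockland condition used here is literally the one in \cite{CGGP92} under the identification. Reconciling one-sided with two-sided inverses is already contained in \cite{CGGP92}*{Theorem~6.2}, so once the identification is in place the theorem is immediate, the only remaining input being the one-line computation of $\pi(u_P)$ above.
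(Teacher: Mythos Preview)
Your proposal is correct and follows essentially the same route as the paper: use the $^*$-isomorphism $\Phi_\infty\colon \Sigma(G)\to\Op(\mathbf K)$ of \cref{res:equivalence_elliptic_cggp} to transport the question to the CGGP framework, then invoke \cite{CGGP92}*{Theorem~6.2} for the first assertion, and identify $\overline{\pi(\Op(u_P))}$ with $\D\pi(P)$ on smooth vectors for the second (the paper cites \cite{Pon08}*{Remark~3.3.7} for this last step, while you compute it directly). The only cosmetic point is that in the paper's setup the Rockland condition of \cref{def:rockland:general} is literally stated for $\Op(u_\infty)\in\Op(\mathbf K^m)$ rather than for the class $[u]\in\Sigma^m(G)$, so the dictionary step is already absorbed into the definition.
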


\begin{proposition}\label{res:c-star-and-psi-star-algebra}
	The algebra \(\Sigma^0(G)\) has a \(C^*\)-closure denoted by \(C^*(\Sigma^0 G)\) and \(\Sigma^0(G)\) is closed under holomorphic functional calculus.
\end{proposition}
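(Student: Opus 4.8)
\textbf{Proof strategy for \cref{res:c-star-and-psi-star-algebra}.}
The plan is to realize $\Sigma^0(G)$ as a $^*$-subalgebra of a suitable $C^*$-algebra and then invoke a standard ``smooth subalgebra'' (spectral invariance) argument. First I would fix a faithful $^*$-representation: the most natural candidate is to let $\Sigma^0(G)$ act on $L^2(G)$ (or, equivalently, on the Hilbert module over $C^*(G)$ given by the regular representation) by $u \mapsto \bigl(f \mapsto u * f\bigr)$. That this is well-defined and bounded is exactly the content of \cref{res:order0_bounded} applied to the special case $\X = \simpleset 0$ (so that the Shubin tangent groupoid degenerates to $G$ itself and $\X^* \rtimes^0 G = G$, $\Ess^0(G) = \ess^0(G)$, $\Symb^0 = \Sigma^0(G)$); indeed that theorem shows every element of $\Sigma^0(G)$ extends to a two-sided multiplier of $C^*(G)$, i.e. $\Sigma^0(G) \hookrightarrow M(C^*(G))$ as a $^*$-subalgebra. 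Since the involution on $\Sigma^0(G)$ agrees with the adjoint in $M(C^*(G))$ (by \cref{res:involution} / the compatibility used in \cref{res:equivalence_elliptic_cggp}), we may simply define $C^*(\Sigma^0 G)$ to be the operator-norm closure of the image of $\Sigma^0(G)$ in $M(C^*(G))$. This disposes of the first half of the statement.

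For the spectral invariance, the key step is to show that if $u \in \Sigma^0(G)$ is invertible in $C^*(\Sigma^0 G)$ with inverse an operator $T$, then $T$ already lies in $\Sigma^0(G)$. The engine here should be the Rockland-type characterization of invertibility from \cite{CGGP92} together with \cref{res:rockland:ess-hom}: an element $u \in \Sigma^0(G)$ has a two-sided inverse \emph{within} $\Sigma^0(G)$ if and only if $u$ and $u^*$ satisfy the Rockland condition on $G$. So I would argue: if $u$ is invertible in the $C^*$-closure, then $u$ is invertible in every irreducible representation $\pi \in \widehat G \setminus \simpleset{\pi_{\mathrm{triv}}}$ (these extend to the $C^*$-algebra and send $u$ to a bounded invertible operator), hence in particular $\D\pi(u)$ and $\D\pi(u^*)$ are injective on smooth vectors — this is the Rockland condition. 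By \cref{res:rockland:ess-hom}, $u$ (and symmetrically $u^*$) has a left inverse in $\Sigma^0(G)$, and a standard argument (left inverse of $u$ plus right inverse of $u$, or left inverses of both $u$ and $u^*$) upgrades this to a genuine two-sided inverse $v \in \Sigma^0(G)$. By uniqueness of inverses in $M(C^*(G))$, $v = T$, so $T \in \Sigma^0(G)$.

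From inverse-closedness one deduces closedness under holomorphic functional calculus in the usual way: for $a \in \Sigma^0(G)$ and $\lambda$ outside the spectrum of $a$ computed in $C^*(\Sigma^0 G)$, the resolvent $(\lambda - a)^{-1}$ lies in $\Sigma^0(G)$ by the previous paragraph; hence the spectrum of $a$ in $\Sigma^0(G)$ (viewed, say, after unitization) equals its spectrum in the $C^*$-closure, and for any function $h$ holomorphic on a neighbourhood of this spectrum the Cauchy integral $h(a) = \tfrac{1}{2\pi\I}\oint h(\lambda)(\lambda - a)^{-1}\,\D\lambda$ can be taken with values in $\Sigma^0(G)$ — one needs $\Sigma^0(G)$ (or rather a suitable Fréchet topology on it, inherited e.g. via the symbol estimates of \cref{res:symbol_estimates} with $\X$ trivial) to be a complete locally convex $^*$-algebra in which this operator-valued integral converges, which follows from the symbol-estimate seminorms.

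\textbf{Main obstacle.} The substantive point is the inverse-closedness step, and within it the reduction of ``$C^*$-invertible'' to ``satisfies the Rockland condition'': one must be careful that invertibility in the $C^*$-closure really does force injectivity of $\D\pi(u)$ on \emph{smooth} vectors (not merely non-vanishing on the Hilbert-space completion), and that \cref{res:rockland:ess-hom} then genuinely produces an inverse inside $\Sigma^0(G)$ rather than in some larger distribution space. The cleanest route is probably to cite \cite{CGGP92}*{Theorem~6.1, Theorem~6.2} directly at the level of homogeneous distributions and transport the statement through the isomorphism $\Sigma^m(G) \cong$ (relevant space in \cite{CGGP92}) already invoked in \cref{res:existence-elliptic} and \cref{res:rockland:ess-hom}; a secondary nuisance is pinning down the Fréchet topology on $\Sigma^0(G)$ needed to make the holomorphic functional calculus integral converge, but the symbol estimates \eqref{eq:symbol_estimates} provide the required seminorms.
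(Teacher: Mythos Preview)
Your proposal has a genuine circularity. You invoke \cref{res:order0_bounded} with $\X=\simpleset 0$ to obtain the faithful bounded $^*$-representation of $\Sigma^0(G)$, but the proof of \cref{res:order0_bounded} in the order-$0$ case \emph{uses} \cref{res:c-star-and-psi-star-algebra}: it needs the $C^*$-closure of $\Symb^0$ to form $\sqrt{C\cdot 1-\sigma_0(\P_1)^**\sigma_0(\P_1)}$ and it needs closure under holomorphic functional calculus to know that this square root lies back in $\Symb^0$. So the first step of your plan already presupposes the statement you are trying to prove.

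The paper avoids this by working on the $\Op(\mathbf K^0)$ side of the isomorphism $\Phi_\infty$ from \cref{res:equivalence_elliptic_cggp} and appealing to classical facts: $L^2$-boundedness of order-zero homogeneous convolution kernels comes from writing $v=c\delta_0+\mathrm{PV}(K)$ and citing \cite{KS71} or \cite{Goo80}, which gives the embedding $\Op(\mathbf K^0)\hookrightarrow\Bound(L^2G)$ and hence the $C^*$-closure without any reference to the calculus built in \cref{sec:calculus}. Spectral invariance is then taken directly from \cite{CGGP92}*{Theorem~5.1}, and the Fr\'echet algebra structure from \cite{FKF20} and \cite{FR16}; the conclusion follows from Gramsch's $\Psi^*$-algebra formalism \cite{Gra84}, which packages the passage from spectral invariance to closure under holomorphic functional calculus. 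Your Rockland-based route to spectral invariance (invertible in the $C^*$-closure $\Rightarrow$ $\pi(u)$ invertible for all $\pi\neq\pi_{\mathrm{triv}}$ $\Rightarrow$ Rockland $\Rightarrow$ inverse in $\Sigma^0(G)$) is essentially what underlies \cite{CGGP92}*{Theorem~5.1} and could be made to work, but you would still need an independent source for the $L^2$-boundedness and for the Fr\'echet topology in which the Cauchy integral converges; the symbol seminorms \eqref{eq:symbol_estimates} live on $\PPseu^m$, not on $\Sigma^0(G)$, so that suggestion also needs more care.
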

\subsection{Essentially homogeneous vs. homogeneous distributions}
Recall from \cref{def:ess_graded_group} the space of essentially \(m\)-homogeneous distributions and its quotient space
	\begin{align*}
		\ess^m(G)&=\{u\in\Algebra E'(G)+\Schwartz(G)\colon \singsupp(u)\subset\{0\}\text{ and }{\alpha_\lambda}_* u-\lambda^m u\in\Schwartz(G) \text{ for all $\lambda \in \Rp$}\},\\
		\Sigma^m(G)&= \ess^m(G)/\Schwartz(G).
	\end{align*}
	used in \cites{Tay84, vEY19}. Here, we use again the embedding of \(\Schwartz(G)\) into distributions by multiplying them with the Lebesgue measure, see \cref{rem:schwartz-embedding}.
Moreover, we set \(\ess(G)=\bigoplus_{m\in\RR}\ess^m(G)\) and \(\Sigma(G)=\bigoplus_{m\in\RR}\Sigma^m(G)\).
Recall that with respect to the group convolution of distributions on \(G\), one has \(\ess^\ell(G)*\ess^m(G)\subseteq\ess^{\ell+m}(G)\), \(\ess^m(G)*\Schwartz(G)\subseteq \Schwartz(G)\) and  \(\Schwartz(G)*\ess^m(G)\subseteq\Schwartz(G)\). Furthermore, \(\ess^m(G)\) and \(\Schwartz(G)\) are closed under involution. Extending linearly, one sees that \(\ess(G)\) and \(\Sigma(G)\) are \(^*\)-algebras.

\begin{example} 
	For \(G=\RR^n\), one typically applies Fourier transform to work with functions instead of distributions. Suppose \(a_\infty(\xi)\) is a \(m\)-homogeneous function on \(\RR^n\setminus\{0\}\), that is \(a_\infty(\lambda\xi)=\lambda^m a_\infty(\xi)\) for all \(\lambda>0\). Let \(\chi\in \Smooth(\RR^n)\) vanish near \(\xi=0\) and be constant \(1\) outside a compact set. Then \(a(\xi)=\chi(\xi)a_\infty(\xi)\) is smooth function on all of \(\RR^n\), but only essentially homogeneous. For some purposes it might be useful to work with \(a\), corresponding to \(\mathcal F^{-1}(a)=\widecheck a\in \ess^m(G)\), whereas for others it is beneficial to consider the homogeneous function at infinity \(a_\infty\), corresponding to an element of \(\Sigma^m(G)\).
	
	With regard to the orders, note that for \(\alpha_\lambda(v)=\lambda v\) and viewing \(a\) as a tempered distribution, one has \({\alpha_\lambda}_* a-\lambda^{-m-Q(\alpha)}a\in\Schwartz(\RR^n)\) for all \(\lambda>0\). Hence, under Fourier transform \({\alpha_\lambda}_*\widecheck a -\lambda^m \widecheck a\in\Schwartz(\RR^n)\) for all \(\lambda>0\).
\end{example}

Recall that a \(m\)-homogeneous function \(a_\infty\) on \(\RR^n\setminus\{0\}\) with respect to dilations \(\alpha\), that is \(\alpha^*_\lambda a_\infty =\lambda^m a_\infty\), extends to a unique homogeneous distribution on \(\RR^n\) when \(-m-Q(\alpha)\notin \NN_0\), see for example \cite{Hoe90a}*{Theorem~3.2.3}. For \(-m-Q(\alpha)\in\NN_0\) the extension is not unique and also not necessarily homogeneous, see \cite{Hoe90a}*{Theorem~3.2.4}. Due to this fact, under inverse Fourier transform \cite{CGGP92} have to work with the following spaces of distributions, see also \cite{Gel90}*{Proposition~1.2}:
\begin{definition}[\cite{CGGP92}*{p.~33}]
	For \(-m-Q(\alpha)\notin\NN_0\) let
	\begin{equation*}
		\mathbf{K}^m=\{u_\infty\in\Schwartz'(G)\colon \singsupp (u_\infty)\subseteq \{0\} \text{ and }{\alpha_\lambda}_*u_\infty=\lambda ^m u_\infty \text{ for all $\lambda \in \Rp$}\}.
	\end{equation*} 
	and for \(-m-Q(\alpha)\in\NN_0\)
	\begin{equation*}
		\mathbf{K}^m=\left\{u_\infty+p\log(\norm{\,\cdot\,})\in\Schwartz'(G)\colon \begin{array}{l}
			p\in\mathcal P^{-m-Q(\alpha)}\text{, }\singsupp (u_\infty)\subseteq \{0\}
			\\
			\text{ and }{\alpha_\lambda}_*u_\infty=\lambda ^m u_\infty\text{ for all }\lambda>0
		\end{array} \right\}  .
	\end{equation*} 
	Here, \(\mathcal P^k\) denotes for \(k\in\NN_0\) the space of all \(k\)-homogeneous polynomials with respect to \(\alpha\) on \(G\) and \(\norm{\,\cdot\,}\) denotes a homogeneous quasi-norm on \(G\).
\end{definition}
Note that the shift in the order as compared to the definition of \(\mathbf K^m\) in \cites{Gel90, CGGP92} stems from our different convention of homogeneity for distributions, which coincides with the one used in \cite{vEY19}.

Let \([u]\in \Sigma^m(G)\) be represented by an essentially homogeneous \(u\). Then by \cite{DH22}*{Lemma~3.6}  applied to the essentially homogeneous \(u\), one can write
\begin{equation}\label{eq:decomposition_ess_hom}
	u =u_\infty+f,
\end{equation}
where \(u_\infty\in \mathbf K^m\) and \(f\in\Smooth(G)\).
For \(-m-Q(\alpha)\notin\NN_0\), this decomposition is unique, whereas for  \(-m-Q(\alpha)\in\NN_0\) it is unique up to adding an element of \(\mathcal P^{-m-Q(\alpha)}\) to \(u_\infty\) and subtracting it from \(f\).

Let \(\Schwartz_0(G)\) denote the space of all \(f\in\Schwartz(G)\) such that \(\int_G p(x)f(x)=0\) for all polynomials \(p\). For \(G=\RR^n\), this space corresponds under Fourier transform to the space of Schwartz functions that vanish with all derivatives at \(0\). This is the space on which homogeneous functions naturally act as multiplier. 

By \cite{CGGP92}*{Proposition~2.2} convolution with \(v\in\mathbf K^m\) defines an operator \(\Op(v)\colon \Schwartz_0(G)\to\Schwartz_0 (G)\) with \(f\mapsto v*f\). Note that this operator uniquely determines \(v\) if \(-m-Q(\alpha)\notin\NN\), otherwise upto to a polynomial in \(\mathcal P^{-m-Q(\alpha)}\). Denote \(\Op(\mathbf K)=\bigoplus_{m\in\RR}\Op(\mathbf K^m)\) the space of all such operators, which is an algebra with respect to composition by \cite{CGGP92}*{Proposition~2.3}. It is also closed under formal adjoints and \(\Op(v)^*=\Op(v^*)\), see \cite{CGGP92}*{p.~41}. These are for general graded groups the equivalent of homogeneous functions on \(\RR^n\!\setminus\!\{0\}\) acting as multiplication operators on \(\mathcal F(\Schwartz_0(\RR^n))\).
\begin{lemma}\label{res:equivalence_elliptic_cggp}
	For a graded Lie group \(G\) the map \(\Phi_\infty\colon \Sigma(G)\to \Op(\mathbf K) \), defined by extending linearly \([u]\mapsto \Op(u_\infty)\) for \(u\in\ess^m(G)\), is well-defined and a \(^*\)-isomorphism. 
\end{lemma}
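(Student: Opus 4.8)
The plan is to factor $\Phi_\infty$ through the passage to the leading homogeneous part and to import the structural properties of $\Op(\mathbf K)$ from \cite{CGGP92}. By \eqref{eq:decomposition_ess_hom} (i.e.\ \cite{DH22}*{Lemma~3.6}) every $u\in\ess^m(G)$ splits as $u=u_\infty+f$ with $u_\infty\in\mathbf K^m$ and $f\in\Smooth(G)$, the part $u_\infty$ being unique if $-m-Q(\alpha)\notin\NN_0$ and unique modulo $\mathcal P^{-m-Q(\alpha)}$ otherwise; and by \cite{CGGP92}*{Proposition~2.2} the map $\Op$ restricted to $\mathbf K^m$ has kernel exactly $\mathcal P^{-m-Q(\alpha)}$ (read as $\{0\}$ when $-m-Q(\alpha)\notin\NN_0$). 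So I would first set up a map $\Lambda_m\colon\Sigma^m(G)\to\mathbf K^m/\mathcal P^{-m-Q(\alpha)}$, $[u]\mapsto[u_\infty]$, show it is a linear isomorphism, and then observe that $\Phi_\infty$ restricted to $\Sigma^m(G)$ is the composite of $\Lambda_m$ with the isomorphism $\mathbf K^m/\mathcal P^{-m-Q(\alpha)}\xrightarrow{\ \sim\ }\Op(\mathbf K^m)$ induced by $\Op$. Extending linearly over $m\in\RR$ using $\Sigma(G)=\bigoplus_m\Sigma^m(G)$ and $\Op(\mathbf K)=\bigoplus_m\Op(\mathbf K^m)$ then yields the vector-space isomorphism, and it remains to verify compatibility with products and involutions.

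To see that $\Lambda_m$ is a well-defined linear bijection: well-definedness holds because replacing $u$ by $u+s$ with $s\in\Schwartz(G)$ keeps $u_\infty$ a valid leading part (only $f$ changes, to $f+s\in\Smooth(G)$), and $u_\infty$ is in any case ambiguous only modulo $\mathcal P^{-m-Q(\alpha)}$; linearity is immediate. For injectivity, if $u_\infty\in\mathcal P^{-m-Q(\alpha)}$ then $u=u_\infty+f$ is globally smooth, and writing $u=u_c+u_s$ with $u_c\in\Algebra E'(G)$, $u_s\in\Schwartz(G)$ shows $u_c=u-u_s\in\SmoothCompactSupp(G)\subseteq\Schwartz(G)$, hence $u\in\Schwartz(G)$ and $[u]=0$. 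For surjectivity, given $w=w_\infty+p\log\norm{v}\in\mathbf K^m$, fix $\chi\in\SmoothCompactSupp(G)$ equal to $1$ near $0$; then $\chi w\in\Algebra E'(G)$, $\chi w-w=(\chi-1)w\in\Smooth(G)$ (as $\chi-1$ vanishes near the only singularity), and the identity ${\alpha_\lambda}_*w-\lambda^m w=-\lambda^m\log\lambda\cdot p$ gives ${\alpha_\lambda}_*(\chi w)-\lambda^m(\chi w)=g_\lambda-\lambda^m\log\lambda\cdot\chi p$ with $g_\lambda\in\SmoothCompactSupp(G)$, which lies in $\Schwartz(G)$; thus $\chi w\in\ess^m(G)$ with $\Lambda_m([\chi w])=[w]$. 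It is also convenient to record here that every class in $\Sigma^m(G)$ has a compactly supported representative, namely $\chi u$ for a suitable cut-off $\chi$, since $(1-\chi)u\in\Schwartz(G)$.

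Finally, multiplicativity and the $^*$-property. For the involution, inversion $v\mapsto v^{-1}$ is a diffeomorphism of $G$ fixing $0$ and commuting with the dilations $\alpha_\lambda$, so it preserves each of $\ess^m(G)$, $\mathbf K^m$, $\Schwartz(G)$ and $\mathcal P^{-m-Q(\alpha)}$ and commutes with taking leading parts; hence $(u^*)_\infty=(u_\infty)^*$, and since $\Op(w^*)=\Op(w)^*$ on $\mathbf K$ (\cite{CGGP92}*{p.~41}) we get $\Phi_\infty([u]^*)=\Phi_\infty([u])^*$. For the product it suffices to prove $\Lambda_{\ell+m}([u]*[v])=\Lambda_\ell([u])*\Lambda_m([v])$ for $u\in\ess^\ell(G)$, $v\in\ess^m(G)$, i.e.\ $(u*v)_\infty\equiv u_\infty*v_\infty\pmod{\mathcal P}$; together with the fact that $\Op$ turns the convolution on $\mathbf K$ into composition of operators (\cite{CGGP92}*{Proposition~2.3}) this gives $\Phi_\infty([u]*[v])=\Phi_\infty([u])\circ\Phi_\infty([v])$. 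Writing $u=u_\infty+f$, $v=v_\infty+g$ and expanding $u*v=u_\infty*v_\infty+u_\infty*g+f*v_\infty+f*g$, the point is that the three cross terms lie in $\Smooth(G)$, so that $u_\infty*v_\infty$ is a legitimate leading part of $u*v$. I expect this to be the main obstacle: the convolutions involved mix compactly supported, Schwartz and merely slowly increasing smooth pieces and need not even be classically defined a priori, so one has to reduce to compactly supported representatives of $[u],[v]$ (available by the previous paragraph), split off the homogeneous pieces with cut-offs, and control the remaining integrals by the homogeneity of $u_\infty,v_\infty$ and the integrability criterion \cref{res:integrable} — essentially re-running the estimates behind \eqref{eq:decomposition_ess_hom} and \cite{CGGP92}*{Propositions~2.2--2.3}. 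All other parts are bookkeeping on top of these inputs.
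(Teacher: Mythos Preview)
Your bijectivity argument via the intermediate map \(\Lambda_m\colon\Sigma^m(G)\to\mathbf K^m/\mathcal P^{-m-Q(\alpha)}\) is correct and essentially the same as the paper's (the paper writes down the inverse \(\Psi_\infty(\Op(u_\infty))=[\chi\cdot u_\infty]\) directly, but the content is identical). The involution argument is also fine.

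The gap is in multiplicativity. You formulate the goal as \((u*v)_\infty\equiv u_\infty*v_\infty\pmod{\mathcal P}\) and then expand \(u*v=u_\infty*v_\infty+\text{cross terms}\). But there is no distributional convolution on \(\mathbf K\): the object \(u_\infty*v_\infty\) is not defined (neither factor is compactly supported nor Schwartz), and Proposition~2.3 of \cite{CGGP92} does \emph{not} say that \(\Op\) turns convolution into composition --- it only says that \(\Op(u_\infty)\Op(v_\infty)=\Op(w)\) for \emph{some} \(w\in\mathbf K^{\ell+m}\), produced abstractly. You acknowledge the difficulty and propose to ``re-run the estimates behind Propositions~2.2--2.3'', but those results do not supply the missing link. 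What you actually need is \cite{CGGP92}*{Proposition~3.3}, which identifies this abstract \(w\) with the concrete compactly supported convolution: \(w-\chi u_\infty*\chi v_\infty\in\Smooth(G)\). The paper's argument proceeds exactly this way: start on the operator side with \(w\), invoke Proposition~3.3, then observe (as you do) that \(u=\chi u_\infty+\tilde f\) with \(\tilde f\in\Schwartz(G)\) and similarly for \(v\), so that \(u*v-\chi u_\infty*\chi v_\infty\in\Schwartz(G)\); combining gives \((u*v)_\infty=w\). So your cut-off idea is right, but you should drop the undefined \(u_\infty*v_\infty\) entirely and work from the operator side, citing Proposition~3.3 rather than 2.2--2.3.
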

\begin{proof}
	The possible non-uniqueness of the decomposition in \eqref{eq:decomposition_ess_hom} causes no problems as \(\Op(p)=0\) for every polynomial~\(p\), hence the map is well-defined. Let \([ u]\in \Sigma^\ell(G)\) and \([ v]\in \Sigma^m(G)\) be represented by \(u =u_\infty+f\) and \(v=v_\infty +g\). We need to check \(\Op((u*v)_\infty)=\Op(u_\infty)\Op(v_\infty)\). By \cite{CGGP92}*{Proposition~2.3} one knows \(\Op(u_\infty)\Op(v_\infty)=\Op(w)\) for some \(w\in \mathbf K^{\ell+m}\). Let \(\chi\in\SmoothCompactSupp(G)\) be constant \(1\) near \(0\). Then by \cite{CGGP92}*{Proposition~3.3} one has \(w-\chi\cdot u_\infty*\chi\cdot v_\infty\in\Smooth(G)\). Note that \(u=\chi\cdot u+(1-\chi)u = \chi\cdot u_\infty+\tilde f\) with \(\tilde f=\chi\cdot f+(1-\chi)u\in\Schwartz(G)\) and similarly for \(v\). Consequently
	\begin{equation*}
		u*v\in \chi\cdot u_\infty*\chi\cdot v_\infty +\Schwartz(G).
	\end{equation*}
	Hence, we obtain that \((u*v)_\infty = w\). For the adjoints note that \((u^*)_\infty=(u_\infty)^*\). To see that the map is bijective, let \(\Psi_\infty(\Op(u_\infty))=[\chi\cdot u_\infty]\). This is well-defined as \(u_\infty\) is unique up to a polynomial and one checks that \(\Psi_\infty=\Phi^{-1}_\infty\).
\end{proof}

\begin{example}\label{ex:diffop-and_phi_infty}
	Suppose \(P\) is a \(m\)-homogeneous left-invariant differential operator on \(G\) and \(u_P\in\ess^m(G)\) the corresponding distribution defined by \(\langle u_P, f\rangle =Pf(0)\). Then \(\Phi_\infty([u_P])=\Op(u_P)\), where \(\Op(u_P)f=Pf\) for \(f\in\Schwartz_0(G)\). 
\end{example}
\subsection{The Rockland condition}\label{sec:rockland}
By \cref{res:equivalence_elliptic_cggp}, we see that \(u\in\Sigma^m(G)\) is invertible in \(\Sigma(G)\)
if and only if the operator \(\Op(u_\infty)\) is invertible in \(\Op(\mathbf K)\).
The latter can be characterized by a Rockland condition in terms of the representations of \(G\).

Let \(\pi\) be a non-trivial irreducible unitary representation of \(G\) on a Hilbert space \(\mathcal H_\pi\). Then one has the integrated representation \(\pi\colon\Schwartz(G)\to\Bound(\mathcal H_\pi)\) with \(f\mapsto\pi(f)\) given by
\[\pi(f)=\int_G f(x)\pi(x)\D x\in \mathbb B(\mathcal H_\pi).\]
The integrated representation \(\pi\) is a \(^*\)-homomorphism. Note that \(\pi(f)\) can be also viewed as the operator-valued Fourier transform of \(f\) at \(\pi\) often denoted by \(\widehat{f}(\pi)\).

As described in \cite{CGGP92}*{p.~36}, see also \cite{Pon08}*{Section~3.3.2}, one defines for \(\Op(u)\in\Op(\mathbf K^m)\) a possibly unbounded operator \(\pi(\Op(u))\) on \(\mathcal H_\pi\) with domain being the span of \(\pi(f)v\) for \(f\in\Schwartz_0(G)\) and \(v\in\mathcal H_\pi\) by
\[\pi(\Op(u))\pi(f)=\pi(u*f).\]
Then \(\pi(\Op(u))\) is densely defined and the adjoint of \(\pi(\Op(u))\) is \(\pi(\Op(u^*))\).
Therefore, \(\pi(\Op(u))\) is closable. The map \(\Op(u)\mapsto \overline{\pi(\Op(u))}\) yields a \(^*\)-representation of \(\Op(\mathbf K)\). The smooth vectors \(\mathcal H^\infty_\pi\) are contained in the domain of \(\overline{\pi(\Op(u))}\) by \cite{CGGP92}*{Lemma~6.3}. 

For \(P\in\lie{U}^m(\lie g)\), denote by \(u_P\in \mathbf K^m\) the corresponding distribution as in \cref{ex:diffop-as-hom-distr}. Then \(\D\pi(P)\) as defined in \cref{sec:diff-op-rockland} and \(\overline{\pi(\Op(u_P))}\) coincide on \(\mathcal H^\infty_\pi\), see \cite{Pon08}*{Remark~3.3.7}.
In particular, the following Rockland condition generalizes the one introduced by Rockland \cite{Roc78} for differential operators which was recalled in \cref{def:rockland-cond-diff}.
\begin{definition}[\cites{Roc78, CGGP92}] \label{def:rockland:general}
	The operator \(\Op(u)\in \Op(\mathbf K^m)\) satisfies the \emph{Rockland condition} on \(G\) if \(\overline{\pi(\Op(u))}\) is injective on the space of smooth vectors \(\mathcal H^\infty_\pi\) for all \(\pi\in\widehat{G}\setminus\{\pi_{\mathrm{triv}}\}\).
\end{definition}
\begin{example}
	For \(G=\RR^n\), every irreducible unitary representation is equivalent to a character \(\pi_\xi(x)=e^{i\langle \xi,x\rangle}\) for \(\xi\in\widehat\RR^n\), where \(\xi=0\) defines the trivial representation. For \(u\in\mathbf K^m\) and \(\xi\neq 0\), the operator \(\overline{\pi_\xi(\Op(u))}\) is multiplication by \(a_\infty(\xi)\), where \(a_\infty\) is the homogeneous function coinciding with \(\widehat u\) on \(\RR^n\!\setminus\!\{0\}\). Hence, the Rockland condition is the usual ellipticity condition in this case.
\end{example}
\begin{theorem}[\cite{CGGP92}*{Theorem~6.2}]\label{res:Rockland-theorem}
	An operator \(\Op(u)\in\Op(\mathbf K^m)\) has a left inverse if and only if \(\Op(u)\) satisfies the Rockland condition. Moreover, an operator \(u\in\Op(\mathbf K^m)\) is invertible if and only if \(\Op(u)\) and \(\Op(u^*)\) satisfy the Rockland condition.
\end{theorem}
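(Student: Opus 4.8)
The necessity direction is straightforward. If $\Op(v) \in \Op(\mathbf{K}^{-m})$ is a left inverse, then $\Op(v)\Op(u) = \Op(\delta_0)$ on $\Schwartz_0(G)$, where $\delta_0 \in \mathbf{K}^0$ represents the identity. Applying a nontrivial $\pi \in \widehat G \setminus \{\pi_{\mathrm{triv}}\}$ and using $\pi(\Op(w))\pi(f) = \pi(w*f)$ together with the multiplicativity of $\Op$, one gets $\overline{\pi(\Op(v))}\,\overline{\pi(\Op(u))} = \id$ on the smooth vectors $\mathcal H^\infty_\pi$ (which lie in the relevant domains by \cite{CGGP92}*{Lemma~6.3}), so $\overline{\pi(\Op(u))}$ is injective there, which is exactly the Rockland condition. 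The second assertion reduces to the first: in the unital algebra $\Op(\mathbf{K})$ a left inverse and a right inverse automatically coincide, so $\Op(u)$ is invertible iff it is both left- and right-invertible; and since $\Op(u)^* = \Op(u^*)$, right-invertibility of $\Op(u)$ is equivalent to left-invertibility of $\Op(u^*)$. Hence it suffices to prove that the Rockland condition for $\Op(u)$ implies the existence of a left inverse.

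For sufficiency I would first reduce to $m = 0$. Fix a positive Rockland operator $R$ of homogeneous degree $d$; its complex powers yield invertible elements $\Op(R^{s/d}) \in \Op(\mathbf{K}^s)$ for every $s \in \RR$ (Folland--Stein, which is also the construction underlying \cite{CGGP92}*{Theorem~6.1}). Composing $\Op(u)$ with such powers shifts its order while preserving the Rockland condition, since composition corresponds to composition of the operator-valued symbols $\pi(\Op(\argument))$ and the powers act by invertible operators. It is then convenient to replace $\Op(u)$ by the nonnegative self-adjoint element $\Op(u^* * u) = \Op(u)^*\Op(u) \in \Op(\mathbf{K}^{2m})$: because $\overline{\pi(\Op(u^**u))} = \overline{\pi(\Op(u))}^*\,\overline{\pi(\Op(u))}$, its kernel on $\mathcal H^\infty_\pi$ agrees with that of $\overline{\pi(\Op(u))}$, so it again satisfies the Rockland condition, and a left inverse $B$ of $\Op(u^**u)$ gives the left inverse $B\Op(u)^*$ of $\Op(u)$.

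The heart of the matter is to upgrade the pointwise injectivity of $\overline{\pi(\Op(u))}$ into a uniform a priori estimate. The plan is to show, by a scaling-and-compactness argument over the dilation orbits in $\widehat G$ in the spirit of Helffer--Nourrigat and the Kirillov orbit method, that the Rockland condition forces a subelliptic lower bound of the form $\norm{f}_{L^2} \lesssim \norm{\Op(u^* * u)f}_{L^2} + \norm{f}_{(-1)}$ for $f \in \Schwartz_0(G)$, with $\norm{\argument}_{(-1)}$ a Sobolev norm of strictly lower order. Such an estimate produces a left parametrix modulo operators of negative order, and one then uses the symbolic calculus of \cite{CGGP92} --- that $\Op(\mathbf{K})$ is a filtered $^*$-algebra with a well-behaved principal symbol --- together with asymptotic summation inside the algebra to improve the parametrix to an exact left inverse in $\Op(\mathbf{K}^{-2m})$. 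I expect this uniform estimate to be the main obstacle: passing from injectivity at each fixed representation to a bound that is uniform along the noncompact family of representations, and then realizing the formally constructed inverse symbol $\pi \mapsto \overline{\pi(\Op(u))}^{-1}$ as the symbol of a genuine kernel in $\mathbf{K}^{-2m}$, is precisely the technical core of \cite{CGGP92} and draws on the full strength of the representation theory of graded nilpotent groups.
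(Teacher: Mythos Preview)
The paper does not give a proof of this theorem at all: it is quoted as \cite{CGGP92}*{Theorem~6.2} and used as a black box, with no argument provided beyond the citation. There is therefore nothing in the paper to compare your proposal against.

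That said, your sketch is a reasonable outline of how the original argument in \cite{CGGP92} proceeds---necessity via multiplicativity of $\pi(\Op(\argument))$, reduction to a self-adjoint order-zero operator using complex powers of a Rockland operator, and then a uniform a priori estimate over $\widehat G$ obtained by representation-theoretic compactness arguments in the style of Helffer--Nourrigat. You correctly identify the hard step (passing from pointwise injectivity to a uniform lower bound and realizing the inverse as a kernel in $\mathbf K^{-m}$) and correctly attribute it to \cite{CGGP92}. Since the present paper only invokes the result, your write-up is more than what the paper itself offers; if your goal is to supply a proof here, you would need to either reproduce the full CGGP92 argument or, as the paper does, simply cite it.
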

 Hence, \cref{res:equivalence_elliptic_cggp} and the statements above give \cref{res:rockland:ess-hom}.
\begin{remark}\label{rem:rockland-matrix} \cref{res:Rockland-theorem} also holds for matrices of operators, see \cite{DH22}*{Lemma~3.9}. Namely \(\Op(u)=(\Op(u_{ij}))_{i,j=1}^k\in\mathbb M_k(\Op(\mathbf K^m))\) has a left inverse if and only if for all \(\pi\in\widehat{G}\setminus\{\pi_{\mathrm{triv}}\}\) the operator \(\left(\overline{\pi(\Op(u_{ij}))}\right)_{i,j=1}^k\) is injective on \((\mathcal H^\infty_\pi)^k\).
\end{remark}
\subsection[\texorpdfstring{The \(\Cst\)-algebra of order zero homogeneous distributions}{The C*-algebra of order zero homogeneous distributions}]{\texorpdfstring{The \boldmath\(\Cst\)-algebra of order zero homogeneous distributions}{The C*-algebra of order zero homogeneous distributions}}
It is well-known that zero homogeneous distributions extend to bounded operators.
\begin{lemma}
	Let \(v\in\mathbf K^0\). Then \(\Op(v)\colon \Schwartz_0(G)\to\Schwartz_0(G)\) extends to a bounded operator on \(L^2(G)\).
\end{lemma}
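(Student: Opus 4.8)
The plan is to reduce the boundedness of $\Op(v)$ on $L^2(G)$ to a known result about order-zero operators on graded groups, of which \cref{res:Rockland-theorem} and the surrounding theory of $\Op(\mathbf K)$ are the natural source. The key point is that $v \in \mathbf K^0$ means $v = v_\infty + (\text{possible } \log\text{-term})$, but since $-0 - Q(\alpha) = -Q(\alpha) \notin \NN_0$, there is no logarithmic term and $v_\infty$ is genuinely $0$-homogeneous with singular support in $\{0\}$. First I would recall that $\Op(v)$ is a priori only defined on $\Schwartz_0(G)$ by convolution $f \mapsto v * f$, and that $\Schwartz_0(G)$ is dense in $L^2(G)$ (its Fourier transform on $\RR^n$ is the space of Schwartz functions vanishing to infinite order at $0$, which is $L^2$-dense; on a general graded group density follows since $\Schwartz_0$ is the orthogonal complement of the span of polynomials, which is $\{0\}$ in $L^2$). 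So it suffices to prove the operator-norm estimate $\norm{v * f}_{L^2(G)} \leq C \norm{f}_{L^2(G)}$ for all $f \in \Schwartz_0(G)$.

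The cleanest route is via the $C^*$-algebra structure already set up in the appendix. By \cref{res:equivalence_elliptic_cggp}, $\Op(\mathbf K^0)$ is a $^*$-subalgebra of the operators on $\Schwartz_0(G)$, closed under the adjoint $\Op(v)^* = \Op(v^*)$ with $v^* \in \mathbf K^0$. I would invoke the standard Calderón--Zygmund / singular-integral estimate for convolution operators with Calderón--Zygmund kernels on a homogeneous (graded) group — this is precisely \cite{CGGP92} or \cite{FS82}*{Chapter on singular integrals}: a distribution that is smooth away from $0$, homogeneous of degree $-Q(\alpha)$ after the shift in conventions (i.e. degree $0$ in the convention of this paper, meaning ${\alpha_\lambda}_* v_\infty = v_\infty$) and satisfies the cancellation condition $\int_{\norm x_\alpha = 1} \dots = 0$ implicit in $v_\infty \in \mathbf K^0 = \mathcal F^{-1}(\text{bounded } 0\text{-homogeneous function})$, gives a bounded convolution operator on $L^2$. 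Alternatively, and perhaps more in the spirit of the appendix, I would note that $\pi(\Op(v))$ is a bounded operator for every $\pi \in \widehat G$ with a uniform bound (since for $v \in \mathbf K^0$, $\overline{\pi(\Op(v))}$ is bounded by \cite{CGGP92}), and then use the Plancherel theorem for $G$ together with the fact that $v * f = \mathcal F^{-1}(\pi \mapsto \widehat v(\pi) \widehat f(\pi))$ to get $\norm{v*f}_{L^2}^2 = \int \norm{\widehat v(\pi) \widehat f(\pi)}_{\mathrm{HS}}^2 \, d\mu(\pi) \leq (\sup_\pi \norm{\widehat v(\pi)}_{\mathrm{op}})^2 \norm{f}_{L^2}^2$.

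Concretely, the steps would be: (1) observe $-Q(\alpha) \notin \NN_0$, so $v = v_\infty \in \Schwartz'(G)$ with ${\alpha_\lambda}_* v = v$, $\singsupp v \subseteq \{0\}$, and $\widehat v \in L^\infty$ of the operator-valued Fourier transform (this is the content of the kernel being of Calderón--Zygmund type; cite \cite{CGGP92}*{Proposition~2.2} and the boundedness statement there, or \cite{FS82}); (2) establish $L^2$-density of $\Schwartz_0(G)$; (3) for $f \in \Schwartz_0(G)$, use the group Plancherel formula to write $\norm{v*f}_{L^2(G)}^2 = \int_{\widehat G} \norm{\widehat{v*f}(\pi)}_{\mathrm{HS}}^2 \, d\mu(\pi)$ and $\widehat{v*f}(\pi) = \widehat v(\pi) \widehat f(\pi)$, bounding by $\norm{v}_{\mathrm{mult}}^2 \norm{f}_{L^2(G)}^2$ where $\norm{v}_{\mathrm{mult}} = \mathrm{ess\,sup}_\pi \norm{\widehat v(\pi)}_{\mathrm{op}} < \infty$; (4) extend $\Op(v)$ to all of $L^2(G)$ by density and continuity. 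The main obstacle is step (1)/(3): one must justify that $\widehat v(\pi)$ is a uniformly bounded field of operators, i.e. that a homogeneous-of-degree-zero distribution on $G$ really is an $L^2$-multiplier. On $\RR^n$ this is classical (Mihlin/Hörmander or Calderón--Zygmund), and on graded groups it is exactly the kind of statement proven in \cite{CGGP92} and \cite{FS82}; so in practice this reduces to quoting the right lemma rather than reproving the Calderón--Zygmund theory. If one prefers to avoid abstract Plancherel, one can instead invoke directly the singular-integral boundedness theorem on homogeneous groups (e.g.\ \cite{FS82}) applied to the kernel $v_\infty$, which is smooth off $0$, satisfies the pointwise size and regularity bounds by homogeneity, and has mean zero on the unit sphere by $v_\infty \in \mathbf K^0$; that gives the $L^2$-boundedness immediately.
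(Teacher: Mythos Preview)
Your proposal lands on essentially the right idea (reduce to singular-integral boundedness on homogeneous groups), but the Plancherel route you sketch as primary is circular: the uniform bound $\sup_\pi \norm{\widehat v(\pi)}_{\mathrm{op}} < \infty$ is, for a general distribution, \emph{equivalent} to $L^2$-boundedness of convolution by $v$, so you cannot invoke it as an input. \cite{CGGP92} does not give you this bound for free for arbitrary $v\in\mathbf K^0$; their $L^2$-theory for order-zero kernels ultimately rests on the singular-integral estimates you list as the alternative. Also, your last claim that $v_\infty$ automatically has mean zero on the unit sphere is not correct: $\delta_0\in\mathbf K^0$, so a $0$-homogeneous distribution need not arise from a mean-zero kernel.

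The paper's proof handles both points in one stroke and is shorter than either of your routes. It cites \cite{Chr88}*{Lemma~2.4} to write any $v\in\mathbf K^0$ as $v=c\,\delta_0+\mathrm{PV}(K)$ with $c\in\CC$ and $K$ the smooth kernel on $G\setminus\{0\}$, the $\mathrm{PV}$ absorbing the necessary cancellation; then the $\delta_0$ term gives the identity and the principal-value term is bounded on $L^2(G)$ by the classical singular-integral theorem on homogeneous/nilpotent groups, for which the paper cites \cite{KS71}*{Theorem~1} or \cite{Goo80}*{Theorem~2.1}. Your alternative via \cite{FS82} is in the same spirit, but you would still need the $c\,\delta_0+\mathrm{PV}(K)$ decomposition to get into the hypotheses of those theorems.
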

\begin{proof}
	By \cite{Chr88}*{Lemma~2.4} \(v\) can be written as \(v=c\delta_0+\mathrm{PV}(K)\) with \(c\in\CC\) and \(\mathrm{PV}(K)\) denoting a principal-value distribution. These extend to bounded operators on \(L^2(G)\), see \cite{KS71}*{Theorem~1} or \cite{Goo80}*{Theorem~2.1}.
\end{proof}
In particular, one can consider the \(C^*\)-completion of the algebra \(\Op(\mathbf K^0)\) in \(\Bound(L^2G)\).
	We remark that \(\Cst(\Op(\mathbf K^0))\) is the generalized fixed point algebra \(\mathrm{Fix}^{\Rp}(J_G)\) considered in \cite{Ewe23a}, where its spectrum and \(\mathrm K\)-theory are computed.

Recall the following notion of a \(\Psi^*\)-algebra by Gramsch \cite{Gra84}*{Definition~5.1}. Compiling known facts from the literature, we shall see in the following that \(\Op(\mathbf K^0)\) is a \(\Psi^*\)\nb-algebra.
\begin{definition}
	A subalgebra \(\Algebra A\) of a \(\Cst\)-algebra \(B\) with unit \(1\in B\) is called a \emph{\(\Psi^*\)\nb-algebra of \(B\)} if
	\begin{enumerate}
		\item \(1\in \Algebra A\) and \(\mathcal A^*=\mathcal A\),		
		\item \(\mathcal A\) is spectrally invariant, that is, \(\mathcal A^{-1}=B^{-1}\cap \mathcal A\),
		\item\label{item:Frechet} \(\mathcal A\) can be equipped with a Fr\'echet algebra structure such that \(\mathcal A\hookrightarrow B\) is continuous.
	\end{enumerate}
\end{definition}
One can show, see \cite{Gra84}*{Remark~5.2},  that in this case \(a\mapsto a^{-1}\) is a continuous map \(\mathcal A\to\mathcal A\) and that therefore \(\Psi^*\)-algebras are closed under holomorphic functional calculus. 
\begin{proposition}\label{res:psi-star-algebra}
	The algebra \(\Op(\mathbf K^0)\) is a \(\Psi^*\)-algebra of \(\Cst(\Op(\mathbf K^0))\). In particular, it is closed under holomorphic functional calculus.
\end{proposition}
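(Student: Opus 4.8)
The plan is to verify the three defining properties of a $\Psi^*$-algebra for $\Algebra A = \Op(\mathbf K^0)$ inside $B = \Cst(\Op(\mathbf K^0))$, drawing on the structural results already assembled in this appendix. Property \emph{(i)}: the unit $\Op(\delta_0) = \id$ lies in $\Op(\mathbf K^0)$ since $\delta_0 \in \mathbf K^0$ (it is $0$-homogeneous with singular support $\simpleset 0$ and $-0 - Q(\alpha) \notin \NN_0$), and $\Op(\mathbf K)$ is closed under formal adjoints with $\Op(v)^* = \Op(v^*)$ by \cite{CGGP92}*{p.~41}, with $v \mapsto v^*$ preserving the homogeneous degree; hence $\Op(\mathbf K^0)^* = \Op(\mathbf K^0)$. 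Property \emph{(iii)}: one equips $\mathbf K^0$ (equivalently $\Op(\mathbf K^0)$) with its natural Fr\'echet topology — the seminorms being, say, the best constants in the homogeneity-truncated symbol estimates together with seminorms controlling the smooth part in the decomposition \eqref{eq:decomposition_ess_hom} — and checks that the operator norm on $L^2(G)$ is dominated by finitely many of these, which is exactly the content of the Calder\'on--Zygmund type bounds \cite{KS71}*{Theorem~1} / \cite{Goo80}*{Theorem~2.1} used in the preceding lemma, made quantitative.

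The heart of the argument is property \emph{(ii)}, spectral invariance: if $\Op(v) \in \Op(\mathbf K^0)$ is invertible in $B$, one must produce the inverse already inside $\Op(\mathbf K^0)$. I would argue as follows. Suppose $\Op(v)$ is invertible in $\Cst(\Op(\mathbf K^0)) \subseteq \Bound(L^2 G)$. Then $\Op(v)$ and $\Op(v)^* = \Op(v^*)$ are both bounded below on $L^2(G)$, hence a fortiori injective on the dense subspace of smooth vectors of every nontrivial irreducible unitary representation $\pi$ of $G$ — concretely, $\overline{\pi(\Op(v))}$ is injective on $\mathcal H_\pi^\infty$ for all $\pi \in \widehat G \setminus \simpleset{\pi_{\mathrm{triv}}}$, since any kernel vector there would, after smearing against a suitable $\pi(f)$ with $f \in \Schwartz_0(G)$, contradict the lower bound; and likewise for $v^*$. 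Thus both $\Op(v)$ and $\Op(v^*)$ satisfy the Rockland condition (\cref{def:rockland:general}), so by the Rockland Theorem \cite{CGGP92}*{Theorem~6.2} (our \cref{res:Rockland-theorem}) the operator $\Op(v)$ has a two-sided inverse $\Op(w)$ with $w \in \mathbf K^0$ — note the orders compose correctly, $0 + 0 = 0$. This abstract inverse $\Op(w)$ then coincides with the $B$-inverse of $\Op(v)$ by uniqueness of inverses, and it lies in $\Op(\mathbf K^0)$, proving spectral invariance.

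The main obstacle I anticipate is the passage ``invertible in the $C^*$-algebra $\Rightarrow$ Rockland condition,'' i.e.\ showing that boundedness below on $L^2(G)$ forces injectivity of $\overline{\pi(\Op(v))}$ on $\mathcal H_\pi^\infty$ for every nontrivial $\pi$. This is the standard mechanism behind all such spectral-invariance results for Rockland-type algebras, and it is implicit in \cite{CGGP92}; to make it rigorous one disintegrates the left regular representation of $G$ into irreducibles and observes that the $L^2$-lower bound descends to almost every fibre, then upgrades ``almost every $\pi$'' to ``every nontrivial $\pi$'' using homogeneity of $v$ under the dilation action $\alpha$, which acts transitively on the relevant orbits of $\widehat G$ so that the lower bound is dilation-invariant and hence holds on the whole fibre. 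Everything else — closedness under adjoints, the Fr\'echet structure, continuity of the inclusion, and the final appeal to \cite{Gra84}*{Remark~5.2} for closure under holomorphic functional calculus — is routine bookkeeping once the appropriate seminorms are written down. The remaining claim of \cref{res:c-star-and-psi-star-algebra}, that $\Sigma^0(G)$ itself has a $C^*$-closure and is closed under holomorphic functional calculus, then follows by transporting everything along the $^*$-isomorphism $\Phi_\infty \colon \Sigma^0(G) \to \Op(\mathbf K^0)$ of \cref{res:equivalence_elliptic_cggp}.
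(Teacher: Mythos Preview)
Your overall structure --- verifying (i), (ii), (iii) --- matches the paper's proof, and your handling of (i) is essentially identical. The differences are in (ii) and (iii), where the paper simply cites known results rather than re-deriving them.

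For spectral invariance (ii), the paper just invokes \cite{CGGP92}*{Theorem~5.1} directly. Your argument via the Rockland theorem is, in outline, how that theorem is proved in \cite{CGGP92}, so you are reconstructing a result the paper prefers to quote. The ``main obstacle'' you flag --- passing from $L^2$-invertibility to injectivity of $\overline{\pi(\Op(v))}$ on $\mathcal H_\pi^\infty$ for every nontrivial $\pi$ --- is genuine and is precisely what is handled in \cite{CGGP92}; your disintegration-plus-dilation sketch points in the right direction but is not a complete proof as written.

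For the Fr\'echet structure (iii), your description of the seminorms is too vague to be verified: ``best constants in homogeneity-truncated symbol estimates together with seminorms controlling the smooth part'' does not obviously define a Fr\'echet algebra topology on $\Op(\mathbf K^0)$, nor is it clear that multiplication is continuous for it. The paper instead imports a concrete Fr\'echet structure from Fermanian-Kammerer--Fischer \cite{FKF20}*{Definition~5.1}: the algebra $\widetilde S^0$ of $0$-homogeneous operator-valued symbols, isomorphic to $\Op(\mathbf K^0)$ via the group Fourier transform, carries explicit seminorms (continuity of multiplication being checked as in \cite{FR16}*{Theorem~5.2.22}), one of which is $\sup_\pi \norm{\widehat v(\pi)}$; by Plancherel this coincides with the $L^2$-operator norm, so the inclusion into $B$ is continuous for free. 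This is cleaner than trying to make your Calder\'on--Zygmund bounds quantitative.
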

\begin{proof}
	It is clear that \(\Op(\mathbf K^0)\) contains the identity operator and that is closed under the involution. Spectral invariance is also known, see for example \cite{CGGP92}*{Theorem~5.1}. For the Fr\'echet algebra structure, we use that
	Fermanian-Kammerer and Fischer define in \cite{FKF20}*{Definition~5.1} an algebra of \(0\)-homogeneous symbols \(\widetilde{S}^0\subset L^\infty(\widehat G, \Bound(\mathcal H_\rho))\). Here, \(\widehat G\) is equipped with the Plancherel measure. Under group Fourier transform this algebra is isomorphic to \(\Op(\mathbf K^0)\). 
	They equip \(\widetilde{S}^0\) with Fr\'echet seminorms.
	Arguing similar to the proof of \cite{FR16}*{Theorem~5.2.22} one can show that \(\widetilde{S}^0\) is a Fr\'echet algebra with respect to these seminorms. One seminorm is given by \(\sup_\pi\norm{\widehat v(\pi)}\) for \(v\in\mathbf K^0\). By the Plancherel theorem this is precisely the norm of \(v\) as a convolution operator on \(L^2(G)\). Hence, \refitem{item:Frechet} is satisfied. 
\end{proof}
\end{appendix}

\bibliographystyle{alpha} 
\bibliography{references.bib}

\end{document}